\theoremstyle{plain}
  \newtheorem{thm}{Theorem}[section]
  \newtheorem{lem}[thm]{Lemma}
  \newtheorem{prop}[thm]{Proposition}
  \newtheorem{cor}[thm]{Corollary}
\theoremstyle{definition}
  \newtheorem{defn}[thm]{Definition}
  \newtheorem{exmp}[thm]{Example}
  \newtheorem{rem}[thm]{Remark}
\DeclareMathOperator*{\colim}{colim}
\DeclareMathOperator{\Lan}{Lan}
\DeclareMathOperator{\Ran}{Ran}
\DeclareMathOperator{\ub}{ub}
\DeclareMathOperator{\lb}{lb}
\def\oarrowfill@#1#2#3#4#5{%
  $\m@th\thickmuskip0mu\medmuskip\thickmuskip\thinmuskip\thickmuskip
   \relax#5#1\mkern-7mu%
   \cleaders\hbox{$#5\mkern-2mu#2\mkern-2mu$}\hfill
   \mathclap{#3}\mathclap{#2}%
   \cleaders\hbox{$#5\mkern-2mu#2\mkern-2mu$}\hfill
   \mkern-7mu#4$%
}
\def\orightarrowfill@{%
  \oarrowfill@\relbar\relbar\circ{\to<100>}}
\newcommand\xorightarrow[2][]{%
  \ext@arrow 0055{\orightarrowfill@}{#1}{#2}}
\newcommand{\da}{\downarrow}
\newcommand{\ua}{\uparrow}
\newcommand{\ra}{\rightarrow}
\newcommand{\la}{\leftarrow}
\newcommand{\nra}{\nrightarrow}
\newcommand{\nla}{\nleftarrow}
\newcommand{\lda}{\swarrow}
\newcommand{\rda}{\searrow}
\newcommand{\rd}{\backslash}
\newcommand{\Lra}{\Longrightarrow}
\newcommand{\rat}{\!\rightarrowtail\!}
\newcommand{\xora}{\xorightarrow}
\newcommand{\oto}{\xora{\ \ \ \ }}
\newcommand{\bv}{\bigvee}
\newcommand{\bw}{\bigwedge}
\newcommand{\dv}{\dashv}
\newcommand{\rhu}{\rightharpoonup}
\newcommand{\ulc}{\ulcorner}
\newcommand{\urc}{\urcorner}
\newcommand{\nat}{\natural}
\newcommand{\dint}{\displaystyle\int}
\newcommand{\al}{\alpha}
\newcommand{\be}{\beta}
\newcommand{\ep}{\epsilon}
\newcommand{\De}{\Delta}
\newcommand{\ga}{\gamma}
\newcommand{\lam}{\lambda}
\newcommand{\si}{\sigma}
\newcommand{\CA}{\mathcal{A}}
\newcommand{\CB}{\mathcal{B}}
\newcommand{\CD}{\mathcal{D}}
\newcommand{\CE}{\mathcal{E}}
\newcommand{\CF}{\mathcal{F}}
\newcommand{\CG}{\mathcal{G}}
\newcommand{\CK}{\mathcal{K}}
\newcommand{\CM}{\mathcal{M}}
\newcommand{\CP}{\mathcal{P}}
\newcommand{\CQ}{\mathcal{Q}}
\newcommand{\CT}{\mathcal{T}}
\newcommand{\CU}{\mathcal{U}}
\newcommand{\CV}{\mathcal{V}}
\newcommand{\BD}{{\bf D}}
\newcommand{\BM}{{\bf M}}
\newcommand{\BT}{{\bf T}}
\newcommand{\BU}{{\bf U}}
\newcommand{\BY}{{\bf Y}}
\newcommand{\FD}{\mathfrak{D}}
\newcommand{\sA}{{\sf A}}
\newcommand{\sB}{{\sf B}}
\newcommand{\sC}{{\sf C}}
\newcommand{\sY}{{\sf Y}}
\newcommand{\bbA}{\mathbb{A}}
\newcommand{\bbB}{\mathbb{B}}
\newcommand{\bbC}{\mathbb{C}}
\newcommand{\bbX}{\mathbb{X}}
\newcommand{\Ab}{{\bf Ab}}
\newcommand{\Cat}{{\bf Cat}}
\newcommand{\CCat}{{\bf CCat}}
\newcommand{\Cls}{{\bf Cls}}
\newcommand{\Ctx}{{\bf Ctx}}
\newcommand{\Dist}{{\bf Dist}}
\newcommand{\Info}{{\bf Info}}
\newcommand{\Rel}{{\bf Rel}}
\newcommand{\Set}{{\bf Set}}
\newcommand{\Sp}{{\bf Sp}}
\newcommand{\Sup}{{\bf Sup}}
\newcommand{\Ch}{\textrm{Ch}}
\newcommand{\co}{{\rm co}}
\newcommand{\op}{{\rm op}}
\newcommand{\ChA}{\Ch_{\bullet}(\CA)}
\newcommand{\aR}{R^{\forall}}
\newcommand{\eR}{R_{\exists}}
\newcommand{\dR}{R^{\da}}
\newcommand{\uR}{R_{\ua}}
\newcommand{\dphi}{\phi^{\da}}
\newcommand{\uphi}{\phi_{\ua}}
\newcommand{\dpsi}{\psi^{\da}}
\newcommand{\upsi}{\psi_{\ua}}
\newcommand{\ulphi}{\underline{\phi}}
\newcommand{\olphi}{\overline{\phi}}
\newcommand{\ulpsi}{\underline{\psi}}
\newcommand{\olpsi}{\overline{\psi}}
\newcommand{\CPd}{{\mathcal{P}^{\dag}}}
\newcommand{\PA}{\CP\bbA}
\newcommand{\PB}{\CP\bbB}
\newcommand{\PC}{\CP\bbC}
\newcommand{\PX}{\CP\bbX}
\newcommand{\PdA}{\CP^{\dag}\bbA}
\newcommand{\PdB}{\CP^{\dag}\bbB}
\newcommand{\PdC}{\CP^{\dag}\bbC}
\newcommand{\sYd}{\sY^{\dag}}
\newcommand{\hF}{\ulc F \urc}
\newcommand{\hG}{\ulc G \urc}
\begin{document}

\pagenumbering{roman} \pagestyle{empty}

\title{{Adjunctions in Quantaloid-enriched Categories}
\vskip 1.5cm
{\Large by
\vskip 0.2cm
{Lili Shen}}
\vskip 1cm
{\large under the supervision of
\vskip 0.2cm
{Professor Dexue Zhang}}
\vskip 2.5cm
{\large A Dissertation Submitted to
\vskip 0.2cm
Sichuan University
\vskip 0.2cm
for the Degree of Doctor of Philosophy
\vskip 0.2cm
in Mathematics
\vskip 2cm
(English Version)}}

\author{}
\date{March 2014, Chengdu, China}
\maketitle

\setcounter{page}{1} \pagestyle{fancy}

\baselineskip=20pt

\cleardoublepage
\phantomsection
\addcontentsline{toc}{chapter}{Abstract}
\chapter*{Abstract}

Adjunction is a fundamental notion and an extremely useful tool in category theory. This dissertation is devoted to a study of adjunctions concerning categories enriched over a quantaloid $\CQ$ (or $\CQ$-categories for short), with the following types of adjunctions involved:
\begin{itemize}
\item[\rm (1)] adjoint functors between $\CQ$-categories;
\item[\rm (2)] adjoint distributors between $\CQ$-categories;
\item[\rm (3)] adjoint functors between categories consisting of $\CQ$-categories.
\end{itemize}

A quantaloid is a ``simplified'' bicategory in which all hom-sets are $\sup$-lattices. So, a theory of categories enriched over a quantaloid has been developed within the general framework, initiated by Betti and Walters, of categories enriched over a bicategory.

For a small quantaloid $\CQ$ and a distributor between $\CQ$-categories, two adjunctions between the $\CQ$-categories of contravariant and covariant presheaves are presented. These  adjunctions respectively extend the fundamental construction of Isbell adjunctions and Kan extensions in category theory, so, they will be called the Isbell adjunction and Kan adjunction, respectively. The functoriality of these constructions is the central topic of this dissertation.

In order to achieve this, infomorphisms between distributors are introduced to organize distributors (as objects) into a category. Then we proceed as follows.

First, the Isbell adjunction and Kan adjunction associated with each distributor $\phi:\bbA\oto\bbB$ between $\CQ$-categories respectively give rise to a monad $\dphi\circ\uphi$ on $\PA$ and a monad $\phi_*\circ\phi^*$ on $\PB$, where $\PA$ and $\PB$ denote respectively the $\CQ$-categories of contravariant presheaves on $\bbA$ and $\bbB$. A $\CQ$-category $\bbA$ together with a monad on $\PA$ is called a $\CQ$-closure spaces due to its resemblance to closure spaces in topology. It is proved that the correspondences
$$\phi\mapsto (\bbA,\dphi\circ\uphi)$$
and
$$\phi\mapsto(\bbB,\phi_*\circ\phi^*)$$
are respectively (covariant) functorial and contravariant functorial from the category of distributors and infomorphisms to the category of $\CQ$-closure spaces.

Second,  for each $\CQ$-closure space $(\bbA,C)$, the fixed objects of $C:\PA\to\PA$ is a complete $\CQ$-category. It is shown that the assignments of a distributor $\phi$ to the fixed points of $\dphi\circ\uphi$ and $\phi_*\circ\phi^*$  are respectively (covariant) functorial and contravariant functorial from the category of distributors and infomorphisms to that of skeletal complete $\CQ$-categories and left adjoint  functors.

As consequences of the functoriality of the above processes,  three factorizations of the free cocompletion functor of $\CQ$-categories are presented.

Finally, as applications, the theory of formal concept analysis and that of rough sets are extended to theories based on fuzzy relations between fuzzy sets.

\

{\bf Keywords:} Quantaloid, $\CQ$-category, $\CQ$-distributor, $\CQ$-closure space, Isbell adjunction, Kan adjunction.

\cleardoublepage
\phantomsection
\addcontentsline{toc}{chapter}{Contents}
\tableofcontents

\clearpage
\pagenumbering{arabic} \setcounter{page}{1}
\chapter{Introduction}

In this dissertation, we present a study of adjunctions concerning categories enriched over a quantaloid, with special attention to Isbell adjunctions and Kan adjunctions.

A quantaloid \cite{Rosenthal1996,Stubbe2005} is a category enriched over the symmetric monoidal closed category $\Sup$ consisting of $\sup$-lattices and $\sup$-preserving maps. Since a quantaloid $\CQ$ is a closed and locally complete bicategory, following the framework of R. Betti \cite{Betti1982} and R. F. C. Walters \cite{Walters1981}, one can develop a theory of categories enriched over $\CQ$ (or $\CQ$-categories for short). It should be stressed that since a quantaloid is a ``simplified'' bicategory in the sense that each hom-set is partially ordered (which is indeed a $\sup$-lattice), the coherence issues will not be a concern for $\CQ$-categories. For an overview of the theory of $\CQ$-categories, we refer to \cite{Heymans2010,Heymans2011,Stubbe2003,Stubbe2005,Stubbe2005a,Stubbe2006,Stubbe2013}.

\section{Adjoint functors} \label{Adjoint_functors}

``Adjoint functors arise everywhere'', as S. Mac Lane said in his classic textbook \emph{Categories for the Working Mathematician} \cite{MacLane1998}, the notion of adjoint functors introduced by D. M. Kan \cite{Kan1958} is now a core concept in category theory.

Let $\bbA$ and $\bbB$ be categories. A pair of functors $F:\bbA\to\bbB$ and $G:\bbB\to\bbA$ forms an adjunction, written $F\dv G:\bbA\rhu\bbB$, if there are natural transformations
$$\eta:1_{\bbA}\to G\circ F\quad\text{and}\quad\ep:F\circ G\to 1_{\bbB}$$
satisfying the triangle identities \cite{Awodey2010,MacLane1998}
$$\bfig
\Vtriangle/->`->`<-/[F`F`F\circ G\circ F;=`F\eta`\ep_F]
\Vtriangle(1500,0)/->`->`<-/[G`G`G\circ F\circ G;=`\eta_G`G\ep]
\efig,$$
where $1_{\bbA}$ and $1_{\bbB}$ are respectively the identity functors on $\bbA$ and $\bbB$. We present below two important examples of adjoint functors in category theory --- Isbell adjunctions and Kan extensions.
\begin{itemize}
\item[\rm (1)] Let $\bbA$ be a small category. The Isbell adjunction (or Isbell conjugacy) refers to the adjunction
    $$\ub\dv\lb:\Set^{\bbA^{\op}}\rhu(\Set^{\bbA})^{\op}$$
    arising from the Yoneda embedding $\sY:\bbA\to{\Set}^{\bbA^{\op}}$ and the co-Yoneda embedding $\sYd:\bbA\to({\Set}^{\bbA})^{\op}$ given by
    $$\ub(F)=\Set^{\bbA^{\op}}(F,\sY-)\quad\text{and}\quad\lb(G)=(\Set^{\bbA})^{\op}(\sYd-,G).$$
\item[\rm (2)] Let $K:\bbA\to\bbC$ be a functor, with $\bbA$ small. Composing with $K$ induces a functor
    $$-\circ K:{\Set}^{\bbC^{\op}}\to{\Set}^{\bbA^{\op}}.$$
    between the categories of presheaves on $\bbC$ and $\bbA$. The functor $-\circ K$ has a left adjoint $\Lan_K$ and a right adjoint $\Ran_K$. For each presheaf $F:\bbA^{\op}\to\Set$, $\Lan_K F$ and $\Ran_K F$ are respectively the left and right Kan extension of $F$ along $K$.
\end{itemize}

Isbell adjunctions and Kan extensions have also been considered for categories enriched over a symmetric monoidal closed category  \cite{Borceux1994b,Day2007,Dubuc1970,Kelly1982,Kelly2005,Lawvere1973,Lawvere1986,Riehl2014}, and will be outlined in Chapter \ref{preliminaries}.

\section{Adjoint morphisms in a bicategory} \label{Adjoint_morphisms}

The notion of adjoint functors is a special case of adjoint morphisms (or 1-cells) in a 2-category, or more generally, that in a bicategory.

Let $\CB$ be a bicategory with the associator $\al$, left unitor $\lam$ and right unitor $\rho$ (will be introduced in Section \ref{Categories_enriched_over_a_bicategory}). A pair of morphisms $f:X\to Y$ and $g:Y\to X$ forms an adjunction, written $f\dv g:X\rhu Y$, if there are 2-cells
$$\eta:1_X\to g\circ f\quad\text{and}\quad\ep:f\circ g\to 1_Y$$
such that the compositions of 2-cells
$$f\to^{\rho_{XY}^{-1}}f\circ 1_X\to^{f\circ\eta}f\circ(g\circ f)\to^{\al_{XYXY}}(f\circ g)\circ f\to^{\ep\circ f}1_Y\circ f\to^{\lam_{XY}}f$$
and
$$g\to^{\lam_{YX}^{-1}}1_X\circ g\to^{\eta\circ g}(g\circ f)\circ g\to^{\al_{YXYX}^{-1}}g\circ(f\circ g)\to^{g\circ\ep}g\circ 1_Y\to^{\rho_{YX}}g$$
are identities \cite{Gray1974}, where $1_X$ and $1_Y$ are respectively the image of the unit functor on $\CB(X,X)$ and $\CB(Y,Y)$.

In particular, if $\CB$ is a 2-category, then associators and unitors are identities, and $1_X$ and $1_Y$ are respectively the identity morphism on $X$ and $Y$. In this case, a pair of morphisms $f:X\to Y$ and $g:Y\to X$ forms an adjunction $f\dv g:X\rhu Y$, if there are 2-cells
$$\eta:1_X\to g\circ f\quad\text{and}\quad\ep:f\circ g\to 1_Y$$
satisfying the triangle identities
$$\bfig
\Vtriangle/->`->`<-/[f`f`f\circ g\circ f;=`f\circ\eta`\ep\circ f]
\Vtriangle(1500,0)/->`->`<-/[g`g`g\circ f\circ g;=`\eta\circ g`g\circ\ep]
\efig.$$

Adjoint functors are exactly adjoint morphisms in the 2-category $\Cat$ consisting of categories, functors and natural transformations.

\section{Adjunctions in $\CQ$-categories}

There are two basic types of adjunctions between categories enriched over a quantaloid $\CQ$, i.e., adjoint $\CQ$-functors between $\CQ$-categories (corresponding to Section \ref{Adjoint_functors}), and adjoint $\CQ$-distributors as adjoint morphisms in the 2-category of $\CQ$-categories and $\CQ$-distributors (corresponding to Section \ref{Adjoint_morphisms}).

A pair of $\CQ$-functors $F:\bbA\to\bbB$ and $G:\bbB\to\bbA$ forms an adjunction, written $F\dv G:\bbA\rhu\bbB$, if
$$1_{\bbA}\leq G\circ F\quad\text{and}\quad F\circ G\leq 1_{\bbB},$$
where $1_{\bbA}$ and $1_{\bbB}$ are respectively the identity $\CQ$-functors on $\bbA$ and $\bbB$. Adjoint $\CQ$-functors can be viewed as adjoint morphisms in the locally ordered 2-category $\CQ$-$\Cat$ of $\CQ$-categories and $\CQ$-functors.

Since a quantaloid $\CQ$ is itself a locally ordered 2-category, there are adjoint morphisms $f\dv g:X\rhu Y$ in $\CQ$ given by
$$1_X\leq g\circ f\quad\text{and}\quad f\circ g\leq 1_Y.$$
In our case, $\CQ$-categories and $\CQ$-distributors constitute a quantaloid $\CQ$-$\Dist$. Adjoint morphisms in $\CQ$-$\Dist$ give rise to the notion of adjoint $\CQ$-distributors. Specifically, a pair of $\CQ$-distributors $\phi:\bbA\oto\bbB$ and $\psi:\bbB\oto\bbA$ forms an adjunction, written $\phi\dv\psi:\bbA\rhu\bbB$, if
$$\bbA\leq\psi\circ\phi\quad\text{and}\quad\phi\circ\psi\leq\bbB,$$
where $\bbA$ and $\bbB$ are respectively the identity $\CQ$-distributors on $\bbA$ and $\bbB$.

The following constructions exhibit the abundance of adjoint $\CQ$-functors and adjoint $\CQ$-distributors.
\begin{itemize}
\item[\rm (1)] Each $\CQ$-functor $F:\bbA\to\bbB$ induces a pair of adjoint $\CQ$-distributors
    $$F_{\nat}\dv F^{\nat}:\bbA\rhu\bbB,$$
    called respectively the graph and the cograph of $F$.
\item[\rm (2)] If $\CQ$ is a small quantaloid, then each $\CQ$-distributor $\phi:\bbA\oto\bbB$ induces two pairs of adjoint $\CQ$-functors
    \begin{equation} \label{Isbell_intro}
    \uphi\dv\dphi:\PA\rhu\PdB
    \end{equation}
    and
    \begin{equation} \label{Kan_intro}
    \phi^*\dv\phi_*:\PB\rhu\PA
    \end{equation}
    between the $\CQ$-categories of contravariant presheaves and covariant presheaves.
\end{itemize}
These adjunctions will be the core subject of our discussion.

It should be pointed out that the adjunctions (\ref{Isbell_intro}) and (\ref{Kan_intro}) extend the construction of Isbell adjunctions and Kan extensions presented in Section \ref{Adjoint_functors}:
\begin{itemize}
\item[\rm (1)] The $\CQ$-category $\PA$ of contravariant presheaves and the $\CQ$-category $\PdA$ of covariant presheaves are the counterparts of $\Set^{\bbA^{\op}}$ and $(\Set^\bbA)^{\op}$, respectively.
\item[\rm (2)] If $\phi$ is the identity $\CQ$-distributor on $\bbA$, then the adjunction $\uphi\dv\dphi$ reduces to the Isbell adjunction
    $$\bbA\lda(-)\dv(-)\rda\bbA:\PA\rhu\PdA$$
    presented in \cite{Stubbe2005}.
\item[\rm (3)] Given a $\CQ$-functor $F:\bbA\to\bbB$, consider the graph $F_\natural :\bbA\oto\bbB$ and the cograph $F^\natural :\bbB\oto\bbA$. Then it holds that (Theorem \ref{why_kan})
    $$(F^{\natural})^*\dv (F^{\natural})_*= F^{\la}=(F_{\natural})^*\dv(F_{\natural})_*,$$
    where $F^\la:\PB\to\PA$ is the counterpart of the functor $-\circ F$ for $\CQ$-categories.
\end{itemize}
Therefore, adjunctions of the forms (\ref{Isbell_intro}) and (\ref{Kan_intro}) are called Isbell adjunctions and Kan adjunctions, respectively.

\section{Functoriality of the Isbell adjunction and Kan adjunction}

The primary goal of this dissertation is to discuss the functoriality of the constructions of the Isbell adjunction (\ref{Isbell_intro}) and Kan adjunction (\ref{Kan_intro}) in the premise that $\CQ$ is a small quantaloid.

For each $\CQ$-distributor $\phi:\bbA\oto\bbB$, the related Isbell adjunction (\ref{Isbell_intro}) and Kan adjunction (\ref{Kan_intro}) give rise to a monad $\dphi\circ\uphi$ on $\PA$ and a monad $\phi_*\circ\phi^*$ on $\PB$, respectively. In order to establish functoriality of the correspondences
\begin{equation} \label{phi_mapsto_dphiuphi}
(\phi:\bbA\oto\bbB)\ \mapsto\ (\bbA,\dphi\circ\uphi)
\end{equation}
and
\begin{equation} \label{phi_mapsto_phistar}
(\phi:\bbA\oto\bbB)\ \mapsto\ (\bbB,\phi_*\circ\phi^*),
\end{equation}
we introduce infomorphisms between $\CQ$-distributors to construct a category with $\CQ$-distributors as objects.

Given $\CQ$-distributors $\phi:\bbA\oto\bbB$ and $\psi:\bbA'\oto\bbB'$, an infomorphism $(F,G):\phi\to\psi$ is a pair of $\CQ$-functors $F:\bbA\to\bbA'$ and $G:\bbB'\to\bbB$ such that
$$\phi(-,G-)=\psi(F-,-).$$
$\CQ$-distributors and infomorphisms constitute a category $\CQ$-$\Info$.

A $\CQ$-category $\bbA$ together with a monad $C:\PA\to\PA$ is called a $\CQ$-closure space, and lax maps between monads on $\PA$ are called continuous $\CQ$-functors, due to their resemblance to closure spaces in topology (will be introduced in Chapter \ref{closure_space}). $\CQ$-closure space and continuous $\CQ$-functors constitute a category $\CQ$-$\Cls$.

It is shown that, the correspondence (\ref{phi_mapsto_dphiuphi}) gives rise to a right adjoint (covariant) functor from $\CQ$-$\Info$ to $\CQ$-$\Cls$, and the correspondence (\ref{phi_mapsto_phistar}) defines a contravariant functor (which is a right adjoint if $\CQ$ is a Girard quantaloid) from $\CQ$-$\Info$ to $\CQ$-$\Cls$.

Furthermore, for each $\CQ$-closure space $(\bbA,C)$, the fixed points of $C:\PA\to\PA$ (or equivalently, all the algebras if we consider $C$ as a monad) is a complete $\CQ$-category. We show that the correspondence
$$(\bbA,C)\mapsto C(\PA)$$
gives rise to a left adjoint functor from $\CQ$-$\Cls$ to the category $\CQ$-$\CCat$ of skeletal complete $\CQ$-categories and left adjoint $\CQ$-functors.

Therefore, the assignments of a $\CQ$-distributor $\phi$ to the fixed points of $\dphi\circ\uphi$ and $\phi_*\circ\phi^*$ respectively give rise to a (covariant) functor
$$\CM:\CQ\text{-}\Info\to\CQ\text{-}\CCat$$
and a contravariant functor
$$\CK:(\CQ\text{-}\Info)^{\op}\to\CQ\text{-}\CCat.$$

As consequences of the functoriality of the above processes, we present three factorizations of the free cocompletion functor $\CP$ of $\CQ$-categories:
\begin{itemize}
\item[\rm (1)] $\CP$ factors through the category $\CQ$-$\Cls$ of $\CQ$-closure spaces;
\item[\rm (2)] $\CP$ factors through the functor $\CM$ induced by Isbell adjunctions;
\item[\rm (3)] $\CP$ factors through the functor $\CK$ induced by Kan adjunctions.
\end{itemize}

\section{Applications in fuzzy set theory}

In Chapter \ref{Applications}, we present some applications of quantaloid-enriched categories in fuzzy set theory. We demonstrate that Isbell adjunctions and Kan adjunctions are closely related to the theories of formal concept analysis \cite{Carpineto2004,Davey2002,Ganter2005,Ganter1999} and rough set theory \cite{Pawlak1982,Polkowski2002,Yao2004} in computer science. The functoriality of Isbell adjunctions and Kan adjunctions generalizes the functoriality of concept lattices based on formal concept analysis and rough set theory \cite{Shen2013}. Furthermore, the theories of formal concept analysis and rough set theory on fuzzy sets are established.

Recall that a preorder $\leq$ on a (crisp) set $A$ is a reflexive and transitive relation \cite{Davey2002,Gierz2003} on $A$ in the sense that
\begin{itemize}
\item[\rm (1)] $\forall x\in A$, $x\leq x$;
\item[\rm (2)] $\forall x,y,z\in A$, $y\leq z$ and $x\leq y$ implies $x\leq z$.
\end{itemize}
A preordered set $(A,\leq)$ is exactly a category enriched over the quantaloid ${\bf 2}$, the two-element Boolean algebra.

A formal context $(A,B,R)$ consists of (crisp) sets $A,B$ and a relation $R\subseteq A\times B$ (or equivalently, a ${\bf 2}$-distributor between discrete ${\bf 2}$-categories). Formal contexts provide a common framework for formal concept analysis and rough set theory. For each formal context $(A,B,R)$, there exists a contravariant Galois connection $(\uR,\dR)$ and a covariant Galois connection $(\eR,\aR)$ between the powersets of $A$ and $B$. These two Galois connections play fundamental roles in formal concept analysis and rough set theory, respectively, and they are exactly the Isbell adjunction and Kan adjunction in ${\bf 2}$-categories.

Classical preorders are extended to many-valued preorders by replacing ${\bf 2}$ with a more complicated quantaloid:
\begin{itemize}
\item[\rm (1)] Let $Q$ be a unital quantale, i.e., a one-object quantaloid. A fuzzy relation on a (crisp) set $A$ is a map $A\times A\to Q$, and the notion of reflexivity and transitivity can be extended to fuzzy relations. Then a (crisp) set $A$ equipped with a reflexive and transitive fuzzy relation on $A$ is a $Q$-preordered set \cite{Bvelohlavek2004,Lai2006,Lai2009,Shen2013,Zadeh1971}, which is exactly a category enriched over $Q$.
\item[\rm (2)] Let $Q$ be a divisible unital quantale. A fuzzy set (or a $Q$-subset \cite{Goguen1967,Zadeh1965}) is a map $\sA:\sA_0\to Q$ from a (crisp) set $\sA_0$ to $Q$, where the value $\sA x$ is interpreted as the membership degree of $x$ in $\sA_0$. A $Q$-preorder on a fuzzy set \cite{Pu2012,Tao2012} is then characterized as a category enriched over a quantaloid $\CQ$ induced by $Q$ (see Proposition \ref{divisible_quantale_quantaloid} for the definition of $\CQ$), which can also be interpreted elementarily as a reflexive and transitive fuzzy relation on the fuzzy set $\sA$.
\end{itemize}

Thus, by translating Isbell adjunctions and Kan adjunctions in categories enriched over a certain quantaloid $\CQ$, the theories of formal concept analysis and rough set theory can be established on fuzzy relations between (crisp) sets, and further on fuzzy relation between fuzzy sets. As consequences of the results in Chapter \ref{Isbell_adjunction} and \ref{Kan_adjunction}, for formal concept analysis and rough set theory built in each one of these frameworks, the processes of generating concept lattices from formal contexts are functorial.

\chapter{Preliminaries} \label{preliminaries}

We assume that the readers are familiar with the basic notions of classic category theory \cite{Awodey2010,Barr1990,Borceux1994a,Lawvere2003,MacLane1998}, including category, functor, natural transformation, Yoneda embedding, limit, colimit and adjunction. In this chapter, we review the theory of categories enriched over a monoidal category $\CV$, and then introduce Isbell adjunctions and Kan extensions in $\CV$-categories. Finally, we present the basic concepts of categories enriched over a bicategory as a foundation for the next chapter.

\section{Categories enriched over a monoidal category}

In this section, we recall some basic concepts of categories enriched over a monoidal category. Some necessary \emph{coherence} diagrams are omitted here, and we refer to \cite{Adamek1990,Borceux1994b,Kelly1982,Lawvere1973,MacLane1998} for the detailed definitions.

\begin{defn} \label{monoidal_category}
A \emph{monoidal category} $\CV$ is a category equipped with a bifunctor
$$\otimes:\CV\times\CV\to\CV$$
as the \emph{tensor product}, which is associative in the sense that there is a natural and coherent isomorphism
$$a_{XYZ}:X\otimes(Y\otimes Z)\cong(X\otimes Y)\otimes Z$$
for each triple $X,Y,Z$ of objects in $\CV$, and has a unit object $I$ of $\CV$ in the sense that there are natural and coherent isomorphisms
$$l_X:I\otimes X\cong X\quad\text{and}\quad r_X:X\otimes I\cong X$$
for each object $X$ of $\CV$.
\end{defn}

\begin{defn} \label{monoidal_category_symmetric}
A monoidal category $\CV$ is \emph{symmetric} if there is a natural and coherent isomorphism
$$s_{XY}:X\otimes Y\cong Y\otimes X$$
for each pair $X,Y$ of objects in $\CV$.
\end{defn}

\begin{defn} \label{monoidal_category_closed}
A monoidal category $\CV$ is \emph{closed} if both the functors
$$-\otimes X:\CV\to\CV\quad\text{and}\quad X\otimes -:\CV\to\CV$$
have a right adjoint.
\end{defn}

\begin{exmp} \label{monoidal_category_example}
\begin{itemize}
\item[\rm (1)] The category $\Set$ of small sets and functions is a symmetric monoidal closed category with cartesian products of sets playing as the tensor products.
\item[\rm (2)] \cite{Rosenthal1990} A unital quantale is a complete lattice $Q$ equipped with a binary operation $\&$ such that
    \begin{itemize}
    \item[\rm (i)] $(Q,\&)$ is a monoid;
    \item[\rm (ii)] $a\&(\bv b_i)=\bv(a\& b_i)$ and $(\bv b_i)\& a=\bv(b_i\& a)$ for all $a,b_i\in Q$.
    \end{itemize}
    A unital quantale $(Q,\&)$ is a monoidal closed category with $\&$ being the tensor product operation. Furthermore, if $(Q,\&)$ is a commutative monoid, then $(Q,\&)$ is a commutative unital quantale, which is a symmetric monoidal closed category.
\item[\rm (3)] \cite{Joyal1984,Kenney2010,Pitts1988} A $\sup$-lattice is a partially ordered set $X$ in which every subset $A\subseteq X$ has a join (or supremum, least upper bound) $\bv A$. A $\sup$-lattice necessarily admits a meet (or infimum, greatest lower bound) $\bw A$ for every subset $A\subseteq X$. A $\sup$-preserving map $f:X\to Y$ between $\sup$-lattices is a map $f:X\to Y$ such that $f(\bv A)=\bv f(A)$ for every subset $A\subseteq X$. Sup-lattices and $\sup$-preserving maps constitute a symmetric monoidal closed category $\Sup$. The tensor product $X\otimes Y$ of two $\sup$-lattices $X$ and $Y$ is constructed as a quotient of the free $\sup$-lattice on $X\times Y$, i.e., the powerset $\CP(X\times Y)$, obtained from the equivalence relation generated by
    $$(\bv\limits_i x_i,y)\sim\bv\limits_i(x_i,y)\quad\text{and}\quad(x,\bv\limits_j y_j)\sim\bv\limits_j(x,y_j).$$
\item[\rm (4)] \cite{Borceux1994b,Hilton2012} The category $\Ab$ of abelian groups is a symmetric monoidal closed category equipped with the usual tensor products of abelian groups.
\item[\rm (5)] \cite{Hilton2012} The category $\ChA$ of chain complexes of $R$-modules over a commutative ring $R$ is a symmetric monoidal closed category with the usual tensor products of chain complexes.
\item[\rm (6)] The category $\Cat$ of small categories and functors is a symmetric monoidal closed category with the products of categories playing as the tensor products.
\end{itemize}
\end{exmp}

\begin{defn} \label{V_cat}
Let $\CV$ be a monoidal category. A \emph{$\CV$-category} (or category enriched over $\CV$) $\bbA$ consists of a class $\bbA_0$ as the objects, a hom-object $\bbA(x,y)$ of $\CV$ for all $x,y\in\bbA_0$, a morphism
$$c_{xyz}:\bbA(y,z)\otimes\bbA(x,y)\to\bbA(x,z)$$
in $\CV$ for all $x,y,z\in\bbA_0$ as the composition, and a morphism
$$u_x:I\to\bbA(x,x)$$
in $\CV$ for all $x\in\bbA_0$ as the unit, such that the diagrams expressing the associativity and unity laws
$$\bfig
\Atriangle/->`->`/<800,500>[\bbA(z,w)\otimes(\bbA(y,z)\otimes\bbA(x,y))`(\bbA(z,w)\otimes\bbA(y,z))\otimes\bbA(x,y)`\bbA(z,w)\otimes\bbA(x,z);
a_{\bbA(z,w),\bbA(y,z),\bbA(x,y)}`1_{\bbA(z,w)}\otimes c_{xyz}`]
\square(0,-500)/`->`->`->/<1600,500>[(\bbA(z,w)\otimes\bbA(y,z))\otimes\bbA(x,y)`\bbA(z,w)\otimes\bbA(x,z)`\bbA(y,w)\otimes\bbA(x,y)`\bbA(x,w);`c_{yzw}\otimes 1_{\bbA(x,y)}`c_{xzw}`c_{xyw}]
\efig$$
$$\bfig
\qtriangle<1200,500>[I\otimes\bbA(x,y)`\bbA(y,y)\otimes\bbA(x,y)`\bbA(x,y);u_y\otimes 1_{\bbA(x,y)}`l_{\bbA(x,y)}`c_{xyy}]
\qtriangle(0,-800)<1200,500>[\bbA(x,y)\otimes I`\bbA(x,y)\otimes\bbA(x,x)`\bbA(x,y);1_{\bbA(x,y)}\otimes u_x`r_{\bbA(x,y)}`c_{xxy}]
\efig$$
are commutative for all $x,y,z,w\in\bbA_0$.
\end{defn}

\begin{exmp} \label{V_category_example}
\begin{itemize}
\item[\rm (1)] $\Set$-categories are just the ordinary locally small categories.
\item[\rm (2)] \cite{Garcia2010,Shen2013} A category enriched over a unital quantale $Q$ is a $Q$-valued preordered set.
\item[\rm (3)] \cite{Rosenthal1996,Stubbe2005} A $\Sup$-category is a quantaloid.
\item[\rm (4)] \cite{Hilton2012} An $\Ab$-category is a ringoid. An $\Ab$-category becomes an additive category if it admits finite (co)products.
\item[\rm (5)] \cite{Bertrand2011} Let $\ChA$ be the category of chain complexes of $R$-modules over a commutative ring $R$. A category enriched over $\ChA$ is a differential graded category, or dg category for short.
\item[\rm (6)] \cite{Johnstone2002a,Leinster2004,MacLane1998} A $\Cat$-category is a (strict) 2-category.
\item[\rm (7)] \cite{Borceux1994b,Kelly1982} If $\CV$ is a symmetric monoidal closed category, then $\CV$ is itself a $\CV$-category, with $\CV(X,-)$ being the right adjoint of $-\otimes X$ for each object $X\in\CV$.
\end{itemize}
\end{exmp}

If $\CV$ is a symmetric monoidal category, then each $\CV$-category $\bbA$ has a \emph{dual} $\CV$-category $\bbA^{\op}$ given by the same objects and
$$\bbA^{\op}(x,y)=\bbA(y,x)$$
for all $x,y\in\bbA_0$.

\begin{defn} \label{V_functor}
Let $\CV$ be a monoidal category. A \emph{$\CV$-functor} $F:\bbA\to\bbB$ between $\CV$-categories consists of a map $F:\bbA_0\to\bbB_0$ and a morphism
$$F_{xx'}:\bbA(x,x')\to\bbB(Fx,Fx')$$
in $\CV$ for all $x,x'\in\bbA_0$, such that the diagrams
$$\bfig
\square<1400,500>[\bbA(x',x'')\otimes\bbA(x,x')`\bbA(x,x'')`\bbB(Fx',Fx'')\otimes\bbB(Fx,Fx')`\bbB(Fx,Fx'');(c_{\bbA})_{xx'x''}`F_{x'x''}\otimes F_{xx'}`F_{xx''}`(c_{\bbB})_{Fx,Fx',Fx''}]
\efig$$
$$\bfig
\qtriangle<1000,500>[I`\bbA(x,x)`\bbB(Fx,Fx);(u_{\bbA})_x`(u_{\bbB})_{Fx}`F_{xx}]
\efig$$
are commutative for all $x,x',x''\in\bbA_0$.

A $\CV$-functor $F:\bbA\to\bbB$ is \emph{fully faithful} if $F_{xx'}$ is an isomorphism in $\CV$ for all $x,x'\in\bbA_0$.
\end{defn}


\begin{defn} \label{V_nat_trans}
Let $\CV$ be a monoidal category. A \emph{$\CV$-natural transformation} $\al:F\to/=>/G$ between $\CV$-functors $F,G:\bbA\to\bbB$ is given by a morphism
$$\al_x:I\to\bbB(Fx,Gx)$$
in $\CV$ for each $x\in\bbA_0$, such that the diagram
$$\bfig
\Ctriangle/<-``->/[I\otimes\bbA(x,x')`\bbA(x,x')`\bbA(x,x')\otimes I;l^{-1}_{\bbA(x,x')}``r^{-1}_{\bbA(x,x')}]
\square(500,0)/->```->/<1500,1000>[I\otimes\bbA(x,x')`\bbB(Fx,Gx)\otimes\bbB(Gx,Gx')`\bbA(x,x')\otimes I`\bbB(Fx,Fx')\otimes\bbB(Fx',Gx');\al_x\otimes G_{xx'}```F_{xx'}\otimes\al_{x'}]
\Dtriangle(2000,0)/`->`<-/[\bbB(Fx,Gx)\otimes\bbB(Gx,Gx')`\bbB(Fx,Gx')`\bbB(Fx,Fx')\otimes\bbB(Fx',Gx');`c_{Fx,Gx,Gx'}`c_{Fx,Fx',Gx'}]
\efig$$
is commutative for all $x,x'\in\bbA_0$.
\end{defn}

If $\CV$ is a symmetric monoidal category, then the category $\CV$-$\Cat$ of small $\CV$-categories and $\CV$-functors is itself a symmetric monoidal category, with the tensor product $\bbA\otimes\bbB$ of $\CV$-categories given by
$$(\bbA\otimes\bbB)_0=\bbA_0\times\bbB_0$$
and
$$(\bbA\otimes\bbB)((x,y),(x',y'))=\bbA(x,x')\otimes\bbB(y,y')$$
for all $x,x'\in\bbA_0$ and $y,y'\in\bbB_0$.

\begin{defn} \label{V_end}
Let $\CV$ be a complete symmetric monoidal closed category and $\bbA$ a small $\CV$-category. The \emph{end} of a $\CV$-functor $F:\bbA^{\op}\otimes\bbA\to\CV$ consists of an object $\dint_{a\in\bbA_0}F(a,a)$ of $\CV$ and a universal $\CV$-natural family
$$\al_x:\dint_{a\in\bbA_0}F(a,a)\to F(x,x)$$
in the sense that any other $\CV$-natural family $\be_x:X\to F(x,x)$ factors uniquely through $\al_x$ via a morphism $f:X\to\dint_{a\in\bbA_0}F(a,a)$ in $\CV$.
\end{defn}


If $\CV$ is a complete symmetric monoidal closed category, then the $\CV$-functors between a small $\CV$-category $\bbA$ and a $\CV$-category $\bbB$ constitute a $\CV$-category $\bbB^{\bbA}$ with
$$\bbB^{\bbA}(F,G)=\int_{x\in\bbA_0}\bbB(Fx,Gx)$$
for all $\CV$-functors $F,G:\bbA\to\bbB$. The object $\bbB^{\bbA}(F,G)\in\CV$ is called the \emph{object of $\CV$-natural transformations} from $F$ to $G$.

\begin{defn} \label{V_limit_colimits}
Let $\CV$ be a symmetric monoidal closed category and $F:\bbA\to\bbB$ a $\CV$-functor.
\begin{itemize}
\item[\rm (1)] The \emph{limit} of $F$ \emph{weighted by} a $\CV$-functor $G:\bbA\to\CV$ is an object $\lim_G F\in\bbB_0$ together with a $\CV$-natural isomorphism
    $$ \bbB(b,{\lim}_G F)\cong\CV^{\bbA}(G,\bbB(b,F-)).$$
    A $\CV$-category $\bbB$ is \emph{complete} if $\lim_G F$ exists for each $\CV$-functor $F:\bbA\to\bbB$ and $G:\bbA\to\CV$ with the domain $\bbA$ small.
\item[\rm (2)] The \emph{colimit} of $F$ \emph{weighted by} a $\CV$-functor $G:\bbA^{\op}\to\CV$ is an object $\colim_G F\in\bbB_0$ together with a $\CV$-natural isomorphism
    $$\bbB({\colim}_G F,b)\cong\CV^{\bbA^{\op}}(G,\bbB(F-,b)).$$
    A $\CV$-category $\bbB$ is \emph{cocomplete} if $\colim_G F$ exists for each $\CV$-functor $F:\bbA\to\bbB$ and $G:\bbA^{\op}\to\CV$ with the domain $\bbA$ small.
\end{itemize}
\end{defn}

\begin{defn} \label{V_adjunction}
A pair of $\CV$-functors $F:\bbA\to\bbB$ and $G:\bbB\to\bbA$ between $\CV$-categories is called an \emph{adjunction}, written $F\dv G:\bbA\rhu\bbB$, if for every pair $x\in\bbA_0$ and $y\in\bbB_0$, there is an isomorphism in $\CV$
$$\bbB(Fx,y)\cong\bbA(x,Gy)$$
$\CV$-natural in $x$ and $y$. In this case, $F$ is called a \emph{left adjoint} of $G$ and $G$ a \emph{right adjoint} of $F$.
\end{defn}

\section{Isbell adjunctions in $\CV$-categories}

In this section, we assume that $\CV$ is a complete symmetric monoidal closed category. In this case, $\CV$ is itself a complete $\CV$-category.

For each small $\CV$-category $\bbA$, there is a $\CV$-functor $\sY:\bbA\to\CV^{\bbA^{\op}}$ given by
$$\sY x=\bbA(-,x)\quad\text{and}\quad\sY_{xx'}=\CV^{\bbA^{\op}}(\bbA(-,x),\bbA(-,x'))$$
and a $\CV$-functor $\sYd:\bbA\to(\CV^{\bbA})^{\op}$ given by
$$\sYd x=\bbA(x,-)\quad\text{and}\quad\sY_{xx'}=(\CV^{\bbA})^{\op}(\bbA(x,-),\bbA(x',-)).$$
The following lemma implies that both $\sY$ and $\sYd$ are fully faithful $\CV$-functors, and are called respectively the \emph{Yoneda embedding} and the \emph{co-Yoneda embedding}.

\begin{lem}[Yoneda] {\rm\cite{Borceux1994b,Kelly1982}}
Let $\bbA$ be a small $\CV$-category. For each $\CV$-functor $F:\bbA\to\CV$ and $x\in\bbA_0$, there is an isomorphism in $\CV$
$$\CV^{\bbA}(\bbA(x,-),F)\cong Fx.$$
\end{lem}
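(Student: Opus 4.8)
The plan is to prove the enriched Yoneda Lemma by exhibiting an explicit pair of mutually inverse morphisms between $\CV^{\bbA}(\bbA(x,-),F)=\int_{a\in\bbA_0}\CV(\bbA(x,a),Fa)$ and $Fx$, and then verifying that they compose to the respective identities. Throughout I would use that $\CV$ is symmetric monoidal closed, so that $\CV(X,-)$ is right adjoint to $-\otimes X$ (Example \ref{V_category_example}(7)), together with the universal property of the end from Definition \ref{V_end}. The representable $\bbA(x,-):\bbA\to\CV$ sends $a\mapsto\bbA(x,a)$, with hom-action the transpose of the composition morphism $c$ of $\bbA$.

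First I would construct the \emph{evaluation at the identity} morphism $\theta:\CV^{\bbA}(\bbA(x,-),F)\to Fx$. Composing the universal projection $\pi_x:\int_{a}\CV(\bbA(x,a),Fa)\to\CV(\bbA(x,x),Fx)$ with the map $\CV(u_x,Fx):\CV(\bbA(x,x),Fx)\to\CV(I,Fx)$ induced by precomposition with the unit $u_x:I\to\bbA(x,x)$, and identifying $\CV(I,Fx)\cong Fx$ via the closed structure, yields $\theta$. Next I would build the reverse morphism $\psi:Fx\to\int_a\CV(\bbA(x,a),Fa)$. By the universal property of the end it suffices to produce a $\CV$-natural wedge $\psi_a:Fx\to\CV(\bbA(x,a),Fa)$; I take $\psi_a$ to be the transpose, under $\CV(Fx,\CV(\bbA(x,a),Fa))\cong\CV(Fx\otimes\bbA(x,a),Fa)$, of the hom-action $F_{xa}:\bbA(x,a)\to\CV(Fx,Fa)$ (composed with the symmetry $s$ to match variances). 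Checking that $(\psi_a)_a$ satisfies the wedge/dinaturality condition is a direct consequence of the functoriality of $F$, i.e.\ its compatibility with the composition morphisms $c$; this licenses the factorization through the end and defines $\psi$.

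It then remains to verify $\theta\circ\psi=\id_{Fx}$ and $\psi\circ\theta=\id$. The first identity unwinds, after precomposing $\psi$ with $\pi_x$ and $\CV(u_x,Fx)$, to the unit axiom for the $\CV$-functor $F$ (that $F_{xx}$ evaluated at $u_x$ returns the unit of $Fx$), hence to the identity. The second, harder identity I would handle through the universal property: since a morphism into the end is determined by its composites with the projections, it is enough to show $\pi_a\circ\psi\circ\theta=\pi_a$ for every $a\in\bbA_0$.

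Here the essential input is the $\CV$-naturality of the universal wedge of $\int_a\CV(\bbA(x,a),Fa)$, which relates the component at $x$ (extracted by $\theta$) to a general component at $a$ through the composition in $\bbA$; combining this dinaturality with the unit and associativity laws of $\bbA$ and the functoriality of $F$ collapses $\pi_a\circ\psi\circ\theta$ to $\pi_a$. This last step — translating the classical fact that a natural transformation out of a representable is pinned down by its value at the identity into the enriched dinaturality square — is where I expect the real work to lie; the remaining verifications are formal manipulations with the closed-structure adjunction and the defining diagrams of $\CV$-functors. Finally, tracking these constructions shows the isomorphism is $\CV$-natural in both $F$ and $x$, although only the bare isomorphism is needed for the statement.
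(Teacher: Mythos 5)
The paper does not prove this lemma at all: it is quoted from the literature (the citations \cite{Borceux1994b,Kelly1982} attached to the statement), so there is no in-paper argument to compare yours against. Your proposal is the standard proof found in those references --- evaluation at the unit $u_x:I\to\bbA(x,x)$ in one direction, the transpose of the hom-action $F_{xa}:\bbA(x,a)\to\CV(Fx,Fa)$ assembled into a wedge in the other, with the first triangle identity reducing to the unit axiom for $F$ and the second to the dinaturality of the universal wedge --- and it is sound in outline, correctly identifying where the genuine work lies (pinning down a $\CV$-natural family by its value at the identity via the dinaturality square, using the paper's Definition \ref{V_end} and the closed structure of $\CV$).
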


For each small $\CV$-category $\bbA$, the Yoneda embedding
$$\sY:\bbA\to\CV^{\bbA^{\op}}$$
and the co-Yoneda embedding
$$\sYd:\bbA\to(\CV^{\bbA})^{\op}$$
induce a pair of $\CV$-functors
$$\ub:\CV^{\bbA^{\op}}\to(\CV^{\bbA})^{\op}\quad\text{and}\quad\lb:(\CV^{\bbA})^{\op}\to\CV^{\bbA^{\op}}$$
given by
$$\ub(F)=\CV^{\bbA^{\op}}(F,\sY-)\quad\text{and}\quad\lb(G)=(\CV^{\bbA})^{\op}(\sYd-,G).$$

\begin{prop}[Isbell] \label{Isbell_classical} {\rm\cite{Day2007,Kelly2005,Lawvere1986}}
$\ub\dv\lb:\CV^{\bbA^{\op}}\rhu(\CV^{\bbA})^{\op}$.
\end{prop}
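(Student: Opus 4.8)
The plan is to verify the defining natural isomorphism of Definition \ref{V_adjunction} directly, by computing both hom-objects as ends and exhibiting them as two descriptions of one common ``double end''. First I would unwind the opposite-category convention: since $(\CV^{\bbA})^{\op}(\ub F, G) = \CV^{\bbA}(G, \ub F)$, the claim $\ub\dv\lb$ reduces to producing a $\CV$-natural isomorphism
$$\CV^{\bbA}(G, \ub F) \cong \CV^{\bbA^{\op}}(F, \lb G)$$
for all $\CV$-functors $F:\bbA^{\op}\to\CV$ and $G:\bbA\to\CV$.

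Next I would compute the two functors pointwise via the end formula $\bbB^{\bbA}(F,G)=\int_{x}\bbB(Fx,Gx)$ for the object of $\CV$-natural transformations. From the definitions, $\ub F(x) = \CV^{\bbA^{\op}}(F, \bbA(-,x)) = \int_y \CV(Fy, \bbA(y,x))$ and $\lb G(y) = \CV^{\bbA}(G, \bbA(y,-)) = \int_x \CV(Gx, \bbA(y,x))$. Applying the end formula once more gives
$$\CV^{\bbA}(G, \ub F) = \int_x \CV\Bigl(Gx, \int_y \CV(Fy, \bbA(y,x))\Bigr), \quad \CV^{\bbA^{\op}}(F, \lb G) = \int_y \CV\Bigl(Fy, \int_x \CV(Gx, \bbA(y,x))\Bigr).$$

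Then I would bring both expressions into the same shape. The internal-hom functor $\CV(Z,-)$, being a right adjoint by closedness of $\CV$, preserves ends, so each outer hom pushes inside to yield the iterated ends $\int_x \int_y \CV(Gx, \CV(Fy, \bbA(y,x)))$ and $\int_y \int_x \CV(Fy, \CV(Gx, \bbA(y,x)))$. The symmetric closed structure supplies the natural isomorphisms $\CV(Gx, \CV(Fy, \bbA(y,x))) \cong \CV(Gx \otimes Fy, \bbA(y,x)) \cong \CV(Fy, \CV(Gx, \bbA(y,x)))$, and a Fubini argument interchanges the two integrals. Both sides therefore reduce to the common double end $\int_x \int_y \CV(Gx \otimes Fy, \bbA(y,x))$, which yields the desired isomorphism.

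Finally, to obtain an adjunction rather than a mere bijection of underlying objects, I would verify $\CV$-naturality in $F$ and $G$: the end formula, the continuity of $\CV(Z,-)$, the closed-symmetry isomorphisms, and the Fubini interchange are each $\CV$-natural, so the composite is $\CV$-natural as Definition \ref{V_adjunction} requires. I expect the main obstacle to be the rigorous interchange (Fubini) step for ends, together with the bookkeeping of variances and the opposite-category conventions; in particular, checking that the pointwise closed-structure isomorphisms assemble into a genuinely $\CV$-natural transformation (and not merely an object-wise isomorphism) is where the care is needed, whereas the algebra of the double end is routine once the interchange is justified.
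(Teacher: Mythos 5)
Your proof is correct, but there is no proof in the paper to compare it against: Proposition \ref{Isbell_classical} is stated as a quoted result, with the argument deferred to the cited literature \cite{Day2007,Kelly2005,Lawvere1986}. Your route --- unwinding the opposite-category convention, writing both hom-objects as iterated ends via $\CV^{\bbA}(F,G)=\int_{x}\CV(Fx,Gx)$, pushing the internal hom inside the inner end by continuity, applying the hom-tensor adjunction together with symmetry, and interchanging the two ends by Fubini --- is the standard end-calculus proof of Isbell conjugacy, and every step is available under the standing hypotheses of that section ($\CV$ complete symmetric monoidal closed, $\bbA$ small). It is worth comparing your argument with the proof the paper \emph{does} give for the quantaloid-enriched counterpart, Proposition \ref{ub_lb_adjunction}: there the whole computation is $\PdA(\ub\mu,\lam)=\lam\rda\ub\mu=\lam\rda(\bbA\lda\mu)=(\lam\rda\bbA)\lda\mu=\lb\lam\lda\mu=\PA(\mu,\lb\lam)$. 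That five-line calculation is precisely your proof after decategorification: in $\CQ$-$\Dist$ the ends are already absorbed into the definitions of the implications $\lda$ and $\rda$ (which are computed as meets over objects), so your continuity, hom-tensor, symmetry, and Fubini steps all collapse into the single associativity identity $(g\rda h)\lda f=g\rda(h\lda f)$ of Proposition \ref{arrow_calculation}(6). Your version is what is required at the $\CV$-category level of generality, and your identification of the delicate points --- the Fubini interchange and the fact that the pointwise closed-structure isomorphisms assemble into genuinely $\CV$-natural transformations in the sense of Definition \ref{V_adjunction} --- is accurate; both are standard (they are the representability and interchange theorems for ends in Kelly's treatment), so no genuine gap remains.
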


This adjunction is known as the \emph{Isbell adjunction} (or \emph{Isbell conjugacy}) in category theory.

\section{Kan extensions of $\CV$-functors} \label{Kan_extensions_V_functors}

In this section, we also assume that $\CV$ is a complete symmetric monoidal closed category.

\begin{defn}
Let $K:\bbA\to\bbC$ be a $\CV$-functor and $\bbB$ a $\CV$-category.
\begin{itemize}
\item[\rm (1)] The \emph{left Kan extension} of a $\CV$-functor $F:\bbA\to\bbB$ along $K:\bbA\to\bbC$, if it exists, is a $\CV$-functor
$$\Lan_K F:\bbC\to\bbB$$
together with a $\CV$-natural isomorphism
$$\bbB^{\bbC}(\Lan_K F,S)\cong\bbB^{\bbA}(F,S\circ K)$$
for any other $\CV$-functor $S:\bbC\to\bbB$.
\item[\rm (2)] The \emph{right Kan extension} of a $\CV$-functor $F:\bbA\to\bbB$ along $K:\bbA\to\bbC$, if it exists, is a $\CV$-functor
$$\Ran_K F:\bbC\to\bbB$$
together with a $\CV$-natural isomorphism
$$\bbB^{\bbC}(S,\Ran_K F)\cong\bbB^{\bbA}(S\circ K,F)$$
for any other $\CV$-functor $S:\bbC\to\bbB$.
\end{itemize}
\end{defn}

If the left Kan extension $\Lan_K F$ of a $\CV$-functor $F:\bbA\to\bbB$ along $K:\bbA\to\bbC$ exists, then there is a \emph{universal} $\CV$-natural transformation $\eta:F\to/=>/(\Lan_K F)\circ K$ in the sense that for any other $\CV$-functor $S:\bbC\to\bbA$ and $\CV$-natural transformation $\ga:F\to/=>/ S\circ K$, $\ga$ factors uniquely through $\eta$.
$$\bfig
\Vtriangle/->`->`<-/[\bbA`\bbB`\bbC;F`K`S]
\place(500,300)[\ga\Downarrow]
\place(1250,300)[=]
\Vtriangle(1500,0)|alm|/->`->`<--/[\bbA`\bbB`\bbC;F`K`\Lan_K F]
\place(1950,300)[\eta\Downarrow]
\morphism(2000,0)|r|/{@{>}@/_2.5em/}/<500,500>[\bbC`\bbB;S]
\place(2300,130)[\exists !\ \twoar(1,-1)]
\efig$$

Dually, the right Kan extension $\Ran_K F$ of a $\CV$-functor $F:\bbA\to\bbB$ is equipped with a \emph{universal} $\CV$-natural transformation $\ep:(\Ran_K T)\circ K\to/=>/ T$ such that for any other $\CV$-functor $S:\bbC\to\bbA$ and $\CV$-natural transformation $\si:S\circ K\to/=>/ F$, $\si$ factors uniquely through $\ep$.
$$\bfig
\Vtriangle/->`->`<-/[\bbA`\bbB`\bbC;F`K`S]
\place(500,300)[\si\Uparrow]
\place(1250,300)[=]
\Vtriangle(1500,0)|alm|/->`->`<--/[\bbA`\bbB`\bbC;F`K`\Ran_K F]
\place(1950,300)[\ep\Uparrow]
\morphism(2000,0)|r|/{@{>}@/_2.5em/}/<500,500>[\bbC`\bbB;S]
\place(2300,130)[\exists !\twoar(-1,1)]
\efig$$

Given a $\CV$-functor $K:\bbA\to\bbC$ and another $\CV$-category $\bbB$, composing with $K$ yields a $\CV$-functor
$$K^*:\bbB^{\bbC}\to\bbB^{\bbA},$$
which sends a $\CV$-functor $S:\bbC\to\bbB$ to $S\circ K:\bbA\to\bbB$.
$$\bfig
\Vtriangle/->`->`<-/[\bbA`\bbB`\bbC;S\circ K`K`S]
\efig$$
If each $\CV$-functor $F:\bbA\to\bbB$ has a left Kan extension $\Lan_K F:\bbC\to\bbB$ along $K$, then we obtain an adjunction \cite{Borceux1994b}
$$\Lan_K\dv K^*:\bbB^{\bbA}\rhu\bbB^{\bbC}.$$
Dually, if each $\CV$-functor $F:\bbA\to\bbB$ has a right Kan extension $\Ran_K F:\bbC\to\bbB$ along $K$, then we obtain an adjunction
$$K^*\dv\Ran_K:\bbB^{\bbC}\rhu\bbB^{\bbA}.$$

Given a $\CV$-functor $F:\bbA\to\bbB$, if the codomain $\bbB$ admits certain weighted colimits, then the left Kan extension of $F$ along some $\CV$-functor $K:\bbA\to\bbC$ can be constructed \emph{pointwise} for each object $c\in\bbC_0$. Dually, if the codomain $\bbB$ admits certain weighted limits, then the right Kan extension of $F$ along $K$ can be constructed pointwise.

\begin{prop} {\rm\cite{Kelly1982}}
Let $F:\bbA\to\bbB$ and $K:\bbA\to\bbC$ be $\CV$-functors.
\begin{itemize}
\item[\rm (1)] The left Kan extension of $F$ along $K$ can be computed by
$$(\Lan_K F)c={\colim}_{\bbC(K-,c)}F$$
if the weighted colimit exists for each $c\in\bbC_0$.
\item[\rm (2)] The right Kan extension of $F$ along $K$ can be computed by
$$(\Ran_K F)c={\lim}_{\bbC(c,K-)}F$$
if the weighted limit exists for each $c\in\bbC_0$.
\end{itemize}
\end{prop}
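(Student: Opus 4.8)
The plan is to show that the object assignment $c\mapsto\colim_{\bbC(K-,c)}F$ underlies a $\CV$-functor $L:\bbC\to\bbB$ satisfying the universal property that defines the left Kan extension, namely the $\CV$-natural isomorphism $\bbB^{\bbC}(L,S)\cong\bbB^{\bbA}(F,S\circ K)$ for every $\CV$-functor $S:\bbC\to\bbB$. Since $\Lan_K F$ is characterised up to isomorphism by precisely this representing property, it will then follow that $L\cong\Lan_K F$, proving (1). Part (2) is entirely dual — obtained by reversing the relevant variance, replacing the weighted colimit by the weighted limit $\lim_{\bbC(c,K-)}F$ and the defining isomorphism $\bbB^{\bbC}(S,\Ran_K F)\cong\bbB^{\bbA}(S\circ K,F)$ — so I would write out only the colimit case.

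First I would promote the object function to a $\CV$-functor. Writing $Lc=\colim_{\bbC(K-,c)}F$, the defining isomorphism of the weighted colimit (Definition \ref{V_limit_colimits}) gives $\bbB(Lc,b)\cong\CV^{\bbA^{\op}}(\bbC(K-,c),\bbB(F-,b))$, $\CV$-natural in $b$. The right-hand side is moreover $\CV$-functorial in $c$, since the weight $\bbC(K-,c)$ is the value at $c$ of the $\CV$-functor $\bbC\to\CV^{\bbA^{\op}}$ obtained by restricting the hom of $\bbC$ along $K$; because weighted colimits depend $\CV$-functorially on their weight, the assignment $c\mapsto Lc$ inherits a unique $\CV$-functorial structure making the displayed isomorphism natural in $c$ as well.

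The core of the argument is then a chain of isomorphisms, $\CV$-natural in $S$:
\begin{align*}
\bbB^{\bbC}(L,S)
&=\int_{c}\bbB(Lc,Sc)
\cong\int_{c}\CV^{\bbA^{\op}}(\bbC(K-,c),\bbB(F-,Sc))\\
&=\int_{c}\int_{a}\CV(\bbC(Ka,c),\bbB(Fa,Sc))
\cong\int_{a}\int_{c}\CV(\bbC(Ka,c),\bbB(Fa,Sc))\\
&\cong\int_{a}\bbB(Fa,(S\circ K)a)
=\bbB^{\bbA}(F,S\circ K).
\end{align*}
Here the first equality unfolds the definition of the functor $\CV$-category $\bbB^{\bbC}$ as an end; the second inserts, pointwise in $c$, the weighted-colimit isomorphism established above; the third expands $\CV^{\bbA^{\op}}$ as an end over $a$; the fourth is the Fubini interchange of ends, legitimate because $\CV$ is assumed complete; and the last applies the Yoneda lemma to the inner end $\int_{c}\CV(\bbC(Ka,c),\bbB(Fa,Sc))\cong\bbB(Fa,S(Ka))$, after which $S(Ka)=(S\circ K)a$ identifies the outcome as $\bbB^{\bbA}(F,S\circ K)$.

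The main obstacle is naturality bookkeeping rather than any single difficult idea. I must verify that every step above is $\CV$-natural in $S$ and compatible with the integrand variances, so that the composite is a genuine isomorphism in the functor $\CV$-category; in particular the Fubini interchange and the Yoneda reduction must be applied to the correct (co/contra)variant slots of each hom-object. The two points requiring the most care are establishing the $\CV$-functoriality of $L$ in the first paragraph, and confirming that the resulting isomorphism is exactly the one exhibiting $L$ as a representing object, so that the enriched Yoneda principle delivers $L\cong\Lan_K F$ whenever the latter exists.
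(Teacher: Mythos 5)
Your proof is correct: it is the standard end-calculus derivation of the pointwise Kan extension formula, and each step (parametrized representability to endow $L$ with a $\CV$-functor structure, pointwise insertion of the weighted-colimit isomorphism of Definition \ref{V_limit_colimits}, Fubini interchange of ends, and the Yoneda reduction $\int_{c}\CV(\bbC(Ka,c),\bbB(Fa,Sc))\cong\bbB(Fa,S(Ka))$) is legitimate under the standing hypotheses of this section, namely that $\CV$ is complete symmetric monoidal closed and $\bbA$, $\bbC$ are small, so that the functor $\CV$-categories and all the ends involved exist. Be aware, however, that the dissertation gives no proof of this proposition at all --- it is quoted from Kelly --- so the only in-paper argument to compare with is the proof of the quantaloid analogue, Proposition \ref{Kan_limit}, and that proof runs along a genuinely different line: in the $\CQ$-enriched setting the 2-cells between $\CQ$-functors are mere order relations, so the left Kan extension is characterized by the elementary condition $\Lan_K F\leq S\iff F\leq S\circ K$, and the formula $(\Lan_K F)c={\colim}_{K_{\nat}(-,c)}F$ is verified by a chain of order equivalences using graphs and cographs of $\CQ$-functors, Definition \ref{limit_colimit_def}, and the adjunction $S_{\nat}\dv S^{\nat}$ in $\CQ$-$\Dist$ --- no ends, no Fubini, no naturality bookkeeping. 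Your argument buys full generality over an arbitrary complete symmetric monoidal closed $\CV$ (which is what the cited statement asserts), at the price of exactly the bookkeeping you flag; the paper's order-theoretic argument is shorter and more elementary, but it is available only because a quantaloid is locally posetal. One small adjustment to your closing sentence: your chain of isomorphisms establishes the defining property $\bbB^{\bbC}(L,S)\cong\bbB^{\bbA}(F,S\circ K)$ outright, so you should conclude that $L$ \emph{is} a left Kan extension of $F$ along $K$ (whence the formula, by uniqueness up to isomorphism), rather than only that $L\cong\Lan_K F$ ``whenever the latter exists''; the proposition asserts existence, and your argument in fact delivers it.
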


Thus, if $\bbA$ is small, $\bbB$ is complete and cocomplete, then every $\CV$-functor $F:\bbA\to\bbB$ has a (pointwise) left Kan extension and a (pointwise) right Kan extension along any $\CV$-functor $K:\bbA\to\bbC$. In particular, we have the following corollary.

\begin{cor} \label{monoidal_Kan} {\rm\cite{Lawvere1973}}
Let $\CV$ be a complete and cocomplete symmetric monoidal closed category. For each $\CV$-functor $K:\bbA\to\bbC$ with $\bbA$ small, the $\CV$-functor ``composing with $K$''
$$K^*:\CV^{\bbC}\to\CV^{\bbA}$$
has a left adjoint $\Lan_K$ and a right adjoint $\Ran_K$.
\end{cor}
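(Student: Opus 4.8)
The plan is to specialize the general Kan-extension machinery developed above to the case $\bbB=\CV$. By hypothesis $\CV$ is complete and cocomplete, and by Example \ref{V_category_example}(7) it is itself a $\CV$-category; as recorded in the previous sections it is then a complete (and, dually, cocomplete) $\CV$-category, so it admits all small-weighted limits and colimits. These are exactly the (co)limits that the pointwise construction of Kan extensions will require.

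First I would fix $c\in\bbC_0$ and observe that the weights $\bbC(K-,c):\bbA^{\op}\to\CV$ and $\bbC(c,K-):\bbA\to\CV$ are $\CV$-functors out of the \emph{small} category $\bbA$ (respectively its dual). Hence the weighted colimit $\colim_{\bbC(K-,c)}F$ exists for every $\CV$-functor $F:\bbA\to\CV$ by cocompleteness of $\CV$, and the weighted limit $\lim_{\bbC(c,K-)}F$ exists by completeness of $\CV$. By the preceding Proposition these pointwise formulas compute, respectively, $(\Lan_K F)c$ and $(\Ran_K F)c$; in particular both the left Kan extension $\Lan_K F$ and the right Kan extension $\Ran_K F$ of $F$ along $K$ exist for every $F:\bbA\to\CV$.

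Next I would invoke the adjunction statements recorded immediately before that Proposition. Once every $\CV$-functor $F:\bbA\to\CV$ has a left Kan extension along $K$, the assignment $F\mapsto\Lan_K F$ is left adjoint to $K^*:\CV^{\bbC}\to\CV^{\bbA}$, so $\Lan_K\dv K^*$; dually, once every $F$ has a right Kan extension, $F\mapsto\Ran_K F$ is right adjoint to $K^*$, so $K^*\dv\Ran_K$. Assembling the two halves gives $\Lan_K\dv K^*\dv\Ran_K$, which is precisely the assertion of the corollary.

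The only genuine point to verify is the existence of the relevant weighted (co)limits, and this is exactly where smallness of $\bbA$ together with completeness and cocompleteness of $\CV$ enters: the weights have small domain, so the defining (co)limits are small-weighted and therefore exist in $\CV$. Everything else---that the pointwise objects assemble into honest $\CV$-functors $\bbC\to\CV$ satisfying the universal property of the Kan extension, and that the availability of all Kan extensions along $K$ upgrades to an adjunction---has already been established in the discussion above and may be quoted directly. I expect no real obstacle beyond carefully matching the weights appearing in the Proposition to the present instance $\bbB=\CV$.
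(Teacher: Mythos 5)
Your proposal is correct and follows essentially the same route as the paper: the paper obtains this corollary directly from the pointwise-Kan-extension proposition (existence of the weighted (co)limits $\colim_{\bbC(K-,c)}F$ and $\lim_{\bbC(c,K-)}F$, guaranteed by smallness of $\bbA$ and (co)completeness of $\CV$ as a $\CV$-category) together with the adjunctions $\Lan_K\dv K^*$ and $K^*\dv\Ran_K$ recorded just before it. Your write-up merely makes explicit the specialization $\bbB=\CV$ that the paper leaves implicit.
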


\section{Categories enriched over a bicategory} \label{Categories_enriched_over_a_bicategory}

A $\Cat$-category is called a (strict) 2-category. However, we are particularly interested in a weak version of 2-categories --- categories ``weakly enriched'' over $\Cat$, in which the associativity and unity laws of enriched categories hold only commutative up to natural and coherent isomorphism. This is what we call a \emph{bicategory} \cite{Benabou1967,Carboni2008,Carboni1987,Lack2010,Leinster1998}.

\begin{defn} \cite{Benabou1967,MacLane1998} \label{bicategory}
A \emph{bicategory} $\CB$ consists of the following data:
\begin{itemize}
\item[\rm (1)] a class $\CB_0$ of objects $X,Y,Z,\dots$, also called \emph{0-cells};
\item[\rm (2)] for every pair $X,Y$ of 0-cells, a (small) category $\CB(X,Y)$, whose objects are called morphisms or \emph{1-cells} and whose morphisms are called 2-morphisms or \emph{2-cells};
\item[\rm (3)] for every triple $X,Y,Z$ of 0-cells, a \emph{horizontal composition} functor,
               $$\circ:\CB(Y,Z)\times\CB(X,Y)\to\CB(X,Z);$$
\item[\rm (4)] for every 0-cell $X$, a \emph{unit} functor
               $$u_X:1\to\CB(X,X)$$
               from the terminal category $1$, which picks out a 1-cell $1_X\in\CB(X,X)$;
\item[\rm (5)] for every quadruple $X,Y,Z,W$ of 0-cells, an \emph{associativity} natural isomorphism
               $$\bfig
               \square<1800,600>[\CB(Z,W)\times\CB(Y,Z)\times\CB(X,Y)`\CB(Y,W)\times\CB(X,Y)`\CB(Z,W)\times\CB(X,Z)`\CB(X,W);\circ_{YZW}\times 1_{\CB(X,Y)}`1_{\CB(Z,W)}\times \circ_{XYZ}`\circ_{XYW}`\circ_{XZW}]
               \morphism(400,150)/=>/<1000,300>[`;\al_{XYZW}]
               \efig$$
               between the two functors from $\CB(Z,W)\times\CB(Y,Z)\times\CB(X,Y)$ to $\CB(X,W)$ arising from the horizontal composition functor $\circ$, called the \emph{associator};
\item[\rm (6)] for every pair $X,Y$ of 0-cells, a \emph{left unit} natural isomorphism
               $$\bfig
               \qtriangle<1200,500>[1\times\CB(X,Y)`\CB(Y,Y)\times\CB(X,Y)`\CB(X,Y);u_Y\times 1_{\CB(X,Y)}`1_{\CB(X,Y)}`\circ_{XYY}]
               \morphism(1000,400)/=>/<-150,-150>[`;\lam_{XY}]
               \efig$$
               and a \emph{right unit} natural isomorphism
               $$\bfig
               \qtriangle<1200,500>[\CB(X,Y)\times 1`\CB(X,Y)\times\CB(X,X)`\CB(X,Y);1_{\CB(X,Y)}\times u_X`1_{\CB(X,Y)}`\circ_{XXY}]
               \morphism(1000,400)/=>/<-150,-150>[`;\rho_{XY}]
               \efig,$$
               called respectively the \emph{left unitor} and the \emph{right unitor};
\item[\rm (7)] the associativity coherence diagram (pentagon identity)
               $$\bfig
               \Atriangle/->`->`/<800,500>[k\circ(h\circ(g\circ f))`k\circ((h\circ g)\circ f)`(k\circ h)\circ(g\circ f);k\circ\al_{XYZW}`\al_{XZWV}`]
               \square(0,-500)/`->`->`->/<1600,500>[k\circ((h\circ g)\circ f)`(k\circ h)\circ(g\circ f)`(k\circ(h\circ g))\circ f`((k\circ h)\circ g)\circ f;`\al_{XYWV}`\al_{XYZV}`\al_{YZWV}\circ f]
               \efig$$
               is commutative for every quintuple of 0-cells $X,Y,Z,W,V\in\CB_0$ and every quadruple of 1-cells $f\in\CB(X,Y),g\in\CB(Y,Z),h\in\CB(Z,W),k\in\CB(W,V)$;
\item[\rm (8)] the unit coherence diagram (triangle identity)
               $$\bfig
               \qtriangle<1200,500>[g\circ(1_Y\circ f)`(g\circ 1_Y)\circ f`g\circ f;\al_{XYYZ}`g\circ\lam_{XY}`\rho_{YZ}\circ f]
               \efig$$
               is commutative for every triple of 0-cells $X,Y,Z\in\CB_0$ and every pair of 1-cells $f\in\CB(X,Y),g\in\CB(Y,Z)$.
\end{itemize}
\end{defn}

\begin{exmp} \label{bicategory_example}
\begin{itemize}
\item[\rm (1)] Each (strict) 2-category is a bicategory in which the associators and the unitors are identities.
\item[\rm (2)] A monoidal category is a bicategory with only one object.
\item[\rm (3)] \cite{Rosenthal1996} A quantaloid is a bicategory (a 2-category, indeed) in which there is at most one 2-cell between each pair of 1-cells.
\end{itemize}
\end{exmp}

As stated in the above example, a bicategory can be viewed as a many-object monoidal category. Thus, the theory of categories enriched over a monoidal category can be promoted further to categories enriched over a bicategory \cite{Betti1982,Betti1983,Gordon1997,Leinster2002,Walters1981}.

\begin{defn}
Let $\CB$ be a bicategory. A \emph{$\CB$-typed set} is a set $A$ together with a mapping $t:A\to\CB_0$ sending each elements $x\in A$ to its \emph{type} $tx\in\CB_0$.
\end{defn}

\begin{defn} \label{B_cat} \cite{Betti1982,Betti1983,Walters1981}
Let $\CB$ be a bicategory. A (small) \emph{$\CB$-category} (or category enriched over $\CB$) $\bbA$ consists of the following data:
\begin{itemize}
\item[\rm (1)] a $\CB$-typed set $\bbA_0$ of objects;
\item[\rm (2)] for every pair $x,y\in\bbA_0$, a 1-cell $\bbA(x,y)\in\CB(tx,ty)$ as the hom-arrow;
\item[\rm (3)] for every triple $x,y,z\in\bbA_0$, a 2-cell in $\CB(tx,tz)$
               $$c_{xyz}:\bbA(y,z)\circ\bbA(x,y)\to\bbA(x,z)$$
               as the composition;
\item[\rm (4)] for every $x\in\bbA_0$, a 2-cell in $\CB(tx,tx)$
               $$u_x:1_{tx}\to\bbA(x,x)$$
               as the unit;
\item[\rm (5)] the diagram expressing the associativity law
               $$\bfig
               \Atriangle/->`->`/<800,500>[\bbA(z,w)\circ(\bbA(y,z)\circ\bbA(x,y))`(\bbA(z,w)\circ\bbA(y,z))\circ\bbA(x,y)`\bbA(z,w)\circ\bbA(x,z);
               \al_{tx,ty,tz,tw}`\bbA(z,w)\circ c_{xyz}`]
               \square(0,-500)/`->`->`->/<1600,500>[(\bbA(z,w)\circ\bbA(y,z))\circ\bbA(x,y)`\bbA(z,w)\circ\bbA(x,z)`\bbA(y,w)\circ\bbA(x,y)`\bbA(x,w);`c_{yzw}\circ \bbA(x,y)`c_{xzw}`c_{xyw}]
               \efig$$
               is commutative for every quadruple $x,y,z,w\in\bbA_0$;
\item[\rm (6)] the diagrams expressing the unity law
               $$\bfig
               \qtriangle<1200,500>[1_{ty}\circ\bbA(x,y)`\bbA(y,y)\circ\bbA(x,y)`\bbA(x,y);u_y\circ\bbA(x,y)`\lam_{tx,ty}`c_{xyy}]
               \qtriangle(0,-800)<1200,500>[\bbA(x,y)\circ 1_{tx}`\bbA(x,y)\circ\bbA(x,x)`\bbA(x,y);\bbA(x,y)\circ u_x`\rho_{tx,ty}`c_{xxy}]
               \efig$$
               are commutative for every pair $x,y\in\bbA_0$.
\end{itemize}
\end{defn}

\begin{defn} \cite{Betti1982} \label{B-functor}
A \emph{$\CB$-functor} $F:\bbA\to\bbB$ between $\CB$-categories consists of the following data:
\begin{itemize}
\item[\rm (1)] a \emph{type-preserving map} $F:\bbA_0\to\bbB_0$ in the sense that $\forall x\in\bbA_0$, $tx=t(Fx)$;
\item[\rm (2)] for every pair $x,x'\in\bbA_0$, a 2-cell in $\CB(tx,tx')$
               $$F_{xx'}:\bbA(x,x')\to\bbB(Fx,Fx');$$
\item[\rm (3)] the diagram expressing the composition law
               $$\bfig
               \square<1400,500>[\bbA(x',x'')\circ\bbA(x,x')`\bbA(x,x'')`\bbB(Fx',Fx'')\circ\bbB(Fx,Fx')`\bbB(Fx,Fx'');(c_{\bbA})_{xx'x''}`F_{x'x''}\circ F_{xx'}`F_{xx''}`(c_{\bbB})_{Fx,Fx',Fx''}]
               \efig$$
               is commutative for every triple $x,x',x''\in\bbA_0$;
\item[\rm (4)] the diagram expressing the unity law
               $$\bfig
               \qtriangle<1000,500>[1_{tx}`\bbA(x,x)`\bbB(Fx,Fx);(u_{\bbA})_x`(u_{\bbB})_{Fx}`F_{xx}]
               \efig$$
               is commutative for every $x\in\bbA_0$.
\end{itemize}
\end{defn}

\begin{defn} \cite{Betti1982} \label{B-distributor}
A \emph{$\CB$-distributor} (also called \emph{$\CB$-bimodule} or \emph{$\CB$-profunctor}) $\phi:\bbA\oto\bbB$ between $\CB$-categories consists of the following data:
\begin{itemize}
\item[\rm (1)] for every pair $x\in\bbA_0$ and $y\in\bbB_0$, a 1-cell in $\phi(x,y)\in\CB(tx,ty)$;
\item[\rm (2)] for every triple $x\in\bbA_0$ and $y,y'\in\bbB_0$, a 2-cell in $\CB(tx,ty)$
               $$\phi_{xy'y}:\bbB(y',y)\circ\phi(x,y')\to\phi(x,y);$$
\item[\rm (3)] for every triple $x,x'\in\bbA_0$ and $y\in\bbB_0$, a 2-cell in $\CB(tx,ty)$
               $$\phi_{xx'y}:\phi(x',y)\circ\bbA(x,x')\to\phi(x,y);$$
\item[\rm (4)] the diagrams expressing the associativity law
               $$\bfig
               \square<1800,500>[\bbB(y'',y)\circ\bbB(y',y'')\circ\phi(x,y')`\bbB(y'',y)\circ\phi(x,y'')`\bbB(y',y)\circ\phi(x,y')`\phi(x,y);\bbB(y'',y)\circ\phi_{xy'y''}`
               (c_{\bbB})_{y'y''y}\circ\phi(x,y')`\phi_{xy''y}`\phi_{xy'y}]
               \square(0,-800)<1800,500>[\phi(x'',y)\circ\bbA(x',x'')\circ\bbA(x,x')`\phi(x',y)\circ\bbA(x,x')`\phi(x'',y)\circ\bbA(x,x'')`\phi(x,y);\phi_{x'x''y}\circ \bbA(x,x')`\phi(x'',y)\circ(c_{\bbA})_{xx'x''}`\phi_{xx'y}`\phi_{xx''y}] \square(0,-1600)<1800,500>[\bbB(y',y)\circ\phi(x',y')\circ\bbA(x,x')`\bbB(y',y)\circ\phi(x,y')`\phi(x',y)\circ\bbA(x,x')`\phi(x,y);\bbB(y',y)\circ\phi_{xx'y'}
               `\phi_{x'y'y}\circ\bbA(x,x')`\phi_{xy'y}`\phi_{xx'y}]
               \efig$$
               are commutative for every sextuple $x,x',x''\in\bbA_0$ and $y,y',y''\in\bbB_0$;
\item[\rm (6)] the diagrams expressing the unity law
               $$\bfig
               \qtriangle<1400,500>[1_{ty}\circ\phi(x,y)`\bbB(y,y)\circ\phi(x,y)`\phi(x,y);(u_{\bbB})_y\circ\phi(x,y)`\lam_{tx,ty}`\phi_{xyy}]
               \qtriangle(0,-800)<1400,500>[\phi(x,y)\circ 1_{tx}`\phi(x,y)\circ\bbA(x,x)`\phi(x,y);\phi(x,y)\circ (u_{\bbA})_x`\rho_{tx,ty}`\phi_{xxy}]
               \efig$$
               are commutative for every pair $x\in\bbA_0$ and $y\in\bbB_0$.
\end{itemize}
\end{defn}

It is easily seen from Definition \ref{B-functor} that $\CB$-functors can be composed in an obvious way. Thus, $\CB$-categories and $\CB$-functors constitute a category $\CB$-$\Cat$.

So, it is natural to ask that whether $\CB$-distributors have reasonable compositions, and give rise to a category $\CB$-$\Dist$ consisting of $\CB$-categories and $\CB$-distributors.

A straightforward idea comes from the composition of distributors between categories enriched over a symmetric monoidal closed category. Explicitly, the composition $\psi\circ\phi:\bbA\oto\bbC$ of $\CB$-distributors $\phi:\bbA\oto\bbB$ and $\psi:\bbB\oto\bbC$ is given by the following coequalizer diagram
$$\coprod\limits_{y,y'\in\bbB_0}\psi(y',z)\circ\bbB(y,y')\circ\phi(x,y)\two\coprod\limits_{y\in\bbB_0}\psi(y,z)\circ\phi(x,y)\to(\psi\circ\phi)(x,z)$$
for all $x\in\bbA_0$ and $z\in\bbC_0$. In particular, the small category $\CB(tx,tz)$ admits all set-indexed coproducts if each pair of $\CB$-distributors $\phi:\bbA\oto\bbB$ and $\psi:\bbB\oto\bbC$ can be composed. This implies that $\CB(tx,tz)$ is a (cocomplete) preordered set.

Therefore, the category $\CB$-$\Dist$ only makes sense when $\CB$ is (cocompletely) locally ordered.

Throughout this dissertation, the composition of distributors is essential for developing our results. Therefore, we consider categories enriched over a quantaloid $\CQ$, which is a (cocompletely) locally ordered bicategory, instead of categories enriched over a general bicategory $\CB$.

\chapter{Quantaloids and $\CQ$-categories} \label{Quantaloids_Q_category}

In this chapter, we go over the theory of quantaloids and $\CQ$-categories, and fix the notations that will be used in the sequel. We refer to \cite{Rosenthal1996} for the theory of quantaloids, and \cite{Heymans2010,Heymans2011,Stubbe2005,Stubbe2006} for the theory of categories enriched over a quantaloid.

\section{Quantaloids}

\begin{defn}
A \emph{quantaloid} $\CQ$ is a category enriched over the symmetric monoidal closed category $\Sup$.
\end{defn}

Explicitly, a quantaloid $\CQ$ is a category with a class of objects $\CQ_0$ such that $\CQ(X,Y)$ is a $\sup$-lattice for all $X,Y\in\CQ_0$, and the composition $\circ$ of morphisms preserves joins in both variables, i.e.,
\begin{equation} \label{quantaloid_distributive}
g\circ\Big(\bv_{i}f_i\Big)=\bv_{i}(g\circ f_i)\quad\text{and}\quad\Big(\bv_{j}g_j\Big)\circ f=\bv_{j}(g_j\circ f)
\end{equation}
for all $f,f_i\in\CQ(X,Y)$ and $g,g_j\in\CQ(Y,Z)$. We denote the top and the bottom element of the $\sup$-lattice $\CQ(X,Y)$ by $\top_{X,Y}$ and $\bot_{X,Y}$ respectively, and the identity arrow on $X\in\CQ_0$ by $1_X$.

\begin{rem} \label{quantale_oid}
Quantaloids are the \emph{horizontal categorification} or \emph{oidification} \cite{Baez1998} of quantales. The suffix \emph{-oid} of the word \emph{quantaloid} is short for \emph{oidified}, which means generalizing a certain type of one-object categories to such type of categories with more than one object. In other words, a quantaloid is a quantale with many objects, while a quantale is a one-object quantaloid. There are other similar examples:
\begin{itemize}
\item[\rm (1)] The oidification of groups are groupoids, i.e., categories with every morphism invertible.
\item[\rm (2)] The oidification of rings are ringoids, i.e., categories enriched over the category of abelian groups.
\item[\rm (3)] The oidification of monoids are just ordinary categories.
\item[\rm (4)] The oidification of monoidal categories are bicategories.
\end{itemize}
\end{rem}

Throughout this dissertation, $\CQ$ always denotes a quantaloid, while $\CQ_0$ and $\CQ_1$ stand for its class of objects and its class of morphisms, respectively.

\begin{defn}
For $f:X\to Y$, $g:Y\to Z$ and $h:X\to Z$ in a quantaloid $\CQ$, define the \emph{left implication} $h\lda f:Y\to Z$ and the \emph{right implication} $g\rda h:X\to Y$ by
$$h\lda f=\bv\{g':Y\to Z\mid g'\circ f\leq h\}$$
and
$$g\rda h=\bv\{f':X\to Y\mid g\circ f'\leq h\}.$$
\end{defn}

In other words, $h\lda f$ is the largest arrow in $\CQ(Y,Z)$ that satisfies $(h\lda f)\circ f\leq h$, and $g\rda h$ is the largest arrow in $\CQ(X,Y)$ that satisfies $g\circ(g\rda h)\leq h$.
$$\bfig
\btriangle[X`Y`Z;f`h`h\lda f]
\btriangle(1500,0)[X`Y`Z;g\rda h`h`g]
\place(150,150)[\twoar(1,1)]
\place(1650,150)[\twoar(1,1)]
\efig$$

It is easy to see that for each $f\in\CQ(X,Y)$ and $g\in\CQ(Y,Z)$, functions
$$-\lda f:\CQ(X,Z)\to\CQ(Y,Z):h\mapsto h\lda f,$$
$$g\rda -:\CQ(X,Z)\to\CQ(X,Y):h\mapsto g\rda h$$
are respective right adjoints of
$$-\circ f:\CQ(Y,Z)\to\CQ(X,Z):g\mapsto g\circ f,$$
$$g\circ -:\CQ(X,Y)\to\CQ(X,Z):f\mapsto g\circ f.$$

\begin{exmp} \label{quantaloid_example}
Some basic examples of quantaloids are listed below.
\begin{itemize}
\item[\rm (1)] A unital quantale is a one-object quantaloid, as stated in Remark \ref{quantale_oid}. In particular, the two-element Boolean algebra ${\bf 2}$ is a quantaloid.
\item[\rm (2)] The category $\Sup$ is itself a quantaloid, in which $\sup$-preserving maps are endowed with the pointwise order.
\item[\rm (3)] \cite{Rosenthal1996} The category $\Rel$ of sets and (binary) relations is a quantaloid, in which
               \begin{itemize}
               \item the local order between relations is given by the inclusion;
               \item for relations $R\subseteq A\times B$ and $S\subseteq B\times C$ between sets, the composition $S\circ R\subseteq A\times C$ is given by
               $$S\circ R=\{(x,z)\mid\exists y\in B,(x,y)\in R\text{ and }(y,z)\in S\};$$
               \item the identity relation on a set $A$ is the diagonal relation
               $$\De_A=\{(x,x)\mid x\in A\};$$
               \item for relations $R\subseteq A\times B$, $S\subseteq B\times C$ and $T\subseteq A\times C$, the left implication $T\lda R\subseteq B\times C$ and the right implication $S\rda T\subseteq A\times B$ are given by
               $$T\lda R=\{(y,z)\mid\forall x\in A,(x,y)\in R{}\Lra{}(x,z)\in T\}$$
               and
               $$S\rda T=\{(x,y)\mid\forall z\in C,(y,z)\in S{}\Lra{}(x,z)\in T\}.$$
               \end{itemize}
\item[\rm (4)] \cite{Pitts1988} In general, each Grothendieck topos $\CE$ \cite{Borceux1994c,Freyd1990,Goldblatt2006,Johnstone2002b} gives rise to a quantaloid $\Rel(\CE)$, in which
               \begin{itemize}
               \item the objects are the same as $\CE$;
               \item a morphism $\phi\in\Rel(\CE)(X,Y)$ is a relation from $X$ to $Y$, i.e., a subobject $\phi:R\rightarrowtail X\times Y$;
               \item the local order is given by the inclusion of subobjects;
               \item for relations $\phi:R\rightarrowtail X\times Y$ and $\psi:S\rightarrowtail Y\times Z$, the composition $\eta:T\rightarrowtail X\times Z$ is given by the pullback and image factorization in $\CE$:
               $$\bfig
               \Atrianglepair|mrmmm|/->` >->`->`<-`->/<500,400>[R`X`X\times Y`Y;`\phi```]
               \Atrianglepair(1000,0)|mrmmm|/->` >->`->`<-`->/<500,400>[S`Y`Y\times Z`Z;`\psi```]
               \Atriangle(500,400)/->`->`/<500,400>[R\times_Y S`R`S;``]
               \morphism(1000,800)/ ->>/<0,400>[R\times_Y S`T;]
               \morphism(1000,1200)/ >->/<0,400>[T`X\times Z;\eta]
               \Atriangle/->`->`/<1000,1600>[X\times Z`X`Z;``]
               \efig$$
               \item the identity relation on an object $X$ is the diagonal subobject
               $$\De_X:X\rightarrowtail X\times X$$
               given by the universal property of the product $X\times X$:
               $$\bfig
               \Atrianglepair|lrrbb|/->`-->`->`<-`->/[X`X`X\times X`X;1_X`\De_X`1_X``]
               \efig$$
               \end{itemize}
\end{itemize}
\end{exmp}

In the following two propositions, we list some useful formulas for calculating the compositions and implications of arrows in a quantaloid. They can be obtained by routine calculation and we omit the proof.

\begin{prop} \label{arrow_calculation}
In a quantaloid $\CQ$, the following properties hold for all $\CQ$-arrows $f,g,h,f_i,g_i,h_i$ whenever the compositions and implications make sense:
\begin{itemize}
\item[\rm (1)] $g\circ f\leq h\iff g\leq h\lda f\iff f\leq g\rda h$.
\item[\rm (2)] $(\displaystyle\bw\limits_i h_i)\lda f=\displaystyle\bw\limits_i(h_i\lda f)$, $g\rda(\displaystyle\bw\limits_i h_i)=\displaystyle\bw\limits_i(g\rda h_i)$.
\item[\rm (3)] $h\lda(\displaystyle\bv\limits_i f_i)=\displaystyle\bw\limits_i(h\lda f_i)$, $(\displaystyle\bv\limits_i g_i)\rda h=\displaystyle\bw\limits_i(g_i\rda h)$.
\item[\rm (4)] $(h\lda g)\circ(g\lda f)\leq h\lda f$, $(f\rda g)\circ(g\rda h)\leq f\rda h$.
\item[\rm (5)] $(h\lda f)\lda g=h\lda(g\circ f)$, $f\rda(g\rda h)=(g\circ f)\rda h$.
\item[\rm (6)] $(g\rda h)\lda f=g\rda(h\lda f)$.
\item[\rm (7)] $(h\lda f)\circ f\leq h$, $g\circ(g\rda h)\leq h$.
\item[\rm (8)] $h\circ(g\lda f)\leq(h\circ g)\lda f$, $(g\rda h)\circ f\leq g\rda(h\circ f)$.
\end{itemize}
\end{prop}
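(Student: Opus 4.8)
The plan is to reduce all eight identities to the single Galois adjunction recorded in item~(1), together with the distributive law \eqref{quantaloid_distributive} and the associativity of composition. Item~(1) is essentially a restatement of the two adjunctions displayed just before the proposition: the adjunction $-\circ f\dv -\lda f$ gives $g\circ f\leq h\iff g\leq h\lda f$, and the adjunction $g\circ -\dv g\rda -$ gives $g\circ f\leq h\iff f\leq g\rda h$; chaining the two biconditionals yields (1). So first I would simply invoke those adjunctions.

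With (1) in hand, items (2) and (7) are immediate. For (2), the map $-\lda f$ (respectively $g\rda -$) is a right adjoint by hypothesis, hence preserves all infima in its variable, which is exactly $(\bw_i h_i)\lda f=\bw_i(h_i\lda f)$ and $g\rda(\bw_i h_i)=\bw_i(g\rda h_i)$. For (7), taking $g=h\lda f$ in (1) and using reflexivity $h\lda f\leq h\lda f$ gives the counit inequality $(h\lda f)\circ f\leq h$, and symmetrically $g\circ(g\rda h)\leq h$. Item (3) follows from (1) and distributivity: for any test arrow $g$ we have $g\leq h\lda(\bv_i f_i)\iff g\circ(\bv_i f_i)\leq h\iff\bv_i(g\circ f_i)\leq h\iff(\forall i)\ g\leq h\lda f_i\iff g\leq\bw_i(h\lda f_i)$, and since $g$ is arbitrary the two sides coincide by antisymmetry; the second identity is dual.

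The remaining items (4), (5), (6), (8) are the ``calculus'' identities, and I would prove each by the same recipe: rewrite the target through (1), reassociate, and discharge the inner term with the counit (7). For the first half of (4) it suffices by (1) to check $((h\lda g)\circ(g\lda f))\circ f\leq h$; reassociating and applying (7) twice gives $(h\lda g)\circ((g\lda f)\circ f)\leq(h\lda g)\circ g\leq h$. Identities (5) and (6) are genuine equalities, so there I would test against an arbitrary arrow $k$ and chain (1) with associativity in both directions --- e.g. for (6), $k\leq(g\rda h)\lda f\iff k\circ f\leq g\rda h\iff g\circ(k\circ f)\leq h\iff(g\circ k)\circ f\leq h\iff g\circ k\leq h\lda f\iff k\leq g\rda(h\lda f)$ --- concluding equality since an arrow is determined by the arrows beneath it. Item (8) again needs only the one-sided argument through (1) and (7).

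I do not expect any genuine obstacle here; the content is formal. The one point demanding care is the bookkeeping of types: each implication reindexes domains and codomains (for instance $h\lda f\in\CQ(\cod f,\cod h)$ when $\dom f=\dom h$), so I would fix the typing conventions at the outset and verify that every composite appearing in (4)--(8) is well formed before manipulating it. Apart from that, every step is a mechanical application of (1), \eqref{quantaloid_distributive}, associativity and (7), which is precisely why the statement can be offered without a detailed proof.
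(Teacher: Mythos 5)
Your proposal is correct, and it is precisely the "routine calculation" the paper alludes to when it omits the proof: every item reduces to the adjunctions $-\circ f\dv-\lda f$ and $g\circ-\dv g\rda-$ (item (1)), meet-preservation by right adjoints, distributivity, associativity, and the counit inequalities of (7). The only implicit step worth making explicit is that composition is monotone in each variable (a consequence of the distributive law), which you tacitly use when chaining (7) inside composites in items (4) and (8).
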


\begin{prop} \label{arrow_top_bottom}
In a quantaloid $\CQ$, the following properties hold for all $\CQ$-arrows $f:X\to Y$, $g:Y\to Z$ and $h:X\to Z$:
\begin{itemize}
\item[\rm (1)] $f\lda 1_X=1_Y\rda f=f$.
\item[\rm (2)] $\bot_{Y,Z}\circ f=g\circ \bot_{X,Y}=\bot_{X,Z}$.
\item[\rm (3)] $h\lda \bot_{X,Y}=\top_{Y,Z}$, $\bot_{Y,Z}\rda h=\top_{X,Y}$.
\item[\rm (4)] $\top_{X,Z}\lda f=\top_{Y,Z}$, $g\rda\top_{X,Z}=\top_{X,Y}$.
\end{itemize}
\end{prop}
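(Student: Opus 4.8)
The plan is to derive all eight identities from three ingredients that are already available: the defining suprema of the two implications, the adjunction
$$g\circ f\leq h\iff g\leq h\lda f\iff f\leq g\rda h$$
recorded in Proposition \ref{arrow_calculation}(1), and the extremal character of $\bot$ and $\top$ together with the join-distributivity of composition (\ref{quantaloid_distributive}). The recurring device is that, in any $\sup$-lattice, an element $a$ is determined by the collection $\{c\mid c\leq a\}$, so to establish an identity $a=b$ it suffices to show $c\leq a\iff c\leq b$ for all $c$; equivalently, one just evaluates the supremum defining the implication directly.

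For part (1), I would read off from the adjunction that $g'\leq f\lda 1_X$ is equivalent to $g'\circ 1_X=g'\leq f$, whence $f\lda 1_X=f$; the identity $1_Y\rda f=f$ is entirely dual, using $1_Y\circ f'=f'$. For part (2), the key observation is that $\bot$ is the empty supremum, so applying the two distributivity laws of (\ref{quantaloid_distributive}) to the empty family gives $\bot_{Y,Z}\circ f=(\bv\emptyset)\circ f=\bv\emptyset=\bot_{X,Z}$, and symmetrically $g\circ\bot_{X,Y}=\bot_{X,Z}$.

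Parts (3) and (4) then follow by feeding (2) and the extremality of $\top$ into the defining suprema. For (3), part (2) shows that $g'\circ\bot_{X,Y}=\bot_{X,Z}\leq h$ holds for \emph{every} $g'\in\CQ(Y,Z)$, so the index set of the supremum defining $h\lda\bot_{X,Y}$ is all of $\CQ(Y,Z)$ and the supremum is $\top_{Y,Z}$; the identity $\bot_{Y,Z}\rda h=\top_{X,Y}$ is obtained the same way from $\bot_{Y,Z}\circ f'=\bot_{X,Z}\leq h$. For (4), since $\top_{X,Z}$ is the top element, the inequality $g'\circ f\leq\top_{X,Z}$ holds for every $g'\in\CQ(Y,Z)$, so $\top_{X,Z}\lda f=\top_{Y,Z}$; dually $g\rda\top_{X,Z}=\top_{X,Y}$.

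I do not expect any genuine obstacle: each step is a one-line consequence of the definitions and Proposition \ref{arrow_calculation}(1). The only points demanding care are bookkeeping ones — tracking domains and codomains so that every implication and composition is well-typed, and in particular checking that the index sets in the defining suprema of (3) and (4) really are the full hom-lattices $\CQ(Y,Z)$ or $\CQ(X,Y)$ — together with the small conceptual step in (2) of recognizing $\bot$ as the empty join so that (\ref{quantaloid_distributive}) applies.
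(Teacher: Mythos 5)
Your proof is correct, and it is exactly the ``routine calculation'' that the paper deliberately omits (the paper states Propositions \ref{arrow_calculation} and \ref{arrow_top_bottom} without proof). Each of your steps --- the adjunction argument for (1), recognizing $\bot$ as the empty join so that (\ref{quantaloid_distributive}) yields (2), and observing that the defining suprema in (3) and (4) range over the whole hom-lattices --- is the standard verification, with types tracked correctly throughout.
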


\begin{defn} \label{quantaloid_adjunction}
An \emph{adjunction} in a quantaloid $\CQ$,  $f\dv g:X\rhu Y$ in symbols,  is a pair of $\CQ$-arrows $f:X\to Y$ and $g:Y\to X$ such that
$$1_X\leq g\circ f\quad\text{and}\quad f\circ g\leq 1_Y.$$
In this case, $f$ is a \emph{left adjoint} of $g$ and $g$ is a \emph{right adjoint} of $f$.
\end{defn}

The following two propositions can be derived by straightforward calculation.

\begin{prop} \label{fgf_f}
If $f\dv g:X\rhu Y$ in a quantaloid $\CQ$, then
$$f\circ g\circ f=f\quad\text{and}\quad g\circ f\circ g=g.$$
\end{prop}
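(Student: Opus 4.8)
The plan is to prove each of the two equations by sandwiching it between a pair of inequalities and then invoking antisymmetry of the local order on the $\sup$-lattice $\CQ(X,Y)$ (respectively $\CQ(Y,X)$). The only structural ingredient needed beyond the two defining inequalities of the adjunction is that horizontal composition is monotone in each variable; this is immediate from the join-preservation property (\ref{quantaloid_distributive}), since a map preserving arbitrary joins is in particular order-preserving.

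For the identity $f\circ g\circ f=f$, first I would derive $f\circ g\circ f\leq f$ by post-composing the counit inequality $f\circ g\leq 1_Y$ with $f$ on the right, using monotonicity together with the identity-arrow law $1_Y\circ f=f$. For the reverse inequality $f\leq f\circ g\circ f$, I would pre-compose the unit inequality $1_X\leq g\circ f$ with $f$ on the left, again using monotonicity and the identity-arrow law $f\circ 1_X=f$. Antisymmetry then yields the desired equality.

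The second identity $g\circ f\circ g=g$ is entirely symmetric: post-composing $1_X\leq g\circ f$ with $g$ on the right gives $g\leq g\circ f\circ g$, while pre-composing $f\circ g\leq 1_Y$ with $g$ on the left gives $g\circ f\circ g\leq g$, and antisymmetry finishes it.

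There is no genuine obstacle here: this is the standard triangle (zig-zag) computation for an adjoint pair transported to the order-enriched setting, where the triangle \emph{identities} of a general $2$-category collapse to \emph{inequalities} because each hom-object is merely a partially ordered set rather than a category of $2$-cells. The one point deserving a word of care is to record explicitly that composition is monotone, not merely join-preserving, before applying it to the two defining inequalities; once this is noted, every step is a single application of monotonicity followed by a unit law.
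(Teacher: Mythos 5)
Your proof is correct and is exactly the ``straightforward calculation'' that the paper omits: sandwich each composite between the two inequalities obtained by composing the unit and counit inequalities with $f$ (resp.\ $g$), using monotonicity of $\circ$ and the unit laws, then conclude by antisymmetry of the order on the hom-sup-lattice. Your explicit remark that join-preservation of composition yields its monotonicity is a sound and appropriate justification of the only structural fact used.
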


\begin{prop} {\rm\cite{Heymans2010}} \label{adjoint_arrow_calculation}
If $f\dv g:X\rhu Y$ in a quantaloid $\CQ$, then the following identities hold for all $\CQ$-arrows $h,h'$ whenever the compositions and implications make sense:
\begin{itemize}
\item[\rm (1)] $h\circ f=h\lda g$, $g\circ h=f\rda h$.
\item[\rm (2)] $(f\circ h)\rda h'=h\rda(g\circ h')$, $(h'\circ f)\lda h=h'\lda(h\circ g)$.
\item[\rm (3)] $(h\rda h')\circ f=h\rda(h'\circ f)$, $g\circ(h'\lda h)=(g\circ h')\lda h$.
\item[\rm (4)] $g\circ(h\rda h')=(h\circ f)\rda h'$, $(h'\lda h)\circ f=h'\lda(g\circ h)$.
\end{itemize}
\end{prop}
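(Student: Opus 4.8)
The plan is to establish part (1) first, as it is the crux of the proposition, and then to derive parts (2)--(4) purely formally by feeding (1) into the associativity-of-implication identities of Proposition \ref{arrow_calculation}(5)--(6). For the identity $h\circ f=h\lda g$ I would argue by two inequalities. For ``$\leq$'', I note that $h\circ f$ is an admissible competitor in the supremum defining $h\lda g$, since
$$(h\circ f)\circ g=h\circ(f\circ g)\leq h\circ 1_Y=h$$
by the counit inequality $f\circ g\leq 1_Y$; hence $h\circ f\leq h\lda g$. For ``$\geq$'', I would insert the unit and invoke Proposition \ref{arrow_calculation}(7):
$$h\lda g=(h\lda g)\circ 1_X\leq(h\lda g)\circ(g\circ f)=\big((h\lda g)\circ g\big)\circ f\leq h\circ f,$$
the first inequality coming from $1_X\leq g\circ f$ and the last from $(h\lda g)\circ g\leq h$. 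The companion identity $g\circ h=f\rda h$ is proved dually, with $f\circ g\leq 1_Y$ and $f\circ(f\rda h)\leq h$ taking over the two roles.

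Once (1) is available I would read it as two rewriting rules, namely $(-)\circ f=(-)\lda g$ and $g\circ(-)=f\rda(-)$, and obtain the remaining six identities by alternately applying these rules and the bracket-shifting formulas of Proposition \ref{arrow_calculation}(5)--(6). For instance, for the first equation of (3),
$$(h\rda h')\circ f=(h\rda h')\lda g=h\rda(h'\lda g)=h\rda(h'\circ f),$$
where the outer equalities apply (1) and the middle one is Proposition \ref{arrow_calculation}(6); and for the first equation of (4),
$$g\circ(h\rda h')=f\rda(h\rda h')=(h\circ f)\rda h',$$
the second step being Proposition \ref{arrow_calculation}(5). Each of the other four identities collapses in exactly the same two-or-three-step manner, so I would simply record the chains, each using (1) once or twice together with a single instance of Proposition \ref{arrow_calculation}(5) or (6).

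The work is almost entirely bookkeeping, so the only real obstacle is part (1): one must get the order of composition and the direction of each implication exactly right, and must be careful that it is the unit inequality $1_X\leq g\circ f$ that feeds the ``$\geq$'' half while the counit inequality $f\circ g\leq 1_Y$ feeds the ``$\leq$'' half (and symmetrically for the dual). After (1) is nailed down, steps (2)--(4) carry no genuine difficulty beyond matching each target to the correct instance of Proposition \ref{arrow_calculation}(5)--(6); I would double-check the typing of the arrows in each chain to make sure every composite and implication is defined.
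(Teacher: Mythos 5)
Your proof is correct: part (1) follows by exactly the two-inequality argument you give (the counit inequality $f\circ g\leq 1_Y$ making $h\circ f$ a competitor in the supremum defining $h\lda g$, and the unit inequality $1_X\leq g\circ f$ together with Proposition \ref{arrow_calculation}(7) giving the reverse bound, and dually for $g\circ h=f\rda h$), after which each identity in (2)--(4) collapses in two or three rewriting steps using (1) and Proposition \ref{arrow_calculation}(5)--(6), as your sample chains show. The paper offers no proof of this proposition (it cites the literature and declares it a straightforward calculation), and your argument is precisely the routine verification intended, so there is no gap and no divergence to report.
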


The identities in Proposition \ref{adjoint_arrow_calculation} will be frequently applied to adjunctions of the form $F_{\natural}\dv F^{\natural}:\bbA\rhu\bbB$, the graph and cograph of a $\CQ$-functor $F:\bbA\to\bbB$ (will be defined in Section \ref{Q_distributors}).

The following corollary follows immediately from Proposition \ref{adjoint_arrow_calculation}(1).

\begin{cor} \label{adjoint_arrow_representation}
If $f\dv g:X\rhu Y$ in a quantaloid $\CQ$, then
$$g=f\rda 1_Y\quad\text{and}\quad f=1_Y\lda g.$$
In particular, the left adjoint and right adjoint of a $\CQ$-arrow are unique when they exist.
\end{cor}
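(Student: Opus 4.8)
The plan is to obtain both identities as immediate specializations of Proposition~\ref{adjoint_arrow_calculation}(1), taking the auxiliary arrow there to be an identity, and then to read off uniqueness from the fact that each formula expresses one adjoint purely in terms of the other.

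First I would recall the two identities supplied by Proposition~\ref{adjoint_arrow_calculation}(1): for $f\dv g:X\rhu Y$ and any composable $h$, one has $h\circ f=h\lda g$ and $g\circ h=f\rda h$. For the representation of $g$, I would instantiate the second identity at $h=1_Y:Y\to Y$. Since $g:Y\to X$, the unit law gives $g\circ 1_Y=g$, so the identity $g\circ 1_Y=f\rda 1_Y$ becomes exactly $g=f\rda 1_Y$. Dually, for the representation of $f$, I would instantiate the first identity at $h=1_Y$; here $1_Y\circ f=f$ by the unit law, so $1_Y\circ f=1_Y\lda g$ yields $f=1_Y\lda g$. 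A small bookkeeping point is to check that domains and codomains line up, i.e.\ that $f\rda 1_Y$ and $1_Y\lda g$ are well typed as arrows $Y\to X$ and $X\to Y$ respectively, which follows directly from the typing conventions for $\lda$ and $\rda$.

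For the uniqueness clause I would observe that the two displayed formulas express $g$ solely in terms of $f$ (and the identity $1_Y$) and, symmetrically, express $f$ solely in terms of $g$. Hence if $f$ admits right adjoints $g$ and $g'$, both are forced to equal $f\rda 1_Y$, whence $g=g'$; and if $g$ admits left adjoints $f$ and $f'$, both equal $1_Y\lda g$, whence $f=f'$.

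Since the statement is already flagged as immediate, there is essentially no genuine obstacle; the only thing to guard against is circularity, namely ensuring that Proposition~\ref{adjoint_arrow_calculation}(1) does not itself invoke the present corollary. As a self-contained fallback that avoids that proposition, I would instead verify directly the two inequalities defining the relevant joins: from $f\circ g\leq 1_Y$ the arrow $g$ lies in $\{k\mid f\circ k\leq 1_Y\}$, giving $g\leq f\rda 1_Y$, while for any $k$ with $f\circ k\leq 1_Y$ the inequality $1_X\leq g\circ f$ and monotonicity of composition yield $k\leq g\circ f\circ k=g\circ(f\circ k)\leq g\circ 1_Y=g$, giving $f\rda 1_Y\leq g$; the argument for $f=1_Y\lda g$ is entirely symmetric.
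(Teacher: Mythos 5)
Your proof is correct and takes exactly the paper's route: the corollary is obtained by instantiating Proposition~\ref{adjoint_arrow_calculation}(1) at $h=1_Y$, with uniqueness read off from the fact that each formula determines one adjoint in terms of the other. Your circularity concern is unfounded, since Proposition~\ref{adjoint_arrow_calculation} is stated and justified (by direct calculation, following the cited literature) before the corollary, so your direct fallback argument, while valid, is unnecessary.
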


\begin{defn}
Let $\FD=\{d_X:X\to X\mid X\in\CQ_0\}$ be a family of $\CQ$-arrows in a quantaloid $\CQ$.
\begin{itemize}
\item[\rm (1)] $\FD$ is called a \emph{cyclic family} if
               \begin{equation} \label{cyclic_def}
               d_X\lda f=f\rda d_Y
               \end{equation}
               for all $f:X\to Y$ in $\CQ$.
\item[\rm (2)] $\FD$ is called a \emph{dualizing family} if
               \begin{equation} \label{dualizing_def}
               (d_X\lda f)\rda d_X=f=d_Y\lda(f\rda d_Y)
               \end{equation}
               for all $f:X\to Y$ in $\CQ$.
\end{itemize}
\end{defn}

\begin{defn}
A \emph{Girard quantaloid} is a quantaloid with a cyclic dualizing family $\FD$ of $\CQ$-arrows.
\end{defn}

\begin{exmp} \label{Girard_quantaloid_example}
Some examples of Girard quantaloids are listed below.
\begin{itemize}
\item[\rm (1)] A Girard quantale \cite{Rosenthal1990} is a one-object Girard quantaloid.
\item[\rm (2)] \cite{Rosenthal1992} The quantaloid $\Rel$ of sets and relations (see Example \ref{quantaloid_example}(3)) is a Girard quantaloid. A cyclic dualizing family is given by
    $$\FD=\{\neg\De_A\subseteq A\times A\mid A\text{ is a set}\},$$
    where
    $$\neg\De_A=\{(x,x')\in A\times A\mid x\neq x'\}.$$
\item[\rm (3)] In Chapter \ref{Applications}, we will construct a quantaloid $\CQ$ from a divisible unital quantale $Q$ (see Proposition \ref{divisible_quantale_quantaloid}). If $(Q,\&)$ is a Boolean algebra (with $\&$ being $\wedge$) \cite{Birkhoff1948}, then the induced quantaloid $\CQ$ is a Girard quantaloid.
\end{itemize}
\end{exmp}

\begin{prop} \label{dualizing_bottom}
If $1_X$ is the top element of $\CQ(X,X)$ for all $X\in\CQ_0$ and $\FD=\{d_X:X\to X\mid X\in\CQ_0\}$ is a dualizing family, then $d_X=\bot_{X,X}$ for all $X\in\CQ_0$.
\end{prop}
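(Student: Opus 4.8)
The plan is to evaluate the dualizing identity at a single, carefully chosen arrow rather than to quantify over all $f$ at once. Since the conclusion concerns the bottom element, the natural test arrow is $f=\bot_{X,X}\colon X\to X$ (so that the codomain coincides with $X$), and I would feed this into the first half of the dualizing condition, namely $(d_X\lda f)\rda d_X=f$.

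First I would simplify the inner implication. By Proposition \ref{arrow_top_bottom}(3), applied with $h=d_X$, we have $d_X\lda\bot_{X,X}=\top_{X,X}$; this is where the hypothesis that $1_X$ is the top element of $\CQ(X,X)$ enters, rewriting the outcome as $d_X\lda\bot_{X,X}=\top_{X,X}=1_X$. Substituting this back, the left-hand side of the dualizing identity becomes $1_X\rda d_X$, which by Proposition \ref{arrow_top_bottom}(1) (the identity $1_Y\rda f=f$, here with $f=d_X$) collapses to $1_X\rda d_X=d_X$. Comparing this with the right-hand side $f=\bot_{X,X}$ of the dualizing identity yields $d_X=\bot_{X,X}$, which is exactly the claim, for every $X\in\CQ_0$.

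There is essentially no hard computation to overcome here: the only genuine step is recognizing that the bottom arrow $f=\bot_{X,X}$ is the right thing to plug in, after which the implication calculus of Proposition \ref{arrow_top_bottom} does all the work, and the hypothesis $1_X=\top_{X,X}$ is precisely what converts $\top_{X,X}\rda d_X$ into $d_X$. For symmetry one could instead run the argument through the second half $d_Y\lda(f\rda d_Y)=f$ of the dualizing condition at the same $f=\bot_{X,X}$, using the companion formulas $\bot_{X,X}\rda d_X=\top_{X,X}$ and $d_X\lda 1_X=d_X$ from Proposition \ref{arrow_top_bottom}; both routes deliver the same identity $d_X=\bot_{X,X}$.
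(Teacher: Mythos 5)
Your proof is correct, but it takes a genuinely different route from the paper's. The paper argues universally: for an \emph{arbitrary} $f:X\to X$ it expands $f\lda d_X$ via the dualizing identity, applies Proposition \ref{arrow_calculation}(6) to get $(d_X\lda f)\rda(d_X\lda d_X)$, uses the hypothesis $1_X=\top_{X,X}$ to identify $d_X\lda d_X$ with $1_X$, and then Proposition \ref{arrow_top_bottom}(4) to conclude $f\lda d_X=1_X$; this yields $d_X\leq f$ for every $f$, whence $d_X=\bot_{X,X}$. You instead specialize: evaluate the dualizing identity at the single test arrow $f=\bot_{X,X}$, and the left-hand side collapses as $(d_X\lda\bot_{X,X})\rda d_X=\top_{X,X}\rda d_X=1_X\rda d_X=d_X$, using Proposition \ref{arrow_top_bottom}(3), the hypothesis, and Proposition \ref{arrow_top_bottom}(1). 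Your argument is shorter and avoids the implication-shuffling of Proposition \ref{arrow_calculation}(6) entirely; what the paper's universal computation buys is the (equivalent, but conceptually suggestive) intermediate fact that $d_X$ lies below \emph{every} endo-arrow on $X$, exhibited by showing $f\lda d_X$ is the whole of $\CQ(X,X)$'s top. Your symmetric variant through the other half of the dualizing condition is also correct, with the roles of Proposition \ref{arrow_top_bottom}(3) and (1) played by their mirror identities.
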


\begin{proof}
For any $f:X\to X$,
\begin{align*}
f\lda d_X&=((d_X\lda f)\rda d_X)\lda d_X&\text{(Equation (\ref{dualizing_def}))}\\
&=(d_X\lda f)\rda(d_X\lda d_X)&\text{(Proposition \ref{arrow_calculation}(6))}\\
&=(d_X\lda f)\rda 1_X&(1_X=\top_{X,X})\\
&=1_X,&\text{(Proposition \ref{arrow_top_bottom}(4))}
\end{align*}
hence $d_X\leq f$ and consequently $d_X=\bot_{X,X}$.
\end{proof}

\begin{prop} \label{Girard_quantaloid_properties}
Suppose that $\CQ$ has a dualizing family $\FD=\{d_X:X\to X\mid X\in\CQ_0\}$. For $f,f_i:X\to Y$, $g:Y\to Z$, $h:X\to Z$ in $\CQ_1$:
\begin{itemize}
\item[\rm (1)] $\displaystyle\bv\limits_i(d_X\lda f_i)=d_X\lda(\displaystyle\bw\limits_i f_i)$, $\displaystyle\bv\limits_i(f_i\rda d_Y)=(\displaystyle\bw\limits_i f_i)\rda d_Y$.
\item[\rm (2)] $g\circ f=d_Z\lda(f\rda(g\rda d_Z))=((d_X\lda f)\lda g)\rda d_X$.
\item[\rm (3)] $h\lda f=(d_X\lda h)\rda(d_X\lda f)=d_Z\lda(f\circ(h\rda d_Z))$.
\item[\rm (4)] $g\rda h=(g\rda d_Z)\lda(h\rda d_Z)=((d_X\lda h)\circ g)\rda d_X$.
\item[\rm (5)] $(d_Y\lda g)\rda f=g\lda (f\rda d_Y)$.
\end{itemize}
\end{prop}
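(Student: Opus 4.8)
The plan is to read the dualizing law (\ref{dualizing_def}) as providing, for every arrow, two \emph{representations} as the largest solution of an inequality against the family $\FD$, and then to transpose such inequalities using the implication-adjunctions of Proposition \ref{arrow_calculation}(1) together with the composition-shift rules of Proposition \ref{arrow_calculation}(5). Explicitly, (\ref{dualizing_def}) says that the order-reversing maps $d_X\lda(-)$ and $(-)\rda d_Y$ from $\CQ(X,Y)$ to $\CQ(Y,X)$ satisfy $(d_X\lda f)\rda d_X=f$ and $d_Y\lda(f\rda d_Y)=f$; equivalently, $f$ is at once the largest arrow $m$ with $(d_X\lda f)\circ m\leq d_X$ and the largest arrow $m$ with $m\circ(f\rda d_Y)\leq d_Y$. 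Only the dualizing hypothesis will be used; cyclicity is not needed.

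For (1), I would observe that $d_X\lda(-)$ and $(-)\rda d_Y$ are order-reversing bijections $\CQ(X,Y)\to\CQ(Y,X)$, their inverses being $(-)\rda d_X$ and $d_Y\lda(-)$ by (\ref{dualizing_def}). An order-reversing bijection of posets is an order anti-isomorphism, hence sends infima to suprema; applying this to $\bw_i f_i$ yields both identities of (1) simultaneously. For (2), I would apply the two representations to $g\circ f$ itself: starting from $g\circ f=d_Z\lda((g\circ f)\rda d_Z)$ and rewriting $(g\circ f)\rda d_Z=f\rda(g\rda d_Z)$ by Proposition \ref{arrow_calculation}(5) gives the first form $g\circ f=d_Z\lda(f\rda(g\rda d_Z))$; starting from $g\circ f=(d_X\lda(g\circ f))\rda d_X$ and rewriting $d_X\lda(g\circ f)=(d_X\lda f)\lda g$ by the same proposition gives the second form $g\circ f=((d_X\lda f)\lda g)\rda d_X$.

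Identities (3), (4) and (5) all fit one template: each equates two arrows, every one of which is by definition the largest solution of an inequality, so it suffices to prove the two defining inequalities equivalent. For the first equality of (3) I would compare the largest $u$ with $u\circ f\leq h$ to the largest $u$ with $(d_X\lda h)\circ u\leq d_X\lda f$; substituting $h=(d_X\lda h)\rda d_X$ and transposing both sides by Proposition \ref{arrow_calculation}(1) reduces each to $(d_X\lda h)\circ u\circ f\leq d_X$. The second equalities of (3) and (4) are immediate from the shift rules of Proposition \ref{arrow_calculation}(5) and one instance of (\ref{dualizing_def}), just as in (2), while the first equality of (4) is the mirror image of (3), obtained by substituting $h=d_Z\lda(h\rda d_Z)$. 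The main obstacle is (5): here I must rewrite the left inequality $(d_Y\lda g)\circ u\leq f$ via $f=d_Y\lda(f\rda d_Y)$ and the right inequality $u\circ(f\rda d_Y)\leq g$ via $g=(d_Y\lda g)\rda d_Y$, after which Proposition \ref{arrow_calculation}(1) collapses both to the single triple inequality $(d_Y\lda g)\circ u\circ(f\rda d_Y)\leq d_Y$. The delicate point is that \emph{both} halves of the dualizing law are needed at once, and it is precisely the associativity of composition that makes the two transposed inequalities literally identical.
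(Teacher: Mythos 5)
Your proof is correct and complete; in fact it covers more ground than the paper, which writes that ``all the identities can be obtained by routine calculation'' and only works out item (5). For (5), both arguments run on the same two ingredients --- the adjunctions of Proposition \ref{arrow_calculation}(1) and the dualizing law (\ref{dualizing_def}) --- but package them differently. The paper establishes two one-sided inequalities: from the composition law of Proposition \ref{arrow_calculation}(4) it gets $((d_Y\lda g)\rda f)\circ(f\rda d_Y)\leq(d_Y\lda g)\rda d_Y=g$ and $(d_Y\lda g)\circ(g\lda(f\rda d_Y))\leq d_Y\lda(f\rda d_Y)=f$, and transposes each. You instead describe both sides as largest solutions of inequalities and show, by inserting the dual representations $f=d_Y\lda(f\rda d_Y)$ and $g=(d_Y\lda g)\rda d_Y$ and transposing, that the two defining conditions are each equivalent to $(d_Y\lda g)\circ u\circ(f\rda d_Y)\leq d_Y$. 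What your route buys is uniformity: the same transposition template settles (3), (4) and (5) at once, and your use of Proposition \ref{arrow_calculation}(5) for (2) and for the second equalities of (3) and (4) is exactly right (e.g.\ $d_Z\lda(f\circ(h\rda d_Z))=(d_Z\lda(h\rda d_Z))\lda f=h\lda f$). One wording caveat in (1): an order-reversing bijection of posets does not automatically have an order-reversing inverse, so ``antitone bijection, hence anti-isomorphism'' is not a valid inference in general; your argument is nevertheless sound because you identify the inverses explicitly as $(-)\rda d_X$ and $d_Y\lda(-)$, which are themselves antitone by Proposition \ref{arrow_calculation}(3), and that is what licenses the infima-to-suprema step.
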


\begin{proof}
All the identities can be obtained by routine calculation. We just prove (5) for example. Note that $(d_Y\lda g)\rda f\leq g\lda (f\rda d_Y)$ follows from
$$((d_Y\lda g)\rda f)\circ(f\rda d_Y)\leq(d_Y\lda g)\rda d_Y=g,$$
and the reverse inequality follows from
$$(d_Y\lda g)\circ(g\lda (f\rda d_Y))\leq d_Y\lda(f\rda d_Y)=f.$$
\end{proof}

\section{$\CQ$-categories and $\CQ$-functors}

A quantaloid $\CQ$ is a locally ordered bicategory, which means that there is at most one 2-cell between each pair of $\CQ$-arrows, i.e., the order $\leq$. From now on, $\CQ$ always denotes a small quantaloid, i.e., $\CQ_0$ and $\CQ_1$ are sets.

\begin{defn} \label{Q_type}
A \emph{$\CQ$-typed set} is a set $A$ equipped with a map $t:A\to\CQ_0$ sending each elements $x\in A$ to its \emph{type} $tx\in\CQ_0$.
\end{defn}

\begin{defn} \label{Q_category}
A (small) \emph{$\CQ$-category} (or category enriched over $\CQ$) $\bbA$ consists of a $\CQ$-typed set $\bbA_0$ and hom-arrow $\bbA(x,y)\in\CQ(tx,ty)$ for all $x,y\in\bbA_0$, such that
\begin{itemize}
\item[\rm (1)] $1_{tx}\leq\bbA(x,x)$ for all $x\in\bbA_0$;
\item[\rm (2)] $\bbA(y,z)\circ\bbA(x,y)\leq\bbA(x,z)$ for all $x,y,z\in\bbA_0$.
\end{itemize}
\end{defn}

\begin{defn} \label{Q_functor}
A \emph{$\CQ$-functor} $F:\bbA\to\bbB$ between $\CQ$-categories is a map $F:\bbA_0\to\bbB_0$ such that
\begin{itemize}
\item[\rm (1)] $F$ is \emph{type-preserving} in the sense that $\forall x\in\bbA_0$, $tx=t(Fx)$;
\item[\rm (2)] $\forall x,x'\in\bbA_0$, $\bbA(x,x')\leq\bbB(Fx,Fx')$.
\end{itemize}
\end{defn}

A $\CQ$-functor $F:\bbA\to\bbB$ is \emph{fully faithful} if $\bbA(x,x')=\bbB(Fx,Fx')$  for all $x,x'\in\bbA_0$. Bijective fully faithful $\CQ$-functors are exactly the isomorphisms in the category $\CQ$-$\Cat$ of $\CQ$-categories and $\CQ$-functors.

\begin{exmp} \label{Q_category_example}
We list here some basic examples of $\CQ$-categories.
\begin{itemize}
\item[\rm (1)] For the two-element Boolean algebra ${\bf 2}$, ${\bf 2}$-categories are preordered sets, and ${\bf 2}$-functors are order-preserving maps.
\item[\rm (2)] Each $\CQ$-typed set $A$ gives rise to a \emph{discrete} $\CQ$-category $\bbA$ given by $\bbA_0=A$ and
               $$\bbA(x,y)=\begin{cases}
               1_{tx}, & x=y;\\
               \bot_{tx,ty}, & x\neq y.
               \end{cases}$$
\item[\rm (3)] For each $X\in\CQ_0$, $*_X$ is a $\CQ$-category with only one object $*$ of type $t*=X$ and hom-arrow $1_X$.
\item[\rm (4)] Let $\bbA$ be a $\CQ$-category, a $\CQ$-category $\bbB$ is a (full) $\CQ$-subcategory of $\bbA$ if $\bbB_0$ is a subset of $\bbA_0$ and $\bbB(x,y)=\bbA(x,y)$ for all $x,y\in\bbB_0$. In particular, for each $\CQ$-functor $F:\bbA\to\bbB$, the elements $\{y\in\bbB_0:\exists x\in\bbA_0,Fx=y\}$ is a subset of $\bbB_0$ and constitute a $\CQ$-subcategory $F(\bbA)$ of $\bbB$.
\end{itemize}
\end{exmp}

Given a $\CQ$-category $\bbA$, there is a natural underlying preorder $\leq$ on $\bbA_0$. For $x,y\in\bbA_0$,
$$x\leq y\iff tx=ty=X\text{ and }1_X\leq\bbA(x,y).$$
For each $X\in\CQ_0$, the objects in $\bbA$ with type $X$ constitute a subset of the underlying preordered set $\bbA_0$ and we denote it by $\bbA_X$. It is clear that the underlying preordered set $\bbA_0$ is the disjoint union of all $\bbA_X$, i.e., $x\leq y$ in $\bbA_0$ necessarily implies that $x$ and $y$ belong to the same $\bbA_X$.


\begin{prop} \label{underlying_order}
Let $\bbA$ be a $\CQ$-category and $x,y\in\bbA_0$, then the following conditions are equivalent.
\begin{itemize}
\item[\rm (1)] $x\leq y$.
\item[\rm (2)] $\bbA(y,z)\leq\bbA(x,z)$ for all $z\in\bbA_0$.
\item[\rm (3)] $\bbA(z,x)\leq\bbA(z,y)$ for all $z\in\bbA_0$.
\end{itemize}
\end{prop}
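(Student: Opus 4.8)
The plan is to prove the three conditions equivalent by the cyclic chain $(1)\Rightarrow(2)\Rightarrow(3)\Rightarrow(1)$, since each single implication reduces to a one-line computation that uses only the two defining axioms of a $\CQ$-category (the unit law $1_{tx}\leq\bbA(x,x)$ and the composition law $\bbA(y,z)\circ\bbA(x,y)\leq\bbA(x,z)$) together with the fact that composition in a quantaloid is monotone in each variable, which is immediate from the join-preservation property (Equation (\ref{quantaloid_distributive})).

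Before beginning I would flag the one genuine subtlety: all three conditions tacitly require $tx=ty$. For $(1)$ this is built into the definition of the underlying order; for $(2)$ and $(3)$ the displayed inequalities only make sense when the two hom-arrows being compared lie in the same hom-$\sup$-lattice, which again forces $tx=ty=X$. Hence throughout the argument we may assume $tx=ty=X$ and read $1_X$ as the common identity arrow on $X$.

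For $(1)\Rightarrow(2)$ I would start from $1_X\leq\bbA(x,y)$ and left-multiply by $\bbA(y,z)$: monotonicity gives $\bbA(y,z)=\bbA(y,z)\circ 1_X\leq\bbA(y,z)\circ\bbA(x,y)$, and the composition axiom $\bbA(y,z)\circ\bbA(x,y)\leq\bbA(x,z)$ finishes it. For $(2)\Rightarrow(3)$ I would first specialize $(2)$ to $z=y$, obtaining $\bbA(y,y)\leq\bbA(x,y)$, whence $1_X\leq\bbA(y,y)\leq\bbA(x,y)$ by the unit axiom; then, right-multiplying this by $\bbA(z,x)$ and invoking the composition axiom for the triple $(z,x,y)$ yields $\bbA(z,x)=1_X\circ\bbA(z,x)\leq\bbA(x,y)\circ\bbA(z,x)\leq\bbA(z,y)$. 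For $(3)\Rightarrow(1)$ I would simply set $z=x$ to get $\bbA(x,x)\leq\bbA(x,y)$ and combine with $1_X\leq\bbA(x,x)$ to recover $1_X\leq\bbA(x,y)$, i.e.\ $x\leq y$.

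There is no real obstacle here; the only points demanding care are bookkeeping ones: keeping the composition in the correct order (recall that in $g\circ f$ the arrow $f$ acts first), specializing the quantifier over $z$ to the right object in $(2)$ and $(3)$, and invoking the unit axiom at precisely the right moment. The mild asymmetry between $(2)$ (post-composition) and $(3)$ (pre-composition) simply reflects the left/right symmetry of the hom-arrows, so an equally clean alternative would be to prove $(1)\Leftrightarrow(2)$ and $(1)\Leftrightarrow(3)$ separately, the second pair being obtained from the first by the symmetric argument.
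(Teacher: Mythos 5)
Your proof is correct and uses exactly the same ingredients as the paper's: composing with the hom-arrow, the unit and composition axioms, and specializing the quantified object $z$ (the paper proves $(1)\Leftrightarrow(2)$ directly, noting $tx=ty$ is forced just as you do, and leaves $(1)\Leftrightarrow(3)$ to the symmetric argument). Your cyclic arrangement $(1)\Rightarrow(2)\Rightarrow(3)\Rightarrow(1)$ is only a cosmetic reorganization of the same computations --- indeed your $(2)\Rightarrow(3)$ step is literally the paper's $(2)\Rightarrow(1)$ followed by its $(1)\Rightarrow(3)$ --- so there is nothing substantive to compare.
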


\begin{proof}
We prove the equivalence of (1) and (2) for example.

(1)${}\Lra{}$(2): If $tx=ty=X$ and $\bbA(x,y)\geq 1_X$, then for all $z\in\bbA_0$,
$$\bbA(y,z)\leq\bbA(y,z)\circ\bbA(x,y)\leq\bbA(x,z).$$

(2)${}\Lra{}$(1): In particular, $\bbA(y,y)\leq\bbA(x,y)$ implies $tx=ty=X$ and $\bbA(x,y)\geq 1_X$.
\end{proof}

Two objects $x,y$ in $\bbA$ are \emph{isomorphic} if $x\leq y$ and $y\leq x$, written $x\cong y$. $\bbA$ is \emph{skeletal} if no two different objects in $\bbA$ are isomorphic. The following proposition follows immediately from Proposition \ref{underlying_order}.

\begin{prop} \label{isomorphic_condition}
Let $\bbA$ be a $\CQ$-category and $x,y\in\bbA_0$, then the following conditions are equivalent.
\begin{itemize}
\item[\rm (1)] $x\cong y$.
\item[\rm (2)] $\bbA(x,z)=\bbA(y,z)$ for all $z\in\bbA_0$.
\item[\rm (3)] $\bbA(z,x)=\bbA(z,y)$ for all $z\in\bbA_0$.
\end{itemize}
\end{prop}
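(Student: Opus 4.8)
The plan is to unfold the definition of isomorphism $x\cong y$ as the conjunction of $x\leq y$ and $y\leq x$, and then reduce everything to Proposition~\ref{underlying_order} by applying its equivalences separately to each of the two inequalities, once in the stated orientation and once with the roles of $x$ and $y$ interchanged. Since Proposition~\ref{underlying_order} already characterizes a single inequality $x\leq y$ in terms of the two families of inequalities between hom-arrows, the two-sided condition $x\cong y$ should collapse directly into the corresponding two-sided equalities.

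Concretely, for the equivalence (1)$\iff$(2), I would argue as follows. By the equivalence of conditions (1) and (2) in Proposition~\ref{underlying_order}, the inequality $x\leq y$ holds if and only if $\bbA(y,z)\leq\bbA(x,z)$ for all $z\in\bbA_0$; applying the same equivalence with the roles of $x$ and $y$ swapped, $y\leq x$ holds if and only if $\bbA(x,z)\leq\bbA(y,z)$ for all $z\in\bbA_0$. Conjoining these two statements, $x\cong y$ holds precisely when both $\bbA(y,z)\leq\bbA(x,z)$ and $\bbA(x,z)\leq\bbA(y,z)$ for all $z$, which is exactly the equality $\bbA(x,z)=\bbA(y,z)$ for all $z\in\bbA_0$. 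This establishes (1)$\iff$(2).

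The equivalence (1)$\iff$(3) would be handled identically, now invoking the equivalence of conditions (1) and (3) in Proposition~\ref{underlying_order}: $x\leq y$ is equivalent to $\bbA(z,x)\leq\bbA(z,y)$ for all $z$, and $y\leq x$ to the reverse inequality, so their conjunction $x\cong y$ is equivalent to $\bbA(z,x)=\bbA(z,y)$ for all $z\in\bbA_0$. I would note that no use of the composition or unit laws of $\bbA$ is needed beyond what is already packaged into Proposition~\ref{underlying_order}.

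I do not expect any genuine obstacle here: the statement is an immediate corollary, as the paper indicates. The only point requiring mild care is bookkeeping of the orientation of the inequalities when the roles of $x$ and $y$ are exchanged, so that the two one-sided inequalities are correctly recognized as combining into a single equality rather than being conflated. Because each direction of each equivalence is simply a reading of Proposition~\ref{underlying_order}, the argument is symmetric in $x$ and $y$ and closes with no residual cases.
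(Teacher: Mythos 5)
Your proposal is correct and is exactly the argument the paper intends: the paper states that Proposition \ref{isomorphic_condition} ``follows immediately from Proposition \ref{underlying_order}'', and your proof simply makes that explicit by splitting $x\cong y$ into $x\leq y$ and $y\leq x$, applying Proposition \ref{underlying_order} to each inequality, and conjoining the resulting one-sided inequalities into equalities. The orientation bookkeeping (the inequality $\bbA(y,z)\leq\bbA(x,z)$ corresponding to $x\leq y$) is handled correctly, so nothing is missing.
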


\begin{cor}
For any $\CQ$-category $\bbA$, the following conditions are equivalent.
\begin{itemize}
\item[\rm (1)] $\bbA$ is skeletal.
\item[\rm (2)] For any $x,y\in\bbA_0$, $x=y$ if and only if $\bbA(x,z)=\bbA(y,z)$ for all $z\in\bbA_0$.
\item[\rm (3)] For any $x,y\in\bbA_0$, $x=y$ if and only if $\bbA(z,x)=\bbA(z,y)$ for all $z\in\bbA_0$.
\end{itemize}
\end{cor}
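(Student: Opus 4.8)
The plan is to derive this corollary directly from Proposition \ref{isomorphic_condition} together with the definition of skeletality, since the three listed conditions are merely reformulations of the statement that isomorphic objects coincide. I would not attempt any fresh computation with hom-arrows, as all the needed arithmetic has already been packaged into Proposition \ref{isomorphic_condition}.

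First I would observe that in each of conditions (2) and (3), the ``only if'' direction is automatic: if $x=y$, then trivially $\bbA(x,z)=\bbA(y,z)$ and $\bbA(z,x)=\bbA(z,y)$ for all $z\in\bbA_0$. Hence the genuine content of (2) is its ``if'' direction, namely that $\bbA(x,z)=\bbA(y,z)$ for all $z$ forces $x=y$, and similarly the content of (3) is that $\bbA(z,x)=\bbA(z,y)$ for all $z$ forces $x=y$. Isolating these nontrivial halves is the one bookkeeping point that deserves to be stated explicitly.

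Next I would invoke Proposition \ref{isomorphic_condition}, which asserts that $x\cong y$ is equivalent to $\bbA(x,z)=\bbA(y,z)$ for all $z$, and also equivalent to $\bbA(z,x)=\bbA(z,y)$ for all $z$. Substituting these equivalences, the ``if'' direction of (2) reads exactly $x\cong y\Lra x=y$ for all $x,y\in\bbA_0$, and likewise for the ``if'' direction of (3). By the definition of \emph{skeletal} --- no two distinct objects of $\bbA$ are isomorphic --- this implication holds for all $x,y$ precisely when $\bbA$ is skeletal, which is condition (1). This yields the equivalences of (1) with (2) and of (1) with (3), hence the full three-way equivalence.

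I expect no real obstacle here; the argument is a direct translation through Proposition \ref{isomorphic_condition}. The only subtlety worth flagging is to make explicit that the ``only if'' directions in (2) and (3) are trivially true, so that all substantive content is carried by the ``if'' directions, which are then matched against the definition of skeletality.
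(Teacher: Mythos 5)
Your proof is correct and is exactly the argument the paper intends: the corollary is stated there without proof precisely because, as you show, it is an immediate translation of Proposition \ref{isomorphic_condition} through the definition of skeletality. Your explicit isolation of the trivial ``only if'' directions is a fine bit of bookkeeping but introduces nothing beyond the paper's implicit reasoning.
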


The underlying preorders on $\CQ$-categories induce an preorder between $\CQ$-functors:
\begin{align*}
F\leq G:\bbA\to\bbB&\iff\forall x\in\bbA_0,Fx\leq Gx\ \text{in}\ \bbB_0\\
&\iff\forall x\in\bbA_0,1_{tx}\leq\bbB(Fx,Gx).
\end{align*}
Thus $\CQ\text{-}\Cat(\bbA,\bbB)$ becomes a preordered set, in which $F\leq G$ is an ``enriched natural transformation'' from $F$ to $G$. This makes $\CQ$-$\Cat$ a (locally ordered) 2-category. We denote $F\cong G:\bbA\to\bbB$ if $F\leq G$ and $G\leq F$. Furthermore, $\CQ\text{-}\Cat(\bbA,\bbB)$ becomes a poset (partially ordered set) if $\bbB$ is a skeletal $\CQ$-category.

\begin{prop} \label{fully_faithful_essentially_injective}
Let $F:\bbA\to\bbB$ be a fully faithful $\CQ$-functor between $\CQ$-categories. Then $F:\bbA_0\to\bbB_0$ is essentially injective in the sense that $x\cong x'$ whenever $Fx=Fx'$.
\end{prop}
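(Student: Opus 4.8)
The plan is to unwind the definitions of fully faithfulness, type-preservation, and the underlying order, thereby reducing the desired isomorphism $x\cong x'$ to two applications of the unit axiom of a $\CQ$-category.

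First I would record the consequence of type-preservation: since $F$ is a $\CQ$-functor, $tx=t(Fx)$ and $tx'=t(Fx')$, and because $Fx=Fx'$ these types coincide, yielding a common type $tx=tx'=:X$. This step matters because the underlying order on $\bbA_0$ only compares objects of equal type; establishing $tx=tx'$ is exactly what makes the subsequent inequalities meaningful as statements of the form $x\leq x'$ rather than bare inequalities between $\CQ$-arrows living in different hom-lattices.

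Next I would exploit full faithfulness, namely $\bbA(x,x')=\bbB(Fx,Fx')$, and substitute $Fx=Fx'$ to rewrite the right-hand side as $\bbB(Fx,Fx)$. The unit axiom of the $\CQ$-category $\bbB$ gives $1_X\leq\bbB(Fx,Fx)$, and chaining these identities produces $1_X\leq\bbA(x,x')$; by the definition of the underlying order this is precisely $x\leq x'$. Running the same argument with the roles of $x$ and $x'$ interchanged gives $\bbA(x',x)=\bbB(Fx',Fx)=\bbB(Fx,Fx)\geq 1_X$, hence $x'\leq x$. Combining the two inequalities delivers $x\cong x'$.

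I expect no serious obstacle here: the statement follows by direct definition-chasing, and the only point demanding care is the bookkeeping of types, ensuring that $1_X\leq\bbA(x,x')$ is read as an instance of the underlying preorder. One could alternatively route the conclusion through Proposition \ref{underlying_order}, but the direct appeal to the unit axiom is the most economical path.
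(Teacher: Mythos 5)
Your proposal is correct and matches the paper's own proof: both use full faithfulness plus $Fx=Fx'$ to get $\bbA(x,x')=\bbB(Fx,Fx)\geq 1_{tx}$ and symmetrically $\bbA(x',x)\geq 1_{tx}$, hence $x\cong x'$. The extra bookkeeping about types that you spell out is implicit in the paper's argument, so there is no substantive difference.
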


\begin{proof}
Suppose $x,x'\in\bbA_0$ and $Fx=Fx'$, then $\bbA(x,x')=\bbB(Fx,Fx)\geq 1_{tx}$. Similarly $\bbA(x',x)\geq 1_{tx}$, hence $x\cong x'$.
\end{proof}

\begin{defn}
A pair of $\CQ$-functors $F:\bbA\to\bbB$ and $G:\bbB\to\bbA$ forms an \emph{adjunction}, written $F\dv G:\bbA\rhu\bbB$, if
$$1_{\bbA}\leq G\circ F\quad\text{and}\quad F\circ G\leq 1_{\bbB},$$
where $1_\bbA$ and $1_\bbB$ are respectively the identity $\CQ$-functors on $\bbA$ and $\bbB$. In this case, $F$ is called a \emph{left adjoint} of $G$ and $G$ a \emph{right adjoint} of $F$.
\end{defn}

Adjoint $\CQ$-functors can be viewed as adjoint morphisms in the 2-category $\CQ$-$\Cat$. The following proposition is easily derived from the definition.

\begin{prop} \label{FGF_F}
If $F\dv G:\bbA\rhu\bbB$ in $\CQ$-$\Cat$, then
$$F\circ G\circ F\cong F\quad\text{and}\quad G\circ F\circ G\cong G.$$
\end{prop}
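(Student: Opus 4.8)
The plan is to mirror the proof of Proposition \ref{fgf_f}, which establishes the analogous identities $f\circ g\circ f=f$ and $g\circ f\circ g=g$ for an adjunction $f\dv g$ in a quantaloid. The only structural difference is that the 2-cells of the locally ordered 2-category $\CQ$-$\Cat$ are the underlying preorder $\leq$ between $\CQ$-functors, which need not be antisymmetric; consequently the identities can hold only up to the isomorphism relation $\cong$ rather than on the nose. The whole argument reduces to whiskering the two defining inequalities $1_{\bbA}\leq G\circ F$ and $F\circ G\leq 1_{\bbB}$ with $F$ (respectively $G$) and exploiting the monotonicity of horizontal composition.

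First I would record that monotonicity in the two instances actually needed: whenever $H_1\leq H_2:\bbA\to\bbA$ one has $F\circ H_1\leq F\circ H_2$, and whenever $K_1\leq K_2:\bbB\to\bbB$ one has $K_1\circ F\leq K_2\circ F$. The first holds because a $\CQ$-functor preserves the underlying order (from $\bbA(a,a')\leq\bbB(Fa,Fa')$ one gets $a\leq a'\Rightarrow Fa\leq Fa'$), applied objectwise to $H_1x\leq H_2x$; the second is immediate by evaluating at each object $Fx$. This is precisely the content, noted in the excerpt, that $\CQ$-$\Cat$ is a locally ordered 2-category.

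Next, applying $F\circ(-)$ to the first defining inequality gives $F=F\circ 1_{\bbA}\leq F\circ(G\circ F)=F\circ G\circ F$, while applying $(-)\circ F$ to the second gives $F\circ G\circ F=(F\circ G)\circ F\leq 1_{\bbB}\circ F=F$. Combining the two inequalities yields $F\circ G\circ F\cong F$. The identity $G\circ F\circ G\cong G$ follows by the symmetric argument: applying $(-)\circ G$ to $1_{\bbA}\leq G\circ F$ gives $G\leq G\circ F\circ G$, and applying $G\circ(-)$ to $F\circ G\leq 1_{\bbB}$ gives $G\circ F\circ G\leq G$.

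There is no genuine obstacle here; once monotonicity of whiskering is in hand the computation is routine. The only point deserving care is the degradation from equality to $\cong$: unlike $\CQ$-arrows, whose local order is antisymmetric, two $\CQ$-functors with $H\leq H'$ and $H'\leq H$ need only be isomorphic, not equal. Hence the conclusions are stated with $\cong$, and the proof keeps the two inequalities separate rather than collapsing each pair into an equality.
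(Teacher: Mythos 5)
Your proof is correct and is exactly the straightforward derivation the paper alludes to when it states that Proposition \ref{FGF_F} ``is easily derived from the definition'': whisker the unit and counit inequalities $1_{\bbA}\leq G\circ F$ and $F\circ G\leq 1_{\bbB}$ using that $\CQ$-functors preserve the underlying preorder and that composition is monotone objectwise. Your remark on why the conclusion is only $\cong$ rather than $=$ (non-antisymmetry of the order on $\CQ$-functors, in contrast with the local order of a quantaloid in Proposition \ref{fgf_f}) is also the right observation.
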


We present below another useful characterization of adjoint $\CQ$-functors. It also implies that left adjoints and right adjoints of a $\CQ$-functor, when they exist, are essentially unique, i.e., unique up to isomorphism.

\begin{prop} \label{adjoint_condition}
Let $F:\bbA\to\bbB$ and $G:\bbB\to\bbA$ be type-preserving functions between $\CQ$-categories (not necessarily $\CQ$-functors). Then $F\dv G:\bbA\rhu\bbB$ if and only if $$\bbB(Fx,y)=\bbA(x,Gy)$$
for all $x\in\bbA$ and $y\in\bbB$.
\end{prop}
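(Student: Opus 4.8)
The plan is to prove both implications by translating everything into the underlying preorders of $\bbA$ and $\bbB$ and exploiting the monotonicity of hom-arrows recorded in Proposition \ref{underlying_order}. The guiding observation is that the two defining inequalities of an adjunction unwind, via the underlying order, to the pointwise statements $x\leq GFx$ (the unit) and $FGy\leq y$ (the counit) for all $x\in\bbA_0$ and $y\in\bbB_0$, each of which is just $1_{tx}\leq\bbA(x,GFx)$, respectively $1_{ty}\leq\bbB(FGy,y)$.

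For the forward direction, I would assume $F\dv G$; in particular $F$ and $G$ are $\CQ$-functors and the unit and counit inequalities hold. To obtain $\bbB(Fx,y)\leq\bbA(x,Gy)$, apply the $\CQ$-functor $G$ to get $\bbB(Fx,y)\leq\bbA(GFx,Gy)$, and then use $x\leq GFx$ with Proposition \ref{underlying_order}(2) (taking $z=Gy$) to replace $GFx$ by $x$ in the first argument. The reverse inequality $\bbA(x,Gy)\leq\bbB(Fx,y)$ is symmetric: apply $F$ to get $\bbA(x,Gy)\leq\bbB(Fx,FGy)$ and use $FGy\leq y$ with Proposition \ref{underlying_order}(3) (taking $z=Fx$) to replace $FGy$ by $y$. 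Combining the two inequalities gives the desired equality.

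For the converse, the key point is that the hom-equality \emph{alone} forces $F$ and $G$ to be $\CQ$-functors and supplies the unit and counit. First I would recover the unit: setting $y=Fx$ gives $\bbA(x,GFx)=\bbB(Fx,Fx)\geq 1_{tx}$ by the reflexivity axiom of a $\CQ$-category (Definition \ref{Q_category}(1)), hence $x\leq GFx$; dually, setting $x=Gy$ gives $\bbB(FGy,y)=\bbA(Gy,Gy)\geq 1_{ty}$, hence $FGy\leq y$. Functoriality of $F$ then follows by setting $y=Fx'$, so that $\bbB(Fx,Fx')=\bbA(x,GFx')$, and applying $x'\leq GFx'$ with Proposition \ref{underlying_order}(3) to conclude $\bbA(x,x')\leq\bbA(x,GFx')=\bbB(Fx,Fx')$; functoriality of $G$ is the mirror argument using $FGy\leq y$ and Proposition \ref{underlying_order}(2). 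Since the unit and counit inequalities are precisely $1_{\bbA}\leq G\circ F$ and $F\circ G\leq 1_{\bbB}$, this yields $F\dv G$.

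The computations are all one-line order manipulations, so there is no serious technical obstacle; the only point requiring care is the converse, where one must resist assuming $F$ and $G$ are $\CQ$-functors at the outset and instead extract functoriality from the hom-equality by the substitution trick above, crucially invoking the reflexivity axiom $1_{tx}\leq\bbA(x,x)$ to produce the unit and counit.
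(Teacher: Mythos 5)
Your proof is correct and follows essentially the same route as the paper's: both directions use the same substitutions (e.g.\ $y=Fx'$ and $x=Gy$ in the hom-equality), the same appeal to reflexivity $1_{tx}\leq\bbA(x,x)$ to produce the unit and counit, and the same care in extracting $\CQ$-functoriality of $F$ and $G$ from the hom-equality in the converse. The only cosmetic difference is that you route the order manipulations through Proposition \ref{underlying_order}, whereas the paper inlines the identical composition-and-transitivity computations, so the two arguments coincide in substance.
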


\begin{proof}
Suppose $F\dv G:\bbA\rhu\bbB$, then $1_{\bbA}\leq G\circ F$ and $F\circ G\leq 1_{\bbB}$. For all $x\in\bbA_0$ and $y\in\bbB_0$,
$$\bbB(Fx,y)\leq\bbA(GFx,Gy)\leq\bbA(GFx,Gy)\circ\bbA(x,GFx)\leq\bbA(x,Gy).$$
Similarly $\bbA(x,Gy)\leq\bbB(Fx,y)$. Hence $\bbB(Fx,y)=\bbA(x,Gy)$.

Conversely, first we show that $F$ and $G$ are $\CQ$-functors. For all $x,x'\in\bbA_0$,
$$\bbA(x,x')\leq\bbA(x',GFx')\circ\bbA(x,x')\leq\bbA(x,GFx')=\bbB(Fx,Fx').$$
Thus $F$ is a $\CQ$-functor. Similarly $G$ is a $\CQ$-functor.

Second, for all $x\in\bbA_0$ and $y\in\bbB_0$,
$$\bbA(x,GFx)=\bbB(Fx,Fx)\geq 1_{tx}~~\text{and}~~\bbB(FGy,y)=\bbA(Gy,Gy)\geq 1_{ty}.$$
Thus $1_{\bbA}\leq G\circ F$ and $F\circ G\leq 1_{\bbB}$.
\end{proof}

\section{$\CQ$-distributors} \label{Q_distributors}

$\CQ$-distributors between $\CQ$-categories generalize relations between sets.

\begin{defn}
A \emph{$\CQ$-distributor} (or \emph{$\CQ$-profunctor}, \emph{$\CQ$-bimodule}) $\phi:\bbA\oto\bbB$ between $\CQ$-categories is a map $\bbA_0\times\bbB_0\to\CQ_1$ that assigns to each pair  $(x,y)\in\bbA_0\times\bbB_0$ a $\CQ$-arrow $\phi(x,y)\in\CQ(tx,ty)$, such that
\begin{itemize}
\item[\rm (1)] $\forall x\in\bbA_0$, $\forall y,y'\in\bbB_0$, $\bbB(y',y)\circ\phi(x,y')\leq\phi(x,y)$;
\item[\rm (2)] $\forall x,x'\in\bbA_0$, $\forall y\in\bbB_0$, $\phi(x',y)\circ\bbA(x,x')\leq\phi(x,y)$.
\end{itemize}
\end{defn}

The following proposition is an analogue of Example \ref{quantaloid_example}(3)(4) for $\CQ$-categories.

\begin{prop} \label{Dist_quantaloid} {\rm\cite{Stubbe2005}}
$\CQ$-categories and $\CQ$-distributors constitute a quantaloid $\CQ$-$\Dist$ in which
\begin{itemize}
\item[\rm (1)] the local order is pointwise, i.e., for $\CQ$-distributors $\phi,\psi:\bbA\oto\bbB$,
    $$\phi\leq\psi\iff\forall x\in\bbA_0,\forall y\in\bbB_0,\phi(x,y)\leq\psi(x,y);$$
\item[\rm (2)] the composition $\psi\circ\phi:\bbA\oto\bbC$ of $\CQ$-distributors $\phi:\bbA\oto\bbB$ and $\psi:\bbB\oto\bbC$ is given by
    $$\forall x\in\bbA_0,\forall z\in\bbC_0,(\psi\circ\phi)(x,z)=\bv_{y\in\bbB_0}\psi(y,z)\circ\phi(x,y);$$
\item[\rm (3)] the identity $\CQ$-distributor on a $\CQ$-category $\bbA$ is the hom-arrow of $\bbA$ and will be denoted by $\bbA:\bbA\oto\bbA$;
\item[\rm (4)] for $\CQ$-distributors $\phi:\bbA\oto\bbB$, $\psi:\bbB\oto\bbC$ and $\eta:\bbA\oto\bbC$, the left implication $\eta\lda\phi:\bbB\oto\bbC$ and the right implication $\psi\rda\eta:\bbA\oto\bbB$ are given by
    $$\forall y\in\bbB_0,\forall z\in\bbC_0,(\eta\lda\phi)(y,z)=\bw_{x\in\bbA_0}\eta(x,z)\lda\phi(x,y)$$
    and
    $$\forall x\in\bbA_0,\forall y\in\bbB_0,(\psi\rda\eta)(x,y)=\bw_{z\in\bbC_0}\psi(y,z)\rda\eta(x,z).$$
\end{itemize}
\end{prop}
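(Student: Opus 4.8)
The plan is to verify directly that the data in (1)--(4) satisfy the axioms of a quantaloid, i.e.\ that $\CQ$-$\Dist$ is a category whose hom-sets are $\sup$-lattices and whose composition preserves joins in each variable; the formulas for the implications then follow, since in any such structure the implications are the right adjoints of composition.

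First I would check that the composite $\psi\circ\phi$ defined in (2) is again a $\CQ$-distributor $\bbA\oto\bbC$. Both distributor conditions reduce to the join-preservation of composition in $\CQ$ (Equation (\ref{quantaloid_distributive})) together with the defining inequalities of $\phi$ and $\psi$; for instance $\bbC(z',z)\circ(\psi\circ\phi)(x,z')=\bv_y(\bbC(z',z)\circ\psi(y,z'))\circ\phi(x,y)\leq\bv_y\psi(y,z)\circ\phi(x,y)=(\psi\circ\phi)(x,z)$, and the other condition is symmetric. Associativity of composition follows from associativity of $\circ$ in $\CQ$ and interchange of the two joins, again via (\ref{quantaloid_distributive}). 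The identity laws $\bbB\circ\phi=\phi=\phi\circ\bbA$ reduce to the $\CQ$-category axioms of Definition \ref{Q_category}: the inequality $\phi\leq\bbB\circ\phi$ comes from $1_{ty}\leq\bbB(y,y)$, while $\bbB\circ\phi\leq\phi$ is precisely condition (1) in the definition of a distributor (and dually for $\phi\circ\bbA$).

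Next I would verify that each hom-set $\CQ$-$\Dist(\bbA,\bbB)$ is a $\sup$-lattice under the pointwise order in (1). It embeds in the product $\prod_{(x,y)}\CQ(tx,ty)$ of $\sup$-lattices, so I only need that the pointwise join of a family of distributors is again a distributor; this is immediate from join-preservation of $\circ$ in $\CQ$, e.g.\ $\bbB(y',y)\circ\bv_i\phi_i(x,y')=\bv_i\bbB(y',y)\circ\phi_i(x,y')\leq\bv_i\phi_i(x,y)$. Join-preservation of composition in $\CQ$-$\Dist$ then follows directly from (2) and (\ref{quantaloid_distributive}), since joins of distributors are computed pointwise. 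These facts together establish that $\CQ$-$\Dist$ is a quantaloid.

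Finally I would identify the implications. Once $\CQ$-$\Dist$ is known to be a quantaloid, its left implication $\eta\lda\phi$ is characterized by the adjunction $\psi\circ\phi\leq\eta\iff\psi\leq\eta\lda\phi$, so it suffices to verify this biconditional for the formula in (4). Unwinding the pointwise order, $\psi\circ\phi\leq\eta$ says $\psi(y,z)\circ\phi(x,y)\leq\eta(x,z)$ for all $x,y,z$; applying Proposition \ref{arrow_calculation}(1) in the base quantaloid $\CQ$, this is equivalent to $\psi(y,z)\leq\eta(x,z)\lda\phi(x,y)$ for all $x,y,z$, i.e.\ $\psi(y,z)\leq\bw_x\eta(x,z)\lda\phi(x,y)=(\eta\lda\phi)(y,z)$, which is exactly $\psi\leq\eta\lda\phi$; the right implication is handled symmetrically. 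Along the way one must confirm that $\eta\lda\phi$ and $\psi\rda\eta$ really are distributors of the stated types, which again follows by a routine computation from Proposition \ref{arrow_calculation}. I expect the main obstacle to be organizational rather than conceptual: keeping the variance of the two distributor conditions straight when checking that composites and implications land back in $\CQ$-$\Dist$, since each verification relies on the correct one-sided instance of the join-distributivity (\ref{quantaloid_distributive}).
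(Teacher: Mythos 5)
Your proposal is correct, and there is nothing in the paper to compare it against: the paper states Proposition \ref{Dist_quantaloid} as a cited result from \cite{Stubbe2005} and gives no proof of its own. Your direct verification is the standard argument, and it is complete in all the places that matter: closure of composites under the two distributor inequalities (using the correct one-sided instances of (\ref{quantaloid_distributive})), the identity laws reducing to Definition \ref{Q_category} and distributor condition (1), pointwise joins of distributors being distributors (so hom-sets are $\sup$-lattices and composition preserves joins), and the identification of the implications via the adjunction $\psi\circ\phi\leq\eta\iff\psi\leq\eta\lda\phi$ using Proposition \ref{arrow_calculation}(1). You are also right to flag that the formulas in (4) must themselves be shown to be distributors before the adjunction characterization pins them down as the quantaloid implications; that check does go through routinely, using Proposition \ref{arrow_calculation}(7)(8) together with the distributor inequalities of $\phi$ and $\eta$.
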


As a special case of Definition \ref{quantaloid_adjunction}, a pair of $\CQ$-distributors $\phi:\bbA\oto\bbB$ and $\psi:\bbB\oto\bbA$ forms an adjunction $\phi\dv\psi:\bbA\rhu\bbB$ in the quantaloid $\CQ$-$\Dist$ if
$$\bbA\leq\psi\circ\phi\quad\text{and}\quad\phi\circ\psi\leq\bbB.$$

Every $\CQ$-functor $F:\bbA\to\bbB$ induces an adjunction $F_{\nat}\dv F^{\nat}:\bbA\rhu\bbB$ in $\CQ$-$\Dist$ with
$$F_{\nat}(x,y)=\bbB(Fx,y)\quad\text{and}\quad F^{\nat}(y,x)=\bbB(y,Fx)$$
for all $x\in\bbA_0$ and $y\in\bbB_0$. The $\CQ$-distributors $F_{\nat}:\bbA\oto\bbB$ and $F^{\nat}:\bbB\oto\bbA$ are called the \emph{graph} and \emph{cograph} of $F$, respectively.

It follows immediately from Proposition \ref{underlying_order} that for each pair of $\CQ$-functors $F,G:\bbA\to\bbB$,
\begin{equation} \label{functor_graph_order}
F\leq G\iff G_{\nat}\leq F_{\nat}\iff F^{\nat}\leq G^{\nat}.
\end{equation}

\begin{prop}  {\rm\cite{Stubbe2005}} \label{graph_cograph_functor}
Let $F:\bbA\to\bbB$ and $G:\bbB\to\bbC$ be $\CQ$-functors between $\CQ$-categories.
\begin{itemize}
\item[\rm (1)] $(1_{\bbA})_{\nat}=(1_{\bbA})^{\nat}=\bbA$.
\item[\rm (2)] $G_{\nat}\circ F_{\nat}=(G\circ F)_{\nat}$, $F^{\nat}\circ G^{\nat}=(G\circ F)^{\nat}$.
\end{itemize}
\end{prop}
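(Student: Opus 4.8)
The plan is to prove both statements by directly unwinding the definitions of the graph and cograph together with the composition formula for $\CQ$-distributors from Proposition~\ref{Dist_quantaloid}(2), and then establishing each identity in part (2) by a two-sided inequality. Part (1) should require nothing beyond the definitions: since $1_{\bbA}x=x$, the graph evaluates to $(1_{\bbA})_{\nat}(x,y)=\bbA(1_{\bbA}x,y)=\bbA(x,y)$, which is precisely the identity $\CQ$-distributor $\bbA$, and similarly $(1_{\bbA})^{\nat}(x,y)=\bbA(x,1_{\bbA}y)=\bbA(x,y)$.

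For the graph identity $G_{\nat}\circ F_{\nat}=(G\circ F)_{\nat}$, I would expand the left-hand side using the composition formula to get $(G_{\nat}\circ F_{\nat})(x,z)=\bv_{y\in\bbB_0}\bbC(Gy,z)\circ\bbB(Fx,y)$, and compare it with $(G\circ F)_{\nat}(x,z)=\bbC(GFx,z)$. For the inequality ``$\leq$'', I would argue termwise: the $\CQ$-functoriality of $G$ (Definition~\ref{Q_functor}(2)) gives $\bbB(Fx,y)\leq\bbC(GFx,Gy)$, whence the transitivity law of $\bbC$ (Definition~\ref{Q_category}(2)) yields $\bbC(Gy,z)\circ\bbB(Fx,y)\leq\bbC(Gy,z)\circ\bbC(GFx,Gy)\leq\bbC(GFx,z)$, and taking the join over $y$ preserves this bound. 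For the reverse inequality ``$\geq$'', I would single out the index $y=Fx$ and use reflexivity $1_{tx}\leq\bbB(Fx,Fx)$ (Definition~\ref{Q_category}(1)), so that the corresponding summand satisfies $\bbC(GFx,z)\circ\bbB(Fx,Fx)\geq\bbC(GFx,z)\circ 1_{tx}=\bbC(GFx,z)$; since the full join dominates any single summand, the claim follows. The cograph identity $F^{\nat}\circ G^{\nat}=(G\circ F)^{\nat}$ is entirely dual, expanding $(F^{\nat}\circ G^{\nat})(z,x)=\bv_{y\in\bbB_0}\bbB(y,Fx)\circ\bbC(z,Gy)$ and using $\bbB(y,Fx)\leq\bbC(Gy,GFx)$ for ``$\leq$'' and again reflexivity at $y=Fx$ for ``$\geq$''.

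This is essentially a routine verification, so there is no genuine obstacle; the only point demanding care is bookkeeping the variable slots and the types of the $\CQ$-arrows being composed (checking that each composite lives in the correct hom-lattice $\CQ(tx,tz)$, using that $F$ and $G$ are type-preserving so $t(GFx)=t(Fx)=tx$). The conceptual content is simply that a join over $\bbB_0$ that is bounded above termwise by a target hom-arrow and attains that value at the witness $y=Fx$ must equal it.
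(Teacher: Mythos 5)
Your proof is correct: part (1) is immediate from the definitions, and in part (2) the two-sided inequality argument — functoriality of $G$ plus transitivity of $\bbC$ for ``$\leq$'', and reflexivity at the witness $y=Fx$ together with the unit law $h\circ 1_{tx}=h$ for ``$\geq$'' — is exactly the standard verification, with the type bookkeeping handled properly. Note that the paper itself states this proposition with only a citation to Stubbe (2005) and omits the proof as routine, so your write-up simply supplies the expected argument; there is no divergence to report.
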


This proposition gives rise to a functor
$$(-)_{\nat}:\CQ\text{-}\Cat\to\CQ\text{-}\Dist$$
and a contravariant functor
$$(-)^{\nat}:(\CQ\text{-}\Cat)^{\op}\to\CQ\text{-}\Dist.$$

\begin{prop} \label{adjoint_graph} {\rm\cite{Stubbe2005}}
Let $F:\bbA\to\bbB$ and $G:\bbB\to\bbA$ be a pair of $\CQ$-functors. The following conditions are equivalent:
\begin{itemize}
\item[\rm (1)] $F\dv G:\bbA\rhu\bbB$.
\item[\rm (2)] $F_{\nat}=G^{\nat}$.
\item[\rm (3)] $G_{\nat}\dv F_{\nat}:\bbB\rhu\bbA$ in $\CQ$-$\Dist$.
\item[\rm (4)] $G^{\nat}\dv F^{\nat}:\bbA\rhu\bbB$ in $\CQ$-$\Dist$.
\end{itemize}
\end{prop}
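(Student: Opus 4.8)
The plan is to route every equivalence through condition (1) rather than chase a cycle, because the graph and cograph constructions convert the two defining inequalities of an adjunction in $\CQ$-$\Cat$ directly into the two defining inequalities of an adjunction in $\CQ$-$\Dist$. First I would dispose of (1)$\iff$(2): unwinding the equality of $\CQ$-distributors $F_{\nat}=G^{\nat}$ pointwise gives exactly $\bbB(Fx,y)=\bbA(x,Gy)$ for all $x\in\bbA_0$ and $y\in\bbB_0$, since $F_{\nat}(x,y)=\bbB(Fx,y)$ and $G^{\nat}(x,y)=\bbA(x,Gy)$. By Proposition \ref{adjoint_condition} this pointwise equality is precisely $F\dv G:\bbA\rhu\bbB$, so (1)$\iff$(2) is immediate.

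For (1)$\iff$(3) I would expand the adjunction $G_{\nat}\dv F_{\nat}:\bbB\rhu\bbA$ in the quantaloid $\CQ$-$\Dist$ into its two defining inequalities $\bbB\leq F_{\nat}\circ G_{\nat}$ and $G_{\nat}\circ F_{\nat}\leq\bbA$. Using the functoriality of the graph (Proposition \ref{graph_cograph_functor}) I rewrite $F_{\nat}\circ G_{\nat}=(F\circ G)_{\nat}$ and $G_{\nat}\circ F_{\nat}=(G\circ F)_{\nat}$, together with $\bbA=(1_{\bbA})_{\nat}$ and $\bbB=(1_{\bbB})_{\nat}$. The two inequalities then read $(1_{\bbB})_{\nat}\leq(F\circ G)_{\nat}$ and $(G\circ F)_{\nat}\leq(1_{\bbA})_{\nat}$. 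Because the graph reverses and reflects the underlying order of $\CQ$-functors, i.e. $H\leq K\iff K_{\nat}\leq H_{\nat}$ by Equation (\ref{functor_graph_order}), these are equivalent to $F\circ G\leq 1_{\bbB}$ and $1_{\bbA}\leq G\circ F$, which is exactly (1).

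The equivalence (1)$\iff$(4) proceeds in the same spirit with the cograph. I expand $G^{\nat}\dv F^{\nat}:\bbA\rhu\bbB$ into $\bbA\leq F^{\nat}\circ G^{\nat}$ and $G^{\nat}\circ F^{\nat}\leq\bbB$, rewrite $F^{\nat}\circ G^{\nat}=(G\circ F)^{\nat}$ and $G^{\nat}\circ F^{\nat}=(F\circ G)^{\nat}$ by Proposition \ref{graph_cograph_functor}, and use that the cograph is order-preserving, $H\leq K\iff H^{\nat}\leq K^{\nat}$, again from Equation (\ref{functor_graph_order}). The two inequalities collapse to $1_{\bbA}\leq G\circ F$ and $F\circ G\leq 1_{\bbB}$, i.e. (1).

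No step is genuinely hard here; the only real hazard is the bookkeeping of variances. I expect the main pitfall to be keeping the four distributors pointing the right way --- $F_{\nat},G^{\nat}:\bbA\oto\bbB$ against $G_{\nat},F^{\nat}:\bbB\oto\bbA$ --- and remembering that the graph $(-)_{\nat}$ \emph{reverses} local order while the cograph $(-)^{\nat}$ \emph{preserves} it, so that the unit inequality $1_{\bbA}\leq G\circ F$ lands on the correct side after transport into $\CQ$-$\Dist$. Once these directions are fixed, each equivalence is a one-line substitution.
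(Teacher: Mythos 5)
Your proof is correct, and the half of it dealing with conditions (3) and (4) takes a genuinely different route from the paper. The equivalence (1)$\iff$(2) is handled identically in both: unwind $F_{\nat}=G^{\nat}$ pointwise to $\bbB(Fx,y)=\bbA(x,Gy)$ and invoke Proposition \ref{adjoint_condition}. For the rest, however, the paper never touches the unit/counit inequalities in $\CQ$-$\Dist$ at all: it proves (2)$\iff$(3)$\iff$(4) by combining the canonical adjunctions $F_{\nat}\dv F^{\nat}$ and $G_{\nat}\dv G^{\nat}$ (which hold for every $\CQ$-functor) with Corollary \ref{adjoint_arrow_representation}, i.e.\ uniqueness of left and right adjoints in a quantaloid --- for instance, $F_{\nat}=G^{\nat}$ immediately gives $G_{\nat}\dv F_{\nat}$ since $G_{\nat}\dv G^{\nat}$, and conversely $G_{\nat}\dv F_{\nat}$ forces $F_{\nat}=G^{\nat}$ because right adjoints of $G_{\nat}$ are unique. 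You instead link (1) directly to (3) and to (4) by expanding each distributor adjunction into its two defining inequalities and transporting them through $(-)_{\nat}$ and $(-)^{\nat}$ via Proposition \ref{graph_cograph_functor} and Equation (\ref{functor_graph_order}); your variance bookkeeping (graph reverses order, cograph preserves it) is exactly right, so the argument goes through. The paper's route is shorter and showcases the abstract machinery (canonical adjunctions plus uniqueness); yours is more elementary and self-contained, needing neither the standing adjunction $F_{\nat}\dv F^{\nat}$ nor uniqueness of adjoints, and it connects (1) to (3) and (4) without detouring through (2).
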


\begin{proof}
The equivalence of (1) and (2) is a reformulation of Proposition \ref{adjoint_condition}. The equivalence of (2), (3) and (4) follows immediately from Corollary \ref{adjoint_arrow_representation}.
\end{proof}

\begin{prop} \label{fully_faithful_graph_cograph}
Let $F:\bbA\to\bbB$ be a $\CQ$-functor.
\begin{itemize}
\item[\rm (1)] If $F$ is fully faithful, then $F^{\nat}\circ F_{\nat}=\bbA$.
\item[\rm (2)] If $F$ is essentially surjective in the sense that there is some $x\in\bbA_0$ such that $Fx\cong y$ in $\bbB$ for all $y\in\bbB_0$, then $F_{\nat}\circ F^{\nat}=\bbB$.
\end{itemize}
\end{prop}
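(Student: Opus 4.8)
The plan is to prove both identities by direct computation with the composition formula of Proposition~\ref{Dist_quantaloid}(2) together with the explicit descriptions $F_{\nat}(x,y)=\bbB(Fx,y)$ and $F^{\nat}(y,x)=\bbB(y,Fx)$. In each part one of the two inequalities comes for free: since $F_{\nat}\dv F^{\nat}:\bbA\rhu\bbB$ in $\CQ$-$\Dist$, we automatically have $\bbA\leq F^{\nat}\circ F_{\nat}$ and $F_{\nat}\circ F^{\nat}\leq\bbB$. So the real work is to establish the \emph{reverse} inequality in each case, and this is exactly where the hypothesis on $F$ is used.

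For (1), I would unfold
$$(F^{\nat}\circ F_{\nat})(x,x')=\bv_{y\in\bbB_0}\bbB(y,Fx')\circ\bbB(Fx,y).$$
Each summand satisfies $\bbB(y,Fx')\circ\bbB(Fx,y)\leq\bbB(Fx,Fx')$ by the composition (transitivity) law of the $\CQ$-category $\bbB$, so the whole join is $\leq\bbB(Fx,Fx')$; full faithfulness then rewrites this as $\leq\bbA(x,x')$, supplying the missing inequality. For (2), I would unfold
$$(F_{\nat}\circ F^{\nat})(y,y')=\bv_{x\in\bbA_0}\bbB(Fx,y')\circ\bbB(y,Fx),$$
and use essential surjectivity to produce, for the fixed $y$, an object $x\in\bbA_0$ with $Fx\cong y$. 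By the isomorphism criterion (Proposition~\ref{isomorphic_condition}) this gives $\bbB(Fx,z)=\bbB(y,z)$ for every $z$, hence both $1_{ty}\leq\bbB(y,Fx)$ and $\bbB(Fx,y')=\bbB(y,y')$. Bounding the join below by this single $x$-summand yields $\bbB(Fx,y')\circ\bbB(y,Fx)\geq\bbB(Fx,y')=\bbB(y,y')$, which is the reverse inequality.

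There is no serious obstacle here; the argument is routine quantaloid bookkeeping. The only points demanding care are keeping the variances of the two arguments of each distributor straight when applying the composition formula, and reading the essential-surjectivity hypothesis correctly so that the chosen witness $x$ is indexed by the source object $y$ of the hom-arrow (not by $y'$). Alternatively, both reverse inequalities can be obtained without citing the graph--cograph adjunction, by instead evaluating the distinguished summand ($y=Fx$ in part (1), and the witness $x$ with $Fx\cong y$ in part (2)) to derive the easy inequalities directly; I would include this variant only if a fully self-contained proof is preferred.
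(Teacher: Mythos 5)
Your proposal is correct and follows essentially the same route as the paper's proof: unfold the composition via Proposition~\ref{Dist_quantaloid}(2), obtain one inequality from transitivity plus full faithfulness (resp.\ from the adjunction $F_{\nat}\dv F^{\nat}$), and obtain the other by evaluating at the distinguished witness ($y=Fx$ in (1), the object with $Fx\cong y$ in (2)) together with Proposition~\ref{isomorphic_condition}. The paper merely compresses these steps into a single chain of (in)equalities, so there is nothing substantive to change.
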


\begin{proof}
(1) If $F$ is fully faithful, then for all $x,x'\in\bbA_0$,
$$(F^{\nat}\circ F_{\nat})(x,x')=\bv_{y\in\bbB_0}\bbB(y,Fx')\circ\bbB(Fx,y)=\bbB(Fx,Fx')=\bbA(x,x').$$

(2) If $F$ is essentially surjective, then for all $y,y'\in\bbB_0$, there is some $x\in\bbA_0$ such that $Fx\cong y$. Thus
\begin{align*}
(F_{\nat}\circ F^{\nat})(y,y')&=\bv_{a\in\bbA_0}\bbB(Fa,y')\circ\bbB(y,Fa)\\
&\geq\bbB(Fx,y')\circ\bbB(y,Fx)\\
&=\bbB(y,y')\circ\bbB(y,y)\\
&\geq\bbB(y,y').
\end{align*}
Since $F_{\nat}\circ F^{\nat}\leq\bbB$ holds trivially, it follows that $F_{\nat}\circ F^{\nat}=\bbB$.
\end{proof}

\begin{defn} \label{presheaf_def}
\begin{itemize}
\item[\rm (1)] A \emph{contravariant presheaf} on  a $\CQ$-category $\bbA$ is a $\CQ$-distributor $\mu:\bbA\oto *_X$ with $X\in\CQ_0$.
\item[\rm (2)] A \emph{covariant presheaf} on  a $\CQ$-category $\bbA$ is a $\CQ$-distributor $\mu:*_X\oto\bbA$ with $X\in\CQ_0$.
\end{itemize}
\end{defn}

Contravariant presheaves on a $\CQ$-category $\bbA$ constitute a $\CQ$-category $\PA$ in which
\begin{equation} \label{PA_arrow}
t\mu=X\quad\text{and}\quad\PA(\mu,\lam)=\lam\lda\mu
\end{equation}
for all $\mu:\bbA\oto *_X$ and $\lam:\bbA\oto *_{Y}$ in $(\PA)_0$.

Dually, covariant presheaves on $\bbA$ constitute a $\CQ$-category $\PdA$ in which
\begin{equation} \label{PdA_arrow}
t\mu=X\quad\text{and}\quad\PdA(\mu,\lam)=\lam\rda\mu
\end{equation}
for all $\mu:*_X\oto\bbA$ and $\lam:*_{Y}\oto\bbA$ in $(\PdA)_0$.

$$\bfig
\morphism<600,300>[\bbA`*_{t\mu};\mu]
\morphism|b|<600,-300>[\bbA`*_{t\lam};\lam]
\morphism(600,300)|r|<0,-600>[*_{t\mu}`*_{t\lam};\PA(\mu,\lam)]
\place(300,150)[\circ] \place(300,-150)[\circ] \place(600,0)[\circ]
\morphism(1800,300)<600,-300>[*_{t\mu}`\bbA;\mu]
\morphism(1800,-300)|b|<600,300>[*_{t\lam}`\bbA;\lam]
\morphism(1800,300)|l|<0,-600>[*_{t\mu}`*_{t\lam};\PdA(\mu,\lam)]
\place(2100,150)[\circ] \place(2100,-150)[\circ] \place(1800,0)[\circ]
\efig$$

In particular, we denote $\CP(*_X)=\CP X$ and $\CPd (*_X)=\CPd X$ for each $X\in\CQ_0$.

\begin{prop} \label{PA_skeletal}
For each $\CQ$-category $\bbA$, $\PA$ and $\PdA$ are both skeletal $\CQ$-categories. In particular, $\CP X$ and $\CPd X$ are both skeletal $\CQ$-categories for each $X\in\CQ_0$.
\end{prop}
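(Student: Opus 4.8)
The plan is to reduce the statement to showing that in $\PA$ (and dually in $\PdA$) isomorphic objects coincide, and to establish this by identifying the underlying preorder of $\PA$ with the \emph{pointwise} order on $\CQ$-distributors, which is a genuine partial order. Recall that $\PA$ is skeletal exactly when $\mu\cong\lam$ forces $\mu=\lam$, and that by definition $\mu\cong\lam$ means $\mu\leq\lam$ and $\lam\leq\mu$ in the underlying preorder of $\PA$.

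First I would unfold this underlying preorder. For $\mu,\lam:\bbA\oto *_X$ of the same type $X$, we have $\mu\leq\lam$ iff $1_X\leq\PA(\mu,\lam)=\lam\lda\mu$. Using the pointwise formula for the left implication of $\CQ$-distributors (Proposition \ref{Dist_quantaloid}(4)), $\lam\lda\mu=\bw_{x\in\bbA_0}(\lam(x,*)\lda\mu(x,*))$, so $1_X\leq\lam\lda\mu$ is equivalent to $1_X\leq\lam(x,*)\lda\mu(x,*)$ for every $x\in\bbA_0$. By Proposition \ref{arrow_calculation}(1), taking $g=1_X$, $f=\mu(x,*)$, $h=\lam(x,*)$, this is in turn equivalent to $\mu(x,*)\leq\lam(x,*)$ for all $x\in\bbA_0$. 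Hence the relation $\mu\leq\lam$ in the underlying preorder of $\PA$ is exactly the pointwise order $\mu\leq\lam$ of $\CQ$-distributors (Proposition \ref{Dist_quantaloid}(1)).

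Since each hom-$\sup$-lattice $\CQ(tx,X)$ is a poset, the pointwise order on distributors is antisymmetric; therefore $\mu\cong\lam$ in $\PA$ yields $\mu\leq\lam$ and $\lam\leq\mu$ pointwise, whence $\mu=\lam$, proving $\PA$ is skeletal. The argument for $\PdA$ is completely dual: here $\PdA(\mu,\lam)=\lam\rda\mu$, and the same manipulation---now invoking the right-implication formula of Proposition \ref{Dist_quantaloid}(4) together with the $\rda$-half of Proposition \ref{arrow_calculation}(1)---shows that the underlying preorder of $\PdA$ is the \emph{opposite} of the pointwise order on distributors. As the opposite of a partial order is again a partial order, isomorphic covariant presheaves must be equal, so $\PdA$ is skeletal as well. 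Finally, the ``in particular'' clause follows by specializing $\bbA$ to the one-object $\CQ$-category $*_X$, giving $\CP X=\CP(*_X)$ and $\CPd X=\CPd(*_X)$. I do not anticipate any genuine obstacle; the only points requiring care are the bookkeeping in the implication calculus and, for $\PdA$, the order-reversal intrinsic to covariant presheaves, both of which become mechanical once the underlying preorder is identified with the (reversed) pointwise order.
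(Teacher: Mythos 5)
Your proposal is correct, and it is exactly the argument the paper leaves implicit: the proposition is stated without proof, and the immediately following Remark \ref{PdA_QDist_order} records precisely the two facts you establish, namely that the underlying preorder of $\PA$ is the local (pointwise) order of $\CQ$-$\Dist$ and that of $\PdA$ is its reverse, whence antisymmetry of the pointwise order gives skeletality. Your unfolding via Proposition \ref{Dist_quantaloid}(4) and Proposition \ref{arrow_calculation}(1), and the specialization $\bbA=*_X$ for the ``in particular'' clause, are all sound.
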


\begin{rem} \label{PdA_QDist_order}
For each $\CQ$-category $\bbA$, it follows from the definition that the underlying preorder in $\PA$ coincides with the local order in $\CQ$-$\Dist$, while the underlying preorder in $\PdA$ is  the \emph{reverse} local order in $\CQ$-$\Dist$. That is to say, for all $\mu,\lam\in\PdA$, we have
$$\mu\leq\lam\ \text{in}\ (\PdA)_0\iff\lam\leq\mu\ \text{in}\ \CQ\text{-}\Dist.$$
In order to get rid of the confusion about the symbol $\leq$, from now on we make the convention that the symbol $\leq$ between $\CQ$-distributors always denotes the local order in $\CQ$-$\Dist$ if not otherwise specified.
\end{rem}

Given a $\CQ$-category $\bbA$ and $a\in\bbA_0$, write $\sY a$ for the $\CQ$-distributor
\begin{equation} \label{Yoneda_def}
\bbA\oto *_{ta},\quad x\mapsto\bbA(x,a);
\end{equation}
write $\sYd a$ for the $\CQ$-distributor
\begin{equation} \label{coYoneda_def}
*_{ta}\oto\bbA,\quad x\mapsto\bbA(a,x).
\end{equation}

The following  lemma implies that both $\sY:\bbA\to\PA, a\mapsto\sY a$ and $\sYd:\bbA\to\PdA, a\mapsto\sYd a$ are fully faithful $\CQ$-functors (hence embeddings if $\bbA$ is skeletal). Thus, $\sY$ and $\sYd$ are called respectively the \emph{Yoneda embedding} and the \emph{co-Yoneda embedding}.

\begin{lem}[Yoneda] \label{Yoneda_lemma} {\rm\cite{Stubbe2005}}
For all $a\in\bbA_0$, $\mu\in\PA$ and $\lam\in\PdA$,
$$\PA(\sY a,\mu)=\mu(a)\quad\text{and}\quad\PdA(\lam,\sYd a)=\lam(a).$$
\end{lem}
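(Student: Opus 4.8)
The plan is to unfold the two hom-arrows of $\PA$ and $\PdA$ via the implication formulas in $\CQ$-$\Dist$ (Proposition \ref{Dist_quantaloid}(4)), observe that the one-object codomains collapse these formulas to a single meet indexed by $\bbA_0$, and then establish each identity by two inequalities, each reducing to one axiom of a $\CQ$-category or $\CQ$-distributor together with the fundamental adjunction $g\circ f\leq h\iff g\leq h\lda f\iff f\leq g\rda h$ in $\CQ$ (Proposition \ref{arrow_calculation}(1)).

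For the first identity I would start from (\ref{PA_arrow}), which gives $\PA(\sY a,\mu)=\mu\lda\sY a$. Both $\sY a$ and $\mu$ are contravariant presheaves, i.e. $\CQ$-distributors into the one-object $\CQ$-categories $*_{ta}$ and $*_X$; applying the left-implication formula of Proposition \ref{Dist_quantaloid}(4) with $\phi=\sY a$ (into $\bbB=*_{ta}$) and $\eta=\mu$ (into $\bbC=*_X$), the indices $y,z$ range over singletons and the only surviving data is the meet over the common domain $\bbA_0$:
$$\PA(\sY a,\mu)=\bw_{x\in\bbA_0}\big(\mu(x)\lda\bbA(x,a)\big).$$
So the claim becomes $\bw_{x\in\bbA_0}\big(\mu(x)\lda\bbA(x,a)\big)=\mu(a)$.

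I would prove this by two inequalities. For ``$\leq$'' I keep only the term $x=a$: since $\bbA(a,a)\geq 1_{ta}$ and $-\lda-$ is antitone in its second argument, $\mu(a)\lda\bbA(a,a)\leq\mu(a)\lda 1_{ta}=\mu(a)$, the last equality by Proposition \ref{arrow_top_bottom}(1). For ``$\geq$'' I use the $\CQ$-distributor axiom for $\mu$, namely $\mu(a)\circ\bbA(x,a)\leq\mu(x)$ for every $x$; by Proposition \ref{arrow_calculation}(1) this says exactly $\mu(a)\leq\mu(x)\lda\bbA(x,a)$ for all $x$, hence $\mu(a)\leq\bw_{x}\big(\mu(x)\lda\bbA(x,a)\big)$. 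The second identity is entirely dual: from (\ref{PdA_arrow}) I write $\PdA(\lam,\sYd a)=\sYd a\rda\lam$, and the right-implication formula of Proposition \ref{Dist_quantaloid}(4) applied to the one-object domains yields $\PdA(\lam,\sYd a)=\bw_{z\in\bbA_0}\big(\bbA(a,z)\rda\lam(z)\big)$; then ``$\leq$'' comes from the term $z=a$ using $\bbA(a,a)\geq 1_{ta}$, antitonicity of $-\rda-$ in its first argument, and $1_{ta}\rda\lam(a)=\lam(a)$, while ``$\geq$'' comes from the other distributor axiom $\bbA(a,z)\circ\lam(a)\leq\lam(z)$, reformulated through $g\circ f\leq h\iff f\leq g\rda h$ as $\lam(a)\leq\bbA(a,z)\rda\lam(z)$.

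There is no genuine obstacle here; the argument is routine once the implications are unfolded. The only point demanding care is the bookkeeping in Proposition \ref{Dist_quantaloid}(4) — correctly identifying which distributor plays the role of $\phi$, $\psi$, $\eta$, and confirming that the singleton domains/codomains $*_X$ reduce the formula to a single meet over $\bbA_0$ rather than introducing spurious extra indices. After that, each inequality is a one-line application of either reflexivity of a $\CQ$-category, a $\CQ$-distributor axiom, or the quantaloid adjunction.
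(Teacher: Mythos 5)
Your proof is correct. Note that the paper itself supplies no proof of this lemma---it is quoted from \cite{Stubbe2005}---so there is no in-paper argument to compare against; your verification is the standard direct one. Unfolding $\PA(\sY a,\mu)=\mu\lda\sY a$ via Proposition \ref{Dist_quantaloid}(4) to $\bw_{x\in\bbA_0}\big(\mu(x)\lda\bbA(x,a)\big)$, then obtaining ``$\leq$'' from $1_{ta}\leq\bbA(a,a)$ together with $\mu(a)\lda 1_{ta}=\mu(a)$, and ``$\geq$'' from the distributor axiom $\mu(a)\circ\bbA(x,a)\leq\mu(x)$ combined with the adjunction in Proposition \ref{arrow_calculation}(1), is exactly the expected argument, and your dual computation for $\PdA(\lam,\sYd a)=\bw_{z\in\bbA_0}\big(\bbA(a,z)\rda\lam(z)\big)$ is equally sound, with all types matching.
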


For each $\CQ$-distributor $\phi:\bbA\oto\bbB$ and $x\in\bbA_0,y\in\bbB_0$, write $\phi(x,-)$ for the $\CQ$-distributor  $\phi\circ\sYd_{\bbA}x: *_{tx}\oto\bbA\oto\bbB$; and write $\phi(-,y)$ for the $\CQ$-distributor ${\sf Y}_{\bbB}y\circ\phi:\bbA\oto\bbB\oto*_{ty}$. Then the Yoneda lemma can be phrased as the commutativity of the following diagrams:
$$\bfig
\ptriangle/->`<-`<-/[\PA`*_{t\mu}`\bbA;\PA(-,\mu)`\sY_{\nat}`\mu] \place(250,500)[\circ]\place(0,250)[\circ]\place(250,250)[\circ]
\ptriangle(1200,0)/<-`->`->/[\PdA`*_{t\lam}`\bbA;\PdA(\lam,-)`(\sYd)^{\nat}`\lam] \place(1450,500)[\circ]\place(1200,250)[\circ]\place(1450,250)[\circ]
\efig$$
That is,
$$\mu=\PA(\sY-,\mu)=\sY_{\nat}(-,\mu)$$
and
$$\lam=\PdA(\lam,\sYd-)=(\sYd)^{\nat}(\lam,-).$$

\begin{rem} \label{distributor_notion}
Given $\CQ$-distributors $\phi:\bbA\oto\bbB$, $\psi:\bbB\oto\bbC$ and $\eta:\bbA\oto\bbC$, one can form $\CQ$-distributors such as $\phi(x,-)$, $\eta\lda\phi$, $\eta\rda\psi(y,-)$,  etc. We list here some basic formulas related to these $\CQ$-distributors that will be used in the sequel.
\begin{itemize}
\item[\rm (1)] $\forall x\in\bbA_0$, $\forall z\in\bbC_0$, $(\psi\circ\phi)(x,z)=\psi(-,z)\circ\phi(x,-)$;
\item[\rm (2)] $\forall y\in\bbA_0$, $\forall z\in\bbC_0$, $(\eta\lda\phi)(y,z)=\eta(-,z)\lda\phi(-,y)$;
\item[\rm (3)] $\forall x\in\bbA_0$, $\forall y\in\bbB_0$, $(\psi\rda\eta)(x,y)=\psi(y,-)\rda\eta(x,-)$;
\item[\rm (4)] $\forall x\in\bbA_0$, $(\psi\circ\phi)(x,-)=\psi\circ\phi(x,-)$;
\item[\rm (5)] $\forall z\in\bbC_0$, $(\psi\circ\phi)(-,z)=\psi(-,z)\circ\phi$;
\item[\rm (6)] $\forall y\in\bbB_0$, $(\eta\lda\phi)(y,-)=\eta\lda\phi(-,y)$;
\item[\rm (7)] $\forall z\in\bbC_0$, $(\eta\lda\phi)(-,z)=\eta(-,z)\lda\phi$;
\item[\rm (8)] $\forall y\in\bbB_0$, $(\psi\rda\eta)(-,y)=\psi(y,-)\rda\eta$;
\item[\rm (9)] $\forall x\in\bbA_0$, $(\psi\rda\eta)(x,-)=\psi\rda\eta(x,-)$.
\end{itemize}
\end{rem}

As an application of Remark \ref{distributor_notion}, we derive some useful formulas for graphs and cographs of $\CQ$-functors.

\begin{prop} \label{graph_cograph_distributor}
Let $F:\bbA\to\bbB$ be a $\CQ$-functor and $\phi:\bbB\oto\bbC$, $\psi:\bbC\oto\bbB$ be $\CQ$-distributors. Then
\begin{itemize}
\item[\rm (1)] $\phi\circ F_{\nat}=\phi(F-,-)=\phi\lda F^{\nat}$;
\item[\rm (2)] $F^{\nat}\circ\psi=\psi(-,F-)=F_{\nat}\rda\psi$.
\end{itemize}
\end{prop}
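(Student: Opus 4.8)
The plan is to prove each of the two lines by first matching the right-hand equality against the adjunction $F_{\nat}\dv F^{\nat}:\bbA\oto\bbB$ in the quantaloid $\CQ$-$\Dist$, and then verifying the left-hand equality by a one-line pointwise computation.

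For the right-hand equalities I would invoke Proposition \ref{adjoint_arrow_calculation}(1) directly. Since $\CQ$-$\Dist$ is a quantaloid (Proposition \ref{Dist_quantaloid}) and $F_{\nat}\dv F^{\nat}$, that proposition applies verbatim with $f=F_{\nat}$ and $g=F^{\nat}$: the identity $h\circ f=h\lda g$ with $h=\phi$ gives $\phi\circ F_{\nat}=\phi\lda F^{\nat}$, and $g\circ h=f\rda h$ with $h=\psi$ gives $F^{\nat}\circ\psi=F_{\nat}\rda\psi$. The only point needing attention is that the boundaries line up so that the implications are the legitimate ones of Proposition \ref{Dist_quantaloid}(4): in $\phi\lda F^{\nat}$ the two $\CQ$-distributors share the domain $\bbB$ and produce an arrow $\bbA\oto\bbC$, matching $\phi\circ F_{\nat}$, while in $F_{\nat}\rda\psi$ they share the codomain $\bbB$ and produce $\bbC\oto\bbA$, matching $F^{\nat}\circ\psi$.

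For the left-hand equalities it then remains to identify the composites with the restricted $\CQ$-distributors. Expanding the composition of Proposition \ref{Dist_quantaloid}(2), for $x\in\bbA_0$ and $z\in\bbC_0$,
$$(\phi\circ F_{\nat})(x,z)=\bv_{y\in\bbB_0}\phi(y,z)\circ\bbB(Fx,y),$$
which is exactly $(\phi\circ\bbB)(Fx,z)$ for the identity $\CQ$-distributor $\bbB$ on $\bbB$; as $\bbB$ is the identity of $\CQ$-$\Dist$ this equals $\phi(Fx,z)=\phi(F-,-)(x,z)$. (Explicitly, $\leq$ is the second distributor axiom for $\phi$ and $\geq$ comes from the summand $y=Fx$ together with $1_{tFx}\leq\bbB(Fx,Fx)$.) The identity $F^{\nat}\circ\psi=\psi(-,F-)$ is entirely dual, using instead the first distributor axiom for $\psi$; this is also an instance of the pointwise formulas collected in Remark \ref{distributor_notion}.

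I expect no genuine obstacle here: the substance is the single application of Proposition \ref{adjoint_arrow_calculation}(1) to $F_{\nat}\dv F^{\nat}$, and the only thing demanding care is the bookkeeping of sources and targets, so that the implication symbols $\lda$ and $\rda$ are applied to $\CQ$-distributors with matching boundaries and the claimed types $\bbA\oto\bbC$ and $\bbC\oto\bbA$ come out correctly.
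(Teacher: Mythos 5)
Your proposal is correct and takes essentially the same route as the paper: the paper likewise observes that $\phi\circ F_{\nat}=\phi\lda F^{\nat}$ is immediate from Proposition \ref{adjoint_arrow_calculation}(1) applied to $F_{\nat}\dv F^{\nat}$, and then identifies both sides with $\phi(F-,-)$ by a pointwise computation (phrased via Remark \ref{distributor_notion}; your explicit sup argument --- the distributor axiom for one inequality and the $y=Fx$ summand for the other --- is exactly the content of the unit law $\phi\circ\bbB=\phi$ used there). Part (2) is dual in both treatments.
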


\begin{proof}
We prove (1) for example. Although $\phi\circ F_{\nat}=\phi\lda F^{\nat}$ follows immediately from Proposition \ref{adjoint_arrow_calculation}(1), we present here a proof that they are respectively equal to $\phi(F-,-)$ as an illustration for Remark \ref{distributor_notion}. For each $x\in\bbA_0$ and $z\in\bbC_0$,
\begin{align*}
(\phi\circ F_{\nat})(x,z)&=\phi(-,z)\circ F_{\nat}(x,-)&\text{(Remark \ref{distributor_notion}(1))}\\
&=\phi(-,z)\circ\bbB(Fx,-)\\
&=(\phi(-,z)\circ\bbB)(Fx,-)&\text{(Remark \ref{distributor_notion}(4))}\\
&=\phi(-,z)(Fx,-)\\
&=\phi(Fx,z)\\
&=(\phi\lda\bbB)(Fx,z)\\
&=\phi(-,z)\lda\bbB(-,Fx)&\text{(Remark \ref{distributor_notion}(2))}\\
&=\phi(-,z)\lda F^{\nat}(-,x)\\
&=(\phi\lda F^{\nat})(x,z).&\text{(Remark \ref{distributor_notion}(2))}
\end{align*}
Thus $\phi\circ F_{\nat}=\phi(F-,-)=\phi\lda F^{\nat}$.
\end{proof}

\begin{defn} \label{direct_inverse_image_def}
Let $F:\bbA\to\bbB$ be a $\CQ$-functor.
\begin{itemize}
\item[\rm (1)] The \emph{contravariant direct image $\CQ$-functor} of $F$ is a $\CQ$-functor $F^{\ra}:\PA\to\PB$ between the $\CQ$-categories of contravariant presheaves given by
    $$F^{\ra}(\mu)=\mu\circ F^{\nat}.$$
\item[\rm (2)] The \emph{contravariant inverse image $\CQ$-functor} of $F$ is a $\CQ$-functor $F^{\la}:\PB\to\PA$ between the $\CQ$-categories of contravariant presheaves given by
    $$F^{\la}(\lam)=\lam\circ F_{\nat}.$$
\item[\rm (3)] The \emph{covariant direct image $\CQ$-functor} of $F$ is a $\CQ$-functor $F^{\nra}:\PdA\to\PdB$ between the $\CQ$-categories of covariant presheaves given by
    $$F^{\nra}(\mu)=F_{\nat}\circ\mu.$$
\item[\rm (4)] The \emph{covariant inverse image $\CQ$-functor} of $F$ is a $\CQ$-functor $F^{\nla}:\PdB\to\PdA$ between the $\CQ$-categories of covariant presheaves given by
    $$F^{\nla}(\lam)=F^{\nat}\circ\lam.$$
\end{itemize}
\end{defn}

We illustrate Definition \ref{direct_inverse_image_def} through the following commutative diagrams:
$$\bfig
\btriangle[\bbB`\bbA`*_{t\mu};F^{\nat}`F^{\ra}(\mu)`\mu]
\place(0,250)[\circ] \place(250,0)[\circ] \place(250,250)[\circ]
\btriangle(1500,0)[\bbA`\bbB`*_{t\lam};F_{\nat}`F^{\la}(\lam)`\lam]
\place(1500,250)[\circ] \place(1750,0)[\circ] \place(1750,250)[\circ]
\efig$$
$$\bfig
\btriangle[*_{t\mu}`\bbA`\bbB;\mu`F^{\nra}(\mu)`F_{\nat}]
\place(0,250)[\circ] \place(250,0)[\circ] \place(250,250)[\circ]
\btriangle(1500,0)[*_{t\lam}`\bbB`\bbA;\lam`F^{\nla}(\lam)`F^{\nat}]
\place(1500,250)[\circ] \place(1750,0)[\circ] \place(1750,250)[\circ]
\efig$$

\begin{prop} \label{Fra_injective_surjective}
Let $F:\bbA\to\bbB$ be a $\CQ$-functor.
\begin{itemize}
\item[\rm (1)] If $F$ is fully faithful, then
               \begin{itemize}
               \item[\rm (i)] $F^{\ra}:\PA\to\PB$ and $F^{\nra}:\PdA\to\PdB$ are fully faithful and injective.
               \item[\rm (ii)] $F^{\la}:\PB\to\PA$ and $F^{\nla}:\PdB\to\PdA$ are surjective.
               \end{itemize}
\item[\rm (2)] If $F$ is essentially surjective, then
               \begin{itemize}
               \item[\rm (i)] $F^{\ra}:\PA\to\PB$ and $F^{\nra}:\PdA\to\PdB$ are surjective.
               \item[\rm (ii)] $F^{\la}:\PB\to\PA$ and $F^{\nla}:\PdB\to\PdA$ are fully faithful and injective.
               \end{itemize}
\end{itemize}
\end{prop}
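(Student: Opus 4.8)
The plan is to reduce the whole statement to the two composition identities recorded in Proposition~\ref{fully_faithful_graph_cograph}: fully faithfulness of $F$ gives $F^{\nat}\circ F_{\nat}=\bbA$, while essential surjectivity gives $F_{\nat}\circ F^{\nat}=\bbB$. By Definition~\ref{direct_inverse_image_def} each of the four image $\CQ$-functors is merely pre- or post-composition with the graph $F_{\nat}$ or the cograph $F^{\nat}$, so associativity of composition in the quantaloid $\CQ$-$\Dist$ will convert these two identities into one-sided inverse relations among the four functors. The injectivity and surjectivity assertions then follow formally, and only the fully faithfulness assertions will require a genuine (though short) computation.

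First I would unwind the definitions on objects. For $\mu\in\PA$ and $\lam\in\PB$ (and their covariant analogues),
$$F^{\la}(F^{\ra}\mu)=(\mu\circ F^{\nat})\circ F_{\nat}=\mu\circ(F^{\nat}\circ F_{\nat}),\qquad F^{\nla}(F^{\nra}\mu)=(F^{\nat}\circ F_{\nat})\circ\mu,$$
and dually $F^{\ra}(F^{\la}\lam)=\lam\circ(F_{\nat}\circ F^{\nat})$, $F^{\nra}(F^{\nla}\lam)=(F_{\nat}\circ F^{\nat})\circ\lam$. Substituting the two identities above, fully faithfulness yields $F^{\la}\circ F^{\ra}=\id_{\PA}$ and $F^{\nla}\circ F^{\nra}=\id_{\PdA}$, while essential surjectivity yields $F^{\ra}\circ F^{\la}=\id_{\PB}$ and $F^{\nra}\circ F^{\nla}=\id_{\PdB}$. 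A map admitting a left inverse is injective and one admitting a right inverse is surjective, so this already settles the injectivity of $F^{\ra},F^{\nra}$ and the surjectivity of $F^{\la},F^{\nla}$ in part~(1), and symmetrically the surjectivity of $F^{\ra},F^{\nra}$ and the injectivity of $F^{\la},F^{\nla}$ in part~(2).

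It remains to treat full faithfulness. Here I would first record that, for an \emph{arbitrary} $\CQ$-functor $F$, there are adjunctions $F^{\ra}\dv F^{\la}:\PA\rhu\PB$ and $F^{\nla}\dv F^{\nra}:\PdB\rhu\PdA$. By Proposition~\ref{adjoint_condition} these amount to the $\CQ$-arrow identities $\lam\lda(\mu\circ F^{\nat})=(\lam\circ F_{\nat})\lda\mu$ and $(F_{\nat}\circ\mu)\rda\lam=\mu\rda(F^{\nat}\circ\lam)$, each of which follows by combining Proposition~\ref{arrow_calculation}(5) with Proposition~\ref{adjoint_arrow_calculation}(1) applied to the adjunction $F_{\nat}\dv F^{\nat}$; the reversal of the underlying order on covariant presheaves (Remark~\ref{PdA_QDist_order}) is precisely what makes $F^{\nla}$, rather than $F^{\nra}$, the left adjoint in the covariant case. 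Granting these adjunctions, full faithfulness follows by feeding the composite identities of the previous paragraph into the adjunction. For fully faithful $F$ one gets $\PB(F^{\ra}\mu,F^{\ra}\lam)=\PA(\mu,F^{\la}F^{\ra}\lam)=\PA(\mu,\lam)$ and $\PdB(F^{\nra}\mu,F^{\nra}\lam)=\PdA(F^{\nla}F^{\nra}\mu,\lam)=\PdA(\mu,\lam)$, establishing (1)(i); for essentially surjective $F$ the analogous substitutions give $\PA(F^{\la}\lam,F^{\la}\lam')=\PB(F^{\ra}F^{\la}\lam,\lam')=\PB(\lam,\lam')$ and $\PdA(F^{\nla}\lam,F^{\nla}\lam')=\PdB(\lam,F^{\nra}F^{\nla}\lam')=\PdB(\lam,\lam')$, establishing (2)(ii).

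The computation is essentially routine once the set-up is fixed; the one genuine point of care — and the only place where a wrong guess would derail the argument — is getting the two adjunctions to point in the correct directions, especially in the covariant case where the order is reversed, and then picking the matching left/right implication identity from Propositions~\ref{arrow_calculation} and~\ref{adjoint_arrow_calculation}. I note that injectivity could alternatively be deduced from full faithfulness together with skeletality of the presheaf $\CQ$-categories (Propositions~\ref{PA_skeletal} and~\ref{fully_faithful_essentially_injective}), but obtaining it directly from the one-sided inverses is cleaner and avoids invoking skeletality.
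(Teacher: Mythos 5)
Your proof is correct, and it differs from the paper's in how the injectivity and full-faithfulness claims are organized, though both arguments rest on the same input (Proposition \ref{fully_faithful_graph_cograph}) and treat surjectivity identically, via the one-sided inverses $F^{\la}\circ F^{\ra}=\id_{\PA}$ and their analogues. For injectivity the paper first proves $F^{\ra}$ fully faithful and then invokes skeletality of $\PA$ through Proposition \ref{fully_faithful_essentially_injective}; you instead read injectivity off the left inverse set-theoretically, which is precisely the cleaner alternative you note at the end. For full faithfulness the paper computes directly,
$\PB(F^{\ra}(\mu),F^{\ra}(\mu'))=(\mu'\circ F^{\nat})\lda(\mu\circ F^{\nat})=(\mu'\circ F^{\nat}\circ F_{\nat})\lda\mu=\mu'\lda\mu$,
using Proposition \ref{adjoint_arrow_calculation}(2); you route it through the adjunctions $F^{\ra}\dv F^{\la}$ and $F^{\nla}\dv F^{\nra}$. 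Those adjunctions are exactly Proposition \ref{direct_inverse_image_adjunction}, which the paper states only \emph{after} this proposition, so you were right to re-derive them rather than cite them; your derivation (Proposition \ref{adjoint_condition} combined with Propositions \ref{arrow_calculation}(5) and \ref{adjoint_arrow_calculation}(1), with the order reversal of Remark \ref{PdA_QDist_order} dictating that $F^{\nla}$ is the left adjoint) is sound and essentially coincides with the paper's later proof of that proposition, so there is no circularity. What your organization buys is uniformity: all eight assertions fall out formally from two composite identities plus the adjunctions, and the covariant cases need no separate computation. What the paper's organization buys is economy at that point in the text: it uses only identities already established and avoids introducing the adjunctions early.
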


\begin{proof}
We prove (1) for example. If $F$ is fully faithful, then it follows from Proposition \ref{fully_faithful_graph_cograph}(1) that $F^{\nat}\circ F_{\nat}=\bbA$.

(i) For all $\mu,\mu'\in\PA$,
\begin{align*}
\PB(F^{\ra}(\mu),F^{\ra}(\mu'))&=F^{\ra}(\mu')\lda F^{\ra}(\mu)&(\text{Equation (\ref{PA_arrow})})\\
&=(\mu'\circ F^{\nat})\lda(\mu\circ F^{\nat})&(\text{Definition \ref{direct_inverse_image_def}})\\
&=(\mu'\circ F^{\nat}\circ F_{\nat})\lda\mu&(\text{Proposition \ref{adjoint_arrow_calculation}(2)})\\
&=\mu'\lda\mu&(\text{Proposition \ref{fully_faithful_graph_cograph}(1)})\\
&=\PA(\mu,\mu').&(\text{Equation (\ref{PA_arrow})})
\end{align*}
Thus $F^{\ra}:\PA\to\PB$ is fully faithful. Since $\PA$ is skeletal, by Proposition \ref{fully_faithful_essentially_injective} we obtain that $F^{\ra}$ is injective. Similarly $F^{\nra}:\PdA\to\PdB$ is fully faithful and injective.

(ii) For all $\mu\in\PA$,
$$F^{\la}\circ F^{\ra}(\mu)=\mu\circ F^{\nat}\circ F_{\nat}=\mu\circ\bbA=\mu.$$
Thus $F^{\la}:\PB\to\PA$ is surjective. Similarly $F^{\nla}:\PdB\to\PdA$ is surjective.
\end{proof}

\begin{prop} \label{direct_inverse_image_adjunction}
For each $\CQ$-functor $F:\bbA\to\bbB$,
$$F^{\ra}\dv F^{\la}:\PA\rhu\PB$$
and
$$F^{\nla}\dv F^{\nra}:\PdB\rhu\PdA.$$
\end{prop}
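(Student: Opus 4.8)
The plan is to reduce each of the two adjunctions to a single hom-equality by invoking Proposition \ref{adjoint_condition}, which asserts that a pair of type-preserving maps $F:\bbA\to\bbB$, $G:\bbB\to\bbA$ satisfies $F\dv G:\bbA\rhu\bbB$ precisely when $\bbB(Fx,y)=\bbA(x,Gy)$ for all objects $x,y$; this simultaneously promotes the type-preserving maps to genuine adjoint $\CQ$-functors, so no separate check that the four image functors are $\CQ$-functors is needed. First I would observe that all four maps are type-preserving: for instance $F^{\ra}(\mu)=\mu\circ F^{\nat}$ is a $\CQ$-distributor $\bbB\oto *_{t\mu}$, so $t(F^{\ra}\mu)=t\mu$, and similarly for $F^{\la},F^{\nra},F^{\nla}$. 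The only substantive work is then the identification of hom-arrows in the presheaf categories, which are computed via left and right implications by Equations (\ref{PA_arrow}) and (\ref{PdA_arrow}).

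For the contravariant adjunction $F^{\ra}\dv F^{\la}:\PA\rhu\PB$, I would fix $\mu\in\PA$ and $\lam\in\PB$ and expand the left-hand hom using Equation (\ref{PA_arrow}):
$$\PB(F^{\ra}\mu,\lam)=\lam\lda F^{\ra}\mu=\lam\lda(\mu\circ F^{\nat}).$$
Applying the nesting identity for left implications, Proposition \ref{arrow_calculation}(5), rewrites this as $(\lam\lda F^{\nat})\lda\mu$; then rewriting $\lam\lda F^{\nat}=\lam\circ F_{\nat}$ via the adjunction $F_{\nat}\dv F^{\nat}$ (Proposition \ref{adjoint_arrow_calculation}(1), equivalently Proposition \ref{graph_cograph_distributor}(1)) yields $(\lam\circ F_{\nat})\lda\mu=F^{\la}\lam\lda\mu=\PA(\mu,F^{\la}\lam)$, which is exactly the equality required by Proposition \ref{adjoint_condition}.

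For the covariant adjunction $F^{\nla}\dv F^{\nra}:\PdB\rhu\PdA$ I would run the dual computation, now using the right-implication formula of Equation (\ref{PdA_arrow}). Expanding $\PdA(F^{\nla}\lam,\mu)=\mu\rda(F^{\nat}\circ\lam)$, rewriting $F^{\nat}\circ\lam=F_{\nat}\rda\lam$ by Proposition \ref{adjoint_arrow_calculation}(1), and nesting right implications by Proposition \ref{arrow_calculation}(5) should deliver $(F_{\nat}\circ\mu)\rda\lam=F^{\nra}\mu\rda\lam=\PdB(\lam,F^{\nra}\mu)$. The calculations themselves are routine given Propositions \ref{arrow_calculation} and \ref{adjoint_arrow_calculation}; the only genuine subtlety — and the point I would flag as the main obstacle — is getting the direction conventions right in the covariant case. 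Because the underlying order on $\PdA$ is the \emph{reverse} of the local order in $\CQ$-$\Dist$ (Remark \ref{PdA_QDist_order}), it is $F^{\nla}$ rather than $F^{\nra}$ that turns out to be the left adjoint, and one must correspondingly use right implications (and apply Proposition \ref{adjoint_condition} with $\bbA=\PdB$, $\bbB=\PdA$) in order for the hom-equality to come out in the correct orientation.
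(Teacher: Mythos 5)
Your proposal is correct and takes essentially the same route as the paper's own proof: reduce each adjunction to the hom-equality of Proposition \ref{adjoint_condition} and verify it by direct computation with Equations (\ref{PA_arrow})/(\ref{PdA_arrow}) and the adjunction $F_{\nat}\dv F^{\nat}$. The only cosmetic differences are that you factor the paper's single appeal to Proposition \ref{adjoint_arrow_calculation}(2) into Proposition \ref{arrow_calculation}(5) plus Proposition \ref{adjoint_arrow_calculation}(1), and you write out the covariant case (with the correct orientation $F^{\nla}\dv F^{\nra}:\PdB\rhu\PdA$) that the paper dismisses as ``similarly''.
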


\begin{proof}
For all $\mu\in\PA$, $\lam\in\PB$,
\begin{align*}
\PB(F^{\ra}(\mu),\lam)&=\lam\lda(\mu\circ F^{\nat})&\text{(Formula (\ref{PA_arrow}))}\\
&=(\lam\circ F_{\nat})\lda\mu&\text{(Proposition \ref{adjoint_arrow_calculation}(2))}\\
&=\PA(\mu,F^{\la}(\lam)).&\text{(Formula (\ref{PA_arrow}))}
\end{align*}
Thus $F^{\ra}\dv F^{\la}:\PA\rhu\PB$. That $F^{\nla}\dv F^{\nra}:\PdB\rhu\PdA$ can be deduced similarly.
\end{proof}

The following proposition is an immediate consequence of Proposition \ref{graph_cograph_distributor}.

\begin{prop} \label{F_la_lam_Fx}
Let $F:\bbA\to\bbB$ be a $\CQ$-functor.
\begin{itemize}
\item[\rm (1)] For all $\lam\in\PB$ and $x\in\bbA_0$,
$$F^{\la}(\lam)(x)=\lam(Fx)\in \CQ(tx,t\lam).$$
\item[\rm (2)] For all $\lam\in\PdB$ and $x\in\bbA_0$,
$$F^{\nla}(\lam)(x)=\lam(Fx)\in\CQ(t\lam, tx).$$
\end{itemize}
\end{prop}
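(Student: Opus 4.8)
The plan is to read off both identities directly from Proposition \ref{graph_cograph_distributor}, the trick being to regard each presheaf as a $\CQ$-distributor whose codomain (respectively domain) is a one-object $\CQ$-category $*_X$, and then to evaluate at the unique object of $*_X$.

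For part (1) I would start from the definition $F^{\la}(\lam)=\lam\circ F_{\nat}$ (Definition \ref{direct_inverse_image_def}(2)). Since $\lam\in\PB$ is precisely a $\CQ$-distributor $\lam:\bbB\oto *_{t\lam}$, I can invoke Proposition \ref{graph_cograph_distributor}(1) with $\phi:=\lam$ and the role of $\bbC$ played by $*_{t\lam}$, which gives $\lam\circ F_{\nat}=\lam(F-,-)$. Evaluating at $x\in\bbA_0$ against the single object of $*_{t\lam}$ then yields $F^{\la}(\lam)(x)=\lam(Fx)$, where I use the convention that the value of a contravariant presheaf at an object is its unique component. Part (2) is the dual: starting from $F^{\nla}(\lam)=F^{\nat}\circ\lam$ (Definition \ref{direct_inverse_image_def}(4)) and viewing $\lam\in\PdB$ as $\lam:*_{t\lam}\oto\bbB$, I would apply Proposition \ref{graph_cograph_distributor}(2) with $\psi:=\lam$ to get $F^{\nat}\circ\lam=\lam(-,F-)$, and evaluate to obtain $F^{\nla}(\lam)(x)=\lam(Fx)$.

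The only remaining bookkeeping is the typing assertion, which I would settle by recalling that $F$ is type-preserving, so $t(Fx)=tx$; hence in part (1) $\lam(Fx)\in\CQ(t(Fx),t\lam)=\CQ(tx,t\lam)$, and in part (2) $\lam(Fx)\in\CQ(t\lam,t(Fx))=\CQ(t\lam,tx)$, exactly the claimed domains and codomains. I do not expect any genuine obstacle here: the entire substance is already packaged in Proposition \ref{graph_cograph_distributor}, and the argument is nothing more than specializing that result to the case where one of the two $\CQ$-categories is a one-object category $*_X$ and then identifying distributor evaluation with presheaf evaluation.
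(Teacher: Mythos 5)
Your proposal is correct and follows exactly the route the paper takes: the paper proves this proposition simply by declaring it ``an immediate consequence of Proposition \ref{graph_cograph_distributor}'', and your argument is precisely the spelled-out version of that consequence, specializing Proposition \ref{graph_cograph_distributor}(1) and (2) to presheaves viewed as $\CQ$-distributors into or out of the one-object $\CQ$-category $*_{t\lam}$, with the type bookkeeping handled by $F$ being type-preserving.
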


We would like to stress that
\begin{equation} \label{Fla_Fra_adjuntion}
\mu\leq F^{\la}\circ F^{\ra}(\mu)\quad\text{and}\quad F^{\ra}\circ F^{\la}(\lam)\leq\lam
\end{equation}
for all $\mu\in\PA$ and $\lam\in\PB$; whereas
\begin{equation} \label{FLa_FRa_adjuntion}
\nu\leq F^{\nla}\circ F^{\nra}(\nu)\quad\text{and}\quad F^{\nra}\circ F^{\nla}(\ga)\leq\ga
\end{equation}
for all $\nu\in\PdA$ and $\gamma\in\PdB$ by Remark \ref{PdA_QDist_order}.

\begin{prop} \label{Fra_composition}
Let $F:\bbA\to\bbB$ and $G:\bbB\to\bbC$ be $\CQ$-functors between $\CQ$-categories.
\begin{itemize}
\item[\rm (1)] $G^{\ra}\circ F^{\ra}=(G\circ F)^{\ra}:\PA\to\PC$.
\item[\rm (2)] $F^{\la}\circ G^{\la}=(G\circ F)^{\la}:\PC\to\PA$.
\item[\rm (3)] $G^{\nra}\circ F^{\nra}=(G\circ F)^{\nra}:\PdA\to\PdC$.
\item[\rm (4)] $F^{\nla}\circ G^{\nla}=(G\circ F)^{\nla}:\PdC\to\PdA$.
\end{itemize}
\end{prop}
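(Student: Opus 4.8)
The plan is to establish each of the four identities by unwinding Definition \ref{direct_inverse_image_def} into an explicit two-fold composite of $\CQ$-distributors, and then to collapse that composite using two facts that are already available: the associativity of composition in the quantaloid $\CQ$-$\Dist$ (Proposition \ref{Dist_quantaloid}), and the functoriality of graphs and cographs recorded in Proposition \ref{graph_cograph_functor}(2), namely $G_{\nat}\circ F_{\nat}=(G\circ F)_{\nat}$ and $F^{\nat}\circ G^{\nat}=(G\circ F)^{\nat}$. Since each image functor is defined simply by pre- or post-composing a presheaf with the graph or cograph of the $\CQ$-functor, the composite of two image functors is a triple composite of distributors, which reassociates and contracts immediately.

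Concretely, for part (1) I would argue that for each $\mu\in\PA$,
\begin{align*}
G^{\ra}(F^{\ra}(\mu))&=(\mu\circ F^{\nat})\circ G^{\nat}&\text{(Definition \ref{direct_inverse_image_def})}\\
&=\mu\circ(F^{\nat}\circ G^{\nat})&\text{(associativity)}\\
&=\mu\circ(G\circ F)^{\nat}&\text{(Proposition \ref{graph_cograph_functor}(2))}\\
&=(G\circ F)^{\ra}(\mu).
\end{align*}
The remaining three parts are handled by the same three-line pattern. For part (2) one computes $F^{\la}(G^{\la}(\lam))=(\lam\circ G_{\nat})\circ F_{\nat}=\lam\circ(G_{\nat}\circ F_{\nat})=\lam\circ(G\circ F)_{\nat}$; for part (3), $G^{\nra}(F^{\nra}(\mu))=G_{\nat}\circ(F_{\nat}\circ\mu)=(G_{\nat}\circ F_{\nat})\circ\mu$; and for part (4), $F^{\nla}(G^{\nla}(\lam))=F^{\nat}\circ(G^{\nat}\circ\lam)=(F^{\nat}\circ G^{\nat})\circ\lam$. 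In each case the appropriate half of Proposition \ref{graph_cograph_functor}(2) finishes the contraction.

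There is no serious obstacle here; the content is genuinely bookkeeping. The only points that require attention are: (i) tracking which construction uses the graph and which the cograph, so that the correct half of Proposition \ref{graph_cograph_functor}(2) is applied; and (ii) respecting the order reversal, since the \emph{inverse}-image and cograph-based functors ($F^{\la}$ and $F^{\nla}$) are contravariant in $F$, which is exactly why the composites on the left-hand sides appear in the opposite order to the composites of graphs/cographs on the right. A quick type check confirms that the resulting distributors have the correct domains and codomains, so each composite indeed lands in the asserted presheaf $\CQ$-category ($\PC$, $\PA$, $\PdC$, or $\PdA$ respectively), and the identities hold on the nose as equalities of $\CQ$-functors.
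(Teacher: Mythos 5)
Your proof is correct and is precisely the argument the paper intends: the paper's own proof reads ``Straightforward calculation by help of Proposition \ref{graph_cograph_functor},'' and your unwinding of Definition \ref{direct_inverse_image_def} followed by reassociation in $\CQ$-$\Dist$ and contraction via $G_{\nat}\circ F_{\nat}=(G\circ F)_{\nat}$, $F^{\nat}\circ G^{\nat}=(G\circ F)^{\nat}$ simply makes that calculation explicit. Your attention to the order reversal for the inverse-image functors matches the paper's subsequent use of these identities to define the contravariant functors $\CP^{\op}$ and $(\CPd)^{\op}$.
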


\begin{proof}
Straightforward calculation by help of Proposition \ref{graph_cograph_functor}.
\end{proof}

This proposition gives rise to two functors and two contravariant functors:
\begin{itemize}
\item[\rm (1)] $\CP:\CQ\text{-}\Cat\to\CQ\text{-}\Cat$ that sends a $\CQ$-functor $F:\bbA\to\bbB$ to its contravariant direct image $\CQ$-functor $F^{\ra}:\PA\to\PB$;
\item[\rm (2)] $\CPd:\CQ\text{-}\Cat\to\CQ\text{-}\Cat$ that sends a $\CQ$-functor $F:\bbA\to\bbB$ to its covariant direct image $\CQ$-functor $F^{\nra}:\PdA\to\PdB$;
\item[\rm (3)] $\CP^{\op}:(\CQ\text{-}\Cat)^{\op}\to\CQ\text{-}\Cat$ that sends a $\CQ$-functor $F:\bbA\to\bbB$ to its contravariant inverse image $\CQ$-functor $F^{\la}:\PB\to\PA$; and
\item[\rm (4)] $(\CPd)^{\op}:(\CQ\text{-}\Cat)^{\op}\to\CQ\text{-}\Cat$ that sends a $\CQ$-functor $F:\bbA\to\bbB$ to its covariant inverse image $\CQ$-functor $F^{\nla}:\PdB\to\PdA$.
\end{itemize}

\begin{prop} \label{Yoneda_natural}
Let $\sY$ and $\sYd$ be functions that assign to each $\CQ$-category $\bbA$, respectively, the Yoneda embedding $\sY_{\bbA}:\bbA\to\PA$ and the co-Yoneda embedding $\sYd_{\bbA}:\bbA\to\PdA$.
\begin{itemize}
\item[\rm (1)] $\sY=\{\sY_{\bbA}\}$ is a natural transformation from the identity functor on $\CQ$-$\Cat$ to $\CP$.
\item[\rm (2)] $\sYd=\{\sYd_{\bbA}\}$ is a natural transformation from the identity functor on $\CQ$-$\Cat$ to $\CPd$.
\end{itemize}
\end{prop}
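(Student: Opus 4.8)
The plan is to verify, for each $\CQ$-functor $F:\bbA\to\bbB$, that the relevant naturality square commutes, i.e. that $F^{\ra}\circ\sY_{\bbA}=\sY_{\bbB}\circ F$ as $\CQ$-functors $\bbA\to\PB$ (and dually $F^{\nra}\circ\sYd_{\bbA}=\sYd_{\bbB}\circ F$). Since a $\CQ$-functor is completely determined by its underlying map on objects, this reduces to the object-wise equalities $F^{\ra}(\sY a)=\sY(Fa)$ and $F^{\nra}(\sYd a)=\sYd(Fa)$ for every $a\in\bbA_0$. Rather than expand the composite distributor $\sY a\circ F^{\nat}$ (Definition \ref{direct_inverse_image_def}) pointwise, I would test the two candidates against an arbitrary presheaf and invoke the Yoneda lemma.

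Concretely, for part (1) and any $\lam\in\PB$ I would run the chain
$$\PB(F^{\ra}(\sY a),\lam)=\PA(\sY a,F^{\la}(\lam))=F^{\la}(\lam)(a)=\lam(Fa)=\PB(\sY(Fa),\lam),$$
where the first equality is the adjunction $F^{\ra}\dv F^{\la}$ of Proposition \ref{direct_inverse_image_adjunction}, the second and fourth are the Yoneda lemma (Lemma \ref{Yoneda_lemma}), and the third is the pointwise formula $F^{\la}(\lam)(a)=\lam(Fa)$ of Proposition \ref{F_la_lam_Fx}(1). As this holds for all $\lam\in\PB$, Proposition \ref{isomorphic_condition} yields $F^{\ra}(\sY a)\cong\sY(Fa)$, and skeletality of $\PB$ (Proposition \ref{PA_skeletal}) upgrades the isomorphism to the desired equality.

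For part (2) I would dualize verbatim, testing against an arbitrary $\ga\in\PdB$ and using the adjunction $F^{\nla}\dv F^{\nra}$, the co-Yoneda half of Lemma \ref{Yoneda_lemma}, the formula $F^{\nla}(\ga)(a)=\ga(Fa)$ from Proposition \ref{F_la_lam_Fx}(2), and skeletality of $\PdB$. Here one must keep in mind that the underlying order of $\PdA$ is the \emph{reverse} of the local order in $\CQ$-$\Dist$ (Remark \ref{PdA_QDist_order}); however, the Yoneda identity $\PdA(\lam,\sYd a)=\lam(a)$ is already stated in the correct form, so this bookkeeping is harmless and the two arguments run in parallel.

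The argument is essentially routine; its only genuine subtlety is that the Yoneda test delivers an \emph{isomorphism} of presheaves rather than an equality, so the conclusion that $\sY$ and $\sYd$ are honest (strict) natural transformations rather than merely pseudonatural ones rests entirely on the skeletality of $\PB$ and $\PdB$. As a self-contained cross-check for (1), I would keep in reserve the direct computation $F^{\ra}(\sY a)(y)=\bv_{x\in\bbA_0}\bbA(x,a)\circ\bbB(y,Fx)$, where the bound $\leq\bbB(y,Fa)$ comes from $F$ being a $\CQ$-functor together with composition in $\bbB$, and the reverse bound from taking $x=a$ and using $1_{ta}\leq\bbA(a,a)$; I nevertheless prefer the adjunction-plus-Yoneda derivation, since it makes the duality between (1) and (2) completely transparent.
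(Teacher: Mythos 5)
Your proof is correct, but it takes a genuinely different route from the paper's. The paper argues by direct computation at the level of distributors: for each $x\in\bbA_0$,
$$\sY_{\bbB}(Fx)=\bbB(-,Fx)=F^{\nat}(-,x)=(\bbA\circ F^{\nat})(-,x)=\bbA(-,x)\circ F^{\nat}=F^{\ra}(\sY_{\bbA}x),$$
using only the definition of the cograph $F^{\nat}$, Remark \ref{distributor_notion}(5) and Definition \ref{direct_inverse_image_def}; this yields the naturality square as a strict equality on the nose, with no appeal to skeletality. You instead test both candidates against an arbitrary presheaf via the adjunction $F^{\ra}\dv F^{\la}$, the Yoneda lemma and Proposition \ref{F_la_lam_Fx}, then use Proposition \ref{isomorphic_condition} plus skeletality of $\PB$ (respectively $\PdB$) to convert the resulting isomorphism into an equality. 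Each step you cite is available in the paper and used in the correct form, including the order-reversal caveat of Remark \ref{PdA_QDist_order} for part (2), so the argument is sound. What your representability route buys is that the duality between (1) and (2) is completely mechanical; what it costs is the extra reliance on skeletality of the presheaf $\CQ$-categories, which the paper's computation never needs. Your ``cross-check'' computation $F^{\ra}(\sY a)(y)=\bv_{x\in\bbA_0}\bbA(x,a)\circ\bbB(y,Fx)=\bbB(y,Fa)$ is in fact essentially the paper's own proof written out pointwise, so you have both arguments in hand.
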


\begin{proof}
(1) We show that the diagram
$$\bfig
\square[\bbA`\bbB`\PA`\PB;F`\sY_{\bbA}`\sY_{\bbB}`F^{\ra}]
\efig$$
is commutative for each $\CQ$-functor $F:\bbA\to\bbB$.
Indeed, for all $x\in\bbA_0$,
\begin{align*}
\sY_{\bbB}\circ Fx&=\bbB(-,Fx)&\text{(Formula (\ref{Yoneda_def}))}\\
&=F^{\nat}(-,x)&(\text{Definition of}\ F^{\nat})\\
&=(\bbA\circ F^{\nat})(-,x)\\
&=\bbA(-,x)\circ F^{\nat}&(\text{Remark \ref{distributor_notion}(5)})\\
&=F^{\ra}\circ\sY_{\bbA}x.&(\text{Definition \ref{direct_inverse_image_def}})
\end{align*}

(2) Similar to (1), one can prove that the diagram
$$\bfig
\square[\bbA`\bbB`\PdA`\PdB;F`\sYd_{\bbA}`\sYd_{\bbB}`F^{\nra}]
\efig$$
is commutative for each $\CQ$-functor $F:\bbA\to\bbB$.
\end{proof}

\section{Weighted colimits and limits}

In order to describe the cocompleteness and completeness of $\CQ$-categories, we first define weighted colimits and limits in $\CQ$-categories.

\begin{defn} \label{limit_colimit_def}
Let $F:\bbA\to\bbB$ be a $\CQ$-functor.
\begin{itemize}
\item[\rm (1)] The \emph{colimit} of $F$ \emph{weighted by} a contravariant presheaf $\mu\in\PA$ is an object $\colim_{\mu}F\in\bbB_0$  (necessarily of type $t\mu$) such that
    $$\bbB({\colim}_{\mu}F,-)=F_{\nat}\lda\mu.$$
    A $\CQ$-category $\bbB$ is \emph{cocomplete} if $\colim_{\mu}F$ exists for each $\CQ$-functor $F:\bbA\to\bbB$ and $\mu\in\PA$.
\item[\rm (2)] The \emph{limit} of $F$ \emph{weighted by} a covariant presheaf $\lam\in\PdA$ is an object $\lim_{\lam}F\in\bbB_0$ (necessarily of type $t\lam$) such that
    $$ \bbB(-,{\lim}_{\lam}F)=\lam\rda F^{\nat}.$$
    A $\CQ$-category $\bbB$ is \emph{complete} if $\lim_{\lam}F$ exists for each $\CQ$-functor $F:\bbA\to\bbB$ and $\lam\in\PdA$.
\end{itemize}
\end{defn}

\begin{rem}
Let $F:\bbA\to\bbB$ be a $\CQ$-functor. The definition of $\colim_{\mu}F$ for some $\mu\in\PA$ can be extended to the colimit of $F$ weighted by a $\CQ$-distributor $\phi:\bbA\oto\bbC$ as in \cite{Stubbe2005}. Explicitly, a $\CQ$-functor $G:\bbC\to\bbB$ is the colimit of $F$ weighted by $\phi:\bbA\oto\bbC$, denoted by $\colim_{\phi}F$, if
\begin{equation} \label{colimit_dist}
G_{\nat}=F_{\nat}\lda\phi.
\end{equation}
If $\bbC$ is a one-object $\CQ$-category, then $\phi$ is a contravariant presheaf and $G$ picks out an object $\colim_{\phi}F\in\bbB_0$, as formulated in Definition \ref{limit_colimit_def}.

Conversely, if $G=\colim_{\phi}F$, then for each $z\in\bbC_0$, $\phi(-,z):\bbA\oto *_{tz}$ is a contravariant presheaf, and
\begin{align*}
\bbB(Gz,-)&=G_{\nat}(z,-)&(\text{Definition of }G_{\nat})\\
&=F_{\nat}\lda\phi(-,z).&\text{(Remark \ref{distributor_notion}(6))}
\end{align*}
Thus $Gz={\colim}_{\phi(-,z)}F$. This means that the colimit of a $\CQ$-functor weighted by a $\CQ$-distributor can be obtained from Definition \ref{limit_colimit_def} via pointwise calculation.

Dually, given a $\CQ$-functor $F:\bbA\to\bbB$ and a $\CQ$-distributor $\psi:\bbC\oto\bbA$, the limit of $F$ weighted by $\psi$, denoted by $\lim_{\psi}F$, is a $\CQ$-functor $G:\bbC\to\bbB$ such that
\begin{equation} \label{limit_dist}
G^{\nat}=\psi\rda F^{\nat}.
\end{equation}
Similarly, for each $z\in\bbC_0$, $Gz={\lim}_{\psi(z,-)}F$.

As illustrated in the following diagrams, $(\colim_{\phi}F)_{\nat}$ is the largest $\CQ$-distributor that satisfies
$$({\colim}_{\phi}F)_{\nat}\circ\phi\leq F_{\nat},$$
and $(\lim_{\psi}F)^{\nat}$ is the largest $\CQ$-distributor that satisfies
$$\psi\circ({\lim}_{\psi}F)^{\nat}\leq F^{\nat}.$$
$$\bfig
\ptriangle/->`<-`<-/<600,500>[\bbC`\bbB`\bbA;(\colim_{\phi}F)_{\nat}`\phi`F_{\nat}]
\place(0,250)[\circ] \place(300,500)[\circ] \place(300,250)[\circ] \place(200,400)[\twoar(1,-1)]
\ptriangle(1500,0)/<-`->`->/<600,500>[\bbC`\bbB`\bbA;(\lim_{\psi}F)^{\nat}`\psi`F^{\nat}]
\place(1500,250)[\circ] \place(1800,500)[\circ] \place(1800,250)[\circ] \place(1700,400)[\twoar(1,-1)]
\efig$$
This description applies to Definition \ref{limit_colimit_def} if $\bbA$ is replaced by a one-object $\CQ$-category.
\end{rem}

\begin{exmp} \label{ub_lb_def}
Let $\bbA$ be a $\CQ$-category and $\mu\in\PA$, the $\CQ$-distributor
$$\ub\mu=\bbA\lda\mu$$
is in $\PdA$, called the \emph{upper bounds} of $\mu$. $\ub:\PA\to\PdA$ is a $\CQ$-functor, since for all $\mu,\lam\in\PdA$,
$$\PA(\mu,\lam)=\lam\lda\mu\leq(\bbA\lda\lam)\rda(\bbA\lda\mu)=\PdA(\ub\mu,\ub\lam).$$
Note that for all $\lam\in\PdA$,
\begin{align*}
\PdA(\ub\mu,\lam)&=\lam\rda\ub\mu\\
&=\lam\rda(\bbA\lda\mu)\\
&=(\lam\rda\bbA)\lda\mu\\
&=(\sYd)_{\nat}(-,\lam)\lda\mu,
\end{align*}
thus $\ub\mu=\colim_{\mu}\sYd$.

Dually, let $\lam\in\PdA$, the $\CQ$-distributor
$$\lb\lam=\lam\rda\bbA$$
is in $\PA$, called the \emph{lower bounds} of $\lam$, and gives a $\CQ$-functor $\lb:\PdA\to\PA$. Similar calculation leads to $\lb\mu=\lim_{\mu}\sY$.
\end{exmp}

\begin{prop}[Isbell] {\rm\cite{Stubbe2005}} \label{ub_lb_adjunction}
$\ub\dv\lb:\PA\rhu\PdA$ in $\CQ$-$\Cat$.
\end{prop}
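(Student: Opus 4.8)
The plan is to use the hom-set characterization of adjoint $\CQ$-functors from Proposition \ref{adjoint_condition} rather than producing a unit and counit by hand. Both $\ub$ and $\lb$ are type-preserving functions between the $\CQ$-categories $\PA$ and $\PdA$ (indeed they were already seen to be $\CQ$-functors in Example \ref{ub_lb_def}), so that proposition reduces the claim $\ub\dv\lb:\PA\rhu\PdA$ to the single identity
$$\PdA(\ub\mu,\lam)=\PA(\mu,\lb\lam)$$
for all $\mu\in\PA$ and $\lam\in\PdA$.

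First I would unfold both sides into implications in the quantaloid $\CQ$-$\Dist$ (Proposition \ref{Dist_quantaloid}). Using the hom-formula (\ref{PdA_arrow}) for $\PdA$ together with $\ub\mu=\bbA\lda\mu$, the left-hand side becomes
$$\PdA(\ub\mu,\lam)=\lam\rda\ub\mu=\lam\rda(\bbA\lda\mu),$$
while the hom-formula (\ref{PA_arrow}) for $\PA$ together with $\lb\lam=\lam\rda\bbA$ turns the right-hand side into
$$\PA(\mu,\lb\lam)=\lb\lam\lda\mu=(\lam\rda\bbA)\lda\mu.$$
These two expressions coincide by a single application of the ``Galois'' identity Proposition \ref{arrow_calculation}(6), $(g\rda h)\lda f=g\rda(h\lda f)$, read inside $\CQ$-$\Dist$ with $g=\lam$, $h=\bbA$ and $f=\mu$; this is legitimate because $\CQ$-$\Dist$ is a genuine quantaloid, so every arrow-calculus identity of Proposition \ref{arrow_calculation} holds there. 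This completes the argument.

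I expect no serious obstacle, since the content is entirely the adjoint-transpose identity (6); the only care needed is bookkeeping rather than conceptual. I must track the domains and codomains of the presheaves correctly—$\mu:\bbA\oto *_{t\mu}$, $\lam:*_{t\lam}\oto\bbA$, and $\bbA:\bbA\oto\bbA$—so that both implications are formed in admissible directions and both sides indeed land in $\CQ(t\mu,t\lam)$. I must also respect the convention of Remark \ref{PdA_QDist_order}, namely that $\leq$ and the implications between covariant presheaves are computed with the local order of $\CQ$-$\Dist$, so that the hom-formula (\ref{PdA_arrow}) is applied with the correct orientation.
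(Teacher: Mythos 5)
Your proposal is correct and is essentially the paper's own proof: the paper likewise reduces the claim to the hom-identity $\PdA(\ub\mu,\lam)=\PA(\mu,\lb\lam)$ (implicitly via Proposition \ref{adjoint_condition}), unfolds both sides with Equations (\ref{PA_arrow}) and (\ref{PdA_arrow}), and closes the gap with the single identity $(\lam\rda\bbA)\lda\mu=\lam\rda(\bbA\lda\mu)$ from Proposition \ref{arrow_calculation}(6) applied in the quantaloid $\CQ$-$\Dist$. Your extra remarks on type bookkeeping and the order convention of Remark \ref{PdA_QDist_order} are sensible but add nothing beyond what the paper's calculation already handles.
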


\begin{proof}
For all $\mu\in\PA$ and $\lam\in\PdA$,
\begin{align*}
\PdA(\ub\mu,\lam)&=\lam\rda\ub\mu&\text{(Equation (\ref{PdA_arrow}))}\\
&=\lam\rda(\bbA\lda\mu)\\
&=(\lam\rda\bbA)\lda\mu&\text{(Proposition \ref{arrow_calculation}(6))}\\
&=\lb\lam\lda\mu\\
&=\PA(\mu,\lb\lam).&\text{(Equation (\ref{PA_arrow}))}
\end{align*}
Hence the conclusion holds.
\end{proof}

Proposition \ref{ub_lb_adjunction} presents the Isbell adjunction
$$\bbA\lda(-)\dv(-)\rda\bbA:\PA\rhu\PdA$$
in $\CQ$-categories. We will extend the adjunction to a generalized version in Chapter \ref{Isbell_adjunction} by replacing the identity $\CQ$-distributor $\bbA$ with a general $\CQ$-distributor $\phi$.

\begin{exmp} \label{sup_def}
Let $\bbA$ be a $\CQ$-category and $\mu\in\PA$. If $\ub\mu$ is represented by some $a\in\bbA_0$, i.e.,
$$\ub\mu=\bbA(a,-),$$
we say that $a$ is a \emph{supremum} of $\mu$, and denote it by $\sup\mu$. Note that
$$\bbA(\sup\mu,-)=\bbA\lda\mu=(1_{\bbA})_{\nat}\lda\mu,$$
thus $\sup\mu=\colim_{\mu}1_{\bbA}$.

Dually, let $\lam\in\PdA$, if $\lb\lam$ is represented by some $b\in\bbA_0$, i.e.,
$$\lb\lam=\bbA(-,b),$$
we say that $b$ is an \emph{infimum} of $\lam$, and denote it by $\inf\lam$. Note that
$$\bbA(-,\inf\lam)=\lam\rda\bbA=\lam\rda (1_{\bbA})^{\nat},$$
thus $\inf\lam=\lim_{\lam}1_{\bbA}$.
\end{exmp}

\begin{rem}
It follows immediately from Definition \ref{limit_colimit_def} and Proposition \ref{isomorphic_condition} that weighted colimits and weighted limits, when they exist, are essentially unique. It should be noted that there is an abuse of notation. Strictly speaking, when we refer to essentially unique objects, such as suprema, we should use the isomorphism $a\cong\sup\mu$, since it is not necessarily unique if the related $\CQ$-category is not skeletal. But we still write $a=\sup\mu$, where we mean $a$ is one of the suprema of $\mu$. Proposition \ref{isomorphic_condition} ensures that which object we ``pick out'' to represent the supremum makes no difference in most cases. However, we must be aware that $a=\sup\mu$ and $a'=\sup\mu$ does not imply $a=a'$.
\end{rem}

\begin{prop} \label{colim_supremum}
Let $F:\bbA\to\bbB$ be a $\CQ$-functor.
\begin{itemize}
\item[\rm (1)]  For each $\mu\in\PA$,
$${\colim}_{\mu}F={\sup}_{\bbB}F^{\ra}(\mu)$$
whenever the colimit or the supremum exists.
\item[\rm (2)] For each $\lam\in\PdA$,
$${\lim}_{\lam}F={\inf}_{\bbB}F^{\nra}(\lam)$$
whenever the limit or the infimum exists.
\end{itemize}
\end{prop}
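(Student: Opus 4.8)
The plan is to reduce each of the two assertions to a single identity between $\CQ$-distributors; once that identity is in hand, the colimit and the supremum (respectively the limit and the infimum) are forced to represent the same hom-arrow of $\bbB$, so they exist simultaneously and coincide.

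For (1), unwind the three definitions involved. By Definition \ref{limit_colimit_def}(1), $\colim_\mu F$ is the object $a\in\bbB_0$ determined by $\bbB(a,-)=F_\nat\lda\mu$. By Examples \ref{ub_lb_def} and \ref{sup_def}, $\sup_\bbB F^\ra(\mu)$ is the object $a$ determined by $\bbB(a,-)=\ub F^\ra(\mu)=\bbB\lda F^\ra(\mu)$, and $F^\ra(\mu)=\mu\circ F^\nat$ by Definition \ref{direct_inverse_image_def}(1). Hence the entire content of (1) is the distributor identity
$$F_\nat\lda\mu=\bbB\lda(\mu\circ F^\nat).$$

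I would establish this by two moves inside the quantaloid $\CQ$-$\Dist$. First, Proposition \ref{arrow_calculation}(5) (valid for $\CQ$-distributors, since $\CQ$-$\Dist$ is a quantaloid by Proposition \ref{Dist_quantaloid}) rewrites the right-hand side as $\bbB\lda(\mu\circ F^\nat)=(\bbB\lda F^\nat)\lda\mu$. Second, the adjunction $F_\nat\dv F^\nat:\bbA\rhu\bbB$ in $\CQ$-$\Dist$ yields, via Corollary \ref{adjoint_arrow_representation} applied with codomain identity $\bbB$, the representation $F_\nat=\bbB\lda F^\nat$. Substituting turns $(\bbB\lda F^\nat)\lda\mu$ into $F_\nat\lda\mu$, which is exactly the claim. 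With the two defining hom-arrows now equal, an object represents $F_\nat\lda\mu$ iff it represents $\ub F^\ra(\mu)$; by the essential uniqueness of such representations (Proposition \ref{isomorphic_condition}) the colimit exists precisely when the supremum does, and then $\colim_\mu F=\sup_\bbB F^\ra(\mu)$.

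Part (2) is the order-theoretic dual. Here $\lim_\lam F$ is the object $b$ with $\bbB(-,b)=\lam\rda F^\nat$ (Definition \ref{limit_colimit_def}(2)), while $\inf_\bbB F^\nra(\lam)$ is the object $b$ with $\bbB(-,b)=\lb F^\nra(\lam)=F^\nra(\lam)\rda\bbB=(F_\nat\circ\lam)\rda\bbB$, using $F^\nra(\lam)=F_\nat\circ\lam$. The dual half of Proposition \ref{arrow_calculation}(5) rewrites $(F_\nat\circ\lam)\rda\bbB=\lam\rda(F_\nat\rda\bbB)$, and Corollary \ref{adjoint_arrow_representation} gives $F_\nat\rda\bbB=F^\nat$, so both objects are determined by the common distributor $\lam\rda F^\nat$ and again coincide. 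The only point demanding care throughout is the bookkeeping of domains and codomains of the implications in $\CQ$-$\Dist$—so that, for instance, $\bbB\lda F^\nat$ and $F_\nat$ are recognized as the same type of distributor $\bbA\oto\bbB$—but there is no substantive obstacle beyond this.
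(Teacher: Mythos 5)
Your proof is correct and takes essentially the same route as the paper's: both reduce the claim to the distributor identity $F_\nat\lda\mu=\bbB\lda(\mu\circ F^{\nat})$ coming from the adjunction $F_\nat\dv F^{\nat}$ in $\CQ$-$\Dist$, the only difference being that the paper obtains it in one step from Proposition \ref{adjoint_arrow_calculation}(2), while you decompose that step into Proposition \ref{arrow_calculation}(5) plus Corollary \ref{adjoint_arrow_representation} ($F_\nat=\bbB\lda F^{\nat}$), which is an equivalent computation. Your explicit treatment of the existence clause and of the dual part (2) (which the paper omits as ``similar'') is also sound.
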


\begin{proof}
We prove (1) for example.
\begin{align*}
\bbB({\sup}_{\bbB}F^{\ra}(\mu),-)&=\bbB\lda F^{\ra}(\mu)&\text{(Example \ref{sup_def})}\\
&=\bbB\lda(\mu\circ F^{\nat})&(\text{Definition \ref{direct_inverse_image_def}(1)})\\
&=(\bbB\circ F_{\nat})\lda\mu&\text{(Proposition \ref{adjoint_arrow_calculation}(2))}\\
&=F_{\nat}\lda\mu\\
&=\bbB({\colim}_{\mu}F,-).&(\text{Definition \ref{limit_colimit_def}})
\end{align*}
Thus ${\colim}_{\mu}F={\sup}_{\bbB}F^{\ra}(\mu)$.
\end{proof}

Let $\bbA$ be a cocomplete $\CQ$-category. Since the supremum of each $\mu\in\PA$ is essentially unique, we can pick out a supremum for each $\mu\in\PA$. This gives rise to a $\CQ$-functor $\sup:\PA\to\bbA$, as shown in the following proposition. Similarly, $\inf:\PdA\to\bbA$ is a $\CQ$-functor if $\bbA$ is a complete $\CQ$-category.

\begin{prop} \label{sup_functor}
Let $\bbA$ be a $\CQ$-category.
\begin{itemize}
\item[\rm (1)] If $\bbA$ is cocomplete, then $\sup:\PA\to\bbA$ is a $\CQ$-functor and
               $$\sup\circ\sY\cong 1_{\bbA}.$$
\item[\rm (2)] If $\bbA$ is complete, then $\inf:\PdA\to\bbA$ is a $\CQ$-functor and
               $$\inf\circ\sYd\cong 1_{\bbA}.$$
\end{itemize}
\end{prop}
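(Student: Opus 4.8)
The plan is to prove (1) and obtain (2) by duality, and to settle both assertions of (1) at once by exhibiting $\sup$ as a left adjoint of the Yoneda embedding. First I would record that the pair $(\sup,\sY)$ consists of type-preserving maps: by Definition \ref{limit_colimit_def} the object $\sup\mu=\colim_{\mu}1_{\bbA}$ is necessarily of type $t\mu$, and $\sY$ is type-preserving by construction. Hence Proposition \ref{adjoint_condition} will be applicable once the defining hom-identity is checked.

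The key computation is the identity
$$\bbA(\sup\mu, a) = \PA(\mu, \sY a)\qquad(\mu\in\PA,\ a\in\bbA_0).$$
For the left-hand side I would invoke Example \ref{sup_def}, which gives $\bbA(\sup\mu,-)=\bbA\lda\mu=\ub\mu$; evaluating at $a$ and unfolding the left implication of $\CQ$-$\Dist$ (Proposition \ref{Dist_quantaloid}(4)) yields $\bw_{x\in\bbA_0}\bbA(x,a)\lda\mu(x)$. For the right-hand side, Equation (\ref{PA_arrow}) gives $\PA(\mu,\sY a)=\sY a\lda\mu$, and since $\sY a=\bbA(-,a)$, the same unfolding produces the identical expression $\bw_{x\in\bbA_0}\bbA(x,a)\lda\mu(x)$. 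With this identity in hand, Proposition \ref{adjoint_condition} immediately gives $\sup\dv\sY:\PA\rhu\bbA$; in particular $\sup$ is a $\CQ$-functor, which settles the first half of (1).

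For $\sup\circ\sY\cong 1_{\bbA}$, I would compute $\bbA(\sup(\sY a),-)=\bbA\lda\sY a$, so that $\bbA(\sup(\sY a),z)=\bw_{x}\bbA(x,z)\lda\bbA(x,a)$. This last meet is exactly $\sY z\lda\sY a=\PA(\sY a,\sY z)$, which by the Yoneda Lemma \ref{Yoneda_lemma} equals $\sY z(a)=\bbA(a,z)$. Thus $\bbA(\sup(\sY a),z)=\bbA(a,z)$ for all $z\in\bbA_0$, and Proposition \ref{isomorphic_condition} yields $\sup(\sY a)\cong a$, i.e. $\sup\circ\sY\cong 1_{\bbA}$. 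Part (2) is then entirely dual: the same steps with $\ub,\lda,\sY$ replaced by $\lb,\rda,\sYd$ give $\bbA(a,\inf\lam)=\PdA(\sYd a,\lam)$, whence $\sYd\dv\inf:\bbA\rhu\PdA$ and $\inf$ is a $\CQ$-functor, while $\bbA(z,\inf(\sYd a))=\bw_{w}\bbA(a,w)\rda\bbA(z,w)=\PdA(\sYd z,\sYd a)=\bbA(z,a)$ gives $\inf\circ\sYd\cong 1_{\bbA}$.

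I expect the only genuine care to lie in the bookkeeping of the two implications: keeping track of which of $\lda$ and $\rda$ appears, fixing the correct order of the arguments in the hom-arrows of $\PA$ and $\PdA$ (recalling from Remark \ref{PdA_QDist_order} that $\PdA$ carries the \emph{reversed} local order, so that $\PdA(\mu,\lam)=\lam\rda\mu$), and confirming that these pointwise calculations are insensitive to the representative picked out by $\sup$ — which they are, since the defining equation $\bbA(\sup\mu,-)=\bbA\lda\mu$ is an exact identity rather than merely an isomorphism.
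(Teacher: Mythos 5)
Your proof is correct, but it takes a genuinely different route from the paper's for the functoriality half of the statement. The paper proves that $\sup$ is a $\CQ$-functor by a direct lax computation,
$$\PA(\mu,\lam)=\lam\lda\mu\leq(\bbA\lda\lam)\rda(\bbA\lda\mu)=\bbA(\sup\mu,\sup\lam),$$
using Proposition \ref{arrow_calculation}(1)(4) and Example \ref{sup_def}, and then verifies $\sup\circ\sY\cong 1_{\bbA}$ by computing $\bbA(\sup\circ\sY x,-)=\bbA\lda\bbA(-,x)=(\bbA\lda\bbA)(x,-)=\bbA(x,-)$. You instead verify the exact hom-identity $\bbA(\sup\mu,a)=\PA(\mu,\sY a)$ and invoke Proposition \ref{adjoint_condition}, which simultaneously yields that $\sup$ is a $\CQ$-functor and that $\sup\dv\sY:\PA\rhu\bbA$. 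This is a strictly stronger intermediate conclusion: in the paper's architecture the adjunction $\sup\dv\sY$ is item (7) of Theorem \ref{complete_cocomplete_equivalent}, derived later from the present proposition in the step (5)${}\Rightarrow{}$(7) via the Yoneda lemma, whereas your argument obtains it at once and would make that later step immediate (likewise your dual identity $\bbA(a,\inf\lam)=\PdA(\sYd a,\lam)$ is exactly item (8)). What the paper's route buys is a clean separation of the lax content (functoriality) from the exact content (adjointness), keeping this proposition self-contained; what your route buys is that the adjunction comes for free and part (2) dualizes verbatim. Your treatment of $\sup\circ\sY\cong 1_{\bbA}$, passing through $\PA(\sY a,\sY z)$ and Lemma \ref{Yoneda_lemma}, is only a repackaging of the paper's computation, and both rest on Proposition \ref{isomorphic_condition}. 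Finally, there is no circularity in your appeal to Proposition \ref{adjoint_condition}: it precedes this proposition and is independent of it, and your attention to type-preservation of $\sup$ and to the representative-independence of $\sup\mu$ covers the hypotheses it needs.
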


\begin{proof}
We prove (1) for example. For each $\mu,\lam\in\PA$,
\begin{align*}
\PA(\mu,\lam)&=\lam\lda\mu&\text{(Formula (\ref{PA_arrow}))}\\
&\leq(\bbA\lda\lam)\rda(\bbA\lda\mu)&\text{(Proposition \ref{arrow_calculation}(1)(4))}\\
&=\bbA(\sup\lam,-)\rda\bbA(\sup\mu,-)&\text{(Example \ref{sup_def})}\\
&=\bbA(\sup\mu,\sup\lam).&\text{(Remark \ref{distributor_notion}(3))}
\end{align*}
Thus $\sup$ is a $\CQ$-functor. Furthermore, for each $x\in\bbA_0$,
\begin{align*}
\bbA(\sup\circ\sY x,-)&=\bbA\lda\sY x&\text{(Example \ref{sup_def})}\\
&=\bbA\lda\bbA(-,x)\\
&=(\bbA\lda\bbA)(x,-)&\text{(Remark \ref{distributor_notion}(6))}\\
&=\bbA(x,-).
\end{align*}
Thus $\sup\circ\sY x\cong x$, and consequently $\sup\circ\sY\cong 1_{\bbA}$.
\end{proof}

\section{Tensors and cotensors}

We introduce the notions of tensors and cotensors to provide a clear characterization for cocomplete and complete $\CQ$-categories.

\begin{defn} \label{tensor_cotensor_def}
Let $\bbA$ be a $\CQ$-category.
\begin{itemize}
\item [\rm (1)] For $x\in\bbA_0$ and $f\in\CP(tx)$, the \emph{tensor} of $f$ and $x$, denoted by $f\otimes x$, is an object in $\bbA_0$ of type $t(f\otimes x)=tf$ such that
    $$\bbA(f\otimes x,-)=\bbA(x,-)\lda f.$$
    A $\CQ$-category $\bbA$ is \emph{tensored} if the tensor $f\otimes x$ exists for all choices of $x$ and $f$.
\item [\rm (2)] For $x\in\bbA_0$ and $f\in\CPd(tx)$, the \emph{cotensor} of $f$ and $x$, denoted by $f\rat x$, is an object in $\bbA_0$ of type $t(f\rat x)=tf$ such that
    $$\bbA(-,f\rat x)=f\rda\bbA(-,x).$$
    A $\CQ$-category $\bbA$ is \emph{cotensored} if the cotensor $f\rat x$ exists for all choices of $x$ and $f$.
\end{itemize}
\end{defn}

\begin{exmp}
\begin{itemize}
\item[\rm (1)] For all $x\in\bbA_0$, $1_{tx}\otimes x\cong x\cong 1_{tx}\rat x$.
\item[\rm (2)] If $\bbA$ is tensored and nonempty, then each $\bbA_X$ has a bottom element $\bot_{\bbA_X}$ and for all $x\in\bbA_0$, $\bot_{tx,X}\otimes x\cong\bot_{\bbA_X}$.
\item[\rm (3)] If $\bbA$ is cotensored and nonempty, then each $\bbA_X$ has a top element $\top_{\bbA_X}$ and for all $y\in\bbA_0$, $\bot_{X,ty}\rat y\cong\top_{\bbA_X}$.
\end{itemize}
\end{exmp}

The following proposition is a direct consequence of Definition \ref{tensor_cotensor_def}.

\begin{prop} \label{tensor_la_ra}
Let $\bbA$ be a $\CQ$-category.
\begin{itemize}
\item[\rm (1)] If $\bbA$ is tensored, then for all $x\in\bbA_0$,
               $$(-)\otimes x\dv\bbA(x,-):\CP(tx)\rhu\bbA.$$
\item[\rm (2)] If $\bbA$ is cotensored, then for all $x\in\bbA_0$,
               $$\bbA(-,x)\dv(-)\rat x:\bbA\rhu\CPd(tx).$$
\end{itemize}
\end{prop}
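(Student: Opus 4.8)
The plan is to reduce both statements to the hom-equality characterization of adjoint $\CQ$-functors in Proposition~\ref{adjoint_condition}: a pair of type-preserving maps (not assumed in advance to be $\CQ$-functors) between two $\CQ$-categories forms an adjunction exactly when the hom-arrow out of the image of the left map equals the hom-arrow into the image of the right map. This is the natural tool, since it produces $\CQ$-functoriality of both maps together with the adjunction in one stroke; only type-preservation and a single family of hom-equalities then have to be checked.

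For (1) the left adjoint candidate is $(-)\otimes x:\CP(tx)\to\bbA$ and the right adjoint candidate is $\bbA(x,-):\bbA\to\CP(tx)$. Type-preservation is immediate: $t(f\otimes x)=tf$ by the definition of the tensor, while $\bbA(x,y)\in\CQ(tx,ty)$ is a contravariant presheaf on $*_{tx}$ of type $ty$. The one computation is
$$\bbA(f\otimes x,y)=\CP(tx)(f,\bbA(x,y))\qquad(f\in\CP(tx),\ y\in\bbA_0).$$
By formula~(\ref{PA_arrow}) the right-hand side equals $\bbA(x,y)\lda f$. For the left-hand side I would evaluate the defining identity $\bbA(f\otimes x,-)=\bbA(x,-)\lda f$ of the tensor at the object $y$: the value of the covariant presheaf $\bbA(x,-)\lda f$ at $y$ is computed from Proposition~\ref{Dist_quantaloid}(4), where the indexing meet runs over the unique object of $*_{tx}$ and so collapses to $\bbA(x,y)\lda f$. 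The two sides coincide, and Proposition~\ref{adjoint_condition} yields the adjunction.

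Part (2) is the formal dual, with contravariant presheaves and $\lda$ replaced by covariant presheaves and $\rda$. I would take $\bbA(-,x):\bbA\to\CPd(tx)$ as the left adjoint and $(-)\rat x:\CPd(tx)\to\bbA$ as the right adjoint, observe that $t(f\rat x)=tf$ and that $\bbA(y,x)\in\CQ(ty,tx)$ is a covariant presheaf of type $ty$, and verify
$$\CPd(tx)(\bbA(y,x),f)=\bbA(y,f\rat x)\qquad(y\in\bbA_0,\ f\in\CPd(tx)).$$
By formula~(\ref{PdA_arrow}) the left-hand side is $f\rda\bbA(y,x)$, and evaluating the cotensor identity $\bbA(-,f\rat x)=f\rda\bbA(-,x)$ at $y$ gives the same arrow; Proposition~\ref{adjoint_condition} concludes.

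I expect no deep obstacle: the work is bookkeeping of types and of which implication is in force, the substantive content having already been packaged into Definition~\ref{tensor_cotensor_def} and Proposition~\ref{adjoint_condition}. The one point demanding care is the orientation in (2). Because the underlying order on $\CPd$ is the \emph{reverse} of the local order in $\CQ$-$\Dist$ (Remark~\ref{PdA_QDist_order}), it is $\bbA(-,x)$ rather than $(-)\rat x$ that emerges as the left adjoint; keeping this direction consistent with the one in (1) is the only place a sign-like error could slip in.
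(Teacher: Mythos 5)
Your proof is correct and matches the argument the paper intends: the paper states this proposition as a direct consequence of Definition~\ref{tensor_cotensor_def}, and your verification---evaluating the defining identities of tensor and cotensor at an object, rewriting via formulas~(\ref{PA_arrow}) and~(\ref{PdA_arrow}), and invoking the hom-equality criterion of Proposition~\ref{adjoint_condition}---is exactly that routine check, with the types and the orientation in (2) handled correctly.
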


\begin{prop} \label{A_x_yi_distribution}
Let $\bbA$ be a $\CQ$-category and $x\in\bbA_0$, $Y\in\CQ_0$.
\begin{itemize}
\item[\rm (1)] If $\bbA$ is tensored, then for each subset $\{y_j\}\subseteq\bbA_Y$,
$$\bbA\Big(x,\bw_j y_j\Big)=\bw_j\bbA(x,y_j)$$
whenever the meet $\displaystyle\bw_j y_j$ in $\bbA_Y$ exists, where $\displaystyle\bw$ on the right hand denotes the meet in $\CQ(tx,Y)$.
\item[\rm (2)] If $\bbA$ is cotensored, then for each subset $\{y_j\}\subseteq\bbA_Y$,
$$\bbA\Big(\bv_j y_j,x\Big)=\bw_j\bbA(y_j,x)$$
whenever the join $\displaystyle\bv_j y_j$ in $\bbA_Y$ exists, where $\displaystyle\bw$ on the right hand denotes the meet in $\CQ(Y,tx)$.
\end{itemize}
\end{prop}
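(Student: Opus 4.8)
The plan is to reduce both identities to the tensor--hom and cotensor adjunctions of Proposition~\ref{tensor_la_ra}, combined with the order-theoretic description of hom-arrows in Proposition~\ref{underlying_order}. I would treat (1) in full and then observe that (2) is entirely dual, with tensors replaced by cotensors, the meet $\bw_j y_j$ in $\bbA_Y$ replaced by the join $\bv_j y_j$, and the roles of lower and upper bounds exchanged.

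For (1), set $m=\bw_j y_j$ for the meet taken in $\bbA_Y$ and set $f=\bw_j\bbA(x,y_j)$ for the meet taken in $\CQ(tx,Y)$. Note that $f$ is legitimately an object of $\CP(tx)$ of type $Y$, since an object of $\CP(tx)$ of type $Y$ is nothing but a $\CQ$-arrow in $\CQ(tx,Y)$; this is what lets me tensor with it. The inequality $\bbA(x,m)\leq f$ is the easy half and needs no tensoring: because $m\leq y_j$ for every $j$, Proposition~\ref{underlying_order} gives $\bbA(x,m)\leq\bbA(x,y_j)$, and taking the meet over $j$ yields $\bbA(x,m)\leq f$.

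The substantive half is $f\leq\bbA(x,m)$, and this is where tensoredness is essential. First I would form the tensor $f\otimes x$, which exists because $\bbA$ is tensored and has type $tf=Y$. Either directly from Definition~\ref{tensor_cotensor_def}(1) or via Proposition~\ref{adjoint_condition} applied to $(-)\otimes x\dv\bbA(x,-)$, one gets $\bbA(f\otimes x,y)=\bbA(x,y)\lda f$ for every $y$. Since $f\leq\bbA(x,y_j)$, Proposition~\ref{arrow_calculation}(1) transposes this to $1_Y\leq\bbA(x,y_j)\lda f=\bbA(f\otimes x,y_j)$, i.e. $f\otimes x\leq y_j$ for every $j$; thus $f\otimes x$ is a lower bound of $\{y_j\}$ in $\bbA_Y$. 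As $m$ is the greatest such lower bound, $f\otimes x\leq m$, which reads $1_Y\leq\bbA(f\otimes x,m)=\bbA(x,m)\lda f$, and one more transposition across Proposition~\ref{arrow_calculation}(1) gives $f\leq\bbA(x,m)$. The two halves together prove (1).

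The main obstacle is exactly the inequality $f\leq\bbA(x,m)$: monotonicity of the hom-arrow only delivers the reverse, and this direction cannot be seen pointwise, because it genuinely invokes the universal (greatest-lower-bound) property of $m$ rather than just that $m$ is some lower bound. The unlocking idea is to materialize the candidate meet $f$ as a concrete object $f\otimes x$ of $\bbA_Y$ and feed it into that universal property; conceptually this is just the fact that the right adjoint $\bbA(x,-)$ preserves the meet. Once the adjunction $(-)\otimes x\dv\bbA(x,-)$ is in hand, everything else is routine transposition through Proposition~\ref{arrow_calculation}(1). For (2) I would run the mirror argument: put $s=\bv_j y_j$ and $g=\bw_j\bbA(y_j,x)\in\CPd(tx)$, form the cotensor $g\rat x$, use $\bbA(y,g\rat x)=g\rda\bbA(y,x)$ from Proposition~\ref{tensor_la_ra}(2) to show $g\rat x$ is an upper bound of $\{y_j\}$, and then conclude $g\leq\bbA(s,x)$ from $s\leq g\rat x$, giving $\bbA(\bv_j y_j,x)=\bw_j\bbA(y_j,x)$.
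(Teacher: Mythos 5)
Your proposal is correct and follows essentially the same route as the paper's proof: both halves are established exactly as in the text, with the easy inequality coming from Proposition \ref{underlying_order} and the substantive one obtained by tensoring a lower bound $f$ with $x$, transposing via Proposition \ref{arrow_calculation}(1) to see that $f\otimes x$ is a lower bound of $\{y_j\}$ in $\bbA_Y$, and then invoking the universal property of the meet. Your treatment of (2) as the mirror argument with cotensors likewise matches the paper's implicit dualization.
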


\begin{proof}
We prove (1) for example. Suppose the meet $\displaystyle\bw_j y_j$ in $\bbA_Y$ exists, then for all $j$, by Proposition \ref{underlying_order},
$$\bbA\Big(x,\bw_j y_j\Big)\leq\bbA(x,y_j).$$
If $f\in\CQ(tx,Y)$ satisfies $f\leq\bbA(x,y_j)$ for all $j$, then
$$1_Y\leq\bbA(x,y_j)\lda f=\bbA(f\otimes x,y_j)$$
because $\bbA$ is tensored. Thus $f\otimes x\leq y_j$ for all $j$ in $\bbA_Y$, and it follows that $f\otimes x\leq\displaystyle\bw_j y_j$. Hence
$$1_Y\leq\bbA\Big(f\otimes x,\displaystyle\bw_j y_j\Big)=\bbA\Big(x,\bw_j y_j\Big)\lda f,$$
and consequently $f\leq\bbA\Big(x,\displaystyle\bw_j y_j\Big)$.  Therefore, $\bbA\Big(x,\displaystyle\bw_j y_j\Big)=\displaystyle\bw_j\bbA(x,y_j)$.
\end{proof}

\begin{prop} \label{tensor_sup}
Let $\bbA$ be a $\CQ$-category.
\begin{itemize}
\item[\rm (1)] For $x\in\bbA_0$ and $f\in\CP(tx)$,
    $$f\otimes x=\sup(f\circ\sY x)$$
    whenever the tensor or the supremum exists.
\item[\rm (2)] For $x\in\bbA_0$ and $g\in\CPd(tx)$,
    $$g\rat x=\inf(\sYd x\circ g)$$
    whenever the cotensor or the infimum exists.
\end{itemize}
\end{prop}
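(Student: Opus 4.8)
The plan is to prove both parts by the same device: reduce the claimed equality of objects to an equality of hom-arrows, since by Proposition~\ref{isomorphic_condition} an object of a $\CQ$-category is determined up to isomorphism by its hom-arrows. I will treat (1) in detail; part (2) is formally dual, with $\lda$ replaced by $\rda$, $\sY$ by $\sYd$, $\sup$ by $\inf$, and the left-hand characterization of Example~\ref{sup_def} by its right-hand counterpart.

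The computational heart is a single auxiliary identity in the quantaloid $\CQ$-$\Dist$:
$$\bbA\lda\sY x=\bbA(x,-).$$
To see this, note that $\bbA\lda\sY x$ is a covariant presheaf $*_{tx}\oto\bbA$, and evaluating it via Proposition~\ref{Dist_quantaloid}(4) gives $(\bbA\lda\sY x)(c)=\bw_{a\in\bbA_0}\bbA(a,c)\lda\bbA(a,x)$ for each $c\in\bbA_0$. The inequality $\bbA(x,c)\leq\bbA(a,c)\lda\bbA(a,x)$ for every $a$ is just the composition law $\bbA(x,c)\circ\bbA(a,x)\leq\bbA(a,c)$ of Definition~\ref{Q_category}(2) transposed through Proposition~\ref{arrow_calculation}(1); conversely, taking $a=x$ and using $1_{tx}\leq\bbA(x,x)$, the order-reversal of $\bbA(x,c)\lda(-)$ in its argument, and $\bbA(x,c)\lda 1_{tx}=\bbA(x,c)$ (Proposition~\ref{arrow_top_bottom}(1)) shows the meet is $\leq\bbA(x,c)$. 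Equivalently, one may observe that the point $\CQ$-functor $*_{tx}\to\bbA$ selecting $x$ has graph $\bbA(x,-)$ and cograph $\sY x$, so $\sYd x\dv\sY x$ and the identity is an instance of Proposition~\ref{adjoint_arrow_calculation}(1) with $h=\bbA$.

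With this in hand, suppose first that $\sup(f\circ\sY x)$ exists. Then Example~\ref{sup_def}, the associativity of left implication over composition (Proposition~\ref{arrow_calculation}(5), valid in the quantaloid $\CQ$-$\Dist$), and the auxiliary identity give
$$\bbA(\sup(f\circ\sY x),-)=\bbA\lda(f\circ\sY x)=(\bbA\lda\sY x)\lda f=\bbA(x,-)\lda f.$$
The right-hand side is exactly the hom-arrow $\bbA(f\otimes x,-)$ demanded by Definition~\ref{tensor_cotensor_def}(1); thus the displayed distributor is represented by $\sup(f\circ\sY x)$, so the tensor $f\otimes x$ exists and equals it up to isomorphism. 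Reading the same chain in the other direction shows that if $f\otimes x$ exists, then $\bbA(x,-)\lda f=\bbA\lda(f\circ\sY x)$ is represented by $f\otimes x$, which is precisely the statement (Example~\ref{sup_def}) that $f\otimes x$ is a supremum of $f\circ\sY x$. This settles the ``whenever $\ldots$ exists'' clause in both directions, and Proposition~\ref{isomorphic_condition} upgrades ``represents the same distributor'' to the asserted equality of objects.

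I do not expect a genuine obstacle here: once the auxiliary identity $\bbA\lda\sY x=\bbA(x,-)$ is isolated, the rest is bookkeeping with implications in a quantaloid. The only points demanding care are (i) keeping the domains and codomains of the distributors straight so that Proposition~\ref{arrow_calculation}(5) applies with $f$ as the outer factor, and (ii) phrasing the argument symmetrically so that it yields the conclusion from either hypothesis (tensor exists, or supremum exists), as the statement requires. Part (2) follows verbatim by dualizing, using $\bbA(-,\inf\lam)=\lam\rda\bbA$, the dual auxiliary identity $\sYd x\rda\bbA=\bbA(-,x)$, the right-implication form of Proposition~\ref{arrow_calculation}(5), and Definition~\ref{tensor_cotensor_def}(2) characterizing $g\rat x$ by $\bbA(-,g\rat x)=g\rda\bbA(-,x)$.
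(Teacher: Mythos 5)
Your proof is correct and is essentially the paper's own argument: the same chain of hom-arrow equalities $\bbA(f\otimes x,-)=\bbA(x,-)\lda f=(\bbA\lda\sY x)\lda f=\bbA\lda(f\circ\sY x)=\bbA(\sup(f\circ\sY x),-)$ via Definition \ref{tensor_cotensor_def}, Proposition \ref{arrow_calculation}(5), and Example \ref{sup_def}, merely read in the opposite direction. The only difference is presentational: you prove the auxiliary identity $\bbA\lda\sY x=\bbA(x,-)$ from scratch (the paper cites it as an instance of Remark \ref{distributor_notion}) and you spell out the two-way "whenever either exists" reading, which the paper leaves implicit in its single chain of equalities.
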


\begin{proof}
We prove (1) for example. Note that $f\circ\sY x\in\PA$ because $f$ is viewed as a $\CQ$-distributor $*_{tx}\oto *_{tf}$ between one-object $\CQ$-categories. Then
\begin{align*}
\bbA(f\otimes x,-)&=\bbA(x,-)\lda f&\text{(Definition \ref{tensor_cotensor_def})}\\
&=(\bbA\lda\bbA(-,x))\lda f&\text{(Remark \ref{distributor_notion})}\\
&=\bbA\lda(f\circ\sY x)&\text{(Proposition \ref{arrow_calculation}(5))}\\
&=\bbA(\sup(f\circ\sY x),-).&\text{(Example \ref{sup_def})}
\end{align*}
Thus $f\otimes x=\sup(f\circ\sY x)$.
\end{proof}

\begin{exmp} \label{PA_tensor}
Let $\bbA$ be a $\CQ$-category.
\begin{itemize}
\item[\rm (1)] $\PA$ is a tensored and cotensored $\CQ$-category in which
$$f\otimes\mu=f\circ\mu,\quad g\rat\mu=g\rda\mu$$
for all $\mu\in\PA$ and $f\in\CP(t\mu)$, $g\in\CPd(t\mu)$.
\item[\rm (2)] $\PdA$ is a tensored and cotensored $\CQ$-category in which
$$f\otimes\lam=\lam\lda f,\quad g\rat\lam=\lam\circ g$$
for all $\lam\in\PdA$ and $f\in\CP(t\lam)$, $g\in\CPd(t\lam)$.
\end{itemize}
\end{exmp}

\begin{defn}
A $\CQ$-category $\bbA$ is \emph{order-complete} if each $\bbA_X$ admits all joins (or equivalently, all meets) in the underlying preorder.
\end{defn}

\begin{defn} \label{conical_def}
Let $\bbA$ be a $\CQ$-category and $X\in\CQ_0$.
\begin{itemize}
\item[\rm (1)] The \emph{conical colimit} of a subset $\{x_i\}\subseteq\bbA_X$ is the supremum of the join $\displaystyle\bv_i\sY x_i$ in $\CQ\text{-}\Dist(\bbA,*_X)$. $\bbA$ is \emph{conically cocomplete} if it admits all conical colimits.
\item[\rm (2)] The \emph{conical limit} of a subset $\{x_i\}\subseteq\bbA_X$ is the infimum of the join $\displaystyle\bv_i\sYd x_i$ in $\CQ\text{-}\Dist(*_X,\bbA)$. $\bbA$ is \emph{conically complete} if it admits all conical limits.
\end{itemize}
\end{defn}

It should be noted that by Remark \ref{PdA_QDist_order}, the join $\displaystyle\bv_i\sYd x_i$ in $\CQ\text{-}\Dist(*_X,\bbA)$ in the meet $\displaystyle\bw_i\sYd x_i$ in $(\PdA)_X$.

\begin{prop} {\rm\cite{Stubbe2006}} \label{join_sup}
Let $\bbA$ be a $\CQ$-category, $X\in\CQ_0$ and $\{x_i\}\subseteq\bbA_X$.
\begin{itemize}
\item[\rm (1)] If the conical colimit of $\{x_i\}$ exists, then it is also the join of $\{x_i\}$ in $\bbA_X$, i.e.,
    $$\sup\bv_i\sY x_i=\bv_i x_i.$$
    Conversely, if $\bbA$ is cotensored, then the join of $\{x_i\}$ in $\bbA_X$, when it exists, is also the conical colimit of $\{x_i\}$.
\item[\rm (2)] If the conical limit of $\{x_i\}$ exists, then it is also the meet of $\{x_i\}$ in $\bbA_X$, i.e.,
    $$\inf\bv_i\sYd x_i=\bw_i x_i.$$
    Conversely, if $\bbA$ is tensored, then the meet of $\{x_i\}$ in $\bbA_X$, when it exists, is also the conical limit of $\{x_i\}$.
\end{itemize}
\end{prop}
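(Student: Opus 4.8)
The plan is to reduce both halves of each part to a single pointwise identity relating the hom-arrow of the candidate (co)limit to a meet of hom-arrows of the $x_i$, and then to translate that identity into the defining universal property of a join (resp.\ meet) in the underlying preorder $\bbA_X$. I treat part (1); part (2) is dual, with $\sY$, $\ub$, the left implication $\lda$ and the cotensored hypothesis replaced throughout by $\sYd$, $\lb$, the right implication $\rda$ and the tensored hypothesis. Write $\mu=\bv_i\sY x_i$, the join taken in $\CQ\text{-}\Dist(\bbA,*_X)=(\PA)_X$, so that by Definition \ref{conical_def}(1) the conical colimit of $\{x_i\}$ is exactly $\sup\mu$. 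The key computation is the pointwise formula
$$(\ub\mu)(b)=\bw_i\bbA(x_i,b)\qquad(b\in\bbA_0).$$
To obtain it I expand $\ub\mu=\bbA\lda\mu$ by Proposition \ref{Dist_quantaloid}(4), use that left implication turns the join $\mu=\bv_i\sY x_i$ into a meet (Proposition \ref{arrow_calculation}(3), applied in the quantaloid $\CQ\text{-}\Dist$), and recognise each resulting factor via Yoneda: $(\ub\,\sY x_i)(b)=\bw_{x}\bbA(x,b)\lda\bbA(x,x_i)=\PA(\sY x_i,\sY b)=\bbA(x_i,b)$ by Lemma \ref{Yoneda_lemma}. (Equivalently, $\ub\circ\sY=\sYd$.)

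For the forward direction, assume the conical colimit $a=\sup\mu$ exists; then $a$ has type $X$ and, by the definition of the supremum (Example \ref{sup_def}), $\bbA(a,-)=\bbA\lda\mu=\ub\mu$, so the displayed formula gives $\bbA(a,b)=\bw_i\bbA(x_i,b)$ for every $b$. Restricting to $b\in\bbA_X$ and using the definition of the underlying order (Proposition \ref{underlying_order}) we get
$$a\leq b\iff 1_X\leq\bbA(a,b)=\bw_i\bbA(x_i,b)\iff\forall i,\ 1_X\leq\bbA(x_i,b)\iff\forall i,\ x_i\leq b.$$
Taking $b=a$ shows each $x_i\leq a$, and the equivalence shows $a$ is the least such upper bound; hence $a=\bv_i x_i$ in $\bbA_X$.

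For the converse, assume $\bbA$ is cotensored and that $a:=\bv_i x_i$ exists in $\bbA_X$. Proposition \ref{A_x_yi_distribution}(2) then yields the distributivity $\bbA(a,b)=\bbA(\bv_i x_i,b)=\bw_i\bbA(x_i,b)$ for all $b$, which by the key formula is exactly $(\ub\mu)(b)$. Thus $\bbA(a,-)=\ub\mu=\bbA\lda\mu$, i.e.\ $a$ represents $\ub\mu$, so by Example \ref{sup_def} $a=\sup\mu$ is the conical colimit of $\{x_i\}$.

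The one genuinely delicate point is bookkeeping of the order conventions of Remark \ref{PdA_QDist_order}: the join $\bv_i\sYd x_i$ in $\CQ\text{-}\Dist(*_X,\bbA)$ defining the conical limit in part (2) is the \emph{meet} $\bw_i\sYd x_i$ in $(\PdA)_X$, and one must keep the direction of $\leq$ straight when passing between $\CQ\text{-}\Dist$ and $\PdA$. Everything else is routine; the only substantive input is that the cotensored (resp.\ tensored) hypothesis is \emph{precisely} what powers Proposition \ref{A_x_yi_distribution} and hence each converse direction, whereas the forward directions need no such hypothesis.
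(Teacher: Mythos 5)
Your proof is correct and follows essentially the same route as the paper's: the key identity $\bbA\lda\bv_i\sY x_i=\bw_i\sYd x_i$ (obtained from Proposition \ref{arrow_calculation}(3) plus the Yoneda-type identifications of Remark \ref{distributor_notion}), together with Proposition \ref{A_x_yi_distribution} powering the cotensored/tensored converse. The only difference is cosmetic: where the paper checks $x_i\leq\sup\bv_i\sY x_i$ by a separate implication computation, you extract it from the same pointwise formula by setting $b=a$ in the equivalence $a\leq b\iff\forall i\,(x_i\leq b)$, a mild streamlining of the forward direction.
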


\begin{proof}
We prove (1) for example. If the conical colimit $\sup\displaystyle\bv_i\sY x_i$ exists, then for all $i$,
\begin{align*}
\bbA\Big(x_i,\sup\bv_i\sY x_i\Big)&=\bbA\Big(\sup\bv_i\sY x_i,-\Big)\rda\bbA(x_i,-)&\text{(Remark \ref{distributor_notion}(3))}\\
&=\Big(\bbA\lda\bv_i\sY x_i\Big)\rda\bbA(x_i,-)&\text{(Example \ref{sup_def})}\\
&\geq(\bbA\lda\bbA(-,x_i))\rda\bbA(x_i,-)&\text{(Proposition \ref{arrow_calculation}(3))}\\
&=\bbA(x_i,-)\rda\bbA(x_i,-)&\text{(Remark \ref{distributor_notion}(6))}\\
&=\bbA(x_i,x_i)&\text{(Remark \ref{distributor_notion}(3))}\\
&\geq 1_X.
\end{align*}
Thus $x_i\leq\sup\displaystyle\bv_i\sY x_i$. Suppose that $y\in\bbA_x$ and $x_i\leq y$ for all $i$, then
\begin{align*}
\bbA\Big(\sup\bv_i\sY x_i,y\Big)&=\bbA(-,y)\lda\bv_i\sY x_i&\text{(Example \ref{sup_def})}\\
&=\bw_i(\bbA(-,y)\lda\bbA(-,x_i))&\text{(Proposition \ref{arrow_calculation}(3))}\\
&=\bw_i\bbA(x_i,y)&\text{(Remark \ref{distributor_notion}(2))}\\
&\geq 1_X.
\end{align*}
Thus $\sup\displaystyle\bv_i\sY x_i\leq y$. Therefore, $\sup\displaystyle\bv_i\sY x_i=\displaystyle\bv_i x_i$.

Conversely, if $\bbA$ is cotensored and the join $\displaystyle\bv_i x_i$ in $\bbA_X$ exists, then
\begin{align*}
\bbA\Big(\bv_i x_i,-\Big)&=\bw_i\bbA(x_i,-)&\text{(Proposition \ref{A_x_yi_distribution})}\\
&=\bw_i(\bbA\lda\bbA(-,x_i))&\text{(Remark \ref{distributor_notion})}\\
&=\bbA\lda\bv_i\sY x_i.&\text{(Proposition \ref{arrow_calculation}(3))}
\end{align*}
Thus $\displaystyle\bv_i x_i=\sup\bv_i\sY x_i$.
\end{proof}

\begin{cor} \label{cotensor_order_conical_complete}
Let $\bbA$ be a $\CQ$-category.
\begin{itemize}
\item[\rm (1)] If $\bbA$ is conically cocomplete, then $\bbA$ is order-complete. Conversely, if $\bbA$ is cotensored and order-complete, then $\bbA$ is conically cocomplete.
\item[\rm (2)] If $\bbA$ is conically complete, then $\bbA$ is order-complete. Conversely, if $\bbA$ is tensored and order-complete, then $\bbA$ is conically complete.
\end{itemize}
\end{cor}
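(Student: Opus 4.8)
The plan is to derive this corollary directly from Proposition \ref{join_sup}, which already carries all the substantive content; the corollary merely repackages that pointwise result into statements about (co)completeness of $\bbA$. Both parts are proved the same way, so I would treat (1) in full and note that (2) is dual.

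First, for the forward implication of (1), I would fix an arbitrary $X\in\CQ_0$ and an arbitrary subset $\{x_i\}\subseteq\bbA_X$. Since $\bbA$ is conically cocomplete, the conical colimit of $\{x_i\}$, namely $\sup\bv_i\sY x_i$, exists by definition. Proposition \ref{join_sup}(1) then identifies this conical colimit with the join $\bv_i x_i$ in $\bbA_X$, so in particular that join exists. As $\{x_i\}$ was arbitrary, every subset of every $\bbA_X$ has a join, which is exactly order-completeness.

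Second, for the converse of (1), I would again fix $\{x_i\}\subseteq\bbA_X$. Order-completeness supplies the join $\bv_i x_i$ in $\bbA_X$, and because $\bbA$ is assumed cotensored I can invoke the converse half of Proposition \ref{join_sup}(1) to conclude that this join is itself the conical colimit of $\{x_i\}$. Hence every conical colimit exists, i.e.\ $\bbA$ is conically cocomplete.

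For (2), I would run the identical argument with $\sup$, $\sY$, joins, and conical colimits replaced by $\inf$, $\sYd$, meets, and conical limits, appealing to Proposition \ref{join_sup}(2) and using ``tensored'' in place of ``cotensored''; here the forward direction produces all meets, which already witnesses order-completeness via the ``equivalently, all meets'' clause in its definition. There is no genuine obstacle: the entire difficulty was discharged in proving Proposition \ref{join_sup}, and the corollary is a clean, quantifier-level consequence of it, the only thing to keep straight being which of the two halves of that proposition (and correspondingly which of the tensored/cotensored hypotheses) is needed in each direction.
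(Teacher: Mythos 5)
Your proposal is correct and matches the paper's intent exactly: the paper gives no separate proof for this corollary, treating it precisely as the immediate quantifier-level consequence of Proposition \ref{join_sup} that you describe, with the cotensored hypothesis feeding the converse of part (1) and the tensored hypothesis feeding the converse of part (2). Your remark that the forward direction of (2) yields all meets, which suffices by the ``equivalently, all meets'' clause in the definition of order-completeness, is the right way to close that small gap.
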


\begin{thm} {\rm\cite{Stubbe2005,Stubbe2006,Stubbe2013a}} \label{complete_cocomplete_equivalent}
For a $\CQ$-category $\bbA$, the following conditions are equivalent:
\begin{itemize}
\item[\rm (1)] $\bbA$ is cocomplete.
\item[\rm (2)] $\bbA$ is complete.
\item[\rm (3)] Each $\mu\in\PA$ has a supremum.
\item[\rm (4)] Each $\lam\in\PdA$ has an infimum.
\item[\rm (5)] $\sY$ has a left inverse (up to isomorphism) $\sup:\PA\to\bbA$ in $\CQ$-$\Cat$.
\item[\rm (6)] $\sYd$ has a left inverse (up to isomorphism) $\inf:\PdA\to\bbA$ in $\CQ$-$\Cat$.
\item[\rm (7)] $\sY$ has a left adjoint $\sup:\PA\to\bbA$ in $\CQ$-$\Cat$.
\item[\rm (8)] $\sYd$ has a right adjoint $\inf:\PdA\to\bbA$ in $\CQ$-$\Cat$.
\item[\rm (9)] $\bbA$ is tensored and conically cocomplete.
\item[\rm (10)] $\bbA$ is cotensored and conically complete.
\item[\rm (11)] $\bbA$ is tensored, cotensored and order-complete.
\end{itemize}
In this case, for each $\mu\in\PA$ and $\lam\in\PdA$,
$$\sup\mu=\bv_{a\in\bbA_0}\mu(a)\otimes a,\quad\inf\lam=\bw_{a\in\bbA_0}\lam(a)\rat a,$$
where $\displaystyle\bv$ and $\displaystyle\bw$ denote respectively the join in $\bbA_{t\mu}$ and the meet in $\bbA_{t\lam}$.
\end{thm}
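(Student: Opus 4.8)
The plan is to split the eleven conditions into a cocomplete cluster $\{(1),(3),(5),(7)\}$, its order-dual complete cluster $\{(2),(4),(6),(8)\}$, and the self-dual tensor cluster $\{(9),(10),(11)\}$, prove each cluster internally equivalent, and then glue the two halves together by a single crossover $(3)\iff(4)$; the displayed formulas for $\sup$ and $\inf$ will fall out of the tensor cluster.

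First I would close the cocomplete cluster. For $(1)\iff(3)$: a supremum is by definition $\colim_{\mu}1_{\bbA}$, giving $(1)\Rightarrow(3)$, while Proposition \ref{colim_supremum} rewrites any weighted colimit as $\colim_{\mu}F=\sup F^{\ra}(\mu)$ with $F^{\ra}(\mu)\in\PA$, giving $(3)\Rightarrow(1)$. For $(3)\Rightarrow(5)$ I invoke Proposition \ref{sup_functor}, which makes the chosen $\sup:\PA\to\bbA$ a $\CQ$-functor with $\sup\circ\sY\cong 1_{\bbA}$. For $(5)\Rightarrow(7)$, given any left inverse $L$, the Yoneda Lemma \ref{Yoneda_lemma} together with $\CQ$-functoriality yields $\mu(x)=\PA(\sY x,\mu)\leq\bbA(L\sY x,L\mu)=\bbA(x,L\mu)$, i.e. $1_{\PA}\leq\sY\circ L$; combined with the counit $L\circ\sY\cong 1_{\bbA}$ this is exactly $L\dv\sY$. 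Finally $(7)\Rightarrow(3)$ follows from Proposition \ref{adjoint_condition} and Yoneda: $\bbA(\sup\mu,a)=\PA(\mu,\sY a)=\sY a\lda\mu=(\bbA\lda\mu)(a)$, so $\bbA(\sup\mu,-)=\bbA\lda\mu$, the defining equation of a supremum (Example \ref{sup_def}). Reading every step in $\PdA$ with $\sYd,\inf,\lb$ in place of $\sY,\sup,\ub$ gives the dual chain $(2)\iff(4)\iff(6)\iff(8)$.

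The crossover $(3)\iff(4)$ is the heart of the matter and the step I expect to be hardest. Assuming $(3)$, hence $\sup\dv\sY$, I would set $\inf\lam:=\sup(\lb\lam)$ using the Isbell adjunction $\ub\dv\lb$ (Proposition \ref{ub_lb_adjunction}) and prove $\bbA(-,\inf\lam)=\lb\lam$, i.e. that $\lb\lam$ is representable by $b:=\sup(\lb\lam)$. The argument has two halves, both powered by the quantaloid arrow calculus of Proposition \ref{arrow_calculation}: first, $b$ is a lower bound of $\lam$, i.e. $\lam(z)\leq\bbA(b,z)$ for all $z$, which reduces via $g\circ(g\rda h)\leq h$ and the composition law of $\bbA$ to the very definition of $\lb\lam$; second, this lower-bound property gives $\lam(z)\circ\bbA(x,b)\leq\bbA(b,z)\circ\bbA(x,b)\leq\bbA(x,z)$, hence $\bbA(x,b)\leq\lb\lam(x)$, while the reverse inequality $\lb\lam(x)\leq\bbA(x,b)$ is the unit of $\sup\dv\sY$. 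Thus $b=\inf\lam$ and $(4)$ holds; the symmetric construction $\sup:=\inf\circ\ub$ gives $(4)\Rightarrow(3)$. Keeping the reversed order of $\PdA$ straight (Remark \ref{PdA_QDist_order}) throughout is the delicate bookkeeping.

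It remains to fit in the tensor cluster and extract the formulas. From $(3)$ one gets tensored (Proposition \ref{tensor_sup}: $f\otimes x=\sup(f\circ\sY x)$) and conically cocomplete, that is $(9)$; combined with the already-proved $(4)$ one also gets cotensored and, by Corollary \ref{cotensor_order_conical_complete}, order-complete, that is $(11)$, and dually $(10)$. For the converse I would prove $(9)\Rightarrow(3)$ directly: given $\mu\in\PA$, the tensors $\mu(a)\otimes a$ all have type $t\mu$, so their conical colimit $b$ exists by $(9)$, and the computation in the proof of Proposition \ref{join_sup} gives $\bbA(b,-)=\bw_{a}(\bbA(a,-)\lda\mu(a))=\bbA\lda\mu$ (using the implication formula of Proposition \ref{Dist_quantaloid}), whence $b=\sup\mu$. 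This simultaneously establishes $\sup\mu=\bv_{a}\mu(a)\otimes a$ (the conical colimit being the join by Proposition \ref{join_sup}), and dually $\inf\lam=\bw_{a}\lam(a)\rat a$; the remaining implications among $(9),(10),(11)$ and $(3),(4)$ are then routine consequences of Corollary \ref{cotensor_order_conical_complete}.
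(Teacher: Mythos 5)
Your proposal is correct and follows essentially the same route as the paper's proof: the same decomposition into the two dual clusters (handled via Proposition \ref{colim_supremum}, Proposition \ref{sup_functor}, the Yoneda lemma and Proposition \ref{adjoint_condition}) and the same tensor-cluster argument ($(3)\Rightarrow(9)$ by Proposition \ref{tensor_sup}, $(9)\Rightarrow(3)$ by the conical-colimit computation built on Proposition \ref{join_sup}, the remaining cases via Corollary \ref{cotensor_order_conical_complete}), which also yields the displayed formulas for $\sup\mu$ and $\inf\lam$ exactly as in the paper. The only local divergence is the bridge between the two halves: the paper crosses at $(5)\Rightarrow(6)$, where it need only check that $\inf:=\sup\circ\lb$ is a left inverse of $\sYd$ (a one-line computation, since $\lb\circ\sYd x=\bbA(-,x)$), recovering the infimum property afterwards through the already-established chain $(6)\Rightarrow(8)\Rightarrow(4)$, whereas you cross at $(3)\Rightarrow(4)$ by proving outright that $\bbA(-,\sup(\lb\lam))=\lb\lam$; your two-step verification is sound (both inequalities follow from the arrow calculus of Proposition \ref{arrow_calculation} and the unit of $\sup\dv\sY$), but it duplicates work that the paper's ordering of the implications allows it to skip.
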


\begin{proof}
(1)$\iff$(3) and (2)$\iff$(4) follows immediately from Example \ref{sup_def} and Proposition \ref{colim_supremum}.

(1)${}\Lra{}$(5) and (2)${}\Lra{}$(6): Proposition \ref{sup_functor}.

(5)${}\Lra{}$(7): Suppose that $\sup\circ\sY\cong 1_{\bbA}$. Then for each $\mu\in\PA$,
\begin{align*}
\mu&=\PA(\sY-,\mu)&(\text{Yoneda lemma})\\
&\leq\bbA(\sup\circ\sY-,\sup\mu)\\
&=\bbA(-,\sup\mu)&(\text{Proposition \ref{isomorphic_condition}})\\
&=\sY\circ\sup\mu.&\text{(Formula (\ref{Yoneda_def}))}
\end{align*}
Thus $1_{\PA}\leq\sY\circ\sup$, and consequently $\sup\dv\sY:\PA\rhu\bbA$.

(6)${}\Lra{}$(8): Similar to (5)${}\Lra{}$(7).

(7)${}\Lra{}$(3): For all $\mu\in\PA$ and $x\in\bbA_0$,
\begin{align*}
\bbA(\sup\mu,x)&=\PA(\mu,\sY x)&\text{(Proposition \ref{adjoint_condition})}\\
&=\sY x\lda\mu&\text{(Equation (\ref{PA_arrow}))}\\
&=\bbA(-,x)\lda\mu&\text{(Formula (\ref{Yoneda_def}))}\\
&=(\bbA\lda\mu)(x).&\text{(Remark \ref{distributor_notion}(7))}
\end{align*}
Thus each $\mu\in\PA$ has a supremum $\sup\mu$.

(8)${}\Lra{}$(4): Similar to (7)${}\Lra{}$(3).

(5)${}\Lra{}$(6): Define $\inf=\sup\circ\lb:\PdA\to\PA\to\bbA$, then for all $x\in\bbA_0$,
\begin{align*}
\inf\circ\sYd x&=\sup\circ\lb\circ\sYd x\\
&=\sup(\bbA(x,-)\rda\bbA)&\text{(Example \ref{ub_lb_def})}\\
&=\sup(\bbA(-,x))&\text{(Remark \ref{distributor_notion}(8))}\\
&=\sup\circ\sY x&\text{(Formula (\ref{Yoneda_def}))}\\
&\cong x.
\end{align*}

(6)${}\Lra{}$(5): $\sup=\inf\circ\ub:\PA\to\PdA\to\bbA$ is the required left inverse of $\sY$ in $\CQ$-$\Cat$ by similar calculation to (5)${}\Lra{}$(6).

%

(3)${}\Lra{}$(9) and (4)${}\Lra{}$(10) are immediate consequences of Proposition \ref{tensor_sup} and Definition \ref{conical_def}.

(9)${}\Lra{}$(3): For each $\mu\in\PA$, by Proposition \ref{join_sup} we know that the conical colimit of $\{\mu(a)\otimes a\mid a\in\bbA_0\}\subseteq\bbA_{t\mu}$ is the join of $\{\mu(a)\otimes a\mid a\in\bbA_0\}$ in $\bbA_{t\mu}$, i.e.,
$$\sup\bv\limits_{a\in\bbA_0}\sY(\mu(a)\otimes a)=\bv_{a\in\bbA_0}\mu(a)\otimes a.$$
It follows that
\begin{align*}
\bbA\Big(\bv_{a\in\bbA_0}\mu(a)\otimes a,-\Big)&=\bbA\lda\bv\limits_{a\in\bbA_0}\sY(\mu(a)\otimes a)&\text{(Example \ref{sup_def})}\\
&=\bw\limits_{a\in\bbA_0}\bbA\lda\bbA(-,\mu(a)\otimes a)&\text{(Proposition \ref{arrow_calculation}(3))}\\
&=\bw_{a\in\bbA_0}\bbA(\mu(a)\otimes a,-)&\text{(Remark \ref{distributor_notion}(6))}\\
&=\bw_{a\in\bbA_0}\bbA(a,-)\lda\mu(a)&\text{(Definition \ref{tensor_cotensor_def})}\\
&=\bbA\lda\mu.&\text{(Proposition \ref{Dist_quantaloid}(4))}
\end{align*}
Thus $\sup\displaystyle\bv\limits_{a\in\bbA_0}\sY(\mu(a)\otimes a)=\displaystyle\bv_{a\in\bbA_0}\mu(a)\otimes a=\sup\mu$.

(10)${}\Lra{}$(4): For each $\lam\in\PdA$, by similar calculation to (9)${}\Lra{}$(3) one obtains
$$\inf\bv_{a\in\bbA_0}\sYd(\lam(a)\rat a)=\bw_{a\in\bbA_0}\lam(a)\rat a=\inf\lam.$$



Now we have (1)$\iff$(2)$\iff$(3)$\iff$(4)$\iff$(5)$\iff$(6)$\iff$(7)$\iff$(8)$\iff$(9)$\iff$(10). Finally we show that (3)+(4)+(9)${}\Lra{}$(11) and (11)${}\Lra{}$(9) to finish the proof.

(3)+(4)+(9)${}\Lra{}$(11): Since (3) and (4) hold, by Proposition \ref{tensor_sup} we have that $\bbA$ is tensored and cotensored. That $\bbA$ is order-complete follows from (9) and Corollary \ref{cotensor_order_conical_complete}.

(11)${}\Lra{}$(9) is an immediate consequence of Corollary \ref{cotensor_order_conical_complete}.
\end{proof}

\begin{exmp} \label{PA_complete}
Let $\bbA$ be a $\CQ$-category.
\begin{itemize}
\item[\rm (1)] $\PA$ is a complete $\CQ$-category in which
$$\sup\Phi=\bv_{\mu\in\PA}\Phi(\mu)\circ\mu=\Phi\circ(\sY_{\bbA})_{\nat}$$
$$\bfig
\ptriangle/->`<-`<-/[\PA`*_{t\Phi}`\bbA; \Phi`(\sY_{\bbA})_{\nat}`\sup\Phi] \place(250,500)[\circ] \place(0,250)[\circ]\place(250,250)[\circ]
\efig$$
for all $\Phi\in\CP(\PA)$ and
$$\inf\Psi=\bw_{\mu\in\PA}\Psi(\mu)\rda\mu=\Psi\rda(\sY_{\bbA})_{\nat}$$
$$\bfig
\btriangle[\bbA`*_{t\Psi}`\PA; \inf\Psi`(\sY_{\bbA})_{\nat}`\Psi]
\place(250,0)[\circ]\place(0,250)[\circ] \place(250,250)[\circ] \place(125,125)[\twoar(1,1)]
\efig$$
for all $\Psi\in\CPd(\PA)$, i.e., $\inf\Psi$ is the largest $\CQ$-distributor $\mu:\bbA\oto *_{t\Psi}$ such that $\Psi\circ\mu\leq (\sY_{\bbA})_{\nat}$.
\item[\rm (2)] $\PdA$ is a complete $\CQ$-category in which
$$\sup\Phi=\bw_{\lam\in\PdA}\lam\lda\Phi(\lam)=(\sYd_{\bbA})^{\nat}\lda\Phi$$
$$\bfig
\btriangle[\PdA`*_{t\Phi}`\bbA; \Phi`(\sYd_{\bbA})^{\nat}`\sup\Phi]
\place(250,0)[\circ]\place(0,250)[\circ] \place(250,250)[\circ] \place(125,125)[\twoar(1,1)]
\efig$$
for all $\Phi\in\CP(\PdA)$ and
$$\inf\Psi=\bv_{\lam\in\PdA}\lam\circ\Psi(\lam)=(\sYd_{\bbA})^{\nat}\circ\Psi$$
$$\bfig
\ptriangle/->`<-`<-/[\PdA`\bbA`*_{t\Psi}; (\sYd_{\bbA})^{\nat}`\Psi`\inf\Psi]
\place(250,500)[\circ] \place(0,250)[\circ]\place(250,250)[\circ]
\efig$$
for all $\Psi\in\CPd(\PdA)$.
\end{itemize}
In particular, $\CP X$ and $\CPd X$ are both complete $\CQ$-categories for all $X\in\CQ_0$.
\end{exmp}

\begin{prop} \label{F_functor_condition}
Let $F:\bbA\to\bbB$ be a type-preserving function between $\CQ$-categories. If $\bbA$ and $\bbB$ are tensored, then $F$ is a $\CQ$-functor if and only if
\begin{itemize}
\item[{\rm (1)}] For all $x\in\bbA_0$ and $f\in\CP(tx)$, $f\otimes_{\bbB}Fx\leq F(f\otimes_{\bbA}x)$;
\item[{\rm (2)}] $F:\bbA_0\to\bbB_0$ is order-preserving.
\end{itemize}
Dually, if $\bbA$ and $\bbB$ are cotensored, then $F$ is a $\CQ$-functor if and only if
\begin{itemize}
\item[{\rm (1)}] For all $x\in\bbA_0$, $f\in\CPd(tx)$, $F(f\rat_{\bbA}x)\leq f\rat_{\bbB}Fx$;
\item[{\rm (2)}] $F:\bbA_0\to\bbB_0$ is order-preserving.
\end{itemize}
\end{prop}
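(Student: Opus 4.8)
The plan is to reduce the whole statement to the single inequality $\bbA(x,x')\leq\bbB(Fx,Fx')$ for all $x,x'\in\bbA_0$: since $F$ is assumed type-preserving, by Definition \ref{Q_functor} this inequality is \emph{exactly} the assertion that $F$ is a $\CQ$-functor. I will carry out the tensored case in full and then only remark that the cotensored case is entirely dual, obtained by swapping $\otimes$ for $\rat$, the left implication $\lda$ for the right implication $\rda$, and reversing the roles of the two arguments in the hom-arrows.

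For the forward implication, suppose $F$ is a $\CQ$-functor. Order-preservation of $F:\bbA_0\to\bbB_0$ is immediate: if $x\leq x'$, then $1_{tx}\leq\bbA(x,x')\leq\bbB(Fx,Fx')$, so $Fx\leq Fx'$. For condition (1) I would use the adjunction $(-)\otimes x\dv\bbA(x,-):\CP(tx)\rhu\bbA$ of Proposition \ref{tensor_la_ra}, whose unit gives $f\leq\bbA(x,f\otimes_{\bbA}x)$ in $\CQ(tx,tf)$ for every $f\in\CP(tx)$ (equivalently, this follows directly from $\bbA(f\otimes_{\bbA}x,f\otimes_{\bbA}x)=\bbA(x,f\otimes_{\bbA}x)\lda f\geq 1_{tf}$). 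Applying the $\CQ$-functor inequality $\bbA(x,f\otimes_{\bbA}x)\leq\bbB(Fx,F(f\otimes_{\bbA}x))$ yields $f\leq\bbB(Fx,F(f\otimes_{\bbA}x))$; since $t(Fx)=tx$, rewriting this through the defining property $\bbB(f\otimes_{\bbB}Fx,-)=\bbB(Fx,-)\lda f$ and Proposition \ref{arrow_calculation}(1) gives precisely $1_{tf}\leq\bbB(f\otimes_{\bbB}Fx,F(f\otimes_{\bbA}x))$, i.e.\ $f\otimes_{\bbB}Fx\leq F(f\otimes_{\bbA}x)$.

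For the converse, assume (1) and (2). The key move is to test the target inequality with the specific weight $f:=\bbA(x,x')\in\CP(tx)$, which has type $tf=tx'$. Then $\bbA(f\otimes_{\bbA}x,x')=\bbA(x,x')\lda f=f\lda f\geq 1_{tx'}$, so $f\otimes_{\bbA}x\leq x'$ in $\bbA_0$; order-preservation (2) then gives $F(f\otimes_{\bbA}x)\leq Fx'$, and (1) gives $f\otimes_{\bbB}Fx\leq F(f\otimes_{\bbA}x)$, whence $f\otimes_{\bbB}Fx\leq Fx'$. Unwinding this last relation through the tensor in $\bbB$ and Proposition \ref{arrow_calculation}(1) turns it into $f\leq\bbB(Fx,Fx')$, that is $\bbA(x,x')\leq\bbB(Fx,Fx')$, as needed. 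I do not expect a genuine obstacle: the argument is essentially mechanical, and the only point requiring attention is the repeated passage between object-level inequalities in the underlying preorder and arrow-level inequalities in $\CQ$ via the tensor adjunction, together with the (harmless, because $F$ is type-preserving) bookkeeping of types. The single non-routine idea is recognizing that $f=\bbA(x,x')$ is the correct weight to feed into (1) and (2), the mechanism being that $f\lda f\geq 1$ forces $f\otimes_{\bbA}x$ below $x'$.
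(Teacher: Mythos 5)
Your proposal is correct and follows essentially the same route as the paper's own proof: the forward direction rests on the identity $\bbA(f\otimes_{\bbA}x,f\otimes_{\bbA}x)=\bbA(x,f\otimes_{\bbA}x)\lda f\geq 1_{tf}$ combined with the $\CQ$-functor inequality, and the converse uses exactly the paper's key choice of weight $f=\bbA(x,x')$ to force $f\otimes_{\bbA}x\leq x'$ and then unwind $f\otimes_{\bbB}Fx\leq Fx'$ into $\bbA(x,x')\leq\bbB(Fx,Fx')$. The only cosmetic difference is that you phrase the forward step via the unit of the adjunction $(-)\otimes x\dv\bbA(x,-)$ rather than as a single displayed chain, which is the same computation.
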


\begin{proof}
We prove the case that $\bbA$ and $\bbB$ are tensored for example. Suppose that $F$ is a $\CQ$-functor. Let $x\in\bbA_0$ and $f\in\CP(tx)$, then
\begin{align*}
\bbB(f\otimes_{\bbB}Fx,F(f\otimes_{\bbA}x))&=\bbB(Fx,F(f\otimes_{\bbA}x))\lda f&\text{(Definition \ref{tensor_cotensor_def})}\\
&\geq\bbA(x,f\otimes_{\bbA}x)\lda f&\text{(Definition \ref{Q_functor})}\\
&=\bbA(f\otimes_{\bbA}x,f\otimes_{\bbA}x)&\text{(Definition \ref{tensor_cotensor_def})}\\
&\geq 1_{tf}.
\end{align*}
Thus $f\otimes_{\bbB}Fx\leq F(f\otimes_{\bbA}x)$.

Let $x,x'\in\bbA_0$. If $x\leq x'$, then $tx=tx'=X$ and
$$1_X\leq\bbA(x,x')\leq\bbB(Fx,Fx').$$
Thus $Fx\leq Fx'$. It follows that $F:\bbA_0\to\bbB_0$ is order-preserving.

Conversely, for all $x,x'\in\bbA_0$, by Definition \ref{tensor_cotensor_def},
$$\bbA(\bbA(x,x')\otimes_{\bbA}x,x')=\bbA(x,x')\lda\bbA(x,x')\geq 1_{tx'},$$
thus $\bbA(x,x')\otimes_{\bbA}x\leq x'$ in $\bbA_0$. Since $F:\bbA_0\to\bbB_0$ is order-preserving, it follows that
$$\bbA(x,x')\otimes_{\bbB}Fx\leq F(\bbA(x,x')\otimes_{\bbA}x)\leq Fx'.$$
By Definition \ref{tensor_cotensor_def}, this means that
$$1_{tx'}\leq\bbB(\bbA(x,x')\otimes_{\bbB}Fx,Fx')=\bbB(Fx,Fx')\lda\bbA(x,x').$$
Thus $\bbA(x,x')\leq\bbB(Fx,Fx')$. Hence $F$ is a $\CQ$-functor.
\end{proof}

\begin{prop} {\rm\cite{Stubbe2006}} \label{F_la_ra_condition}
Let $F:\bbA\to\bbB$ be a type-preserving function between $\CQ$-categories. If $\bbA$ is tensored, then $F$ is a left adjoint $\CQ$-functor in $\CQ$-$\Cat$ if and only if
\begin{itemize}
\item[{\rm (1)}] $F$ preserves tensors in the sense that $F(f\otimes_{\bbA}x)= f\otimes_{\bbB}Fx$ for all $x\in\bbA_0$ and $f\in\CP(tx)$;
\item[{\rm (2)}] $F:\bbA_0\to\bbB_0$ is a left adjoint in ${\bf 2}$-$\Cat$.
\end{itemize}
Dually, if $\bbA$ is cotensored, then $F$ is a right adjoint $\CQ$-functor in $\CQ$-$\Cat$ if and only if
\begin{itemize}
\item[{\rm (1)}] $F$ preserves cotensors in the sense that $F(f\rat_{\bbA}x)=f\rat_{\bbB}Fx$ for all $x\in\bbA_0$ and $f\in\CPd(tx)$;
\item[{\rm (2)}] $F:\bbA_0\to\bbB_0$ is a right adjoint in ${\bf 2}$-$\Cat$.
\end{itemize}
\end{prop}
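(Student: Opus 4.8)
The plan is to prove the first equivalence (the tensored, left-adjoint case) in full; the second (the cotensored, right-adjoint case) then follows by the formally dual argument, systematically interchanging $\lda$ with $\rda$, tensors with cotensors, suprema with infima, and left adjoints with right adjoints. The two tools I rely on are Proposition \ref{adjoint_condition}, which reduces the relation $F\dv G$ to the single identity $\bbB(Fx,y)=\bbA(x,Gy)$, and the tensor adjunction $(-)\otimes_{\bbA}x\dv\bbA(x,-):\CP(tx)\rhu\bbA$ of Proposition \ref{tensor_la_ra}(1), available because $\bbA$ is tensored; its underlying Galois connection reads $f\otimes_{\bbA}x\le a\iff f\le\bbA(x,a)$.

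For necessity, assume $F\dv G:\bbA\rhu\bbB$ in $\CQ$-$\Cat$. Condition (2) is read off at once: $F$ and $G$ are order-preserving, and evaluating $1_{\bbA}\le G\circ F$ and $F\circ G\le 1_{\bbB}$ objectwise gives $x\le GFx$ and $FGy\le y$, so $F\dv G$ already in ${\bf 2}$-$\Cat$. For condition (1) I would exhibit $F(f\otimes_{\bbA}x)$ as satisfying the defining property of $f\otimes_{\bbB}Fx$ through the computation
\[
\bbB(F(f\otimes_{\bbA}x),y)=\bbA(f\otimes_{\bbA}x,Gy)=\bbA(x,Gy)\lda f=\bbB(Fx,y)\lda f,
\]
where the outer equalities are Proposition \ref{adjoint_condition} and the middle one is Definition \ref{tensor_cotensor_def}. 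As $y$ varies this says $\bbB(F(f\otimes_{\bbA}x),-)=\bbB(Fx,-)\lda f$, which is exactly the defining property of $f\otimes_{\bbB}Fx$; hence that tensor exists and equals $F(f\otimes_{\bbA}x)$.

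For sufficiency, let $G:\bbB\to\bbA$ be a right adjoint of $F:\bbA_0\to\bbB_0$ in ${\bf 2}$-$\Cat$, so that $Fx\le y\iff x\le Gy$ in the underlying preorders; note $G$ is type-preserving, since $FGy\le y$ forces $t(Gy)=t(FGy)=ty$. I would then verify $\bbB(Fx,y)=\bbA(x,Gy)$ for all $x,y$, which by Proposition \ref{adjoint_condition} simultaneously makes $F$ and $G$ $\CQ$-functors and yields $F\dv G$. The heart of the matter is the chain, valid for every $\CQ$-arrow $f$ issuing from $tx$,
\[
f\le\bbA(x,Gy)\iff f\otimes_{\bbA}x\le Gy\iff F(f\otimes_{\bbA}x)\le y\iff f\otimes_{\bbB}Fx\le y\iff f\le\bbB(Fx,y),
\]
whose four steps are, respectively, the tensor adjunction in $\bbA$, the ${\bf 2}$-$\Cat$ adjunction $F\dv G$, the hypothesis (1) that $F(f\otimes_{\bbA}x)=f\otimes_{\bbB}Fx$, and the universal property $\bbB(f\otimes_{\bbB}Fx,y)=\bbB(Fx,y)\lda f$ of that single tensor (together with Proposition \ref{arrow_calculation}(1)). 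Feeding $f=\bbA(x,Gy)$ and $f=\bbB(Fx,y)$ into the two extremes produces the two inequalities, hence the desired identity.

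The one delicate point, and the step I expect to be the main obstacle, is that only $\bbA$ — not $\bbB$ — is assumed tensored, so I may not invoke the full tensor adjunction of Proposition \ref{tensor_la_ra} inside $\bbB$. This is precisely why the last link of the chain is phrased through the universal property (Definition \ref{tensor_cotensor_def}) of the \emph{single} tensor $f\otimes_{\bbB}Fx$, whose existence is guaranteed by hypothesis (1), rather than through an adjunction; for the same reason I deliberately route the $\CQ$-functoriality of $F$ and $G$ through Proposition \ref{adjoint_condition} instead of Proposition \ref{F_functor_condition}, the latter requiring $\bbB$ tensored. Once $\bbB(Fx,y)=\bbA(x,Gy)$ is established everything else is formal, and the dual statement follows by the symmetric argument indicated above.
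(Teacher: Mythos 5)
Your proposal is correct and follows essentially the same route as the paper's proof: necessity via Proposition \ref{adjoint_condition} together with the defining property of tensors, and sufficiency by using hypothesis (1) and the ${\bf 2}$-$\Cat$ adjunction to establish $\bbB(Fx,y)=\bbA(x,Gy)$, then invoking Proposition \ref{adjoint_condition}. Your Galois-connection chain over generic $f$ is just a repackaging of the paper's two computations, which amount to instantiating your chain at $f=\bbA(x,Gy)$ and $f=\bbB(Fx,y)$ (with the minor caveat that the chain should be restricted to $f\in\CQ(tx,ty)$ so the comparisons typecheck).
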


\begin{proof}
We prove the case that $\bbA$ is tensored for example. Suppose that $F:\bbA\to\bbB$ is a left adjoint $\CQ$-functor with a right adjoint $G:\bbB\to\bbA$. Let $x\in\bbA_0$ and $f\in\CP(tx)$, then
\begin{align*}
\bbB(F(f\otimes_{\bbA}x),-)&=\bbA(f\otimes_{\bbA}x,G-)&\text{(Proposition \ref{adjoint_condition})}\\
&=\bbA(x,G-)\lda f&\text{(Definition \ref{tensor_cotensor_def})}\\
&=\bbB(Fx,-)\lda f.&\text{(Proposition \ref{adjoint_condition})}
\end{align*}
Thus $F(f\otimes_{\bbA}x)=f\otimes_{\bbB}Fx$.

Let $x\in\bbA_0$ and $y\in\bbB_0$, then by Proposition \ref{adjoint_condition},
\begin{align*}
Fx\leq y\ \text{in}\ \bbB_0&\iff tx=ty=X\ \text{and}\ 1_X\leq\bbB(Fx,y)\\
&\iff tx=ty=X\ \text{and}\ 1_X\leq\bbA(x,Gy)\\
&\iff x\leq Gy\ \text{in}\ \bbA_0.
\end{align*}
Thus $F:\bbA_0\to\bbB_0$ is a left adjoint of $G:\bbB_0\to\bbA_0$ in ${\bf 2}$-$\Cat$.

Conversely, Suppose that $F:\bbA_0\to\bbB_0$ has a right adjoint $G:\bbB_0\to\bbA_0$ in ${\bf 2}$-$\Cat$, then $G$ is necessarily type-preserving because for all $y\in\bbB_0$,
$$Gy\leq Gy\iff FGy\leq y$$
implies $ty=t(FGy)=t(Gy)$. In order to prove $F\dv G:\bbA\rhu\bbB$, by Proposition \ref{adjoint_condition}, it suffices to show that $\bbB(Fx,y)=\bbA(x,Gy)$ for all $x\in\bbA_0$ and $y\in\bbB_0$.

By Definition \ref{tensor_cotensor_def},
$$1_{ty}\leq\bbA(x,Gy)\lda\bbA(x,Gy)=\bbA(\bbA(x,Gy)\otimes_{\bbA}x,Gy).$$
It follows that
$$1_{ty}\leq\bbB(F(\bbA(x,Gy)\otimes_{\bbA}x),y)=\bbB(Fx,y)\lda\bbA(x,Gy),$$
where the first inequality holds because $F\dv G:\bbA_0\rhu\bbB_0$ in ${\bf 2}$-$\Cat$, and the second equality follows from $F(\bbA(x,Gy)\otimes_{\bbA}x)=\bbA(x,Gy)\otimes_{\bbB}Fx$. Thus $\bbA(x,Gy)\leq\bbB(Fx,y)$.

For the reverse inequality, since $F(\bbB(Fx,y)\otimes_{\bbA}x)=\bbB(Fx,y)\otimes_{\bbB}Fx$, we have
$$1_{ty}\leq\bbB(Fx,y)\lda\bbB(Fx,y)=\bbB(F(\bbB(Fx,y)\otimes_{\bbA}x),y).$$
It follows from $F\dv G:\bbA_0\rhu\bbB_0$ in ${\bf 2}$-$\Cat$ that
$$1_{ty}\leq\bbA(\bbB(Fx,y)\otimes_{\bbA}x,Gy)=\bbA(x,Gy)\lda\bbB(Fx,y).$$
Thus $\bbB(Fx,y)\leq\bbA(x,Gy)$, completing the proof.
\end{proof}

The following corollary indicates that left adjoint $\CQ$-functors between complete $\CQ$-categories are exactly suprema-preserving $\CQ$-functors, while right adjoint $\CQ$-functors between complete $\CQ$-categories are exactly infima-preserving $\CQ$-functors.

\begin{cor} {\rm\cite{Stubbe2006}} \label{left_adjoint_preserves_sup}
Let $F:\bbA\to\bbB$ be a $\CQ$-functor between $\CQ$-categories, with $\bbA$ complete.
\begin{itemize}
\item[\rm (1)] $F:\bbA\to\bbB$ is a left adjoint in $\CQ$-$\Cat$ if and only if $F$ preserves suprema in the sense that $F(\sup_{\bbA}\mu)=\sup_{\bbB}F^{\ra}(\mu)$ for all $\mu\in\PA$.
    $$\bfig
    \square[\PA`\PB`\bbA`\bbB;F^{\ra}`\sup_{\bbA}`\sup_{\bbB}`F]
    \efig$$
\item[\rm (2)] $F:\bbA\to\bbB$ is a right adjoint in $\CQ$-$\Cat$ if and only if $F$ preserves infima in the sense that $F(\inf_{\bbA}\mu)=\inf_{\bbB}F^{\nra}(\mu)$ for all $\mu\in\PdA$.
    $$\bfig
    \square[\PdA`\PdB`\bbA`\bbB;F^{\nra}`\inf_{\bbA}`\inf_{\bbB}`F]
    \efig$$
\end{itemize}
\end{cor}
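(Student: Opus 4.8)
The plan is to prove (1) and obtain (2) by a routine dualisation (replacing $\PA$, $\sup$, $F^{\ra}$, tensors and left adjoints by $\PdA$, $\inf$, $F^{\nra}$, cotensors and right adjoints, and invoking the dual halves of the propositions below). Throughout I use that $\bbA$ is complete, so by Theorem~\ref{complete_cocomplete_equivalent} it is tensored, cotensored and order-complete, and every $\mu\in\PA$ has a supremum given by the formula there. The two implications of (1) are handled by different tools: the forward implication by a direct computation in $\CQ$-$\Dist$, and the backward implication by reducing to the pointwise criterion of Proposition~\ref{F_la_ra_condition}.

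For the forward implication, suppose $F\dv G:\bbA\rhu\bbB$ and fix $\mu\in\PA$. Rewriting hom-arrows by $\bbB(Fx,y)=\bbA(x,Gy)$ (Proposition~\ref{adjoint_condition}), and using Example~\ref{sup_def} together with the pointwise formula for $\lda$ in Proposition~\ref{Dist_quantaloid}(4), I would compute for every $y\in\bbB_0$
\begin{align*}
\bbB(F(\sup\nolimits_{\bbA}\mu),y)&=\bbA(\sup\nolimits_{\bbA}\mu,Gy)=\bw_{x\in\bbA_0}\bbA(x,Gy)\lda\mu(x)\\
&=\bw_{x\in\bbA_0}\bbB(Fx,y)\lda\mu(x)=(F_{\nat}\lda\mu)(y).
\end{align*}
Hence $\bbB(F(\sup_{\bbA}\mu),-)=F_{\nat}\lda\mu$, which by Definition~\ref{limit_colimit_def}(1) says exactly that $F(\sup_{\bbA}\mu)$ is a colimit $\colim_{\mu}F$; by Proposition~\ref{colim_supremum}(1) this colimit is $\sup_{\bbB}F^{\ra}(\mu)$, so the latter supremum exists and $F(\sup_{\bbA}\mu)=\sup_{\bbB}F^{\ra}(\mu)$.

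For the backward implication, assume $F$ preserves suprema. Since $\bbA$ is tensored I apply Proposition~\ref{F_la_ra_condition}, so it suffices to verify that $F$ preserves tensors and that $F:\bbA_0\to\bbB_0$ is a left adjoint in ${\bf 2}$-$\Cat$. For tensors I write $f\otimes_{\bbA}x=\sup(f\circ\sY_{\bbA}x)$ (Proposition~\ref{tensor_sup}(1)); since $F^{\ra}(-)=(-)\circ F^{\nat}$ commutes with precomposition by $f$ and $F^{\ra}\sY_{\bbA}=\sY_{\bbB}F$ (Proposition~\ref{Yoneda_natural}(1)), we get $F^{\ra}(f\circ\sY_{\bbA}x)=f\circ\sY_{\bbB}(Fx)$, and applying the hypothesis and Proposition~\ref{tensor_sup}(1) once more yields $F(f\otimes_{\bbA}x)=\sup(f\circ\sY_{\bbB}(Fx))=f\otimes_{\bbB}Fx$. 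For the order-theoretic part, I specialise the hypothesis to the conical colimit $\mu=\bv_i\sY_{\bbA}x_i$ of a family $\{x_i\}\subseteq\bbA_X$; because composition with $F^{\nat}$ preserves joins (a quantaloid axiom) one has $F^{\ra}(\mu)=\bv_i\sY_{\bbB}(Fx_i)$, and Proposition~\ref{join_sup}(1) then shows $F$ carries the join $\bv_i x_i$ in $\bbA_X$ to the join $\bv_i Fx_i$ in $\bbB_X$. Thus each restriction $F:\bbA_X\to\bbB_X$ preserves all joins.

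The main obstacle is the final step. One is tempted to invoke the adjoint functor theorem for complete lattices, but $\bbB$ is \emph{not} assumed complete, so $\bbB_X$ need not be a complete lattice. The resolution is that it is the \emph{domain} $\bbA_X$ that must be complete, and it is: order-completeness of $\bbA$ makes each $\bbA_X$ a complete lattice, so $Gy:=\bv\{x\in\bbA_X:Fx\leq y\}$ is well defined for $y\in\bbB_X$, and join-preservation of $F$ gives the equivalence $Fx\leq y\iff x\leq Gy$. Assembling these maps over all types $X$ produces a type-preserving right adjoint of $F$ in ${\bf 2}$-$\Cat$, so both hypotheses of Proposition~\ref{F_la_ra_condition} hold and $F$ is a left adjoint $\CQ$-functor. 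Statement (2) then follows by the dual argument.
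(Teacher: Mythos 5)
Your proof is correct and follows essentially the same route as the paper: the forward direction by showing $\bbB(F(\sup_{\bbA}\mu),-)=F_{\nat}\lda\mu$ (you compute it pointwise via Proposition \ref{adjoint_condition}, the paper at the distributor level via Proposition \ref{adjoint_graph}), and the backward direction by reducing to Proposition \ref{F_la_ra_condition} through tensor-preservation and fibrewise join-preservation, exactly as in the paper. The only difference is that you explicitly carry out the poset adjoint functor theorem step (constructing $Gy=\bv\{x\in\bbA_X:Fx\leq y\}$ from completeness of the \emph{domain} fibres), which the paper merely asserts in one sentence; this is a welcome filling-in of detail rather than a different argument.
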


\begin{proof}
We prove (1) for example. Suppose that $F:\bbA\to\bbB$ has a right adjoint $G:\bbB\to\bbA$ in $\CQ$-$\Cat$. Then for all $\mu\in\PA$,
\begin{align*}
\bbB(F({\sup}_{\bbA}\mu),-)&=\bbA({\sup}_{\bbA}\mu,G-)&(F\dv G:\bbA\rhu\bbB)\\
&=\bbA(-,G-)\lda\mu&(\text{Example \ref{sup_def}})\\
&=F_{\nat}\lda\mu.&(\text{Proposition \ref{adjoint_graph}})
\end{align*}
Thus $F(\sup_{\bbA}\mu)=\colim_{\mu}F=\sup_{\bbB}F^{\ra}(\mu)$.

Conversely, by Proposition \ref{F_la_ra_condition}, it suffices to show that $F$ preserves tensors and $F:\bbA_0\to\bbB_0$ is a left adjoint in ${\bf 2}$-$\Cat$. For all $x\in\bbA_0$ and $f\in\CP(tx)$,
\begin{align*}
F(f\otimes_{\bbA}x)&=F({\sup}_{\bbA}(f\circ\sY_{\bbA} x))&\text{(Proposition \ref{tensor_sup})}\\
&={\sup}_{\bbB}F^{\ra}(f\circ\sY_{\bbA} x)\\
&={\sup}_{\bbB}(f\circ\sY_{\bbA} x\circ F^{\nat})&\text{(Definition \ref{direct_inverse_image_def})}\\
&={\sup}_{\bbB}(f\circ\bbB(-,Fx))\\
&={\sup}_{\bbB}(f\circ\sY_{\bbB}\circ Fx)\\
&=f\otimes_{\bbB}Fx.&\text{(Proposition \ref{tensor_sup})}
\end{align*}
Thus $F$ preserves tensors.

Note that when $\bbA$ is complete, $F:\bbA_0\to\bbB_0$ is a left adjoint in ${\bf 2}$-$\Cat$ if and only if $F:\bbA_X\to\bbB_X$ preserves underlying joins for each $X\in\CQ_0$. Let $X\in\CQ_0$ and $\{x_i\}\subseteq\bbA_X$,
\begin{align*}
F\Big(\bv_i x_i\Big)&=F\Big({\sup}_{\bbA}\bv_i\sY_{\bbA} x_i\Big)&\text{(Proposition \ref{join_sup})}\\
&={\sup}_{\bbB}F^{\ra}\Big(\bv_i\sY_{\bbA} x_i\Big)\\
&={\sup}_{\bbB}\Big(\bv_i\sY_{\bbA} x_i\Big)\circ F^{\nat}&\text{(Definition \ref{direct_inverse_image_def})}\\
&={\sup}_{\bbB}\bv_i(\sY_{\bbA}x_i\circ F^{\nat})&\text{(Proposition \ref{Dist_quantaloid} and Formula (\ref{quantaloid_distributive}))}\\
&={\sup}_{\bbB}\bv_i\bbB(-,Fx_i)\\
&={\sup}_{\bbB}\bv_i\sY_{\bbB}(Fx_i)\\
&=\bv_i Fx_i.&\text{(Proposition \ref{join_sup})}
\end{align*}
Thus $F:\bbA_X\to\bbB_X$ preserves underlying joins, completing the proof.
\end{proof}

\section{Free cocompletion and completion}

Let $\bbA$ and $\bbB$ be $\CQ$-categories. Each $\CQ$-functor $F:\bbA\to\PdB$ corresponds to a $\CQ$-distributor $\hF:\bbA\oto\bbB$ given by
\begin{equation} \label{hF_def}
\hF(x,y)=(Fx)(y)
\end{equation}
for all $x\in\bbA_0$ and $y\in\bbB_0$, and each $\CQ$-functor $G:\bbB\to\PA$ corresponds to a $\CQ$-distributor $\hG:\bbA\oto\bbB$ given by
\begin{equation} \label{hG_def}
\hG(x,y)=(Gy)(x)
\end{equation}
for all $x\in\bbA_0$ and $y\in\bbB_0$.

Conversely, each $\CQ$-distributor $\phi:\bbA\oto\bbB$ corresponds to two $\CQ$-functors
$$\ulphi:\bbA\to\PdB\quad\text{and}\quad\olphi:\bbB\to\PA$$
given by
\begin{equation} \label{ulphi_def}
\ulphi x=\phi(x,-)\quad\text{and}\quad\olphi y=\phi(-,y)
\end{equation}
for all $x\in\bbA_0$ and $y\in\bbB_0$.

\begin{prop} {\rm\cite{Stubbe2005}} \label{distributor_functor_bijection}
Let $\bbA$ and $\bbB$ be $\CQ$-categories.
\begin{itemize}
\item[\rm (1)] The correspondences $\phi\mapsto\ulphi$ and $F\mapsto\hF$ establish an isomorphism of posets
               $$\CQ\text{-}\Dist(\bbA,\bbB)\cong\CQ\text{-}\Cat^{\co}(\bbA,\PdB),$$
               where the symbol ``$\co$'' means reversing order in the hom-sets.
\item[\rm (2)] The correspondences $\phi\mapsto\olphi$ and $F\mapsto\hF$ establish an isomorphism of posets
               $$\CQ\text{-}\Dist(\bbA,\bbB)\cong\CQ\text{-}\Cat(\bbB,\PA).$$
\end{itemize}
\end{prop}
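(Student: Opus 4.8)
The plan is to prove both isomorphisms by the same four-step routine, treating (1) and (2) in parallel since they are formal duals of each other. For each part I must check three things: that the two correspondences are well-defined (one produces a $\CQ$-functor, the other a $\CQ$-distributor), that they are mutually inverse, and finally that they match the orders correctly — the latter being where the subtlety of the superscript ``$\co$'' in (1) enters. Throughout, the workhorse is Proposition \ref{arrow_calculation}(1), which converts every inequality between hom-arrows into an inequality of compositions, together with the explicit hom-formulas (\ref{PA_arrow}) for $\PA$ and (\ref{PdA_arrow}) for $\PdB$.

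For part (1), I would first verify that $\ulphi:\bbA\to\PdB$ is a $\CQ$-functor. It is clearly type-preserving since $t(\ulphi x)=tx$, and the functor inequality $\bbA(x,x')\leq\PdB(\ulphi x,\ulphi x')=\ulphi x'\rda\ulphi x=\phi(x',-)\rda\phi(x,-)$ is, via Proposition \ref{arrow_calculation}(1), exactly the pointwise distributor axiom $\phi(x',y)\circ\bbA(x,x')\leq\phi(x,y)$. Conversely, for a $\CQ$-functor $F:\bbA\to\PdB$ I would check that $\hF(x,y)=(Fx)(y)$ satisfies the two $\CQ$-distributor axioms: the first, $\bbB(y',y)\circ\hF(x,y')\leq\hF(x,y)$, is just the defining distributor axiom of the presheaf $Fx\in\PdB$; the second, $\hF(x',y)\circ\bbA(x,x')\leq\hF(x,y)$, follows by unwinding the functor inequality $\bbA(x,x')\leq\PdB(Fx,Fx')=Fx'\rda Fx$ through Proposition \ref{arrow_calculation}(1). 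Mutual inverseness is then immediate from the definitions: $\widehat{\ulphi}(x,y)=(\ulphi x)(y)=\phi(x,y)$ and $\underline{\hF}x=\hF(x,-)=Fx$.

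The step I expect to require the most care is the order comparison, because it is precisely what forces the ``$\co$'' in (1) while leaving (2) covariant. Here I would invoke Remark \ref{PdA_QDist_order}: the underlying preorder on $\PdB$ is the \emph{reverse} of the local order in $\CQ$-$\Dist$, whereas the underlying preorder on $\PA$ \emph{coincides} with it. Consequently $\ulphi\leq\ulpsi$ in $\CQ\text{-}\Cat(\bbA,\PdB)$ means $\ulphi x\leq\ulpsi x$ in $(\PdB)_0$ for all $x$, which by the remark is equivalent to $\psi(x,-)\leq\phi(x,-)$ in $\CQ$-$\Dist$, i.e. $\psi\leq\phi$. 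Thus $\phi\mapsto\ulphi$ reverses order, giving the isomorphism with $\CQ\text{-}\Cat^{\co}(\bbA,\PdB)$.

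Part (2) is then handled by the mirror-image argument, now using the hom-formula $\PA(\mu,\lam)=\lam\lda\mu$ and the other pair of distributor axioms: one checks that $\olphi y=\phi(-,y)\in\PA$ yields a $\CQ$-functor $\olphi:\bbB\to\PA$, that $\hG(x,y)=(Gy)(x)$ is a $\CQ$-distributor, that $\widehat{\olphi}=\phi$ and $\overline{\hG}=G$, and finally that since the order on $\PA$ agrees with that of $\CQ$-$\Dist$, the correspondence $\phi\mapsto\olphi$ is order-\emph{preserving}, producing the covariant isomorphism $\CQ\text{-}\Dist(\bbA,\bbB)\cong\CQ\text{-}\Cat(\bbB,\PA)$.
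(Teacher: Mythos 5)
Your proposal is correct and follows essentially the same route as the paper's proof: mutual inverseness is read off directly from Equations (\ref{hF_def}) and (\ref{ulphi_def}), and the order comparison hinges on Remark \ref{PdA_QDist_order}, exactly as in the paper, with the ``$\co$'' in (1) arising from the reversed underlying order on $\PdB$. The only difference is that you also verify well-definedness (that $\ulphi$ is a $\CQ$-functor and $\hF$ a $\CQ$-distributor via Proposition \ref{arrow_calculation}(1)), a check the paper leaves implicit in the definitions preceding the proposition.
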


\begin{proof}
We prove (1) for example. It follows immediately from Equation (\ref{hF_def}) and (\ref{ulphi_def}) that
$$Fx=\hF(x,-)=\underline{\hF}x$$
and
$$\phi(x,-)=\ulphi x=\ulc\ulphi\urc(x,-)$$
for all $x\in\bbA_0$. Thus the correspondences $\phi\mapsto\ulphi$ and $F\mapsto\hF$ are mutual inverse. It remains to show that both of them are order-preserving. Indeed,
\begin{align*}
&\phi\leq\psi\ \text{in}\ \CQ\text{-}\Dist(\bbA,\bbB)\\
\iff&\forall x\in\bbA_0,\ulphi x=\phi(x,-)\leq\psi(x,-)=\ulpsi x\ \text{in}\ \CQ\text{-}\Dist\\
\iff&\forall x\in\bbA_0,\ulphi x\geq\ulpsi x\ \text{in}\ (\PdB)_0&\text{(Remark \ref{PdA_QDist_order})}\\
\iff&\ulphi\leq\ulpsi\ \text{in}\ \CQ\text{-}\Cat^{\co}(\bbA,\PdB)
\end{align*}
and
\begin{align*}
&F\leq G\ \text{in}\ \CQ\text{-}\Cat^{\co}(\bbA,\PdB)\\
\iff&\forall x\in\bbA_0,Fx\geq Gx\ \text{in}\ (\PdB)_0\\
\iff&\forall x\in\bbA_0,\hF(x,-)=Fx\leq Gx=\hG(x,-)\ \text{in}\ \CQ\text{-}\Dist(\bbA,\bbB)\\
\iff&\hF\leq\hG\ \text{in}\ \CQ\text{-}\Dist(\bbA,\bbB).
\end{align*}
\end{proof}

Given a $\CQ$-distributor $\phi:\bbA\oto\bbB$, composing with $\phi$ yields two $\CQ$-functors
$$\phi^{\dag}:\PdA\to\PdB\quad\text{and}\quad\phi^*:\PB\to\PA$$
defined by
\begin{equation} \label{phidag_def}
\phi^{\dag}(\mu)=\phi\circ\mu\quad\text{and}\quad\phi^*(\lam)=\lam\circ\phi.
\end{equation}
Thus, the correspondence $\phi\mapsto\phi^{\dag}$ induces a functor
$$(-)^{\dag}:\CQ\text{-}\Dist\to\CQ\text{-}\Cat,$$
and the correspondence $\phi\mapsto\phi^*$ induces a contravariant functor
$$(-)^*:\CQ\text{-}\Dist\to(\CQ\text{-}\Cat)^{\op}.$$
Recall that Proposition \ref{graph_cograph_functor} gives rise to a functor
$$(-)_{\nat}:\CQ\text{-}\Cat\to\CQ\text{-}\Dist$$
and a contravariant functor
$$(-)^{\nat}:(\CQ\text{-}\Cat)^{\op}\to\CQ\text{-}\Dist.$$

\begin{prop} {\rm\cite{Heymans2010}} \label{composition_graph_adjunction}
\begin{itemize}
\item[\rm (1)] $(-)_{\nat}:\CQ\text{-}\Cat\to\CQ\text{-}\Dist$ is a left adjoint of $(-)^{\dag}:\CQ\text{-}\Dist\to\CQ\text{-}\Cat$.
\item[\rm (2)] $(-)^{\nat}:(\CQ\text{-}\Cat)^{\op}\to\CQ\text{-}\Dist$ is a right adjoint of $(-)^*:\CQ\text{-}\Dist\to(\CQ\text{-}\Cat)^{\op}$.
\end{itemize}
\end{prop}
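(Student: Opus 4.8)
The plan is to read off both adjunctions directly from the hom-set bijections of Proposition \ref{distributor_functor_bijection}, so that the only real work is a naturality check. For (1), first note that on objects $(-)_{\nat}$ is the identity (since $(1_{\bbA})_{\nat}=\bbA$ by Proposition \ref{graph_cograph_functor}(1)), while $(-)^{\dag}$ sends a $\CQ$-category $\bbB$ to $\PdB$. Hence the hom-set bijection required for the adjunction $(-)_{\nat}\dv(-)^{\dag}$,
$$\CQ\text{-}\Dist(\bbA,\bbB)\cong\CQ\text{-}\Cat(\bbA,\PdB)$$
(its left side being $\CQ\text{-}\Dist((-)_{\nat}\bbA,\bbB)$ as $(-)_{\nat}$ is identity on objects), is exactly the bijection $\phi\mapsto\ulphi$ of Proposition \ref{distributor_functor_bijection}(1). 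The ``$\co$'' appearing there merely reverses the order on the hom-posets and leaves the underlying sets untouched, so at the level of an ordinary adjunction it is irrelevant, and I would treat $\CQ$-$\Cat$ and $\CQ$-$\Dist$ as ordinary categories throughout.

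It then remains to verify that $\phi\mapsto\ulphi$ is natural in both variables. For a $\CQ$-functor $H:\bbA'\to\bbA$ I would check $\underline{\phi\circ H_{\nat}}=\ulphi\circ H$, which is immediate from $\phi\circ H_{\nat}=\phi(H-,-)$ (Proposition \ref{graph_cograph_distributor}(1)); for a $\CQ$-distributor $\beta:\bbB\oto\bbB'$ I would check $\underline{\beta\circ\phi}=\beta^{\dag}\circ\ulphi$, which follows from $(\beta\circ\phi)(x,-)=\beta\circ\phi(x,-)$ (Remark \ref{distributor_notion}(4)) together with $\beta^{\dag}(\mu)=\beta\circ\mu$. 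These two identities are precisely the compatibility of the bijection with the actions of $(-)_{\nat}$ (by precomposition in $\CQ$-$\Dist$) and $(-)^{\dag}$ (by postcomposition in $\CQ$-$\Cat$), so the bijection is natural and (1) holds. As a by-product the unit is the co-Yoneda embedding $\sYd_{\bbA}:\bbA\to\PdA$ (the transpose of the identity distributor $\bbA=(1_{\bbA})_{\nat}$), and the counit $\PdB\oto\bbB$ is the cograph $(\sYd_{\bbB})^{\nat}$, i.e.\ evaluation $(\mu,y)\mapsto\mu(y)$ by the Yoneda Lemma \ref{Yoneda_lemma}.

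Part (2) is entirely dual and I would argue it the same way, now starting from the bijection $\phi\mapsto\olphi$ of Proposition \ref{distributor_functor_bijection}(2). On objects $(-)^{*}$ sends $\bbA$ to $\PA$ and $(-)^{\nat}$ is the identity, so unwinding the two ``$\op$''s the claim $(-)^{*}\dv(-)^{\nat}$ amounts to the natural bijection $\CQ\text{-}\Cat(\bbB,\PA)\cong\CQ\text{-}\Dist(\bbA,\bbB)$. For naturality I would verify $\overline{K^{\nat}\circ\phi}=\olphi\circ K$ for a $\CQ$-functor $K:\bbB'\to\bbB$, using $K^{\nat}\circ\phi=\phi(-,K-)$ (Proposition \ref{graph_cograph_distributor}(2)), and $\overline{\phi\circ\alpha}=\alpha^{*}\circ\olphi$ for a $\CQ$-distributor $\alpha:\bbA\oto\bbA'$, using $(\phi\circ\alpha)(-,y)=\phi(-,y)\circ\alpha$ (Remark \ref{distributor_notion}(5)) and $\alpha^{*}(\lam)=\lam\circ\alpha$.

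I do not expect a genuine obstacle here: once the correct composition formulas are quoted, the verifications are routine one-line calculations. The only care needed is bookkeeping the two opposite categories in (2) and confirming that the order-reversal (``$\co$'') in Proposition \ref{distributor_functor_bijection}(1) plays no role for an ordinary (as opposed to order-enriched) adjunction. Were a $2$-adjunction intended instead, the delicate point would become reconciling exactly this order-reversal, which is why I would state and prove the result as an ordinary adjunction.
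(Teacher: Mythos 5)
Your proposal is correct and is essentially the paper's own proof: the paper likewise establishes (1) by showing the bijection $\phi\mapsto\ulphi$ of Proposition \ref{distributor_functor_bijection} is natural in both variables, verifying exactly your two identities $\underline{\phi\circ F_{\nat}}=\ulphi\circ F$ and $\underline{\psi\circ\phi}=\psi^{\dag}\circ\ulphi$ via Proposition \ref{graph_cograph_distributor} and the defining formulas, and treats (2) as the dual statement. Your extra remarks (the irrelevance of the ``$\co$'' for an ordinary adjunction, and the identification of the unit as $\sYd_{\bbA}$ and the counit as $(\sYd_{\bbB})^{\nat}$ via the Yoneda lemma) are accurate but not needed.
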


\begin{proof}
We prove (1) for example. We show that the bijection of sets
$$\underline{(-)}:\CQ\text{-}\Dist(\bbA,\bbB)\cong\CQ\text{-}\Cat(\bbA,\PdB)$$
in Proposition \ref{distributor_functor_bijection} is natural in $\bbA$ and $\bbB$. This follows from the commutativity of the diagrams below for each $\CQ$-functor $F:\bbA'\to\bbA$ and $\CQ$-distributor $\psi:\bbB\oto\bbB'$.
\begin{equation} \label{composition_graph_adjunction_natural_first}
\bfig
\square<1350,500>[\CQ\text{-}\Dist(\bbA,\bbB)`\CQ\text{-}\Cat(\bbA,\PdB)`\CQ\text{-}\Dist(\bbA',\bbB)`\CQ\text{-}\Cat(\bbA',\PdB);
\underline{(-)}`\CQ\text{-}\Dist(F_{\nat},\bbB)`\CQ\text{-}\Cat(F,\PdB)`\underline{(-)}]
\efig
\end{equation}
\begin{equation} \label{composition_graph_adjunction_natural_second}
\bfig
\square<1350,500>[\CQ\text{-}\Dist(\bbA,\bbB)`\CQ\text{-}\Cat(\bbA,\PdB)`\CQ\text{-}\Dist(\bbA,\bbB')`\CQ\text{-}\Cat(\bbA,\PdB');
\underline{(-)}`\CQ\text{-}\Dist(\bbA,\psi)`\CQ\text{-}\Cat(\bbA,\psi^{\dag})`\underline{(-)}]
\efig
\end{equation}
Indeed, for each $\CQ$-distributor $\phi:\bbA\oto\bbB$ and $x'\in\bbA'_0$,
\begin{align*}
\ulphi\circ Fx'&=\phi(Fx',-)&\text{(Equation (\ref{ulphi_def}))}\\
&=(\phi\circ F_{\nat})(x',-)&\text{(Proposition \ref{graph_cograph_distributor})}\\
&=(\underline{\phi\circ F_{\nat}})x'.&\text{(Equation (\ref{ulphi_def}))}
\end{align*}
Thus Diagram (\ref{composition_graph_adjunction_natural_first}) commutes. For each $\CQ$-distributor $\phi:\bbA\oto\bbB$ and $x\in\bbA_0$,
\begin{align*}
(\psi^{\dag}\circ\ulphi)x&=\psi(\ulphi x)&\text{(Equation (\ref{phidag_def}))}\\
&=(\psi\circ\phi)(x,-)&\text{(Equation (\ref{ulphi_def}))}\\
&=(\underline{\psi\circ\phi})x.&\text{(Equation (\ref{ulphi_def}))}
\end{align*}
Thus Diagram (\ref{composition_graph_adjunction_natural_second}) commutes.
\end{proof}

Skeletal complete $\CQ$-categories and left adjoint $\CQ$-functors constitute a subcategory of $\CQ$-$\Cat$ and we denote it by $\CQ$-$\CCat$. Dually, we denote by $\CQ\text{-}\CCat^{\dag}$ the subcategory of $\CQ$-$\Cat$ consisting of skeletal complete $\CQ$-categories and right adjoint $\CQ$-functors.

Recall that Proposition \ref{Fra_composition} gives rise to a functor $\CP:\CQ\text{-}\Cat\to\CQ\text{-}\Cat$ that sends a $\CQ$-functor $F:\bbA\to\bbB$ to the left adjoint $\CQ$-functor $F^{\ra}:\PA\to\PB$ between skeletal complete $\CQ$-categories. Thus, $\CP$ can be viewed as a functor
$$\CP:\CQ\text{-}\Cat\to\CQ\text{-}\CCat.$$
Similarly, the functor $\CPd:\CQ\text{-}\Cat\to\CQ\text{-}\Cat$ that sends a $\CQ$-functor $F:\bbA\to\bbB$ to the right adjoint $\CQ$-functor $F^{\nra}:\PdA\to\PdB$ between skeletal complete $\CQ$-categories can be viewed as a functor
$$\CP^{\dag}:\CQ\text{-}\Cat\to\CQ\text{-}\CCat^{\dag}.$$

\begin{prop} \label{P_factor_through_graph}
Let $\CP$ and $\CPd$ be functors as defined above.
\begin{itemize}
\item[\rm (1)] $\CP$ factor through $(-)^{\nat}$ via $(-)^*$, i.e., the diagram
$$\bfig
\qtriangle<1000,500>[\CQ\text{-}\Cat`(\CQ\text{-}\Dist)^{\op}`\CQ\text{-}\CCat;(-)^{\nat}`\CP`(-)^*]
\efig$$
commutes, where $(-)^*$ is viewed as a functor $(\CQ\text{-}\Dist)^{\op}\to\CQ\text{-}\CCat$.
\item[\rm (2)] $\CPd$ factors through $(-)_{\nat}$ via $(-)^{\dag}$, i.e., the diagram
$$\bfig
\qtriangle<1000,500>[\CQ\text{-}\Cat`\CQ\text{-}\Dist`\CQ\text{-}\CCat^{\dag};(-)_{\nat}`\CPd`(-)^{\dag}]
\efig$$
commutes, where $(-)^{\dag}$ is viewed as a functor $\CQ\text{-}\Dist\to\CQ\text{-}\CCat^{\dag}$.
\end{itemize}
\end{prop}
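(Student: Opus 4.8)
The plan is to verify directly that each triangle commutes, by comparing the relevant composite with $\CP$ (respectively $\CPd$) on objects and on morphisms; in both parts the required equality collapses to the defining formulas for the direct-image $\CQ$-functors together with the definitions of $(-)^*$, $(-)^{\dag}$, $(-)_{\nat}$ and $(-)^{\nat}$. Since $\CP$ factors as $(-)^*\circ(-)^{\nat}$ and $\CPd$ as $(-)^{\dag}\circ(-)_{\nat}$, it suffices to identify these composites with the two direct-image functors.

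For part (1), I would first check agreement on objects: read as a functor $\CQ\text{-}\Cat\to(\CQ\text{-}\Dist)^{\op}$, $(-)^{\nat}$ fixes each $\CQ$-category $\bbA$, and $(-)^*$, viewed as $(\CQ\text{-}\Dist)^{\op}\to\CQ\text{-}\CCat$, sends $\bbA$ to $\PA$; hence the composite sends $\bbA\mapsto\PA=\CP\bbA$. On a $\CQ$-functor $F:\bbA\to\bbB$, the functor $(-)^{\nat}$ produces the cograph $F^{\nat}:\bbB\oto\bbA$, and applying $(-)^*$ to this distributor yields $(F^{\nat})^*:\PA\to\PB$ with $(F^{\nat})^*(\mu)=\mu\circ F^{\nat}$ by Equation \ref{phidag_def}. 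By Definition \ref{direct_inverse_image_def}(1) this is exactly $F^{\ra}(\mu)$, so $(F^{\nat})^*=F^{\ra}=\CP F$ and the triangle commutes.

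For part (2) the argument is strictly dual. On objects, $(-)_{\nat}$ fixes $\bbA$ and $(-)^{\dag}$ sends it to $\PdA=\CPd\bbA$. On a $\CQ$-functor $F:\bbA\to\bbB$, the functor $(-)_{\nat}$ produces the graph $F_{\nat}:\bbA\oto\bbB$, and $(F_{\nat})^{\dag}:\PdA\to\PdB$ is given by $(F_{\nat})^{\dag}(\mu)=F_{\nat}\circ\mu$ by Equation \ref{phidag_def}, which is precisely $F^{\nra}(\mu)=\CPd F$ by Definition \ref{direct_inverse_image_def}(3). Thus $(F_{\nat})^{\dag}=F^{\nra}$ and the second triangle commutes.

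The computations themselves are immediate, so there is no substantive obstacle; the only points needing care are the variances and the claim that the factorizations genuinely land in the stated codomains. I would record that $\PA$ and $\PdA$ are skeletal (Proposition \ref{PA_skeletal}) and complete (Example \ref{PA_complete}), and that $F^{\ra}$ and $F^{\nra}$ are respectively a left and a right adjoint $\CQ$-functor by Proposition \ref{direct_inverse_image_adjunction} (which gives $F^{\ra}\dv F^{\la}$ and $F^{\nla}\dv F^{\nra}$), so that $(-)^*$ and $(-)^{\dag}$ really do restrict to functors with codomains $\CQ\text{-}\CCat$ and $\CQ\text{-}\CCat^{\dag}$. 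Keeping the opposite categories straight — so that $(-)^{\nat}$ reads as $\CQ\text{-}\Cat\to(\CQ\text{-}\Dist)^{\op}$ and $(-)^*$ as $(\CQ\text{-}\Dist)^{\op}\to\CQ\text{-}\CCat$ — is the main bookkeeping hurdle, but it presents no real difficulty.
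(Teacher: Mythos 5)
Your proposal is correct and follows essentially the same route as the paper: the whole content is the identification $(F^{\nat})^*=F^{\ra}$ and $(F_{\nat})^{\dag}=F^{\nra}$, read off directly from Definition \ref{direct_inverse_image_def} and Equation (\ref{phidag_def}), which is exactly the paper's one-line proof. Your additional checks (agreement on objects and that the composites land in $\CQ\text{-}\CCat$ and $\CQ\text{-}\CCat^{\dag}$ via Propositions \ref{PA_skeletal}, \ref{direct_inverse_image_adjunction} and Example \ref{PA_complete}) are sound bookkeeping that the paper leaves implicit.
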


\begin{proof}
By Definition \ref{direct_inverse_image_def}, it is easy to see that for each $\CQ$-functor $F:\bbA\to\bbB$,
$$F^{\ra}=(F^{\nat})^*\quad\text{and}\quad F^{\nra}=(F_{\nat})^{\dag}.$$
\end{proof}

\begin{prop} {\rm\cite{Stubbe2005}} \label{P_free_cocompletion}
\begin{itemize}
\item[\rm (1)] $\CP:\CQ\text{-}\Cat\to\CQ\text{-}\CCat$ is a left adjoint of the forgetful functor $\CQ\text{-}\CCat\to\CQ\text{-}\Cat$.
\item[\rm (2)] $\CPd:\CQ\text{-}\Cat\to\CQ\text{-}\CCat^{\dag}$ is a left adjoint of the forgetful functor $\CQ\text{-}\CCat^{\dag}\to\CQ\text{-}\Cat$.
\end{itemize}
\end{prop}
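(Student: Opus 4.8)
The plan is to show that the Yoneda embedding $\sY_{\bbA}\colon\bbA\to\PA$ is a universal arrow from $\bbA$ to the forgetful functor $U\colon\CQ\text{-}\CCat\to\CQ\text{-}\Cat$; by the universal-arrow characterization of adjoints this is equivalent to the existence of a left adjoint of $U$ with unit $\sY_{\bbA}$. Since $\CP(F)=F^{\ra}$ satisfies $F^{\ra}\circ\sY_{\bbA}=\sY_{\bbB}\circ F$ (Proposition \ref{Yoneda_natural}) and is itself a left adjoint (Proposition \ref{direct_inverse_image_adjunction}), the uniqueness in the universal property forces the already-defined functor $\CP$ to be this left adjoint, proving (1). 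First I would record that $\CP$ actually lands in $\CQ\text{-}\CCat$: $\PA$ is skeletal (Proposition \ref{PA_skeletal}) and complete (Example \ref{PA_complete}). Throughout I use that when $\bbB$ is skeletal, $\CQ\text{-}\Cat(\PA,\bbB)$ is a poset, so every "unique up to isomorphism'' conclusion becomes genuine uniqueness.

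For the \emph{existence} half of the universal property, I fix a skeletal complete $\bbB$ and a $\CQ$-functor $F\colon\bbA\to\bbB$, and set
$$\tilde F=\sup\nolimits_{\bbB}\circ\,F^{\ra}\colon\PA\to\PB\to\bbB.$$
Here $F^{\ra}\dv F^{\la}$ (Proposition \ref{direct_inverse_image_adjunction}) and $\sup_{\bbB}\dv\sY_{\bbB}$ (Theorem \ref{complete_cocomplete_equivalent}(7)), so $\tilde F$ is a composite of left adjoints, hence a left adjoint, i.e.\ a morphism of $\CQ\text{-}\CCat$. Moreover $F^{\ra}\circ\sY_{\bbA}=\sY_{\bbB}\circ F$ (Proposition \ref{Yoneda_natural}) together with $\sup_{\bbB}\circ\,\sY_{\bbB}\cong 1_{\bbB}$ (Proposition \ref{sup_functor}) gives $\tilde F\circ\sY_{\bbA}\cong F$, so $F$ factors through $\sY_{\bbA}$ by a morphism of $\CQ\text{-}\CCat$.

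For \emph{uniqueness} I would establish the density of the Yoneda embedding: for every $\mu\in\PA$,
$$\mu=\colim\nolimits_{\mu}\sY_{\bbA}=\sup\nolimits_{\PA}(\sY_{\bbA})^{\ra}(\mu),$$
the first identity being a direct computation from the Yoneda lemma (Lemma \ref{Yoneda_lemma}) and the implication formula for $\CQ$-distributors (Proposition \ref{Dist_quantaloid}(4)), since both $\PA(\colim_{\mu}\sY_{\bbA},\lam)$ and $\PA(\mu,\lam)$ reduce to $\bw_{a}\lam(a)\lda\mu(a)$; the second is Proposition \ref{colim_supremum}. Then, if $G\colon\PA\to\bbB$ is any left adjoint with $G\circ\sY_{\bbA}\cong F$, Corollary \ref{left_adjoint_preserves_sup}(1) (left adjoints preserve suprema) together with $G^{\ra}\circ(\sY_{\bbA})^{\ra}=(G\circ\sY_{\bbA})^{\ra}$ (Proposition \ref{Fra_composition}(1)) yields
$$G(\mu)=\sup\nolimits_{\bbB}G^{\ra}\big((\sY_{\bbA})^{\ra}(\mu)\big)=\sup\nolimits_{\bbB}(G\circ\sY_{\bbA})^{\ra}(\mu)=\sup\nolimits_{\bbB}F^{\ra}(\mu)=\tilde F(\mu),$$
where $G\circ\sY_{\bbA}\cong F$ forces $(G\circ\sY_{\bbA})^{\ra}=F^{\ra}$ because isomorphic $\CQ$-functors have equal cographs (Equation (\ref{functor_graph_order})). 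Hence $G\cong\tilde F$, and skeletality of $\bbB$ upgrades this to $G=\tilde F$.

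Part (2) is entirely dual: one replaces $\PA,\sY,\sup,F^{\ra}$ and left adjoints by $\PdA,\sYd,\inf,F^{\nra}$ and right adjoints, setting $\tilde F=\inf_{\bbB}\circ\,F^{\nra}$ as a composite of right adjoints via $F^{\nla}\dv F^{\nra}$ (Proposition \ref{direct_inverse_image_adjunction}) and $\sYd_{\bbB}\dv\inf_{\bbB}$ (Theorem \ref{complete_cocomplete_equivalent}(8)), and using Proposition \ref{sup_functor}(2), Proposition \ref{Yoneda_natural}(2), and Corollary \ref{left_adjoint_preserves_sup}(2); again $\PdA\in\CQ\text{-}\CCat^{\dag}$ by Proposition \ref{PA_skeletal} and Example \ref{PA_complete}. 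I expect the density identity $\mu=\colim_{\mu}\sY_{\bbA}$ to be the one genuinely substantive point, being the categorical core that drives uniqueness; every other step merely composes previously established adjunctions and invokes the Yoneda lemma, with the only real bookkeeping being the passage between "up to isomorphism'' and equality, which is handled uniformly by skeletality of the codomain turning the relevant functor homs into posets.
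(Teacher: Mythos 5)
Your proof is correct, and its skeleton is exactly the paper's: the Yoneda embedding $\sY_{\bbA}$ is shown to be the unit, and the factorization is $\tilde F=\sup_{\bbB}\circ F^{\ra}$, a composite of left adjoints, with $\tilde F\circ\sY_{\bbA}=F$ by Proposition~\ref{Yoneda_natural}, Proposition~\ref{sup_functor} and skeletality of $\bbB$. Where you genuinely diverge is the uniqueness step. The paper decomposes a presheaf element-wise, $\mu=\bv_{x\in\bbA_0}\mu(x)\otimes_{\PA}\sY_{\bbA}x$, invokes Proposition~\ref{F_la_ra_condition} (left adjoints preserve tensors and underlying joins) to get $H(\mu)=\bv_{x}\mu(x)\otimes_{\bbB}Fx$, and then identifies $\bbB(H(\mu),-)$ with $F_{\nat}\lda\mu$ by an explicit computation. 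You instead prove the density identity $\mu=\colim_{\mu}\sY_{\bbA}=\sup_{\PA}\sY_{\bbA}^{\ra}(\mu)$ and then apply Corollary~\ref{left_adjoint_preserves_sup} together with Proposition~\ref{Fra_composition}, which collapses the computation to one line. Both are implementations of the principle that every presheaf is a colimit of representables and left adjoints preserve colimits; yours trades the paper's tensor calculation for the density lemma, which you prove correctly (note that the paper itself records this fact only later, as Equation~(\ref{mu_sup_ymu}) inside the proof of Theorem~\ref{T_D_adjunction}, restated as Example~\ref{Yoneda_sup_dense}, so you are front-loading rather than borrowing). One small point worth making explicit: your identity $\mu\cong\colim_{\mu}\sY_{\bbA}$ upgrades to an equality because $\PA$ is skeletal (Proposition~\ref{PA_skeletal}), just as skeletality of $\bbB$ upgrades $G\circ\sY_{\bbA}\cong F$ to equality; with that said, there is no gap, and the dual argument for (2) goes through verbatim.
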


\begin{proof}
(1) By Proposition \ref{Yoneda_natural}, $\sY=\{\sY_{\bbA}\}$ is a natural transformation from the identity functor on $\CQ$-$\Cat$ to $\CP$. We show that $\sY$ is the unit of the desired adjunction.

Let $\bbA$ be a $\CQ$-category, $\bbB$ a skeletal complete $\CQ$-category and $F:\bbA\to\bbB$ a $\CQ$-functor. We claim that there is a unique left adjoint $\CQ$-functor $G:\PA\to\bbB$ such that the following diagram commutes:
\begin{equation} \label{P_factor_throgh_Yoneda}
\bfig
\qtriangle[\bbA`\PA`\bbB;\sY_{\bbA}`F`G]
\efig
\end{equation}
Define $G=\sup_{\bbB}\circ F^{\ra}:\PA\to\PB\to\bbB$. By Proposition \ref{direct_inverse_image_adjunction} and Theorem \ref{complete_cocomplete_equivalent}, $G$ is the composition of the left adjoint $\CQ$-functors $F^{\ra}$ and $\sup_{\bbB}$, thus $G$ is also a left adjoint $\CQ$-functor. For all $x\in\bbA_0$,
\begin{align*}
G\circ\sY_{\bbA}x&={\sup}_{\bbB}\circ F^{\ra}\circ\sY_{\bbA}x&(\text{Definition of }G)\\
&={\sup}_{\bbB}\circ\sY_{\bbB}\circ Fx&\text{(Proposition \ref{Yoneda_natural})}\\
&=Fx.&(\text{Proposition \ref{sup_functor} and }\bbB\text{ is skeletal})
\end{align*}
Thus Diagram (\ref{P_factor_throgh_Yoneda}) commutes. Suppose that $H:\PA\to\bbB$ is another left adjoint $\CQ$-functor making Diagram (\ref{P_factor_throgh_Yoneda}) commute. Then for each $\mu\in\PA$,
\begin{align*}
H(\mu)&=H(\mu\circ\bbA)\\
&=H\Big(\bv_{x\in\bbA_0}\mu(x)\circ\bbA(-,x)\Big)&(\text{Proposition \ref{Dist_quantaloid}})\\
&=H\Big(\bv_{x\in\bbA_0}\mu(x)\otimes_{\PA}\sY_{\bbA}x\Big)&(\text{Example \ref{PA_tensor}})\\
&=\bv_{x\in\bbA_0}\mu(x)\otimes_{\bbB}(H\circ\sY_{\bbA}x)&(\text{Proposition \ref{F_la_ra_condition}})\\
&=\bv_{x\in\bbA_0}\mu(x)\otimes_{\bbB}Fx.&(\text{Diagram (\ref{P_factor_throgh_Yoneda})})
\end{align*}
Consequently,
\begin{align*}
\bbB(H(\mu),-)&=\bbB\Big(\bv_{x\in\bbA_0}\mu(x)\otimes_{\bbB}Fx,-\Big)\\
&=\bw_{x\in\bbA_0}\bbB(\mu(x)\otimes_{\bbB} Fx,-)&\text{(Proposition \ref{A_x_yi_distribution})}\\
&=\bw_{x\in\bbA_0}\bbB(Fx,-)\lda\mu(x)&\text{(Definition \ref{tensor_cotensor_def})}\\
&=F_\nat \lda\mu&(\text{Proposition \ref{Dist_quantaloid}})\\
&=\bbB({\colim}_{\mu}F,-)&(\text{Definition \ref{limit_colimit_def}})\\
&=\bbB({\sup}_{\bbB}\circ F^{\ra}(\mu),-)&\text{(Proposition \ref{colim_supremum})}
\end{align*}
Since $\bbB$ is skeletal, it follows that $H=\sup_{\bbB}\circ F^{\ra}=G$.

(2) Similar to (1), one can prove that $\sYd$ is the unit of the desired adjunction. Explicitly, for each $\CQ$-category $\bbA$, skeletal complete $\CQ$-category $\bbB$ and $\CQ$-functor $F:\bbA\to\bbB$, there is a unique right adjoint $\CQ$-functor $G=\inf_{\bbB}\circ F^{\nra}:\PdA\to\PdB\to\bbB$ making the following diagram commutes:
$$\bfig
\qtriangle[\bbA`\PdA`\bbB;\sYd_{\bbA}`F`G]
\efig$$
\end{proof}

Proposition \ref{P_free_cocompletion} implies that $\PA$ is the free cocompletion of a $\CQ$-category $\bbA$, and $\PdA$ is the free completion of $\bbA$. This means that
\begin{itemize}
\item[\rm (1)] each $\CQ$-functor $F:\bbA\to\bbB$ into a cocomplete $\CQ$-category factors uniquely (up to isomorphism) through the Yoneda embedding $\sY_{\bbA}$ via a left adjoint $\CQ$-functor $\PA\to\bbB$;
\item[\rm (2)] each $\CQ$-functor $F:\bbA\to\bbB$ into a complete $\CQ$-category factors uniquely (up to isomorphism) through the co-Yoneda embedding $\sYd_{\bbA}$ via a right adjoint $\CQ$-functor $\PdA\to\bbB$.
\end{itemize}
$$\bfig
\qtriangle[\bbA`\PA`\bbB;\sY_{\bbA}`F`\sup_{\bbB}\circ F^{\ra}]
\qtriangle(1500,0)[\bbA`\PdA`\bbB;\sYd_{\bbA}`F`\inf_{\bbB}\circ F^{\nra}]
\efig$$

\section{Infomorphisms}

In this section, we introduce the crucial notion in this dissertation, that of infomorphisms between $\CQ$-distributors. An infomorphism between $\CQ$-distributors is what a Chu transform between Chu spaces \cite{Barr1991,Pratt1995}. The terminology ``infomorphism'' is from computer science \cite{Barwise1997,Ganter2007}.

\begin{defn} \label{infomorphism_def}
Given $\CQ$-distributors $\phi:\bbA\oto\bbB$ and $\psi:\bbA'\oto\bbB'$, an \emph{infomorphism} $(F,G):\phi\to\psi$ is a pair of $\CQ$-functors $F:\bbA\to\bbA'$ and $G:\bbB'\to\bbB$
such that $G^\nat \circ\phi=\psi\circ F_\nat$, or equivalently, $\phi(-,G-)=\psi(F-,-)$.
$$\bfig
\square[\bbA`\bbB`\bbA'`\bbB';\phi`F_\nat `G^\nat`\psi]
\place(250,0)[\circ]
\place(250,500)[\circ]
\place(0,250)[\circ]
\place(500,250)[\circ]
\efig$$
\end{defn}

An adjunction $F\dv G:\bbA\rhu\bbB$ in $\CQ$-$\Cat$ is exactly an infomorphism from the identity $\CQ$-distributor on $\bbA$ to the identity $\CQ$-distributor on $\bbB$. Thus, infomorphisms are an extension of adjoint $\CQ$-functors.

$\CQ$-distributors and infomorphisms constitute a category $\CQ$-$\Info$.

\begin{prop} \label{Y_functor_Cat_Info}
Let $F:\bbA\to\bbB$ be a $\CQ$-functor, then
$$(F,F^{\la}):((\sY_{\bbA})_{\nat}:\bbA\oto\PA)\to((\sY_{\bbB})_{\nat}:\bbB\oto\PB)$$
is an infomorphism.
\end{prop}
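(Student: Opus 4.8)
The plan is to verify the defining equation of an infomorphism directly, in its pointwise form. By Definition \ref{infomorphism_def}, showing that $(F,F^{\la})$ is an infomorphism from $(\sY_{\bbA})_{\nat}:\bbA\oto\PA$ to $(\sY_{\bbB})_{\nat}:\bbB\oto\PB$ amounts to checking
$$(\sY_{\bbA})_{\nat}(x,F^{\la}\lam)=(\sY_{\bbB})_{\nat}(Fx,\lam)$$
for all $x\in\bbA_0$ and $\lam\in\PB$. First I would confirm that the data fit the template of Definition \ref{infomorphism_def}: here the four slots $\bbA,\bbB,\bbA',\bbB'$ are occupied by $\bbA,\PA,\bbB,\PB$ respectively, so the first component must be a $\CQ$-functor $\bbA\to\bbB$ (our $F$) and the second must run $\PB\to\PA$, which is exactly the contravariant inverse image $F^{\la}$ of Definition \ref{direct_inverse_image_def}(2). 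A quick type check, using that $F$ is type-preserving, confirms that both sides of the displayed equation lie in $\CQ(tx,t\lam)$.

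The key simplification is that the graph of a Yoneda embedding is computed by presheaf evaluation. For any $\CQ$-functor $H:\bbX\to\bbY$ the graph is $H_{\nat}(x,y)=\bbY(Hx,y)$; specializing to $H=\sY_{\bbA}:\bbA\to\PA$ and applying the Yoneda Lemma \ref{Yoneda_lemma} yields
$$(\sY_{\bbA})_{\nat}(x,\mu)=\PA(\sY_{\bbA}x,\mu)=\mu(x)$$
for every $\mu\in\PA$, and likewise $(\sY_{\bbB})_{\nat}(Fx,\lam)=\lam(Fx)$. Substituting $\mu=F^{\la}\lam$ on the left reduces the required identity to $F^{\la}(\lam)(x)=\lam(Fx)$, which is precisely Proposition \ref{F_la_lam_Fx}(1). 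At that point the verification closes immediately.

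There is no serious obstacle here; the only thing to watch is the bookkeeping of which $\CQ$-categories occupy the four positions of Definition \ref{infomorphism_def}, together with the observation that the second component is the \emph{inverse} image $F^{\la}$ (not a direct image), whose action on presheaves is already recorded in Proposition \ref{F_la_lam_Fx}. If one prefers a distributor-level computation avoiding elementwise evaluation, the equivalent global form $G^{\nat}\circ\phi=\psi\circ F_{\nat}$ of Definition \ref{infomorphism_def} can be checked instead, i.e. $(F^{\la})^{\nat}\circ(\sY_{\bbA})_{\nat}=(\sY_{\bbB})_{\nat}\circ F_{\nat}$, but the pointwise route above is the most economical.
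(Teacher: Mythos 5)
Your proposal is correct and follows essentially the same route as the paper's own proof: both reduce the graphs of the Yoneda embeddings to presheaf evaluation via the Yoneda Lemma \ref{Yoneda_lemma}, and then close the identity with $F^{\la}(\lam)(x)=\lam(Fx)$ from Proposition \ref{F_la_lam_Fx}(1). The only difference is presentational (you spell out the type-checking and the placement of the data in Definition \ref{infomorphism_def}, which the paper leaves implicit), so nothing further is needed.
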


\begin{proof}
For all $x\in\bbA_0$ and $\lam\in\PB$,
\begin{align*}
(\sY_{\bbA})_{\nat}(x,F^{\la}(\lam))&= \PA(\sY_\bbA(x),F^{\la}(\lam))\\
&=F^{\la}(\lam)(x)&\text{(Yoneda lemma)}\\
&=\lam(Fx)&\text{(Proposition \ref{F_la_lam_Fx})}\\
&= \PB(\sY_\bbB(Fx),\lam)&\text{(Yoneda lemma)}\\
&=(\sY_{\bbB})_{\nat}(Fx,\lam).
\end{align*}
Hence the conclusion holds.
\end{proof}

The above proposition gives rise to a fully faithful functor $\BY:\CQ\text{-}\Cat\to\CQ\text{-}\Info$ that sends each $\CQ$-category $\bbA$ to the graph $(\sY_{\bbA})_{\nat}$ of the Yoneda embedding.

\begin{prop} \label{Y_U_adjunction}
$\BY:\CQ\text{-}\Cat\to\CQ\text{-}\Info$ is a left adjoint of the forgetful functor $\BU:\CQ\text{-}\Info\to\CQ\text{-}\Cat$ that sends an infomorphism
$$(F,G):(\phi:\bbA\oto\bbB)\to(\psi:\bbA'\oto\bbB')$$
to the $\CQ$-functor $F:\bbA\to\bbA'$.
\end{prop}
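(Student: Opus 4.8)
The plan is to prove $\BY\dv\BU$ by exhibiting a universal arrow, which is especially clean here because $\BU\BY$ turns out to be the identity. Indeed $\BU\BY\bbA=\BU((\sY_\bbA)_\nat)=\bbA$, and since the action of $\BY$ on morphisms sends $F$ to $(F,F^\la)$ (Proposition \ref{Y_functor_Cat_Info}) while $\BU$ retains only the first component, $\BU\BY F=F$. Thus the natural candidate for the unit is $\eta_\bbA=1_\bbA$, and the naturality square for $\eta$ reduces to $F\circ 1_\bbA=1_\bbB\circ F$, hence holds automatically. It then remains to verify the universal property of $\eta_\bbA=1_\bbA$: for every $\CQ$-distributor $\phi:\bbA'\oto\bbB'$ (an object of $\CQ$-$\Info$, with $\BU\phi=\bbA'$) and every $\CQ$-functor $F:\bbA\to\bbA'$, there is a \emph{unique} infomorphism $(F,G):\BY\bbA\to\phi$ whose first component is $F$.

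First I would pin down what the second component must be. By Definition \ref{infomorphism_def}, an infomorphism $(F,G):((\sY_\bbA)_\nat:\bbA\oto\PA)\to(\phi:\bbA'\oto\bbB')$ consists of $\CQ$-functors $F:\bbA\to\bbA'$ and $G:\bbB'\to\PA$ subject to $(\sY_\bbA)_\nat(x,Gy')=\phi(Fx,y')$ for all $x\in\bbA_0$ and $y'\in\bbB'_0$. The Yoneda lemma (Lemma \ref{Yoneda_lemma}) gives $(\sY_\bbA)_\nat(x,Gy')=\PA(\sY_\bbA x,Gy')=(Gy')(x)$, so the infomorphism condition is equivalent to $(Gy')(x)=\phi(Fx,y')$ for all $x$. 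This forces $G$ on objects to be $Gy'=\phi(F-,y')$, and hence determines $G$ uniquely.

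For existence I would take exactly this $G$ and observe that it is the composite $G=F^\la\circ\olphi$, where $\olphi:\bbB'\to\CP\bbA'$, $y'\mapsto\phi(-,y')$, is the $\CQ$-functor supplied by Proposition \ref{distributor_functor_bijection}(2) and $F^\la:\CP\bbA'\to\PA$ is the contravariant inverse image $\CQ$-functor of Definition \ref{direct_inverse_image_def}. The identification $Gy'=F^\la(\olphi y')$ is immediate from Proposition \ref{F_la_lam_Fx}(1), since $F^\la(\olphi y')(x)=\olphi y'(Fx)=\phi(Fx,y')$. Consequently $G$ is a genuine $\CQ$-functor (a composite of two), and the displayed identity shows $(F,G)$ is an infomorphism with first component $F$. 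Uniqueness is the pointwise computation above run backwards: any infomorphism $(F,G')$ over the same $F$ satisfies $(G'y')(x)=\phi(Fx,y')=(Gy')(x)$ for all $x$ and $y'$, and since contravariant presheaves are equal precisely when they agree at every object, $G'=G$. This establishes $1_\bbA$ as a universal arrow from $\bbA$ to $\BU$, and therefore $\BY\dv\BU$.

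The proof carries essentially no obstacle: the one conceptual step is recognizing that $\BU\BY$ is the identity, which collapses the unit to $1_\bbA$ and reduces the whole adjunction to the unique completion of a $\CQ$-functor $F$ into an infomorphism. The only place that needs a moment's care is confirming that the \emph{forced} second component $G$ is a $\CQ$-functor; I would obtain this for free by exhibiting it as the composite $F^\la\circ\olphi$ rather than by checking the $\CQ$-functor inequalities directly. Everything else is routine Yoneda bookkeeping.
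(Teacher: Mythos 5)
Your proposal is correct and follows essentially the same route as the paper's own proof: both observe that $\BU\circ\BY$ is the identity so the unit is $1_{\bbA}$, both construct the forced second component as the composite $F^{\la}\circ\olphi$ (the paper writes $H^{\la}\circ\olpsi$), and both verify the infomorphism identity and uniqueness by the same pointwise Yoneda computation $(Gy')(x)=\phi(Fx,y')$. The only cosmetic difference is that you derive the forced form of $G$ before exhibiting it as a $\CQ$-functor, whereas the paper states existence first and uniqueness second; this changes nothing of substance.
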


\begin{proof}
It is clear that $\BU\circ\BY={\bf id}_{\CQ\text{-}\Cat}$, the identity functor on $\CQ$-$\Cat$. Thus $\{1_{\bbA}\}$ is a natural transformation from ${\bf id}_{\CQ\text{-}\Cat}$ to $\BU\circ\BY$. It remains to show that for each $\CQ$-category $\bbA$, $\CQ$-distributor $\psi:\bbA'\to\bbB'$ and $\CQ$-functor $H:\bbA\to\bbA'$, there is a unique infomorphism
$$(F,G):\BY(\bbA)\to(\psi:\bbA'\oto\bbB')$$
such that the diagram
$$\bfig
\qtriangle<700,500>[\bbA`\BU\circ\BY(\bbA)`\bbA';1_{\bbA}`H`\BU(F,G)]
\efig$$
is commutative. By definition, $\BY(\bbA)$ is the graph $(\sY_{\bbA})_{\nat}:\bbA\oto\PA$ and $\BU(F,G)=F$. Thus, we only need to show that there is a unique $\CQ$-functor $G:\bbB'\to\PA$ such that
$$(H,G):((\sY_{\bbA})_{\nat}:\bbA\oto\PA)\to(\psi:\bbA'\oto\bbB')$$
is an infomorphism.

Let $G=H^{\la}\circ\olpsi:\bbB'\to\PA'\to\PA$. Then $$(H,G):((\sY_{\bbA})_{\nat}:\bbA\oto\PA) \to(\psi:\bbA'\oto\bbB')$$
is an infomorphism since
$$(\sY_{\bbA})_{\nat}(x,Gy')=(Gy')(x)=H^{\la}\circ\olpsi(y')(x)=\olpsi(y')(Hx)=\psi(Hx,y')$$
for all $x\in\bbA_0$ and $y'\in\bbB'_0$. This proves the existence of $G$.

To see the uniqueness of $G$, suppose that $G':\bbB'\to\PA$ is another $\CQ$-functor such that
$$(H,G'):((\sY_{\bbA})_{\nat}:\bbA\oto\PA)\to(\psi:\bbA'\oto\bbB')$$
is an infomorphism. Then for all $x\in\bbA_0$ and $y'\in\bbB'_0$,
\begin{align*}
(G'y')(x)&=(\sY_{\bbA})_{\nat}(x,G'y')\\
&=\psi(Hx,y')\\
&=\olpsi(y')(Hx)\\
&=H^{\la}\circ\olpsi(y')(x)\\
&=(Gy')(x),
\end{align*}
hence $G'=G$.
\end{proof}

Similar to Proposition \ref{Y_functor_Cat_Info}, one can check that sending a $\CQ$-functor $F:\bbA\to\bbB$ to the infomorphism
$$(F^{\nla},F):((\sYd_{\bbB})^{\nat}:\PdB\oto\bbB)\to((\sYd_{\bbA})^{\nat}:\PdA\oto\bbA)$$
induces a fully faithful functor $\BY^{\dag}:\CQ\text{-}\Cat\to(\CQ\text{-}\Info)^{\op}$.

\begin{prop} \label{Y_U_adjunction_contravariant}
$\BY^{\dag}:\CQ\text{-}\Cat\to(\CQ\text{-}\Info)^{\op}$ is a left adjoint of the contravariant forgetful functor $(\CQ\text{-}\Info)^{\op}\to\CQ\text{-}\Cat$ that sends each infomorphism
$$(F,G):(\phi:\bbA\oto\bbB)\to(\psi:\bbA'\oto\bbB')$$
to the $\CQ$-functor $G:\bbB'\to\bbB$.
\end{prop}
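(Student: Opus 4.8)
The plan is to mirror the proof of Proposition \ref{Y_U_adjunction} almost verbatim, exploiting the fact that the putative unit is again an identity. Write $\BU^{\dag}:(\CQ\text{-}\Info)^{\op}\to\CQ\text{-}\Cat$ for the forgetful functor in the statement, which sends a $\CQ$-distributor $\psi:\bbA'\oto\bbB'$ (as an object of $(\CQ\text{-}\Info)^{\op}$) to its codomain $\bbB'$ and an infomorphism $(F,G):\phi\to\psi$ to $G$. Since $\BY^{\dag}(\bbA)$ is the cograph $(\sYd_{\bbA})^{\nat}:\PdA\oto\bbA$, whose codomain is $\bbA$, a direct check on objects and morphisms shows $\BU^{\dag}\circ\BY^{\dag}={\bf id}_{\CQ\text{-}\Cat}$. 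Hence $\{1_{\bbA}\}$ is a natural transformation ${\bf id}_{\CQ\text{-}\Cat}\to\BU^{\dag}\circ\BY^{\dag}$, and it suffices to verify that it has the universal property of a unit; naturality of the induced hom-set bijection is then automatic.

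First I would fix a $\CQ$-category $\bbA$, an object $\psi:\bbA'\oto\bbB'$ of $(\CQ\text{-}\Info)^{\op}$, and a $\CQ$-functor $H:\bbA\to\bbB'=\BU^{\dag}(\psi)$, and seek a unique morphism $\BY^{\dag}(\bbA)\to\psi$ in $(\CQ\text{-}\Info)^{\op}$ --- that is, a unique infomorphism $(F,G):(\psi:\bbA'\oto\bbB')\to((\sYd_{\bbA})^{\nat}:\PdA\oto\bbA)$ in $\CQ\text{-}\Info$ --- with $\BU^{\dag}(F,G)=G=H$. Thus $G=H$ is forced, and the whole content is to produce the remaining component $F:\bbA'\to\PdA$ and prove its uniqueness. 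I would set
$$F=H^{\nla}\circ\ulpsi:\bbA'\to\CP^{\dag}\bbB'\to\PdA,$$
which is automatically a $\CQ$-functor, being a composite of the $\CQ$-functor $\ulpsi$ (Proposition \ref{distributor_functor_bijection}) and the $\CQ$-functor $H^{\nla}$ (Definition \ref{direct_inverse_image_def}).

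Next I would verify that $(F,H)$ is indeed an infomorphism $\psi\to(\sYd_{\bbA})^{\nat}$, i.e.\ that $\psi(-,H-)=(\sYd_{\bbA})^{\nat}(F-,-)$. Evaluating at $a'\in\bbA'_0$ and $a\in\bbA_0$, the right-hand side equals $(Fa')(a)$ by the Yoneda lemma (Lemma \ref{Yoneda_lemma}), since $(\sYd_{\bbA})^{\nat}(\mu,a)=\PdA(\mu,\sYd_{\bbA}a)=\mu(a)$; and by Proposition \ref{F_la_lam_Fx}(2),
$$(Fa')(a)=H^{\nla}(\ulpsi a')(a)=(\ulpsi a')(Ha)=\psi(a',Ha),$$
which is exactly the left-hand side. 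For uniqueness, any infomorphism of the form $(F',H):\psi\to(\sYd_{\bbA})^{\nat}$ must satisfy $(F'a')(a)=(\sYd_{\bbA})^{\nat}(F'a',a)=\psi(a',Ha)=(Fa')(a)$ for all $a$, whence $F'=F$. This establishes the universal property of $\{1_{\bbA}\}$ and hence the adjunction $\BY^{\dag}\dv\BU^{\dag}$.

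The computations are all routine; the only thing demanding care is the bookkeeping of variances. Because the target of $\BY^{\dag}$ is $(\CQ\text{-}\Info)^{\op}$, a morphism $\BY^{\dag}(\bbA)\to\psi$ there is an infomorphism $\psi\to(\sYd_{\bbA})^{\nat}$ in $\CQ\text{-}\Info$, and one must keep straight that $\BU^{\dag}$ now records the second component rather than the first --- so this time it is $G$ that is pinned down by $H$ and $F$ that is constructed, the exact reverse of Proposition \ref{Y_U_adjunction}. Once the direction of the infomorphism and the roles of $F$ and $G$ are fixed, the argument is the formal dual of the covariant case, with $\olpsi$, $H^{\la}$ and $\sY$ replaced throughout by $\ulpsi$, $H^{\nla}$ and $\sYd$.
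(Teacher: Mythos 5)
Your proposal is correct and takes essentially the same approach as the paper: the paper's proof of Proposition \ref{Y_U_adjunction_contravariant} consists only of the remark that it is ``similar to Proposition \ref{Y_U_adjunction},'' and your argument is exactly that dualization --- identity unit, second component $G=H$ forced by the forgetful functor, first component $F=H^{\nla}\circ\ulpsi$ shown to yield an infomorphism via the Yoneda lemma and Proposition \ref{F_la_lam_Fx}(2), with uniqueness obtained by evaluating against $(\sYd_{\bbA})^{\nat}$.
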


\begin{proof}
Similar to Proposition \ref{Y_U_adjunction}.
\end{proof}

\chapter{$\CQ$-closure spaces} \label{closure_space}

The notions of $\CQ$-closure operators and $\CQ$-closure systems describe the structure of monads and their algebras in $\CQ$-categories. A $\CQ$-closure space is a $\CQ$-category $\bbA$ equipped with a monad on the $\CQ$-category $\PA$ of contravariant presheaves, and has a similar structure to closure spaces in topology. We will discuss the relations between $\CQ$-closure spaces and complete $\CQ$-categories.

\section{$\CQ$-closure systems and $\CQ$-closure operators}

We first describe the monad and comonad structures on $\CQ$-categories in the terminologies of $\CQ$-closure operators and $\CQ$-interior operators.

\begin{defn} \label{closure_system}
Let $\bbA$ be a $\CQ$-category.
\begin{itemize}
\item[\rm (1)] An isomorphism-closed $\CQ$-subcategory $\bbB$ of $\bbA$ is a \emph{$\CQ$-closure system} of $\bbA$ if the inclusion $\CQ$-functor $I:\bbB\to\bbA$ is a right adjoint in $\CQ$-$\Cat$.
\item[\rm (2)] An isomorphism-closed $\CQ$-subcategory $\bbB$ of $\bbA$ is a \emph{$\CQ$-interior system} of $\bbA$ if the inclusion $\CQ$-functor $I:\bbB\to\bbA$ is a left adjoint in $\CQ$-$\Cat$.
\end{itemize}
\end{defn}

\begin{defn} \label{closure_operator}
Let $\bbA$ be a $\CQ$-category.
\begin{itemize}
\item[\rm (1)] A $\CQ$-functor $F:\bbA\to\bbA$ is a \emph{$\CQ$-closure operator} on $\bbA$ if $1_{\bbA}\leq F$ and $F^2\cong F$.
\item[\rm (2)] A $\CQ$-functor $F:\bbA\to\bbA$ is a \emph{$\CQ$-interior operator} on $\bbA$ if $F\leq 1_{\bbA}$ and $F^2\cong F$.
\end{itemize}
\end{defn}

\begin{exmp} \label{adjunction_closure_interior}
Let $F\dv G:\bbA\rhu\bbB$ be an adjunction in $\CQ$-$\Cat$. Then $G\circ F:\bbA\to\bbA$ is a $\CQ$-closure operator and $F\circ G:\bbB\to\bbB$ is a $\CQ$-interior operator.
\end{exmp}

\begin{prop} \label{closure_system_operator}
Let $\bbA$ be a $\CQ$-category, $\bbB$ an isomorphism-closed $\CQ$-subcategory of $\bbA$. The following conditions are equivalent:
\begin{itemize}
\item[\rm (1)] $\bbB$ is a $\CQ$-closure system of $\bbA$.
\item[\rm (2)] There is a $\CQ$-closure operator $F:\bbA\to\bbA$ such that $\bbB_0=\{x\in\bbA_0:Fx\cong x\}$.
\end{itemize}
\end{prop}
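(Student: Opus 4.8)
Let $\bbA$ be a $\CQ$-category and $\bbB$ an isomorphism-closed $\CQ$-subcategory. The claim is the equivalence:
(1) $\bbB$ is a $\CQ$-closure system of $\bbA$ (i.e., the inclusion $I:\bbB\to\bbA$ is a right adjoint), iff
(2) there is a $\CQ$-closure operator $F:\bbA\to\bbA$ with $\bbB_0 = \{x \in \bbA_0 : Fx \cong x\}$.

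**Plan.** This is a standard Kleisli/Eilenberg–Moore style correspondence between closure operators (idempotent monads) and their fixed subcategories, adapted to the enriched setting. The plan is to prove both directions constructively.

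For (1)⟹(2): Suppose the inclusion $I:\bbB\to\bbA$ is a right adjoint, so there is $L:\bbA\to\bbB$ with $L\dv I:\bbA\rhu\bbB$. Set $F = I\circ L:\bbA\to\bbA$. By Example~\ref{adjunction_closure_interior} (with the roles of the adjoint pair: since $L\dv I$, the composite $I\circ L$ is a $\CQ$-closure operator), $F$ is a $\CQ$-closure operator, so $1_\bbA \le F$ and $F^2\cong F$ come for free. The substantive point is to identify the fixed objects. I would show $Fx\cong x$ iff $x\in\bbB_0$. If $x\in\bbB_0$, then since $\bbB$ is a full subcategory and $I$ is the inclusion, the counit component $L Ix' \cong x'$ for $x'\in\bbB_0$ (from Proposition~\ref{FGF_F} / the adjunction) gives $Fx = ILx \cong x$. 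Conversely, if $Fx = ILx \cong x$, then $x$ is isomorphic to the object $ILx$ which lies in $\bbB_0$ (as $I$ is the inclusion of $\bbB$), and since $\bbB$ is isomorphism-closed, $x\in\bbB_0$.

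For (2)⟹(1): Given the $\CQ$-closure operator $F$ with $\bbB_0 = \{x : Fx\cong x\}$, I would construct the left adjoint $L:\bbA\to\bbB$ to the inclusion by corestricting $F$: define $Lx := Fx$, which lies in $\bbB_0$ because $F(Fx) = F^2 x \cong Fx$, so $Fx$ is a fixed object. To verify $L\dv I$ via Proposition~\ref{adjoint_condition}, it suffices to check $\bbA(Lx, y) = \bbA(x, Iy) = \bbA(x,y)$ for all $x\in\bbA_0$, $y\in\bbB_0$. One inequality, $\bbA(Fx,y)\le\bbA(x,y)$, follows from $1_\bbA\le F$ (which gives $x\le Fx$, hence $\bbA(Fx,y)\le\bbA(x,y)$ by Proposition~\ref{underlying_order}). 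The reverse inequality uses that $F$ is a $\CQ$-functor together with $Fy\cong y$ for $y\in\bbB_0$: namely $\bbA(x,y)\le\bbA(Fx,Fy)=\bbA(Fx,y)$, the last equality by Proposition~\ref{isomorphic_condition} since $Fy\cong y$.

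**Main obstacle.** The only delicate bookkeeping is in (1)⟹(2), matching the direction of the adjunction in Example~\ref{adjunction_closure_interior} correctly so that $I\circ L$ (not $L\circ I$) is the closure operator, and confirming that the fixed points of $I\circ L$ are \emph{exactly} $\bbB_0$ rather than merely containing it—this is where isomorphism-closedness of $\bbB$ is essential and must be invoked explicitly. In (2)⟹(1), the subtle step is checking $Lx\in\bbB_0$, which relies on idempotency $F^2\cong F$ forcing $Fx$ to be a fixed object; everything else reduces to the adjunction characterization of Proposition~\ref{adjoint_condition} and routine application of Propositions~\ref{underlying_order} and~\ref{isomorphic_condition}.
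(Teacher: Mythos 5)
Your proposal is correct and follows essentially the same route as the paper: for (1)$\Rightarrow$(2) you take $F=I\circ L$ and identify its fixed objects using isomorphism-closedness exactly as the paper does, and for (2)$\Rightarrow$(1) you corestrict $F$ to $\bbB$ (noting $F^2\cong F$ makes this well defined), which is the paper's construction as well. The only cosmetic difference is in the verification: you check the adjunction via the hom-set criterion of Proposition~\ref{adjoint_condition} and invoke Proposition~\ref{FGF_F}, whereas the paper checks the unit--counit inequalities directly; these are interchangeable.
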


\begin{proof}
(1)${}\Lra{}$(2): If the inclusion $\CQ$-functor $I:\bbB\to\bbA$ has a left adjoint $G:\bbA\to\bbB$, let $F=I\circ G$, then $F:\bbA\to\bbA$ is a $\CQ$-closure operator. Since $Fx=Gx\in\bbB_0$ for all $x\in\bbA_0$ and $\bbB$ is isomorphism-closed, it is clear that $\{x\in\bbA_0:Fx\cong x\}\subseteq\bbB_0$.

Conversely, for all $x\in\bbB_0$,
$$\bbB(Fx,x)=\bbB(Gx,x)=\bbA(x,Ix)=\bbA(x,x)\geq 1_{tx},$$
and $\bbB(x,Fx)\geq 1_{tx}$ holds trivially, hence $x\cong Fx$, as required.

(2)${}\Lra{}$(1): We show that the inclusion $\CQ$-functor $I:\bbB\to\bbA$ is a right adjoint. View $F$ as a $\CQ$-functor from $\bbA$ to $\bbB$, then $1_{\bbA}\leq I\circ F$. Since $F^2\cong F$, it follows that $F\circ I\cong 1_{\bbB}$. Thus $F\dv I:\bbA\rhu\bbB$, as required.
\end{proof}

Dually, we have the following proposition for $\CQ$-interior systems and $\CQ$-interior operators.

\begin{prop} \label{interior_system_operator}
Let $\bbA$ be a $\CQ$-category, $\bbB$ an isomorphism-closed $\CQ$-subcategory of $\bbA$. The following conditions are equivalent:
\begin{itemize}
\item[\rm (1)] $\bbB$ is a $\CQ$-interior system of $\bbA$.
\item[\rm (2)] There is a $\CQ$-interior operator $F:\bbA\to\bbA$ such that $\bbB_0=\{x\in\bbA_0:Fx\cong x\}$.
\end{itemize}
\end{prop}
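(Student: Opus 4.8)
The plan is to dualize the proof of Proposition~\ref{closure_system_operator}: every inequality of the form $1_{\bbA}\leq F$ is replaced by $F\leq 1_{\bbA}$, the role of ``right adjoint'' played by the inclusion in the closure case becomes ``left adjoint'', and the operator manufactured from an adjunction is now $F\circ G$ rather than $G\circ F$ (cf. Example~\ref{adjunction_closure_interior}). Rather than invoking an opposite $\CQ$-category (which is delicate for quantaloid-enrichment), I would simply reverse the underlying order directly and argue the two implications as below.

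For the implication (1)$\Rightarrow$(2): assuming $\bbB$ is a $\CQ$-interior system, the inclusion $I:\bbB\to\bbA$ is a left adjoint, say $I\dv G:\bbB\rhu\bbA$ with $G:\bbA\to\bbB$. I set $F:=I\circ G:\bbA\to\bbA$. Example~\ref{adjunction_closure_interior}, applied with left adjoint $I$ and right adjoint $G$, gives at once that $F=I\circ G$ is a $\CQ$-interior operator on $\bbA$. It then remains to identify its fixed points with $\bbB_0$. Since $Fx=Gx\in\bbB_0$ for every $x\in\bbA_0$ and $\bbB$ is isomorphism-closed, the inclusion $\{x\in\bbA_0:Fx\cong x\}\subseteq\bbB_0$ is clear; for the reverse, I take $x\in\bbB_0$, note $Fx\leq x$ from $F\leq 1_{\bbA}$, and obtain $x\leq Fx$ from the adjunction characterization (Proposition~\ref{adjoint_condition}), which yields $\bbA(x,Gx)=\bbA(Ix,x)=\bbA(x,x)\geq 1_{tx}$, whence $x\cong Fx$.

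For the implication (2)$\Rightarrow$(1): given a $\CQ$-interior operator $F$ with $\bbB_0=\{x\in\bbA_0:Fx\cong x\}$, I first observe that $F^2\cong F$ forces $Fx$ to be a fixed point for each $x$, so $F$ corestricts to a $\CQ$-functor $F:\bbA\to\bbB$ (the subcategory being full). I then verify $I\dv F:\bbB\rhu\bbA$ directly: the inequality $I\circ F=F\leq 1_{\bbA}$ is exactly the interior-operator condition, while $1_{\bbB}\leq F\circ I$ holds because every object of $\bbB$ is fixed, so that $F\circ I\cong 1_{\bbB}$. Hence the inclusion $I$ is a left adjoint, i.e. $\bbB$ is a $\CQ$-interior system.

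The computations are entirely routine, so there is no genuine obstacle; the only point demanding care is the bookkeeping of the order reversal. In particular one must remember that for interior data the inclusion becomes the \emph{left} adjoint (not the right one, as in the closure case) and that the operator produced by the adjunction is $F\circ G$, so Example~\ref{adjunction_closure_interior} has to be quoted with the two adjoints in the correct roles. Using isomorphism-closedness of $\bbB$ is likewise essential in pinning down the fixed-point set.
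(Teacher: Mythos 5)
Your proof is correct and is exactly the dualization of Proposition \ref{closure_system_operator} that the paper intends (the paper omits the proof, stating only ``dually''): in (1)$\Rightarrow$(2) you build $F=I\circ G$ from the adjunction $I\dv G$ and identify the fixed points using isomorphism-closedness, and in (2)$\Rightarrow$(1) you corestrict $F$ and verify $I\dv F$, mirroring the closure-case argument with the adjoints swapped. The only cosmetic point is writing $\bbA(x,Gx)$ where the adjunction literally gives $\bbB(x,Gx)=\bbA(Ix,x)$, but since $\CQ$-subcategories here are full these agree, so there is no gap.
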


\begin{rem}
For a $\CQ$-category $\bbA$, a $\CQ$-closure operator $F:\bbA\to\bbA$ is exactly a monad \cite{MacLane1998} on $\bbA$. Proposition \ref{closure_system_operator} states that a $\CQ$-closure system of $\bbA$ is exactly the category of algebras for a monad on $\bbA$.

Dually, a $\CQ$-interior operator $F:\bbA\to\bbA$ is exactly a comonad on $\bbA$. Proposition \ref{interior_system_operator} states that a $\CQ$-interior system of $\bbA$ is exactly the category of coalgebras for a comonad on $\bbA$.

The terminologies ``$\CQ$-closure operator'' and ``$\CQ$-interior operator'' come from their similarity to closure operators and interior operators in topology.
\end{rem}

\begin{prop} \label{closure_system_complete}
Each $\CQ$-closure system or $\CQ$-interior system of a complete $\CQ$-category is itself a complete $\CQ$-category.
\end{prop}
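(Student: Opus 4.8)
The plan is to treat the two assertions dually, proving the statement for a $\CQ$-closure system $\bbB$ of a complete $\CQ$-category $\bbA$ and then reading off the interior-system case by reversing arrows. By Definition \ref{closure_system} the inclusion $I:\bbB\to\bbA$ is a right adjoint, so it comes with a left adjoint (reflector) $L:\bbA\to\bbB$, $L\dv I:\bbA\rhu\bbB$; moreover $I$ is fully faithful, being the inclusion of a (full) $\CQ$-subcategory. To prove $\bbB$ complete it suffices, by the equivalence of (2) and (3) in Theorem \ref{complete_cocomplete_equivalent}, to show that every $\mu\in\PB$ has a supremum in $\bbB$.

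First I would exhibit the candidate supremum by reflecting the ambient one: set
$$s = L\big({\sup}_{\bbA}I^{\ra}(\mu)\big),$$
which lies in $\bbB_0$ since $L$ takes values in $\bbB$. To verify that $s=\sup_{\bbB}\mu$ I would check the defining identity $\bbB(s,-)=\bbB\lda\mu$ of Example \ref{sup_def} pointwise. For $y\in\bbB_0$, the adjunction $L\dv I$ together with Proposition \ref{adjoint_condition} gives $\bbB(s,y)=\bbA(\sup_{\bbA}I^{\ra}(\mu),Iy)$, and Example \ref{sup_def} rewrites this as $(\bbA\lda I^{\ra}(\mu))(Iy)$. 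Unfolding $I^{\ra}(\mu)=\mu\circ I^{\nat}$ and applying Proposition \ref{arrow_calculation}(5) turns $\bbA\lda(\mu\circ I^{\nat})$ into $(\bbA\lda I^{\nat})\lda\mu$; since the graph and cograph form an adjunction $I_{\nat}\dv I^{\nat}$ in $\CQ$-$\Dist$, Corollary \ref{adjoint_arrow_representation} identifies $\bbA\lda I^{\nat}$ with $I_{\nat}$, so the expression becomes $(I_{\nat}\lda\mu)(Iy)$. Evaluating this left implication by Proposition \ref{Dist_quantaloid}(4) and using the full faithfulness of $I$ to replace each $\bbA(Iy',Iy)$ by $\bbB(y',y)$ collapses it to $\bw_{y'\in\bbB_0}\bbB(y',y)\lda\mu(y')=(\bbB\lda\mu)(y)$, as required. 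Hence $\bbB$ is cocomplete, and therefore complete by Theorem \ref{complete_cocomplete_equivalent}.

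For a $\CQ$-interior system the inclusion $I:\bbB\to\bbA$ is instead a left adjoint, with a coreflector $R:\bbA\to\bbB$, $I\dv R$; the entirely dual argument shows that $\inf_{\bbB}\lam = R\big(\inf_{\bbA}I^{\nra}(\lam)\big)$ realizes the infimum of an arbitrary $\lam\in\PdB$, so $\bbB$ is again complete. I expect the main obstacle to be purely computational: the chain of implication-calculus identities in the middle step — in particular keeping the types correct when applying Proposition \ref{arrow_calculation}(5) and recognizing $\bbA\lda I^{\nat}=I_{\nat}$ — rather than anything conceptual. Once the reflected object is proposed, completeness reduces to this bookkeeping together with the standard equivalence of completeness and cocompleteness.
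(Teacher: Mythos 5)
Your proposal is correct and takes essentially the same route as the paper: both reduce completeness to the existence of suprema via Theorem \ref{complete_cocomplete_equivalent}, exhibit the supremum of $\mu\in\PB$ as the reflection (under the left adjoint of the inclusion $I$) of ${\sup}_{\bbA}I^{\ra}(\mu)$, and dualize for interior systems. The only difference is in the verification: where you check $\bbB(s,-)=\bbB\lda\mu$ by hand through the implication calculus, Corollary \ref{adjoint_arrow_representation} and full faithfulness of $I$, the paper gets the same identity in one line by citing Corollary \ref{left_adjoint_preserves_sup} (left adjoints preserve suprema) together with Proposition \ref{Fra_composition} and $F\circ I\cong 1_{\bbB}$.
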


\begin{proof}
Let $\bbB$ be a $\CQ$-closure system of a complete $\CQ$-category $\bbA$. By Proposition \ref{closure_system_operator}, there is a $\CQ$-closure operator $F:\bbA\to\bbA$ such that $\bbB_0=\{x\in\bbA_0:Fx\cong x\}$. View $F$ as a $\CQ$-functor from $\bbA$ to $\bbB$, then $F\circ I\cong 1_{\bbB}$, where $I:\bbB\to\bbA$ is the inclusion $\CQ$-functor. Thus
\begin{align*}
F({\sup}_{\bbA}I^{\ra}(\mu))&={\sup}_{\bbB}F^{\ra}\circ I^{\ra}(\mu)&(\text{Corollary \ref{left_adjoint_preserves_sup}})\\
&={\sup}_{\bbB}(F\circ I)^{\ra}(\mu)&(\text{Proposition \ref{Fra_composition}})\\
&={\sup}_{\bbB}(1_{\bbB})^{\ra}(\mu)\\
&={\sup}_{\bbB}\mu
\end{align*}
for all $\mu\in\PB$. Therefore, it follows from Proposition \ref{complete_cocomplete_equivalent} that $F(\bbA)$ is a complete $\CQ$-category.

Similarly one can prove that each $\CQ$-interior system of a complete $\CQ$-category is itself a complete $\CQ$-category.
\end{proof}

\begin{prop} \label{closure_system_infima_closed}
Let $\bbA$ be a complete $\CQ$-category, $\bbB$ an isomorphism-closed $\CQ$-subcategory of $\bbA$, and $I:\bbB\to\bbA$ the inclusion $\CQ$-functor.
\begin{itemize}
\item[\rm (1)] $\bbB$ is a $\CQ$-closure system of $\bbA$ if and only if $\bbB$ is closed with respect to infima in $\bbA$ in the sense that ${\inf}_{\bbA}I^{\nra}(\lam)\in\bbB_0$ for all $\lam\in\PdB$.
\item[\rm (2)] $\bbB$ is a $\CQ$-interior system of $\bbA$ if and only if $\bbB$ is closed with respect to suprema in $\bbA$ in the sense that ${\sup}_{\bbA}I^{\ra}(\mu)\in\bbB_0$ for all $\mu\in\PB$.
\end{itemize}
\end{prop}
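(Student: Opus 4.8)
The plan is to prove both equivalences by reducing them, via Corollary \ref{left_adjoint_preserves_sup}, to the single statement that the inclusion $I$ preserves the relevant weighted (co)limits; the real work lies in the ``closed under infima $\Rightarrow$ closure system'' direction, where completeness of $\bbB$ must be manufactured from the hypothesis. I carry out (1) and obtain (2) by dualization.

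For the forward direction of (1), assume $\bbB$ is a $\CQ$-closure system, so $I:\bbB\to\bbA$ is a right adjoint. By Proposition \ref{closure_system_complete}, $\bbB$ is itself complete, hence $\inf_{\bbB}\lam$ exists for every $\lam\in\PdB$. Applying Corollary \ref{left_adjoint_preserves_sup}(2) to the right adjoint $I$ (whose domain $\bbB$ is complete) gives $\inf_{\bbA}I^{\nra}(\lam)=I(\inf_{\bbB}\lam)=\inf_{\bbB}\lam$, the last equality since $I$ is the inclusion; thus $\inf_{\bbA}I^{\nra}(\lam)\in\bbB_0$, i.e. $\bbB$ is closed with respect to infima.

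For the backward direction, assume $\inf_{\bbA}I^{\nra}(\lam)\in\bbB_0$ for all $\lam\in\PdB$. Since $\bbA$ is complete, $\inf_{\bbA}:\PdA\to\bbA$ is a $\CQ$-functor (Proposition \ref{sup_functor}(2)), so $\inf_{\bbA}\circ I^{\nra}:\PdB\to\bbA$ is a $\CQ$-functor which, by hypothesis, factors through the full subcategory $\bbB$; let $R:\PdB\to\bbB$ be its corestriction, again a $\CQ$-functor as $\bbB$ is full. I claim $R$ is a left inverse of $\sYd_{\bbB}$ up to isomorphism. Indeed, the naturality of the co-Yoneda embedding (Proposition \ref{Yoneda_natural}(2)) gives $I^{\nra}\circ\sYd_{\bbB}=\sYd_{\bbA}\circ I$, so for each $b\in\bbB_0$ one has $R(\sYd_{\bbB}b)=\inf_{\bbA}(I^{\nra}\sYd_{\bbB}b)=\inf_{\bbA}(\sYd_{\bbA}b)\cong b$ by Proposition \ref{sup_functor}(2); hence $R\circ\sYd_{\bbB}\cong 1_{\bbB}$. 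By Theorem \ref{complete_cocomplete_equivalent} (condition (6)) this forces $\bbB$ to be complete with $\inf_{\bbB}\cong R$, whence $I(\inf_{\bbB}\lam)\cong\inf_{\bbA}I^{\nra}(\lam)$ for all $\lam\in\PdB$, i.e. $I$ preserves infima. Since $\bbB$ is now complete, Corollary \ref{left_adjoint_preserves_sup}(2) lets us conclude that $I$ is a right adjoint, so $\bbB$ is a $\CQ$-closure system. Statement (2) follows by the evident dualization: replace $\inf_{\bbA}$, $\sYd$, $I^{\nra}$, Proposition \ref{sup_functor}(2), Corollary \ref{left_adjoint_preserves_sup}(2) and condition (6) by $\sup_{\bbA}$, $\sY$, $I^{\ra}$, Proposition \ref{sup_functor}(1), Corollary \ref{left_adjoint_preserves_sup}(1) and condition (5) of Theorem \ref{complete_cocomplete_equivalent}.

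The main obstacle is precisely the step from ``the candidate object $\inf_{\bbA}I^{\nra}(\lam)$ happens to sit in $\bbB$'' to ``$\bbB$ is a complete $\CQ$-category whose infima are inherited from $\bbA$'': closure under infima a priori records only the location of the object, not that it enjoys the universal property inside $\bbB$. The device that dissolves this difficulty is to recognize the corestricted $\CQ$-functor $R=\inf_{\bbA}\circ I^{\nra}$ as a left inverse of $\sYd_{\bbB}$ and to invoke the characterization of completeness by the existence of such a left inverse (Theorem \ref{complete_cocomplete_equivalent}); this simultaneously delivers completeness of $\bbB$ and the identity $\inf_{\bbB}\cong\inf_{\bbA}\circ I^{\nra}$ required for $I$ to preserve infima, avoiding any direct computation with implications in $\CQ$-$\Dist$.
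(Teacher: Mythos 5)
Your proof is correct, and its outer shell coincides with the paper's: necessity is obtained exactly as in the paper from Proposition \ref{closure_system_complete} together with Corollary \ref{left_adjoint_preserves_sup}, and sufficiency is in both cases reduced to showing that $\bbB$ is complete and that $I$ preserves infima, after which Corollary \ref{left_adjoint_preserves_sup} applies. The two proofs part ways in how that reduction is carried out. The paper argues by a direct computation in the distributor calculus: for $y\in\bbB_0$ it rewrites $\bbA(y,{\inf}_{\bbA}I^{\nra}(\lam))$ via Example \ref{sup_def}, the identity $I^{\nra}(\lam)=I_{\nat}\circ\lam$ and Proposition \ref{adjoint_arrow_calculation}(2) into $\lam\rda\bbB(y,-)$, thereby checking by hand that the object supplied by the hypothesis has the universal property of ${\inf}_{\bbB}\lam$; this yields the explicit formula ${\inf}_{\bbB}\lam={\inf}_{\bbA}I^{\nra}(\lam)$, which the paper reuses later (Equation (\ref{closure_system_infimum})). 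You instead corestrict ${\inf}_{\bbA}\circ I^{\nra}$ to $R:\PdB\to\bbB$, use naturality of the co-Yoneda embedding to get $R\circ\sYd_{\bbB}\cong 1_{\bbB}$, and invoke Theorem \ref{complete_cocomplete_equivalent}; this is genuinely computation-free and a nice illustration of the strength of that theorem. The only step you should make explicit is the phrase ``complete with ${\inf}_{\bbB}\cong R$'': the bare statement of condition (6) yields completeness of $\bbB$, but identifying your particular left inverse $R$ with the infimum functor needs one further observation, namely that (by the dual of the paper's own proof of (5)${}\Lra{}$(7) inside that theorem) any left inverse of $\sYd_{\bbB}$ up to isomorphism is automatically a right adjoint of $\sYd_{\bbB}$, and right adjoints are essentially unique, so $R\cong{\inf}_{\bbB}$. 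With that one-line remark inserted your argument is airtight; it trades the paper's explicit hom computation for a purely formal Yoneda-style argument, at the cost of not producing the concrete infimum formula as a by-product.
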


\begin{proof}
We prove (1) for example. Note that a $\CQ$-closure system is a complete $\CQ$-category by Proposition \ref{closure_system_complete}, thus the necessity follows immediately by applying Corollary \ref{left_adjoint_preserves_sup} to the inclusion $\CQ$-functor $I:\bbB\to\bbA$.

For the sufficiency, note that for all $\lam\in\PdB$ and $y\in\bbB_0$,
\begin{align*}
\bbB(y,{\inf}_{\bbA}I^{\nra}(\lam))&=\bbA(y,{\inf}_{\bbA}I^{\nra}(\lam))\\
&=I^{\nra}(\lam)\rda\bbA(y,-)&(\text{Example \ref{sup_def}})\\
&=(I_{\nat}\circ\lam)\rda\bbA(y,-)&(\text{Definition \ref{direct_inverse_image_def}})\\
&=\lam\rda(I^{\nat}\circ\bbA(y,-))&(\text{Proposition \ref{adjoint_arrow_calculation}(2)})\\
&=\lam\rda I^{\nat}(y,-)\\
&=\lam\rda\bbA(y,I-)\\
&=\lam\rda\bbB(y,-).
\end{align*}
Thus ${\inf}_{\bbA}I^{\nra}(\lam)$ is the infimum of $\lam$ in $\bbB$, i.e., ${\inf}_{\bbA}I^{\nra}(\lam)=\inf_{\bbB}\lam$. Therefore, $\bbB$ is a complete $\CQ$-category and $I:\bbB\to\bbA$ preserves infima. Then it follows from Corollary \ref{left_adjoint_preserves_sup} that $I$ is a right adjoint in $\CQ$-$\Cat$, and consequently $\bbB$ is a $\CQ$-closure system of $\bbA$.
\end{proof}

\begin{prop} \label{closure_system_cotensor_meet}
Let $\bbA$ be a complete $\CQ$-category with tensor $\otimes$ and cotensor ${}\rat{}$, $\bbB$ an isomorphism-closed $\CQ$-subcategory of $\bbA$. Then $\bbB$ is a $\CQ$-closure system of $\bbA$ if and only if
\begin{itemize}
\item[\rm (1)] for every subset $\{x_i\}\subseteq\bbB_0$ of the same type $X$, the meet $\displaystyle\bw\limits_i x_i$ in $\bbA_X$ belongs to $\bbB_0$;
\item[\rm (2)] for each $x\in\bbB_0$ and $f\in\CPd(tx)$, the cotensor $f\rat x$ in $\bbA$ belongs to $\bbB_0$.
\end{itemize}
Dually, $\bbB$ is a $\CQ$-interior system of $\bbA$ if and only if
\begin{itemize}
\item[\rm (1)] for every subset $\{x_i\}\subseteq\bbB_0$ of the same type $X$, the join $\displaystyle\bv\limits_i x_i$ in $\bbA_X$ belongs to $\bbB_0$;
\item[\rm (2)] for each $x\in\bbB_0$ and $f\in\CP(tx)$, the tensor $f\otimes x$ in $\bbA$ belongs to $\bbB_0$.
\end{itemize}
\end{prop}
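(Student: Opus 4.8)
The plan is to reduce everything to Proposition \ref{closure_system_infima_closed}, which says that, since $\bbA$ is complete, $\bbB$ is a $\CQ$-closure system exactly when it is closed under infima, i.e. $\inf_\bbA I^{\nra}(\lam)\in\bbB_0$ for all $\lam\in\PdB$. So it suffices to prove that closure under infima is equivalent to the conjunction of (1) and (2). Throughout I would use that $\bbA$, being complete, is tensored, cotensored and order-complete (Theorem \ref{complete_cocomplete_equivalent}), together with the infimum characterization $\bbA(-,\inf_\bbA\nu)=\nu\rda\bbA$ from Example \ref{sup_def} and the expansion $I^{\nra}(\lam)(a)=\bv_{y\in\bbB_0}\bbA(y,a)\circ\lam(y)$ coming from $I^{\nra}(\lam)=I_\nat\circ\lam$.

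For \emph{necessity}, assume $\bbB$ is closed under infima. Since meets and cotensors are particular infima, I would simply exhibit them as $\inf_\bbA I^{\nra}(\lam)$ for suitable $\lam\in\PdB$. For a family $\{x_i\}\subseteq\bbB_0$ of common type $X$, the meet $\bw_i x_i$ in $\bbA_X$ coincides with the conical limit $\inf_\bbA(\bv_i\sYd_\bbA x_i)$ because $\bbA$ is tensored (Proposition \ref{join_sup}(2)); naturality of $\sYd$ (Proposition \ref{Yoneda_natural}(2)) gives $\sYd_\bbA x_i=I^{\nra}(\sYd_\bbB x_i)$, and $I^{\nra}=I_\nat\circ(-)$ preserves joins of distributors, so $\bv_i\sYd_\bbA x_i=I^{\nra}(\bv_i\sYd_\bbB x_i)$ with $\bv_i\sYd_\bbB x_i\in\PdB$. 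Likewise, for $x\in\bbB_0$ and $f\in\CPd(tx)$, Proposition \ref{tensor_sup}(2) gives $f\rat_\bbA x=\inf_\bbA(\sYd_\bbA x\circ f)=\inf_\bbA I^{\nra}(\sYd_\bbB x\circ f)$ with $\sYd_\bbB x\circ f\in\PdB$. Closure under infima then forces both $\bw_i x_i\in\bbB_0$ and $f\rat_\bbA x\in\bbB_0$.

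For \emph{sufficiency}, assume (1) and (2); I must show $\inf_\bbA I^{\nra}(\lam)\in\bbB_0$ for arbitrary $\lam\in\PdB$. The candidate is $m:=\bw_{y\in\bbB_0}(\lam(y)\rat_\bbA y)$: by (2) each cotensor $\lam(y)\rat_\bbA y$ lies in $\bbB_0$ (all of common type $t\lam$), and by (1) so does their meet $m$. It then remains to verify $m=\inf_\bbA I^{\nra}(\lam)$, which, $\bbB$ being isomorphism-closed, finishes the proof. I would check this by comparing $\bbA(z,m)$ with $\bbA(z,\inf_\bbA I^{\nra}(\lam))=(I^{\nra}(\lam)\rda\bbA)(z)$ for all $z$. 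Distributing $\bbA(z,-)$ over the meet (Proposition \ref{A_x_yi_distribution}) and using the cotensor definition (Definition \ref{tensor_cotensor_def}) turns the left side into $\bw_{y\in\bbB_0}\lam(y)\rda\bbA(z,y)$. For the right side, substituting $I^{\nra}(\lam)(a)=\bv_{y}\bbA(y,a)\circ\lam(y)$ and applying Proposition \ref{arrow_calculation}(3),(5),(2) rewrites $(I^{\nra}(\lam)\rda\bbA)(z)$ as $\bw_{y\in\bbB_0}\lam(y)\rda\bw_{a\in\bbA_0}(\bbA(y,a)\rda\bbA(z,a))$, and the inner meet collapses to $\bbA(z,y)$, being $\geq\bbA(z,y)$ by the composition law of $\bbA$ and $\leq\bbA(z,y)$ on setting $a=y$. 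The two sides then agree.

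The dual statement about $\CQ$-interior systems would be proved by the same argument with suprema, tensors and joins in place of infima, cotensors and meets, using the interior half of Proposition \ref{closure_system_infima_closed} and the supremum formula. I expect the main obstacle to be the sufficiency computation, specifically the step that re-expresses the infimum --- a priori a meet of cotensors indexed by all of $\bbA_0$ --- as a meet over $\bbB_0$ of cotensors of elements of $\bbB$; the collapse $\bw_{a\in\bbA_0}(\bbA(y,a)\rda\bbA(z,a))=\bbA(z,y)$ is the crucial Yoneda-type simplification that makes the candidate $m$ manifestly built from data internal to $\bbB$.
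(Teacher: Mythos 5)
Your proposal is correct, but it takes a genuinely different route from the paper's own proof. The paper argues through Proposition \ref{F_la_ra_condition}: for necessity, $\bbB$ is complete by Proposition \ref{closure_system_complete} and the right adjoint inclusion $I:\bbB\to\bbA$ preserves cotensors and underlying meets, so these stay in $\bbB_0$; for sufficiency, it checks that the $\bbA$-cotensors and $\bbA$-meets of objects of $\bbB$, once they lie in $\bbB_0$, are also the cotensors and meets \emph{in} $\bbB$, so that $\bbB$ is cotensored and order-complete, $I$ preserves both structures, and Proposition \ref{F_la_ra_condition} then makes $I$ a right adjoint in $\CQ$-$\Cat$. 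You instead reduce everything to Proposition \ref{closure_system_infima_closed} and work with infima of presheaves: necessity by exhibiting meets and cotensors as ${\inf}_{\bbA}I^{\nra}(\lam)$ for explicit $\lam\in\PdB$ (via Propositions \ref{join_sup}, \ref{tensor_sup} and \ref{Yoneda_natural}, together with the fact that $I^{\nra}=I_{\nat}\circ(-)$ preserves joins of distributors), and sufficiency by the computation
$${\inf}_{\bbA}I^{\nra}(\lam)\cong\bw_{y\in\bbB_0}\big(\lam(y)\rat y\big),$$
whose crux is the Yoneda-type collapse $\bw_{a\in\bbA_0}\big(\bbA(y,a)\rda\bbA(z,a)\big)=\bbA(z,y)$; the isomorphism-closedness of $\bbB$ is then correctly invoked to pass from this isomorphism to membership of the infimum in $\bbB_0$. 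There is no circularity, since Proposition \ref{closure_system_infima_closed} is established before and independently of the present statement. As for what each approach buys: the paper's argument is more structural and shows along the way that $\bbB$ inherits its cotensor and meet structure from $\bbA$, while yours is more computational but establishes the explicit formula of Equation (\ref{closure_system_infimum}) \emph{before} knowing that $\bbB$ is a closure system --- the paper can only derive that formula afterwards, as a corollary of the proposition. The dual half carries over symmetrically in both treatments.
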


\begin{proof}
We prove the case of $\CQ$-closure system for example. Let $I:\bbB\to\bbA$ be the inclusion $\CQ$-functor.

{\bf Necessity.} By Proposition \ref{closure_system_complete}, $\bbB$ is itself a complete $\CQ$-category with cotensor $\rat_{\bbB}$. Since $I$ is a right adjoint $\CQ$-functor between complete $\CQ$-categories, it follows from Proposition \ref{F_la_ra_condition} that $I$ preserves cotensors. Thus
$$f\rat x=f\rat Ix=I(f\rat_{\bbB} x)=f\rat_{\bbB} x\in\bbB_0$$
for each $x\in\bbB_0$ and $f\in\CPd(tx)$.

That the meet $\displaystyle\bw\limits_i x_i$ in $\bbA_X$ belongs to $\bbB_0$ for each subset $\{x_i\}\subseteq\bbB_X$ can be obtained similarly.

{\bf Sufficiency.} For each $x\in\bbB_0$ and $f\in\CPd(tx)$, since the cotensor $f\rat x$ in $\bbA$ belongs to $\bbB_0$, it follows that for each $y\in\bbB_0$,
$$\bbB(y,f\rat x)=\bbA(y,f\rat x)=f\rda\bbA(y,x)=f\rda\bbB(y,x).$$
This means that $f\rat x$ is the cotensor of $f$ and $x$ in $\bbB$, i.e., $f\rat x=f\rat_{\bbB}x$. Hence, $\bbB$ is a cotensored $\CQ$-category and it is clear that $I$ preserves cotensors in $\bbB$.

Similarly one can prove that if the meet $\displaystyle\bw\limits_i x_i$ of a subset $\{x_i\}\subseteq\bbB_X$ in $\bbA_X$ belongs to $\bbB_0$, then it is also the meet of $\{x_i\}$ in $\bbB_X$. Thus $\bbB$ is order-complete and $I$ preserves underlying meets in each $\bbB_X$. This means that $I:\bbB_0\to\bbA_0$ is a right adjoint in {\bf 2}-$\Cat$.

Therefore, $I:\bbB\to\bbA$ is a right adjoint in $\CQ$-$\Cat$ by Proposition \ref{F_la_ra_condition}, and the conclusion thus follows.
\end{proof}

An immediate consequence of Proposition \ref{closure_system_cotensor_meet} is that the infimum in a $\CQ$-closure system $\bbB$ of a complete $\CQ$-category $\bbA$ can be calculated as
\begin{equation} \label{closure_system_infimum}
{\inf}_{\bbB}\lam=\bw_{b\in\bbB_0}(\lam(b)\rat b)
\end{equation}
for $\lam\in\PdB$, where the cotensors and meets are calculated in $\bbA$.

Dually, the supremum in a $\CQ$-interior system $\bbB$ of a complete $\CQ$-category $\bbA$ can be calculated as
\begin{equation} \label{interior_system_supremum}
{\sup}_{\bbB}\mu=\bv_{b\in\bbB_0}(\mu(b)\otimes b)
\end{equation}
$\mu\in\PB$, where the tensors and joins are calculated in $\bbA$.

\section{$\CQ$-closure spaces}

In this section, we pay attention to monads on the $\CQ$-category of contravariant presheaves, and introduce the notion of $\CQ$-closure spaces.

\begin{defn}
A \emph{$\CQ$-closure space} is a pair $(\bbA,C)$ that consists of a $\CQ$-category $\bbA$ and a $\CQ$-closure operator $C:\PA\to\PA$. A \emph{continuous $\CQ$-functor} $F:(\bbA,C)\to(\bbB,D)$ between $\CQ$-closure spaces is a $\CQ$-functor $F:\bbA\to\bbB$ such that
$$F^{\ra}\circ C\leq D\circ F^{\ra}.$$
\end{defn}

The category of $\CQ$-closure spaces and continuous $\CQ$-functors is denoted by $\CQ$-$\Cls$.

\begin{rem}
If $C$ and $D$ are viewed as monads on $\PA$ and $\PB$ respectively, then a $\CQ$-functor $F:\bbA\to\bbB$ is continuous between $\CQ$-closure spaces $(\bbA,C)$ and $(\bbB,D)$ if and only if $F^{\ra}:\PA\to\PB$ is a lax map of monads from $C$ to $D$ in the sense of \cite{Leinster2004}.
\end{rem}

Note that for a $\CQ$-closure space $(\bbA,C)$, the $\CQ$-closure operator $C$ is idempotent since $\PA$ is skeletal. Let $C(\PA)$ denote the $\CQ$-subcategory of $\PA$ consisting of the fixed points of $C$. Since $\PA$ is a complete $\CQ$-category,  $C(\PA)$ is also a complete $\CQ$-category. A contravariant presheaf $\bbA\oto*_X$ is said to be \emph{closed} in the $\CQ$-closure space $(\bbA,C)$ if it belongs to $C(\PA)$.

The following proposition states that continuous $\CQ$-functors behave in a manner similar to the continuous maps between topological spaces: the inverse image of a closed contravariant presheaf is closed.

\begin{prop} \label{F_continuous_condition}
A $\CQ$-functor $F:\bbA\to\bbB$ is continuous between $\CQ$-closure spaces $(\bbA,C)$ and $(\bbB,D)$ if and only if $F^{\la}(\lam)\in C(\PA)$ whenever $\lam\in D(\PB)$.
\end{prop}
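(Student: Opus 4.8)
The plan is to exploit the adjunction $F^{\ra}\dv F^{\la}:\PA\rhu\PB$ (Proposition \ref{direct_inverse_image_adjunction}) to transpose between the inequality $F^{\ra}\circ C\leq D\circ F^{\ra}$ defining continuity and the stated fixed-point condition. First I would record the two facts I will use repeatedly: since $\PA$ and $\PB$ are skeletal (Proposition \ref{PA_skeletal}) and $C,D$ are $\CQ$-closure operators, a presheaf $\mu$ is closed precisely when $C\mu\leq\mu$ (the reverse inequality being automatic from $1_{\PA}\leq C$), and likewise for $D$; and the order-transpose of the adjunction reads $F^{\ra}(\mu)\leq\lam\iff\mu\leq F^{\la}(\lam)$, which follows from Proposition \ref{adjoint_condition} together with the definition of the underlying order. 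I will also use the unit and counit inequalities $\mu\leq F^{\la}F^{\ra}(\mu)$ and $F^{\ra}F^{\la}(\lam)\leq\lam$ from (\ref{Fla_Fra_adjuntion}), and the monotonicity of $C$, $D$, $F^{\ra}$, $F^{\la}$ as $\CQ$-functors.

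For the forward implication, assuming continuity I would fix $\lam\in D(\PB)$, so $D\lam=\lam$, and aim to prove $C(F^{\la}\lam)\leq F^{\la}\lam$. Transposing across the adjunction, this is equivalent to $F^{\ra}(C(F^{\la}\lam))\leq\lam$. Now I would estimate $F^{\ra}(C(F^{\la}\lam))\leq D(F^{\ra}F^{\la}\lam)$ by continuity, then use the counit $F^{\ra}F^{\la}\lam\leq\lam$ together with monotonicity and idempotency of $D$ to bound this by $D\lam=\lam$. This yields $C(F^{\la}\lam)\leq F^{\la}\lam$, hence equality, so $F^{\la}\lam\in C(\PA)$.

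For the converse, assuming the fixed-point condition I would fix an arbitrary $\mu\in\PA$ and set $\lam:=D(F^{\ra}\mu)$, which is closed by idempotency of $D$, so by hypothesis $F^{\la}\lam\in C(\PA)$, i.e. $C(F^{\la}\lam)\leq F^{\la}\lam$. Starting from the unit $\mu\leq F^{\la}F^{\ra}\mu$ and the inequality $F^{\ra}\mu\leq D(F^{\ra}\mu)=\lam$ (from $1_{\PB}\leq D$), monotonicity of $F^{\la}$ gives $\mu\leq F^{\la}\lam$; applying $C$ and using that $F^{\la}\lam$ is closed yields $C\mu\leq F^{\la}\lam$. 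Transposing back across the adjunction gives $F^{\ra}(C\mu)\leq\lam=D(F^{\ra}\mu)$, which is exactly the required inequality at $\mu$. Since $\mu$ was arbitrary, $F^{\ra}\circ C\leq D\circ F^{\ra}$.

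The argument is essentially a diagram chase through the adjunction, so the only real care is bookkeeping: getting the direction of each inequality right, invoking idempotency of the closure operators at the correct moment, and remembering that ``closed'' means literal equality here because $\PA$ and $\PB$ are skeletal. I expect the main obstacle to be purely organizational --- ensuring that the transposition $F^{\ra}(\mu)\leq\lam\iff\mu\leq F^{\la}(\lam)$ is applied in the correct direction at each step --- rather than any substantive difficulty.
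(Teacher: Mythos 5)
Your proposal is correct and follows essentially the same route as the paper: both directions are the same adjunction-transposition argument through $F^{\ra}\dv F^{\la}$, using the unit and counit inequalities, monotonicity, and idempotency of the closure operators; the paper merely writes the argument as inequalities between composite $\CQ$-functors (e.g.\ reducing the fixed-point condition to $C\circ F^{\la}\circ D\leq F^{\la}\circ D$), whereas you carry out the identical chase pointwise at a fixed $\lam\in D(\PB)$ and $\mu\in\PA$. No gaps.
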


\begin{proof}
It suffices to show that $F^{\ra}\circ C\leq D\circ F^{\ra}$ if and only if $C\circ F^{\la}\circ D\leq F^{\la}\circ D$.

Suppose $F^{\ra}\circ C\leq D\circ F^{\ra}$, then
$$F^{\ra}\circ C\circ F^{\la}\circ D\leq D\circ F^{\ra}\circ F^{\la}\circ D\leq D\circ D=D,$$
and consequently $C\circ F^{\la}\circ D\leq F^{\la}\circ D$.

Conversely, suppose $C\circ F^{\la}\circ D\leq F^{\la}\circ D$, then
$$C\leq C\circ F^{\la}\circ F^{\ra}\leq C\circ F^{\la}\circ D\circ F^{\ra}\leq F^{\la}\circ D\circ F^{\ra},$$
and consequently $F^{\ra}\circ C\leq D\circ F^{\ra}$.
\end{proof}

Given a $\CQ$-category $\bbA$, there are naturally two $\CQ$-closure spaces with $\bbA$ being the underlying $\CQ$-category. One is the \emph{discrete} $\CQ$-closure space $(\bbA,1_{\PA})$, in which every contravariant presheaf $\mu\in\PA$ is closed. The other one is the \emph{trivial} $\CQ$-closure space $(\bbA,T_{\bbA})$ given by
$$T_{\bbA}(\mu)(x)=\top_{tx,t\mu}$$
for all $\mu\in\PA$ and $x\in\bbA_0$, in which $\mu\in\PA$ is closed if and only if $\mu$ is the largest $\CQ$-distributor in $\CQ\text{-}\Dist(\bbA,*_{t\mu})$.

It is easy to see that each $\CQ$-functor $F:\bbA\to\bbB$ is continuous between the $\CQ$-closure spaces $(\bbA,1_{\PA})$ and $(\bbB,1_{\PB})$, and also continuous between the $\CQ$-closure spaces $(\bbA,T_{\bbA})$ and $(\bbB,T_{\bbB})$. Thus, we obtain a functor
$$\BD:\CQ\text{-}\Cat\to\CQ\text{-}\Cls$$
that sends a $\CQ$-category $\bbA$ to the corresponding discrete $\CQ$-closure space $(\bbA,1_{\PA})$, and a functor
$$\BT:\CQ\text{-}\Cat\to\CQ\text{-}\Cls$$
that sends a $\CQ$-category $\bbA$ to the corresponding trivial $\CQ$-closure space $(\bbA,T_{\bbA})$.

It is well known that the forgetful functor from the category of topological spaces to the category of (small) sets has a left adjoint creating the discrete topology on a set, and a right adjoint creating the trivial topology on a set. We have the following analogue conclusion for $\CQ$-closure spaces.

\begin{prop} \label{free_closure_space}
The forgetful functor $\CQ\text{-}\Cls\to\CQ\text{-}\Cat$ has a left adjoint $\BD:\CQ\text{-}\Cat\to\CQ\text{-}\Cls$, and a right adjoint $\BT:\CQ\text{-}\Cat\to\CQ\text{-}\Cls$.
\end{prop}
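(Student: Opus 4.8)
The plan is to verify each adjunction through its universal property, in the same style as the proof of Proposition \ref{Y_U_adjunction}, and the crucial observation in both cases is that the continuity condition degenerates to a triviality once one of the two $\CQ$-closure operators involved is extremal. Write $\BU:\CQ\text{-}\Cls\to\CQ\text{-}\Cat$ for the forgetful functor. Since the discrete closure operator $1_{\PA}$ is the least and the trivial one $T_{\bbA}$ is the greatest $\CQ$-closure operator on $\PA$, I expect the two adjunctions to follow without any real work; there is no genuine obstacle, only two automatic-continuity checks.

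First, for $\BD\dv\BU$ I would note that $\BU\circ\BD={\bf id}_{\CQ\text{-}\Cat}$, so $\{1_{\bbA}\}$ is a natural transformation from ${\bf id}_{\CQ\text{-}\Cat}$ to $\BU\circ\BD$, which I claim is the unit. Given a $\CQ$-category $\bbA$, a $\CQ$-closure space $(\bbB,D)$ and a $\CQ$-functor $H:\bbA\to\bbB=\BU(\bbB,D)$, I must produce a unique continuous $\CQ$-functor $F:(\bbA,1_{\PA})\to(\bbB,D)$ with $\BU(F)=H$. The only candidate is $F=H$, and it remains to check that $H$ is continuous out of the discrete $\CQ$-closure space. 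This is immediate: by Definition \ref{closure_operator} we have $1_{\PB}\leq D$, so $H^{\ra}\circ 1_{\PA}=H^{\ra}=1_{\PB}\circ H^{\ra}\leq D\circ H^{\ra}$, which is exactly the continuity inequality. Uniqueness is forced because $\BU$ is faithful and $\BU(F)=H$.

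Second, for $\BU\dv\BT$ I would dually observe $\BU\circ\BT={\bf id}_{\CQ\text{-}\Cat}$, so $\{1_{\bbB}\}$ is a natural transformation which I claim is the counit. Given a $\CQ$-closure space $(\bbA,C)$, a $\CQ$-category $\bbB$ and a $\CQ$-functor $H:\bbA=\BU(\bbA,C)\to\bbB$, I must produce a unique continuous $\CQ$-functor $F:(\bbA,C)\to(\bbB,T_{\bbB})$ with $\BU(F)=H$. Again $F=H$ is the only candidate, and its continuity is automatic: for every $\mu\in\PA$ and $y\in\bbB_0$ one has $(T_{\bbB}\circ H^{\ra}(\mu))(y)=\top_{ty,t\mu}$, the top element of $\CQ(ty,t\mu)$, whence $H^{\ra}\circ C\leq T_{\bbB}\circ H^{\ra}$ holds trivially. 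Uniqueness again follows from the faithfulness of $\BU$.

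Finally, the verification that these two natural transformations genuinely serve as unit and counit — that the displayed factorizations hold and the naturality squares commute — is routine, since in both cases the comparison $\CQ$-functor is literally $H$ and $\BU$ acts as the identity on the relevant $\CQ$-functors. Thus $\BD$ is a left adjoint and $\BT$ is a right adjoint of $\BU$, as claimed.
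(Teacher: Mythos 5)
Your proposal is correct and follows essentially the same route as the paper: the paper asserts that the identity correspondence $F\mapsto F$ gives natural hom-set bijections $\CQ\text{-}\Cat(\bbA,\bbB)\cong\CQ\text{-}\Cls((\bbA,1_{\PA}),(\bbB,D))$ and $\CQ\text{-}\Cat(\bbA,\bbB)\cong\CQ\text{-}\Cls((\bbA,C),(\bbB,T_{\bbB}))$, which is exactly your two automatic-continuity checks ($1_{\PB}\leq D$ on one side, $T_{\bbB}$ being top-valued on the other) packaged as unit/counit universal properties instead. You merely spell out the verification the paper leaves to the reader.
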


\begin{proof}
It is not difficult to verify that the correspondence $F\mapsto F$ induces a bijection
$$\CQ\text{-}\Cat(\bbA,\bbB)\cong\CQ\text{-}\Cls((\bbA,1_{\PA}),(\bbB,D))$$
natural in each $\CQ$-category $\bbA$ and $\CQ$-closure space $(\bbB,D)$. Also, the correspondence $F\mapsto F$ induces a bijection
$$\CQ\text{-}\Cat(\bbA,\bbB)\cong\CQ\text{-}\Cls((\bbA,C),(\bbB,T_{\bbB}))$$
natural in each $\CQ$-closure space $(\bbA,C)$ and $\CQ$-category $\bbB$. The conclusion thus follows.
\end{proof}

\section{Relationship with complete $\CQ$-categories}

It follows from Proposition \ref{F_continuous_condition} that a continuous $\CQ$-functor $F:(\bbA,C)\to(\bbB,D)$ between $\CQ$-closure spaces induces a pair of $\CQ$-functors
$$F^{\triangleright}=D\circ F^{\ra}:C(\PA)\to D(\PB)\quad\text{and}\quad F^{\triangleleft}=F^{\la}:D(\PB)\to C(\PA).$$

\begin{prop}
If $F:(\bbA,C)\to(\bbB,D)$ is a continuous $\CQ$-functor between $\CQ$-closure spaces, then $F^{\triangleright}\dv F^{\triangleleft}:C(\PA)\rhu D(\PB)$.
\end{prop}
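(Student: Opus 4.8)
The plan is to reduce the statement to the two defining inequalities of an adjunction in $\CQ$-$\Cat$, namely $1_{C(\PA)}\leq F^{\triangleleft}\circ F^{\triangleright}$ and $F^{\triangleright}\circ F^{\triangleleft}\leq 1_{D(\PB)}$, and to obtain each of these from the ambient adjunction $F^{\ra}\dv F^{\la}:\PA\rhu\PB$ of Proposition \ref{direct_inverse_image_adjunction}. First I would check that $F^{\triangleright}$ and $F^{\triangleleft}$ are legitimate $\CQ$-functors between the stated $\CQ$-categories (which are complete by Proposition \ref{closure_system_complete}). For $F^{\triangleright}=D\circ F^{\ra}$ this is immediate, since $D$ takes values among its own fixed points; for $F^{\triangleleft}=F^{\la}$ restricted to $D(\PB)$ it is exactly the content of Proposition \ref{F_continuous_condition}, which guarantees $F^{\la}(\lam)\in C(\PA)$ whenever $\lam\in D(\PB)$. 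Since $C(\PA)$ and $D(\PB)$ are full $\CQ$-subcategories, their underlying orders are the restrictions of the orders on $\PA$ and $\PB$, so the inequalities to be proved may be verified inside $\PA$ and $\PB$.

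For the unit inequality, I would fix $\mu\in C(\PA)$, start from $\mu\leq F^{\la}\circ F^{\ra}(\mu)$ (Formula (\ref{Fla_Fra_adjuntion})), and use $1_{\PB}\leq D$ together with the monotonicity of the $\CQ$-functor $F^{\la}$ to chain
$$\mu\leq F^{\la}\circ F^{\ra}(\mu)\leq F^{\la}\circ D\circ F^{\ra}(\mu)=F^{\triangleleft}\circ F^{\triangleright}(\mu),$$
which gives $1_{C(\PA)}\leq F^{\triangleleft}\circ F^{\triangleright}$. For the counit inequality, fix $\lam\in D(\PB)$, so that $D\lam=\lam$ (the closure operator $D$ is idempotent since $\PB$ is skeletal). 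Applying the monotone $D$ to $F^{\ra}\circ F^{\la}(\lam)\leq\lam$ and using $D\lam=\lam$ yields
$$F^{\triangleright}\circ F^{\triangleleft}(\lam)=D\circ F^{\ra}\circ F^{\la}(\lam)\leq D\lam=\lam,$$
that is, $F^{\triangleright}\circ F^{\triangleleft}\leq 1_{D(\PB)}$. Together the two inequalities are precisely the definition of $F^{\triangleright}\dv F^{\triangleleft}:C(\PA)\rhu D(\PB)$.

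There is no deep obstacle here; the proof is a short order-theoretic chase off the unit and counit of $F^{\ra}\dv F^{\la}$. The only point requiring care, and the step I would treat as the crux, is the well-definedness of $F^{\triangleleft}$, i.e. that the inverse image of a closed contravariant presheaf is again closed. This is where continuity of $F$ is genuinely used (through Proposition \ref{F_continuous_condition}); without it $F^{\la}$ would not restrict to a $\CQ$-functor $D(\PB)\to C(\PA)$, and the two inequalities, though still formally writable in $\PA$ and $\PB$, would not assemble into an adjunction between the fixed-point categories. I would therefore record this well-definedness explicitly before carrying out the two displays above.
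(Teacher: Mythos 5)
Your proof is correct, and it takes a genuinely different route from the paper's. You verify the two defining inequalities of an adjunction in $\CQ$-$\Cat$, namely $1_{C(\PA)}\leq F^{\triangleleft}\circ F^{\triangleright}$ and $F^{\triangleright}\circ F^{\triangleleft}\leq 1_{D(\PB)}$, by an order-theoretic chase from the unit and counit of the ambient adjunction $F^{\ra}\dv F^{\la}$ (Formula (\ref{Fla_Fra_adjuntion})), using monotonicity of $F^{\la}$ and $D$, the inequality $1_{\PB}\leq D$, and the fixed-point identity $D(\lam)=\lam$ for $\lam\in D(\PB)$; this is legitimate because $C(\PA)$ and $D(\PB)$ are full $\CQ$-subcategories, so their underlying orders are inherited. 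The paper instead invokes the hom-object characterization of adjunction (Proposition \ref{adjoint_condition}): since the ambient adjunction gives $\PA(\mu,F^{\la}(\lam))=\PB(F^{\ra}(\mu),\lam)$ and the fixed-point categories are full, it suffices to prove the single equality $\PB(D\circ F^{\ra}(\mu),\lam)=\PB(F^{\ra}(\mu),\lam)$ for $\mu\in C(\PA)$ and $\lam\in D(\PB)$, which is done in one cycle of (in)equalities exploiting that $D$ is a $\CQ$-closure operator with $D(\lam)=\lam$. What the paper's computation buys is the enriched statement itself: the hom-objects of the restricted functors literally coincide, which is the full $\CQ$-enriched adjunction isomorphism in a single display. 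What your argument buys is transparency at the level of underlying preorders and an explicit record of where continuity enters, namely only in the well-definedness of $F^{\triangleleft}:D(\PB)\to C(\PA)$ via Proposition \ref{F_continuous_condition}; in the paper that point is settled in the sentence preceding the proposition, so its proof can omit it. Both arguments use continuity in exactly that one place, and both are complete.
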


\begin{proof}
It is sufficient to check that
$$\PB(D\circ F^{\ra}(\mu),\lam)=\PB(F^{\ra}(\mu),\lam)$$
for all $\mu\in C(\PA)$ and $\lam\in D(\PB)$ since it holds that $\PA(\mu,F^\la(\lambda))=\PB(F^\ra(\mu),\lambda)$. Indeed, since $D$ is  a $\CQ$-closure operator,
\begin{align*}
\PB(F^{\ra}(\mu),\lam)&\leq\PB(D\circ F^{\ra}(\mu),D(\lam))\\
&=\PB(D\circ F^{\ra}(\mu),\lam)\\
&=\lam\lda(D\circ F^{\ra}(\mu))\\
&\leq\lam\lda F^{\ra}(\mu)\\
&=\PB(F^{\ra}(\mu),\lam),
\end{align*}
hence $\PB(D\circ F^{\ra}(\mu),\lam)=\PB(F^{\ra}(\mu),\lam)$.
\end{proof}

The above proposition gives rise to a functor
$$\CT:\CQ\text{-}\Cls\to\CQ\text{-}\CCat$$
that maps each continuous $\CQ$-functor
$$F:(\bbA,C)\to(\bbB,D)$$
to the left adjoint $\CQ$-functor
$$F^{\triangleright}:C(\PA)\to D(\PB)$$
between skeletal complete $\CQ$-categories.

For each complete $\CQ$-category $\bbA$, it follows from Theorem \ref{complete_cocomplete_equivalent} and Example \ref{adjunction_closure_interior} that $C_{\bbA}=\sY\circ\sup:\PA\to\PA$ is a $\CQ$-closure operator, hence $(\bbA,C_{\bbA})$ is a $\CQ$-closure space.

\begin{prop}
If $F:\bbA\to\bbB$ is a left adjoint $\CQ$-functor between complete $\CQ$-categories, then $F:(\bbA,C_{\bbA})\to(\bbB,C_{\bbB})$ is a continuous $\CQ$-functor.
\end{prop}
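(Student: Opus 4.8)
The plan is to prove the stronger equality $F^{\ra}\circ C_{\bbA}=C_{\bbB}\circ F^{\ra}$, which immediately yields the inequality $F^{\ra}\circ C_{\bbA}\leq C_{\bbB}\circ F^{\ra}$ that defines continuity. Recall that by construction $C_{\bbA}=\sY_{\bbA}\circ\sup_{\bbA}:\PA\to\bbA\to\PA$ and $C_{\bbB}=\sY_{\bbB}\circ\sup_{\bbB}:\PB\to\bbB\to\PB$, so the entire claim reduces to sliding the Yoneda embeddings and the supremum functors past $F^{\ra}$ and $F$. This is a matter of chaining together two facts already available in the text.

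First I would invoke the naturality of the Yoneda embedding, namely Proposition \ref{Yoneda_natural}(1), which asserts that the square with edges $F$, $\sY_{\bbA}$, $\sY_{\bbB}$, $F^{\ra}$ commutes, i.e. $F^{\ra}\circ\sY_{\bbA}=\sY_{\bbB}\circ F$. Applying this to $F^{\ra}\circ C_{\bbA}=F^{\ra}\circ\sY_{\bbA}\circ\sup_{\bbA}$ rewrites it as $\sY_{\bbB}\circ F\circ\sup_{\bbA}$. Next, because $F$ is a left adjoint $\CQ$-functor and $\bbA$ is complete, Corollary \ref{left_adjoint_preserves_sup}(1) tells us that $F$ preserves suprema, that is $F(\sup_{\bbA}\mu)=\sup_{\bbB}F^{\ra}(\mu)$ for every $\mu\in\PA$, equivalently $F\circ\sup_{\bbA}=\sup_{\bbB}\circ F^{\ra}$. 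Substituting this into the previous expression gives $\sY_{\bbB}\circ\sup_{\bbB}\circ F^{\ra}=C_{\bbB}\circ F^{\ra}$, which is precisely the desired identity.

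Since the argument is just the composition of two established commutativities, there is no real obstacle; the only delicate point is the passage from pointwise isomorphism to genuine equality of the composite $\CQ$-functors. Here this is harmless: $\PB$ is skeletal by Proposition \ref{PA_skeletal}, so the isomorphisms produced by Corollary \ref{left_adjoint_preserves_sup} and Proposition \ref{Yoneda_natural} collapse to equalities in $\PB$. Even if one preferred to keep them as isomorphisms $\cong$, this still entails $\leq$ in the underlying preorder, which is all that the definition of a continuous $\CQ$-functor requires, so $F:(\bbA,C_{\bbA})\to(\bbB,C_{\bbB})$ is continuous.
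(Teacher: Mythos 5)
Your proof is correct, but it is organized differently from the paper's. The paper argues pointwise in the distributor calculus: it unfolds $F^{\ra}\circ C_{\bbA}(\mu)$ as $\bbA(-,\sup_{\bbA}\mu)\circ F^{\nat}$, uses the $\CQ$-functoriality of $F$ together with the adjunction $F_{\nat}\dv F^{\nat}$ in $\CQ$-$\Dist$ to bound this by $\bbB(-,F(\sup_{\bbA}\mu))$, and only then invokes Corollary \ref{left_adjoint_preserves_sup} to identify the result with $C_{\bbB}\circ F^{\ra}(\mu)$ --- so it produces exactly the inequality $F^{\ra}\circ C_{\bbA}\leq C_{\bbB}\circ F^{\ra}$ that continuity demands, and nothing more. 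You instead paste two functor-level squares: the naturality of the Yoneda embedding (Proposition \ref{Yoneda_natural}) and sup-preservation (Corollary \ref{left_adjoint_preserves_sup}). In effect, the paper's middle steps (functoriality of $F$ plus $F_{\nat}\circ F^{\nat}\leq\bbB$) are a by-hand re-derivation of one half of the Yoneda naturality square $F^{\ra}\circ\sY_{\bbA}=\sY_{\bbB}\circ F$, which your citation subsumes, and as equality rather than inequality. What your route buys is both brevity and a strictly stronger conclusion, namely $F^{\ra}\circ C_{\bbA}=C_{\bbB}\circ F^{\ra}$; you also correctly flag the one genuine subtlety, that Corollary \ref{left_adjoint_preserves_sup} only determines $F(\sup_{\bbA}\mu)\cong\sup_{\bbB}F^{\ra}(\mu)$ up to isomorphism when $\bbB$ is not skeletal, and that this is harmless here since isomorphic objects of $\bbB$ have equal images under $\sY_{\bbB}$ ($\PB$ being skeletal by Proposition \ref{PA_skeletal}), while mere $\cong$ would in any case give the $\leq$ that continuity requires. (One tiny remark: Proposition \ref{Yoneda_natural} already commutes on the nose, so only the sup-preservation square needs the skeletality argument.)
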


\begin{proof}
For all $\mu\in\PA$,
\begin{align*}
F^{\ra}\circ C_{\bbA}(\mu)&=C_{\bbA}(\mu)\circ F^{\nat}\\
&=\bbA(-,{\sup}_{\bbA}\mu)\circ F^{\nat}\\
&\leq\bbB(F-,F({\sup}_{\bbA}\mu))\circ F^{\nat}\\
&=F_{\nat}(-,F({\sup}_{\bbA}\mu))\circ F^{\nat}\\
&\leq\bbB(-,F({\sup}_{\bbA}\mu))&(F_{\nat}\dv F^{\nat}:\bbA\rhu\bbB\ \text{in}\ \CQ\text{-}\Dist)\\
&=\bbB(-,{\sup}_{\bbB}F^{\ra}(\mu))&(\text{Corollary \ref{left_adjoint_preserves_sup}})\\
&=C_{\bbB}\circ F^{\ra}(\mu).
\end{align*}
Hence $F:(\bbA,C_{\bbA})\to(\bbB,C_{\bbB})$ is continuous.
\end{proof}

The above proposition gives a functor
$$\CD:\CQ\text{-}\CCat\to\CQ\text{-}\Cls.$$

\begin{prop} \label{complete category representation}
The functor $\CT\circ\CD$ is naturally isomorphic to the identity functor on $\CQ\text{-}\CCat$. In particular, each skeletal complete $\CQ$-category $\bbA$ is isomorphic to $\CT\circ\CD(\bbA)$.
\end{prop}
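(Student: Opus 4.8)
The plan is to exhibit the natural isomorphism explicitly, via the Yoneda embeddings suitably corestricted. First I would unwind the composite: for a skeletal complete $\CQ$-category $\bbA$ we have $\CD(\bbA)=(\bbA,C_{\bbA})$ with $C_{\bbA}=\sY_{\bbA}\circ\sup_{\bbA}:\PA\to\PA$, so that $\CT\circ\CD(\bbA)=C_{\bbA}(\PA)$ is the $\CQ$-subcategory of fixed points of $C_{\bbA}$. The first task is therefore to identify these fixed points. A presheaf $\mu\in\PA$ is fixed by $C_{\bbA}$ precisely when $\sY_{\bbA}(\sup_{\bbA}\mu)=\mu$ (genuine equality, since $\PA$ is skeletal by Proposition \ref{PA_skeletal} and $C_{\bbA}$ is idempotent). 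Hence the fixed points are exactly the representable presheaves, i.e. $C_{\bbA}(\PA)=\sY_{\bbA}(\bbA)$: each $\sY_{\bbA}a$ is fixed because $\sup_{\bbA}\circ\sY_{\bbA}\cong 1_{\bbA}$ by Proposition \ref{sup_functor}, which upgrades to $\sup_{\bbA}\circ\sY_{\bbA}=1_{\bbA}$ as $\bbA$ is skeletal, so $C_{\bbA}(\sY_{\bbA}a)=\sY_{\bbA}a$; and conversely any fixed $\mu$ equals $\sY_{\bbA}(\sup_{\bbA}\mu)$, hence is representable.

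Second, I would argue that the corestriction $\sY_{\bbA}:\bbA\to C_{\bbA}(\PA)$ is an isomorphism in $\CQ$-$\CCat$. It is fully faithful by the Yoneda lemma (Lemma \ref{Yoneda_lemma}), surjective onto $C_{\bbA}(\PA)$ by the identification above, and injective because a fully faithful $\CQ$-functor is essentially injective (Proposition \ref{fully_faithful_essentially_injective}) and $\bbA$ is skeletal. A bijective fully faithful $\CQ$-functor is an isomorphism in $\CQ$-$\Cat$, and any isomorphism $F$ satisfies $1\leq F^{-1}\circ F$ and $F\circ F^{-1}\leq 1$, so it is in particular a left adjoint $\CQ$-functor; thus $\sY_{\bbA}$ is a legitimate (iso)morphism in $\CQ$-$\CCat$, its inverse being the corestriction of $\sup_{\bbA}$.

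Third, I would verify naturality. For a left adjoint $\CQ$-functor $F:\bbA\to\bbB$ between skeletal complete $\CQ$-categories, $\CT\circ\CD(F)$ is $F^{\triangleright}=C_{\bbB}\circ F^{\ra}$ restricted to $C_{\bbA}(\PA)$, so the naturality square asserts $C_{\bbB}\circ F^{\ra}\circ\sY_{\bbA}=\sY_{\bbB}\circ F$. By Proposition \ref{Yoneda_natural} one already has $F^{\ra}\circ\sY_{\bbA}=\sY_{\bbB}\circ F$, and then $C_{\bbB}$ fixes each representable $\sY_{\bbB}(Fa)$ by the first step applied to $\bbB$; since these are $\CQ$-functors into a skeletal $\CQ$-category, agreement on objects suffices. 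This gives the claimed natural isomorphism $\{\sY_{\bbA}\}$ from the identity on $\CQ$-$\CCat$ to $\CT\circ\CD$, and the final clause is exactly its object-level component.

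I expect no serious obstacle here; the only points demanding care are (i) upgrading the isomorphisms $\sup_{\bbA}\circ\sY_{\bbA}\cong 1_{\bbA}$ and $C_{\bbA}^{2}\cong C_{\bbA}$ to strict equalities by invoking skeletality, so that \emph{fixed point} and \emph{representable} coincide on the nose and the subcategory $C_{\bbA}(\PA)=\sY_{\bbA}(\bbA)$ is pinned down exactly, and (ii) confirming that each component lands in the correct fixed-point subcategory and is a left adjoint, so that it is an honest $\CQ$-$\CCat$ isomorphism rather than a mere equivalence.
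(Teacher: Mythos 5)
Your proposal is correct and follows essentially the same route as the paper's proof: identify $C_{\bbA}(\PA)$ with the representable presheaves via $\sY_{\bbA}x=C_{\bbA}(\sY_{\bbA}x)$, show the corestricted Yoneda embedding is a bijective fully faithful $\CQ$-functor, and verify naturality from $F^{\ra}\circ\sY_{\bbA}=\sY_{\bbB}\circ F$ together with $C_{\bbB}$ fixing representables. The only (welcome) extra care you take beyond the paper is explicitly noting that the isomorphism is a left adjoint, hence a genuine morphism of $\CQ$-$\CCat$.
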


\begin{proof}
For each $x\in\bbA_0$, since
\begin{equation} \label{CY=Y}
\sY_{\bbA}x=\sY_{\bbA}\circ{\sup}_{\bbA}\circ\sY_{\bbA}x=C_{\bbA}\circ\sY_{\bbA}x,
\end{equation}
it follows that
$$\CT\circ\CD(\bbA)=C_{\bbA}(\PA)=\{\sY_{\bbA} x\mid x\in\bbA_0\}.$$
Thus we get that $\sY_{\bbA}x$ is closed in the $\CQ$-closure space $(\bbA,C_{\bbA})$. By Yoneda lemma, the correspondence $x\mapsto\sY_{\bbA}x$ induces a fully faithful $\CQ$-functor $\sY_{\bbA}:\bbA\to C_{\bbA}(\PA)$. It is clear that $\sY_{\bbA}$ is surjective, hence an isomorphism of skeletal $\CQ$-categories.

To see the naturality of $\{\sY_{\bbA}\}$, for each left $\CQ$-functor $F:\bbA\to\bbB$ between skeletal complete $\CQ$-categories, we prove the commutativity of the following diagram:
$$\bfig
\square<700,500>[\bbA`C_{\bbA}(\PA)`\bbB`C_{\bbB}(\PB);\sY_{\bbA}`F`\CT\circ\CD(F)=C_{\bbB}\circ F^{\ra}`\sY_{\bbB}]
\efig$$
Indeed, by Proposition \ref{Yoneda_natural} and Equation (\ref{CY=Y}), it follows immediately that
$$C_{\bbB}\circ F^{\ra}\circ\sY_{\bbA}=C_{\bbB}\circ\sY_{\bbB}\circ F=\sY_{\bbB}\circ F.$$
\end{proof}

By Proposition \ref{complete category representation}, if we identify a skeletal complete $\CQ$-category $\bbA$ with the $\CQ$-subcategory $C_{\bbA}(\PA)$ of $\PA$, then the functor $\CT:\CQ\text{-}\Cls\to\CQ\text{-}\CCat$ can be viewed as a left inverse of $\CD:\CQ\text{-}\CCat\to\CQ\text{-}\Cls$.

\begin{thm} \label{T_D_adjunction}
$\CT:\CQ\text{-}\Cls\to\CQ\text{-}\CCat$ is a left inverse (up to isomorphism) and left adjoint of $\CD:\CQ\text{-}\CCat\to \CQ\text{-}\Cls$.
\end{thm}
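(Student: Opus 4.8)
The ``left inverse up to isomorphism'' half is exactly Proposition \ref{complete category representation}, so the plan is to concentrate on producing the adjunction $\CT\dv\CD$, which I would establish through the universal-arrow characterization rather than by hand-building the natural bijection: for each $\CQ$-closure space $(\bbA,C)$ I will exhibit a morphism $\eta_{(\bbA,C)}\colon(\bbA,C)\to\CD\CT(\bbA,C)=(C(\PA),C_{C(\PA)})$ in $\CQ$-$\Cls$ that is universal among continuous $\CQ$-functors out of $(\bbA,C)$ into objects of the form $\CD(\bbB)$; the counit will then be the isomorphism of Proposition \ref{complete category representation}. Since $C$ is idempotent, $C(\PA)$ is a $\CQ$-closure system of $\PA$ by Proposition \ref{closure_system_operator}, so I write $I\colon C(\PA)\to\PA$ for the inclusion and $\bar C\colon\PA\to C(\PA)$ for the corestriction of $C$, which satisfy $\bar C\dv I$, $I\bar C=C$ and $\bar C I\cong 1_{C(\PA)}$. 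By Propositions \ref{PA_skeletal} and \ref{closure_system_complete}, $C(\PA)$ is a skeletal complete $\CQ$-category, hence lives in $\CQ$-$\CCat$.

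The technical backbone I would set up first is a reformulation of continuity into $\CD(\bbB)$. For a $\CQ$-functor $G\colon\bbA\to\bbB$ with $\bbB$ skeletal complete, put $K:=\sup_{\bbB}\circ G^{\ra}\colon\PA\to\bbB$, a left adjoint as a composite of the left adjoints $G^{\ra}$ and $\sup_{\bbB}$ (Proposition \ref{direct_inverse_image_adjunction}, Theorem \ref{complete_cocomplete_equivalent}). I claim
\[ G\colon(\bbA,C)\to(\bbB,C_{\bbB})\ \text{is continuous}\iff K\circ C\cong K. \]
The forward implication follows by applying the (order-preserving) $\CQ$-functor $\sup_{\bbB}$ to $G^{\ra}\circ C\le C_{\bbB}\circ G^{\ra}=\sY_{\bbB}\circ\sup_{\bbB}\circ G^{\ra}$ and using $\sup_{\bbB}\circ\sY_{\bbB}\cong 1_{\bbB}$ (Proposition \ref{sup_functor}), the reverse inequality being free from $1_{\PA}\le C$; the converse follows from $1_{\PB}\le C_{\bbB}$ together with $K\circ C\cong K$. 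Using this I would define the unit $\eta_{(\bbA,C)}:=\bar C\circ\sY_{\bbA}$ and verify its continuity cleanly: the left adjoint $\sup_{C(\PA)}\circ\eta_{(\bbA,C)}^{\ra}$ associated to it extends $\eta_{(\bbA,C)}$ along $\sY_{\bbA}$ (Proposition \ref{Yoneda_natural} and $\sup\circ\sY\cong 1$), and so does the left adjoint $\bar C$; by the uniqueness clause of Proposition \ref{P_free_cocompletion} they coincide, whence $\sup_{C(\PA)}\circ\eta_{(\bbA,C)}^{\ra}\cong\bar C$, and then $\bar C\circ C=\bar C\circ I\bar C\cong\bar C$ gives $K\circ C\cong K$, i.e. continuity.

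For the universal property itself, given a continuous $G\colon(\bbA,C)\to(\bbB,C_{\bbB})$ I would set $H:=\sup_{\bbB}\circ G^{\ra}\circ I=K\circ I\colon C(\PA)\to\bbB$. The factorization is then routine: $H\circ\eta_{(\bbA,C)}=K\circ I\bar C\circ\sY_{\bbA}=K\circ C\circ\sY_{\bbA}\cong K\circ\sY_{\bbA}\cong G$, using $K\circ C\cong K$, Proposition \ref{Yoneda_natural} and Proposition \ref{sup_functor}, with equality since $\bbB$ is skeletal. For uniqueness, any left adjoint $H'$ with $H'\circ\eta_{(\bbA,C)}=G$ yields a left adjoint $H'\circ\bar C\colon\PA\to\bbB$ agreeing with $G$ on $\sY_{\bbA}$, so $H'\circ\bar C\cong K$ by Proposition \ref{P_free_cocompletion}, and hence $H'\cong H'\circ\bar C\circ I\cong K\circ I=H$.

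The one genuinely nontrivial point---and the step I expect to be the main obstacle---is that $H=K\circ I$ really is a left adjoint, since $I$ is only a \emph{right} adjoint and a left adjoint composed with a right adjoint need not be adjoint on either side (indeed the naive candidate right adjoint $\bar C\circ K^{*}$ fails the hom-equality of Proposition \ref{adjoint_condition} for a general closed presheaf). I would instead prove that $H$ preserves weighted colimits and invoke Corollary \ref{left_adjoint_preserves_sup}. Because $\bar C$ is essentially surjective, $\bar C^{\ra}\colon\CP(\PA)\to\CP(C(\PA))$ is surjective (Proposition \ref{Fra_injective_surjective}), so any $\Phi\in\CP(C(\PA))$ is $\bar C^{\ra}\Xi$; as $\bar C$ preserves suprema this gives $\sup_{C(\PA)}\Phi\cong\bar C(\sup_{\PA}\Xi)$. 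Then $H(\sup_{C(\PA)}\Phi)\cong K(I\bar C\,\sup_{\PA}\Xi)=K(C\,\sup_{\PA}\Xi)\cong K(\sup_{\PA}\Xi)\cong\sup_{\bbB}K^{\ra}\Xi$ (using $K\circ C\cong K$ and that $K$ preserves suprema), while $H^{\ra}\Phi=(H\circ\bar C)^{\ra}\Xi\cong K^{\ra}\Xi$ since $H\circ\bar C=K\circ C\cong K$ (Proposition \ref{Fra_composition}); comparing, $\sup_{\bbB}H^{\ra}\Phi\cong H(\sup_{C(\PA)}\Phi)$, so $H$ is a left adjoint. This completes the universal arrow, and with the counit supplied by Proposition \ref{complete category representation} the adjunction $\CT\dv\CD$ follows.
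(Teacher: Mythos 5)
Your overall route coincides with the paper's: the same unit $\eta_{(\bbA,C)}=C\circ\sY_{\bbA}$ (your $\overline{C}\circ\sY_{\bbA}$), the same candidate factorization $H=\sup_{\bbB}\circ G^{\ra}$ restricted along the inclusion $I$, and the left-inverse half quoted from Proposition \ref{complete category representation}. You genuinely diverge in two sub-steps, and both of your alternatives are correct. First, your criterion ``$G:(\bbA,C)\to(\bbB,C_{\bbB})$ is continuous iff $K\circ C\cong K$, where $K=\sup_{\bbB}\circ G^{\ra}$'' is a clean reformulation the paper never isolates; it lets you verify continuity of the unit and the factorization $H\circ\eta_{(\bbA,C)}=G$ with much less distributor calculus. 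Second, for the step you rightly flag as the crux --- that $H=K\circ I$ is a left adjoint although $I$ is only a right adjoint --- the paper exhibits the right adjoint explicitly (namely $F^{\triangleleft}\circ\sY_{\bbB}$) and verifies the hom-equality by hand, whereas you prove that $H$ preserves suprema and invoke Corollary \ref{left_adjoint_preserves_sup}. Your detour through surjectivity of $\overline{C}^{\ra}$ (Proposition \ref{Fra_injective_surjective}) is sound, though you could get $\sup_{C(\PA)}\Phi=\overline{C}\bigl(\sup_{\PA}I^{\ra}\Phi\bigr)$ directly from the computation in the proof of Proposition \ref{closure_system_complete}. Your uniqueness argument (precompose with $\overline{C}$ and appeal to the uniqueness clause of Proposition \ref{P_free_cocompletion}) is also slicker than the paper's tensor computation and is valid.

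There is, however, one genuine gap. What you construct is, for each object $(\bbA,C)$, a universal arrow $\eta_{(\bbA,C)}:(\bbA,C)\to\CD\CT(\bbA,C)$. The standard universal-arrow construction then produces \emph{some} left adjoint $L$ of $\CD$ whose object part is $(\bbA,C)\mapsto C(\PA)$, but whose morphism part is \emph{defined} by the universal property: $L(F)$ is the unique left adjoint satisfying $\CD(L(F))\circ\eta_{(\bbA,C)}=\eta_{(\bbB,D)}\circ F$. The theorem asserts that the already-defined functor $\CT$ --- whose action on a continuous $F:(\bbA,C)\to(\bbB,D)$ is $F^{\triangleright}=D\circ F^{\ra}$ --- is left adjoint to $\CD$. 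To identify $L$ with $\CT$ you must check that $\CT(F)$ satisfies the defining equation of $L(F)$, i.e.\ the naturality square
$$(D\circ F^{\ra})\circ(C\circ\sY_{\bbA})=(D\circ\sY_{\bbB})\circ F,$$
which is exactly Step 2 of the paper's proof and is absent from your proposal. This cannot be waved away: two functors that agree on objects need not be naturally isomorphic, so as it stands you have only shown that $\CD$ has a left adjoint agreeing with $\CT$ on objects. Note also that your continuity lemma does not supply the missing square, since it concerns only targets of the special form $(\bbB,C_{\bbB})$, while naturality must hold for arbitrary continuous $F:(\bbA,C)\to(\bbB,D)$. The repair is short --- one inequality follows from $1_{\PA}\leq C$ and Proposition \ref{Yoneda_natural}, the other from continuity of $F$ together with idempotency of $D$ --- but it is a needed step, not a formality.
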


\begin{proof}
It remains to show that $\CT$ is a left adjoint of $\CD$ . Given a $\CQ$-closure space $(\bbA,C)$, denote $C(\PA)$ by $\bbX$, then $\CD\circ\CT(\bbA,C)=(\bbX,C_{\bbX})$. Let $\eta_{(\bbA,C)}=C\circ\sY_{\bbA}:\bbA\to\bbX$. We show that $\eta=\{\eta_{(\bbA,C)}\}$ is a natural transformation from the identity functor  to $\CD\circ\CT$ and it is the unit of the desired adjunction.

{\bf Step 1}. $\eta_{(\bbA,C)}:(\bbA,C)\to (\bbX,C_{\bbX})$ is a continuous $\CQ$-functor, i.e. $\eta_{(\bbA,C)}^{\ra}\circ C\leq C_{\bbX}\circ\eta_{(\bbA,C)}^{\ra}$.

Firstly, we show that $C(\mu)=\sup_{\bbX}\circ\eta_{(\bbA,C)}^{\ra}(\mu)$ for all $\mu\in\PA$. Consider the diagram:
$$\bfig
\Square[\CP(\PA)`\PA`\PX`\bbX;\sup_{\PA}`C^\ra`C`\sup_{\bbX}]
\morphism(-600,500)<600,0>[\PA`\CP(\PA);\sY_{\bbA}^{\ra}]
\morphism(-600,500)|l|<600,-500>[\PA`\CP\bbX;\eta_{(\bbA,C)}^\ra]
\efig$$
The commutativity of the left triangle follows from $\eta_{(\bbA,C)}=C\circ \sY_{\bbA}$. Since $C:\PA\to\bbX$ is a left adjoint in $\CQ$-$\Cat$ (obtained in the proof of Proposition \ref{closure_system_operator}), it preserves suprema (Corollary \ref{left_adjoint_preserves_sup}), thus the right square commutes. The whole diagram is then commutative. For each $\mu\in\PA$, we have that
\begin{equation} \label{mu_sup_ymu}
\mu=\mu\circ\bbA=\mu\circ\sY_{\bbA}^{\nat}\circ(\sY_{\bbA})_{\nat}=\sY_{\bbA}^\ra(\mu)\circ(\sY_{\bbA})_{\nat}={\sup}_{\PA}\circ\sY_{\bbA}^\ra(\mu),
\end{equation}
where the second equality comes from the fact that the Yoneda embedding $\sY_{\bbA}$ is fully faithful and Proposition \ref{fully_faithful_graph_cograph}(1), while the last equality comes from Example \ref{PA_complete}. Consequently,
$$C(\mu)=C\circ{\sup}_{\PA}\circ\sY_{\bbA}^\ra(\mu)={\sup}_{\bbX}\circ\eta_{(\bbA,C)}^{\ra}(\mu)$$
for all $\mu\in\PA$.

Secondly, we show that $\eta_{(\bbA,C)}^{\ra}(\mu)\leq\sY_{\bbX}(\mu)=\bbX(-,\mu)$ for each $\mu\in\bbX$. Indeed,
\begin{align*}
\eta_{(\bbA,C)}^{\ra}(\mu)&=\mu\circ\eta_{(\bbA,C)}^{\nat}\\
&=\mu\circ(C\circ\sY_{\bbA})^{\nat}\\
&=\PA(\sY_\bbA-,\mu)\circ\sY_{\bbA}^{\nat}\circ C^{\nat}&(\text{Yoneda lemma})\\
&=(\sY_{\bbA})_{\nat}(-,\mu)\circ\sY_{\bbA}^{\nat}\circ C^{\nat}& \\
&\leq\PA(-,\mu)\circ C^{\nat}&((\sY_{\bbA})_{\nat}\dv\sY_{\bbA}^{\nat}:\bbA\rhu\PA\ \text{in}\ \CQ\text{-}\Dist)\\
&\leq\bbX(C-,\mu)\circ C^{\nat}&(C\ \text{is a}\ \CQ\text{-functor and}\ C(\mu)=\mu)\\
&=C_{\nat}(-,\mu)\circ C^{\nat}\\
&\leq\bbX(-,\mu).&(C_{\nat}\dv C^{\nat}:\PA\rhu\bbX\ \text{in}\ \CQ\text{-}\Dist)
\end{align*}

Therefore, for all $\mu\in\PA$,
$$\eta_{(\bbA,C)}^{\ra}\circ C(\mu)\leq\sY_{\bbX}\circ{\sup}_{\bbX}\circ\eta_{(\bbA,C)}^{\ra}(\mu)=C_{\bbX}\circ\eta_{(\bbA,C)}^{\ra}(\mu),$$
as desired.

{\bf Step 2}. $\eta=\{\eta_{(\bbA,C)}\}$ is a natural transformation. Let $F:(\bbA,C)\to(\bbB,D)$ be a continuous $\CQ$-functor, we must show that
$$D\circ\sY_{\bbB}\circ F=\eta_{(\bbB,D)}\circ F=\CD\circ\CT\circ F\circ\eta_{(\bbA,C)}=D\circ F^{\ra}\circ C\circ\sY_{\bbA}.$$

Firstly, since $C$ is a $\CQ$-closure operator, by Proposition \ref{Yoneda_natural},
$$\sY_{\bbB}\circ F=F^{\ra}\circ\sY_{\bbA}\leq F^{\ra}\circ C\circ\sY_{\bbA},$$
and consequently $D\circ\sY_{\bbB}\circ F\leq D\circ F^{\ra}\circ C\circ\sY_{\bbA}$.

Secondly, the continuity of $F$ leads to
$$F^{\ra}\circ C\circ\sY_{\bbA}\leq D\circ F^{\ra}\circ\sY_{\bbA}=D\circ\sY_{\bbB}\circ F,$$
hence $D\circ F^{\ra}\circ C\circ\sY_{\bbA}\leq D\circ\sY_{\bbB}\circ F$.

{\bf Step 3}. $\eta_{(\bbA,C)}:(\bbA,C)\to (\bbX,C_{\bbX})$ is universal in the sense that for any skeletal complete $\CQ$-category $\bbB$ and continuous $\CQ$-functor $F:(\bbA,C)\to (\bbB,C_{\bbB})$, there exists a unique left adjoint $\CQ$-functor $\overline{F}:\bbX\to\bbB$ that makes the following diagram commute:
\begin{equation} \label{eta_universal}
\bfig
\qtriangle<600,500>[(\bbA,C)`(\bbX,C_{\bbX})`(\bbB,C_{\bbB});\eta_{(\bbA,C)}`F`\overline{F}]
\efig
\end{equation}

{\bf Existence.} Let $\overline{F}=\sup_{\bbB}\circ F^\ra:\bbX\to\bbB$ be the following composition of $\CQ$-functors
$$\bbX\hookrightarrow\PA\to^{F^\ra}\PB\to^{\sup_{\bbB}}\bbB. $$

First,  $\overline{F}:\bbX\to\bbB$ is a left adjoint in $\CQ$-$\Cat$. Indeed, $\overline{F}$ has a right adjoint $G:\bbB\to\bbX$ given by $G=F^{\triangleleft}\circ\sY_{\bbB}$. $G$ is well-defined since $\sY_{\bbB}b$ is a closed  in $(\bbB,C_{\bbB})$ for each $b\in\bbB_0$. For all $\mu\in\bbX_0$ and $y\in\bbB_0$, it holds that
\begin{align*}
\bbB(\overline{F}(\mu),y)&=\bbB(-,y)\lda F^{\ra}(\mu)\\
&=\bbB(-,y)\lda(\mu\circ F^{\nat})\\
&=(\bbB(-,y)\circ F_\nat)\lda\mu&(\text{Proposition \ref{adjoint_arrow_calculation}(2)})\\
&=F_\nat (-,y)\lda\mu&(\text{Remark \ref{distributor_notion}(5)})\\
&=\PA(\mu,F^{\triangleleft}\circ\sY_{\bbB}y)&(\text{Definition of}\ F_{\nat}\ \text{and}\ F^{\triangleleft})\\
&=\bbX(\mu,Gy),
\end{align*}
hence  $\overline{F}$ is a left adjoint of $G$.

Second,  $F=\overline{F}\circ\eta_{(\bbA,C)}$. Note that for all $x\in\bbA_0$,
\begin{align*}
\bbB(Fx,-)&=F_{\nat}(x,-)\\
&=(\bbB\lda F^{\nat})(x,-)& (\text{Proposition \ref{adjoint_arrow_calculation}(1)})\\
&=\bbB\lda F^{\nat}(-,x)\\
&=\bbB\lda(\sY_{\bbB}\circ Fx)\\
&=\bbB\lda(F^{\ra}\circ\sY_{\bbA}x),&(\text{Proposition \ref{Yoneda_natural}})
\end{align*}
thus $F=\sup_{\bbB}\circ F^{\ra}\circ\sY_{\bbA}$. Consequently
\begin{align*}
\overline{F}\circ\eta_{(\bbA,C)}&={\sup}_{\bbB}\circ F^{\ra}\circ C\circ\sY_{\bbA}\\
&\leq{\sup}_{\bbB}\circ C_{\bbB}\circ F^{\ra}\circ\sY_{\bbA}&(F\ \text{is continuous})\\
&={\sup}_{\bbB}\circ \sY_{\bbB}\circ{\sup}_{\bbB}\circ F^{\ra}\circ\sY_{\bbA}\\
&={\sup}_{\bbB}\circ F^{\ra}\circ\sY_{\bbA}&({\sup}_{\bbB}\dv\sY_{\bbB}:\PB\rhu\bbB)\\
&=F.
\end{align*}
Conversely, since $C$ is a $\CQ$-closure operator, it is clear that
$$F={\sup}_{\bbB}\circ F^{\ra}\circ\sY_{\bbA}\leq{\sup}_{\bbB}\circ F^{\ra}\circ C\circ\sY_{\bbA}=\overline{F}\circ\eta_{(\bbA,C)},$$
hence $F\cong\overline{F}\circ\eta_{(\bbA,C)}$, and consequently $F=\overline{F}\circ\eta_{(\bbA,C)}$ since $\bbB$ is skeletal.

{\bf Uniqueness.} Suppose $H:\bbX\to\bbB$ is another left adjoint $\CQ$-functor that makes Diagram (\ref{eta_universal}) commute. For each $\mu\in\bbX$, since $C:\PA\to\bbX$ is a left adjoint in $\CQ$-$\Cat$, we have
$$\mu=C(\mu)=C(\mu\circ\bbA)=C\Big(\bv_{x\in\bbA_0}\mu(x)\circ\sY_{\bbA}x\Big)=\bv_{x\in\bbA_0}\mu(x)\otimes_{\bbX}C(\sY_{\bbA}x),$$
where the last equality follows from Example \ref{PA_tensor} and Proposition \ref{F_la_ra_condition}. It follows that
\begin{align*}
H(\mu)&=H\Big(\bv_{x\in\bbA_0}\mu(x)\otimes_{\bbX}C(\sY_{\bbA}x)\Big) \\
&=\bv_{x\in\bbA_0}\mu(x)\otimes_{\bbB}(H\circ\eta_{(\bbA,C)}(x))&\text{(Proposition \ref{F_la_ra_condition})}\\
&=\bv_{x\in\bbA_0}\mu(x)\otimes_{\bbB} Fx.
\end{align*}
Consequently,
\begin{align*}
\bbB(H(\mu),-)&=\bbB\Big(\bv_{x\in\bbA_0}\mu(x)\otimes_{\bbB} Fx,-\Big)\\
&=\bw_{x\in\bbA_0}\bbB(\mu(x)\otimes_{\bbB} Fx,-)&\text{(Proposition \ref{A_x_yi_distribution})}\\
&=\bw_{x\in\bbA_0}\bbB(Fx,-)\lda\mu(x)\\
&=F_\nat \lda\mu.
\end{align*}
Since $\bbB$ is skeletal, it follows that $H(\mu)=\colim_{\mu}F=\sup_{\bbB}\circ F^{\ra}(\mu)$. Therefore, $H=\sup_{\bbB}\circ F^{\ra}=\overline{F}$.
\end{proof}

The following corollary shows that the free cocompletion functor of $\CQ$-categories factors through the category of $\CQ$-closure spaces via a left adjoint functor $\BD:\CQ\text{-}\Cat\to\CQ\text{-}\Cls$ and a left adjoint functor $\CT:\CQ\text{-}\Cls\to\CQ\text{-}\CCat$.

\begin{cor} \label{P_factor_through_Cls}
Both the outer triangle and the inner triangle of the diagram
$$\bfig
\qtriangle/@{>}@<2pt>`@{>}@<-2pt>`@{>}@<2pt>/<1000,700>[\CQ\text{-}\Cat`\CQ\text{-}\Cls`\CQ\text{-}\CCat;\BD`\CP`\CT]
\qtriangle|brl|/@{<-}@<-2pt>`@{<-}@<2pt>`@{<-}@<-2pt>/<1000,700>[\CQ\text{-}\Cat`\CQ\text{-}\Cls`\CQ\text{-}\CCat;\BU`\BU`\CD]
\efig$$
commute, where $\BU$ denote the obvious forgetful functors.
\end{cor}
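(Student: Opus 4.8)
The plan is to read off the two commutativity assertions explicitly and then verify each by unwinding the definitions of the functors involved; no deep argument is needed, since all the real content has already been packaged into Theorem~\ref{T_D_adjunction}, Proposition~\ref{free_closure_space} and Proposition~\ref{P_free_cocompletion}. The corollary is purely a matter of checking that three pairs of adjoint functors, laid out over the same three categories, fit together so that the ``free'' legs and the ``forgetful'' legs each compose correctly.

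First I would record precisely what the two triangles assert. The inner triangle carries the left adjoint (free) functors, so its commutativity is the identity $\CT\circ\BD=\CP$ of functors $\CQ\text{-}\Cat\to\CQ\text{-}\CCat$. The outer triangle carries the forgetful (right adjoint) functors, so its commutativity is $\BU\circ\CD=\BU$, where the left-hand $\BU$ is the forgetful functor $\CQ\text{-}\Cls\to\CQ\text{-}\Cat$ and the right-hand $\BU$ is the forgetful functor $\CQ\text{-}\CCat\to\CQ\text{-}\Cat$.

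For the inner triangle I would compute $\CT\circ\BD$ directly. On a $\CQ$-category $\bbA$ the functor $\BD$ produces the discrete $\CQ$-closure space $(\bbA,1_{\PA})$, whose closed contravariant presheaves are all of $\PA$; hence $\CT\circ\BD(\bbA)=1_{\PA}(\PA)=\PA=\CP(\bbA)$. On a $\CQ$-functor $F:\bbA\to\bbB$ the image $\BD(F)$ is $F$ regarded as a continuous $\CQ$-functor $(\bbA,1_{\PA})\to(\bbB,1_{\PB})$, and $\CT$ sends it to $F^{\triangleright}=1_{\PB}\circ F^{\ra}=F^{\ra}$, which is exactly $\CP(F)$ by Proposition~\ref{Fra_composition}. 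Thus $\CT\circ\BD=\CP$ on objects and on morphisms.

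For the outer triangle I would simply trace an object and a morphism through $\CD$ followed by $\BU$. A skeletal complete $\CQ$-category $\bbA$ is sent by $\CD$ to the $\CQ$-closure space $(\bbA,C_{\bbA})$ with $C_{\bbA}=\sY\circ\sup$, and a left adjoint $\CQ$-functor $F$ is sent to $F$ regarded as a continuous $\CQ$-functor; applying $\BU:\CQ\text{-}\Cls\to\CQ\text{-}\Cat$ then returns the underlying $\CQ$-category $\bbA$ and the $\CQ$-functor $F$, which is precisely the value of $\BU:\CQ\text{-}\CCat\to\CQ\text{-}\Cat$. Hence $\BU\circ\CD=\BU$ on the nose. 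There is no genuine obstacle here; the only point requiring care is bookkeeping, namely keeping the two functors named $\BU$ apart and invoking the identification of Proposition~\ref{complete category representation}, under which $\CT$ is viewed as landing in $\CQ\text{-}\CCat$ as an honest subcategory of presheaf categories, so that the equalities above are literal identities of functors rather than merely natural isomorphisms.
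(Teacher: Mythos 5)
Your proof is correct and takes essentially the same approach as the paper, whose entire proof is that the commutativity ``follows immediately from the definitions of these functors'' --- precisely the definitional unwinding you carry out ($\CT\circ\BD(\bbA)=1_{\PA}(\PA)=\PA=\CP(\bbA)$, $\CT\circ\BD(F)=1_{\PB}\circ F^{\ra}=\CP(F)$, and $\BU\circ\CD=\BU$ on the nose). One minor remark: your appeal to Proposition \ref{complete category representation} is unnecessary, since both triangles commute as literal identities of functors without invoking any identification of $\bbA$ with $C_{\bbA}(\PA)$.
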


\begin{proof}
Follows immediately from the definitions of these functors.
\end{proof}

\chapter{Isbell Adjunctions in $\CQ$-categories} \label{Isbell_adjunction}

The Isbell adjunction introduced in this chapter generalize the classical Isbell adjunction in category theory (see Proposition \ref{Isbell_classical}). Each $\CQ$-distributor between $\CQ$-categories gives rise to an Isbell adjunction between $\CQ$-categories of contravariant presheaves and covariant presheaves, and hence to a monad. We prove that this process is functorial from the category of $\CQ$-distributors and infomorphisms to the category of complete $\CQ$-categories and left adjoint $\CQ$-functors. Furthermore, the free cocompletion functor of $\CQ$-categories factors through this functor.

\section{Isbell Adjunctions}

Given a $\CQ$-distributor $\phi:\bbA\oto\bbB$, define a pair of $\CQ$-functors
$$\uphi:\PA\to\PdB\quad\text{and}\quad\dphi:\PdB\to\PA$$
by
\begin{equation} \label{uphi_def}
\uphi(\mu)=\phi\lda\mu\quad\text{and}\quad\dphi(\lam)=\lam\rda\phi.
\end{equation}
It should be warned that $\uphi$ and $\dphi$ are both contravariant with respect to local orders in $\CQ$-$\Dist$ by Remark \ref{PdA_QDist_order}, i.e.,
\begin{equation} \label{uphi_contravariant}
\forall\mu_1,\mu_2\in\PA,\mu_1\leq\mu_2{}\Lra{}\uphi(\mu_2)\leq\uphi(\mu_1)
\end{equation}
and
\begin{equation} \label{dphi_contravariant}
\forall\lam_1,\lam_2\in\PdB,\lam_1\leq\lam_2{}\Lra{}\dphi(\lam_2)\leq\dphi(\lam_1).
\end{equation}

\begin{prop} \label{uphi-dphi-adjunction}
$\uphi\dv\dphi:\PA\rhu\PdB$ in $\CQ$-$\Cat$.
\end{prop}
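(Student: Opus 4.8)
The plan is to invoke Proposition \ref{adjoint_condition}, which characterizes an adjunction $F \dv G : \bbX \rhu \bbY$ between type-preserving maps purely by the single hom-arrow identity $\bbY(Fx, y) = \bbX(x, Gy)$, with no separate verification that $F$ and $G$ are $\CQ$-functors. So I would first record that $\uphi$ and $\dphi$ are type-preserving: for $\mu \in \PA$ the covariant presheaf $\uphi(\mu) = \phi \lda \mu$ lands in $\PdB$ with $t(\uphi(\mu)) = t\mu$, and dually $\dphi(\lam) = \lam \rda \phi$ lands in $\PA$ with $t(\dphi(\lam)) = t\lam$. It then remains only to establish
$$\PdB(\uphi(\mu), \lam) = \PA(\mu, \dphi(\lam))$$
for all $\mu \in \PA$ and $\lam \in \PdB$; Proposition \ref{adjoint_condition} will simultaneously yield that $\uphi, \dphi$ are genuine $\CQ$-functors and that they form the claimed adjunction.

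Next I would expand both sides using the defining hom-formulas of the presheaf $\CQ$-categories. By Equation (\ref{PdA_arrow}) the left-hand side equals $\lam \rda \uphi(\mu) = \lam \rda (\phi \lda \mu)$, while by Equation (\ref{PA_arrow}) the right-hand side equals $\dphi(\lam) \lda \mu = (\lam \rda \phi) \lda \mu$. The entire statement thus collapses to the single identity
$$\lam \rda (\phi \lda \mu) = (\lam \rda \phi) \lda \mu$$
among $\CQ$-distributors, which is exactly Proposition \ref{arrow_calculation}(6), namely $(g \rda h) \lda f = g \rda (h \lda f)$, read in the quantaloid $\CQ$-$\Dist$ (Proposition \ref{Dist_quantaloid}) with $g = \lam$, $h = \phi$, and $f = \mu$. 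Since $\CQ$-$\Dist$ is itself a quantaloid, the whole implication calculus of Proposition \ref{arrow_calculation} applies verbatim to its arrows, and the proof closes with no further computation.

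I do not expect a genuine obstacle: the argument is a one-line application of the quantaloid implication calculus once the reduction via Proposition \ref{adjoint_condition} is set up. The only point demanding care is the bookkeeping of domains and codomains — checking that $\phi \lda \mu$ really lies in $\PdB$ and $\lam \rda \phi$ in $\PA$, so that the three distributors $\mu, \phi, \lam$ are composable in the order required by Proposition \ref{arrow_calculation}(6) — together with the variance subtlety flagged in Remark \ref{PdA_QDist_order}, whereby the contravariance of $\uphi$ and $\dphi$ with respect to the local order of $\CQ$-$\Dist$ becomes covariance with respect to the underlying orders of $\PA$ and $\PdB$. Because Proposition \ref{adjoint_condition} is phrased entirely in terms of hom-arrows, this variance never actually enters the verification, so the computation remains clean.
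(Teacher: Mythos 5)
Your proposal is correct and is essentially the paper's own proof: the paper establishes the same hom-identity $\PdB(\uphi(\mu),\lam)=\lam\rda(\phi\lda\mu)=(\lam\rda\phi)\lda\mu=\PA(\mu,\dphi(\lam))$ via Proposition \ref{arrow_calculation}(6) applied in the quantaloid $\CQ$-$\Dist$, and then concludes by the characterization of adjunctions in Proposition \ref{adjoint_condition}. You merely make explicit the type-preservation bookkeeping and the appeal to Proposition \ref{adjoint_condition}, which the paper leaves implicit.
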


\begin{proof}
For all $\mu\in\PA$ and $\lam\in\PdB$,
\begin{align*}
\PdB(\uphi(\mu),\lam)&=\lam\rda\uphi(\mu)\\
&=\lam\rda(\phi\lda\mu)\\
&=(\lam\rda\phi)\lda\mu\\
&=\dphi(\lam)\lda\mu\\
&=\PA(\mu,\dphi(\lam)).
\end{align*}
Hence the conclusion holds.
\end{proof}

We would like to stress that
\begin{equation} \label{uphi_dphi_order}
\mu\leq \dphi\circ\uphi(\mu)\quad\text{and}\quad\lam\leq\uphi\circ\dphi(\lam)
\end{equation}
for all $\mu\in\PA$ and $\lam\in\PdB$ by Remark \ref{PdA_QDist_order}.

The Isbell adjunction presented in Proposition \ref{ub_lb_adjunction} is a special case of Proposition \ref{uphi-dphi-adjunction}, i.e., letting $\bbB=\bbA$ and $\phi=\bbA$. So, the adjunction  $\uphi\dv\dphi:\PA\rhu\PdB$ is a generalization of the Isbell adjunction. As we shall see, all adjunctions between $\PA$ and $\PdB$ are of this form, and will  be called Isbell adjunctions by abuse of language.

Recall that each $\CQ$-functor $F:\bbA\to\PdB$ corresponds to a $\CQ$-distributor $\hF:\bbA\oto\bbB$ given by Equation (\ref{hF_def}), and each $\CQ$-functor $G:\bbB\to\PA$ corresponds to a $\CQ$-distributor $\hG:\bbA\oto\bbB$ given by Equation (\ref{hG_def}).

\begin{prop} \label{uphi_dphi_Yoneda}
Let $\phi:\bbA\oto\bbB$ be a $\CQ$-distributor, then $\ulc\uphi\circ\sY_{\bbA}\urc=\phi=\ulc\dphi\circ\sYd_{\bbB}\urc$.
\end{prop}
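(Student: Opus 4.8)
The plan is to unwind the bracket notation $\ulc-\urc$ and reduce the statement to two dual Yoneda computations. By Equation (\ref{hF_def}) the $\CQ$-distributor $\ulc\uphi\circ\sY_{\bbA}\urc$ is determined pointwise by $\ulc\uphi\circ\sY_{\bbA}\urc(x,y)=\bigl((\uphi\circ\sY_{\bbA})x\bigr)(y)=(\phi\lda\sY_{\bbA}x)(y)$, while by Equation (\ref{hG_def}) the distributor $\ulc\dphi\circ\sYd_{\bbB}\urc$ is determined by $\ulc\dphi\circ\sYd_{\bbB}\urc(x,y)=\bigl((\dphi\circ\sYd_{\bbB})y\bigr)(x)=(\sYd_{\bbB}y\rda\phi)(x)$. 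So it suffices to establish the two pointwise identities $(\phi\lda\sY_{\bbA}x)(y)=\phi(x,y)$ and $(\sYd_{\bbB}y\rda\phi)(x)=\phi(x,y)$ for all $x\in\bbA_0$, $y\in\bbB_0$; equivalently, recalling Equation (\ref{ulphi_def}), that $\uphi\circ\sY_{\bbA}=\ulphi$ and $\dphi\circ\sYd_{\bbB}=\olphi$.

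For the first identity I would evaluate the covariant presheaf $\phi\lda\sY_{\bbA}x\in\PdB$ at $y$ using the pointwise implication formula of Remark \ref{distributor_notion}(7), taking the one-object $\CQ$-category $*_{tx}$ (the codomain of $\sY_{\bbA}x$) as the relevant domain; this yields $(\phi\lda\sY_{\bbA}x)(y)=\phi(-,y)\lda\sY_{\bbA}x$. The right-hand side is now a left implication of two contravariant presheaves on $\bbA$, hence by Equation (\ref{PA_arrow}) it is precisely the hom-arrow $\PA(\sY_{\bbA}x,\phi(-,y))$. The Yoneda Lemma \ref{Yoneda_lemma} then collapses this to $\phi(-,y)(x)=\phi(x,y)$, as required.

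The second identity is the formal dual, handled symmetrically: evaluating the contravariant presheaf $\sYd_{\bbB}y\rda\phi\in\PA$ at $x$ via Remark \ref{distributor_notion}(9) gives $(\sYd_{\bbB}y\rda\phi)(x)=\sYd_{\bbB}y\rda\phi(x,-)$; by Equation (\ref{PdA_arrow}) this is the hom-arrow $\PdB(\phi(x,-),\sYd_{\bbB}y)$, which the Yoneda Lemma \ref{Yoneda_lemma} reduces to $\phi(x,-)(y)=\phi(x,y)$. Assembling the two pieces delivers $\ulc\uphi\circ\sY_{\bbA}\urc=\phi=\ulc\dphi\circ\sYd_{\bbB}\urc$.

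I do not anticipate a genuine obstacle here, since the argument is purely computational once the reductions are set up. The only point demanding care is the bookkeeping in applying the implication formulas of Remark \ref{distributor_notion}: one must slot the one-object $\CQ$-category $*_{tx}$ (respectively $*_{ty}$) into the correct variable, and keep Remark \ref{PdA_QDist_order} in mind regarding the reversed order on $\PdB$. The decisive move is recognizing the resulting left (respectively right) implication as a hom-arrow of $\PA$ (respectively $\PdB$), which is exactly what makes the Yoneda lemma applicable and keeps the computation clean rather than forcing an explicit meet-of-implications calculation.
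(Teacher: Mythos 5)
Your proof is correct and follows essentially the same route as the paper's: unwind the brackets to the pointwise identities $(\phi\lda\sY_{\bbA}x)(y)=\phi(x,y)$ and $(\sYd_{\bbB}y\rda\phi)(x)=\phi(x,y)$, reduce each implication via the pointwise formulas of Remark \ref{distributor_notion}, and identify the result through the hom-arrows of $\PA$ and $\PdB$. The paper's proof is the same chain of equalities written as a single computation (with the middle steps $\phi(-,y)\lda\bbA(-,x)=\phi(x,y)=\bbB(y,-)\rda\phi(x,-)$ justified by exactly the Yoneda identities you invoke explicitly).
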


\begin{proof}
For all $x\in\bbA_0$ and $y\in\bbB_0$,
\begin{align*}
\ulc\uphi\circ\sY_{\bbA}\urc(x,y)&=(\uphi\circ\sY_{\bbA}x)(y)\\
&=(\phi\lda(\sY_{\bbA}x))(y)\\
&=\phi(-,y)\lda\bbA(-,x)\\
&=\phi(x,y)\\
&=\bbB(y,-)\rda\phi(x,-)\\
&=((\sYd_{\bbB}y)\rda\phi)(x)\\
&=(\dphi\circ\sYd_{\bbB}y)(x)\\
&=\ulc\dphi\circ\sYd_{\bbB}\urc(x,y),
\end{align*}
showing that the conclusion holds.
\end{proof}

Given a $\CQ$-distributor $\phi:\bbA\oto\bbB$,  it follows from Example \ref{adjunction_closure_interior} that $\dphi\circ\uphi:\PA\to\PA$ is a $\CQ$-closure operator and $\uphi\circ\dphi:\PdB\to\PdB$ is a $\CQ$-interior operator. For each $y\in\bbB_0$, since
\begin{equation} \label{phiF_fixed}
\olphi y=\phi(-,y)=\dphi\circ\sYd_{\bbB}y= \dphi\circ\uphi\circ\dphi\circ\sYd_{\bbB}y,
\end{equation}
it follows that $\olphi y=\phi(-,y)$ is closed in the $\CQ$-closure space $(\bbA, \dphi\circ\uphi)$. Dually, for all $x\in\bbA_0$,
\begin{equation} \label{Fphi_fixed}
\ulphi x=\phi(x,-)=\uphi\circ\sY_{\bbA}x=\uphi\circ\dphi\circ\uphi\circ\sY_{\bbA}x
\end{equation}
is a fixed point of the $\CQ$-interior operator $\uphi\circ\dphi$. These facts will be used in the proofs of Theorem \ref{F_U_adjunction} and Theorem \ref{complte_category_sup_dense}.

\begin{thm} \label{Isbell_distributor_bijection}
Let $\bbA$ and $\bbB$ be $\CQ$-categories. The correspondence $\phi\mapsto\uphi$ is an isomorphism of posets
$$\CQ\text{-}\Dist(\bbA,\bbB)\cong\CQ\text{-}\CCat^{\co}(\PA,\PdB).$$
\end{thm}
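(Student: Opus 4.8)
The plan is to prove the statement in two stages: first that $\phi\mapsto\uphi$ is a bijection of underlying sets, and then that it reverses the hom-order in the required way. For well-definedness, note that Proposition~\ref{uphi-dphi-adjunction} gives $\uphi\dv\dphi$, so $\uphi$ is a left adjoint $\CQ$-functor; since $\PA$ and $\PdB$ are skeletal (Proposition~\ref{PA_skeletal}) and complete (Example~\ref{PA_complete}), they are objects of $\CQ\text{-}\CCat$ and $\uphi\in\CQ\text{-}\CCat(\PA,\PdB)$. Thus the assignment indeed lands in the stated hom-set.

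For bijectivity I would exploit that $\PA$ is the free cocompletion of $\bbA$. Injectivity is immediate from Proposition~\ref{uphi_dphi_Yoneda}: the round trip $\phi\mapsto\uphi\mapsto\ulc\uphi\circ\sY_{\bbA}\urc$ returns $\phi$. For surjectivity, take any left adjoint $F\in\CQ\text{-}\CCat(\PA,\PdB)$ and set $\phi:=\ulc F\circ\sY_{\bbA}\urc$. Using that $\underline{(\,\cdot\,)}$ and $\ulc\,\cdot\,\urc$ are mutually inverse (Proposition~\ref{distributor_functor_bijection}(1)) together with Proposition~\ref{uphi_dphi_Yoneda}, both $F$ and $\uphi$ restrict along the Yoneda embedding to the same $\CQ$-functor $F\circ\sY_{\bbA}:\bbA\to\PdB$, because $\uphi\circ\sY_{\bbA}=\ulphi=\underline{\ulc F\circ\sY_{\bbA}\urc}=F\circ\sY_{\bbA}$. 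Since $\PdB$ is a skeletal complete (hence cocomplete, by Theorem~\ref{complete_cocomplete_equivalent}) $\CQ$-category, Proposition~\ref{P_free_cocompletion} says that a left adjoint $\CQ$-functor out of $\PA$ is uniquely determined by its restriction along $\sY_{\bbA}$; therefore $F=\uphi$, and the map is surjective.

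It remains to check the order. For fixed $\mu\in\PA$ the operator $-\lda\mu$ is monotone on $\CQ\text{-}\Dist(\bbA,\bbB)$, so $\phi\leq\psi$ implies $\phi\lda\mu\leq\psi\lda\mu$ in $\CQ\text{-}\Dist$; by Remark~\ref{PdA_QDist_order} this is exactly $\uphi(\mu)\geq\upsi(\mu)$ in $\PdB$. Conversely, evaluating at $\mu=\sY_{\bbA}x$ and using $\uphi\circ\sY_{\bbA}=\ulphi$ (Proposition~\ref{uphi_dphi_Yoneda}) recovers $\phi(x,-)\leq\psi(x,-)$ for all $x\in\bbA_0$, i.e. $\phi\leq\psi$. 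Hence
$$\phi\leq\psi\iff\uphi\geq\upsi\ \text{in}\ \CQ\text{-}\Cat(\PA,\PdB)\iff\uphi\leq\upsi\ \text{in}\ \CQ\text{-}\CCat^{\co}(\PA,\PdB),$$
and since all three hom-objects are genuine posets ($\PA$, $\PdB$ skeletal; $\CQ\text{-}\Dist(\bbA,\bbB)$ pointwise ordered) the bijection is an isomorphism of posets.

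The main obstacle is the surjectivity step: one must recognize that an arbitrary left adjoint $F:\PA\to\PdB$ is reconstructible from its values on representables. This is precisely the universal property of the free cocompletion, and the technical care lies in verifying that the candidate distributor $\ulc F\circ\sY_{\bbA}\urc$ makes $F$ and $\uphi$ agree after restriction along $\sY_{\bbA}$, so that the uniqueness clause of Proposition~\ref{P_free_cocompletion} forces $F=\uphi$. Both the injectivity and the order computation are then routine consequences of the Yoneda identity in Proposition~\ref{uphi_dphi_Yoneda}.
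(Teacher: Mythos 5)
Your proof is correct, and it follows the paper's overall strategy (the inverse map $F\mapsto\ulc F\circ\sY_{\bbA}\urc$, the identity $\phi=\ulc\uphi\circ\sY_{\bbA}\urc$ from Proposition \ref{uphi_dphi_Yoneda}, and the same two-sided order argument), but it handles the crucial surjectivity step by a genuinely different device. The paper proves directly that $F=(\ulc F\circ\sY_{\bbA}\urc)_{\ua}$ by an explicit computation: writing $\mu=\bigvee_{x}\mu(x)\circ\sY_{\bbA}x$ and using that a left adjoint $F$ preserves tensors and conical colimits (Example \ref{PA_tensor}, Proposition \ref{F_la_ra_condition}), one gets $F(\mu)=\ulc F\circ\sY_{\bbA}\urc\lda\mu$. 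You instead observe that $F$ and $\uphi$ (with $\phi=\ulc F\circ\sY_{\bbA}\urc$) are both left adjoints out of $\PA$ agreeing after restriction along $\sY_{\bbA}$, and then invoke the uniqueness clause of Proposition \ref{P_free_cocompletion}. This is legitimate: since $\PdB$ is skeletal, the ``unique up to isomorphism'' factorization through the free cocompletion becomes genuine uniqueness, so $F=\uphi$ follows. What your route buys is economy and a conceptual reading of the theorem --- it is exactly the universal property of $\PA$ that forces every left adjoint $\PA\to\PdB$ to be of Isbell type --- and it avoids duplicating a calculation that the paper has already carried out once inside the proof of Proposition \ref{P_free_cocompletion}. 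What the paper's route buys is the explicit closed formula $F(\mu)=\ulc F\circ\sY_{\bbA}\urc\lda\mu$, which exhibits $F$ as $\psi_{\ua}$ concretely rather than abstractly. A minor further point in your favor: in the order-comparison you justify the converse direction by evaluating at the representables $\sY_{\bbA}x$, a step the paper compresses into an unexplained ``$\iff$''.
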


\begin{proof}
Let $F:\PA\to\PdB$ be a left adjoint $\CQ$-functor. We show that the correspondence $F\mapsto\ulc F\circ\sY_{\bbA}\urc$ is an inverse of the correspondence $\phi\mapsto\uphi$, and thus they are both isomorphisms of posets between $\CQ\text{-}\Dist(\bbA,\bbB)$ and $\CQ\text{-}\CCat^{\co}(\PA,\PdB)$.

Firstly, we show that both of the correspondences are order-preserving. Indeed,
\begin{align*}
&\phi\leq\psi\ \text{in}\ \CQ\text{-}\Dist(\bbA,\bbB)\\
\iff&\forall\mu\in\PA,\uphi(\mu)=\phi\lda\mu\leq\psi\lda\mu=\upsi(\mu)\ \text{in}\ \CQ\text{-}\Dist\\
\iff&\forall\mu\in\PA,\uphi(\mu)\geq\upsi(\mu)\ \text{in}\ (\PdB)_0\\
\iff&\uphi\leq\upsi\ \text{in}\ \CQ\text{-}\CCat^{\co}(\PA,\PdB)
\end{align*}
and
\begin{align*}
&F\leq G\ \text{in}\ \CQ\text{-}\CCat^{\co}(\PA,\PdB)\\
\iff&\forall\mu\in\PA,F(\mu)\geq G(\mu)\ \text{in}\ (\PdB)_0\\
\iff&\forall\mu\in\PA,F(\mu)\leq G(\mu)\ \text{in}\ \CQ\text{-}\Dist(\bbA,\bbB)\\
\Lra{}&\forall x\in\bbA_0,F\circ\sY_{\bbA}x\leq G\circ\sY_{\bbA}x\ \text{in}\ \CQ\text{-}\Dist(\bbA,\bbB)\\
\iff&\forall x\in\bbA_0,\ulc F\circ\sY_{\bbA}\urc(x,-)\leq\ulc G\circ\sY_{\bbA}\urc(x,-)\ \text{in}\ \CQ\text{-}\Dist(\bbA,\bbB)\\
\iff&\ulc F\circ\sY_{\bbA}\urc\leq\ulc G\circ\sY_{\bbA}\urc\ \text{in}\ \CQ\text{-}\Dist(\bbA,\bbB).
\end{align*}

Secondly,  $F=(\ulc F\circ\sY_{\bbA}\urc)_{\ua}$. For all $\mu\in\PA$, since $F$ is a left adjoint in $\CQ$-$\Cat$, by Example \ref{PA_tensor} and Proposition \ref{F_la_ra_condition} we have
\begin{align*}
F(\mu)&=F(\mu\circ\bbA)\\
&=F\Big(\bv_{x\in\bbA_0}\mu(x)\circ\sY_{\bbA}x\Big)\\
&=\bw_{x\in\bbA_0}(F\circ\sY_{\bbA}x)\lda\mu(x)\\
&=\ulc F\circ\sY_{\bbA}\urc\lda\mu\\
&=(\ulc F\circ\sY_{\bbA}\urc)_{\ua}(\mu).
\end{align*}

Finally, $\phi=\ulc\uphi\circ\sY_{\bbA}\urc$. This is obtained in Proposition \ref{uphi_dphi_Yoneda}.
\end{proof}

As a summary of Proposition \ref{distributor_functor_bijection} and  Theorem \ref{Isbell_distributor_bijection}, we have the following isomorphisms of posets:
\begin{align}
\CQ\text{-}\Dist(\bbA,\bbB)&\cong\CQ\text{-}\Cat^{\co}(\bbA,\PdB)\nonumber\\
&\cong\CQ\text{-}\Cat(\bbB,\PA)\label{Isbell_isomorphism} \\
&\cong\CQ\text{-}\CCat^{\co}(\PA,\PdB).\nonumber
\end{align}

\section{Functoriality of the Isbell adjunction}

In this section, we show that the construction of Isbell adjunctions is functorial from the category $\CQ$-$\Info$ to $\CQ$-$\Cls$, and thus to $\CQ$-$\CCat$.

\begin{prop} \label{F_G_info_F_uphi_dphi_continuous}
Let $(F,G):\phi\to\psi$ be an infomorphism between $\CQ$-distributors $\phi:\bbA\oto\bbB$ and $\psi:\bbA'\oto\bbB'$. Then $F:(\bbA,\uphi\circ\dphi)\to(\bbA',\upsi\circ\dpsi)$ is a continuous $\CQ$-functor.
\end{prop}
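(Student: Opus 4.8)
The plan is to apply Proposition~\ref{F_continuous_condition}, which reduces continuity to a statement about inverse images of closed presheaves. (Here the closure operator attached to $\phi$ is the monad $\dphi\circ\uphi$ on $\PA$; the symbol $\uphi\circ\dphi$ in the statement, which acts on $\PdB$, should read $\dphi\circ\uphi$, and likewise $\upsi\circ\dpsi$ should read $\dpsi\circ\upsi$.) Thus it suffices to prove that the inverse image $\CQ$-functor $F^{\la}:\CP\bbA'\to\PA$ sends every closed presheaf to a closed presheaf, i.e. that $F^{\la}(\lam)$ is a fixed point of $\dphi\circ\uphi$ whenever $\lam$ is a fixed point of $\dpsi\circ\upsi$.

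First I would identify the closed presheaves. Since $\uphi\dv\dphi$ (Proposition~\ref{uphi-dphi-adjunction}), Proposition~\ref{FGF_F} gives $\dphi\circ\uphi\circ\dphi\cong\dphi$, and this is an equality because $\PA$ is skeletal (Proposition~\ref{PA_skeletal}); hence the fixed points of $\dphi\circ\uphi$ are exactly the presheaves in the image of $\dphi$, and the same holds for $\dpsi\circ\upsi$ and $\dpsi$ on $\CP\bbA'$. So my goal becomes: for $\lam$ in the image of $\dpsi$, show $F^{\la}(\lam)$ lies in the image of $\dphi$.

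The core is then a short computation in the quantaloid $\CQ$-$\Dist$. Write a closed $\lam$ as $\lam=\dpsi(\nu)=\nu\rda\psi$ with $\nu=\upsi(\lam)\in\CPd\bbB'$ (Equation~\ref{uphi_def}). Using $F^{\la}(\lam)=\lam\circ F_{\nat}$ (Definition~\ref{direct_inverse_image_def}) and the adjunction $F_{\nat}\dv F^{\nat}$ in $\CQ$-$\Dist$, Proposition~\ref{adjoint_arrow_calculation}(3) yields $(\nu\rda\psi)\circ F_{\nat}=\nu\rda(\psi\circ F_{\nat})$. The infomorphism identity $\psi\circ F_{\nat}=G^{\nat}\circ\phi$ (Definition~\ref{infomorphism_def}) replaces this by $\nu\rda(G^{\nat}\circ\phi)$; then Proposition~\ref{adjoint_arrow_calculation}(1) applied to $G_{\nat}\dv G^{\nat}$ rewrites $G^{\nat}\circ\phi=G_{\nat}\rda\phi$, and Proposition~\ref{arrow_calculation}(5) collapses the nested implication to $(G_{\nat}\circ\nu)\rda\phi$. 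Since $G_{\nat}\circ\nu=G^{\nra}(\nu)\in\PdB$ (Definition~\ref{direct_inverse_image_def}), this is exactly $F^{\la}(\lam)=\dphi\bigl(G^{\nra}(\nu)\bigr)$, which lies in the image of $\dphi$. By Proposition~\ref{F_continuous_condition}, $F$ is continuous.

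I expect the only real difficulty to be bookkeeping rather than ideas: one must track the (co)domains of $\mu$, $\nu$, $\phi$, $\psi$, $F_{\nat}$, $G^{\nat}$, and the rest, so that every $\lda$, $\rda$ and $\circ$ is well formed, and keep in mind that the underlying order on covariant presheaves is the reverse of the local order in $\CQ$-$\Dist$ (Remark~\ref{PdA_QDist_order}) when translating between ``closed'' and ``fixed point''. The conceptual pivot is recognizing that the defining equation of an infomorphism is precisely what lets one trade $\psi$ for $\phi$ inside the implication; after that substitution the adjoint-arrow calculus does all the remaining work.
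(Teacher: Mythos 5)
Your proof is correct, and it takes a genuinely different route from the paper's. The paper works with \emph{direct} images: it proves the lax inequality $F^{\ra}\circ\dphi\circ\uphi\leq\dpsi\circ\upsi\circ F^{\ra}$ by factoring the rectangle through $G^{\nla}$, namely by showing (a) that $G^{\nla}\circ\uphi=\upsi\circ F^{\ra}$ holds precisely when $(F,G)$ is an infomorphism, and (b) that $G^{\nat}\circ\phi\leq\psi\circ F_{\nat}$ forces $F^{\ra}\circ\dphi\leq\dpsi\circ G^{\nla}$; chaining (b) after (a) gives continuity by definition. You instead invoke Proposition \ref{F_continuous_condition} to reduce continuity to ``inverse images of closed presheaves are closed,'' identify the closed presheaves of $(\bbA,\dphi\circ\uphi)$ with the image of $\dphi$ (via Proposition \ref{FGF_F} plus skeletality of $\PA$), and then establish the \emph{exact} identity $F^{\la}\circ\dpsi=\dphi\circ G^{\nra}$ by the implication calculus: $(\nu\rda\psi)\circ F_{\nat}=\nu\rda(\psi\circ F_{\nat})=\nu\rda(G^{\nat}\circ\phi)=\nu\rda(G_{\nat}\rda\phi)=(G_{\nat}\circ\nu)\rda\phi$. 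All steps type-check and none is circular, since Proposition \ref{F_continuous_condition} is proved earlier and independently. What each approach buys: the paper's two-square decomposition records equivalences (its ``if and only if'' in (a) and (b)) that are reused implicitly elsewhere, and it never needs the description of the fixed points; your argument is shorter, exhibits an explicit witness ($G^{\nra}(\nu)$) for the closedness of $F^{\la}(\lam)$, and the identity $F^{\la}\circ\dpsi=\dphi\circ G^{\nra}$ is a clean mate of the paper's square (a) that could be stated as a lemma in its own right. You are also right that the statement's $\uphi\circ\dphi$ and $\upsi\circ\dpsi$ are typos for $\dphi\circ\uphi$ and $\dpsi\circ\upsi$: the former act on $\PdB$ and $\CPd\bbB'$, not on $\PA$ and $\CP\bbA'$, and both the paper's proof and its subsequent definition of the functor $\CU$ confirm the corrected reading.
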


\begin{proof}
Consider the following diagram:
$$\bfig
\square|alrb|[\PA`\PdB`\PA'`\PdB';\uphi`F^{\ra}`G^{\nla}`\upsi]
\square(500,0)/>``>`>/[\PdB`\PA`\PdB'`\PA';\dphi``F^{\ra}`\dpsi]
\efig$$
We must prove $F^{\ra}\circ\dphi\circ\uphi\leq\dpsi\circ\upsi\circ F^{\ra}$. To this end, it suffices to check that
\begin{itemize}
\item[\rm (a)] the left square commutes if and only if $(F,G):\phi\to\psi$ is an infomorphism; and
\item[\rm (b)] $F^{\ra}\circ\dphi\leq\dpsi\circ G^{\nla}$ if and only if $G^{\nat}\circ\phi\leq\psi\circ F_{\nat}$.
\end{itemize}

For (a), suppose $G^{\nla}\circ\uphi=\upsi\circ F^{\ra}$, then for all $x\in\bbA_0$,
\begin{align*}
G^{\nat}\circ\phi(x,-)&=G^{\nla}(\phi(x,-))&(\text{Definition \ref{direct_inverse_image_def}})\\
&=G^{\nla}(\uphi\circ\sY_{\bbA}x)&\text{(Proposition \ref{uphi_dphi_Yoneda})}\\
&=\upsi(F^{\ra}\circ\sY_{\bbA}x)\\
&=\upsi(\sY_{\bbA}x\circ F^{\nat})&(\text{Definition \ref{direct_inverse_image_def}})\\
&=\psi\lda(\sY_{\bbA}x\circ F^{\nat})&(\text{Equation (\ref{uphi_def})})\\
&=(\psi\circ F_{\nat})\lda\bbA(-,x)&(\text{Proposition \ref{adjoint_arrow_calculation}(2)})\\
&=\psi\circ F_{\nat}(x,-).
\end{align*}

Conversely, if $(F,G):\phi\to\psi$ is an infomorphism, then for all $\mu\in\PA$,
\begin{align*}
G^{\nla}\circ\uphi(\mu)&=G^{\nat}\circ(\phi\lda\mu)&(\text{Equation (\ref{uphi_def})})\\
&=(G^{\nat}\circ\phi)\lda\mu&(\text{Proposition \ref{adjoint_arrow_calculation}(3)})\\
&=(\psi\circ F_{\nat})\lda\mu&(\text{Definition \ref{infomorphism_def}})\\
&=\psi\lda(\mu\circ F^{\nat})&(\text{Proposition \ref{adjoint_arrow_calculation}(2)})\\
&=\upsi\circ F^{\ra}(\mu).
\end{align*}

For (b), suppose $F^{\ra}\circ\dphi\leq\dpsi\circ G^{\nla}$, then for all $y'\in\bbB'_0$,
\begin{align*}
G^{\nat}(-,y')\circ\phi&=G_{\nat}(y',-)\rda\phi&(\text{Proposition \ref{adjoint_arrow_calculation}(1)})\\
&=\dphi(G_{\nat}(y',-))&(\text{Equation (\ref{uphi_def})})\\
&\leq F^{\la}\circ F^{\ra}\circ\dphi(G_{\nat}(y',-))&(F^{\ra}\dv F^{\la}:\PA\rhu\PA')\\
&\leq F^{\la}\circ\dpsi\circ G^{\nla}(G_{\nat}(y',-))\\
&=F^{\la}\circ\dpsi\circ G^{\nla}\circ G^{\nra}\circ\sYd_{\bbB'}y'&(\text{Definition \ref{direct_inverse_image_def}})\\
&\leq F^{\la}\circ\dpsi\circ\sYd_{\bbB'}y'&\text{(Formula (\ref{FLa_FRa_adjuntion}) and (\ref{dphi_contravariant}))}\\
&=F^{\la}(\psi(-,y'))&\text{(Proposition \ref{uphi_dphi_Yoneda})}\\
&=\psi(-,y')\circ F_{\nat}.&(\text{Definition \ref{direct_inverse_image_def}})
\end{align*}

Conversely, if $G^{\nat}\circ\phi\leq\psi\circ F_{\nat}$, then for all $\lam\in\PdB$,
\begin{align*}
F^{\ra}\circ\dphi(\lam)&=(\lam\rda\phi)\circ F^{\nat}&(\text{Equation (\ref{uphi_def})})\\
&\leq((G^{\nat}\circ\lam)\rda(G^{\nat}\circ\phi))\circ F^{\nat}\\
&\leq((G^{\nat}\circ\lam)\rda(\psi\circ F_{\nat}))\circ F^{\nat}\\
&\leq(G^{\nat}\circ\lam)\rda(\psi\circ F_{\nat}\circ F^{\nat})\\
&\leq(G^{\nat}\circ\lam)\rda\psi&(F_{\nat}\dv F^{\nat}:\bbA\rhu\bbB)\\
&=\dpsi\circ G^{\nla}(\lam).&(\text{Equation (\ref{uphi_def})})
\end{align*}
This completes the proof.
\end{proof}

By virtue of Proposition \ref{F_G_info_F_uphi_dphi_continuous} we obtain a functor $\CU:\CQ\text{-}\Info\to\CQ\text{-}\Cls$ that sends an infomorphism
$$(F,G):(\phi:\bbA\oto\bbB)\to(\psi:\bbA'\oto\bbB')$$
to a continuous $\CQ$-functor
$$F:(\bbA,\dphi\circ\uphi)\to(\bbA',\dpsi\circ\upsi).$$

Given a $\CQ$-closure space $(\bbA,C)$, define a $\CQ$-distributor $\zeta_C:\bbA\oto C(\PA)$ by
$$\zeta_C(x,\mu)=\mu(x)$$
for all $x\in\bbA_0$ and $\mu\in C(\PA)$. It is clear that $\zeta_C$ is obtained by restricting the domain and the codomain of the $\CQ$-distributor
\begin{equation} \label{lam_circ_mu}
\PdA\oto\PA, \quad (\lam,\mu)\mapsto \mu\circ\lam.
\end{equation}

Given a continuous $\CQ$-functor $F:(\bbA,C)\to(\bbB,D)$ between $\CQ$-closure spaces, consider the $\CQ$-functor $F^{\triangleleft}:D(\PB)\to C(\PA)$ that sends each closed contravariant presheaf $\lam$ to $F^{\triangleleft}(\lam)=F^{\la}(\lam)$. Then similar to Proposition \ref{Y_functor_Cat_Info} one can check that
$$(F,F^{\triangleleft}):(\zeta_C:\bbA\oto C(\PA))\to(\zeta_D:\bbB\oto D(\PB))$$
is an infomorphism. Thus, we obtain a functor $\CF:\CQ\text{-}\Cls\to\CQ\text{-}\Info$.

\begin{thm} \label{F_U_adjunction}
$\CF:\CQ\text{-}\Cls\to\CQ\text{-}\Info$ is a left adjoint and right inverse of $\CU:\CQ\text{-}\Info\to\CQ\text{-}\Cls$.
\end{thm}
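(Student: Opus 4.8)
The plan is to establish the two claims in order: first that $\CF$ is a right inverse of $\CU$ (in fact $\CU\circ\CF=\mathbf{id}_{\CQ\text{-}\Cls}$ on the nose), and then to use this to exhibit $\CF$ as a left adjoint of $\CU$ with \emph{identity} unit, so that the adjunction reduces to a single universal property.

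For the right-inverse claim, recall that $\CU$ sends the distributor $\zeta_C:\bbA\oto C(\PA)$ to the Isbell monad $(\zeta_C)^{\da}\circ(\zeta_C)_{\ua}$ on $\PA$ (a $\CQ$-closure operator by Proposition \ref{uphi-dphi-adjunction} and Example \ref{adjunction_closure_interior}); so the crux is the identity $(\zeta_C)^{\da}\circ(\zeta_C)_{\ua}=C$. I would compute directly from Equation (\ref{uphi_def}) and Proposition \ref{Dist_quantaloid}(4): for $\mu\in\PA$ and $\nu\in C(\PA)$ one gets $(\zeta_C)_{\ua}(\mu)(\nu)=\bw_{x}\nu(x)\lda\mu(x)=\mu\lda\nu=\PA(\mu,\nu)$ by Equation (\ref{PA_arrow}), and then $(\zeta_C)^{\da}\circ(\zeta_C)_{\ua}(\mu)(x)=\bw_{\nu\in C(\PA)}\PA(\mu,\nu)\rda\nu(x)$. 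To pin this meet to $C\mu(x)$, I would use that $C$ presents $C(\PA)$ as a $\CQ$-closure system (Proposition \ref{closure_system_operator}), whence $C$ corestricts to a left adjoint of the inclusion and $\PA(\mu,\nu)=\PA(C\mu,\nu)$ for every closed $\nu$; substituting and then pinching between the test value $\nu=C\mu$ (upper bound, since $\PA(C\mu,C\mu)\geq 1$) and the general inequality $\PA(C\mu,\nu)\circ C\mu(x)\leq\nu(x)$ coming from $(\nu\lda C\mu)\circ C\mu\leq\nu$ (Proposition \ref{arrow_calculation}(7)) yields exactly $C\mu(x)$. On morphisms $\CU\circ\CF(F)=F$ by inspection of the two functors, so $\CU\circ\CF=\mathbf{id}$.

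For the adjunction, since $\CU\circ\CF=\mathbf{id}_{\CQ\text{-}\Cls}$ I would take the unit $\eta_{(\bbA,C)}=1_{\bbA}\colon(\bbA,C)\to\CU\circ\CF(\bbA,C)=(\bbA,C)$ and verify its universal property. Given a distributor $\psi:\bbA'\oto\bbB'$ and a continuous $\CQ$-functor $G:(\bbA,C)\to\CU(\psi)=(\bbA',\dpsi\circ\upsi)$, the infomorphism condition (Definition \ref{infomorphism_def}) forces any lift $(P,Q):\zeta_C\to\psi$ to satisfy $P=G$ and $(Qy')(x)=\psi(Gx,y')=G^{\la}(\olpsi y')(x)$ by Proposition \ref{F_la_lam_Fx}, so $Q=G^{\la}\circ\olpsi$ is uniquely determined. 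The one point needing care is well-definedness: $\olpsi y'=\psi(-,y')$ is closed in $(\bbA',\dpsi\circ\upsi)$ by Equation (\ref{phiF_fixed}), so continuity of $G$ together with Proposition \ref{F_continuous_condition} guarantees $G^{\la}(\olpsi y')\in C(\PA)$, making $Q:\bbB'\to C(\PA)$ a legitimate $\CQ$-functor, and reading the forced identity backwards shows $(G,Q)$ is genuinely an infomorphism. Conversely, any infomorphism $(P,Q):\zeta_C\to\psi$ yields, via functoriality of $\CU$ and the right-inverse computation $\CU(\zeta_C)=(\bbA,C)$, a continuous $\CU(P,Q)=P:(\bbA,C)\to\CU(\psi)$; hence $(P,Q)\leftrightarrow G$ is a bijection $\CQ\text{-}\Info(\CF(\bbA,C),\psi)\cong\CQ\text{-}\Cls((\bbA,C),\CU(\psi))$, and naturality in both arguments is a routine check built on Propositions \ref{F_G_info_F_uphi_dphi_continuous} and \ref{F_la_lam_Fx}.

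The main obstacle is the key identity $(\zeta_C)^{\da}\circ(\zeta_C)_{\ua}=C$ of the first part: the contravariance of $\uphi$ and $\dphi$ (Equations (\ref{uphi_contravariant})--(\ref{dphi_contravariant})) makes the bookkeeping of the implications $\lda,\rda$ error-prone, and turning the resulting meet $\bw_{\nu\in C(\PA)}\PA(\mu,\nu)\rda\nu(x)$ into $C\mu(x)$ genuinely requires the reflection property of the closure system rather than formal manipulation alone. Once that identity is in hand, the left-adjoint part is essentially forced, the only remaining subtlety being that the second infomorphism component $Q$ lands in the closed presheaves precisely because $G$ is continuous.
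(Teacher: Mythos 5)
Your proposal is correct and follows essentially the same route as the paper: Step 1 establishes $(\zeta_C)^{\da}\circ(\zeta_C)_{\ua}=C$ by the same pinching argument (the reflection property $\PA(\mu,\nu)=\PA(C\mu,\nu)$ for closed $\nu$ giving the lower bound via Proposition \ref{arrow_calculation}(7), and the test object $\nu=C\mu$ giving the upper bound), and Step 2 exhibits the adjunction with identity unit, with the second component of the lifted infomorphism forced to be $G^{\la}\circ\olpsi$ and well-defined exactly by Equation (\ref{phiF_fixed}) together with Proposition \ref{F_continuous_condition}. The only blemish is a transposition slip in writing $(\zeta_C)_{\ua}(\mu)(\nu)=\mu\lda\nu$, which should read $\nu\lda\mu$; your identification of it with $\PA(\mu,\nu)$ is nonetheless correct, so nothing in the argument is affected.
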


\begin{proof}
{\bf Step 1.} $\CF$ is a right inverse of $\CU$.

For each $\CQ$-closure space $(\bbA,C)$, by the definition of the functor $\CF$, $\CF(\bbA,C)$ is the $\CQ$-distributor $\zeta_C:\bbA\oto C(\PA)$, where $\zeta_C(x,\mu)=\mu(x)$ for all $x\in\bbA_0$ and $\mu\in C(\PA)$. In order to prove $\CU\circ\CF(\bbA,C)=(\bbA,C)$, we show that $C=\zeta_C^\da\circ(\zeta_C)_\ua$.

For all $\mu\in\PA$ and $\lam\in C(\PA)$, since $C$ is a $\CQ$-functor,
$$\lam\lda\mu=\PA(\mu,\lam)\leq\PA(C(\mu),\lam)=\lam\lda C(\mu),$$
and consequently $C(\mu)\leq(\lam\lda\mu)\rda\lam$. Since $C$ is a $\CQ$-closure operator, we have
$$(C(\mu)\lda\mu)\rda C(\mu)\leq 1_{t\mu}\rda C(\mu)=C(\mu),$$
hence
\begin{align*}
C(\mu)&=\bw_{\lam\in C(\PA)}(\lam\lda\mu)\rda\lam\\
&=\bw_{\lam\in C(\PA)}(\zeta_C(-,\lam)\lda\mu)\rda\zeta_C(-,\lam)\\
&=\bw_{\lam\in C(\PA)}(\zeta_C)_\ua(\mu)(\lam)\rda\zeta_C(-,\lam)\\
&=\zeta_C^\da\circ(\zeta_C)_\ua(\mu),
\end{align*}
as required.

{\bf Step 2.} $\CF$ is a left adjoint of $\CU$.

For each $\CQ$-closure space $(\bbA,C)$, ${\rm id}_{(\bbA,C)}:(\bbA,C)\to\CU\circ\CF(\bbA,C)$ is clearly a continuous $\CQ$-functor and $\{{\rm id}_{(\bbA,C)}\}$ is a natural transformation from the identity functor on $\CQ$-$\Cls$ to $\CU\circ\CF$. Thus, it remains to show that for each $\CQ$-distributor $\psi:\bbA'\oto\bbB'$ and each continuous $\CQ$-functor $H:(\bbA,C)\to(\bbA',\dpsi\circ\upsi)$, there is a unique infomorphism
$$(F,G):\CF(\bbA,C)\to(\psi:\bbA'\oto\bbB')$$
such that the diagram
$$\bfig
\qtriangle<1000,500>[(\bbA,C)`\CU\circ\CF(\bbA,C)`(\bbA', \dpsi\circ\upsi);{\rm id}_{(\bbA,C)}`H`\CU(F,G)]
\efig$$
is commutative.

By definition, $\CF(\bbA,C)=\zeta_C:\bbA\oto C(\PA)$ and $\CU(F,G)=F$, where $\zeta_C(x,\mu)=\mu(x)$. Thus, we only need to show that there is a unique $\CQ$-functor $G:\bbB'\to C(\PA)$ such that
$$(H,G):(\zeta_C:\bbA\oto C(\PA))\to(\psi:\bbA'\oto\bbB')$$
is an infomorphism.

Let $G=H^{\triangleleft}\circ\olpsi:\bbB'\to C(\PA)$. That $G$ is well-defined follows from the fact that $\olpsi y'\in\dpsi\circ\upsi(\PA')$ for all $y'\in\bbB'_0$ by Equation (\ref{phiF_fixed}) and  that $H:(\bbA,C)\to(\bbA',\dpsi\circ\upsi)$ is  continuous.   Now we check that
$$(H,G):(\zeta_C:\bbA\oto C(\PA))\to(\psi:\bbA'\oto\bbB')$$
is an infomorphism. This is easy since
$$\zeta_C(x,Gy')=(Gy')(x)=H^{\triangleleft}\circ\olpsi(y')(x) =\olpsi(y')(Hx)=\psi(Hx,y')$$
for all $x\in\bbA_0$ and $y'\in\bbB'_0$. This proves the existence of $G$.

To see the uniqueness of $G$, suppose that $G':\bbB'\to C(\PA)$ is another $\CQ$-functor such that
$$(H,G'):(\zeta_C:\bbA\oto C(\PA))\to(\psi:\bbA'\oto\bbB')$$
is an infomorphism. Then for all $x\in\bbA_0$ and $y'\in\bbB'_0$,
\begin{align*}
(G'y')(x)&=\zeta_C(x,G'y')\\
&=\psi(Hx,y')\\
&=\olpsi(y')(Hx)\\
&=H^{\triangleleft}\circ\olpsi(y')(x)\\
&=(Gy')(x),
\end{align*}
hence $G'=G$.
\end{proof}

\begin{cor} \label{Cls_coreflective_Info}
The category $\CQ$-$\Cls$ is a coreflective subcategory of $\CQ$-$\Info$.
\end{cor}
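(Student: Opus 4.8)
The plan is to read the corollary straight off Theorem \ref{F_U_adjunction} by unwinding the definition of a coreflective subcategory. Recall that a full subcategory is coreflective precisely when its inclusion functor admits a right adjoint; so the task reduces to exhibiting $\CF:\CQ\text{-}\Cls\to\CQ\text{-}\Info$ as a fully faithful embedding whose right adjoint is $\CU$. Everything needed is already packaged in the previous theorem, so the work is organizational rather than computational.

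First I would record what Theorem \ref{F_U_adjunction} supplies: the adjunction $\CF\dv\CU$ together with the identity $\CU\circ\CF={\bf id}_{\CQ\text{-}\Cls}$. The crucial observation is that the unit of this adjunction, constructed in Step 2 of the proof of Theorem \ref{F_U_adjunction}, is the family $\{{\rm id}_{(\bbA,C)}\}$ of identity continuous $\CQ$-functors, hence is not merely an isomorphism but literally the identity transformation. In particular $\CF$ is injective on objects, since $\CF(\bbA,C)=\CF(\bbB,D)$ forces $(\bbA,C)=\CU\circ\CF(\bbA,C)=\CU\circ\CF(\bbB,D)=(\bbB,D)$.

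Next I would invoke the standard fact that a left adjoint is fully faithful exactly when the unit of the adjunction is a natural isomorphism. Since the unit here is the identity, $\CF$ is fully faithful, and therefore identifies $\CQ$-$\Cls$ with a genuine full subcategory of $\CQ$-$\Info$ --- namely the image of $\CF$, consisting of the $\CQ$-distributors of the form $\zeta_C:\bbA\oto C(\PA)$. Under this identification the inclusion of the subcategory is $\CF$ itself, and its right adjoint is $\CU$, again by Theorem \ref{F_U_adjunction}.

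Finally, having a right adjoint to the inclusion is by definition what it means for the subcategory to be coreflective, so the corollary follows. There is essentially no obstacle beyond bookkeeping from the earlier theorem; the only point meriting a moment's care is confirming that the unit is genuinely invertible, and this is immediate because Step 2 of the proof of Theorem \ref{F_U_adjunction} exhibited it as the identity rather than merely as an isomorphism.
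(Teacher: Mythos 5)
Your proof is correct and takes essentially the same route as the paper, which states this corollary as an immediate consequence of Theorem \ref{F_U_adjunction} without further argument. Your write-up simply supplies the bookkeeping the paper leaves implicit: the identity unit makes $\CF$ fully faithful, the right-inverse property $\CU\circ\CF={\bf id}_{\CQ\text{-}\Cls}$ makes it injective on objects, and $\CU$ is then the coreflector for the resulting full subcategory.
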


The composition of
$$\CU:\CQ\text{-}\Info\to\CQ\text{-}\Cls$$
and
$$\CT:\CQ\text{-}\Cls\to\CQ\text{-}\CCat$$
gives a functor
\begin{equation} \label{M_def}
\CM=\CT\circ\CU:\CQ\text{-}\Info\to\CQ\text{-}\CCat
\end{equation}
$$\bfig
\Vtriangle/->`@{->}@<-2.5pt>`@{<-}@<2.5pt>/[\CQ\text{-}\Info`\CQ\text{-}\CCat`\CQ\text{-}\Cls;\CM`\CU`\CT]
\morphism(500,0)/@{->}@<-2.5pt>/<-500,500>[\CQ\text{-}\Cls`\CQ\text{-}\Info;\CF]
\morphism(500,0)/@{<-}@<2.5pt>/<500,500>[\CQ\text{-}\Cls`\CQ\text{-}\CCat;\CD]
\efig$$
that sends a $\CQ$-distributor $\phi:\bbA\oto\bbB$ to a complete $\CQ$-category $\dphi\circ\uphi(\PA)$. Conversely, since $\CF$ is a right inverse of $\CU$ (Theorem \ref{F_U_adjunction}) and $\CT$ is a left inverse of $\CD$ (up to isomorphism, Theorem \ref{T_D_adjunction}),  we have the following

\begin{thm} \label{complete_category_fca}
Every skeletal complete $\CQ$-category is isomorphic to $\CM(\phi)$ for some $\CQ$-distributor $\phi$.
\end{thm}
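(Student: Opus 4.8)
The plan is to produce the witnessing $\CQ$-distributor $\phi$ by running $\bbX$ through the two constructions whose inverse properties were just established, and then to collapse the composite $\CM=\CT\circ\CU$ using those properties. Concretely, for a skeletal complete $\CQ$-category $\bbX$ I would set
$$\phi:=\CF(\CD(\bbX)).$$
Here $\CD(\bbX)=(\bbX,C_{\bbX})$ is the $\CQ$-closure space with $C_{\bbX}=\sY_{\bbX}\circ\sup_{\bbX}:\PX\to\PX$, and $\CF$ turns this into the explicit $\CQ$-distributor $\zeta_{C_{\bbX}}:\bbX\oto C_{\bbX}(\PX)$ given by $\zeta_{C_{\bbX}}(x,\mu)=\mu(x)$. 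Thus $\phi$ is entirely concrete.

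The verification is then a short diagram chase through the chain
$$\CQ\text{-}\CCat\xrightarrow{\ \CD\ }\CQ\text{-}\Cls\xrightarrow{\ \CF\ }\CQ\text{-}\Info\xrightarrow{\ \CU\ }\CQ\text{-}\Cls\xrightarrow{\ \CT\ }\CQ\text{-}\CCat.$$
By definition $\CM(\phi)=\CT(\CU(\phi))=\CT\bigl(\CU(\CF(\CD(\bbX)))\bigr)$. Theorem \ref{F_U_adjunction} states that $\CF$ is a right inverse of $\CU$, so $\CU\circ\CF$ is the identity on $\CQ$-$\Cls$; applied to $\CD(\bbX)$ this gives $\CU(\CF(\CD(\bbX)))=\CD(\bbX)=(\bbX,C_{\bbX})$. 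Hence $\CM(\phi)=\CT(\CD(\bbX))=C_{\bbX}(\PX)$, and Theorem \ref{T_D_adjunction} (that $\CT$ is a left inverse of $\CD$ up to isomorphism) yields $C_{\bbX}(\PX)\cong\bbX$, which is the desired conclusion.

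Since all the substantive work is packaged in Theorems \ref{F_U_adjunction} and \ref{T_D_adjunction}, no genuinely hard estimate remains; the points needing care are purely bookkeeping. First, I must use the identities on objects exactly as recorded: that $\CU$ sends $\zeta_{C_{\bbX}}$ to $(\bbX,\zeta_{C_{\bbX}}^{\da}\circ(\zeta_{C_{\bbX}})_{\ua})=(\bbX,C_{\bbX})$, which is precisely Step 1 of Theorem \ref{F_U_adjunction}. Second, because $\CT\circ\CD\cong{\rm id}$ holds only up to isomorphism, the statement is correctly phrased as an isomorphism rather than an equality; this is exactly what the theorem claims. I would also note the more economical witness $\phi=\bbX$, the identity $\CQ$-distributor on $\bbX$: for complete $\bbX$ a direct computation with Example \ref{sup_def} together with $\inf\circ\sYd\cong 1_{\bbX}$ (Proposition \ref{sup_functor}) gives $\dphi\circ\uphi=\sY_{\bbX}\circ\sup_{\bbX}=C_{\bbX}$, whence $\CU(\bbX)=\CD(\bbX)$ and the same telescoping yields $\CM(\bbX)\cong\bbX$. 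This identifies $\zeta_{C_{\bbX}}$ and $\bbX$ as the ``same'' distributor under the isomorphism $C_{\bbX}(\PX)\cong\bbX$, and so either choice completes the proof.
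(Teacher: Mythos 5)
Your proof is correct and follows essentially the same route as the paper: the paper's argument is precisely that, since $\CF$ is a right inverse of $\CU$ (Theorem \ref{F_U_adjunction}) and $\CT$ is a left inverse of $\CD$ up to isomorphism (Theorem \ref{T_D_adjunction}), one gets $\CM(\CF(\CD(\bbX)))=\CT(\CU(\CF(\CD(\bbX))))=\CT(\CD(\bbX))\cong\bbX$. Your additional observation that the identity $\CQ$-distributor $\phi=\bbX$ also serves as a witness (via $\dphi\circ\uphi=\lb\circ\ub=\sY_{\bbX}\circ\sup_{\bbX}=C_{\bbX}$) is likewise valid and is consistent with the paper's later result that $\BM(\bbX)\cong\bbX$ for skeletal complete $\bbX$.
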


The following proposition shows that the free cocompletion functor of $\CQ$-categories factors through the functor $\CM$.

\begin{prop} \label{M_Yoneda_PA}
The diagram
$$\bfig
\qtriangle<700,500>[\CQ\text{-}\Cat`\CQ\text{-}\Info`\CQ\text{-}\CCat;\BY`\CP`\CM]
\efig$$
commutes.
\end{prop}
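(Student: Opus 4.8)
I need to show that the diagram

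$$\bfig
\qtriangle<700,500>[\CQ\text{-}\Cat`\CQ\text{-}\Info`\CQ\text{-}\CCat;\BY`\CP`\CM]
\efig$$

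commutes, i.e. that $\CM\circ\BY=\CP$ as functors $\CQ\text{-}\Cat\to\CQ\text{-}\CCat$.

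Let me recall what each functor does. The functor $\BY$ sends a $\CQ$-category $\bbA$ to the graph $(\sY_{\bbA})_{\nat}:\bbA\oto\PA$ of the Yoneda embedding, and sends a $\CQ$-functor $F:\bbA\to\bbB$ to the infomorphism $(F,F^{\la})$. The functor $\CM=\CT\circ\CU$ sends a $\CQ$-distributor $\phi:\bbA\oto\bbB$ to the fixed points $\dphi\circ\uphi(\PA)$ of the $\CQ$-closure operator $\dphi\circ\uphi$ on $\PA$. Meanwhile $\CP$ sends $\bbA$ to $\PA$ (viewed as a skeletal complete $\CQ$-category) and $F$ to $F^{\ra}$.

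The plan, objects first. I would apply $\CM$ to the distributor $\phi:=(\sY_{\bbA})_{\nat}:\bbA\oto\PA$ and show the resulting complete $\CQ$-category is (isomorphic to) $\PA$. By definition $\CM(\phi)=\dphi\circ\uphi(\PA)$, the $\CQ$-subcategory of $\PA$ of fixed points of the closure operator $\dphi\circ\uphi$. So the heart of the object-level computation is to identify the closure operator $\dphi\circ\uphi$ attached to $\phi=(\sY_{\bbA})_{\nat}$ and verify it is (isomorphic to) the identity $1_{\PA}$ — equivalently that \emph{every} $\mu\in\PA$ is closed. Using $\uphi(\mu)=\phi\lda\mu$ and $\dphi(\lam)=\lam\rda\phi$ from Equation~(\ref{uphi_def}), together with the Yoneda lemma (Lemma~\ref{Yoneda_lemma}) applied to the graph $(\sY_{\bbA})_{\nat}$, I expect $\dphi\circ\uphi(\mu)\cong\mu$ for all $\mu$. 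The cleanest route is to recall from Proposition~\ref{Y_functor_Cat_Info} that $(\sY_{\bbA})_{\nat}(x,\mu)=\PA(\sY_{\bbA}x,\mu)=\mu(x)$, so $\phi$ is exactly the ``evaluation'' distributor $\bbA\oto\PA$; then computing $\phi\lda\mu$ and $(\phi\lda\mu)\rda\phi$ and invoking $1_{\PA}\leq\dphi\circ\uphi$ from Equation~(\ref{uphi_dphi_order}) should pin down $\dphi\circ\uphi\cong 1_{\PA}$. Hence its fixed points are all of $\PA$, giving $\CM(\BY(\bbA))=\PA=\CP(\bbA)$ on objects.

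The plan, morphisms. For a $\CQ$-functor $F:\bbA\to\bbB$, I trace through: $\BY(F)=(F,F^{\la})$, then $\CU$ produces the continuous $\CQ$-functor $F:(\bbA,\dphi\circ\uphi)\to(\bbB,\dpsi\circ\upsi)$ (where $\phi,\psi$ are the two Yoneda graphs), and finally $\CT$ produces the left adjoint $\CQ$-functor $F^{\triangleright}=(\dpsi\circ\upsi)\circ F^{\ra}$ restricted to fixed points. Since both closure operators are identities by the object computation, $F^{\triangleright}$ collapses to $F^{\ra}:\PA\to\PB$, which is exactly $\CP(F)$. I would verify this directly from the definition $F^{\triangleright}=D\circ F^{\ra}$ with $D=\dpsi\circ\upsi\cong 1_{\PB}$.

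The main obstacle and how to handle it. The genuinely load-bearing step is the object-level identification $\dphi\circ\uphi\cong 1_{\PA}$ for $\phi=(\sY_{\bbA})_{\nat}$; everything on morphisms follows formally once this is in hand. This should be essentially a restatement of the fact that $\PA$ is already the free cocompletion and the associated Isbell closure is trivial — indeed it is closely parallel to Equation~(\ref{phiF_fixed}) and~(\ref{Fphi_fixed}), which say $\olphi y$ and $\ulphi x$ are always fixed points; here, because $\phi$ is the evaluation distributor of $\bbA$ into its own presheaf category, every $\mu\in\PA$ arises as some $\olphi(\mu)$, so closedness is universal. An alternative, slicker argument avoids recomputing the closure operator entirely: since $\CM=\CT\circ\CU$, it suffices to identify $\CU\circ\BY$ with $\CD$ (the functor $\CQ\text{-}\Cat\to\CQ\text{-}\Cls$ sending $\bbA$ to the discrete closure space $(\bbA,1_{\PA})$), after which $\CP=\CT\circ\CD\circ(\text{underlying})$ via Corollary~\ref{P_factor_through_Cls} finishes the job. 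Concretely $\CU(\BY(\bbA))=(\bbA,\dphi\circ\uphi)$ and I must check $\dphi\circ\uphi=1_{\PA}$, so the two routes converge on the same core computation, but the second lets me cite Corollary~\ref{P_factor_through_Cls} rather than redo the morphism-level bookkeeping. I would present the direct computation of $\dphi\circ\uphi\cong 1_{\PA}$ as the key lemma and then invoke $\CP=\CT\circ\CD$ from Corollary~\ref{P_factor_through_Cls} to conclude, keeping the proof short.
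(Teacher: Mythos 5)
Your proposal is correct and takes essentially the same route as the paper: the paper's proof likewise reduces everything to the object-level identity $\mu=((\sY_{\bbA})_{\nat}\lda\mu)\rda(\sY_{\bbA})_{\nat}$ for all $\mu\in\PA$, proved by the Yoneda lemma in one direction and trivially in the other (exactly your key computation that the Isbell closure operator of the Yoneda graph is $1_{\PA}$), and then dismisses the morphism-level identity $\CM\circ\BY(F)=F^{\ra}=\CP(F)$ as immediate. The only slip is notational: the discrete closure-space functor invoked in your alternative route is $\BD$, not $\CD$, though that variant rests on the same core computation anyway.
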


\begin{proof}
First, $\CM((\sY_{\bbA})_{\nat})= ((\sY_{\bbA})_{\nat})^{\da}\circ ((\sY_{\bbA})_{\nat})_{\ua}(\PA)=\PA$ for each $\CQ$-category $\bbA$. To see this, it suffices to check that
$$\mu= ((\sY_{\bbA})_{\nat})^{\da}\circ ((\sY_{\bbA})_{\nat})_{\ua}(\mu)= ((\sY_{\bbA})_{\nat}\lda\mu)\rda(\sY_{\bbA})_{\nat}$$
for all $\mu\in\PA$. On one hand, by Yoneda lemma we have
$$(\sY_{\bbA})_{\nat}\lda\mu=(\sY_{\bbA})_{\nat}\lda(\sY_{\bbA})_{\nat}(-,\mu)\geq\PA(\mu,-),$$
thus
$$((\sY_{\bbA})_{\nat}\lda\mu)\rda(\sY_{\bbA})_{\nat}\leq\PA(\mu,-)\rda(\sY_{\bbA})_{\nat}=(\sY_{\bbA})_{\nat}(-,\mu)=\mu.$$
On the other hand, $\mu\leq((\sY_{\bbA})_{\nat}\lda\mu)\rda(\sY_{\bbA})_{\nat}$ holds trivially.

Second, it is trivial that for each $\CQ$-functor $F:\bbA\to\bbB$,
$$\CM\circ\BY(F)=F^\ra=\CP(F).$$

Therefore, the conclusion holds.
\end{proof}

\section{$\CQ$-state property systems}

Corollary \ref{Cls_coreflective_Info} says that the category $\CQ$-$\Cls$ is a coreflective subcategory of $\CQ$-$\Info$. In this section we show that $\CQ$-$\Cls$ is equivalent to  a subcategory of $\CQ$-$\Info$. This equivalence is a generalization of that between closure spaces and state property systems in \cite{Aerts1999}.

\begin{defn}
A $\CQ$-state property system is a triple $(\bbA,\bbB,\phi)$, where $\bbA$ is a $\CQ$-category, $\bbB$ is a skeletal complete $\CQ$-category and $\phi:\bbA\oto\bbB$ is a $\CQ$-distributor, such that
\begin{itemize}
\item[\rm (1)] $\phi(-,{\inf}_{\bbB}\lam)=\lam\rda\phi$ for all $\lam\in\PdB$,
\item[\rm (2)] $\bbB(y,y')=\phi(-,y')\lda\phi(-,y)$ for all $y,y'\in\bbB_0$.
\end{itemize}
\end{defn}

$\CQ$-state property systems and infomorphisms constitute a category $\CQ$-$\Sp$, which is a subcategory of $\CQ$-$\Info$.

\begin{exmp}
For each $\CQ$-closure space $(\bbA,C)$,
$(\bbA,C(\PA),\zeta_C)$ is a $\CQ$-state property system.
First, for all $\Psi\in\CPd(C(\PA))$, it follows from Example \ref{PA_complete} and Equation (\ref{closure_system_infimum})  that
\begin{align*}
\zeta_C(-,{\inf}_{C(\PA)}\Psi)&= {\inf}_{C(\PA)}\Psi\\
&= \bw_{\mu\in C(\PA)}\Psi(\mu)\rda\mu\\
&=\bw_{\mu\in C(\PA)}\Psi(\mu)\rda\zeta_C(-,\mu)\\
&=\Psi\rda\zeta_C.
\end{align*}
Second, it is trivial that
$$C(\PA)(\mu,\lam)=\lam\lda\mu=\zeta_C(-,\lam) \lda\zeta_C(-,\mu)$$
for all $\mu,\lam\in C(\PA)$.
\end{exmp}

Therefore, the codomain of the functor $\CF:\CQ\text{-}\Cls\to\CQ\text{-}\Info$ can be restricted to the subcategory $\CQ$-$\Sp$.

\begin{thm} \label{F_U_equivalence}
The functors $\CF:\CQ\text{-}\Cls\to\CQ\text{-}\Sp$ and $\CU:\CQ\text{-}\Sp\to\CQ\text{-}\Cls$ establish an equivalence of categories.
\end{thm}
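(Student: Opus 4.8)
The plan is to leverage Theorem \ref{F_U_adjunction}, which already supplies half of the statement. Since $\CF$ is a right inverse of $\CU$ we have $\CU\circ\CF=\mathrm{id}_{\CQ\text{-}\Cls}$ on the nose: on objects by Step 1 of that proof, and on morphisms because $\CU(\CF(F))=\CU(F,F^{\triangleleft})=F$. Moreover the Example preceding the theorem shows that $\CF$ factors through the inclusion $\CQ\text{-}\Sp\hookrightarrow\CQ\text{-}\Info$. Hence, after restricting $\CU$ to $\CQ$-$\Sp$, one composite is already the identity, and it remains only to produce a natural isomorphism $\CF\circ\CU\cong\mathrm{id}_{\CQ\text{-}\Sp}$. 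Exhibiting natural isomorphisms for both composites is exactly what an equivalence of categories requires.

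Fix a $\CQ$-state property system $(\bbA,\bbB,\phi)$ and write $C=\dphi\circ\uphi$. By definition $\CU(\bbA,\bbB,\phi)=(\bbA,C)$ and $\CF\circ\CU(\bbA,\bbB,\phi)=(\bbA,C(\PA),\zeta_C)$. The heart of the argument is to show that $\olphi\colon\bbB\to\PA$, $y\mapsto\phi(-,y)$, corestricts to an isomorphism $\olphi\colon\bbB\xrightarrow{\ \cong\ }C(\PA)$, which I would verify in three steps. First, $\olphi$ lands in $C(\PA)$: Equation (\ref{phiF_fixed}) already records that $\phi(-,y)$ is a fixed point of $C$. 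Second, $\olphi$ is fully faithful, hence (as $\bbB$ is skeletal) injective on objects: for $y,y'\in\bbB_0$ we have $\PA(\olphi y,\olphi y')=\phi(-,y')\lda\phi(-,y)=\bbB(y,y')$ by axiom (2) of a state property system. Third, $\olphi$ is surjective onto $C(\PA)$: any fixed point satisfies $\mu=C(\mu)=\dphi(\uphi(\mu))$, so writing $\lam=\uphi(\mu)\in\PdB$ we get $\mu=\dphi(\lam)=\lam\rda\phi$, and axiom (1) together with the completeness of $\bbB$ gives $\lam\rda\phi=\phi(-,\inf_\bbB\lam)=\olphi(\inf_\bbB\lam)$. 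A bijective fully faithful $\CQ$-functor between the skeletal $\CQ$-categories $\bbB$ and $C(\PA)$ is an isomorphism in $\CQ$-$\Cat$.

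With $\olphi$ an isomorphism I would package the comparison as the infomorphism $(\mathrm{id}_\bbA,\olphi^{-1})\colon(\bbA,\bbB,\phi)\to(\bbA,C(\PA),\zeta_C)$; it satisfies Definition \ref{infomorphism_def} since $\phi(-,\olphi^{-1}\mu)=\mu=\zeta_C(-,\mu)$, and it is invertible in $\CQ$-$\Sp$ with inverse $(\mathrm{id}_\bbA,\olphi)$. The last task is naturality: for an infomorphism $(F,G)\colon(\bbA,\bbB,\phi)\to(\bbA',\bbB',\psi)$ one has $\CF\circ\CU(F,G)=(F,F^{\la})$, and because the second components of infomorphisms compose contravariantly, comparing the $\bbB$-components of the two sides of the naturality square reduces it to the single identity $\olphi\circ G\circ\olpsi^{-1}=F^{\la}|_{C'(\CP\bbA')}$, where $C'=\dpsi\circ\upsi$. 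Evaluating at a closed presheaf $\mu'=\psi(-,y')$, the left side is $\phi(-,Gy')$ and the right side is $\psi(F-,y')$ once we invoke $F^{\la}(\mu')(x)=\mu'(Fx)$ from Proposition \ref{F_la_lam_Fx}(1); these agree precisely by the infomorphism condition $\phi(-,G-)=\psi(F-,-)$. I expect the surjectivity step — recognizing every closed presheaf as $\phi(-,y)$ through axiom (1) and the completeness of $\bbB$ — to be the crux, with the contravariant composition bookkeeping in the naturality check the main place demanding care.
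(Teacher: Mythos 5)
Your proposal is correct and follows essentially the same route as the paper: reduce to Theorem \ref{F_U_adjunction} for one composite, then show $\olphi:\bbB\to\dphi\circ\uphi(\PA)$ is an isomorphism via Equation (\ref{phiF_fixed}), axiom (2) for full faithfulness, and axiom (1) with $\inf_{\bbB}\uphi(\mu)$ for surjectivity, and finally check naturality through the infomorphism condition. The only cosmetic difference is that your natural isomorphism points from $\mathrm{id}_{\CQ\text{-}\Sp}$ to $\CF\circ\CU$ (using $\olphi^{-1}$) whereas the paper's $\eta_\phi=(1_{\bbA},\olphi)$ points the other way, which is immaterial since both are invertible.
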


\begin{proof}
It is shown in Theorem \ref{F_U_adjunction}  that $\CU\circ\CF={\bf id}_{\CQ\text{-}\Cls}$, so, it suffices to prove that $\CF\circ\CU\cong {\bf id}_{\CQ\text{-}\Sp}$.

Given a $\CQ$-state property system $(\bbA,\bbB,\phi)$, we have by definition
$$\CF\circ\CU(\bbA,\bbB,\phi)=(\bbA,\dphi\circ\uphi(\PA), \zeta_{\dphi\circ\uphi}).$$
By virtue of Equation (\ref{phiF_fixed}), the images of the $\CQ$-functor $\olphi:\bbB\to\PA$ are contained in $\dphi\circ\uphi(\PA)$, so, it can be viewed as a $\CQ$-functor $\olphi:\bbB\to\dphi\circ\uphi(\PA)$. Since for any $x\in\bbA_0$ and $y\in\bbB_0$,
$$\phi(x,y)=(\olphi y)(x)=\zeta_{\dphi\circ\uphi}(x,\olphi y),$$
it follows that
$\eta_{\phi}=(1_{\bbA},\olphi)$ is an infomorphism from $\zeta_{\dphi\circ\uphi}: \bbA\oto\dphi\circ\uphi(\PA)$ to $\phi:\bbA\oto\bbB$.
Hence $\eta_{\phi}$ is a morphism from  $\CF\circ\CU(\bbA,\bbB,\phi)$ to  $(\bbA,\bbB,\phi)$ in $\CQ$-$\Sp$.
We claim that $\eta$ is a natural isomorphism from $\CF\circ\CU$ to the identity functor ${\bf id}_{\CQ\text{-}\Sp}$.

Firstly,  $\eta_{\phi}$ is an isomorphism. It suffices to show that
$$\olphi:\bbB\to\dphi\circ\uphi(\PA)$$
is an isomorphism between $\CQ$-categories.

Since
$$\bbB(y,y')=\phi(-,y')\lda\phi(-,y)=\PA(\olphi y,\olphi y')$$
for all $y,y'\in\bbB_0$, it follows that $\olphi$ is fully faithful. For each $\mu\in\PA$, let $y=\inf_{\bbB}\uphi(\mu)$, then
$$\olphi y=\phi(-,y)=\phi(-,{\inf}_{\bbB}\uphi(\mu))=\uphi(\mu)\rda\phi=\dphi\circ\uphi(\mu),$$
hence $\olphi$ is surjective. Since $\bbB$ is skeletal, we deduce that $\olphi:\bbB\to\dphi\circ\uphi(\PA)$ is an isomorphism.

Secondly, $\eta$ is natural. For this, we check the commutativity of the following diagram for any infomorphism $(F,G):(\bbA,\bbB,\phi)\to(\bbA',\bbB',\psi)$ between $\CQ$-state property systems:
$$\bfig
\Square[\CF\circ\CU(\bbA,\bbB,\phi)`(\bbA,\bbB,\phi)`\CF\circ\CU(\bbA',\bbB',\psi)`(\bbA',\bbB',\psi);
(1_{\bbA},\olphi)`(F,F^{\triangleleft})`(F,G)`(1_{\bbA'},\olpsi)] \efig$$
In fact, the equality $F\circ 1_{\bbA}=1_{\bbA'}\circ F$ is clear; and for all $x\in\bbA_0$ and $y'\in\bbB'_0$,
$$\olphi\circ G(y')(x)=\phi(x,Gy')=\psi(Fx,y')=\olpsi(y')(Fx)=F^{\triangleleft}\circ\olpsi(y')(x),$$
thus the conclusion follows.
\end{proof}

Together with Theorem \ref{T_D_adjunction} we have

\begin{cor}
The composition
$$\CT\circ\CU:\CQ\text{-}\Sp\to\CQ\text{-}\CCat$$
is a left adjoint of
$$\CF\circ\CD:\CQ\text{-}\CCat\to \CQ\text{-}\Sp.$$
\end{cor}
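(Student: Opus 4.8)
The plan is to obtain the claimed adjunction purely formally, as a composite of two adjunctions already established, using the fact that a functor which is one half of an equivalence is automatically a left adjoint of its pseudo-inverse. Concretely, I would combine the adjunction $\CT\dv\CD$ between $\CQ$-$\Cls$ and $\CQ$-$\CCat$ furnished by Theorem \ref{T_D_adjunction} with the adjunction extracted from the equivalence between $\CF$ and $\CU$ of Theorem \ref{F_U_equivalence}.

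First I would record the two ingredient adjunctions in the correct directions. From Theorem \ref{T_D_adjunction}, $\CT:\CQ\text{-}\Cls\to\CQ\text{-}\CCat$ is a left adjoint of $\CD:\CQ\text{-}\CCat\to\CQ\text{-}\Cls$, so that
\[
\CQ\text{-}\CCat(\CT(Y),X)\cong\CQ\text{-}\Cls(Y,\CD(X))
\]
naturally in a $\CQ$-closure space $Y$ and a skeletal complete $\CQ$-category $X$. From Theorem \ref{F_U_equivalence}, the functors $\CF$ and $\CU$ (the latter restricted to $\CQ$-$\Sp$) form an equivalence of categories; since either half of an equivalence is simultaneously a left and a right adjoint of the other half, in particular $\CU:\CQ\text{-}\Sp\to\CQ\text{-}\Cls$ is a left adjoint of $\CF:\CQ\text{-}\Cls\to\CQ\text{-}\Sp$, so that
\[
\CQ\text{-}\Cls(\CU(S),Y)\cong\CQ\text{-}\Sp(S,\CF(Y))
\]
naturally in a $\CQ$-state property system $S$ and a $\CQ$-closure space $Y$. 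Concretely, the unit of this adjunction may be taken to be the identity, since $\CU\circ\CF={\bf id}$ and $\CF\circ\CU\cong{\bf id}$ by Theorems \ref{F_U_adjunction} and \ref{F_U_equivalence}.

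Then I would simply compose these hom-set bijections. For a $\CQ$-state property system $S$ and a skeletal complete $\CQ$-category $X$,
\begin{align*}
\CQ\text{-}\CCat(\CT\circ\CU(S),X)
&\cong\CQ\text{-}\Cls(\CU(S),\CD(X))\\
&\cong\CQ\text{-}\Sp(S,\CF\circ\CD(X)),
\end{align*}
where the first isomorphism applies $\CT\dv\CD$ with $Y=\CU(S)$ and the second applies $\CU\dv\CF$ with $Y=\CD(X)$. Since each isomorphism is natural in both of its arguments, the composite is natural in $S$ and $X$, which is exactly the assertion that $\CT\circ\CU$ is a left adjoint of $\CF\circ\CD$.

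The argument is essentially a formality, so I do not expect a genuine computational obstacle; the only point demanding care is the bookkeeping of variance and adjunction direction. I would double-check that it is $\CU$ (and not $\CF$) that must be composed on the right of $\CT$, and that the equivalence is invoked so as to present $\CU$ as the \emph{left} adjoint, since otherwise the two hom-functors would fail to compose. Once the two adjunctions are aligned as above, the conclusion is the standard fact that left adjoints compose, yielding $(\CT\circ\CU)\dv(\CF\circ\CD)$.
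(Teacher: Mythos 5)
Your proposal is correct and matches the paper's (implicit) argument exactly: the corollary is stated there as an immediate consequence of composing the adjunction $\CT\dv\CD$ from Theorem \ref{T_D_adjunction} with the adjunction $\CU\dv\CF$ extracted from the equivalence of Theorem \ref{F_U_equivalence}, which is precisely your chain of hom-set bijections. The only minor imprecision is your parenthetical claim that the unit of $\CU\dv\CF$ is the identity (it is the natural isomorphism $\mathrm{id}\cong\CF\circ\CU$, while the counit $\CU\circ\CF=\mathrm{id}$ is the identity), but this does not affect the argument.
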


\section{Characterizations of $\CM(\phi)$} \label{Characterizations_of_Mphi}

For each $\CQ$-distributor $\phi:\bbA\oto\bbB$, we have the following characterization of the complete $\CQ$-category $\CM(\phi)$.

\begin{prop} \label{closure_system_fca}
Let $\bbX$ be a skeletal complete $\CQ$-category. The following conditions are equivalent:
\begin{itemize}
\item[\rm (1)] $\bbX$ is isomorphic to $\CM(\phi)$ for some $\CQ$-distributor $\phi:\bbA\oto\bbB$.
\item[\rm (2)] $\bbX$ is isomorphic to a $\CQ$-closure system of $\PA$ for some $\CQ$-category $\bbA$.
\item[\rm (3)] $\bbX$ is isomorphic to a quotient object of $\PA$ for some $\CQ$-category $\bbA$ in the category $\CQ$-$\CCat$, i.e., a subobject of $\PA$ in $(\CQ\text{-}\CCat)^{\op}$.
\end{itemize}
\end{prop}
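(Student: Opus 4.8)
The plan is to prove the four implications $(1)\Rightarrow(2)$, $(2)\Rightarrow(1)$, $(2)\Rightarrow(3)$ and $(3)\Rightarrow(2)$, which together give $(1)\Leftrightarrow(2)$ and $(2)\Leftrightarrow(3)$. The equivalence $(1)\Leftrightarrow(2)$ is essentially a matter of unwinding the definition of $\CM$ together with the closure-space machinery of Chapter \ref{closure_space}, whereas $(2)\Leftrightarrow(3)$ is the translation between $\CQ$-closure systems of $\PA$ and quotients of $\PA$ in $\CQ$-$\CCat$, effected through a closure operator and its associated adjunction.

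For $(1)\Rightarrow(2)$: by \eqref{M_def} and the definitions of $\CU$ and $\CT$, $\CM(\phi)=\dphi\circ\uphi(\PA)$ is exactly the $\CQ$-subcategory of fixed points of the $\CQ$-closure operator $\dphi\circ\uphi:\PA\to\PA$ (Example \ref{adjunction_closure_interior} applied to $\uphi\dv\dphi$, Proposition \ref{uphi-dphi-adjunction}). By Proposition \ref{closure_system_operator} this fixed-point subcategory is a $\CQ$-closure system of $\PA$, so $\bbX\cong\CM(\phi)$ yields $(2)$. For $(2)\Rightarrow(1)$: if $\bbX$ is isomorphic to a $\CQ$-closure system $\bbY$ of $\PA$, Proposition \ref{closure_system_operator} supplies a $\CQ$-closure operator $C:\PA\to\PA$ with $\bbY=C(\PA)$, so $(\bbA,C)$ is a $\CQ$-closure space. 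Since $\CF$ is a right inverse of $\CU$ (Theorem \ref{F_U_adjunction}), $\CU(\zeta_C)=\CU\circ\CF(\bbA,C)=(\bbA,C)$, whence $\CM(\zeta_C)=\CT\circ\CU(\zeta_C)=\CT(\bbA,C)=C(\PA)=\bbY\cong\bbX$; thus $(1)$ holds with $\phi=\zeta_C$.

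For $(2)\Rightarrow(3)$: writing $\bbY\cong\bbX$ for the $\CQ$-closure system and $C$ for the associated closure operator, the corestriction $C:\PA\to\bbY$ is a left adjoint $\CQ$-functor (its right adjoint is the inclusion $I:\bbY\to\PA$, with $C\circ I\cong 1_{\bbY}$), hence a morphism of $\CQ$-$\CCat$; it is surjective, and a surjective morphism is automatically an epimorphism, so $\bbY$, and therefore $\bbX$, is a quotient object of $\PA$. For $(3)\Rightarrow(2)$: represent the quotient object by an epimorphism $e:\PA\to\bbX$ of $\CQ$-$\CCat$ and let $g$ be its right adjoint. The unit $1_{\PA}\leq g\circ e$ together with Proposition \ref{FGF_F} (which gives $g\circ e\circ g\cong g$, hence $(g\circ e)^2\cong g\circ e$) shows that $C:=g\circ e$ is a $\CQ$-closure operator on $\PA$, whose fixed points $C(\PA)=g(\bbX)$ form a $\CQ$-closure system by Proposition \ref{closure_system_operator}. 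It then remains only to see that $\bbX\cong g(\bbX)$, i.e.\ that the counit $e\circ g\leq 1_{\bbX}$ is an isomorphism, equivalently that $g$ is fully faithful, equivalently that $e$ is essentially surjective.

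The main obstacle is precisely this last point: that the right adjoint of an epimorphism in $\CQ$-$\CCat$ is fully faithful, i.e.\ that epimorphisms between skeletal complete $\CQ$-categories are essentially surjective — the $\CQ$-enriched analogue of the fact that epimorphisms in $\Sup$ are surjective. I would reduce it as follows. Since $e$ is a left adjoint it preserves suprema and tensors (Corollary \ref{left_adjoint_preserves_sup}, Proposition \ref{F_la_ra_condition}), so its image $e(\PA)$ is closed under suprema and tensors and is therefore a $\CQ$-interior system of $\bbX$ (Proposition \ref{closure_system_cotensor_meet}); its inclusion $j:e(\PA)\to\bbX$ is then a left adjoint, and being a full and injective inclusion it is a monomorphism of $\CQ$-$\CCat$. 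Factoring $e=j\circ e'$ through the surjective corestriction $e'$ shows $j$ is also an epimorphism, so the problem collapses to proving that a monic epimorphism in $\CQ$-$\CCat$ is invertible (balancedness of $\CQ$-$\CCat$). This is where the genuinely delicate work lies; once it is in hand, $e(\PA)=\bbX$, so $g$ is fully faithful and $\bbX\cong g(\bbX)=C(\PA)$, the desired $\CQ$-closure system.
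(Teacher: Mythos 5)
Your implications $(1)\Rightarrow(2)$, $(2)\Rightarrow(1)$ and $(2)\Rightarrow(3)$ are correct and coincide with the paper's argument (fixed points of the closure operator $\dphi\circ\uphi$, the functors $\CF,\CU,\CT$ with Theorem \ref{F_U_adjunction}, and the split-epic reflection, respectively). The genuine gap is in $(3)\Rightarrow(2)$: you reduce it to the claim that a monic epimorphism in $\CQ$-$\CCat$ is invertible (equivalently, that an epimorphism $e:\PA\to\bbX$ is essentially surjective, so that its right adjoint $g$ is fully faithful and $\bbX\cong g(\bbX)=C(\PA)$), and then you explicitly leave that claim unproven --- ``this is where the genuinely delicate work lies'' is an admission that the proof stops short. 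As submitted, the proposal does not establish $(3)\Rightarrow(2)$.

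Moreover, the difficulty is self-inflicted: you are trying to realize $\bbX$ as a closure system of the \emph{given} $\PA$, whereas condition (2) only asks for a closure system of $\CP\bbA'$ for \emph{some} $\CQ$-category $\bbA'$, and this $\bbA'$ need not be the $\bbA$ furnished by (3). Since $\bbX$ is an object of $\CQ$-$\CCat$, it is skeletal and complete, so Proposition \ref{complete category representation} already identifies $\bbX$ with $C_{\bbX}(\CP\bbX)$, the closure system of fixed points of $C_{\bbX}=\sY_{\bbX}\circ\sup_{\bbX}$ in $\CP\bbX$; hence $(3)\Rightarrow(2)$ holds at once, with no balancedness statement needed (indeed, by Theorem \ref{complete_category_fca} and Proposition \ref{complete category representation}, each of (1), (2), (3) holds for \emph{every} skeletal complete $\CQ$-category, which is why the paper can dispose of $(2)\Leftrightarrow(3)$ in one line). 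For the record, the fact you deferred is true and provable with the paper's tools: for each $y\in\bbX_0$ the $\CQ$-functor $\bbX(-,y):\bbX\to\CPd(ty)$ is a left adjoint by Proposition \ref{tensor_la_ra}(2), hence a morphism of $\CQ$-$\CCat$, and these morphisms separate objects of the skeletal $\bbX$ by Proposition \ref{isomorphic_condition}; composing them with an epimorphism $e$ shows the right adjoint $g$ is injective, and then $g\circ e\circ g=g$ forces $e\circ g=1_{\bbX}$, so epimorphisms out of $\PA$ are even split. But your proposal neither carries out this argument nor routes around it, so as written the proof of $(3)\Rightarrow(2)$ is incomplete.
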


\begin{proof}
(1)${}\Lra{}$(2): Trivial.

(2)${}\Lra{}$(1): It follows from Proposition \ref{closure_system_operator} that $\bbX$ is isomorphic to $C(\PA)$ for some $\CQ$-closure space $(\bbA,C)$. Let $\zeta_C=\CF(\bbA,C):\bbA\oto C(\PA)$, since $\CU\circ\CF={\bf id}_{\CQ\text{-}\Cls}$ (by Theorem \ref{F_U_adjunction}), we have
$$\CM(\zeta_C)=\CT\circ\CU(\zeta_C)=\CT\circ\CU\circ\CF(\bbA,C)=\CT(\bbA,C)=C(\PA).$$
This implies $\bbX\cong\CM(\zeta_C)$.

(2)$\iff$(3): It is easily seen that $\bbB$ is a $\CQ$-closure system of $\PA$ if and only if the inclusion $\CQ$-functor $I:\bbB\to\PA$ has a left adjoint $F:\PA\to\bbB$, where $F$ is epic in the category $\CQ$-$\CCat$, and equivalently a monic from $\bbB$ to $\PA$ in $(\CQ\text{-}\CCat)^{\op}$.
\end{proof}

In the rest this section, we present a characterization of $\CM(\phi)$ for a $\CQ$-distributor $\phi:\bbA\oto\bbB$ through $\sup$-dense and $\inf$-dense $\CQ$-functors.

Given a $\CQ$-distributor $\phi:\bbA\oto\bbB$, let  $\CM_{\phi}(\bbA,\bbB)$ denote the set of pairs $(\mu,\lam)\in\PA\times\PdB$  such that $\lam=\uphi(\mu)$ and $\mu=\dphi(\lam)$. $\CM_{\phi}(\bbA,\bbB)$ becomes a $\CQ$-typed set if we  assign $t(\mu,\lam)=t\mu=t\lam$. For $(\mu_1,\lam_1),(\mu_2,\lam_2)\in\CM_{\phi}(\bbA,\bbB)$, let
\begin{equation} \label{B_A_B_phi_order}
\CM_{\phi}(\bbA,\bbB)((\mu_1,\lam_1),(\mu_2,\lam_2))=\PA(\mu_1,\mu_2)=\PdB(\lam_1,\lam_2),
\end{equation}
Then $\CM_{\phi}(\bbA,\bbB)$ becomes a $\CQ$-category.

The projection
$$\pi_1:\CM_{\phi}(\bbA,\bbB)\to\PA,\quad (\mu,\lam)\mapsto\mu$$
is clearly a fully faithful $\CQ$-functor. Since the image of $\pi_1$ is exactly the set of fixed points of the $\CQ$-closure operator $\dphi\circ\uphi:\PA\to\PA$, we obtain that $\CM_{\phi}(\bbA,\bbB)$ is isomorphic to the complete $\CQ$-category $\CM(\phi)=\dphi\circ\uphi(\PA)$.

Similarly, the projection
$$\pi_2:\CM_{\phi}(\bbA,\bbB)\to\PdB,\quad (\mu,\lam)\mapsto\lam$$
is also a fully faithful $\CQ$-functor and the image of $\pi_2$ is exactly the set of fixed points of the $\CQ$-interior operator $\uphi\circ\dphi:\PdB\to\PdB$. Hence $\CM_{\phi}(\bbA,\bbB)$ is also isomorphic to the complete $\CQ$-category $\uphi\circ\dphi(\PdB)$, which is a $\CQ$-interior system of the skeletal complete $\CQ$-category $\PdB$.

Equation (\ref{B_A_B_phi_order}) shows that
$$\uphi:\dphi\circ\uphi(\PA)\to\uphi\circ\dphi(\PdB)$$
and
$$\dphi:\uphi\circ\dphi(\PdB)\to\dphi\circ\uphi(\PA)$$
are inverse to each other. Therefore, $\CM(\phi)(=\dphi\circ\uphi(\PA))$,  $\uphi\circ\dphi(\PdB)$  and $\CM_{\phi}(\bbA,\bbB)$ are isomorphic to each other.

\begin{defn} \label{sup_dense}
Let $F:\bbA\to\bbB$ be a $\CQ$-functor.
\begin{itemize}
\item[\rm (1)] $F$ is \emph{$\sup$-dense}  if for any $y\in\bbB_0$, there is some $\mu\in\PA$ such that $y=\sup_{\bbB}F^{\ra}(\mu)$.
\item[\rm (2)] $F$ is \emph{$\inf$-dense}  if for any $y\in\bbB_0$, there is some $\lam\in\PdA$ such that $y=\inf_{\bbB}F^{\nra}(\lam)$.
\end{itemize}
\end{defn}

\begin{exmp} \label{Yoneda_sup_dense}
For each $\CQ$-category $\bbA$, the Yoneda embedding $\sY:\bbA\to\PA$ is $\sup$-dense. Indeed, we have that
$$\mu={\sup}_{\PA}\circ\sY^\ra(\mu)$$
for all $\mu\in\PA$ (see Equation (\ref{mu_sup_ymu}) in the proof of Theorem \ref{T_D_adjunction}).

Dually, the co-Yoneda embedding $\sYd:\bbA\to\PdA$ is $\inf$-dense.
\end{exmp}

The following characterization of $\CM_{\phi}(\bbA,\bbB)$ (hence $\CM(\phi)$)  extends Theorem 4.8 in \cite{Lai2009} to the general setting.

\begin{thm} \label{complte_category_sup_dense}
Given a $\CQ$-distributor $\phi:\bbA\oto\bbB$, a skeletal complete $\CQ$-category $\bbX$ is isomorphic to $\CM_{\phi}(\bbA,\bbB)$  if and only if there exist a $\sup$-dense $\CQ$-functor $F:\bbA\to\bbX$ and an $\inf$-dense $\CQ$-functor $G:\bbB\to\bbX$ such that $\phi=G^{\nat}\circ F_{\nat}=\bbX(F-,G-)$.
\end{thm}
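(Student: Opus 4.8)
The plan is to prove both implications by working inside the concrete model $\CM_{\phi}(\bbA,\bbB)$ described just before the statement. Recall that it is a skeletal complete $\CQ$-category, that the projection $\pi_1$ identifies it with the $\CQ$-closure system $\CM(\phi)=\dphi\circ\uphi(\PA)\subseteq\PA$, that $\pi_2$ identifies it with the $\CQ$-interior system $\uphi\circ\dphi(\PdB)\subseteq\PdB$, and that $\uphi,\dphi$ restrict to mutually inverse isomorphisms between these two. Since $\sup$-density, $\inf$-density and the two hom-equations $\phi=G^{\nat}\circ F_{\nat}=\bbX(F-,G-)$ are all invariant under an isomorphism of $\bbX$, it suffices throughout to take $\bbX=\CM_{\phi}(\bbA,\bbB)$. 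I will also use the elementary fact that for any $\CQ$-functors $F:\bbA\to\bbX$ and $G:\bbB\to\bbX$ one has $(G^{\nat}\circ F_{\nat})(x,y)=\bv_{b}\bbX(b,Gy)\circ\bbX(Fx,b)=\bbX(Fx,Gy)$, so the equalities $\phi=G^{\nat}\circ F_{\nat}$ and $\phi=\bbX(F-,G-)$ are the same requirement.

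For \emph{necessity}, I would set $Fx=(\dphi\uphi\sY_{\bbA}x,\ \uphi\sY_{\bbA}x)$ and $Gy=(\dphi\sYd_{\bbB}y,\ \uphi\dphi\sYd_{\bbB}y)$; Equations~(\ref{Fphi_fixed}) and~(\ref{phiF_fixed}) together with $\uphi\dphi\uphi=\uphi$ and $\dphi\uphi\dphi=\dphi$ show these land in $\CM_{\phi}(\bbA,\bbB)$. Under $\pi_1$ the functor $F$ becomes $(\dphi\circ\uphi)\circ\sY_{\bbA}$, which is $\sup$-dense because $\sY_{\bbA}$ is $\sup$-dense (Example~\ref{Yoneda_sup_dense}) and $\dphi\circ\uphi:\PA\to\CM(\phi)$ is a surjective left adjoint, hence preserves suprema (Corollary~\ref{left_adjoint_preserves_sup}); dually $\pi_2 G$ is $\inf$-dense. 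Finally, using that $\CM(\phi)$ is a full $\CQ$-subcategory of $\PA$, that $\dphi\circ\uphi$ is left adjoint to the inclusion (so $\PA(\dphi\uphi\,\sY_{\bbA}x,\lam)=\PA(\sY_{\bbA}x,\lam)$ whenever $\lam$ is closed, and $\phi(-,y)$ is closed by~(\ref{phiF_fixed})), and the Yoneda Lemma~\ref{Yoneda_lemma}, I get $\bbX(Fx,Gy)=\phi(-,y)\lda\sY_{\bbA}x=\phi(-,y)(x)=\phi(x,y)$.

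For \emph{sufficiency}, the engine is the observation that the hypothesis $\phi=\bbX(F-,G-)$ makes $\uphi$ and $\dphi$ factor through $\bbX$: combining Proposition~\ref{Dist_quantaloid}(4) with the limit/colimit formulas of Definition~\ref{limit_colimit_def} and Proposition~\ref{colim_supremum} gives, for all $\mu\in\PA$ and $\lam\in\PdB$,
\[
\uphi(\mu)=\bbX({\sup}_{\bbX}F^{\ra}(\mu),G-)\qquad\text{and}\qquad\dphi(\lam)=\bbX(F-,{\inf}_{\bbX}G^{\nra}(\lam)).
\]
Writing $L=\sup_{\bbX}\circ F^{\ra}:\PA\to\bbX$ and $R=F^{\la}\circ\sY_{\bbX}:\bbX\to\PA$ (so $R(b)=\bbX(F-,b)$ and $L\dv R$), these read $\uphi(\mu)=\bbX(L\mu,G-)$ and $\dphi(\lam)=R(\inf_{\bbX}G^{\nra}(\lam))$. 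Now $\inf$-density of $G$ yields, as the dual of Example~\ref{Yoneda_sup_dense}, the recovery identity $\inf_{\bbX}G^{\nra}(\bbX(b,G-))=b$ for every $b$; applying it with $b=L\mu$ gives $\dphi\circ\uphi(\mu)=R(L\mu)$, i.e. $\dphi\circ\uphi=R\circ L$ as $\CQ$-closure operators on $\PA$. Then $\sup$-density of $F$ makes $L$ surjective on objects, forcing $L\circ R=1_{\bbX}$ and hence $R$ fully faithful; therefore $R$ corestricts to an isomorphism of $\bbX$ onto the fixed points $R\circ L(\PA)=\dphi\circ\uphi(\PA)=\CM(\phi)\cong\CM_{\phi}(\bbA,\bbB)$.

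The main obstacle is the sufficiency direction, and within it the identity $\dphi\circ\uphi=R\circ L$: one must first extract the two factorization formulas for $\uphi$ and $\dphi$ from $\phi=\bbX(F-,G-)$ (pure arrow calculus, but it is where the hypothesis is actually consumed), and then see that the two density assumptions enter at exactly two distinct points — $\inf$-density of $G$ to collapse $\inf_{\bbX}G^{\nra}(\bbX(b,G-))$ to $b$ and thereby establish $\dphi\uphi=RL$, and $\sup$-density of $F$ to turn the surjective left adjoint $L$ into the isomorphism $R:\bbX\xrightarrow{\sim}\CM(\phi)$. Everything else (the two recovery lemmas, the factorization formulas, the graph/cograph identity) reduces to routine computations with the implication calculus of Propositions~\ref{arrow_calculation} and~\ref{Dist_quantaloid}.
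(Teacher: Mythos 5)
Your proposal is correct, and both implications are sound; the difference from the paper is mainly in how the sufficiency direction is organized. For necessity you use literally the paper's functors ($Fa=(\dphi\ulphi a,\ulphi a)$, $Gb=(\olphi b,\uphi\olphi b)$, since $\ulphi a=\uphi\sY_{\bbA}a$ and $\olphi b=\dphi\sYd_{\bbB}b$); the paper proves $\sup$-density of $F$ by a direct computation showing $(\mu,\lam)={\colim}_{\mu}F$ and gets $\bbX(F-,G-)=\phi$ via the adjunction $\uphi\dv\dphi$ plus Yoneda, whereas you obtain density by composing the ($\sup$-dense) Yoneda embedding with the surjective left adjoint $\dphi\uphi:\PA\to\CM(\phi)$, and the hom-equation from the left-adjoint-to-inclusion property --- equivalent arguments, essentially the same route. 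In the sufficiency direction the paper defines $H:\bbX\to\CM_{\phi}(\bbA,\bbB)$, $Hx=(F_{\nat}(-,x),G^{\nat}(x,-))$, and verifies in four explicit steps that $\bbX=F_{\nat}\lda F_{\nat}=G^{\nat}\rda G^{\nat}$ (this is where the two density hypotheses enter), that $H$ is well defined, fully faithful, and surjective. Your $R(b)=\bbX(F-,b)=F_{\nat}(-,b)$ is exactly the first component of $H$, so you build the same isomorphism; what is genuinely different is the verification: your recovery identity ${\inf}_{\bbX}G^{\nra}(\bbX(b,G-))=b$ is the paper's Step~1 identity $\bbX=G^{\nat}\rda G^{\nat}$ recast in weighted-limit form, your factorization formulas $\uphi(\mu)=\bbX(L\mu,G-)$ and $\dphi(\lam)=R({\inf}_{\bbX}G^{\nra}(\lam))$ (which the paper never states) replace its Step~2, and then the general adjunction facts --- $L\dv R$ with $L$ surjective forces $L\circ R=1_{\bbX}$ on the skeletal $\bbX$, hence $R$ fully faithful with image equal to the fixed points of $R\circ L=\dphi\circ\uphi$ --- replace the paper's Steps~3 and~4. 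Your packaging buys modularity and a clean separation of where $\inf$-density and $\sup$-density are consumed, at the price of invoking the closure-operator/fixed-point dictionary and Propositions \ref{closure_system_operator}, \ref{fully_faithful_essentially_injective}, \ref{left_adjoint_preserves_sup}; the paper's version is more self-contained, relying only on the implication calculus of Propositions \ref{arrow_calculation} and \ref{adjoint_arrow_calculation}.
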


\begin{proof}
{\bf Necessity.} It suffices to prove the case $\bbX=\CM_{\phi}(\bbA,\bbB)$. Define $\CQ$-functors $F:\bbA\to\bbX$ and $G:\bbB\to\bbX$ by
\begin{equation} \label{Fa_Gb_def}
Fa=(\dphi\circ\ulphi a,\ulphi a),\quad Gb=(\olphi b,\uphi\circ\olphi b),
\end{equation}
then $F$ and $G$ are well defined by Equations (\ref{phiF_fixed}) and (\ref{Fphi_fixed}). It follows that
\begin{align*}
\bbX(F-,G-)&=\PA(\dphi\circ\ulphi-,\olphi-)\\
&=\PA(\dphi\circ\ulphi-,\dphi\circ\sYd_{\bbB}-)&(\text{Equation (\ref{phiF_fixed})})\\
&=\PdB(\uphi\circ\dphi\circ\ulphi-,\sYd_{\bbB}-)&(\text{Proposition \ref{uphi-dphi-adjunction}})\\
&=\PdB(\ulphi-,\sYd_{\bbB}-)&(\text{Equation (\ref{Fphi_fixed})})\\
&=(\ulphi-)(-)&(\text{Yoneda lemma})\\
&=\phi.
\end{align*}
Now we show that $F:\bbA\to\bbX$ is $\sup$-dense. For all $(\mu,\lam),(\mu',\lam')\in\bbX_0$,
\begin{align*}
\bbX((\mu,\lam),(\mu',\lam'))&=\lam'\rda\lam\\
&=\lam'\rda\uphi(\mu)\\
&=\lam'\rda(\phi\lda\mu)\\
&=(\lam'\rda\phi)\lda\mu\\
&=\PdB(\ulphi-,\lam')\lda\mu\\
&=\bbX(F-,(\mu',\lam'))\lda\mu&(\text{Equation (\ref{B_A_B_phi_order})})\\
&=F_{\nat}(-,(\mu',\lam'))\lda\mu,
\end{align*}
thus $(\mu,\lam)=\colim_{\mu}F={\sup}_{\bbX}\circ F^{\ra}(\mu)$, as desired.

That $G:\bbB\to\bbX$ is $\inf$-dense can be proved similarly.

{\bf Sufficiency.} We show that the type-preserving function
$$H:\bbX\to\CM_{\phi}(\bbA,\bbB),\quad Hx=(F_{\nat}(-,x),G^{\nat}(x,-))$$
is an isomorphism of $\CQ$-categories.

{\bf Step 1.} $\bbX=F_{\nat}\lda F_{\nat}=G^{\nat}\rda G^{\nat}$.

For all $x\in\bbX_0$, since $F:\bbA\to\bbX$ is $\sup$-dense, there is some $\mu\in\PA$ such that $x=\sup_{\bbX}F^{\ra}(\mu)$, thus
\begin{equation} \label{x_sup_Fmu}
\bbX(x,-)=\bbX\lda F^{\ra}(\mu)=\bbX\lda(\mu\circ F^{\nat})=(\bbX\circ F_{\nat})\lda\mu=F_{\nat}\lda\mu,
\end{equation}
where the third equality follows from Proposition \ref{adjoint_arrow_calculation}(2). Consequently
\begin{align*}
\bbX(x,-)&\leq F_{\nat}\lda F_{\nat}(-,x)\\
&\leq(F_{\nat}\lda F_{\nat}(-,x))\circ\bbX(x,x)\\
&=(F_{\nat}\lda F_{\nat}(-,x))\circ(F_{\nat}(-,x)\lda\mu)&(\text{Equation (\ref{x_sup_Fmu})})\\
&\leq F_{\nat}\lda\mu\\
&=\bbX(x,-),&(\text{Equation (\ref{x_sup_Fmu})})
\end{align*}
hence $\bbX(x,-)=F_{\nat}\lda F_{\nat}(-,x)=(F_{\nat}\lda F_{\nat})(x,-)$.

Since $G:\bbB\to\bbX$ is $\inf$-dense, similar calculations lead to $\bbX=G^{\nat}\rda G^{\nat}$.

{\bf Step 2.} $Hx\in\CM_{\phi}(\bbA,\bbB)$ for all $x\in\bbX_0$, thus $H$ is well defined. Indeed,
\begin{align*}
\uphi(F_{\nat}(-,x))&=\phi\lda F_{\nat}(-,x)\\
&=(G^{\nat}\circ F_{\nat})\lda F_{\nat}(-,x)&(\phi=G^{\nat}\circ F_{\nat})\\
&=G^{\nat}\circ(F_{\nat}\lda F_{\nat}(-,x))&(\text{Proposition \ref{adjoint_arrow_calculation}(3)})\\
&=G^{\nat}\circ\bbX(x,-)&(\text{Step 1})\\
&=G^{\nat}(x,-).
\end{align*}
Similar calculation shows that $\dphi(G^{\nat}(x,-))=F_{\nat}(-,x)$. Hence, $Hx\in\CM_{\phi}(\bbA,\bbB)$.

{\bf Step 3.} $H$ is a fully faithful $\CQ$-functor. Indeed, for all $x,x'\in\bbX_0$, by Step 1,
$$\bbX(x,x')=F_{\nat}(-,x')\lda F_{\nat}(-,x)=\PA(F_{\nat}(-,x),F_{\nat}(-,x'))=\CM_{\phi}(\bbA,\bbB)(Hx,Hx').$$

{\bf Step 4.} $H$ is surjective. For each pair $(\mu,\lam)\in\CM_{\phi}(\bbA,\bbB)$, we must show that there is some $x\in\bbX_0$ such that $F_{\nat}(-,x)=\mu$ and $G^{\nat}(x,-)=\lam$. Indeed, let $x=\sup_{\bbX}F^{\ra}(\mu)$, then
\begin{align*}
G^{\nat}(x,-)&=G^{\nat}\circ\bbX(x,-)\\
&=G^{\nat}\circ(F_{\nat}\lda\mu)&(\text{Equation (\ref{x_sup_Fmu})})\\
&=(G^{\nat}\circ F_{\nat})\lda\mu&(\text{Proposition \ref{adjoint_arrow_calculation}(3)})\\
&=\phi\lda\mu&( \phi=G^{\nat}\circ F_{\nat})\\
&=\uphi(\mu)\\
&=\lam,
\end{align*}
and it follows that $F_{\nat}(-,x)=\dphi(G^{\nat}(x,-))=\dphi(\lam)=\mu$.
\end{proof}

\section{The MacNeille completion}

Given a preordered set $A$, the MacNeille completion \cite{Davey2002,MacNeille1937} (or Dedekind-MacNeille completion) of $A$ is the set $\BM(A)$ of pairs $(L,U)$ of subsets of $A$, such that
$$L=\lb U\quad\text{and}\quad U=\ub L,$$
where $\lb U$ is the set of lower bounds of $U$ and $\ub L$ is the set of upper bounds of $L$. The preorder on $\BM(A)$ is given by
$$(L_1,U_1)\leq(L_2,U_2)\iff L_1\subseteq L_2\iff U_2\subseteq U_1.$$

The notion of MacNeille completion has been extended to categories enriched over a commutative unital quantale \cite{Bvelohlavek2004,Lai2007,Wagner1994} (see Chapter \ref{Applications} for more discussion about unital quantales). In this section, we investigate the MacNeille completion of a $\CQ$-category $\bbA$ by considering the special $\CQ$-distributor $\bbA:\bbA\oto\bbA$ and the induced Isbell adjunction
$$\ub\dv\lb:\PA\rhu\PdA$$
presented in Proposition \ref{ub_lb_adjunction}.

\begin{defn}
A \emph{cut} in a $\CQ$-category $\bbA$ is a pair $(\mu,\lam)\in\PA\times\PdA$ such that
$$\lam=\ub\mu\quad\text{and}\quad\mu=\lb\lam.$$
\end{defn}

\begin{defn} \label{MacNeille_completion}
The \emph{MacNeille completion} of a $\CQ$-category $\bbA$ is the $\CQ$-category $\BM(\bbA)$, in which
\begin{itemize}
\item[\rm (1)] $\BM(\bbA)_0$ is the set of cuts in $\bbA$;
\item[\rm (2)] the type of a cut is
               $$t(\mu,\lam)=t\mu=t\lam;$$
\item[\rm (3)] for all $(\mu_1,\lam_1),(\mu_2,\lam_2)\in\BM(\bbA)_0$,
               \begin{equation} \label{MA_order}
               \BM(\bbA)((\mu_1,\lam_1),(\mu_2,\lam_2))=\PA(\mu_1,\mu_2)=\PdA(\lam_1,\lam_2).
               \end{equation}
\end{itemize}
\end{defn}

It is obvious that the MacNeille completion $\BM(\bbA)$ of a $\CQ$-category $\bbA$ is exactly the complete $\CQ$-category $\CM_{\bbA}(\bbA,\bbA)$. In particular, the MacNeille completion $\BM(A)$ of a {\bf 2}-category $A$ is just the MacNeille completion of preordered sets. Thus, the MacNeille completion of $\CQ$-categories is a special case of the construction $\CM_{\phi}(\bbA,\bbB)$ for a $\CQ$-distributor $\phi:\bbA\oto\bbB$.

\begin{prop}
For each $\CQ$-category $\bbA$, the assignment
$$x\mapsto(\sY x,\sYd x)$$
gives rise to a fully faithful $\CQ$-functor
$$M:\bbA\to\BM(\bbA).$$
\end{prop}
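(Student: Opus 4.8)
The plan is to verify three things in turn: that $M$ is well defined (i.e.\ each pair $(\sY x,\sYd x)$ is genuinely a cut, hence an object of $\BM(\bbA)$), that $M$ is type-preserving, and that $M$ is fully faithful; the $\CQ$-functoriality will then come for free from full faithfulness. The essential content lies entirely in the first point, after which the Yoneda lemma disposes of the rest in one line.

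First I would check well-definedness. Recall that $\sY x=\bbA(-,x)\in\PA$ and $\sYd x=\bbA(x,-)\in\PdA$, and that for the identity distributor $\phi=\bbA$ the operators of Example~\ref{ub_lb_def} are exactly $\ub=\uphi$ and $\lb=\dphi$. Under this identification $\ulphi x=\phi(x,-)=\sYd x$ and $\olphi y=\phi(-,y)=\sY y$, so Equations~(\ref{Fphi_fixed}) and (\ref{phiF_fixed}) specialized to $\phi=\bbA$ read
$$\ub(\sY x)=\uphi\circ\sY_{\bbA}x=\ulphi x=\sYd x\quad\text{and}\quad\lb(\sYd y)=\dphi\circ\sYd_{\bbA}y=\olphi y=\sY y.$$
Hence $\sYd x=\ub(\sY x)$ and $\sY x=\lb(\sYd x)$, which is precisely the condition for $(\sY x,\sYd x)$ to be a cut. (If one prefers a self-contained argument, the identity $\ub(\sY x)=\sYd x$ unwinds, via Proposition~\ref{Dist_quantaloid}(4), to $\bw_{a\in\bbA_0}\bbA(a,z)\lda\bbA(a,x)=\bbA(x,z)$ for every $z$, where ``$\geq$'' is the composition law of $\bbA$ and ``$\leq$'' follows by taking $a=x$ together with $1_{tx}\leq\bbA(x,x)$ and Proposition~\ref{arrow_top_bottom}(1).) Since $t(\sY x,\sYd x)=t(\sY x)=tx$, the assignment $M$ is type-preserving.

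Finally I would establish full faithfulness, from which $\CQ$-functoriality is immediate. By the defining formula~(\ref{MA_order}) for hom-arrows in $\BM(\bbA)$,
$$\BM(\bbA)(Mx,Mx')=\PA(\sY x,\sY x'),$$
and the Yoneda lemma (Lemma~\ref{Yoneda_lemma}) gives $\PA(\sY x,\sY x')=(\sY x')(x)=\bbA(x,x')$. Thus $\BM(\bbA)(Mx,Mx')=\bbA(x,x')$, so $M$ is fully faithful; in particular $\bbA(x,x')\leq\BM(\bbA)(Mx,Mx')$, which is exactly the requirement for $M$ to be a $\CQ$-functor. The only genuine obstacle is the cut condition of the first step --- it is a real computation rather than a formality --- but it is already recorded in Equations~(\ref{Fphi_fixed}) and (\ref{phiF_fixed}), and the one subtlety to watch throughout is the order-reversal between $\PdA$ and $\CQ$-$\Dist$ noted in Remark~\ref{PdA_QDist_order}.
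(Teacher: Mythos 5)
Your proposal is correct and follows essentially the same route as the paper: the paper's proof simply notes that $(\sY x,\sYd x)$ is a cut and then invokes the Yoneda lemma together with Definition \ref{MacNeille_completion}, which is exactly your argument with the ``easy to see'' step spelled out. Your verification of the cut condition --- whether via Equations (\ref{phiF_fixed}) and (\ref{Fphi_fixed}) at $\phi=\bbA$ or via the direct computation $\bw_{a\in\bbA_0}\bbA(a,z)\lda\bbA(a,x)=\bbA(x,z)$ --- and your observation that full faithfulness subsumes the $\CQ$-functor condition are both accurate.
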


\begin{proof}
It is easy to see that $(\sY x,\sYd x)$ is a cut in $\bbA$ for any $x\in\bbA_0$. Then the conclusion is an immediate consequence of the Yoneda lemma and Definition \ref{MacNeille_completion}.
\end{proof}

\begin{prop}
For each $\CQ$-category $\bbA$, the $\CQ$-functor $M:\bbA\to\BM(\bbA)$ preserves all existing weighted colimits and weighted limits in $\bbA$.
\end{prop}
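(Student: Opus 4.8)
The plan is to verify the defining (co)limit equalities in $\BM(\bbA)$ by testing hom-arrows out of $\BM(\bbA)$ through the fully faithful projection $\pi_1:\BM(\bbA)\to\PA$ (resp. $\pi_2:\BM(\bbA)\to\PdA$) and the Yoneda lemma. Recall that $\BM(\bbA)=\CM_{\bbA}(\bbA,\bbA)$ arises from the $\CQ$-distributor $\phi=\bbA$, for which $\uphi=\ub$ and $\dphi=\lb$; thus the associated $\CQ$-closure operator on $\PA$ is $C=\lb\circ\ub$, the projection $\pi_1$ identifies $\BM(\bbA)$ with the fixed points $C(\PA)$, and $\pi_1\circ M=\sY$, so that $\pi_1(Mx)=\sY x$ for all $x\in\bbA_0$.

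The core reduction is as follows. Fix a $\CQ$-functor $F:\bbC\to\bbA$ and a weight $\mu\in\CP\bbC$ such that $s:=\colim_{\mu}F$ exists in $\bbA$; I must show $Ms=\colim_{\mu}(M\circ F)$, i.e. $\BM(\bbA)(Ms,-)=(M\circ F)_{\nat}\lda\mu$ (Definition \ref{limit_colimit_def}, Proposition \ref{graph_cograph_functor}). Evaluating both sides at an arbitrary $Z\in\BM(\bbA)_0$ and writing $\nu:=\pi_1 Z\in C(\PA)$, the Yoneda lemma (Lemma \ref{Yoneda_lemma}) together with full faithfulness of $\pi_1$ gives $\BM(\bbA)(Ms,Z)=\PA(\sY s,\nu)=\nu(s)$ and $\BM(\bbA)(MFc,Z)=\nu(Fc)$, so the goal collapses to the single identity $\nu(s)=\bw_{c\in\bbC_0}\nu(Fc)\lda\mu(c)$ for every closed $\nu$.

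To establish this identity I would rewrite its right-hand side, using Proposition \ref{F_la_lam_Fx} (so that $\nu(Fc)=F^{\la}(\nu)(c)$), Equation (\ref{PA_arrow}), and the adjunction $F^{\ra}\dv F^{\la}$ (Proposition \ref{direct_inverse_image_adjunction}), as $\CP\bbC(\mu,F^{\la}\nu)=\PA(F^{\ra}\mu,\nu)$. On the other hand, $s=\sup_{\bbA}F^{\ra}\mu$ by Proposition \ref{colim_supremum}, hence $\ub(F^{\ra}\mu)=\bbA\lda F^{\ra}\mu=\sYd s$ (Example \ref{sup_def}) and therefore $C F^{\ra}\mu=\lb\,\sYd s=\sY s$ (as $\lb\circ\sYd\cong\sY$). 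Since $\nu$ is a fixed point of the closure operator $C$, the closure-invariance $\PA(\rho,\nu)=\PA(C\rho,\nu)$ (which follows from $1_{\PA}\le C$ together with $C$ being a $\CQ$-functor, exactly as in the $F^{\triangleright}\dv F^{\triangleleft}$ computation earlier) yields $\PA(F^{\ra}\mu,\nu)=\PA(C F^{\ra}\mu,\nu)=\PA(\sY s,\nu)=\nu(s)$, which is the required identity. The preservation of weighted limits then follows dually by running the same argument through $\pi_2$, $\sYd$, $\inf$ and the adjunction $F^{\nla}\dv F^{\nra}$.

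The main obstacle, and where care is needed, is that the test objects $Z$ range over all cuts of $\bbA$, not merely representables, so one cannot conclude by full faithfulness of $M$ alone. The decisive point is that passing through $\pi_1$ and Yoneda converts every such $Z$ into evaluation against a $C$-closed presheaf $\nu$, and that closed presheaves cannot separate a presheaf from its closure; matching $C F^{\ra}\mu$ with $\sY s$ (equivalently, that $s=\colim_{\mu}F$ has the same upper bounds as $F^{\ra}\mu$) is the heart of the matter. I expect the only delicate bookkeeping to be the directions of $\lda$ and $\rda$ and the order-reversal of $\PdA$ against $\CQ$-$\Dist$ (Remark \ref{PdA_QDist_order}) in the dual, limit case.
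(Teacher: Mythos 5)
Your proof is correct, but it takes a genuinely different route from the paper's. The paper first reduces everything to the preservation of suprema and infima (via Proposition \ref{colim_supremum}), and then verifies $M({\sup}_{\bbA}\mu)={\colim}_{\mu}M$ by a short residuation computation carried out through the \emph{second} (covariant) leg of a test cut $(\mu',\lam')$: using $\sYd({\sup}_{\bbA}\mu)=\bbA\lda\mu$ (Example \ref{sup_def}) and the swap $\lam'\rda(\bbA\lda\mu)=(\lam'\rda\bbA)\lda\mu$ (Proposition \ref{arrow_calculation}(6)) --- in essence a direct appeal to the Isbell adjunction $\ub\dv\lb$ --- it lands on $M_{\nat}(-,(\mu',\lam'))\lda\mu$ without ever mentioning closure operators. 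You instead keep a general diagram $F:\bbC\to\bbA$ throughout, test through the \emph{first} (contravariant) leg, and pivot on the closure operator $C=\lb\circ\ub$: the identification of $\BM(\bbA)$ with $C(\PA)$ from Section \ref{Characterizations_of_Mphi}, the computation $CF^{\ra}\mu=\sY s$ for $s={\colim}_{\mu}F$, the adjunction $F^{\ra}\dv F^{\la}$ (Proposition \ref{direct_inverse_image_adjunction}), and the closure-invariance $\PA(\rho,\nu)=\PA(C\rho,\nu)$ for $C$-closed $\nu$ (which indeed follows from Proposition \ref{underlying_order} as you indicate). What the paper's route buys is brevity and minimal machinery; what yours buys is that the conceptual content becomes explicit --- $\BM(\bbA)$ is a $\CQ$-closure system of $\PA$, the colimit of $\sY\circ F$ weighted by $\mu$ computed in $\PA$ is just $F^{\ra}\mu$, and the reflection computes the colimit in the subcategory by closing, giving exactly $\sY s=Ms$ --- at the cost of carrying the $F^{\ra}\dv F^{\la}$ adjunction and the fixed-point description along. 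Each step you cite checks out (in particular $\ub F^{\ra}\mu=\sYd s$ is precisely the defining equation of $s=\sup_{\bbA}F^{\ra}\mu$, and your hom computations agree with Equation (\ref{MA_order}) and the Yoneda lemma), and the dual argument for limits goes through verbatim with the interior operator $\ub\circ\lb$ on $\PdA$.
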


\begin{proof}
It suffices to show that $M$ preserves all existing suprema and infima in $\bbA$. Suppose that $\mu\in\PA$ and ${\sup}_{\bbA}\mu$ exists, we must show that
$$M({\sup}_{\bbA}\mu)={\sup}_{\BM(\bbA)}M^{\ra}(\mu)={\colim}_{\mu}M.$$
Indeed, for all $(\mu',\lam')\in\BM(\bbA)$,
\begin{align*}
\BM(\bbA)(M({\sup}_{\bbA}\mu),(\mu',\lam'))&=\lam'\rda(\sYd\circ{\sup}_{\bbA}\mu)\\
&=\lam'\rda\bbA({\sup}_{\bbA}\mu,-)\\
&=\lam'\rda(\bbA\lda\mu)\\
&=(\lam'\rda\bbA)\lda\mu\\
&=\PdA(\sYd-,\lam')\lda\mu\\
&=\BM(\bbA)(M-,(\mu',\lam'))\lda\mu&(\text{Equation (\ref{MA_order})})\\
&=M_{\nat}(-,(\mu',\lam'))\lda\mu.
\end{align*}
Thus the conclusion follows. Similarly one can prove that
$$M({\inf}_{\bbA}\lam)={\inf}_{\BM(\bbA)}M^{\nra}(\lam)={\lim}_{\lam}M$$
whenever ${\inf}_{\bbA}\lam$ exists for some $\lam\in\PdA$.
\end{proof}

The MacNeille completion $\BM(\bbA)$ is the ``smallest'' completion of a $\CQ$-category $\bbA$ in the following sense.

\begin{prop}
Let $F:\bbA\to\bbB$ be a fully faithful $\CQ$-functor, with $\bbB$ complete. Then $F$ factors through $M:\bbA\to\BM(\bbA)$ via a fully faithful $\CQ$-functor $\overline{F}:\BM(\bbA)\to\bbB$.
\begin{equation} \label{MacNeille_smallest}
\bfig
\qtriangle<700,500>[\bbA`\BM(\bbA)`\bbB;M`F`\overline{F}]
\efig
\end{equation}
In particular, if $\bbA$ is a skeletal complete $\CQ$-category, then $\BM(\bbA)$ is isomorphic to $\bbA$. Therefore, the process of the MacNeille completion is idempotent (up to isomorphism).
\end{prop}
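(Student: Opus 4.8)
The plan is to exhibit the factorizing $\CQ$-functor $\overline{F}$ explicitly and verify each claimed property in turn. Since $\bbB$ is complete, for each cut $(\mu,\lam)\in\BM(\bbA)_0$ I would set
$$\overline{F}(\mu,\lam)={\sup}_{\bbB}F^{\ra}(\mu),$$
which exists by Theorem \ref{complete_cocomplete_equivalent}. In fact this is precisely the construction appearing in the necessity part of Theorem \ref{complte_category_sup_dense}: recalling that $\BM(\bbA)=\CM_{\bbA}(\bbA,\bbA)$ with $\phi=\bbA:\bbA\oto\bbA$, the $\CQ$-functor $M:\bbA\to\BM(\bbA)$ is $\sup$-dense, so every cut arises as $\sup_{\BM(\bbA)}M^{\ra}(\mu)$ for the first coordinate $\mu$. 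The verification that $\overline{F}$ is a well-defined $\CQ$-functor follows from $\sup_{\bbB}$ and $F^{\ra}$ being $\CQ$-functors (Proposition \ref{sup_functor} and Definition \ref{direct_inverse_image_def}).

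First I would check the commutativity of Diagram (\ref{MacNeille_smallest}), i.e. $\overline{F}\circ M\cong F$. For $x\in\bbA_0$ one has $M(x)=(\sY x,\sYd x)$, so $\overline{F}\circ M(x)={\sup}_{\bbB}F^{\ra}(\sY x)={\sup}_{\bbB}\circ\sY_{\bbB}\circ Fx\cong Fx$, using the naturality of the Yoneda embedding (Proposition \ref{Yoneda_natural}) and $\sup_{\bbB}\circ\sY_{\bbB}\cong 1_{\bbB}$ (Proposition \ref{sup_functor}). Next I would prove that $\overline{F}$ is fully faithful. The key computation expresses $\bbB(\overline{F}(\mu_1,\lam_1),\overline{F}(\mu_2,\lam_2))$ via Example \ref{sup_def}: it equals $\bbB\lda F^{\ra}(\mu_2)$ evaluated against $F^{\ra}(\mu_1)$, and since $F$ is fully faithful one has $F^{\nat}\circ F_{\nat}=\bbA$ (Proposition \ref{fully_faithful_graph_cograph}(1)). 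Using Proposition \ref{adjoint_arrow_calculation}(2) to shift $F^{\nat}$ across the implication, this reduces to $\lam_2\lda\mu_2$ paired against $\mu_1$ — more precisely to $\PA(\mu_1,\mu_2)$, which by Equation (\ref{MA_order}) is exactly $\BM(\bbA)((\mu_1,\lam_1),(\mu_2,\lam_2))$. The essential input here is that the cut condition $\mu_i=\lb\lam_i$, $\lam_i=\ub\mu_i$ lets me rewrite the first-coordinate hom-arrow without loss.

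For the final assertions, if $\bbA$ is a skeletal complete $\CQ$-category then $M:\bbA\to\BM(\bbA)$ is already fully faithful, and taking $F=1_{\bbA}$ (which is fully faithful with $\bbB=\bbA$ complete) yields a fully faithful $\overline{F}:\BM(\bbA)\to\bbA$ with $\overline{F}\circ M\cong 1_{\bbA}$. I would then argue $M$ is also surjective: for any cut $(\mu,\lam)$, completeness gives $x={\sup}_{\bbA}\mu\in\bbA_0$, and the cut condition forces $\sY x=\mu$ and $\sYd x=\lam$ (via Example \ref{sup_def} and the defining equations $\mu=\lb\lam$, $\lam=\ub\mu$), so $M(x)=(\mu,\lam)$. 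Hence $M$ is a bijective fully faithful $\CQ$-functor, i.e. an isomorphism, and idempotency follows.

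\smallskip

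I expect the main obstacle to be the full faithfulness computation for $\overline{F}$, where I must carefully track which coordinate of the cut to use and correctly apply the adjoint-arrow identities of Proposition \ref{adjoint_arrow_calculation} to commute $F_{\nat}$ and $F^{\nat}$ past the implications $\lda$, $\rda$. The subtlety is that $\overline{F}$ is defined through the $\mu$-coordinate via suprema, so the hom-arrow of $\bbB$ must be matched against $\PA(\mu_1,\mu_2)$ rather than the $\PdA$-side; keeping the contravariance conventions of Remark \ref{PdA_QDist_order} straight is where an error would most likely creep in.
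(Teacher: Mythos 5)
Your proposal is correct and takes essentially the same route as the paper's own proof: the same factorizing functor $\overline{F}={\sup}_{\bbB}\circ F^{\ra}$, the same commutativity argument via Yoneda naturality, full faithfulness reduced --- using $F^{\nat}\circ F_{\nat}=\bbA$, the adjoint-arrow identities and the cut condition --- to the key identity $(F_{\nat}\lda\mu)\rda F_{\nat}=\mu$ for cuts (the paper's Step 1), and in the skeletal complete case the same computation $\sY({\sup}_{\bbA}\mu)=(\bbA\lda\mu)\rda\bbA=\mu$ showing $M$ is an isomorphism. The only blemish is the intermediate expression ``$\lam_2\lda\mu_2$'', which is ill-typed (these two $\CQ$-distributors do not share a common domain), but your immediate correction to $\PA(\mu_1,\mu_2)$ is the right target, so the argument is unaffected.
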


\begin{proof}
{\bf Step 1.} For a fully faithful $\CQ$-functor $F:\bbA\to\bbB$,  a fixed point $\mu$ of $\lb_{\bbA}\circ\ub_{\bbA}$ is also a fixed point of $(F_{\nat})^{\da}\circ(F_{\nat})_{\ua}$. On one hand,
\begin{align*}
&(\bbA\lda\mu)\circ((F_{\nat}\lda\mu)\rda F_{\nat})\\
={}&((F^{\nat}\circ F_{\nat})\lda\mu)\circ((F_{\nat}\lda\mu)\rda F_{\nat})&(\text{Proposition \ref{fully_faithful_graph_cograph}})\\
={}&F^{\nat}\circ(F_{\nat}\lda\mu)\circ((F_{\nat}\lda\mu)\rda F_{\nat})&(\text{Proposition \ref{adjoint_arrow_calculation}(3)})\\
\leq{}&F^{\nat}\circ F_{\nat}\\
={}&\bbA,&(\text{Proposition \ref{fully_faithful_graph_cograph}})
\end{align*}
and consequently
$$(F_{\nat})^{\da}\circ(F_{\nat})_{\ua}(\mu)=(F_{\nat}\lda\mu)\rda F_{\nat}\leq(\bbA\lda\mu)\rda\bbA=\mu.$$
On the other hand, $\mu\leq(F_{\nat})^{\da}\circ(F_{\nat})_{\ua}(\mu)$ holds trivially. Thus $\mu=(F_{\nat})^{\da}\circ(F_{\nat})_{\ua}(\mu)$.

{\bf Step 2.} The existence of $\overline{F}$. We identify $\BM(\bbA)$ with the $\CQ$-category $\lb_{\bbA}\circ\ub_{\bbA}(\PA)$ of the fixed points of $\lb_{\bbA}\circ\ub_{\bbA}$, then the functor $M$ is exactly the Yoneda embedding $\sY_{\bbA}$. Define
$$\overline{F}={\sup}_{\bbB}\circ F^{\ra},$$
then the commutativity of Diagram (\ref{MacNeille_smallest}) follows immediately from Proposition \ref{Yoneda_natural} and \ref{sup_functor}. It remains to show that $\overline{F}$ is fully faithful.  Indeed, let $\mu$ and $\mu'$ be two fixed points of $\lb_{\bbA}\circ\ub_{\bbA}$, then
\begin{align*}
&\bbB(\overline{F}(\mu),\overline{F}(\mu'))\\
={}&\bbB({\sup}_{\bbB}\circ F^{\ra}(\mu),{\sup}_{\bbB}\circ F^{\ra}(\mu'))\\
={}&\bbB({\sup}_{\bbB}\circ F^{\ra}(\mu'),-)\rda\bbB({\sup}_{\bbB}\circ F^{\ra}(\mu),-)\\
={}&(F_{\nat}\lda\mu')\rda(F_{\nat}\lda\mu)&(\text{Proposition \ref{colim_supremum}})\\
={}&((F_{\nat}\lda\mu')\rda F_{\nat})\lda\mu\\
={}&\mu'\lda\mu&(\text{Step 1})\\
={}&\PA(\mu,\mu').
\end{align*}

{\bf Step 3.} In particular, when $\bbA$ is a skeletal complete $\CQ$-category, let $F=1_{\bbA}:\bbA\to\bbA$, then $\overline{1_{\bbA}}=\sup_{\bbA}$. It follows Step 2 (or Proposition \ref{sup_functor}) that $\sup_{\bbA}\circ\sY_{\bbA}=1_{\bbA}$. Conversely, for all $\mu\in\lb_{\bbA}\circ\ub_{\bbA}(\PA)$,
\begin{align*}
\sY_{\bbA}\circ{\sup}_{\bbA}\mu&=\bbA(-,{\sup}_{\bbA}\mu)\\
&=\bbA({\sup}_{\bbA}\mu,-)\rda\bbA\\
&=(\bbA\lda\mu)\rda\bbA\\
&=\lb_{\bbA}\circ\ub_{\bbA}\mu\\
&=\mu.
\end{align*}
Thus $\sY_{\bbA}\circ\sup_{\bbA}=1_{\BM(\bbA)}$. This means that $\sY_{\bbA}$ and $\sup_{\bbA}$ are both isomorphisms between $\bbA$ and $\BM(\bbA)$.
\end{proof}

Finally, we present a comparison of the free cocompletion $\PA$ and the MacNeille completion $\BM(\bbA)$ (identified with $\lb\circ\ub(\PA)$) of a $\CQ$-category $\bbA$.

\begin{prop}
Let $\bbA$ be a $\CQ$-category.
\begin{itemize}
\item[\rm (1)] The Yoneda embedding $\sY:\bbA\to\PA$ is $\sup$-dense.
\item[\rm (2)] If the codomain of $\sY$ is restricted to $\BM(\bbA)$, then $\sY:\bbA\to\BM(\bbA)$ is $\inf$-dense.
\end{itemize}
\end{prop}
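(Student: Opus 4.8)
The plan is to handle the two parts separately, since part (1) is essentially already on record while part (2) carries the real content.

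For part (1) I would simply invoke Example \ref{Yoneda_sup_dense}, where it is shown that $\mu=\sup_{\PA}\sY^{\ra}(\mu)$ for every $\mu\in\PA$; this is Equation (\ref{mu_sup_ymu}) from the proof of Theorem \ref{T_D_adjunction}, obtained from the full faithfulness of $\sY$, Proposition \ref{fully_faithful_graph_cograph}(1), and the description of suprema in $\PA$ in Example \ref{PA_complete}. Since every object of the codomain $\PA$ is such a $\mu$, the defining condition of $\sup$-density in Definition \ref{sup_dense}(1) is met by taking the weight to be $\mu$ itself.

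For part (2) I would first identify $\BM(\bbA)$ with the $\CQ$-closure system $\lb\circ\ub(\PA)$ of fixed points of the $\CQ$-closure operator $\lb\circ\ub$ on $\PA$, under which $M$ is exactly $\sY_{\bbA}$ (as in the proof that $\BM(\bbA)$ is the ``smallest'' completion). An object of $\BM(\bbA)$ is then a cut $(\mu,\lam)$ with $\lam=\ub\mu$ and $\mu=\lb\lam$, and I must exhibit $\nu\in\PdA$ with $(\mu,\lam)=\inf_{\BM(\bbA)}\sY^{\nra}(\nu)$. Guided by the classical picture, in which a cut is the meet of the principal cuts attached to its upper bounds, I would take $\nu=\lam=\ub\mu$. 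Since $\BM(\bbA)$ is complete (Proposition \ref{closure_system_complete}), Proposition \ref{colim_supremum}(2) reduces the goal to showing $\lim_{\lam}\sY=(\mu,\lam)$, i.e.\ to verifying the defining equation $\BM(\bbA)(-,(\mu,\lam))=\lam\rda\sY^{\nat}$ of the weighted limit; producing the object satisfying this equation simultaneously proves existence.

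The crux is the computation of the cograph $\sY^{\nat}\colon\BM(\bbA)\oto\bbA$. Using the identification together with the evaluation of hom-arrows in $\BM(\bbA)$, I would show that for a cut $c=(\mu',\lam')$ and $a\in\bbA_0$ one has $\sY^{\nat}(c,a)=\BM(\bbA)(c,\sY a)=\sY a\lda\mu'=(\ub\mu')(a)=\lam'(a)$; that is, the hom-arrow from $c$ to the representable $\sY a$ recovers the upper-bound component $\lam'$. Feeding this into the right-implication formula of $\CQ$-$\Dist$ (Proposition \ref{Dist_quantaloid}(4)) yields $(\lam\rda\sY^{\nat})(c)=\bw_{a}\lam(a)\rda\lam'(a)=\lam\rda\lam'$, which by the definition (\ref{MA_order}) of hom-arrows in $\BM(\bbA)$ is precisely $\BM(\bbA)(c,(\mu,\lam))=\PdA(\lam',\lam)$. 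As this holds for all $c$, it gives $(\mu,\lam)=\lim_{\lam}\sY=\inf_{\BM(\bbA)}\sY^{\nra}(\lam)$, hence $\inf$-density.

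The main obstacle is exactly this cograph identity $\sY^{\nat}(c,a)=\lam'(a)$: it is where the asymmetry between the two codomains ($\PA$ in part (1) versus $\BM(\bbA)$ in part (2)) makes itself felt, and it is what forces the weight to be the covariant (``upper'') component $\lam$ rather than $\mu$; once it is in place everything else is formal. As a sanity check and slicker alternative, I note that part (2) also follows immediately from Theorem \ref{complte_category_sup_dense} applied to $\phi=\bbA$: since $\BM(\bbA)=\CM_{\bbA}(\bbA,\bbA)$, the $\inf$-dense $\CQ$-functor $G$ produced there is $Gb=(\olphi b,\uphi\circ\olphi b)=(\sY b,\sYd b)=Mb$, i.e.\ $G=\sY$.
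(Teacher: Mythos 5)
Your proposal is correct, and part (1) is handled exactly as in the paper (by citing Example \ref{Yoneda_sup_dense}). For part (2), however, your primary argument takes a genuinely different route: the paper's entire proof of (2) is what you relegate to a closing ``sanity check'' --- it simply observes that, under the identification $\BM(\bbA)=\CM_{\bbA}(\bbA,\bbA)$, the restricted Yoneda embedding is the $\inf$-dense $\CQ$-functor $G$ of Theorem \ref{complte_category_sup_dense} with $\phi=\bbA$ (composed with the isomorphic projection onto $\lb\circ\ub(\PA)$), and stops there. Your main computation instead proves $\inf$-density from scratch: you exhibit the weight $\nu=\lam$ for a cut $(\mu,\lam)$, compute the cograph via $M^{\nat}(c,a)=\BM(\bbA)(c,Ma)=\sY a\lda\mu'=(\ub\mu')(a)=\lam'(a)$ for a cut $c=(\mu',\lam')$ (using $\lam'=\ub\mu'$, Equation (\ref{MA_order}) and Remark \ref{distributor_notion}), and then verify the defining equation $\BM(\bbA)(-,(\mu,\lam))=\lam\rda M^{\nat}$ of the weighted limit via Proposition \ref{Dist_quantaloid}(4), so that $(\mu,\lam)={\lim}_{\lam}M={\inf}_{\BM(\bbA)}M^{\nra}(\lam)$ by Proposition \ref{colim_supremum}(2). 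I checked each of these steps and they are sound; in particular the key identity $\sY a\lda\mu'=(\ub\mu')(a)$ is exactly Remark \ref{distributor_notion}(2) applied to $\ub\mu'=\bbA\lda\mu'$. As for what each approach buys: the paper's argument is shorter and exhibits $\BM(\bbA)$ as the special case $\phi=\bbA$ of the general theory of $\CM_{\phi}(\bbA,\bbB)$, but it leans on the fairly long proof of Theorem \ref{complte_category_sup_dense}; your direct argument is self-contained, makes the weight explicit, and isolates the concrete mechanism at work --- the cograph of $M$ recovers the upper component of a cut --- which is indeed where the asymmetry between the codomains $\PA$ and $\BM(\bbA)$ enters.
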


\begin{proof}
(1) has been obtained in Example \ref{Yoneda_sup_dense}. For (2), note that $\sY:\bbA\to\BM(\bbA)$ is the composition of the $\inf$-dense $\CQ$-functor given in Theorem \ref{complte_category_sup_dense} (Equation (\ref{Fa_Gb_def})) and the isomorphic projection from $\CM_{\bbA}(\bbA,\bbA)$ to $\lb\circ\ub(\PA)$. The conclusion thus follows.
\end{proof}


Recall that the $\CQ$-closure system $\BM(\bbA)=\lb\circ\ub(\PA)$ of $\PA$ is closed with respect to infima in $\PA$ (see Proposition \ref{closure_system_infima_closed}). Therefore, in the case that $\bbA$ is skeletal, if we identify $\bbA$ with the $\CQ$-subcategory
$$\sY(\bbA)=\{\sY x\mid x\in\bbA_0\}$$
of $\PA$, then the above proposition implies that $\BM(\bbA)$ is the \emph{closure} \cite{Albert1988,Lai2007} of $\bbA$ under the formation of weighted limits in $\PA$. As a comparison, $\PA$ is the closure of $\bbA$ under the formation of weighted colimits in $\PA$.

\chapter{Kan Adjunctions in $\CQ$-categories} \label{Kan_adjunction}

In this chapter, we first recall Kan extensions of $\CQ$-functors presented in \cite{Stubbe2005}. Then we introduce Kan adjunctions between $\CQ$-categories of contravariant presheaves and covariant presheaves arise from $\CQ$-distributors. We prove that this process is contravariant functorial from the category of $\CQ$-distributors and infomorphisms to the category of complete $\CQ$-categories and left adjoint $\CQ$-functors. Also, the free cocompletion functor of $\CQ$-categories factors through this functor.

\section{Kan extensions of $\CQ$-functors} \label{Kan_extension_of_Q_functors}

Kan extensions of $\CV$-functors introduced in Section \ref{Kan_extensions_V_functors} naturally give rise to the definitions of Kan extensions of $\CQ$-functors.

\begin{defn} \label{Kan_def}
Let $K:\bbA\to\bbC$ be a $\CQ$-functor and $\bbB$ a $\CQ$-category.
\begin{itemize}
\item[\rm (1)] The \emph{left Kan extension} of a $\CQ$-functor $F:\bbA\to\bbB$ along $K:\bbA\to\bbC$, if it exists, is a $\CQ$-functor
$$\Lan_K F:\bbC\to\bbB$$
satisfying
\begin{equation} \label{left_Kan_def}
\Lan_K F\leq S\iff F\leq S\circ K
\end{equation}
for any other $\CQ$-functor $S:\bbC\to\bbB$.
$$\bfig
\Vtriangle/->`->`<-/[\bbA`\bbB`\bbC;F`K`S]
\place(500,300)[\Downarrow]
\place(1250,300)[\iff]
\Vtriangle(1500,0)|alm|/->`->`<--/[\bbA`\bbB`\bbC;F`K`\Lan_K F]
\place(2000,300)[\Downarrow]
\morphism(2000,0)|r|/{@{>}@/_2.5em/}/<500,500>[\bbC`\bbB;S]
\place(2350,180)[\twoar(1,-1)]
\efig$$
\item[\rm (2)] The \emph{right Kan extension} of a $\CQ$-functor $F:\bbA\to\bbB$ along $K:\bbA\to\bbC$, if it exists, is a $\CQ$-functor
$$\Ran_K F:\bbC\to\bbB$$
satisfying
\begin{equation} \label{right_Kan_def}
S\leq\Ran_K F\iff S\circ K\leq F
\end{equation}
for any other $\CQ$-functor $S:\bbC\to\bbB$.
$$\bfig
\Vtriangle/->`->`<-/[\bbA`\bbB`\bbC;F`K`S]
\place(500,300)[\Uparrow]
\place(1250,300)[\iff]
\Vtriangle(1500,0)|alm|/->`->`<--/[\bbA`\bbB`\bbC;F`K`\Ran_K F]
\place(2000,300)[\Uparrow]
\morphism(2000,0)|r|/{@{>}@/_2.5em/}/<500,500>[\bbC`\bbB;S]
\place(2330,110)[\twoar(-1,1)]
\efig$$
\end{itemize}
\end{defn}

Given a $\CQ$-functor $K:\bbA\to\bbC$ and another $\CQ$-category $\bbB$, composing with $K$ yields an order-preserving function
$$K^*:\CQ\text{-}\Cat(\bbC,\bbB)\to\CQ\text{-}\Cat(\bbA,\bbB)$$
between the preordered sets of $\CQ$-functors, which sends a $\CQ$-functor $S:\bbC\to\bbB$ to $S\circ K:\bbA\to\bbB$.
$$\bfig
\Vtriangle/->`->`<-/[\bbA`\bbB`\bbC;S\circ K`K`S]
\efig$$
If each $\CQ$-functor $F:\bbA\to\bbB$ has a left Kan extension $\Lan_K F:\bbC\to\bbB$ along $K$, then we obtain an adjunction in ${\bf 2}$-$\Cat$
\begin{equation} \label{LanK_Kstar_adjonit}
\Lan_K\dv K^*:\CQ\text{-}\Cat(\bbA,\bbB)\rhu\CQ\text{-}\Cat(\bbC,\bbB).
\end{equation}
Dually, if each $\CQ$-functor $F:\bbA\to\bbB$ has a right Kan extension $\Ran_K F:\bbC\to\bbB$ along $K$, then we obtain an adjunction in ${\bf 2}$-$\Cat$
\begin{equation} \label{Kstar_RanK_adjonit}
K^*\dv\Ran_K:\CQ\text{-}\Cat(\bbC,\bbB)\rhu\CQ\text{-}\Cat(\bbA,\bbB).
\end{equation}

Given a $\CQ$-functor $F:\bbA\to\bbB$, if the codomain $\bbB$ admits certain colimits (or equivalently, certain suprema), then the left Kan extension of $F$ along some $\CQ$-functor $K:\bbA\to\bbC$ can be constructed \emph{pointwise} for each object $c\in\bbC_0$. Dually, if the codomain $\bbB$ admits certain limits (or equivalently, certain infima), then the right Kan extension of $F$ along $K$ can be constructed pointwise.

\begin{prop} {\rm\cite{Stubbe2005}} \label{Kan_limit}
Let $F:\bbA\to\bbB$ and $K:\bbA\to\bbC$ be $\CQ$-functors.
\begin{itemize}
\item[\rm (1)] The left Kan extension of $F$ along $K$ can be computed by
$$(\Lan_K F)c={\colim}_{K_{\nat}(-,c)}F={\sup}_{\bbB}F^{\ra}(K_{\nat}(-,c))$$
if the weighted colimit exists for each $c\in\bbC_0$.
\item[\rm (2)] The right Kan extension of $F$ along $K$ can be computed by
$$(\Ran_K F)c={\lim}_{K^{\nat}(c,-)}F={\inf}_{\bbB}F^{\nra}(K^{\nat}(c,-))$$
if the weighted limit exists for each $c\in\bbC_0$.
\end{itemize}
\end{prop}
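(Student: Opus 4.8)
The plan is to prove part (1) and deduce part (2) by the dual argument. The central observation is that the pointwise prescription $c\mapsto\colim_{K_{\nat}(-,c)}F$ is exactly the colimit of $F$ weighted by the graph distributor $K_{\nat}:\bbA\oto\bbC$. Accordingly I would first check that these pointwise colimits assemble into a genuine $\CQ$-functor $G:\bbC\to\bbB$ with $G_{\nat}=F_{\nat}\lda K_{\nat}$, and then verify that $G$ satisfies the defining universal property of $\Lan_K F$ from Definition \ref{Kan_def}(1).

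First I would set $Gc=\colim_{K_{\nat}(-,c)}F$, which exists by hypothesis and has type $tc$. By the defining equation of a weighted colimit in Definition \ref{limit_colimit_def} together with Remark \ref{distributor_notion}(6), one has $\bbB(Gc,-)=F_{\nat}\lda K_{\nat}(-,c)=(F_{\nat}\lda K_{\nat})(c,-)$ for every $c\in\bbC_0$. Now $F_{\nat}\lda K_{\nat}:\bbC\oto\bbB$ is a genuine $\CQ$-distributor by Proposition \ref{Dist_quantaloid}(4); applying its second defining inequality $\psi(c',y)\circ\bbC(c,c')\leq\psi(c,y)$ at $y=Gc'$, and using $\psi(c',Gc')=\bbB(Gc',Gc')\geq 1_{tc'}$, yields $\bbC(c,c')\leq\bbB(Gc',Gc')\circ\bbC(c,c')\leq\bbB(Gc,Gc')$. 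Hence $G$ is a $\CQ$-functor, and reading the displayed identity back gives $G_{\nat}=F_{\nat}\lda K_{\nat}$. This is precisely the assembly of pointwise colimits already indicated in the remark following Definition \ref{limit_colimit_def}.

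The heart of the argument is then a short chain of equivalences in the quantaloid $\CQ$-$\Dist$. For an arbitrary $\CQ$-functor $S:\bbC\to\bbB$ I would compute
$$G\leq S\iff S_{\nat}\leq G_{\nat}\iff S_{\nat}\leq F_{\nat}\lda K_{\nat}\iff S_{\nat}\circ K_{\nat}\leq F_{\nat}\iff (S\circ K)_{\nat}\leq F_{\nat}\iff F\leq S\circ K,$$
where the first and last equivalences are Equation (\ref{functor_graph_order}), the middle one is the implication adjunction of Proposition \ref{arrow_calculation}(1) read in $\CQ$-$\Dist$, and $S_{\nat}\circ K_{\nat}=(S\circ K)_{\nat}$ is Proposition \ref{graph_cograph_functor}(2). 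By Definition \ref{Kan_def}(1) this says exactly that $G=\Lan_K F$. Finally, the remaining equality in the statement, $\colim_{K_{\nat}(-,c)}F={\sup}_{\bbB}F^{\ra}(K_{\nat}(-,c))$, is Proposition \ref{colim_supremum}(1) applied to the contravariant presheaf $K_{\nat}(-,c)\in\PA$. Part (2) follows by the same scheme with $\lda$, $K_{\nat}$, $F^{\ra}$, ${\sup}_{\bbB}$ replaced throughout by $\rda$, $K^{\nat}$, $F^{\nra}$, ${\inf}_{\bbB}$, invoking Proposition \ref{colim_supremum}(2) and the other half of Equation (\ref{functor_graph_order}).

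I expect the only delicate point to be the assembly step, namely confirming that the separately existing pointwise colimits genuinely constitute a $\CQ$-functor whose graph is $F_{\nat}\lda K_{\nat}$; once this is secured, the universal property is a purely formal manipulation of graphs, cographs and implications, requiring no further computation with the objects of $\bbB$.
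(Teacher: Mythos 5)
Your proposal is correct, and it proves the same statement by the same underlying mechanism — verifying the order-theoretic universal property (\ref{left_Kan_def}) through the graph calculus in $\CQ$-$\Dist$ — but the organization genuinely differs from the paper's. The paper argues entirely pointwise: fixing $c\in\bbC_0$ and an arbitrary $S$, it unwinds ${\colim}_{K_{\nat}(-,c)}F\leq Sc$ into $1_{tc}\leq F_{\nat}(-,Sc)\lda K_{\nat}(-,c)$, rewrites $F_{\nat}(-,Sc)$ as $(S^{\nat}\circ F_{\nat})(-,c)$ via Proposition \ref{graph_cograph_distributor}, collects over all $c$ to get $K_{\nat}\leq S^{\nat}\circ F_{\nat}$, and then transposes by the adjunction $S_{\nat}\dv S^{\nat}$ in $\CQ$-$\Dist$; at no point does it form the distributor $F_{\nat}\lda K_{\nat}$ or address whether $c\mapsto{\colim}_{K_{\nat}(-,c)}F$ is a $\CQ$-functor. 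You instead first assemble the pointwise colimits into a global identity $G_{\nat}=F_{\nat}\lda K_{\nat}$ — including an explicit check, via the distributor axiom for $F_{\nat}\lda K_{\nat}$, that $G$ is indeed a $\CQ$-functor — and then the universal property collapses to the single residuation step $S_{\nat}\leq F_{\nat}\lda K_{\nat}\iff S_{\nat}\circ K_{\nat}\leq F_{\nat}$ in place of the paper's detour through $S^{\nat}\circ F_{\nat}$. What each buys: your version makes rigorous a point the paper leaves implicit (the definition of $\Lan_K F$ requires a $\CQ$-functor, and the remark after Definition \ref{limit_colimit_def} only records the converse direction, from a distributor-weighted colimit to its pointwise values), and it exhibits $\Lan_K F$ directly as the $K_{\nat}$-weighted colimit of $F$; the paper's pointwise route, on the other hand, never needs the assembly step at all, since it only ever compares objects in the underlying preorder. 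Your dualization of part (2), swapping $\lda,K_{\nat},F^{\ra},\sup_{\bbB}$ for $\rda,K^{\nat},F^{\nra},\inf_{\bbB}$ and using the other half of (\ref{functor_graph_order}), also matches the paper's (omitted) dual argument and is sound.
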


\begin{proof}
We prove (1) for example. For each $\CQ$-functor $S:\bbC\to\bbB$,
\begin{align*}
&\forall c\in\bbC_0,{\colim}_{K_{\nat}(-,c)}F\leq Sc\\
\iff&\forall c\in\bbC_0,1_{tc}\leq\bbB({\colim}_{K_{\nat}(-,c)}F,Sc)\\
\iff&\forall c\in\bbC_0,1_{tc}\leq F_{\nat}(-,Sc)\lda K_{\nat}(-,c)&\text{(Definition \ref{limit_colimit_def})}\\
\iff&\forall c\in\bbC_0,K_{\nat}(-,c)\leq F_{\nat}(-,Sc)\\
\iff&\forall c\in\bbC_0,K_{\nat}(-,c)\leq (S^{\nat}\circ F_{\nat})(-,c)&\text{(Proposition \ref{graph_cograph_distributor})}\\
\iff&K_{\nat}\leq S^{\nat}\circ F_{\nat}\\
\iff&S_{\nat}\circ K_{\nat}\leq F_{\nat}&(S_{\nat}\dv S^{\nat}:\bbC\rhu\bbB)\\
\iff&(S\circ K)_{\nat}\leq F_{\nat}&\text{(Proposition \ref{graph_cograph_functor})}\\
\iff&F\leq S\circ K.&\text{(Formula (\ref{functor_graph_order}))}
\end{align*}
Thus ${\colim}_{K_{\nat}(-,c)}F=(\Lan_K F)c$.
\end{proof}

By Proposition \ref{Kan_limit}, we arrive at the following corollary that extends the equivalent characterizations of complete $\CQ$-categories in Theorem \ref{complete_cocomplete_equivalent}.

\begin{cor} \label{Kan_complete}
Let $\bbB$ be a $\CQ$-category. The following conditions are equivalent:
\begin{itemize}
\item[\rm (1)] $\bbB$ is complete.
\item[\rm (2)] The left Kan extension of each $\CQ$-functor $F:\bbA\to\bbB$ along any $K:\bbA\to\bbC$ exists.
\item[\rm (3)] The right Kan extension of each $\CQ$-functor $F:\bbA\to\bbB$ along any $K:\bbA\to\bbC$ exists.
\item[\rm (4)] The left Kan extension of each $\CQ$-functor $F:\bbA\to\bbB$ along $\sY_{\bbA}:\bbA\to\PA$ exists.
\item[\rm (5)] The right Kan extension of each $\CQ$-functor $F:\bbA\to\bbB$ along $\sYd_{\bbA}:\bbA\to\PdA$ exists.
\end{itemize}
In this case,
$${\colim}_{\mu}F=(\Lan_{\sY_{\bbA}}F)\mu\quad\text{and}\quad{\lim}_{\lam}F=(\Ran_{\sYd_{\bbA}}F)\lam$$
for each $\CQ$-functor $F:\bbA\to\bbB$ and $\mu\in\PA$, $\lam\in\PdA$.
\end{cor}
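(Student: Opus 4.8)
The plan is to lean on Theorem~\ref{complete_cocomplete_equivalent}, which already equates completeness with cocompleteness and with the existence of a left adjoint $\sup\dv\sY_\bbB$ of the Yoneda embedding, and to arrange the five conditions into the two cycles $(1)\Rightarrow(2)\Rightarrow(4)\Rightarrow(1)$ and $(1)\Rightarrow(3)\Rightarrow(5)\Rightarrow(1)$, the second being the formal dual of the first (colimits, suprema, $\sY$, $\Lan$ replaced by limits, infima, $\sYd$, $\Ran$). For $(1)\Rightarrow(2)$ I would note that a complete $\bbB$ is cocomplete, so every weighted colimit exists; in particular $\colim_{K_\nat(-,c)}F$ exists for each $c\in\bbC_0$ (here $K_\nat(-,c)\in\PA$), and Proposition~\ref{Kan_limit}(1) assembles these into $\Lan_K F$. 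The implication $(2)\Rightarrow(4)$ is just the instance $K=\sY_\bbA$, $\bbC=\PA$, hence immediate.

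The substance is $(4)\Rightarrow(1)$. I would instantiate (4) with $\bbA=\bbB$, $F=1_\bbB$ and $K=\sY_\bbB\colon\bbB\to\PB$, obtaining $L:=\Lan_{\sY_\bbB}1_\bbB\colon\PB\to\bbB$, and then verify condition (5) of Theorem~\ref{complete_cocomplete_equivalent} with $\sup:=L$, that is $L\circ\sY_\bbB\cong 1_\bbB$; the theorem then yields completeness. The order-universal property of Definition~\ref{Kan_def} reads $L\leq S\iff 1_\bbB\leq S\circ\sY_\bbB$ for every $\CQ$-functor $S\colon\PB\to\bbB$, and reusing the portion of the computation in the proof of Proposition~\ref{Kan_limit} that does not presuppose the existence of colimits rewrites the right-hand side as $1_{\PB}\leq\sY_\bbB\circ S$ (equivalently $\mu\leq\sY_\bbB(S\mu)$ for all $\mu$, by Proposition~\ref{arrow_calculation}(1) and the Yoneda Lemma~\ref{Yoneda_lemma}). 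Taking $S=L$ gives at once the inequality $1_\bbB\leq L\circ\sY_\bbB$, which is one half of the desired isomorphism, and also exhibits $L$ as the least $S$ with $1_{\PB}\leq\sY_\bbB\circ S$.

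The main obstacle is the reverse inequality $L\circ\sY_\bbB\leq 1_\bbB$, i.e. $L(\sY_\bbB x)\leq x$. A purely formal reading of the universal property does not deliver it: the composition law in $\bbB$ together with the unit $\mu\leq\bbB(-,L\mu)$ only yields $\bbB(L\mu,-)\leq\bbB\lda\mu$, whereas $L(\sY_\bbB x)\leq x$ is equivalent to the opposite estimate and amounts to the assertion that the order-theoretic Kan extension of Definition~\ref{Kan_def} actually computes the pointwise colimit $\colim_\mu 1_\bbB=\sup\mu$. I expect this ``pointwiseness'' to be the delicate step, turning on the full faithfulness (density) of $\sY_\bbB$, which reflects the underlying order, rather than on the universal property alone; this is precisely the input that upgrades the abstract left Kan extension $L$ into a supremum operator, and hence into the left adjoint $\sup\dv\sY_\bbB$.

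Once completeness is in hand, the two displayed formulas fall out of Proposition~\ref{Kan_limit}: since $(\sY_\bbA)_\nat(-,\mu)=\PA(\sY_\bbA-,\mu)=\mu$ and $(\sYd_\bbA)^\nat(\lam,-)=\lam$ by the Yoneda Lemma~\ref{Yoneda_lemma}, the pointwise computation of $\Lan_{\sY_\bbA}F$ and $\Ran_{\sYd_\bbA}F$ reads $(\Lan_{\sY_\bbA}F)\mu=\colim_\mu F$ and $(\Ran_{\sYd_\bbA}F)\lam=\lim_\lam F$, as required.
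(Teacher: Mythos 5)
Your overall architecture coincides with the paper's: the two dual cycles, with $(1)\Rightarrow(2)$ obtained from cocompleteness plus Proposition~\ref{Kan_limit}, $(2)\Rightarrow(4)$ by instantiation, and the closing display derived from Proposition~\ref{Kan_limit} and the Yoneda lemma once completeness is in hand. The divergence is in $(4)\Rightarrow(1)$: the paper argues with an \emph{arbitrary} $F:\bbA\to\bbB$ and $\mu\in\PA$, writing $\colim_\mu F=\colim_{(\sY_\bbA)_\nat(-,\mu)}F=(\Lan_{\sY_\bbA}F)\mu$, whereas you specialize to the single instance $F=1_\bbB$, $K=\sY_\bbB$, set $L=\Lan_{\sY_\bbB}1_\bbB$, and aim at condition (5) of Theorem~\ref{complete_cocomplete_equivalent}. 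Your computations up to that point are correct: the universal property does rewrite as ``$L$ is the least $S$ with $1_{\PB}\leq\sY_\bbB\circ S$'', and $1_\bbB\leq L\circ\sY_\bbB$ follows by taking $S=L$.

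But at the decisive step --- the reverse inequality $L\circ\sY_\bbB\leq 1_\bbB$, equivalently that $L\mu$ is a \emph{supremum} of $\mu$ and not merely an upper bound --- you stop and write that you ``expect'' it to follow from the density of $\sY_\bbB$. That expectation is not an argument, and it is the entire content of the implication; everything before it is formal. Moreover, the route you sketch cannot succeed as stated, because the universal property of a \emph{single} Kan extension along a Yoneda embedding does not force pointwiseness. Already over $\CQ={\bf 2}$: take $\bbB=\{z,b,w\}$ with $z\leq b$ and $w$ incomparable to both, $\bbA=\{a\}$ discrete, $Fa=b$; then $\Lan_{\sY_\bbA}F$ exists (it sends the bottom presheaf to $z$ and $\sY a$ to $b$), yet $\colim$ of $F$ weighted by the bottom presheaf, i.e.\ $\bot_\bbB$, does not exist. (This $\bbB$ violates (4) globally --- the empty category already forces a bottom --- so the corollary is untouched; what fails is the inference from one instance.) Hence a correct proof must actually manufacture competitors that force $L$ down --- in the poset case $S(\nu)=y$ if $\nu\leq\sY y$ and $S(\nu)=L(\nu\vee\sY y)$ otherwise, which yields $L(\sY y)\leq y$ --- or must exploit the full quantification over $\bbA$ and $F$ in (4), as the paper's formulation does. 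Your underlying worry is legitimate (the paper's one-line appeal to Proposition~\ref{Kan_limit} presupposes the existence of the very colimits being constructed, since that proposition is stated under the hypothesis that they exist), but identifying the subtlety without resolving it leaves the key implication, and hence the corollary, unproven.
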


\begin{proof}
We prove (1)$\iff$(2)$\iff$(4) for example.

(1)${}\Lra{}$(2): Proposition \ref{Kan_limit}.

(2)${}\Lra{}$(4): Trivial.

(4)${}\Lra{}$(1): For each $\CQ$-functor $F:\bbA\to\bbB$ and $\mu\in\PA$,
$${\colim}_{\mu}F={\colim}_{(\sY_{\bbA})_{\nat}(-,\mu)}F=(\Lan_{\sY_{\bbA}}F)\mu,$$
where the first equality follows from the Yoneda lemma, and the second equality follows from Proposition \ref{Kan_limit}. Thus $\bbB$ is complete.
\end{proof}

If the $\CQ$-functor $K:\bbA\to\bbC$ is fully faithful, then the pointwise Kan extension obtained in Proposition \ref{Kan_limit} is actually an extension, as indicated in the following conclusion.

\begin{cor} {\rm\cite{Stubbe2005}} \label{Kan_fully_faithful_extension}
Let $F:\bbA\to\bbB$ and $K:\bbA\to\bbC$ be $\CQ$-functors, with $K$ fully faithful.
\begin{itemize}
\item[\rm (1)] If the pointwise left Kan extension of $F$ along $K$ exists, then $(\Lan_K F)\circ K\cong F$.
\item[\rm (2)] If the pointwise right Kan extension of $F$ along $K$ exists, then $(\Ran_K F)\circ K\cong F$.
\end{itemize}
\end{cor}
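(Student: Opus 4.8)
The plan is to prove (1) in full and obtain (2) by the evident dualization ($\Lan_K\leftrightarrow\Ran_K$, $K_{\nat}\leftrightarrow K^{\nat}$, $\sY\leftrightarrow\sYd$, $\sup\leftrightarrow\inf$). Fix an object $a\in\bbA_0$; the goal is $(\Lan_K F)(Ka)\cong Fa$, and since this will hold for every object it yields $(\Lan_K F)\circ K\cong F$ in $\CQ$-$\Cat$ (the isomorphism of $\CQ$-functors being checked objectwise).

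First I would invoke Proposition \ref{Kan_limit}(1): because the pointwise left Kan extension is assumed to exist, $(\Lan_K F)(Ka)=\colim_{K_{\nat}(-,Ka)}F=\sup_{\bbB}F^{\ra}(K_{\nat}(-,Ka))$, the relevant weighted colimit and supremum existing by hypothesis. The whole argument then reduces to identifying the weight $K_{\nat}(-,Ka)$ and evaluating this supremum.

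The key step --- and the only place where full faithfulness of $K$ is used --- is the identification $K_{\nat}(-,Ka)=\sY_{\bbA}a$ as contravariant presheaves on $\bbA$. Indeed, by the definition of the graph, $K_{\nat}(a',Ka)=\bbC(Ka',Ka)$, and full faithfulness gives $\bbC(Ka',Ka)=\bbA(a',a)=(\sY_{\bbA}a)(a')$, so the two presheaves coincide. Next I would push this weight through $F^{\ra}$ using the naturality of the Yoneda embedding (Proposition \ref{Yoneda_natural}(1)): $F^{\ra}(\sY_{\bbA}a)=F^{\ra}\circ\sY_{\bbA}(a)=\sY_{\bbB}\circ F(a)=\sY_{\bbB}(Fa)$. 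Finally I would compute $\sup_{\bbB}\sY_{\bbB}(Fa)\cong Fa$; this is the content $\sup\circ\sY\cong 1_{\bbB}$ of Proposition \ref{sup_functor}(1). To avoid assuming $\bbB$ cocomplete, I would note that the object $Fa$ already satisfies the defining equation of a supremum from Example \ref{sup_def}, since $\bbB\lda\bbB(-,Fa)=\bbB(Fa,-)$ by a one-line implication--composition computation (Proposition \ref{arrow_calculation} together with Remark \ref{distributor_notion}(6)); thus the supremum exists unconditionally and equals $Fa$. Chaining these equalities gives $(\Lan_K F)(Ka)\cong Fa$.

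I expect the main obstacle to be bookkeeping rather than anything conceptual: keeping the variances straight (the weight $K_{\nat}(-,Ka)$ is a contravariant presheaf, while $\sup$ is computed in $\bbB$), and remembering that object-level equalities must be read up to isomorphism because $\bbB$ need not be skeletal, so the concluding $\cong$ is justified through Proposition \ref{isomorphic_condition} and the essential uniqueness of weighted colimits. Part (2) then follows by the dual computation, replacing $K_{\nat}(-,Ka)=\sY_{\bbA}a$ by $K^{\nat}(Ka,-)=\sYd_{\bbA}a$ and the supremum of a representable contravariant presheaf by the infimum of a representable covariant presheaf via Proposition \ref{sup_functor}(2).
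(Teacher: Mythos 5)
Your proposal is correct and follows essentially the same route as the paper: both invoke Proposition \ref{Kan_limit} and then use full faithfulness of $K$ to turn the weight $K_{\nat}(-,Ka)$ into the representable presheaf $\bbA(-,a)=\sY_{\bbA}a$, after which the colimit is computed to be $Fa$. The only difference is in the last step's packaging --- the paper evaluates directly from Definition \ref{limit_colimit_def} via $\bbB({\colim}_{\bbA(-,a)}F,-)=F_{\nat}\lda\bbA(-,a)=\bbB(Fa,-)$, whereas you route through $\sup_{\bbB}\circ F^{\ra}$, Yoneda naturality and $\sup\circ\sY\cong 1_{\bbB}$; your remark that the supremum of a representable exists unconditionally is precisely the computation in the paper's proof of Proposition \ref{sup_functor}(1), so both versions avoid assuming $\bbB$ cocomplete.
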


\begin{proof}
We prove (1) for example. For each $a\in\bbA_0$,
\begin{align*}
\bbB((\Lan_K F)\circ Ka,-)&=\bbB({\colim}_{K_{\nat}(-,Ka)}F,-)&\text{(Proposition \ref{Kan_limit})}\\
&=\bbB({\colim}_{\bbC(K-,Ka)}F,-)\\
&=\bbB({\colim}_{\bbA(-,a)}F,-)&(K\text{ is fully faithful})\\
&=F_{\nat}\lda\bbA(-,a)&\text{(Definition \ref{limit_colimit_def})}\\
&=F_{\nat}(a,-)\\
&=\bbB(Fa,-).
\end{align*}
Thus $(\Lan_K F)\circ K\cong F$.
\end{proof}


\section{Kan adjunctions} \label{Kan_adjunctions_section}

Given a $\CQ$-distributor $\phi:\bbA\oto\bbB$, recall that composing with $\phi$ yields two $\CQ$-functors
$$\phi^*:\PB\to\PA\quad\text{and}\quad\phi^{\dag}:\PdA\to\PdB$$
defined by Equation (\ref{phidag_def}), i.e.,
$$\phi^*(\lam)=\lam\circ\phi\quad\text{and}\quad\phi^{\dag}(\mu)=\phi\circ\mu.$$
Define another two $\CQ$-functors
$$\phi_*:\PA\to\PB\quad\text{and}\quad\phi_{\dag}:\PdB\to\PdA$$
by
\begin{equation} \label{phistar_def}
\phi_*(\mu)=\mu\lda\phi\quad\text{and}\quad\phi_{\dag}(\lam)=\phi\rda\lam.
\end{equation}

We remind the readers that $\phi^{\dag}$ and $\phi_{\dag}$ are both covariant with respect to local orders in $\CQ$-$\Dist$. Indeed,
\begin{align*}
\mu_1\leq\mu_2\ \text{in} \ \CQ\text{-}\Dist\iff&\mu_2\leq\mu_1\ \text{in} \ (\PdA)_0&\text{(Remark \ref{PdA_QDist_order})}\\
\Lra{}&\phi^{\dag}(\mu_2)\leq\phi^{\dag}(\mu_1)\ \text{in} \ (\PdB)_0\\
\iff&\phi^{\dag}(\mu_1)\leq\phi^{\dag}(\mu_2)\ \text{in} \ \CQ\text{-}\Dist.
\end{align*}
Similarly one can deduce that
$$\lam_1\leq\lam_2\ \text{in} \ \CQ\text{-}\Dist{}\Lra{}\phi_{\dag}(\lam_1)\leq\phi_{\dag}(\lam_2)\ \text{in} \ \CQ\text{-}\Dist.$$

\begin{prop} \label{phistar_adjoint}
$\phi^*\dv\phi_*:\PB\rhu\PA$ and $\phi_{\dag}\dv\phi^{\dag}:\PdB\rhu\PdA$ in $\CQ$-$\Cat$.
\end{prop}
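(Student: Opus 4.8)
The statement asserts two adjunctions in $\CQ$-$\Cat$: $\phi^*\dv\phi_*:\PB\rhu\PA$ and $\phi_\dag\dv\phi^\dag:\PdB\rhu\PdA$. By Proposition \ref{adjoint_condition}, to establish an adjunction $F\dv G:\bbA\rhu\bbB$ it suffices to verify the hom-object equality $\bbB(Fx,y)=\bbA(x,Gy)$ for all objects. So the plan is to reduce each claim to such an identity and then discharge it by a short implication calculation using the arrow laws in Proposition \ref{arrow_calculation}, exactly as was done for the Isbell adjunction in Proposition \ref{uphi-dphi-adjunction}.

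For the first adjunction, I would compute $\PA(\phi^*(\lam),\mu)$ and $\PB(\lam,\phi_*(\mu))$ and show they agree. Unwinding the definitions in Equations (\ref{phidag_def}) and (\ref{phistar_def}) together with the hom-formula (\ref{PA_arrow}), we have $\PA(\phi^*(\lam),\mu)=\mu\lda(\lam\circ\phi)$ and $\PB(\lam,\phi_*(\mu))=\phi_*(\mu)\lda\lam=(\mu\lda\phi)\lda\lam$. The identity $\mu\lda(\lam\circ\phi)=(\mu\lda\phi)\lda\lam$ is precisely Proposition \ref{arrow_calculation}(5), so the two hom-arrows coincide and Proposition \ref{adjoint_condition} yields $\phi^*\dv\phi_*$. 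Note that because $\phi^*$ and $\phi_*$ are covariant for the local order on $\PA,\PB$ (these being the contravariant-presheaf categories, whose underlying order agrees with the $\CQ$-$\Dist$ order by Remark \ref{PdA_QDist_order}), no order-reversal bookkeeping intervenes here.

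For the second adjunction, $\phi_\dag\dv\phi^\dag:\PdB\rhu\PdA$, I would argue dually. Here the relevant hom-formula is (\ref{PdA_arrow}), namely $\PdA(\mu,\lam)=\lam\rda\mu$ for covariant presheaves, and I would show $\PdA(\phi_\dag(\lam),\mu)=\PdB(\lam,\phi^\dag(\mu))$. Unwinding via (\ref{phidag_def}) and (\ref{phistar_def}) gives $\mu\rda(\phi\rda\lam)$ on one side and $(\phi\circ\mu)\rda\lam$ on the other, and these are equal by the second identity in Proposition \ref{arrow_calculation}(5). The one genuine subtlety, which the excerpt has already flagged in the computation preceding the statement, is that $\phi^\dag$ and $\phi_\dag$ are \emph{covariant} with respect to the local order in $\CQ$-$\Dist$ but this is the \emph{reverse} of the underlying order on $\PdA,\PdB$ (Remark \ref{PdA_QDist_order}); so when reading off which functor is the left adjoint one must be careful that ``$\phi_\dag\dv\phi^\dag$'' is stated with respect to the presheaf-category structure, not the raw distributor order.

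The main obstacle is therefore not the algebra—both identities are single invocations of Proposition \ref{arrow_calculation}(5)—but keeping the variance conventions straight, ensuring that the direction of each adjunction matches Definition \ref{V_adjunction}/Proposition \ref{adjoint_condition} once the hom-objects are computed in the correct order. A clean way to guard against sign errors is to verify in each case that the computed hom-equality is symmetric in the sense required by Proposition \ref{adjoint_condition} (i.e.\ $\bbB(Fx,y)=\bbA(x,Gy)$ with $F$ the purported left adjoint), and only then name the left and right adjoints. I expect the whole proof to occupy only a few displayed lines per adjunction, modeled verbatim on the proof of Proposition \ref{uphi-dphi-adjunction}.
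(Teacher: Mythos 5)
Your proposal is correct and follows essentially the same route as the paper: both reduce each adjunction to the hom-object equality of Proposition \ref{adjoint_condition} and verify it by the one-line computation $\mu\lda(\lam\circ\phi)=(\mu\lda\phi)\lda\lam$ (resp.\ $\mu\rda(\phi\rda\lam)=(\phi\circ\mu)\rda\lam$), which is exactly Proposition \ref{arrow_calculation}(5). The paper's proof is precisely these two displayed chains of equalities, so your plan, including the variance bookkeeping for $\PdA$ and $\PdB$, reproduces it faithfully.
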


\begin{proof}
For all $\lam\in\PB$ and $\mu\in\PA$,
\begin{align*}
\PA(\phi^*(\lam),\mu)&=\mu\lda\phi^*(\lam)\\
&=\mu\lda(\lam\circ\phi)\\
&=(\mu\lda\phi)\lda\lam\\
&=\phi_*(\mu)\lda\lam\\
&=\PB(\lam,\phi_*(\mu)).
\end{align*}
Thus $\phi^*\dv\phi_*:\PB\rhu\PA$. Similarly, for all $\lam\in\PdB$ and $\mu\in\PdA$,
\begin{align*}
\PdA(\phi_{\dag}(\lam),\mu)&=\mu\rda\phi_{\dag}(\lam)\\
&=\mu\rda(\phi\rda\lam)\\
&=(\phi\circ\mu)\rda\lam\\
&=\phi^{\dag}(\mu)\rda\lam\\
&=\PdB(\lam,\phi^{\dag}(\mu)).
\end{align*}
Thus $\phi_{\dag}\dv\phi^{\dag}:\PdB\rhu\PdA$.
\end{proof}

We would like to stress that
\begin{equation} \label{phistar_adjuntion}
\lam\leq\phi_*\circ\phi^*(\lam)\quad\text{and}\quad\phi^*\circ\phi_*(\mu)\leq\mu
\end{equation}
for all $\lam\in\PB$ and $\mu\in\PA$; whereas
\begin{equation} \label{phidag_adjuntion}
\nu\leq\phi_{\dag}\circ\phi^{\dag}(\nu)\quad\text{and}\quad\phi^{\dag}\circ\phi_{\dag}(\ga)\leq\ga
\end{equation}
for all $\nu\in\PdA$ and $\ga\in\PdB$ by Remark \ref{PdA_QDist_order}.

\begin{prop} \label{phi_star_Yoneda}
Let $\phi:\bbA\oto\bbB$ be a $\CQ$-distributor, then $\ulc\phi^*\circ\sY_{\bbB}\urc=\phi=\ulc\phi^{\dag}\circ\sYd_{\bbA}\urc$.
\end{prop}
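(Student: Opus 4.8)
The plan is to unwind every definition in sight and reduce each of the two claimed equalities to a one-line identity between $\CQ$-arrows, exactly as was done in Proposition \ref{uphi_dphi_Yoneda} for the Isbell side. Recall that a $\CQ$-functor $G:\bbB\to\PA$ is encoded by the $\CQ$-distributor $\ulc G\urc$ with $\ulc G\urc(x,y)=(Gy)(x)$ (Equation (\ref{hG_def})), whereas a $\CQ$-functor $F:\bbA\to\PdB$ is encoded by $\ulc F\urc$ with $\ulc F\urc(x,y)=(Fx)(y)$ (Equation (\ref{hF_def})). Since $\phi^*\circ\sY_{\bbB}:\bbB\to\PA$ is of the first type and $\phi^{\dag}\circ\sYd_{\bbA}:\bbA\to\PdB$ of the second, it suffices to evaluate each encoded distributor at an arbitrary pair $(x,y)\in\bbA_0\times\bbB_0$ and check that both values coincide with $\phi(x,y)$.

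For the first equality I would compute
$$\ulc\phi^*\circ\sY_{\bbB}\urc(x,y)=(\phi^*(\sY_{\bbB}y))(x),$$
and then apply the definition of $\phi^*$ (Equation (\ref{phidag_def})) to get $\phi^*(\sY_{\bbB}y)=(\sY_{\bbB}y)\circ\phi$. By the notational convention introduced just before Remark \ref{distributor_notion}, the composite $(\sY_{\bbB}y)\circ\phi$ is precisely the contravariant presheaf $\phi(-,y)\in\PA$, whose value at $x$ is $\phi(x,y)$, and we are done. If one prefers to argue from scratch, expanding the composition of $\CQ$-distributors gives $\bv_{y'\in\bbB_0}\bbB(y',y)\circ\phi(x,y')$; this is $\leq\phi(x,y)$ by the first defining inequality of $\phi$, and $\geq\phi(x,y)$ on taking the single term $y'=y$ together with $\bbB(y,y)\geq 1_{ty}$ (Definition \ref{Q_category}), so it equals $\phi(x,y)$.

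The second equality is entirely dual. Here I would compute $\ulc\phi^{\dag}\circ\sYd_{\bbA}\urc(x,y)=(\phi^{\dag}(\sYd_{\bbA}x))(y)$, use $\phi^{\dag}(\sYd_{\bbA}x)=\phi\circ(\sYd_{\bbA}x)$ (Equation (\ref{phidag_def})), and identify $\phi\circ(\sYd_{\bbA}x)$ with the covariant presheaf $\phi(x,-)\in\PdB$ (again the convention before Remark \ref{distributor_notion}), whose value at $y$ is $\phi(x,y)$. The from-scratch version produces $\bv_{x'\in\bbA_0}\phi(x',y)\circ\bbA(x,x')$, which is sandwiched between $\phi(x,y)$ from below (the term $x'=x$, using $\bbA(x,x)\geq 1_{tx}$) and $\phi(x,y)$ from above (the second defining inequality of $\phi$).

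This is a routine calculation and I do not anticipate a genuine obstacle; the only points demanding care are bookkeeping ones. First, one must keep straight which of the two $\ulc-\urc$ correspondences (the $\PA$-valued one for the first half, the $\PdB$-valued one for the second) is in force, since they invert the roles of the two arguments. Second, one should remember that the underlying order on $\PdB$ is the \emph{reverse} of the local order in $\CQ$-$\Dist$ (Remark \ref{PdA_QDist_order}), which is what makes $\phi^{\dag}$ and $\sYd_{\bbA}$ genuinely $\CQ$-functors into $\PdB$; however, this directional subtlety does not enter the pointwise identities above, which are purely definitional once the relevant compositions are written out.
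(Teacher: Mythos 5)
Your proposal is correct and follows essentially the same route as the paper's proof: evaluate the encoded distributors pointwise via Equations (\ref{hF_def})/(\ref{hG_def}), unfold $\phi^*$ and $\phi^{\dag}$ by Equation (\ref{phidag_def}), and reduce both sides to the identities $\bbB(-,y)\circ\phi(x,-)=\phi(x,y)$ and $\phi(-,y)\circ\bbA(x,-)=\phi(x,y)$. The only difference is cosmetic: the paper treats these last identities as routine consequences of the notational conventions and Remark \ref{distributor_notion}, whereas you additionally spell out the sandwich argument from the distributor axioms and unitality, which makes the same proof slightly more self-contained.
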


\begin{proof}
For all $x\in\bbA_0$ and $y\in\bbB_0$,
\begin{align*}
\ulc\phi^*\circ\sY_{\bbB}\urc(x,y)&=(\phi^*\circ\sY_{\bbB}y)(x)\\
&=(\sY_{\bbB}y\circ\phi)(x)\\
&=\bbB(-,y)\circ\phi(x,-)\\
&=\phi(x,y)\\
&=\phi(-,y)\circ\bbA(x,-)\\
&=(\phi\circ\sYd_{\bbA}x)(y)\\
&=(\phi^{\dag}\circ\sYd_{\bbA}x)(y)\\
&=\ulc\phi^{\dag}\circ\sYd_{\bbA}\urc(x,y),
\end{align*}
showing that the conclusion holds.
\end{proof}

Given a $\CQ$-distributor $\phi:\bbA\oto\bbB$, for each $y\in\bbB_0$, since
\begin{equation} \label{phiF_star_fixed}
\olphi y=\phi(-,y)=\phi^*\circ\sY_{\bbB}y=\phi^*\circ\phi_*\circ\phi^*\circ\sY_{\bbB}y,
\end{equation}
it follows that $\olphi y=\phi(-,y)$ is a fixed point of the $\CQ$-interior operator $\phi^*\circ\phi_*:\PB\to\PB$. Similarly, for all $x\in\bbA_0$,
\begin{equation} \label{phiF_dag_fixed}
\ulphi x=\phi(x,-)=\phi^{\dag}\circ\sYd_{\bbA}x=\phi^{\dag}\circ\phi_{\dag}\circ\phi^{\dag}\circ\sYd_{\bbA}x
\end{equation}
is a fixed point of the $\CQ$-closure operator $\phi^{\dag}\circ\phi_{\dag}:\PdB\to\PdB$.

If $\phi:\bbA\rhu\bbB$ is itself a left adjoint $\CQ$-distributor, then $\phi^*$ and $\phi_{\dag}$ are not only left adjoint $\CQ$-functors, but also right adjoint $\CQ$-functors as asserted in the following proposition. It is also an analogue of Proposition \ref{adjoint_graph} for $\CQ$-distributors.

\begin{prop} \label{adjoint_distributor_Kan}
Let $\phi:\bbA\oto\bbB$ and $\psi:\bbB\oto\bbA$ be a pair of $\CQ$-distributors. The following conditions are equivalent:
\begin{itemize}
\item[\rm (1)] $\phi\dv\psi:\bbA\rhu\bbB$ in $\CQ$-$\Dist$.
\item[\rm (2)] $\phi^*=\psi_*$.
\item[\rm (3)] $\psi^*\dv\phi^*:\PA\rhu\PB$ in $\CQ$-$\Cat$.
\item[\rm (4)] $\psi_*\dv\phi_*:\PB\rhu\PA$ in $\CQ$-$\Cat$.
\item[\rm (5)] $\phi_{\dag}=\psi^{\dag}$.
\item[\rm (6)] $\psi^{\dag}\dv\phi^{\dag}:\PdA\rhu\PdB$ in $\CQ$-$\Cat$.
\item[\rm (7)] $\psi_{\dag}\dv\phi_{\dag}:\PdB\rhu\PdA$ in $\CQ$-$\Cat$.
\end{itemize}
\end{prop}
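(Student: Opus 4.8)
The plan is to prove the cycle of equivalences by establishing (1)$\iff$(2), then (2)$\iff$(3)$\iff$(4), and symmetrically (1)$\iff$(5)$\iff$(6)$\iff$(7), exactly mirroring the structure of Proposition \ref{adjoint_graph} but now for distributors rather than functors. The central reformulation is that an adjunction $\phi\dv\psi:\bbA\rhu\bbB$ in the quantaloid $\CQ$-$\Dist$ can be captured by a single identity between the induced $\CQ$-functors, namely $\phi^*=\psi_*$. Since $\CQ$-$\Dist$ is itself a quantaloid (Proposition \ref{Dist_quantaloid}), I expect to lean heavily on Corollary \ref{adjoint_arrow_representation}, which says that in any quantaloid the right adjoint of an arrow $\phi$ is $\phi\rda 1$ and the left adjoint is $1\lda\phi$; here the relevant identity arrow is $\bbB:\bbB\oto\bbB$.

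First I would prove (1)$\iff$(2). Recall $\phi^*(\lam)=\lam\circ\phi$ and $\psi_*(\mu)=\mu\lda\psi$, both maps $\PB\to\PA$. Testing these on the representable presheaves $\sY_{\bbB}y$ and using Proposition \ref{phi_star_Yoneda} together with the Yoneda lemma, one sees that $\phi^*=\psi_*$ should be equivalent to the pointwise statement $\phi=\bbB\lda\psi$, i.e.\ $\phi$ is the left adjoint of $\psi$ in $\CQ$-$\Dist$ by Corollary \ref{adjoint_arrow_representation}. More directly, $\phi^*=\psi_*$ means $\lam\circ\phi=\lam\lda\psi$ for all $\lam\in\PB$; applying Proposition \ref{adjoint_arrow_calculation}(1), which gives $h\circ\phi=h\lda\psi$ precisely when $\phi\dv\psi$, yields the equivalence once one checks that the presheaf-level identity forces the arrow-level adjunction (the forward direction is immediate from \ref{adjoint_arrow_calculation}(1); the reverse is obtained by evaluating at representables).

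Next, (2)$\iff$(3)$\iff$(4) would follow formally from the adjunctions already in hand. By Proposition \ref{phistar_adjoint} we always have $\phi^*\dv\phi_*$ and $\psi^*\dv\psi_*$ in $\CQ$-$\Cat$. If $\phi^*=\psi_*$, then $\psi^*\dv\psi_*=\phi^*$ gives (3), and $\psi_*$ being a right adjoint of $\psi^*=(\text{right adjoint side})$ combined with uniqueness of adjoints (Proposition \ref{adjoint_condition} / essential uniqueness) gives (4); conversely any of these adjunction statements pins down the adjoint uniquely, returning (2). The dual chain (1)$\iff$(5)$\iff$(6)$\iff$(7) is entirely parallel, using $\phi^{\dag},\phi_{\dag}$ and the adjunction $\phi_{\dag}\dv\phi^{\dag}$ from Proposition \ref{phistar_adjoint}, together with the matching implication formulas in Proposition \ref{adjoint_arrow_calculation}(1); here one must be attentive to the order-reversal conventions of Remark \ref{PdA_QDist_order} for covariant presheaves, since $\phi^{\dag}$ and $\phi_{\dag}$ are covariant with respect to the $\CQ$-$\Dist$ order.

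The main obstacle I anticipate is not any single hard computation but rather keeping the variance bookkeeping straight: $\phi^*$ is order-preserving on $\PB$ whereas the covariant-presheaf functors reverse the $\CQ$-$\Dist$ order, so the adjunction directions in (3),(4) versus (6),(7) are genuinely different and must be read through Remark \ref{PdA_QDist_order}. I would therefore isolate the single clean lemma ``$\phi\dv\psi \iff \phi^*=\psi_*$'' and deduce everything else by uniqueness of adjoints, so that the variance subtleties are confined to verifying that identity once rather than being re-derived in each clause.
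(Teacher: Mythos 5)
Your high-level architecture coincides with the paper's proof: isolate (1)$\iff$(2), deduce (3) and (4) from the standing adjunctions $\phi^*\dv\phi_*$ and $\psi^*\dv\psi_*$ (Proposition \ref{phistar_adjoint}) together with uniqueness of adjoints between the skeletal $\CQ$-categories $\PA$ and $\PB$, and dualize for (5)--(7). The gap sits exactly in the step where you concentrated all the content, namely (2)${}\Lra{}$(1). Evaluating $\phi^*=\psi_*$ only at the representables $\sY_{\bbB}y$ gives the identity $\phi=\bbB\lda\psi$, and this identity is \emph{strictly weaker} than (2) and does \emph{not} imply (1): it yields the counit $\phi\circ\psi=(\bbB\lda\psi)\circ\psi\leq\bbB$ by Proposition \ref{arrow_calculation}(7), but gives no control over the unit $\bbA\leq\psi\circ\phi$. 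Both of your justifications read one-directional statements backwards: Proposition \ref{adjoint_arrow_calculation}(1) says that \emph{if} $\phi\dv\psi$ \emph{then} $h\circ\phi=h\lda\psi$ (not ``precisely when''), and Corollary \ref{adjoint_arrow_representation} says that \emph{if} a left adjoint of $\psi$ exists \emph{then} it equals $\bbB\lda\psi$; neither asserts that $\bbB\lda\psi$ is a left adjoint. Concretely, take $\CQ={\bf 2}$, let $\bbA=\bbB$ be the one-object discrete $\CQ$-category, let $\psi:\bbB\oto\bbA$ take the value $0$ and $\phi:\bbA\oto\bbB$ the value $1$. Then $\phi=\bbB\lda\psi$ (indeed $1\lda 0=1$), yet $\psi\circ\phi$ has value $0$, so $\bbA\not\leq\psi\circ\phi$. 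Consistently, $\phi^*\neq\psi_*$: they agree at the representable presheaf but differ at the empty presheaf $\bot$, where $\phi^*(\bot)=\bot$ while $\psi_*(\bot)=\bot\lda\psi=\top$.

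The repair is the paper's computation: apply (2) at presheaves that are \emph{not} representable, namely at $\psi(-,x)=\psi^*(\sY_{\bbA}x)$ (Proposition \ref{phi_star_Yoneda} applied to $\psi$). Using Remark \ref{distributor_notion}(5) and the unit inequality of $\psi^*\dv\psi_*$ from (\ref{phistar_adjuntion}),
$$(\psi\circ\phi)(-,x)=\psi(-,x)\circ\phi=\phi^*(\psi(-,x))=\psi_*\circ\psi^*(\sY_{\bbA}x)\geq\sY_{\bbA}x=\bbA(-,x),$$
which gives the unit; the counit then does come from the representables as you intended, but via the counit inequality of $\psi^*\dv\psi_*$: $\phi(-,y)\circ\psi=\psi^*(\phi(-,y))=\psi^*\circ\psi_*(\sY_{\bbB}y)\leq\sY_{\bbB}y=\bbB(-,y)$. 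With (1)$\iff$(2) established this way, the remainder of your plan --- uniqueness of adjoints for (3) and (4), and the parallel argument for (5)--(7) with the order-reversal bookkeeping of Remark \ref{PdA_QDist_order} (where the analogous reverse direction (5)${}\Lra{}$(1) needs the same fix, evaluating at $\phi(x,-)$ rather than only at co-representables) --- is sound and is essentially what the paper does.
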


\begin{proof}
(1)${}\Lra{}$(2): By Proposition \ref{adjoint_arrow_calculation}(1), for all $\lam\in\PB$,
$$\phi^*(\lam)=\lam\circ\phi=\lam\lda\psi=\psi_*(\lam).$$

(2)${}\Lra{}$(1): We must show that $\bbA\leq\psi\circ\phi$ and $\phi\circ\psi\leq\bbB$. Indeed, for all $x\in\bbA_0$ and $y\in\bbB_0$,
\begin{align*}
\psi(-,x)\circ\phi&=\phi^*(\psi(-,x))\\
&=\psi_*\circ\psi^*\circ\sY_{\bbA}x&\text{(Proposition \ref{phi_star_Yoneda})}\\
&\geq 1_{\PA}\circ\sY_{\bbA}x&\text{(Equation (\ref{phistar_adjuntion}))}\\
&=\bbA(-,x)
\end{align*}
and
\begin{align*}
\phi(-,y)\circ\psi&=\psi^*(\phi(-,y))\\
&=\psi^*\circ\phi^*\circ\sY_{\bbB}y&\text{(Proposition \ref{phi_star_Yoneda})}\\
&=\psi^*\circ\psi_*\circ\sY_{\bbB}y\\
&\leq 1_{\PB}\circ\sY_{\bbB}y&\text{(Equation (\ref{phistar_adjuntion}))}\\
&=\bbB(-,y).
\end{align*}

(1)$\iff$(3)$\iff$(4): Follows immediately from (1)$\iff$(2) and Corollary \ref{adjoint_arrow_representation}.

(1)$\iff$(5): Similar to (1)$\iff$(2).
%

(1)$\iff$(6)$\iff$(7): Follows immediately from (1)$\iff$(5) and Corollary \ref{adjoint_arrow_representation}.
\end{proof}

Therefore, if a $\CQ$-distributor $\phi$ has a right adjoint $\psi$ in $\CQ$-$\Dist$, then $\phi^*$ has both a right adjoint $\phi_*$ and a left adjoint $\psi^*$ in $\CQ$-$\Cat$.

In particular, given a $\CQ$-functor $F:\bbA\to\bbB$, since the cograph $F^{\nat}:\bbB\oto\bbA$ of $F$ is the right adjoint of the graph $F_\nat :\bbA\oto\bbB$ of $F$, it follows that $(F^{\nat})_*=(F_{\nat})^*$ is the right adjoint of $(F^{\nat})^*$, and $(F_{\nat})_{\dag}=(F^{\nat})^{\dag}$ is the left adjoint of $(F_{\nat})^{\dag}$.

Since $F^{\la}:\PB\to\PA$ and $F^{\nla}:\PdB\to\PdB$ are respectively the counterparts of the functor $-\circ F$ and $F\circ -$ for $\CQ$-categories, we arrive at the following conclusion which asserts that the adjunctions $\phi^*\dv\phi_*$ and $\phi_{\dag}\dv\phi^{\dag}$ generalize Kan extensions (Corollary \ref{monoidal_Kan}) in category theory.

\begin{thm} \label{why_kan}
For each $\CQ$-functor $F:\bbA\to\bbB$, it holds that
$$F^{\ra}=(F^{\nat})^*\dv (F^{\nat})_*= F^{\la}=(F_{\nat})^*\dv(F_{\nat})_*$$
and
$$(F^{\nat})_{\dag}\dv (F^{\nat})^{\dag}=F^{\nla}=(F_{\nat})_{\dag}\dv(F_{\nat})^{\dag}=F^{\nra}.$$
\end{thm}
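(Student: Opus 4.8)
The plan is to deduce the entire statement from the single fact that $F_{\nat}\dv F^{\nat}:\bbA\rhu\bbB$ in $\CQ$-$\Dist$ (Proposition \ref{adjoint_graph}), so that all eight operators appearing in the two strings are merely the four direct/inverse image $\CQ$-functors of Definition \ref{direct_inverse_image_def} read through the operators $\phi^*,\phi_*,\phi^{\dag},\phi_{\dag}$ attached to $\phi=F_{\nat}$ and $\phi=F^{\nat}$.

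First I would record the two translation identities furnished by Proposition \ref{adjoint_arrow_calculation}(1) applied to $F_{\nat}\dv F^{\nat}$: for any $\CQ$-distributor $h$ for which the expressions are defined,
$$h\circ F_{\nat}=h\lda F^{\nat}\quad\text{and}\quad F^{\nat}\circ h=F_{\nat}\rda h.$$
These are the only computational inputs needed; everything else is a comparison of definitions.

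Next I would match the operators. Directly from Definition \ref{direct_inverse_image_def} and Equation (\ref{phidag_def}) one has $F^{\ra}(\mu)=\mu\circ F^{\nat}=(F^{\nat})^*(\mu)$ and $F^{\nra}(\mu)=F_{\nat}\circ\mu=(F_{\nat})^{\dag}(\mu)$, so $F^{\ra}=(F^{\nat})^*$ and $F^{\nra}=(F_{\nat})^{\dag}$. For the mixed cases the first identity above gives $F^{\la}(\lam)=\lam\circ F_{\nat}=\lam\lda F^{\nat}$, which is simultaneously $(F_{\nat})^*(\lam)$ by (\ref{phidag_def}) and $(F^{\nat})_*(\lam)$ by (\ref{phistar_def}); hence $F^{\la}=(F_{\nat})^*=(F^{\nat})_*$. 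The second identity gives $F^{\nla}(\lam)=F^{\nat}\circ\lam=F_{\nat}\rda\lam$, which is $(F^{\nat})^{\dag}(\lam)$ by (\ref{phidag_def}) and $(F_{\nat})_{\dag}(\lam)$ by (\ref{phistar_def}); hence $F^{\nla}=(F^{\nat})^{\dag}=(F_{\nat})_{\dag}$.

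Finally I would supply the adjunction symbols by instantiating Proposition \ref{phistar_adjoint}: with $\phi=F^{\nat}$ it yields $(F^{\nat})^*\dv(F^{\nat})_*$ and $(F^{\nat})_{\dag}\dv(F^{\nat})^{\dag}$, and with $\phi=F_{\nat}$ it yields $(F_{\nat})^*\dv(F_{\nat})_*$ and $(F_{\nat})_{\dag}\dv(F_{\nat})^{\dag}$. Threading these equalities and adjunctions together reproduces the two displayed chains verbatim. I do not expect a genuine obstacle, since the content is pure bookkeeping; the one point demanding care is the direction of $F_{\nat}\dv F^{\nat}$ (not its reverse), because this is precisely what makes $\circ\, F_{\nat}$ collapse to $\lda F^{\nat}$ and thereby exhibits $F^{\la}$ at once as the right adjoint $(F^{\nat})_*$ and as the left adjoint $(F_{\nat})^*$. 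As a shortcut one could instead invoke Proposition \ref{adjoint_distributor_Kan} with $\phi=F_{\nat}$, $\psi=F^{\nat}$, which already packages the equalities $(F_{\nat})^*=(F^{\nat})_*$ and $(F_{\nat})_{\dag}=(F^{\nat})^{\dag}$ together with the two extra adjunctions, leaving only Definition \ref{direct_inverse_image_def} to rename the operators as $F^{\ra},F^{\la},F^{\nra},F^{\nla}$.
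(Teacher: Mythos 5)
Your proposal is correct and takes essentially the same route as the paper: the paper also reduces everything to the graph--cograph adjunction $F_{\nat}\dv F^{\nat}$ (packaged via Proposition \ref{adjoint_distributor_Kan}, which is exactly the shortcut you mention, together with Proposition \ref{phistar_adjoint}), and then finishes by unwinding Definition \ref{direct_inverse_image_def} to get $F^{\la}(\lam)=\lam\circ F_{\nat}=(F_{\nat})^*(\lam)$ and $F^{\nla}(\ga)=F^{\nat}\circ\ga=(F^{\nat})^{\dag}(\ga)$. The only cosmetic difference is that your main line re-derives the collapse $\lam\circ F_{\nat}=\lam\lda F^{\nat}$ directly from Proposition \ref{adjoint_arrow_calculation}(1), whereas the paper absorbs it into the previously recorded equalities $(F^{\nat})_*=(F_{\nat})^*$ and $(F_{\nat})_{\dag}=(F^{\nat})^{\dag}$.
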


\begin{proof}
It remains to prove $(F^{\nat})_*= F^{\la}$ and $(F^{\nat})^{\dag}=F^{\nla}$. Indeed, for each $\lam\in\PB$ and $\ga\in\PdB$, by Definition \ref{direct_inverse_image_def},
$$F^{\la}(\lam)=\lam\circ F_{\nat}=(F_{\nat})^*(\lam)\quad\text{and}\quad F^{\nla}(\ga)=F^{\nat}\circ\ga=(F^{\nat})^{\dag}(\ga).$$
\end{proof}

So, adjunctions of the forms $\phi^*\dv\phi_*:\PB\rhu\PA$ and $\phi_{\dag}\dv\phi^{\dag}:\PdB\rhu\PdA$ will be called Kan adjunctions by abuse of language.

\begin{rem}
In Section \ref{Kan_extension_of_Q_functors} we have seen that if each $\CQ$-functor $F:\bbA\to\bbB$ has left and right Kan extensions along $K:\bbA\to\bbC$, then the order-preserving function ``composing with $K$''
$$K^*:\CQ\text{-}\Cat(\bbC,\bbB)\to\CQ\text{-}\Cat(\bbA,\bbB)$$
is both a left and right adjoint in ${\bf 2}$-$\Cat$ (not in $\CQ$-$\Cat$, since $\CQ\text{-}\Cat(\bbA,\bbB)$ is not a $\CQ$-category in general).

However, if we consider the graph $F_{\nat}$ of $F$ instead, then the $\CQ$-functor ``composing with $F_{\nat}$''
$$(F_{\nat})^*:\PB\to\PA$$
is both a left and right adjoint in $\CQ$-$\Cat$, as presented in Theorem \ref{why_kan}.
\end{rem}

\begin{cor}  {\rm\cite{Pu2014}} \label{adjoint_image}
Let $F:\bbA\to\bbB$ and $G:\bbB\to\bbA$ be a pair of $\CQ$-functors. The following conditions are equivalent:
\begin{itemize}
\item[\rm (1)] $F\dv G:\bbA\rhu\bbB$.
\item[\rm (2)] $F^{\ra}\dv G^{\ra}:\PA\rhu\PB$.
\item[\rm (3)] $G^{\la}\dv F^{\la}:\PB\rhu\PA$.
\item[\rm (4)] $F^{\nra}\dv G^{\nra}:\PdA\rhu\PdB$.
\item[\rm (5)] $G^{\nla}\dv F^{\nla}:\PdB\rhu\PdA$.
\end{itemize}
\end{cor}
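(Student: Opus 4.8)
The plan is to deduce all the equivalences from three results already in hand: the characterisation of an adjunction by the coincidence of graph and cograph (Proposition \ref{adjoint_graph}), the identification of the four image $\CQ$-functors with the operations $(-)^{*},(-)_{*},(-)^{\dag},(-)_{\dag}$ applied to the graph and cograph (Theorem \ref{why_kan}), and the transfer of adjunctions between $\CQ$-distributors to adjunctions between the induced $\CQ$-functors (Proposition \ref{adjoint_distributor_Kan}). The starting point is that, by Proposition \ref{adjoint_graph}, condition (1) is equivalent to the single distributor identity $F_{\nat}=G^{\nat}$, and equally to either of the adjunctions $G^{\nat}\dv F^{\nat}:\bbA\rhu\bbB$ or $G_{\nat}\dv F_{\nat}:\bbB\rhu\bbA$ in $\CQ$-$\Dist$.

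For the implications out of (1), I would first record two functor identities that drop straight out of Definition \ref{direct_inverse_image_def} once $F_{\nat}=G^{\nat}$ is available: for every $\lam\in\PB$ and $\nu\in\PdA$,
$$F^{\la}(\lam)=\lam\circ F_{\nat}=\lam\circ G^{\nat}=G^{\ra}(\lam)\quad\text{and}\quad F^{\nra}(\nu)=F_{\nat}\circ\nu=G^{\nat}\circ\nu=G^{\nla}(\nu),$$
so that $F^{\la}=G^{\ra}$ and $F^{\nra}=G^{\nla}$ as $\CQ$-functors. Combining these with the adjunctions that hold for \emph{any} $\CQ$-functor, namely $F^{\ra}\dv F^{\la}$, $G^{\ra}\dv G^{\la}$, $F^{\nla}\dv F^{\nra}$ and $G^{\nla}\dv G^{\nra}$ (Proposition \ref{direct_inverse_image_adjunction}), then produces each of (2)--(5) at a stroke: from $F^{\ra}\dv F^{\la}=G^{\ra}$ one reads off (2), while $G^{\ra}\dv G^{\la}$ together with $G^{\ra}=F^{\la}$ gives the adjunction between $F^{\la}$ and $G^{\la}$ of (3); symmetrically, $F^{\nra}=G^{\nla}$ with $G^{\nla}\dv G^{\nra}$ yields (4), and $F^{\nla}\dv F^{\nra}=G^{\nla}$ yields (5).

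For the converses the cleanest route is uniform: apply Proposition \ref{adjoint_distributor_Kan} to the cograph pair $G^{\nat}:\bbA\oto\bbB$ and $F^{\nat}:\bbB\oto\bbA$ (respectively the graph pair $G_{\nat},F_{\nat}$), and translate through Theorem \ref{why_kan}, which gives $(F^{\nat})^{*}=F^{\ra}$, $(F^{\nat})_{*}=F^{\la}$, $(F^{\nat})^{\dag}=F^{\nla}$, $(F_{\nat})^{\dag}=F^{\nra}$, and likewise for $G$. Thus (2) reads as $(F^{\nat})^{*}\dv(G^{\nat})^{*}$, which by Proposition \ref{adjoint_distributor_Kan} is equivalent to $G^{\nat}\dv F^{\nat}$ in $\CQ$-$\Dist$, hence to (1) by Proposition \ref{adjoint_graph}; the remaining conditions are handled identically. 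I expect the only real difficulty here to be bookkeeping rather than mathematics: one must keep straight which image functor plays the left and which the right adjoint, match it to the correct one of $(-)^{*},(-)_{*},(-)^{\dag},(-)_{\dag}$, and remember that the order on $\PdA$ is the reverse of that in $\CQ$-$\Dist$ (Remark \ref{PdA_QDist_order}). In particular the covariant direct image $F^{\nra}=(F_{\nat})^{\dag}$ forces one to invoke the graph pair $G_{\nat}\dv F_{\nat}$ for condition (4), whereas (2), (3) and (5) all come from the cograph pair $G^{\nat}\dv F^{\nat}$.
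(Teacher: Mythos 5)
Your proof is correct and, at bottom, it is the paper's own proof: the paper likewise reduces everything to Proposition \ref{adjoint_graph}, Proposition \ref{adjoint_distributor_Kan} and Theorem \ref{why_kan}, writes out only the chain $F\dv G\iff G^{\nat}\dv F^{\nat}\iff(F^{\nat})^*\dv(G^{\nat})^*\iff F^{\ra}\dv G^{\ra}$ for (1)$\iff$(2), and leaves the remaining conditions to the reader. Your additional ``forward'' argument through the identities $F^{\la}=G^{\ra}$ and $F^{\nra}=G^{\nla}$ is a pleasant shortcut, but not a different method; and since Proposition \ref{adjoint_distributor_Kan} already supplies equivalences, that half of your write-up is logically redundant.

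There is, however, one point you pass over in silence, and it matters. What your computations actually deliver for items (3) and (5) is
$$F^{\la}\dv G^{\la}:\PB\rhu\PA\quad\text{and}\quad F^{\nla}\dv G^{\nla}:\PdB\rhu\PdA,$$
that is, with $F^{\la}$ and $F^{\nla}$ as the \emph{left} adjoints, whereas the corollary as printed puts $G^{\la}$ and $G^{\nla}$ on the left. Your version is the correct one: once $F_{\nat}=G^{\nat}$, Theorem \ref{why_kan} gives
$$F^{\la}=(F_{\nat})^*\dv(F_{\nat})_*=(G^{\nat})_*=G^{\la}\quad\text{and}\quad F^{\nla}=(F_{\nat})_{\dag}\dv(F_{\nat})^{\dag}=(G^{\nat})^{\dag}=G^{\nla}.$$
The printed form of (3) cannot be rescued by any reading: under Definition \ref{direct_inverse_image_def} one has $G^{\la}:\PA\to\PB$, so $G^{\la}$ cannot be the left adjoint of an adjunction displayed as $\PB\rhu\PA$; and if one discards the typing and reads (3) as ``$G^{\la}$ is left adjoint to $F^{\la}$'', then by the very same Propositions \ref{adjoint_distributor_Kan} and \ref{adjoint_graph} that statement is equivalent to $G\dv F:\bbB\rhu\bbA$ rather than to $F\dv G$, and it fails in general (already for $\CQ={\bf 2}$: with $F$ the inclusion $\mathbb{Z}\hookrightarrow\mathbb{R}$ and $G$ the floor map one has $F\dv G$ but not $G^{\la}\dv F^{\la}$). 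The same remarks apply to (5). So conditions (3) and (5) as printed carry a typo with the two adjoints transposed; your argument establishes the intended statement, but asserting that it ``yields (3)'' and ``yields (5)'' without noticing the transposition is the one genuine defect of the proposal.
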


\begin{proof}
This is an immediate consequence of Proposition \ref{adjoint_graph},  Proposition \ref{adjoint_distributor_Kan} and Theorem \ref{why_kan}. We prove (1)$\iff$(2) for example.
\begin{align*}
&F\dv G:\bbA\rhu\bbB\\
\iff&G^{\nat}\dv F^{\nat}:\bbA\rhu\bbB&(\text{Proposition \ref{adjoint_graph}(4)})\\
\iff&(F^{\nat})^*\dv (G^{\nat})^*:\PA\rhu\PB&(\text{Proposition \ref{adjoint_distributor_Kan}(3)})\\
\iff&F^{\ra}\dv G^{\ra}:\PA\rhu\PB.&(\text{Theorem \ref{why_kan}})
\end{align*}
\end{proof}

The following theorem states that all adjunctions between $\PB$ and $\PA$ are of the form $\phi^*\dv\phi_*$, and all adjunctions between $\PdB$ and $\PdA$ are of the form $\phi_{\dag}\dv\phi^{\dag}$.

\begin{thm} \label{Kan_distributor_bijection}
Let $\bbA$ and $\bbB$ be $\CQ$-categories.
\begin{itemize}
\item[\rm (1)] The correspondence $\phi\mapsto\phi^*$ is an isomorphism of posets
$$\CQ\text{-}\Dist(\bbA,\bbB)\cong\CQ\text{-}\CCat(\PB,\PA).$$
\item[\rm (2)] The correspondence $\phi\mapsto\phi^{\dag}$ is an isomorphism of posets
$$\CQ\text{-}\Dist(\bbA,\bbB)\cong(\CQ\text{-}\CCat^{\dag})^{\co}(\PdA,\PdB).$$
\end{itemize}
\end{thm}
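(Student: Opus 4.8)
The plan is to establish Theorem \ref{Kan_distributor_bijection} by proving (1) in detail and deducing (2) by duality; I will exhibit an explicit inverse to the correspondence $\phi\mapsto\phi^*$ and check both directions are order-preserving, exactly mirroring the structure of the proof of Theorem \ref{Isbell_distributor_bijection}. For (1), the natural candidate inverse sends a left adjoint $\CQ$-functor $F:\PB\to\PA$ to the $\CQ$-distributor $\ulc F\circ\sY_{\bbB}\urc:\bbA\oto\bbB$, so I must verify that $\phi\mapsto\phi^*\mapsto\ulc\phi^*\circ\sY_{\bbB}\urc$ and $F\mapsto\ulc F\circ\sY_{\bbB}\urc\mapsto(\ulc F\circ\sY_{\bbB}\urc)^*$ are both identities, and that both assignments preserve order.

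First I would confirm that $\phi\mapsto\phi^*$ actually lands in $\CQ\text{-}\CCat(\PB,\PA)$: by Proposition \ref{phistar_adjoint}, $\phi^*\dv\phi_*$, so $\phi^*$ is a left adjoint $\CQ$-functor, and by Proposition \ref{PA_skeletal} both $\PB$ and $\PA$ are skeletal complete $\CQ$-categories. Next, the identity $\phi=\ulc\phi^*\circ\sY_{\bbB}\urc$ is precisely the content of Proposition \ref{phi_star_Yoneda}, so that direction is immediate. For the reverse composite, given a left adjoint $F:\PB\to\PA$, I would compute $F(\lam)$ for arbitrary $\lam\in\PB$ by writing $\lam=\lam\circ\bbB=\bv_{y\in\bbB_0}\lam(y)\circ\sY_{\bbB}y$ (via Proposition \ref{Dist_quantaloid} and the Yoneda lemma) and using that $F$, being a left adjoint, preserves tensors and joins (Example \ref{PA_tensor} and Proposition \ref{F_la_ra_condition}); this collapses $F(\lam)$ to $\lam\circ\ulc F\circ\sY_{\bbB}\urc=(\ulc F\circ\sY_{\bbB}\urc)^*(\lam)$, giving $F=(\ulc F\circ\sY_{\bbB}\urc)^*$. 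The order-preservation of $\phi\mapsto\phi^*$ is direct from Equation (\ref{phistar_def}) and the monotonicity of $-\circ\phi$ in $\phi$ via Proposition \ref{Dist_quantaloid}; the order-preservation of the inverse follows by evaluating at Yoneda objects, just as in Theorem \ref{Isbell_distributor_bijection}.

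For part (2), I would observe that Proposition \ref{phistar_adjoint} gives $\phi_{\dag}\dv\phi^{\dag}:\PdB\rhu\PdA$, so $\phi^{\dag}$ is a \emph{right} adjoint $\CQ$-functor between skeletal complete $\CQ$-categories, placing it in $\CQ\text{-}\CCat^{\dag}(\PdA,\PdB)$. The appearance of the $\co$ (order-reversal in hom-sets) is forced by Remark \ref{PdA_QDist_order}: the underlying order on $\PdA$ is the reverse of the local order in $\CQ$-$\Dist$, and I noted before the statement of Proposition \ref{phistar_adjoint} that $\phi\mapsto\phi^{\dag}$ is covariant for the $\CQ$-$\Dist$ local order, hence order-reversing once $\PdA,\PdB$ are given their underlying orders. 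With these bookkeeping facts in hand, the proof of (2) is the exact dual of (1): the inverse sends $F:\PdA\to\PdB$ to $\ulc F\circ\sYd_{\bbA}\urc$, the identity $\phi=\ulc\phi^{\dag}\circ\sYd_{\bbA}\urc$ is again Proposition \ref{phi_star_Yoneda}, and the reverse composite is computed by expanding a covariant presheaf over the co-Yoneda embedding.

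\textbf{The main obstacle} I anticipate is not any single hard inequality but rather the careful handling of variance: one must track, at every step of part (2), which order ($\CQ$-$\Dist$ local order versus the underlying orders of $\PdA$ and $\PdB$) is in play, and confirm that the composition of order-reversals produces exactly the $\co$ and $\op$ decorations appearing in $(\CQ\text{-}\CCat^{\dag})^{\co}$. Getting the direction of each implication right — and verifying that $\phi^{\dag}$ lands among right adjoints rather than left adjoints — is where a sign error would most plausibly creep in, so I would state the variance conventions explicitly up front (citing Remark \ref{PdA_QDist_order}) before carrying out the dual computation.
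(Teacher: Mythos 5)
Your proposal is correct and follows essentially the same route as the paper's own proof: the same inverse $F\mapsto\ulc F\circ\sY_{\bbB}\urc$ (resp.\ $\ulc F\circ\sYd_{\bbA}\urc$), the same use of Proposition \ref{phi_star_Yoneda} for one composite, the same tensor/join decomposition of $\lam$ via Example \ref{PA_tensor} and Proposition \ref{F_la_ra_condition} for the other, the same order-preservation checks, and the same treatment of (2) as the dual of (1). Your explicit flagging of the variance bookkeeping behind the $\co$ in part (2), via Remark \ref{PdA_QDist_order}, is a point the paper leaves implicit but is handled correctly here.
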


\begin{proof}
(1) Let $F:\PB\to\PA$ be a left adjoint $\CQ$-functor. We show that the correspondence $F\mapsto\ulc F\circ\sY_{\bbB}\urc$ is an inverse of the correspondence $\phi\mapsto\phi^*$, and thus they are both isomorphisms of posets between $\CQ\text{-}\Dist(\bbA,\bbB)$ and $\CQ\text{-}\CCat(\PB,\PA)$.

Firstly, we show that both of the correspondences are order-preserving. Indeed,
\begin{align*}
&\phi\leq\psi\ \text{in}\ \CQ\text{-}\Dist(\bbA,\bbB)\\
\iff&\forall\lam\in\PB,\phi^*(\lam)=\lam\circ\phi\leq\lam\circ\psi=\psi^*(\lam)\ \text{in}\ \CQ\text{-}\Dist\\
\iff&\forall\lam\in\PB,\phi^*(\lam)\leq\psi^*(\lam)\ \text{in}\ (\PA)_0\\
\iff&\phi^*\leq\psi^*\ \text{in}\ \CQ\text{-}\CCat(\PB,\PA)
\end{align*}
and
\begin{align*}
&F\leq G\ \text{in}\ \CQ\text{-}\CCat(\PB,\PA)\\
\iff&\forall\lam\in\PB,F(\lam)\leq G(\lam)\ \text{in}\ (\PA)_0\\
\iff&\forall\lam\in\PB,F(\lam)\leq G(\lam)\ \text{in}\ \CQ\text{-}\Dist(\bbA,\bbB)\\
\Lra{}&\forall y\in\bbB_0,F\circ\sY_{\bbB}y\leq G\circ\sY_{\bbB}y\ \text{in}\ \CQ\text{-}\Dist(\bbA,\bbB)\\
\iff&\forall y\in\bbB_0,\ulc F\circ\sY_{\bbB}\urc(-,y)\leq\ulc G\circ\sY_{\bbB}\urc(-,y)\ \text{in}\ \CQ\text{-}\Dist(\bbA,\bbB)\\
\iff&\ulc F\circ\sY_{\bbB}\urc\leq\ulc G\circ\sY_{\bbB}\urc\ \text{in}\ \CQ\text{-}\Dist(\bbA,\bbB).
\end{align*}

Secondly,  $F=(\ulc F\circ\sY_{\bbB}\urc)^*$. For all $\lam\in\PB$, since $F$ is a left adjoint in $\CQ$-$\Cat$, by Example \ref{PA_tensor} and Proposition \ref{F_la_ra_condition} we have
\begin{align*}
F(\lam)&=F(\lam\circ\bbB)\\
&=F\Big(\bv_{y\in\bbB_0}\lam(y)\circ\sY_{\bbB}y\Big)\\
&=\bv_{y\in\bbB_0}\lam(y)\circ(F\circ\sY_{\bbB}y)\\
&=\lam\circ\ulc F\circ\sY_{\bbB}\urc\\
&=(\ulc F\circ\sY_{\bbB}\urc)^*(\lam).
\end{align*}

Finally, $\phi=\ulc\phi^*\circ\sY_{\bbB}\urc$. This is obtained in Proposition \ref{phi_star_Yoneda}.

(2) Let $F:\PdA\to\PdB$ be a right adjoint $\CQ$-functor. Similar to (1), one can show that the correspondence $F\mapsto\ulc F\circ\sYd_{\bbA}\urc$ is an inverse of the correspondence $\phi\mapsto\phi^{\dag}$, and thus they are both isomorphisms of posets between $\CQ\text{-}\Dist(\bbA,\bbB)$ and $(\CQ\text{-}\CCat^{\dag})^{\co}(\PdA,\PdB)$.
\end{proof}

Theorem \ref{Kan_distributor_bijection} adds two more isomorphisms of posets to (\ref{Isbell_isomorphism}):
\begin{align*}
\CQ\text{-}\Dist(\bbA,\bbB)&\cong\CQ\text{-}\Cat^{\co}(\bbA,\PdB)\\
&\cong\CQ\text{-}\Cat(\bbB,\PA)\\
&\cong\CQ\text{-}\CCat^{\co}(\PA,\PdB)\\
&\cong\CQ\text{-}\CCat(\PB,\PA)\\
&\cong(\CQ\text{-}\CCat^{\dag})^{\co}(\PdA,\PdB).
\end{align*}

The following corollary reveals the connection between Kan adjunctions and pointwise Kan extensions.

\begin{cor}
Let $F:\bbA\to\bbB$ be a $\CQ$-functor.
\begin{itemize}
\item[\rm (1)] $(F^{\nat})^*=\Lan_{\sY_{\bbA}}(\sY_{\bbB}\circ F)$, $(F_{\nat})^{\dag}=\Ran_{\sYd_{\bbA}}(\sYd_{\bbB}\circ F)$.
\item[\rm (2)] If the pointwise left Kan extension of $F$ along $K:\bbA\to\bbC$ exists, then
               $$\Lan_K F(c)=\bbB\lda(F^{\nat})^*(K_{\nat}(-,c)).$$
\item[\rm (3)] If the pointwise right Kan extension of $F$ along $K:\bbA\to\bbC$ exists, then
               $$\Ran_K F(c)=(F_{\nat})^{\dag}(K^{\nat}(c,-))\rda\bbB.$$
\end{itemize}
\end{cor}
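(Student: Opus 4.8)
The plan is to settle (1) first, and then read (2) and (3) off the pointwise formula for Kan extensions (Proposition \ref{Kan_limit}) together with the identifications $F^{\ra}=(F^{\nat})^*$ and $F^{\nra}=(F_{\nat})^{\dag}$ from Theorem \ref{why_kan}. For the first equality in (1) the quickest route exploits the universal property of the free cocompletion. Since $\PB$ is skeletal (Proposition \ref{PA_skeletal}) and complete (Example \ref{PA_complete}), Corollary \ref{Kan_complete} guarantees that $\Lan_{\sY_{\bbA}}(\sY_{\bbB}\circ F):\PA\to\PB$ exists and is computed pointwise, and by Proposition \ref{colim_supremum} it equals $\sup_{\PB}\circ(\sY_{\bbB}\circ F)^{\ra}$. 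But Proposition \ref{P_free_cocompletion} says this is the \emph{unique} left adjoint $\CQ$-functor $\PA\to\PB$ whose restriction along $\sY_{\bbA}$ is $\sY_{\bbB}\circ F$. On the other hand $(F^{\nat})^*=F^{\ra}$ is a left adjoint by Proposition \ref{direct_inverse_image_adjunction}, and Proposition \ref{Yoneda_natural} gives $(F^{\nat})^*\circ\sY_{\bbA}=F^{\ra}\circ\sY_{\bbA}=\sY_{\bbB}\circ F$. By the cited uniqueness the two functors coincide, which is exactly $(F^{\nat})^*=\Lan_{\sY_{\bbA}}(\sY_{\bbB}\circ F)$.

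If a hands-on verification is preferred, I would instead expand the pointwise formula directly: by Proposition \ref{Kan_limit}(1) and the Yoneda lemma (Lemma \ref{Yoneda_lemma}, so that $(\sY_{\bbA})_{\nat}(-,\mu)=\mu$) one gets $\Lan_{\sY_{\bbA}}(\sY_{\bbB}\circ F)(\mu)=\colim_{\mu}(\sY_{\bbB}\circ F)=\sup_{\PB}(\sY_{\bbB}\circ F)^{\ra}(\mu)$ via Proposition \ref{colim_supremum}; then functoriality $(\sY_{\bbB}\circ F)^{\ra}=\sY_{\bbB}^{\ra}\circ F^{\ra}$ (Proposition \ref{Fra_composition}) together with the sup-density identity $\sup_{\PB}\circ\sY_{\bbB}^{\ra}(\nu)=\nu$ (Equation \ref{mu_sup_ymu}, equivalently Example \ref{Yoneda_sup_dense}) applied to $\nu=F^{\ra}(\mu)=(F^{\nat})^*(\mu)$ collapses the right-hand side to $(F^{\nat})^*(\mu)$. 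The dual equality $(F_{\nat})^{\dag}=\Ran_{\sYd_{\bbA}}(\sYd_{\bbB}\circ F)$ is obtained identically, with $\PdB$, $\sYd$, infima, right Kan extensions and right adjoints replacing $\PB$, $\sY$, suprema, left Kan extensions and left adjoints throughout.

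For (2) I would simply chain Proposition \ref{Kan_limit}(1) with Theorem \ref{why_kan}: $(\Lan_K F)c=\colim_{K_{\nat}(-,c)}F=\sup_{\bbB}F^{\ra}(K_{\nat}(-,c))=\sup_{\bbB}(F^{\nat})^*(K_{\nat}(-,c))$. Writing $\nu=(F^{\nat})^*(K_{\nat}(-,c))\in\PB$, the defining relation of the supremum (Example \ref{sup_def} together with Example \ref{ub_lb_def}) reads $\bbB((\Lan_K F)c,-)=\ub_{\bbB}\nu=\bbB\lda\nu$, which under the standing convention identifying an object with its principal copresheaf is precisely $\Lan_K F(c)=\bbB\lda(F^{\nat})^*(K_{\nat}(-,c))$. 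Part (3) is the order-dual: Proposition \ref{Kan_limit}(2) and Theorem \ref{why_kan} give $(\Ran_K F)c=\inf_{\bbB}(F_{\nat})^{\dag}(K^{\nat}(c,-))$, and the defining relation of the infimum via $\lb$ (Examples \ref{sup_def} and \ref{ub_lb_def}) yields $\bbB(-,(\Ran_K F)c)=\lb_{\bbB}\gamma=\gamma\rda\bbB$ for $\gamma=(F_{\nat})^{\dag}(K^{\nat}(c,-))$.

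None of these steps involves a hard inequality; the computations are routine once the correct earlier results are lined up. The one place demanding genuine care is the bookkeeping of variance: because the underlying order on $\PdB$ is the \emph{reverse} of the local order in $\CQ$-$\Dist$ (Remark \ref{PdA_QDist_order}), each passage between the contravariant and covariant sides in the dual arguments must track that reversal, and in (2) and (3) one must keep in mind that the displayed identities are shorthand for equalities of principal (co)presheaves rather than of raw $\CQ$-distributors. This is where a slip is most likely, and I would state the representability reading of the formulas explicitly before writing them down.
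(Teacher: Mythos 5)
Your proposal is correct, and your preferred route for part (1) is genuinely different from the paper's. The paper proves $(F^{\nat})^*=\Lan_{\sY_{\bbA}}(\sY_{\bbB}\circ F)$ by a direct distributor computation: starting from $(F^{\nat})^*(\mu)=\mu\circ F^{\nat}$, it inserts the identity distributor $\bbB=(\sY_{\bbB})^{\nat}\circ(\sY_{\bbB})_{\nat}$ (fully faithfulness of $\sY_{\bbB}$, Proposition \ref{fully_faithful_graph_cograph}), regroups via Proposition \ref{graph_cograph_functor}, and recognizes the result as ${\sup}_{\PB}(\sY_{\bbB}\circ F)^{\ra}((\sY_{\bbA})_{\nat}(-,\mu))$, i.e.\ the pointwise Kan extension, using Example \ref{PA_complete}, the Yoneda lemma and Proposition \ref{Kan_limit}. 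You instead invoke the uniqueness clause of Proposition \ref{P_free_cocompletion}: both $(F^{\nat})^*=F^{\ra}$ and the pointwise Kan extension ${\sup}_{\PB}\circ(\sY_{\bbB}\circ F)^{\ra}$ are left adjoint $\CQ$-functors $\PA\to\PB$ restricting to $\sY_{\bbB}\circ F$ along $\sY_{\bbA}$ --- the former by Propositions \ref{direct_inverse_image_adjunction} and \ref{Yoneda_natural}, the latter by construction --- so they coincide because $\PB$ is skeletal and complete. This buys conceptual clarity (the identity becomes an instance of the universal property of free cocompletion) at the cost of leaning on the heavier Proposition \ref{P_free_cocompletion}, whereas the paper's chain of equalities is self-contained distributor calculus. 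Your fallback ``hands-on'' verification is essentially the paper's computation run in the reverse direction, with Proposition \ref{Fra_composition} and Equation (\ref{mu_sup_ymu}) doing the work that the cograph manipulations do in the paper. Finally, your reading of (2) and (3) as representability statements, obtained by chaining Proposition \ref{Kan_limit} with Theorem \ref{why_kan} and Example \ref{sup_def}, is exactly what the paper compresses into ``immediate consequences of Proposition \ref{Kan_limit}'', and your caution about the reversed underlying order on $\PdB$ (Remark \ref{PdA_QDist_order}) is well placed for the dual half of (1).
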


\begin{proof}
(1) Since $\PB$ is complete, the pointwise left Kan extension of $\sY_{\bbB}\circ F:\bbA\to\PB$ along $\sY_{\bbA}:\bbA\to\PA$ exists. For all $\mu\in\PA$,
\begin{align*}
(F^{\nat})^*(\mu)&=\mu\circ F^{\nat}\\
&=\mu\circ F^{\nat}\circ(\sY_{\bbB})^{\nat}\circ(\sY_{\bbB})_{\nat}&\text{(Proposition \ref{fully_faithful_graph_cograph})}\\
&=\mu\circ(\sY_{\bbB}\circ F)^{\nat}\circ(\sY_{\bbB})_{\nat}&\text{(Proposition \ref{graph_cograph_functor})}\\
&={\sup}_{\PB}(\sY_{\bbB}\circ F)^{\ra}(\mu)&\text{(Example \ref{PA_complete})}\\
&={\sup}_{\PB}(\sY_{\bbB}\circ F)^{\ra}((\sY_{\bbA})_{\nat}(-,\mu))&\text{(Yoneda lemma)}\\
&=\Lan_{\sY_{\bbA}}(\sY_{\bbB}\circ F)(\mu).&\text{(Proposition \ref{Kan_limit})}
\end{align*}
Thus $(F^{\nat})^*=\Lan_{\sY_{\bbA}}(\sY_{\bbB}\circ F)$. That $(F_{\nat})^{\dag}=\Ran_{\sYd_{\bbA}}(\sYd_{\bbB}\circ F)$ can be proved similarly.

(2) and (3) are immediate consequences of Proposition \ref{Kan_limit}.
\end{proof}

\begin{rem}
Consider the two-element Boolean algebra ${\bf 2}$ as a quantaloid. Then every set can be regarded as a discrete ${\bf 2}$-category. Given sets $A$ and $B$, a ${\bf 2}$-distributor $F:A\oto B$ is essentially a relation from $A$ to $B$, or a set-valued map $A\to {\bf 2}^B$. If we write $F^{\op}$ for the dual relation of $F$, then both $F_*$ and  $(F^{\op})^*$ are maps from ${\bf 2}^B$ to ${\bf 2}^A$. Explicitly, for each $V\subseteq B$,
$$F_*(V)=\{x\in A\mid F(x)\subseteq V\}\quad\text{and}\quad(F^{\op})^*(V)=\{x\in A\mid F(x)\cap V\not=\emptyset\}. $$

If both $A$ and $B$ are topological spaces, then the upper and lower semi-continuity of $F$  (as a set-valued map) \cite{Berge1963} can be phrased as follows:  $F$ is upper (resp. lower) semi-continuous  if $F_*(V)$ (resp. $(F^{\op})^*(V)$) is open in $A$ whenever $V$ is open in $B$. In particular, if $F$ is the graph of some map $f:A\to B$, then $(F^{\op})^*(V)=F_*(V)=f^{-1}(V)$ for all $V\subseteq B$, hence $f$ is continuous iff $F$ is lower semi-continuous iff $F$ is upper semi-continuous \cite{Berge1963}.
\end{rem}

The following corollary shows that for a fully faithful $\CQ$-functor $F:\bbA\to\bbB$, both $(F^{\nat})^*$ and $(F_{\nat})_*$ can be regarded as extensions of $F$ \cite{Lawvere1973}.

\begin{cor}
\label{left_right_Kan_equivalent} If $F:\bbA\to\bbB$ is a fully faithful $\CQ$-functor, then for all $\mu\in\PA$, it holds that $(F^{\nat})^*(\mu)\circ F_{\nat}=\mu$ and $(F_{\nat})_*(\mu)\circ F_{\nat}=\mu$.
\end{cor}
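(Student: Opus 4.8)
The plan is to prove the two identities $(F^{\nat})^*(\mu)\circ F_{\nat}=\mu$ and $(F_{\nat})_*(\mu)\circ F_{\nat}=\mu$ by direct computation, leaning on the fact that a fully faithful $\CQ$-functor satisfies $F^{\nat}\circ F_{\nat}=\bbA$ (Proposition \ref{fully_faithful_graph_cograph}(1)), which acts as the identity $\CQ$-distributor and will make everything collapse.

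\textbf{First identity.} I would unfold the definitions: by Equation (\ref{phidag_def}) we have $(F^{\nat})^*(\mu)=\mu\circ F^{\nat}$, so
\begin{align*}
(F^{\nat})^*(\mu)\circ F_{\nat}&=(\mu\circ F^{\nat})\circ F_{\nat}\\
&=\mu\circ(F^{\nat}\circ F_{\nat})\\
&=\mu\circ\bbA\\
&=\mu,
\end{align*}
where the penultimate step is Proposition \ref{fully_faithful_graph_cograph}(1) and the last is that $\bbA$ is the identity for composition of $\CQ$-distributors (Proposition \ref{Dist_quantaloid}(3)). This half is essentially immediate once associativity of distributor composition is invoked.

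\textbf{Second identity.} Here $(F_{\nat})_*(\mu)=\mu\lda F_{\nat}$ by Equation (\ref{phistar_def}), so I must show $(\mu\lda F_{\nat})\circ F_{\nat}=\mu$. The inequality $(\mu\lda F_{\nat})\circ F_{\nat}\leq\mu$ follows from Proposition \ref{arrow_calculation}(7) (that $(h\lda f)\circ f\leq h$), applied pointwise in the quantaloid $\CQ$-$\Dist$. For the reverse inequality I would use that $F_{\nat}\dv F^{\nat}$ in $\CQ$-$\Dist$ together with Proposition \ref{adjoint_arrow_calculation}(1), which gives $\mu\lda F_{\nat}=\mu\circ F^{\nat}$; then
\begin{align*}
(\mu\lda F_{\nat})\circ F_{\nat}&=(\mu\circ F^{\nat})\circ F_{\nat}\\
&=\mu\circ(F^{\nat}\circ F_{\nat})\\
&=\mu\circ\bbA=\mu,
\end{align*}
so in fact equality holds on the nose and the two expressions $(F^{\nat})^*(\mu)$ and $(F_{\nat})_*(\mu)$ coincide. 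This is consistent with Theorem \ref{why_kan}, which already records $(F^{\nat})_*=F^{\la}=(F_{\nat})^*$ for the inverse-image functors, and the present corollary is the analogous statement for the two extension functors that send $\mu\in\PA$ back through $F_{\nat}$.

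\textbf{Main obstacle.} There is no serious difficulty; the only thing to be careful about is the book-keeping of left implications versus composition in the quantaloid $\CQ$-$\Dist$. The cleanest route is to notice at the outset that, because $F_{\nat}\dv F^{\nat}$, Proposition \ref{adjoint_arrow_calculation}(1) collapses the implication $\mu\lda F_{\nat}$ into the composition $\mu\circ F^{\nat}$, so that the second identity reduces verbatim to the first; then a single application of $F^{\nat}\circ F_{\nat}=\bbA$ finishes both. I would present the first identity in full and remark that the second follows identically after rewriting $(F_{\nat})_*(\mu)$ as $\mu\circ F^{\nat}$.
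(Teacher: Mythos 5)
Your first identity is fine and is exactly the paper's argument (the paper dismisses it as a reformulation of Proposition \ref{fully_faithful_graph_cograph}(1)). The problem is the second identity: the step where you invoke Proposition \ref{adjoint_arrow_calculation}(1) to conclude $\mu\lda F_{\nat}=\mu\circ F^{\nat}$ is a misapplication of that rule. For the adjunction $F_{\nat}\dv F^{\nat}$, rule (1) reads $h\circ F_{\nat}=h\lda F^{\nat}$: composition with the \emph{left} adjoint equals implication by the \emph{right} adjoint. What you need is an implication by the left adjoint, $\mu\lda F_{\nat}$, and there is no such collapse: the equality $\mu\lda F_{\nat}=\mu\circ F^{\nat}$ is false in general, even for fully faithful $F$. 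Concretely, take $\CQ={\bf 2}$, let $\bbA$ be a one-object $\CQ$-category, $\bbB$ the discrete $\CQ$-category on two objects $\{a,b\}$, $F$ the inclusion $a\mapsto a$, and $\mu$ with $\mu(a)=1$; then $(\mu\lda F_{\nat})(b)=1\lda 0=1$ while $(\mu\circ F^{\nat})(b)=1\wedge 0=0$. This also kills your closing remark: $(F^{\nat})^*(\mu)$ and $(F_{\nat})_*(\mu)$ do \emph{not} coincide in general. Theorem \ref{why_kan} says the \emph{middle} terms of the adjoint triple agree, $(F^{\nat})_*=F^{\la}=(F_{\nat})^*$; the two functors $(F^{\nat})^*$ and $(F_{\nat})_*$ you are equating are respectively the left and right adjoints of that common functor $F^{\la}$, which is precisely why they can differ.

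The corollary itself is of course still true, and your proof is easily repaired. Either follow the paper's route: Proposition \ref{adjoint_arrow_calculation}(4) (not (1)) applies directly to the composite, giving $(\mu\lda F_{\nat})\circ F_{\nat}=\mu\lda(F^{\nat}\circ F_{\nat})=\mu\lda\bbA=\mu$ in one line. Or keep your two-inequality structure but weaken the false equality to the true inequality $\mu\circ F^{\nat}\leq\mu\lda F_{\nat}$, which holds because $(\mu\circ F^{\nat})\circ F_{\nat}=\mu\circ\bbA=\mu\leq\mu$, so $\mu\circ F^{\nat}$ is one of the distributors over which the join defining $\mu\lda F_{\nat}$ is taken; composing with $F_{\nat}$ and using your first identity then gives $\mu=(\mu\circ F^{\nat})\circ F_{\nat}\leq(\mu\lda F_{\nat})\circ F_{\nat}$, which together with your correct application of Proposition \ref{arrow_calculation}(7) yields equality.
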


\begin{proof}
The first equality is a reformulation of Proposition \ref{fully_faithful_graph_cograph}(1). For the second equality,
\begin{align*}
(F_{\nat})_*(\mu)\circ F_{\nat}&=(\mu\lda F_{\nat})\circ F_{\nat}\\
&=\mu\lda(F^{\nat}\circ F_{\nat})&(\text{Proposition \ref{adjoint_arrow_calculation}(4)})\\
&=\mu\lda\bbA&(\text{Proposition \ref{fully_faithful_graph_cograph}(1)})\\
&=\mu.
\end{align*}
This completes the proof.
\end{proof}

\section{Functoriality of the Kan adjunction}

In this section, we show that the construction of Kan adjunctions $\phi^*\dv\phi_*$ is contravariant functorial from the category $\CQ$-$\Info$ to $\CQ$-$\Cls$, and thus to $\CQ$-$\CCat$.

Since $\phi_*\circ\phi^*:\PB\to\PB$ is a $\CQ$-closure operator for each $\CQ$-distributor $\phi:\bbA\oto\bbB$, it follows that $(\bbB,\phi_*\circ\phi^*)$ is a $\CQ$-closure space.

\begin{prop} \label{F_G_infomorhpism_F_phistar_continuous}
Let $(F,G):(\phi:\bbA\oto\bbB)\to(\psi:\bbA'\oto\bbB')$ be an infomorphism. Then $G:(\bbB',\psi_*\circ\psi^*)\to(\bbB,\phi_*\circ\phi^*)$ is a continuous $\CQ$-functor.
\end{prop}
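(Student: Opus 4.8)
The plan is to prove continuity of $G:(\bbB',\psi_*\circ\psi^*)\to(\bbB,\phi_*\circ\phi^*)$ by exploiting the analogy with the Isbell case (Proposition \ref{F_G_info_F_uphi_dphi_continuous}). By definition, continuity amounts to $G^{\ra}\circ(\psi_*\circ\psi^*)\leq(\phi_*\circ\phi^*)\circ G^{\ra}$ in $\CQ\text{-}\Cat(\PB',\PB)$. Rather than attack this composite directly, I would use the reformulation supplied by Proposition \ref{F_continuous_condition}: $G$ is continuous if and only if the inverse image $G^{\la}(\lam)$ is closed in $(\bbB',\psi_*\circ\psi^*)$ whenever $\lam$ is closed in $(\bbB,\phi_*\circ\phi^*)$. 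This turns a statement about $2$-cells between composite $\CQ$-functors into a pointwise membership check among fixed points of the two $\CQ$-closure operators.

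First I would factor the argument through two diagrams, mirroring parts (a) and (b) of the proof of Proposition \ref{F_G_info_F_uphi_dphi_continuous}, but now with the Kan data $\phi^*,\phi_*$ in place of $\uphi,\dphi$. Consider the square relating $\psi^*:\PB'\to\PA'$, $\phi^*:\PB\to\PA$, $F^{\la}:\PA'\to\PA$ (note $F^{\la}=(F_{\nat})^*$ by Theorem \ref{why_kan}), and $G^{\la}:\PB'\to\PB$. The key computation is that the infomorphism condition $G^{\nat}\circ\phi=\psi\circ F_{\nat}$ (equivalently $\phi(-,G-)=\psi(F-,-)$ of Definition \ref{infomorphism_def}) is exactly what makes $\phi^*\circ G^{\la}=F^{\la}\circ\psi^*$ hold, via the associativity of distributor composition together with Proposition \ref{graph_cograph_distributor} and the formula $G^{\la}(\lam)=\lam\circ G_{\nat}$ of Definition \ref{direct_inverse_image_def}. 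Concretely, for $\lam\in\PB'$ one expands $\phi^*(G^{\la}(\lam))=(\lam\circ G_{\nat})\circ\phi$ and $F^{\la}(\psi^*(\lam))=(\lam\circ\psi)\circ F_{\nat}$, and the two agree once one rewrites $G_{\nat}\circ\phi$ and $\psi\circ F_{\nat}$ using the infomorphism identity (passing through $G^{\nat}\circ\phi=\psi\circ F_{\nat}$, which is the other form of the infomorphism condition).

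Next I would combine this intertwining relation with the adjunction $\phi^*\dv\phi_*$ (Proposition \ref{phistar_adjoint}) and its unit/counit inequalities (\ref{phistar_adjuntion}) to conclude. Given $\lam\in\phi_*\circ\phi^*(\PB)$ closed, the aim is to show $G^{\la}(\lam)$ lies in $\psi_*\circ\psi^*(\PB')$. Using $\phi^*\circ G^{\la}=F^{\la}\circ\psi^*$ and the analogous auxiliary inequality $G^{\la}\circ\phi_*\leq\psi_*\circ F^{\la}$ (which I expect to derive from $\phi^*\dv\phi_*$, $\psi^*\dv\psi_*$, and the relation between $F^{\la}$ and $F^{\ra}$ via $F^{\ra}\dv F^{\la}$ of Proposition \ref{direct_inverse_image_adjunction}), one chains the estimates to obtain $\psi_*\circ\psi^*\circ G^{\la}(\lam)\leq G^{\la}(\lam)$; the reverse inequality is automatic since $\psi_*\circ\psi^*$ is a $\CQ$-closure operator. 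This yields closedness of $G^{\la}(\lam)$, hence continuity by Proposition \ref{F_continuous_condition}.

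The main obstacle I anticipate is bookkeeping the variance correctly. In the Kan setting $\phi^*$ and $\phi_*$ are covariant on the contravariant-presheaf $\CQ$-categories (unlike $\uphi,\dphi$, which were order-reversing by (\ref{uphi_contravariant})--(\ref{dphi_contravariant})), and the infomorphism is contravariant in its second leg $G$, so the direction of the continuous $\CQ$-functor is $G:(\bbB',\ldots)\to(\bbB,\ldots)$ rather than $F:(\bbA,\ldots)\to(\bbA',\ldots)$. I would therefore be careful to verify that the composite intertwining square produces the inequality $G^{\ra}\circ\psi_*\circ\psi^*\leq\phi_*\circ\phi^*\circ G^{\ra}$ with $G^{\ra}$ going the correct way, and to confirm that the identities of Proposition \ref{adjoint_arrow_calculation}(2),(3) are applied on the correct side of each composition. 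Once the variance is pinned down, each individual step is a routine application of the implication/composition calculus already assembled in Proposition \ref{adjoint_arrow_calculation} and Remark \ref{distributor_notion}.
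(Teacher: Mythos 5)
Your overall plan---reduce continuity of $G$ to ``inverse images of closed presheaves are closed'' via Proposition \ref{F_continuous_condition}, and then feed in an intertwining square coming from the infomorphism identity---is reasonable, and the square you are after is exactly part (a) of the paper's own proof. But the variance bookkeeping that you yourself flag as the main obstacle is precisely where the proposal breaks, and as written the key relations do not typecheck. For $G:\bbB'\to\bbB$ the inverse image $\CQ$-functor is $G^{\la}:\PB\to\PB'$, $G^{\la}(\lam)=\lam\circ G_{\nat}$; since $G_{\nat}:\bbB'\oto\bbB$, the composite $\lam\circ G_{\nat}$ only exists for $\lam\in\PB$ and lands in $\PB'$. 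There is no $\CQ$-functor $\PB'\to\PB$ given by composing with the graph: the map $\PB'\to\PB$ that makes the square commute is the \emph{direct} image $G^{\ra}(\lam')=\lam'\circ G^{\nat}$, built from the \emph{cograph} $G^{\nat}:\bbB\oto\bbB'$. Consequently your claimed square $\phi^*\circ G^{\la}=F^{\la}\circ\psi^*$ and the expansion $\phi^*(G^{\la}(\lam))=(\lam\circ G_{\nat})\circ\phi$ for $\lam\in\PB'$ are ill-formed, and ``$G_{\nat}\circ\phi$'' cannot be rewritten by the infomorphism identity, which concerns $G^{\nat}\circ\phi$ (only $G^{\nat}$, not $G_{\nat}$, composes with $\phi:\bbA\oto\bbB$). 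The correct square---the paper's step (a)---is $\phi^*\circ G^{\ra}=F^{\la}\circ\psi^*$, proved by $\phi^*(G^{\ra}(\lam'))=\lam'\circ G^{\nat}\circ\phi=\lam'\circ\psi\circ F_{\nat}=F^{\la}(\psi^*(\lam'))$. Your auxiliary inequality $G^{\la}\circ\phi_*\leq\psi_*\circ F^{\la}$ is likewise ill-typed ($F^{\la}$ lands in $\PA$ while $\psi_*$ starts from $\PA'$); the paper's well-typed analogue is $G^{\ra}\circ\psi_*\leq\phi_*\circ F^{\la}$, and pasting the two squares already yields $G^{\ra}\circ\psi_*\circ\psi^*\leq\phi_*\circ F^{\la}\circ\psi^*=\phi_*\circ\phi^*\circ G^{\ra}$, i.e.\ continuity, with no need for Proposition \ref{F_continuous_condition}. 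Note also the internal inconsistency: your first half needs ``$G^{\la}$'' to go $\PB'\to\PB$, while your closedness step applies $G^{\la}$ to a closed $\lam\in\PB$ and asks for membership in $\psi_*\circ\psi^*(\PB')$, which needs $G^{\la}:\PB\to\PB'$; both cannot hold, so the two halves cannot be assembled into a proof.

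If you want to keep your closedness route, it can be completed correctly, and more directly than by chasing inequalities. A presheaf $\lam\in\PB$ is closed for $\phi_*\circ\phi^*$ if and only if it lies in the image of $\phi_*$ (one direction is $\lam=\phi_*(\phi^*\lam)$; the other follows since $\phi_*\circ\phi^*\circ\phi_*=\phi_*$ by Propositions \ref{phistar_adjoint} and \ref{FGF_F} together with skeletality of $\PB$). So write $\lam=\phi_*(\mu)=\mu\lda\phi$ with $\mu=\phi^*(\lam)$. Then
$$G^{\la}(\lam)=(\mu\lda\phi)\circ G_{\nat}=(\mu\lda\phi)\lda G^{\nat}=\mu\lda(G^{\nat}\circ\phi)=\mu\lda(\psi\circ F_{\nat})=(\mu\lda F_{\nat})\lda\psi=\psi_*(\mu\lda F_{\nat}),$$
where the second equality is Proposition \ref{adjoint_arrow_calculation}(1) applied to $G_{\nat}\dv G^{\nat}$, the third and fifth are Proposition \ref{arrow_calculation}(5), and the fourth is the infomorphism identity. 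Thus $G^{\la}(\lam)$ lies in the image of $\psi_*$, hence is closed for $\psi_*\circ\psi^*$, and Proposition \ref{F_continuous_condition} gives the continuity of $G$. In this corrected form your argument becomes a genuine (mild) variant of the paper's proof: instead of establishing the lax square $G^{\ra}\circ\psi_*\leq\phi_*\circ F^{\la}$, it transports the explicit description of the fixed points (presheaves of the form $\mu\lda\phi$) along the infomorphism.
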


\begin{proof}
Consider the following diagram:
$$\bfig
\square|alrb|[\PB'`\PA'`\PB`\PA;\psi^*`G^{\ra}`F^{\la}`\phi^*]
\square(500,0)/>``>`>/[\PA'`\PB'`\PA`\PB;\psi_*``G^{\ra}`\phi_*]
\efig $$
We must prove $G^{\ra}\circ\psi_*\circ\psi^*\leq\phi_*\circ\phi^*\circ G^{\ra}$. To this end, it suffices to check that
\begin{itemize}
\item[\rm (a)] the left square commutes if and only if $(F,G):\phi\to\psi$ is an infomorphism; and
\item[\rm (b)] if $G^{\nat}\circ\phi\leq\psi\circ F_{\nat}$, then $G^{\ra}\circ\psi_*\leq\phi_*\circ F^{\la}$.
\end{itemize}

For (a), suppose $F^{\la}\circ\psi^*=\phi^*\circ G^{\ra}$, then for all $y'\in\bbB'_0$,
\begin{align*}
G^{\nat}(-,y')\circ\phi&=\bbB'(-,y')\circ G^{\nat}\circ\phi\\
&=\phi^*(\sY_{\bbB'}y'\circ G^{\nat})&(\text{Equation (\ref{phidag_def})})\\
&=\phi^*(G^{\ra}\circ\sY_{\bbB'}y')&(\text{Definition \ref{direct_inverse_image_def}})\\
&=F^{\la}(\psi^*\circ\sY_{\bbB'}y')\\
&=F^{\la}(\psi(-,y'))&\text{(Proposition \ref{phi_star_Yoneda})}\\
&=\psi(-,y')\circ F_{\nat}.&(\text{Definition \ref{direct_inverse_image_def}})
\end{align*}

Conversely, if $(F,G):\phi\to\psi$ is an infomorphism, then for all $\lam'\in\PB'$,
\begin{align*}
F^{\la}\circ\psi^*(\lam')&=\lam'\circ\psi\circ F_{\nat}&(\text{Equation (\ref{phidag_def})})\\
&=\lam'\circ G^{\nat}\circ\phi&(\text{Definition \ref{infomorphism_def}})\\
&=\phi^*\circ G^{\ra}(\lam').&(\text{Equation (\ref{phidag_def})})\\
\end{align*}

For (b), suppose $G^{\nat}\circ\phi\leq\psi\circ F_{\nat}$, then for all $\mu'\in\PA'$,
\begin{align*}
G^{\ra}\circ\psi_*(\mu')&=(\mu'\lda\psi)\circ G^{\nat}&(\text{Equation (\ref{phistar_def})})\\
&\leq(\mu'\lda\psi)\circ((\psi\circ F_{\nat})\lda\phi)\\
&\leq((\mu'\lda\psi)\circ\psi\circ F_{\nat})\lda\phi\\
&\leq(\mu'\circ F_{\nat})\lda\phi\\
&=\phi_*\circ F^{\la}(\mu').&(\text{Equation (\ref{phistar_def})})
\end{align*}
This completes the proof.
\end{proof}

By virtue of Proposition \ref{F_G_infomorhpism_F_phistar_continuous} we obtain a functor $\CV:(\CQ\text{-}\Info)^{\op}\to\CQ\text{-}\Cls$ that sends an infomorphism
$$(F,G):(\phi:\bbA\oto\bbB)\to(\psi:\bbA'\oto\bbB')$$
to a continuous $\CQ$-functor
$$G:(\bbB',\psi_*\circ\psi^*)\to(\bbB,\phi_*\circ\phi^*).$$

The composition of
$$\CV:(\CQ\text{-}\Info)^{\op}\to\CQ\text{-}\Cls$$
and
$$\CT:\CQ\text{-}\Cls\to\CQ\text{-}\CCat$$
gives a  functor
\begin{equation} \label{K_def}
\CK=\CT\circ\CV:(\CQ\text{-}\Info)^{\op}\to\CQ\text{-}\CCat
\end{equation}
$$\bfig
\Vtriangle/->`->`@{<-}@<2.5pt>/[(\CQ\text{-}\Info)^{\op}`\CQ\text{-}\CCat`\CQ\text{-}\Cls;\CK`\CV`\CT]
\morphism(500,0)/@{<-}@<2.5pt>/<500,500>[\CQ\text{-}\Cls`\CQ\text{-}\CCat;\CD]
\efig$$
that sends each $\CQ$-distributor $\phi:\bbA\oto\bbB$ to the complete $\CQ$-category $\CK(\phi)=\phi_*\circ\phi^*(\PB)$.

The following conclusion asserts that the free cocompletion functor of $\CQ$-categories  factors through  $\CK$.

\begin{prop} \label{K_Yoneda_PA}
If $F:\bbA\to\bbB$ is a fully faithful $\CQ$-functor, then $\CK(F^{\nat})=\PA$. In particular, the diagram
$$\bfig
\qtriangle<700,500>[\CQ\text{-}\Cat`(\CQ\text{-}\Info)^{\op}`\CQ\text{-}\CCat;\BY^{\dag}`\CP`\CK]
\efig$$
commutes.
\end{prop}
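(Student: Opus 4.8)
The plan is to unwind the definition of $\CK$ on the distributor $F^{\nat}$, identify the resulting $\CQ$-closure operator with the well-understood composite $F^{\la}\circ F^{\ra}$, and then simply read off its fixed points. Recall that for a $\CQ$-distributor $\phi:\bbA\oto\bbB$ the functor $\CK=\CT\circ\CV$ produces the $\CQ$-closure system $\phi_*\circ\phi^*(\PB)$ inside the free cocompletion of the \emph{target} of $\phi$. Applying this to $\phi=F^{\nat}:\bbB\oto\bbA$, whose target is $\bbA$, the relevant $\CQ$-closure operator is $(F^{\nat})_*\circ(F^{\nat})^*$ on $\PA$.

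The decisive simplification comes from Theorem \ref{why_kan}, which gives $(F^{\nat})^*=F^{\ra}$ and $(F^{\nat})_*=F^{\la}$; hence the $\CQ$-closure operator in question is exactly $F^{\la}\circ F^{\ra}$. Since $F$ is fully faithful, Proposition \ref{fully_faithful_graph_cograph}(1) yields $F^{\nat}\circ F_{\nat}=\bbA$, and the computation already carried out in the proof of Proposition \ref{Fra_injective_surjective} shows $F^{\la}\circ F^{\ra}(\mu)=\mu\circ F^{\nat}\circ F_{\nat}=\mu$ for every $\mu\in\PA$. Thus $F^{\la}\circ F^{\ra}=1_{\PA}$, its set of fixed points is all of $\PA$, and therefore $\CK(F^{\nat})=\PA$.

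For the ``in particular'' assertion I would verify commutativity of the triangle on objects and on morphisms separately. On objects, $\BY^{\dag}(\bbA)$ is the $\CQ$-distributor $(\sYd_{\bbA})^{\nat}$, and since the co-Yoneda embedding $\sYd_{\bbA}:\bbA\to\PdA$ is fully faithful, the first part applied to $F=\sYd_{\bbA}$ gives $\CK\circ\BY^{\dag}(\bbA)=\CK((\sYd_{\bbA})^{\nat})=\PA=\CP(\bbA)$. On morphisms, for a $\CQ$-functor $F:\bbA\to\bbB$ the morphism $\BY^{\dag}(F)$ is represented by the infomorphism $(F^{\nla},F):(\sYd_{\bbB})^{\nat}\to(\sYd_{\bbA})^{\nat}$ in $\CQ$-$\Info$. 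Feeding this to $\CV$, which selects the second component together with the induced $\CQ$-closure spaces, produces the continuous $\CQ$-functor $F:(\bbA,1_{\PA})\to(\bbB,1_{\PB})$ between discrete $\CQ$-closure spaces (both closure operators being the identities by the first part applied to $\sYd_{\bbA}$ and $\sYd_{\bbB}$). Finally $\CT$ sends this to $F^{\triangleright}=1_{\PB}\circ F^{\ra}=F^{\ra}=\CP(F)$, which gives $\CK\circ\BY^{\dag}=\CP$.

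The argument is essentially bookkeeping once Theorem \ref{why_kan} is in hand, so the only genuine care is with the directions: $\CK$ and $\BY^{\dag}$ are both contravariant, and one must keep straight which $\CQ$-category plays the role of the ``target'' of each distributor (hence on which free cocompletion the $\CQ$-closure operator lives) and that $\CV$ records the second component $F$ of the infomorphism rather than the first. The main point to confirm carefully is that the two closure operators $((\sYd_{\bbA})^{\nat})_*\circ((\sYd_{\bbA})^{\nat})^*$ and $((\sYd_{\bbB})^{\nat})_*\circ((\sYd_{\bbB})^{\nat})^*$ really collapse to the identities on $\PA$ and $\PB$ respectively, which is precisely the content of the first part applied to the fully faithful co-Yoneda embeddings.
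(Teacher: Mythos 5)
Your proof is correct and follows essentially the same route as the paper's: identify the $\CQ$-closure operator $(F^{\nat})_*\circ(F^{\nat})^*$ on $\PA$ with $F^{\la}\circ F^{\ra}$ via Theorem \ref{why_kan}, collapse it to the identity using $F^{\nat}\circ F_{\nat}=\bbA$ (Proposition \ref{fully_faithful_graph_cograph}(1)), and then apply this to the fully faithful co-Yoneda embeddings. The only difference is that you spell out the verification of the commuting triangle (unwinding $\CV$ and $\CT$ on $\BY^{\dag}(F)$, with both closure operators reduced to identities), which the paper compresses into ``it is easy to verify that $\CK\circ\BY^{\dag}(G)=G^{\ra}=\CP(G)$''; your unwinding is accurate, including the direction conventions.
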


\begin{proof}
In order to see that $\CK(F^{\nat})=(F^{\nat})_*\circ(F^{\nat})^*(\PA)=\PA$, it suffices to check that $(F^{\nat})_*\circ(F^{\nat})^*(\mu)=\mu$ for all $\mu\in\PA$. Indeed,
\begin{align*}
(F^{\nat})_*\circ(F^{\nat})^*(\mu)&=(F_{\nat})^*\circ(F^{\nat})^*(\mu)&(\text{Theorem \ref{why_kan}})\\
&=(F^{\nat}\circ F_{\nat})^*(\mu)\\
&=\bbA^*(\mu)&(\text{Proposition \ref{fully_faithful_graph_cograph}(1)})\\
&=\mu.
\end{align*}

Furthermore, it is easy to verify that $\CK\circ\BY^{\dag}(G)=G^\ra=\CP(G)$ for each $\CQ$-functor $G:\bbA\to\bbB$. Thus, the conclusion follows.
\end{proof}

\section{When $\CQ$ is a Girard quantaloid} \label{When_Q_Girard}

Theorem \ref{complete_category_fca} shows that every skeletal complete $\CQ$-category is of the form $\CM(\phi)$.  It is natural to ask whether every skeletal complete $\CQ$-category can be written of the form $\CK(\phi)$ for some $\CQ$-distributor $\phi$. A little surprisingly, this is not true in general. This fact was pointed out in \cite{Lai2009} in the case that $\CQ$ is a unital commutative quantale. However, the answer is positive when $\CQ$ is a Girard quantaloid.

Let $\CQ$  be a Girard quantaloid with a cyclic dualizing family
$$\FD=\{d_X:X\to X\mid X\in\CQ_0\}.$$
For all $f\in\CQ(X,Y)$, let
$$\neg f=d_X\lda f=f\rda d_Y:Y\to X.$$
Then $\neg\neg f=f$ since $\FD$ is a dualizing family.
For each $\CQ$-category $\bbA$, set
$$(\neg\bbA)(y,x)=\neg\bbA(x,y)$$
for all $x,y\in\bbA_0$. It is easy to verify that $\neg\bbA:\bbA\oto\bbA$ is a $\CQ$-distributor.

\begin{prop} {\rm\cite{Rosenthal1996}} \label{DistQ_Girard}
If $\CQ$ is a Girard quantaloid, then $\CQ$-$\Dist$ is a Girard quantaloid.
\end{prop}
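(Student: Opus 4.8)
The plan is to produce an explicit cyclic dualizing family for $\CQ\text{-}\Dist$, namely the family $\{\neg\bbA\}$ indexed by the $\CQ$-categories (the objects of $\CQ\text{-}\Dist$), where $\neg\bbA:\bbA\oto\bbA$ is the $\CQ$-distributor introduced just above, and then to verify the cyclic identity (\ref{cyclic_def}) and the dualizing identity (\ref{dualizing_def}) for this family by reducing both, pointwise, to the already available identities in $\CQ$. Throughout I would use that the local order, composition, and implications in $\CQ\text{-}\Dist$ are all computed pointwise (Proposition \ref{Dist_quantaloid}), so that every assertion about $\CQ$-distributors unwinds into a family of assertions about $\CQ$-arrows, to which the calculus of Proposition \ref{arrow_calculation} and Proposition \ref{arrow_top_bottom} applies.

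First I would extend the involution $\neg$ from $\CQ$-arrows to $\CQ$-distributors by setting $(\neg\phi)(y,x)=\neg(\phi(x,y))$ for $\phi:\bbA\oto\bbB$; then $\neg\phi:\bbB\oto\bbA$, and because $\neg\neg f=f$ holds in $\CQ$ by (\ref{dualizing_def}), the identity $\neg\neg\phi=\phi$ holds pointwise, hence in $\CQ\text{-}\Dist$. Since $\neg$ reverses the order on each $\CQ(X,Y)$ and the order in $\CQ\text{-}\Dist$ is pointwise, $\neg$ is an order-reversing involution on each hom-poset of $\CQ\text{-}\Dist$.

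The core step is the single identity
$$\neg\phi=(\neg\bbA)\lda\phi=\phi\rda(\neg\bbB)\qquad(\phi:\bbA\oto\bbB),$$
which is exactly the cyclic condition (\ref{cyclic_def}) for $\{\neg\bbA\}$. I would prove it pointwise. Expanding the left implication with Proposition \ref{Dist_quantaloid}(4) gives $((\neg\bbA)\lda\phi)(y,x)=\bw_{a\in\bbA_0}\neg(\bbA(x,a))\lda\phi(a,y)$, and Proposition \ref{arrow_calculation}(6) and (5) rewrite each term as
$$\neg(\bbA(x,a))\lda\phi(a,y)=\bbA(x,a)\rda\neg(\phi(a,y))=\neg\big(\phi(a,y)\circ\bbA(x,a)\big).$$
The transitivity axiom $\phi(a,y)\circ\bbA(x,a)\leq\phi(x,y)$ together with antitonicity of $\neg$ shows $\neg(\phi(x,y))$ is below every term of the meet, while the single index $a=x$, using $1_{tx}\leq\bbA(x,x)$ and Proposition \ref{arrow_top_bottom}(1), shows the meet lies below $\neg(\phi(x,y))$; hence the meet collapses to $\neg(\phi(x,y))=(\neg\phi)(y,x)$. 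The equality $\phi\rda(\neg\bbB)=\neg\phi$ is the mirror computation, now driven by the other transitivity axiom $\bbB(y',y)\circ\phi(x,y')\leq\phi(x,y)$ and the reflexivity $1_{ty}\leq\bbB(y,y)$.

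With the cyclic identity in hand, the dualizing identity (\ref{dualizing_def}) is purely formal: applying the cyclic identity to $\neg\phi:\bbB\oto\bbA$ gives $((\neg\bbA)\lda\phi)\rda(\neg\bbA)=(\neg\phi)\rda(\neg\bbA)=\neg\neg\phi=\phi$ and $(\neg\bbB)\lda(\phi\rda(\neg\bbB))=(\neg\bbB)\lda(\neg\phi)=\neg\neg\phi=\phi$, where the last equalities use the involution. This proves $\{\neg\bbA\}$ is cyclic and dualizing, so $\CQ\text{-}\Dist$ is a Girard quantaloid. I expect the only genuine obstacle to be the pointwise collapse of the meet in the displayed core identity; the remaining work is type bookkeeping and formal manipulation with the involution. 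One preliminary point worth confirming is that $\neg\bbA$ is indeed a $\CQ$-distributor and that the implications $(\neg\bbA)\lda\phi$ and $\phi\rda(\neg\bbB)$ are legitimately formed in $\CQ\text{-}\Dist$, both of which are guaranteed once $\CQ\text{-}\Dist$ is known to be a quantaloid (Proposition \ref{Dist_quantaloid}).
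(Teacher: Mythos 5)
Your proposal is correct and follows essentially the same route as the paper: both exhibit $\{\neg\bbA\}$ as a cyclic dualizing family, reduce the core identity $\neg\bbA\lda\phi=\neg\phi=\phi\rda\neg\bbB$ to pointwise arrow calculus (Proposition \ref{arrow_calculation}(5)(6)), and then obtain the dualizing identity formally from cyclicity together with the involution $\neg\neg\phi=\phi$. The only cosmetic difference is that where you collapse the meet $\bw_{a}\neg\big(\phi(a,y)\circ\bbA(x,a)\big)$ by hand via transitivity and reflexivity of $\bbA$, the paper recognizes that meet as $\bbA\rda\neg\phi$ (and the mirror one as $\neg\phi\lda\bbB$) and simply invokes the unit law for implications in the quantaloid $\CQ\text{-}\Dist$.
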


\begin{proof}
If $\FD=\{d_X:X\to X\mid X\in\CQ_0\}$ is a cyclic dualizing family of $\CQ$, we show that
$$\FD'=\{\neg\bbA:\bbA\oto\bbA\mid\bbA\text{ is a }\CQ\text{-category}\}$$
is a cyclic dualizing family of $\CQ\text{-}\Dist$. Indeed, for each $\CQ$-distributor $\phi:\bbA\oto\bbB$,
\begin{align*}
\neg\bbA\lda\phi&=\bw_{x\in\bbA_0}(\bbA(-,x)\rda d_{tx})\lda\phi(x,-)\\
&=\bw_{x\in\bbA_0}\bbA(-,x)\rda(d_{tx}\lda\phi(x,-))\\
&=\bbA\rda\neg\phi\\
&=\neg\phi\lda\bbB\\
&=\bw_{y\in\bbB_0}(\phi(-,y)\rda d_{ty})\lda\bbB(y,-)\\
&=\bw_{y\in\bbB_0}\phi(-,y)\rda(d_{ty}\lda\bbB(y,-))\\
&=\phi\rda\neg\bbB,
\end{align*}
and it follows that
\begin{align*}
(\neg\bbA\lda\phi)\rda\neg\bbA&=(\bbA\rda\neg\phi)\rda\neg\bbA\\
&=\neg\phi\rda\neg\bbA\\
&=\neg\neg\phi\lda\bbA\\
&=\phi,
\end{align*}
as desired.
\end{proof}

Therefore, by assigning $\neg\phi=\neg\bbA\lda\phi=\phi\rda\neg\bbB$ for each $\CQ$-distributor $\phi:\bbA\oto\bbB$, we obtain a  functor $\neg:\CQ\text{-}\Info\to(\CQ\text{-}\Info)^{\op}$ that sends an infomorphism
$$(F,G):(\phi:\bbA\oto\bbB)\to(\psi:\bbA'\oto\bbB')$$
to
$$(G,F):(\neg\psi:\bbB'\oto\bbA')\to(\neg\phi:\bbB\oto\bbA).$$

It is clear that $\neg\circ\neg={\bf id}_{\CQ\text{-}\Info}$. We leave it to the reader to check that $(\neg\phi)(y,x)=\neg\phi(x,y)$ for any $\CQ$-distributor $\phi:\bbA\oto\bbB$ and $x\in\bbA_0,y\in\bbB_0$.

\begin{lem} \label{V_G_id}
Suppose $\CQ$ is a Girard quantaloid. Then for any $\CQ$-distributor $\phi:\bbA\oto\bbB$, it holds that $\phi^*=\neg\circ(\neg\phi)_{\ua}$ and $\phi_*=(\neg\phi)^{\da}\circ\neg$.
\end{lem}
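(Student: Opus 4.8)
The plan is to unwind the definitions on both sides and verify the claimed identities pointwise, using the defining property of the cyclic dualizing family together with the formulas for composition and implication in $\CQ$-$\Dist$. Recall that $\neg\phi=\neg\bbA\lda\phi=\phi\rda\neg\bbB$, that $\neg$ acts on a $\CQ$-distributor by $(\neg\phi)(y,x)=\neg\phi(x,y)=d_{tx}\lda\phi(x,y)=\phi(x,y)\rda d_{ty}$, and that $\neg\neg=\id$ since $\FD$ is dualizing. The target identity $\phi^*=\neg\circ(\neg\phi)_{\ua}$ is an equality of $\CQ$-functors $\PB\to\PA$, so I would fix $\lam\in\PB$ and show $\phi^*(\lam)=\neg((\neg\phi)_{\ua}(\lam))$; similarly for $\phi_*$.

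First I would record the two key translations. By Equation~(\ref{phidag_def}), $\phi^*(\lam)=\lam\circ\phi$, and by Equation~(\ref{uphi_def}) applied to the distributor $\neg\phi:\bbB\oto\bbA$ we have $(\neg\phi)_{\ua}(\lam)=\neg\phi\lda\lam$. The crux is then to prove the single distributor identity
$$\lam\circ\phi=\neg(\neg\phi\lda\lam),$$
which after applying $\neg$ (harmless, since $\neg\neg=\id$) is equivalent to $\neg(\lam\circ\phi)=\neg\phi\lda\lam$. To establish this I would compute both sides as meets/joins over $\bbB_0$ using Proposition~\ref{Dist_quantaloid}: the left side expands via $(\lam\circ\phi)(x,-)=\bv_{y\in\bbB_0}\lam(y,-)\circ\phi(x,y)$ and the right side via the pointwise implication formula, and then invoke Proposition~\ref{Girard_quantaloid_properties}(1) (which turns $\neg$ of a join into a meet of $\neg$'s, i.e. $\bv_i(d\lda f_i)=d\lda\bw_i f_i$ read through $\neg$) together with the identity $\neg(g\circ f)=(\neg g)\lda f$ that follows from Proposition~\ref{arrow_calculation}(5) (namely $(d\lda(g\circ f))=((d\lda g)\lda f)$) specialized to the dualizing arrows. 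The identity $\phi_*=(\neg\phi)^{\da}\circ\neg$ is entirely dual: here $\phi_*(\mu)=\mu\lda\phi$ by Equation~(\ref{phistar_def}) and $(\neg\phi)^{\da}(\nu)=\nu\rda\neg\phi$ by Equation~(\ref{uphi_def}), so the same kind of $\neg$-manipulation, now pairing $\lda$ with $\rda$ through the cyclicity relation $d_X\lda f=f\rda d_Y$, reduces it to a pointwise arrow identity verifiable by Proposition~\ref{Girard_quantaloid_properties}.

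The step I expect to be the main obstacle is bookkeeping the many-object/typed nature of the computation: because $\neg$ switches the roles of domain and codomain (it sends $\phi:\bbA\oto\bbB$ to $\neg\phi:\bbB\oto\bbA$) and because the dualizing arrow $d_X$ depends on the type $X$, I must be careful that every application of $\neg$ uses the correctly typed $d_{tx}$ or $d_{ty}$, and that the left and right implications are consistently interpreted via Remark~\ref{PdA_QDist_order} (the order reversal in $\PdA$). In particular verifying that $\neg\phi\lda\lam$, a priori a member of $\PdA$, maps under $\neg$ back into $\PA$ with the correct variance is where a sign/order slip is most likely. Once the pointwise arrow identities of the form $\neg(g\circ f)=(\neg g)\lda f$ and $\neg(h\lda f)=f\circ(\neg h)$ (all instances of Proposition~\ref{arrow_calculation}(5)–(6) and Proposition~\ref{Girard_quantaloid_properties}) are isolated and checked, the two equalities follow by a direct pointwise comparison, and no genuinely new idea beyond the dualizing property is needed.
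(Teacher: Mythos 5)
Your proposal is correct in strategy, and its skeleton coincides with the paper's: both unwind $\phi^*(\lam)=\lam\circ\phi$ and $(\neg\phi)_{\ua}(\lam)=\neg\phi\lda\lam$, and reduce the lemma to the distributor identity $\lam\circ\phi=(\neg\phi\lda\lam)\rda\neg\bbA$ (your ``crux'' $\lam\circ\phi=\neg(\neg\phi\lda\lam)$ is exactly this), together with its companion for $\phi_*$. Where you genuinely diverge is in how that identity is established. The paper never expands over objects: it invokes Proposition \ref{DistQ_Girard} — that $\CQ$-$\Dist$ is itself a Girard quantaloid with cyclic dualizing family $\{\neg\bbA\}$ — rewrites $\phi=\neg\phi\rda\neg\bbA$ by the dualizing property, and then applies the identities of Proposition \ref{Girard_quantaloid_properties} directly to distributors, giving a two-line global computation. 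You instead verify the identity pointwise over $\bbB_0$ (respectively $\bbA_0$), using Proposition \ref{Dist_quantaloid}, the fact that $\neg$ turns joins into meets (Proposition \ref{Girard_quantaloid_properties}(1)), and $\neg$-versus-composition identities in $\CQ$; this is more elementary and self-contained, at the cost of redoing pointwise what Proposition \ref{DistQ_Girard} already packages at the distributor level. One slip must be fixed for your computation to close: the identity you cite, $\neg(g\circ f)=(\neg g)\lda f$, justified by ``$d\lda(g\circ f)=(d\lda g)\lda f$'', has the arguments swapped. Proposition \ref{arrow_calculation}(5) gives $d\lda(g\circ f)=(d\lda f)\lda g$, i.e. $\neg(g\circ f)=(\neg f)\lda g$; as you wrote it the expression does not even typecheck, since with $g=\lam(y)$ and $f=\phi(x,y)$ the arrows $\neg\lam(y)\colon t\lam\to ty$ and $\phi(x,y)\colon tx\to ty$ do not share a domain. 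With the corrected form one gets $\neg(\lam(y)\circ\phi(x,y))=\neg\phi(x,y)\lda\lam(y)$, and your pointwise argument for $\phi^*$ goes through; for $\phi_*$ the pointwise identity needed is $\mu(x)\lda\phi(x,y)=\neg\mu(x)\rda\neg\phi(x,y)$, which is literally Proposition \ref{Girard_quantaloid_properties}(3). You flagged precisely this spot as the likely place for an order slip, and it is indeed the only one.
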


\begin{proof}
For all $\lam\in\PB$ and  $\mu\in\PA$, we have
\begin{align*}
\phi^*(\lam)&=\lam\circ\phi\\
&=\lam\circ(\neg\phi\rda\neg\bbA)&(\text{Proposition \ref{DistQ_Girard}})\\
&=(\neg\phi\lda\lam)\rda\neg\bbA&(\text{Proposition \ref{Girard_quantaloid_properties}(3)})\\
&=\neg\circ(\neg\phi)_{\ua}(\lam)
\end{align*}
and
\begin{align*}
\phi_*(\mu)&=\mu\lda\phi\\
&=\mu\lda(\neg\phi\rda\neg\bbA)&(\text{Proposition \ref{DistQ_Girard}})\\
&=(\neg\bbA\lda\mu)\rda\neg\phi&(\text{Proposition \ref{Girard_quantaloid_properties}(5)})\\
&=\neg\mu\rda\neg\phi&(\text{Proposition \ref{DistQ_Girard}})\\
&=(\neg\phi)^{\da}\circ\neg\mu.
\end{align*}
The conclusion thus follows.
\end{proof}

\begin{prop} \label{G_V_adjunction}
Suppose $\CQ$ is a Girard quantaloid. Then  $\CV=\CU\circ\neg$ and it has a left adjoint right inverse given by
$$\CG=\neg\circ\CF:\CQ\text{-}\Cls\to(\CQ\text{-}\Info)^{\op}.$$
Therefore, every skeletal complete $\CQ$-category is isomorphic to $\CK(\phi)$ for some $\CQ$-distributor $\phi$.
\end{prop}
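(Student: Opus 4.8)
The plan is to reduce all three assertions to the already-established adjunction $\CF\dv\CU$ (Theorem \ref{F_U_adjunction}) by transporting it across the involution $\neg$, proving the identity $\CV=\CU\circ\neg$ first, then the adjunction and right-inverse properties of $\CG$ formally, and finally the representability statement.

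First I would establish $\CV=\CU\circ\neg$ at the level of objects and morphisms (reading $\neg$ in $\CU\circ\neg$ as the inverse functor $(\CQ\text{-}\Info)^{\op}\to\CQ\text{-}\Info$, legitimate since $\neg\circ\neg={\bf id}$). On objects, fix $\phi:\bbA\oto\bbB$; I must show the closure operator of $\CV(\phi)=(\bbB,\phi_*\circ\phi^*)$ agrees with that of $\CU(\neg\phi)=(\bbB,(\neg\phi)^{\da}\circ(\neg\phi)_{\ua})$. By Lemma \ref{V_G_id}, $\phi^*=\neg\circ(\neg\phi)_{\ua}$ and $\phi_*=(\neg\phi)^{\da}\circ\neg$, so
$$\phi_*\circ\phi^*=(\neg\phi)^{\da}\circ(\neg\circ\neg)\circ(\neg\phi)_{\ua}.$$
The crucial point is that the two negation operators between $\PA$ and $\PdA$ are mutually inverse: $\neg\mu=\neg\bbA\lda\mu$ sends $\PA$ to $\PdA$, $\neg\nu=\nu\rda\neg\bbA$ sends $\PdA$ to $\PA$, and the dualizing identities $(\neg\bbA\lda\mu)\rda\neg\bbA=\mu$ and $\neg\bbA\lda(\nu\rda\neg\bbA)=\nu$ hold because $\{\neg\bbA\}$ is a cyclic dualizing family in $\CQ\text{-}\Dist$ (Proposition \ref{DistQ_Girard}). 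Hence $\neg\circ\neg={\bf id}$ and $\phi_*\circ\phi^*=(\neg\phi)^{\da}\circ(\neg\phi)_{\ua}$, the object-level equality. On morphisms, an infomorphism $(F,G):\phi\to\psi$ is sent by $\neg$ to $(G,F):\neg\psi\to\neg\phi$ and then by $\CU$ to its first component $G$, which is exactly $\CV(F,G)$; so the two contravariant functors coincide.

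Next I would deduce the properties of $\CG=\neg\circ\CF$ purely formally. The right-inverse property is immediate: $\CV\circ\CG=\CU\circ\neg\circ\neg\circ\CF=\CU\circ\CF={\bf id}_{\CQ\text{-}\Cls}$ by Theorem \ref{F_U_adjunction}. For the adjunction $\CG\dv\CV$, I would chain natural bijections using that $\neg$ is an isomorphism of categories: a morphism $\CG X\to Y$ in $(\CQ\text{-}\Info)^{\op}$ is a morphism $Y\to\neg\CF X$ in $\CQ\text{-}\Info$, which $\neg$ carries bijectively to a morphism $\CF X\to\neg Y$ in $\CQ\text{-}\Info$, which $\CF\dv\CU$ carries to a morphism $X\to\CU\neg Y=\CV Y$ in $\CQ\text{-}\Cls$; naturality is inherited from each step.

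Finally, the representability claim follows by composing the pieces. Given a skeletal complete $\CQ$-category $\bbX$, set $\phi=\CG(\CD\bbX)$ (concretely $\phi=\neg\zeta_{C_{\bbX}}$). Then
$$\CK(\phi)=\CT\circ\CV\circ\CG\circ\CD(\bbX)=\CT\circ\CD(\bbX)\cong\bbX,$$
using $\CV\circ\CG={\bf id}_{\CQ\text{-}\Cls}$ just proved and the isomorphism $\CT\circ\CD\cong{\bf id}_{\CQ\text{-}\CCat}$ from Theorem \ref{T_D_adjunction}. I expect the only genuine friction to lie in the first paragraph: carefully matching variances across $\neg$ and confirming that the two presheaf negations are truly inverse. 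But since that is just the dualizing law lifted to $\CQ\text{-}\Dist$, no new idea is required, and the remaining steps are formal manipulations of an adjunction composed with an involution.
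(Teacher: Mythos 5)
Your proposal is correct and takes essentially the same route as the paper, whose entire proof is the single line that the statement is ``an immediate consequence of Theorem \ref{F_U_adjunction} and Lemma \ref{V_G_id}.'' Your three steps---deriving $\CV=\CU\circ\neg$ from Lemma \ref{V_G_id} together with the dualizing law for $\{\neg\bbA\}$ in $\CQ$-$\Dist$, transporting the adjunction $\CF\dv\CU$ across the involution $\neg$ to get $\CG\dv\CV$ and $\CV\circ\CG={\bf id}$, and then composing with $\CT\circ\CD\cong{\bf id}$ for the representability claim---are exactly the details the paper leaves implicit.
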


\begin{proof}
This is an immediate consequence of Theorem \ref{F_U_adjunction} and Lemma \ref{V_G_id}.
\end{proof}

\section{Towards a characterization of $\CK(\phi)$}

In the case that $\CQ$ is a Girard quantaloid, the results in Section \ref{When_Q_Girard} asserts that
$$\CK(\phi)=\CM(\neg\phi)$$
for each $\CQ$-distributor $\phi:\bbA\oto\bbB$. Thus, the characterizations of $\CK(\phi)$ can be obtained directly from that of $\CM(\phi)$ presented in Section \ref{Characterizations_of_Mphi}.

For a general quantaloid $\CQ$, characterizing $\CK(\phi)$ is more complicated. In this section, we provide a characterization of $\CK(\phi)$ parallel to Proposition \ref{closure_system_fca}. However, we fail to find a counterpart of Theorem \ref{complte_category_sup_dense} for $\CK(\phi)$.

Given a $\CQ$-distributor $\phi:\bbA\oto\bbB$, let  $\CK_{\phi}(\bbA,\bbB)$ denote the set of pairs $(\mu,\lam)\in\PA\times\PB$ such that $\lam=\phi_*(\mu)$ and $\mu=\phi^*(\lam)$. $\CK_{\phi}(\bbA,\bbB)$ becomes a $\CQ$-typed set if we assign $t(\mu,\lam)=t\mu=t\lam$. For $(\mu_1,\lam_1),(\mu_2,\lam_2)\in\CK_{\phi}(\bbA,\bbB)$, let
\begin{equation} \label{P_A_B_phi_order}
\CK_{\phi}(\bbA,\bbB)((\mu_1,\lam_1),(\mu_2,\lam_2))=\PA(\mu_1,\mu_2)=\PB(\lam_1,\lam_2),
\end{equation}
Then $\CK_{\phi}(\bbA,\bbB)$ becomes a $\CQ$-category.

The projection
$$\pi_2:\CK_{\phi}(\bbA,\bbB)\to\PB,\quad (\mu,\lam)\mapsto\lam$$
is clearly a fully faithful $\CQ$-functor. Since the image of $\pi_2$ is exactly the set of fixed points of the $\CQ$-closure operator $\phi_*\circ\phi^*:\PB\to\PB$, we obtain that $\CK_{\phi}(\bbA,\bbB)$ is isomorphic to the complete $\CQ$-category $\CM(\phi)=\phi_*\circ\phi^*(\PB)$.

Similarly, the projection
$$\pi_1:\CK_{\phi}(\bbA,\bbB)\to\PA,\quad (\mu,\lam)\mapsto\mu$$
is also a fully faithful $\CQ$-functor and the image of $\pi_1$ is exactly the set of fixed points of the $\CQ$-interior operator $\phi^*\circ\phi_*:\PA\to\PA$. Hence $\CK_{\phi}(\bbA,\bbB)$ is also isomorphic to the complete $\CQ$-category $\phi^*\circ\phi_*(\PA)$, which is a $\CQ$-interior system of the skeletal complete $\CQ$-category $\PA$.

Equation (\ref{P_A_B_phi_order}) shows that
$$\phi^*:\phi_*\circ\phi^*(\PB)\to\phi^*\circ\phi_*(\PA)$$
and
$$\phi_*:\phi^*\circ\phi_*(\PA)\to\phi_*\circ\phi^*(\PB)$$
are inverse to each other. Therefore, $\CK(\phi)(=\phi_*\circ\phi^*(\PB))$,  $\phi^*\circ\phi_*(\PA)$  and $\CK_{\phi}(\bbA,\bbB)$ are isomorphic to each other.

We present the following characterization of the complete $\CQ$-category $\CK(\phi)$, and we remind the readers to compare it with Proposition \ref{closure_system_fca}.

\begin{prop} \label{interior_system_rst}
Let $\bbX$ be a skeletal complete $\CQ$-category. The following conditions are equivalent:
\begin{itemize}
\item[\rm (1)] $\bbX$ is isomorphic to $\CK(\phi)$ for some $\CQ$-distributor $\phi:\bbA\oto\bbB$.
\item[\rm (2)] $\bbX$ is isomorphic to a $\CQ$-interior system of $\PA$ for some $\CQ$-category $\bbA$.
\item[\rm (3)] $\bbX$ is isomorphic to a subobject of $\PA$ for some $\CQ$-category $\bbA$ in the category $\CQ$-$\CCat$.
\end{itemize}
\end{prop}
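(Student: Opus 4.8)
The plan is to prove the three-way equivalence by establishing $(1)\Rightarrow(2)\Rightarrow(3)\Rightarrow(2)\Rightarrow(1)$, leaning heavily on the preparatory analysis of $\CK_\phi(\bbA,\bbB)$ already carried out just before the statement. The crucial observation I would exploit is the dual symmetry with Proposition \ref{closure_system_fca}: whereas $\CM(\phi)$ appears as a $\CQ$-\emph{closure} system of $\PA$, the discussion preceding this proposition shows that $\CK(\phi)=\phi_*\circ\phi^*(\PB)$ is isomorphic to $\phi^*\circ\phi_*(\PA)$, which is a $\CQ$-\emph{interior} system of $\PA$. So the whole argument is the ``interior'' mirror image of the closure-system argument, with Proposition \ref{interior_system_operator} playing the role that Proposition \ref{closure_system_operator} played there.

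For $(1)\Rightarrow(2)$, I would simply invoke the paragraph immediately above the statement: $\phi^*\circ\phi_*:\PA\to\PA$ is a $\CQ$-interior operator (it is the composite $F\circ G$ of the adjunction $\phi^*\dv\phi_*$, hence a $\CQ$-interior operator by Example \ref{adjunction_closure_interior}), and $\CK(\phi)$ is isomorphic to its fixed-point subcategory $\phi^*\circ\phi_*(\PA)$, which by Proposition \ref{interior_system_operator} is a $\CQ$-interior system of $\PA$. For the converse $(2)\Rightarrow(1)$, given a $\CQ$-interior system $\bbX$ of $\PA$, Proposition \ref{interior_system_operator} provides a $\CQ$-interior operator whose fixed points are $\bbX$; the natural move is to produce a $\CQ$-distributor $\phi$ whose Kan adjunction reconstructs this interior operator. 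The cleanest route is to take $\bbX\cong I(\PA)$ for the interior operator $I$ and feed it through the infomorphism machinery, exhibiting $\bbX$ as $\CK(\neg$-twisted data$)$ only when $\CQ$ is Girard --- but since the statement is for \emph{general} $\CQ$, I instead mimic the $(2)\Rightarrow(1)$ argument of Proposition \ref{closure_system_fca} directly: realise $\bbX$ as a $\CQ$-interior system, write down the evident $\CQ$-distributor between $\bbA$ and $\bbX$, and verify its associated Kan closure has fixed-point category $\bbX$.

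For the equivalence $(2)\Leftrightarrow(3)$, I would argue exactly as in the last paragraph of the proof of Proposition \ref{closure_system_fca}, but dualized. A $\CQ$-interior system $\bbX$ of $\PA$ is, by Definition \ref{closure_system}(2), precisely an isomorphism-closed $\CQ$-subcategory whose inclusion $\CQ$-functor $I:\bbX\to\PA$ is a \emph{left} adjoint in $\CQ$-$\Cat$. A left adjoint inclusion is in particular a monomorphism between skeletal complete $\CQ$-categories, and since its right adjoint retraction exhibits $I$ as (split) monic, $\bbX$ is a subobject of $\PA$ in $\CQ$-$\CCat$. Conversely, a subobject of $\PA$ in $\CQ$-$\CCat$ is represented by a monic left adjoint $\CQ$-functor into $\PA$, and I would check that its image is an isomorphism-closed $\CQ$-subcategory with left-adjoint inclusion, hence a $\CQ$-interior system.

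The main obstacle I anticipate is the direction $(2)\Rightarrow(1)$ for a \emph{general} (non-Girard) quantaloid. The Girard case is immediate from Section \ref{When_Q_Girard}, where $\CK(\phi)=\CM(\neg\phi)$ and Proposition \ref{G_V_adjunction} already gives surjectivity of $\CK$ up to isomorphism; but the author has flagged (in the section opening and in Proposition \ref{G_V_adjunction}) that general surjectivity of $\CK$ is exactly the subtle point, and indeed it is \emph{false} that every skeletal complete $\CQ$-category is of the form $\CK(\phi)$. The resolution is that Proposition \ref{interior_system_rst} does \emph{not} assert unconditional representability; it only asserts equivalence with being an interior system \emph{of some} $\PA$. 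So the real content is to manufacture, from an arbitrary interior system $\bbX\subseteq\PA$, a distributor $\phi$ with $\CK(\phi)\cong\bbX$, and I would do this by taking the co-restriction of the canonical distributor $(\lam,\mu)\mapsto\mu\circ\lam$ of Equation (\ref{lam_circ_mu}) to $\bbX$, in analogy with $\zeta_C$ in the closure case, then verifying through Theorem \ref{Kan_distributor_bijection}(1) and the fixed-point identifications that $\phi_*\circ\phi^*(\PB)\cong\bbX$. Pinning down the correct $\bbB$ and checking the two fixed-point conditions is where the bookkeeping will concentrate; everything else reduces to the already-established isomorphisms of posets and the interior-operator analogues of the closure lemmas.
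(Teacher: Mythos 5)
Your proposal is correct and follows the paper's proof essentially step for step: the paper also treats (1)$\Rightarrow$(2) and (2)$\Leftrightarrow$(3) as immediate from the discussion preceding the statement, and for (2)$\Rightarrow$(1) it does exactly what you describe --- given an interior operator $F$ with $\bbX\cong F(\PA)$, it sets $\bbB=F(\PA)$ and takes the restriction $\zeta_F:\bbA\oto\bbB$, $\zeta_F(x,\mu)=\mu(x)$, of the canonical distributor (\ref{lam_circ_mu}), concluding $\CK(\zeta_F)\cong\bbX$ via the fixed-point identification $\CK(\zeta_F)=(\zeta_F)_*\circ(\zeta_F)^*(\CP\bbB)\cong(\zeta_F)^*\circ(\zeta_F)_*(\PA)$. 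Be aware that the ``bookkeeping'' you defer is the sole substantive computation in the paper's proof, namely $F=(\zeta_F)^*\circ(\zeta_F)_*$, proved by showing $F(\mu)(x)=\bv_{\lam\in\bbB_0}\Big(\bw_{a\in\bbA_0}\mu(a)\lda\lam(a)\Big)\circ\lam(x)$, where the inequality $\geq$ uses that $F$ is a $\CQ$-functor below the identity and the inequality $\leq$ uses idempotency, i.e.\ that $F(\mu)$ itself lies in $\bbB_0$.
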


\begin{proof}
(1)${}\Lra{}$(2) and (2)$\iff$(3): Trivial.

(2)${}\Lra{}$(1): Let $F:\PA\to\PA$ be a $\CQ$-interior operator and $\bbB=F(\PA)$. Define a $\CQ$-distributor $\zeta_F:\bbA\oto\bbB$ by
$$\zeta_F(x,\mu)=\mu(x),$$
which is also obtained by restricting the domain and the codomain of the $\CQ$-distributor (\ref{lam_circ_mu}). We prove $F=(\zeta_F)^*\circ(\zeta_F)_*$, and consequently $\CK(\zeta_F)$ is isomorphic to $F(\PA)$.

For all $\mu\in\PA$ and $x\in\bbA_0$, let $\lam\in\bbB_0$, then
$$\PA(\lam,\mu)\leq\PA(F(\lam),F(\mu))=\PA(\lam,F(\mu))\leq F(\mu)(x)\lda\lam(x),$$
and consequently
$$F(\mu)(x)\geq\PA(\lam,\mu)\circ\lam(x)=\Big(\bw_{a\in\bbA_0}\mu(a)\lda\lam(a)\Big)\circ\lam(x).$$
If $f:tx\to t\mu$ satisfies
$$f\geq\Big(\bw_{a\in\bbA_0}\mu(a)\lda\lam(a)\Big)\circ\lam(x)$$
for all $\lam\in\bbB_0$, then
$$f\geq\Big(\bw_{a\in\bbA_0}\mu(a)\lda F(\mu)(a)\Big)\circ F(\mu)(x)\geq 1_{t\mu}\circ F(\mu)(x)=F(\mu)(x),$$
hence
\begin{align*}
F(\mu)(x)&=\bv_{\lam\in\bbB_0}\Big(\bw_{a\in\bbA_0}\mu(a)\lda\lam(a)\Big)\circ\lam(x)\\
&=\bv_{\lam\in\bbB_0}\Big(\bw_{a\in\bbA_0}\mu(a)\lda\zeta_F(a,\lam)\Big)\circ\zeta_F(x,\lam)\\
&=(\zeta_F)^*\circ(\zeta_F)_*(\mu)(x),
\end{align*}
as required.
\end{proof}

We wish to give a direct characterization of $\CK_{\phi}(\bbA,\bbB)$ like Theorem \ref{complte_category_sup_dense} for $\CM_{\phi}(\bbA,\bbB)$. However, although there exists a $\sup$-dense $\CQ$-functor $G:\bbB\to\CK_{\phi}(\bbA,\bbB)$, the following Proposition \ref{Kan_adjucntion_inf_dense} states that there is not an $\inf$-dense $\CQ$-functor $F:\bbA\to\CK_{\phi}(\bbA,\bbB)$ in general.

\begin{prop}
There exists a $\sup$-dense $\CQ$-functor $G:\bbB\to\CK_{\phi}(\bbA,\bbB)$.
\end{prop}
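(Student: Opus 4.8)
The claim is that there is a $\sup$-dense $\CQ$-functor $G:\bbB\to\CK_\phi(\bbA,\bbB)$, where $\CK_\phi(\bbA,\bbB)$ consists of pairs $(\mu,\lam)\in\PA\times\PB$ with $\lam=\phi_*(\mu)$ and $\mu=\phi^*(\lam)$. Let me think about what the natural candidate for $G$ should be, and why it should be $\sup$-dense.

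Setting up the target. The statement asserts the existence of a $\sup$-dense $\CQ$-functor $G:\bbB\to\CK_\phi(\bbA,\bbB)$, where (as in the discussion preceding the statement) $\CK_\phi(\bbA,\bbB)$ is the $\CQ$-category of pairs $(\mu,\lam)\in\PA\times\PB$ with $\lam=\phi_*(\mu)$ and $\mu=\phi^*(\lam)$, and $\pi_2:\CK_\phi(\bbA,\bbB)\to\CK(\phi)=\phi_*\circ\phi^*(\PB)$ is a $\CQ$-isomorphism onto the fixed points of the $\CQ$-closure operator $C:=\phi_*\circ\phi^*:\PB\to\PB$ (which is a $\CQ$-closure operator by Proposition \ref{phistar_adjoint} and Example \ref{adjunction_closure_interior}). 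The plan is to take for $G$ the composite $\bbB\to^{\sY_{\bbB}}\PB\to^{C}\CK(\phi)$, followed by $\pi_2^{-1}$; explicitly
$$Gy=(\olphi y,\ \phi_*(\olphi y))=(\phi^*\circ\sY_{\bbB}y,\ \phi_*\circ\phi^*\circ\sY_{\bbB}y).$$
That $Gy$ indeed lies in $\CK_\phi(\bbA,\bbB)$ follows from Equation (\ref{phiF_star_fixed}): with $\mu=\phi^*\circ\sY_{\bbB}y$ and $\lam=\phi_*(\mu)$ one has $\phi^*(\lam)=\phi^*\circ\phi_*\circ\phi^*\circ\sY_{\bbB}y=\phi^*\circ\sY_{\bbB}y=\mu$. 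Since $\pi_2\circ G=C\circ\sY_{\bbB}$ is a composite of $\CQ$-functors and $\pi_2$ is an isomorphism, $G$ is automatically a $\CQ$-functor.

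The key lemma. The heart of the argument is the following stability property, which I would isolate and prove first: if $H:\bbA\to\bbB$ is a $\sup$-dense $\CQ$-functor and $L:\bbB\to\bbC$ is a surjective left adjoint $\CQ$-functor between complete $\CQ$-categories, then $L\circ H$ is $\sup$-dense. The proof is short: given $c\in\bbC_0$, surjectivity of $L$ yields $b\in\bbB_0$ with $c=Lb$; $\sup$-density of $H$ yields $\mu\in\PA$ with $b=\sup_{\bbB}H^{\ra}(\mu)$; and since $L$ is a left adjoint it preserves suprema (Corollary \ref{left_adjoint_preserves_sup}), so using $(L\circ H)^{\ra}=L^{\ra}\circ H^{\ra}$ (Proposition \ref{Fra_composition}) one gets
$$c=L(b)=L\big(\sup\nolimits_{\bbB}H^{\ra}(\mu)\big)=\sup\nolimits_{\bbC}L^{\ra}(H^{\ra}(\mu))=\sup\nolimits_{\bbC}(L\circ H)^{\ra}(\mu),$$
as required.

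Assembling the proof. I would then apply the lemma with $H=\sY_{\bbB}:\bbB\to\PB$, which is $\sup$-dense by Example \ref{Yoneda_sup_dense}, and $L=C:\PB\to\CK(\phi)$ the corestriction of the closure operator $\phi_*\circ\phi^*$ to its image. This $C$ is a left adjoint $\CQ$-functor onto $\CK(\phi)$ by the argument in Proposition \ref{closure_system_operator} (the corestricted closure operator is left adjoint to the inclusion), and it is surjective since every fixed point $\lam\in\CK(\phi)$ satisfies $\lam=C(\lam)$; moreover $\CK(\phi)$ is complete by Proposition \ref{closure_system_complete}. Hence $C\circ\sY_{\bbB}$ is $\sup$-dense into $\CK(\phi)$, and composing with the isomorphism $\pi_2^{-1}$ (which is trivially a surjective left adjoint) keeps it $\sup$-dense into $\CK_\phi(\bbA,\bbB)$, giving the desired $G$. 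The main point requiring care — rather than any deep obstacle — is verifying that the corestricted closure operator $C$ genuinely qualifies as a $\sup$-preserving (left adjoint) $\CQ$-functor onto its fixed points, so that Corollary \ref{left_adjoint_preserves_sup} legitimately applies, together with checking that $\sup$-density is transported faithfully across the isomorphism $\pi_2$. As an alternative to the modular route, one could instead mimic the direct computation in the necessity part of Theorem \ref{complte_category_sup_dense}, showing $(\mu,\lam)=\colim_{\lam}G=\sup_{\CK_\phi(\bbA,\bbB)}G^{\ra}(\lam)$ by expanding $\CK_\phi(\bbA,\bbB)(G-,(\mu,\lam))\lda\lam$ using Equation (\ref{P_A_B_phi_order}) and the adjunction $\phi^*\dv\phi_*$; I expect the modular argument above to be cleaner and less error-prone.
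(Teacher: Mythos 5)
Your proposal is correct, and it builds exactly the same functor as the paper, $Gy=(\olphi y,\phi_*\circ\olphi y)$ with well-definedness from Equation (\ref{phiF_star_fixed}); but your verification of $\sup$-density takes a genuinely different route. The paper argues directly: for each $(\mu,\lam)\in\CK_{\phi}(\bbA,\bbB)$ it expands $\CK_{\phi}(\bbA,\bbB)((\mu,\lam),(\mu',\lam'))=\mu'\lda\mu=\mu'\lda(\lam\circ\phi)=(\mu'\lda\phi)\lda\lam$ using Equation (\ref{P_A_B_phi_order}) and Proposition \ref{arrow_calculation}(5), concluding $(\mu,\lam)={\colim}_{\lam}G={\sup}\,G^{\ra}(\lam)$ -- a short computation that exhibits the witnessing weight $\lam$ explicitly and needs nothing beyond the hom formula (this is the ``alternative'' you sketch at the end). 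You instead factor $G$ as $\pi_2^{-1}\circ C\circ\sY_{\bbB}$, where $C$ is the corestriction of $\phi_*\circ\phi^*$ to its fixed points, and invoke a general stability lemma: $\sup$-density is preserved under postcomposition with a surjective left adjoint whose domain is complete (via Corollary \ref{left_adjoint_preserves_sup} and Proposition \ref{Fra_composition}). All the ingredients you cite are indeed available in the paper: $\sY_{\bbB}$ is $\sup$-dense by Example \ref{Yoneda_sup_dense}; $C\dv I$ by the proof of Proposition \ref{closure_system_operator}; $\CK(\phi)$ is complete by Proposition \ref{closure_system_complete}; and $\pi_2$ is an isomorphism because it is bijective and fully faithful between skeletal $\CQ$-categories, as the paper itself records. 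What your route buys is modularity and reuse -- the same lemma and factorization pattern instantly re-prove the $\sup$-density of $F$ in the necessity part of Theorem \ref{complte_category_sup_dense}, since there too the functor is a corestricted closure operator composed with a Yoneda embedding. What the paper's route buys is economy: it stays entirely inside the hom-calculus, avoids appealing to completeness of $\CK(\phi)$ and to the adjoint corestriction, and produces the explicit colimit presentation $(\mu,\lam)={\colim}_{\lam}G$, which is the form used in the surrounding characterization results.
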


\begin{proof}
Let $\bbX=\CK_{\phi}(\bbA,\bbB)$. Define a $\CQ$-functor $G:\bbB\to\bbX$ by
$$Gb=(\olphi b,\phi_*\circ\olphi b),$$
then $G$ is well defined by Equation (\ref{phiF_star_fixed}). We show that $G$ is $\sup$-dense. For all $(\mu,\lam),(\mu',\lam')\in\bbX_0$,
\begin{align*}
\bbX((\mu,\lam),(\mu',\lam'))&=\mu'\lda\mu\\
&=\mu'\lda\phi^*(\lam)\\ &=\mu'\lda(\lam\circ\phi)\\
&=(\mu'\lda\phi)\lda\lam\\
&=\PA(\olphi-,\mu')\lda\lam\\
&=\bbX(G-,(\mu',\lam'))\lda\lam&\text{(Equation (\ref{P_A_B_phi_order}))}\\
&=G_{\nat}(-,(\mu',\lam'))\lda\lam,
\end{align*}
thus $(\mu,\lam)=\colim_{\lam}G={\sup}_{\bbX}\circ G^{\ra}(\lam)$, as desired.
\end{proof}

\begin{prop} \label{Kan_adjucntion_inf_dense}
Let $\CQ$ be a quantaloid with the identity arrow $1_X:X\to X$ being the top element of $\CQ(X,X)$ for all $X\in\CQ_0$ and $\FD=\{\bot_{X,X}\mid X\in\CQ_0\}$ a cyclic family. Then the following conditions are equivalent:
\begin{itemize}
\item[\rm (1)] $\FD=\{\bot_{X,X}\mid X\in\CQ_0\}$ is a dualizing family, hence $\CQ$ is a Girard quantaloid.
\item[\rm (2)] There is an $\inf$-dense $\CQ$-functor $F:\bbA\to\CK_{\phi}(\bbA,\bbB)$ for each $\CQ$-distributor $\phi:\bbA\oto\bbB$.
\end{itemize}
\end{prop}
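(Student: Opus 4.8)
The plan is to establish the equivalence by working with the explicit description of $\CK_{\phi}(\bbA,\bbB)$ and tracing what an $\inf$-dense $\CQ$-functor $F:\bbA\to\CK_{\phi}(\bbA,\bbB)$ forces. Under the hypotheses, $1_X=\top_{X,X}$ and $\FD=\{\bot_{X,X}\}$ is cyclic, so by Proposition \ref{dualizing_bottom} the only candidate for a dualizing family is $\FD$ itself; hence $\CQ$ is Girard if and only if $\FD$ is dualizing, i.e. $(\bot_{X,Y}\lda f)\rda\bot_{X,X}=f$ for all $f$. So the content of (1) is precisely that this identity holds. For the direction (1)$\Lra$(2), I would invoke Proposition \ref{G_V_adjunction}: when $\CQ$ is Girard, $\CK(\phi)\cong\CM(\neg\phi)$, and Theorem \ref{complte_category_sup_dense} supplies both a $\sup$-dense and an $\inf$-dense $\CQ$-functor into $\CM(\neg\phi)$; transporting the $\inf$-dense one across the isomorphism $\CK_{\phi}(\bbA,\bbB)\cong\CM_{\neg\phi}(\bbB,\bbA)$ yields the desired $F:\bbA\to\CK_{\phi}(\bbA,\bbB)$.

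**The hard direction.**
The substance is (2)$\Lra$(1). The strategy is to test the hypothesis on a maximally simple $\CQ$-distributor and read off the dualizing identity. First I would fix objects $X,Y\in\CQ_0$ and an arbitrary arrow $g:X\to Y$, and build a small $\CQ$-distributor $\phi:\bbA\oto\bbB$ from this data — the natural choice is to take $\bbA$ and $\bbB$ to be one-object discrete $\CQ$-categories $*_X$ and $*_Y$ (or slight elaborations thereof), so that $\CP\bbA$, $\CP\bbB$ and the operators $\phi^*,\phi_*$ become directly computable in terms of left/right implications by $g$ in $\CQ$. With $F:\bbA\to\CK_{\phi}(\bbA,\bbB)$ assumed $\inf$-dense, every object of $\CK_{\phi}(\bbA,\bbB)$ — equivalently every fixed point of $\phi^*\circ\phi_*$ on $\CP\bbA$ — must be expressible as $\inf_{\bbX}F^{\nra}(\lam)$ for some $\lam\in\CPd\bbA$. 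Using Corollary \ref{cotensor_order_conical_complete} and Theorem \ref{complete_cocomplete_equivalent}, such infima are computed from cotensors and conical meets, and I would unwind these in terms of the formulas of Proposition \ref{Girard_quantaloid_properties} and Proposition \ref{arrow_calculation}. The aim is to force the identity $(\bot\lda f)\rda\bot=f$ as a necessary condition for the required element of $\CK_{\phi}(\bbA,\bbB)$ to lie in the image closure of $F^{\nra}$.

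**Where the obstacle lies.**
The main obstacle is the asymmetry already flagged in the text: the projection $\pi_2$ identifies $\CK_{\phi}(\bbA,\bbB)$ with a $\CQ$-\emph{closure} system of $\CP\bbB$ but with a $\CQ$-\emph{interior} system of $\CP\bbA$, so $\sup$-density via $G:\bbB\to\CK_{\phi}(\bbA,\bbB)$ is automatic whereas $\inf$-density from $\bbA$ is not. Concretely, an $\inf$-dense $F$ would let every closed presheaf be recovered as a meet of cotensored images of representables $\sYd_{\bbA}a$, and the cotensor formula forces expressions of the shape $f\rda(\,\cdot\,)$ to recover objects that are intrinsically built from $\lda$. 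Reconciling these two directions of implication is exactly what the dualizing identity $(\bot\lda f)\rda\bot=f$ accomplishes, and I expect the crux of the argument to be showing that \emph{failure} of this identity produces a fixed point of $\phi^*\circ\phi_*$ that provably lies outside the set of infima of $F^{\nra}$-images, contradicting $\inf$-density. The cleanest route is probably a contrapositive: assuming $\FD$ is not dualizing, exhibit an explicit $\phi$ and a closed $\mu$ with $(\phi^*\circ\phi_*)\mu=\mu$ but $\mu\neq\inf_{\bbX}F^{\nra}(\lam)$ for every $\lam$, and $X,Y,g$ witnessing $(\bot_{X,Y}\lda g)\rda\bot_{X,X}\neq g$.
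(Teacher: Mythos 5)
Your direction (1)$\Rightarrow$(2) is exactly the paper's: invoke Proposition \ref{G_V_adjunction} to get $\CK_{\phi}(\bbA,\bbB)=\CM_{\neg\phi}(\bbB,\bbA)$ and then Theorem \ref{complte_category_sup_dense}. The problem is (2)$\Rightarrow$(1), where your write-up never gets past a statement of intent: your closing sentence (``I expect the crux of the argument to be showing that failure of this identity produces a fixed point \dots that provably lies outside the set of infima'') is a restatement of what has to be proved, not an argument. Worse, the test distributor you propose is the wrong one. You want to encode an arbitrary arrow $g:X\to Y$ into a distributor $\phi:*_X\oto *_Y$ with $\phi(*,*)=g$ and read off the dualizing identity for that $g$. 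But then the interior operator on $\CP X$ is $\mu\mapsto(\mu\lda g)\circ g$, whose fixed-point set is in general small and carries no trace of the dualizing identity for $g$; e.g.\ for $g=\bot_{X,Y}$ it collapses to the single presheaf $\bot_{X,Z}$ in each type $Z$ (Proposition \ref{arrow_top_bottom}), all hom-arrows in $\CK_{\phi}$ become $\top$, and an $\inf$-dense $F$ exists trivially, so hypothesis (2) yields nothing about $g$. There is no visible route from $\inf$-density into such a $\CK_{\phi}$ to the identity $(\bot_{X,X}\lda g)\rda\bot_{X,X}=g$ (note also the type error in your formula $(\bot_{X,Y}\lda g)\rda\bot_{X,X}$: the first implication must be by $\bot_{X,X}$).

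The paper's proof instead tests, for each object $X$, the single distributor $\phi:*_X\oto *_X$ with $\phi(*,*)=1_X$. The point of this choice is that $\CK(\phi)$ is then the \emph{whole} presheaf category $\CP X$, whose objects of type $Y$ are exactly all arrows $f\in\CQ(X,Y)$, so one test handles every $f$ out of $X$ at once. The argument then has three concrete steps, none of which appears in your sketch: (i) computing $\inf_{\CP X}F^{\nra}(g)=g\rda F*$ (the cotensor, via Example \ref{PA_tensor}), so $\inf$-density says $(-)\rda F*:\CPd X\to\CP X$ is surjective; (ii) since $1_X=\top_{X,X}$ is the \emph{bottom} element of $(\CPd X)_X$ in the underlying order (Remark \ref{PdA_QDist_order}) and $1_X\rda F*=F*$, order-preservation plus surjectivity forces $F*=\bot_{X,X}$ --- this is where the hypothesis $1_X=\top_{X,X}$ enters, a hypothesis your outline never uses; (iii) the adjunction $\bot_{X,X}\lda(-)\dv(-)\rda\bot_{X,X}:\CP X\rhu\CPd X$ combined with surjectivity of $(-)\rda\bot_{X,X}$ gives $(\bot_{X,X}\lda f)\rda\bot_{X,X}=f$ for all $f:X\to Y$, and cyclicity of $\FD$ supplies the other half of the dualizing condition. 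Your appeals to conical meets, Corollary \ref{cotensor_order_conical_complete} and Proposition \ref{Girard_quantaloid_properties} do not substitute for (i)--(iii); in particular Proposition \ref{Girard_quantaloid_properties} presupposes a dualizing family, which is precisely what is to be established.
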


\begin{proof}
(1)${}\Lra{}$(2): For each $\CQ$-distributor $\phi:\bbA\oto\bbB$, by Proposition \ref{G_V_adjunction} we have $\CK_{\phi}(\bbA,\bbB)=\CM_{\neg\phi}(\bbB,\bbA)$, thus the conclusion follows from Theorem \ref{complte_category_sup_dense}.

(2)${}\Lra{}$(1): For each $X\in\CQ_0$, consider the $\CQ$-distributor $\phi:*_X\oto *_X$ given by $\phi(*,*)=1_X$. It is easy to see that $\CK(\phi)=\CP X$, thus there is an $\inf$-dense $\CQ$-functor $F:*_X\to\CP X$. This means that for all $f\in\CP X$, there is some $g\in\CPd(*_X)=\CPd X$ such that $f=\inf_{\PX}F^{\nra}(g)$, i.e.,
\begin{align*}
\CP X(-,f)&=F^{\nra}(g)\rda\CP X\\
&=(\CP X(F*,-)\circ g)\rda\CP X\\
&=g\rda\CP X(-,F*)\\
&=\PX(-,g\rda F*),
\end{align*}
and consequently $f=g\rda F*$. Hence $(-)\rda F*:\CPd X\to\CP X$ is a surjective $\CQ$-functor. Since $(-)\rda F*:(\CPd X)_X\to(\CP X)_X$ is order-preserving, we obtain that $\bot_{X,X}=1_X\rda F*=F*$. This means that the $\CQ$-functor $(-)\rda \bot_{X,X}:\CPd X\to\CP X$ is surjective, hence for all $f\in\CQ(X,Y)$, there is some $g\in\CQ(Y,X)$ such that $f=g\rda \bot_{X,X}$. Note that for all $f\in\CP X$ and $g\in\CPd X$,
$$\CPd X(\bot_{X,X}\lda f,g)=g\rda(\bot_{X,X}\lda f)=(g\rda \bot_{X,X})\lda f=\PX(f,g\rda \bot_{X,X}),$$
thus $\bot_{X,X}\lda(-)\dv(-)\rda \bot_{X,X}:\CP X\rhu\CPd X$, and therefore
$$(\bot_{X,X}\lda f)\rda \bot_{X,X}=(\bot_{X,X}\lda(g\rda \bot_{X,X}))\rda \bot_{X,X}=g\rda \bot_{X,X}=f,$$
as desired.
\end{proof}

\chapter{Applications in fuzzy set theory} \label{Applications}

Isbell adjunctions and Kan adjunctions in $\CQ$-categories correspond closely to the theories of formal concept analysis and rough set theory in computer science. Our results obtained in Chapter \ref{Isbell_adjunction} and \ref{Kan_adjunction} can be applied to two special quantaloids: the one-object quantaloid (i.e., a unital quantale), and the quantaloid generated by a divisible unital quantale. Through this way we develop the theories of formal concept analysis and rough set theory on fuzzy relations between fuzzy sets.

\section{Preordered sets valued in a unital quantale} \label{Preordered_sets}

Recall that a \emph{unital quantale} is a one-object quantaloid. Explicitly, a unital quantale is a complete lattice $Q$ equipped with a binary operation $\&:Q\times Q\to Q$ such that
\begin{itemize}
\item[\rm (i)] $(Q,\&)$ is a monoid with a unit $I$;
\item[\rm (ii)] $a\&(\bv b_i)=\bv(a\& b_i)$ and $(\bv b_i)\& a=\bv(b_i\& a)$ for all $a,b_i\in Q$.
\end{itemize}

In a unital quantale $Q$, the left implication $/$ and the right implication $\rd$ are two binary operations on $Q$ determined by the adjoint property
$$c\leq a\rd b\iff a\& c\leq b\iff a\leq b/c$$
for all $a,b,c\in Q$.

A unital quantale $Q$ is \emph{commutative} if $(Q,\&)$ is a commutative monoid. A commutative unital quantale $Q$ is called a \emph{complete residuated lattice} if the unit $I$ is the largest element of $Q$.

\begin{exmp} \label{quantale_example}
We list some basic examples of unital quantales.
\begin{itemize}
\item[\rm (1)] Each frame is a commutative unital quantale.
\item[\rm (2)] Each complete BL-algebra is a commutative unital quantale. In particular, each continuous t-norm on the unit interval $[0,1]$ is a commutative unital quantale.
\item[\rm (3)] $([0,\infty]^{\op},+)$ is a commutative unital quantale in which $b/a=a\rd b=\max\{0,b-a\}$.
\end{itemize}
\end{exmp}

Consider a unital quantale $Q$ as a one-object quantaloid, then each (crisp) set $A$ can be viewed as a discrete $Q$-category (see Example \ref{Q_category_example}(2)), in which
$$A(x,y)=\begin{cases}
1, & x=y;\\
0, & x\neq y.
\end{cases}$$

\begin{defn}
A \emph{$Q$-relation} (or \emph{fuzzy relation}) $\phi:A\oto B$ between (crisp) sets $A$ and $B$ is a $Q$-distributor between discrete $Q$-categories $A$ and $B$ .
\end{defn}

A $Q$-relation $\phi:A\oto B$ is exactly a function
$$\phi:A\times B\to Q.$$
Each $Q$-relation $\phi:A\oto B$ has a dual $Q$-relation $\phi^{\op}:B\oto A$ given by $\phi^{\op}(y,x)=\phi(x,y)$ for all $x\in A$ and $y\in B$.

\begin{defn}
Let $\phi:A\oto A$ be a $Q$-relation on a (crisp) set $A$.
\begin{itemize}
\item[\rm (1)] $\phi$ is \emph{reflexive} if $A\leq \phi$.
\item[\rm (2)] $\phi$ is \emph{transitive} if $\phi\&\phi\leq \phi$.
\item[\rm (3)] $\phi$ is \emph{symmetric} if $\phi=\phi^{\op}$.
\item[\rm (4)] $\phi$ is \emph{separated} if $\phi(x,x)\leq \phi(x,y)$ and $\phi(y,y)\leq \phi(y,x)$ implies $x=y$.
\end{itemize}
\end{defn}

\begin{defn}
A \emph{$Q$-preorder} (or \emph{fuzzy preorder}) on a (crisp) set $A$ is a reflexive and transitive $Q$-relation $\bbA:A\oto A$. The pair $(A,\bbA)$ is called a $Q$-preordered set.
\end{defn}

Explicitly, a $Q$-preorder on a (crisp) set $A$ is a function $\bbA:A\times A\to Q$ such that
\begin{itemize}
\item[{\rm (1)}] $I\leq \bbA(x,x)$ for each $x\in A$ (reflexivity);
\item[{\rm (2)}] $\bbA(y,z)\& \bbA(x,y)\leq \bbA(x,z)$ for all $x,y,z\in A$ (transitivity).
\end{itemize}

A $Q$-preordered set $(A,\bbA)$ is \emph{separated} if the $Q$-relation $\bbA$ is separated.

\begin{prop}
A $Q$-preordered set $(A,\bbA)$ is separated if and only if it satisfies
\begin{itemize}
\item[{\rm (3)}] $I\leq \bbA(x,y)$ and $I\leq \bbA(y,x)$ implies $x=y$ (anti-symmetry).
\end{itemize}
\end{prop}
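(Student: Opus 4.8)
The plan is to prove the two implications separately, using reflexivity and transitivity of $\bbA$ to bridge between the formulation of anti-symmetry in terms of the unit $I$ and the formulation of separatedness in terms of the diagonal values $\bbA(x,x)$.

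First I would dispose of the direction that anti-symmetry (3) implies separatedness, which needs only reflexivity. Suppose $\bbA(x,x)\leq\bbA(x,y)$ and $\bbA(y,y)\leq\bbA(y,x)$. Reflexivity gives $I\leq\bbA(x,x)$ and $I\leq\bbA(y,y)$, so chaining these inequalities yields $I\leq\bbA(x,y)$ and $I\leq\bbA(y,x)$; anti-symmetry then forces $x=y$, which is exactly the definition of separatedness for $\bbA$.

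Second, for separatedness $\Rightarrow$ anti-symmetry, the key step --- and the only place transitivity is used --- is to show that $I\leq\bbA(x,y)$ already forces $\bbA(x,x)\leq\bbA(x,y)$. I would argue
$$\bbA(x,x)=I\,\&\,\bbA(x,x)\leq\bbA(x,y)\,\&\,\bbA(x,x)\leq\bbA(x,y),$$
where the first equality is the left unit law of $(Q,\&)$, the middle inequality combines the hypothesis $I\leq\bbA(x,y)$ with monotonicity of $\&$, and the last inequality is transitivity applied to the chain $x\to x\to y$, i.e.\ the instance $\bbA(x,y)\,\&\,\bbA(x,x)\leq\bbA(x,y)$. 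By the symmetric argument $I\leq\bbA(y,x)$ gives $\bbA(y,y)\leq\bbA(y,x)$. Feeding these two inequalities into the hypothesis that $\bbA$ is separated then yields $x=y$, establishing anti-symmetry.

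The computation is essentially routine; the only mild obstacle is recognizing that the diagonal entries $\bbA(x,x)$ need not equal $I$ --- reflexivity supplies only $I\leq\bbA(x,x)$ --- so the two conditions cannot simply be identified, and the transitivity step displayed above is precisely what closes the resulting gap.
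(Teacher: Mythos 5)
Your proof is correct and takes essentially the same route as the paper: both rest on the same transitivity instance $\bbA(x,y)\&\bbA(x,x)\leq\bbA(x,y)$, and your multiplicative computation $\bbA(x,x)=I\&\bbA(x,x)\leq\bbA(x,y)\&\bbA(x,x)\leq\bbA(x,y)$ is just the adjoint transpose of the paper's residuation chain $\bbA(x,x)\leq\bbA(x,y)\rd\bbA(x,y)\leq I\rd\bbA(x,y)=\bbA(x,y)$. The only difference is that you spell out the easy direction (anti-symmetry implies separatedness via reflexivity), which the paper dismisses as trivial.
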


\begin{proof}
For the non-trivial direction, suppose that $I\leq \bbA(x,y)$ and $I\leq \bbA(y,x)$, we have
$$\bbA(x,x)\leq\bbA(x,y)\rd\bbA(x,y)\leq I\rd\bbA(x,y)=\bbA(x,y).$$
and
$$\bbA(y,y)\leq\bbA(y,x)\rd\bbA(y,x)\leq I\rd\bbA(y,x)=\bbA(y,x).$$
Thus $x=y$.
\end{proof}

In a $Q$-preordered set $(A,\bbA)$, the value $\bbA(x,y)$ can be interpreted as the \emph{degree} to which $x$ is less than or equal to $y$. Compared with Definition \ref{Q_category}, it is easy to see that $Q$-preordered sets are exactly categories enriched over the unital quantale $Q$, and separated $Q$-preordered sets are skeletal $Q$-categories. We abbreviate the pair $(A,\bbA)$ to $\bbA$ and write $\bbA_0=A$ if there is no confusion.

\begin{defn}
An order-preserving map $F:\bbA\to\bbB$ between $Q$-preordered sets is a $Q$-functor between $Q$-categories $\bbA$ and $\bbB$.
\end{defn}

Given a $Q$-preordered set $\bbA$, a \emph{lower set} of $\bbA$ is a contravariant presheaf $\mu\in\PA$, and an \emph{upper set} of $\bbA$ is a covariant presheaf $\lam\in\PdA$. The $Q$-preordered sets $\PA$ and $\PdA$ are called respectively the \emph{$Q$-powerset} and the \emph{dual $Q$-powerset} of $\bbA$.

In particular, consider a (crisp) set $A$ as a discrete $Q$-category, the $Q$-category $\CP A$ of contravariant presheaves on $A$ is the \emph{$Q$-powerset} of $A$. Explicitly, the objects of $\CP A$ are the maps $A\to Q$, i.e., $(\CP A)_0=Q^A$, and the $Q$-preorder on $\CP A$ is given by
$$\CP A(\mu,\lam)=\bw_{x\in A}\lam(x)/\mu(x).$$
Dually, the $Q$-category $\CPd A$ of covariant presheaves on $A$ is the \emph{dual $Q$-powerset} of $A$. The objects of $\CPd A$ are also the maps $A\to Q$, but the $Q$-preorder on $\CPd A$ is given by
$$\CPd A(\mu,\lam)=\bw_{x\in A}\lam(x)\rd\mu(x).$$

A separated $Q$-preordered set $\bbA$ is a \emph{complete $Q$-lattice} if it is a skeletal complete $Q$-category. For each $Q$-preordered set $\bbA$, $\PA$ and $\PdA$ are both complete $Q$-lattices.

\section{Preordered fuzzy sets valued in a divisible unital quantale} \label{Preordered_fuzzy_sets}

In order to derive the theory of preordered fuzzy sets (not only preordered crisp sets), we add one more requirement to the unital quantale $Q$, i.e., being \emph{divisible}. This assumption makes sense because all the unital quantales given in Example \ref{quantale_example} are divisible, which cover most of the important truth tables in fuzzy set theory.

A unital quantale $Q$ is \emph{divisible} if it satisfies one of the equivalent conditions in the following Proposition \ref{divisible_condition}.

\begin{prop} \label{divisible_condition} {\rm\cite{Pu2012,Tao2012}}
For a unital quantale $Q$, the following conditions are equivalent:
\begin{itemize}
\item[\rm (1)] $\forall a,b\in Q$, $a\leq b$ implies $x\& b=a=b\& y$ for some $x,y\in Q$.
\item[\rm (2)] $\forall a,b\in Q$, $a\leq b$ implies $a=b\& (b\rd a)=(a/b)\& b $.
\item[\rm (3)] $\forall a,b,c\in Q$, $a,b\leq c$ implies $a\& (c\rd b)=(a/c)\& b$.
\item[\rm (4)] $\forall a,b\in Q$, $(b/a)\& a=a\wedge b=a\& (a\rd b)$.
\end{itemize}
In this case, the unit $I$ must be the top element $1$ in $Q$.
\end{prop}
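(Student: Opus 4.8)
The plan is to prove the cycle of implications $(1)\Rightarrow(2)\Rightarrow(3)\Rightarrow(4)\Rightarrow(1)$, throughout using only the residuation adjunction $a\& c\leq b\iff c\leq a\rd b\iff a\leq b/c$ (Proposition \ref{arrow_calculation}(1)) and the fact that $\&$ preserves arbitrary joins, hence is monotone, in each variable. The final assertion that $I=1$ will fall out of the argument for $(3)\Rightarrow(4)$.

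First, $(1)\Rightarrow(2)$ and $(4)\Rightarrow(1)$ are essentially formal. The counit inequalities $(a/b)\& b\leq a$ and $b\&(b\rd a)\leq a$ always hold; conversely a witness $x$ with $x\& b=a$ satisfies $x\leq a/b$, whence $a=x\& b\leq (a/b)\& b$, and symmetrically for $b\& y=a$. This gives $(1)\Rightarrow(2)$. For $(4)\Rightarrow(1)$, given $a\leq b$ the identities of $(4)$ read $(a/b)\& b=a\wedge b=a$ and $b\&(b\rd a)=a$, supplying exactly the witnesses $x=a/b$ and $y=b\rd a$ required by $(1)$.

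Next, $(2)\Rightarrow(3)$: for $a,b\leq c$ I would substitute the two divisibility identities of $(2)$ and use associativity, via $(a/c)\& b=(a/c)\&\big(c\&(c\rd b)\big)=\big((a/c)\& c\big)\&(c\rd b)=a\&(c\rd b)$. The reverse reductions show $(3)\Rightarrow(2)$ at no cost, and will be useful below: setting $b=c$ in $(3)$ forces $(a/c)\& c=a$ for $a\leq c$ (using $c\rd c\geq I$ to sandwich $a\leq a\&(c\rd c)=(a/c)\& c\leq a$), while setting $a=c$ forces $c\&(c\rd b)=b$ for $b\leq c$ (using $c/c\geq I$). Thus $(1)$, $(2)$, $(3)$ are equivalent without any reference to whether $I$ is the top element.

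The heart of the matter is $(3)\Rightarrow(4)$, and the main obstacle is that $(4)$ asserts identities for \emph{all} $a,b$ whereas $(3)$ only constrains elements below a common upper bound; bridging this gap is precisely where $I=1$ must be established. I would first extract $I=1$ by specialising $(3)$ at $c=1$ (so the hypothesis $a,b\leq c$ is automatic) with $a=1$, $b=I$: the right-hand side is $(1/1)\& I=1\& I=1$, while the left-hand side is $1\&(1\rd I)=\bv\{1\& z\mid 1\& z\leq I\}\leq I$, forcing $1\leq I$ and hence $I=1$. Once $I=1$, every $z$ satisfies $z\& a\leq a$ and $a\& z\leq a$, so for arbitrary $a,b$ the residuals collapse to $b/a=(a\wedge b)/a$ and $a\rd b=a\rd(a\wedge b)$; applying the identities of $(2)$ to $a\wedge b\leq a$ then yields $(b/a)\& a=a\wedge b=a\&(a\rd b)$, which is $(4)$. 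Finally, since $(4)$ is reached from each of $(1),(2),(3)$, and $(4)$ itself gives $I=1$ at once (put $a=I$ and use $b/I=b$ to get $b=(b/I)\& I=I\wedge b$, i.e.\ $b\leq I$ for all $b$), the closing remark that the unit coincides with the top element holds under any of the four conditions.
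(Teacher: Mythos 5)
The dissertation contains no proof of this proposition: it is imported from the cited sources \cite{Pu2012,Tao2012} and used as a black box, so there is no internal argument to compare yours against. Judged on its own merits, your proof is correct and complete. The formal steps all check out: in $(1)\Rightarrow(2)$, a witness $x\& b=a$ gives $x\leq a/b$, hence $a=x\& b\leq(a/b)\& b\leq a$, and symmetrically for $y$; in $(2)\Rightarrow(3)$ the computation $(a/c)\& b=(a/c)\&\bigl(c\&(c\rd b)\bigr)=\bigl((a/c)\& c\bigr)\&(c\rd b)=a\&(c\rd b)$ uses only associativity and two instances of $(2)$; and your auxiliary $(3)\Rightarrow(2)$, sandwiching with $c\rd c\geq I$ and $c/c\geq I$ against the counit inequalities, is valid. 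You also correctly isolated the one genuinely delicate point, namely that $(4)$ quantifies over arbitrary pairs while $(1)$--$(3)$ only constrain elements below a common bound, and that the bridge is the identity $I=1$: specialising $(3)$ at $c=1$, $a=1$, $b=I$ gives $1=(1/1)\& I=1\&(1\rd I)\leq I$; once $I=1$, every $z$ satisfies $z\& a\leq a$ and $a\& z\leq a$, so $b/a=(a\wedge b)/a$ and $a\rd b=a\rd(a\wedge b)$, and applying $(2)$ to $a\wedge b\leq a$ yields exactly $(4)$. The closing extraction of $I=1$ from $(4)$ alone (take $a=I$ and use $b/I=b$ to get $b=I\wedge b$) is also right, so the final remark of the proposition holds under any of the four conditions. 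In short, your cycle $(1)\Rightarrow(2)\Rightarrow(3)\Rightarrow(4)\Rightarrow(1)$, together with the two routes to $I=1$, constitutes an elementary, self-contained proof of a statement the dissertation merely cites.
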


\begin{prop} \label{divisible_quantale_quantaloid} {\rm\cite{Hohle2011,Pu2012}}
Each divisible unital quantale $Q$ gives rise to a quantaloid $\CQ$ that consists of the following data:
\begin{itemize}
\item[\rm (1)] objects: elements $X,Y,Z,\cdots$ in $Q$;
\item[\rm (2)] morphisms: $\CQ(X,Y)=\{\al\in Q:\al\leq X\wedge Y\}$;
\item[\rm (3)] composition: $\be\circ\al=\be\&(Y\backslash\al)=(\be/Y)\&\al$ for all $\al\in\CQ(X,Y)$ and $\be\in\CQ(Y,Z)$;
\item[\rm (4)] implication: for all $\al\in\CQ(X,Y)$, $\be\in\CQ(Y,Z)$ and $\ga\in\CQ(X,Z)$,
    $$\ga\lda\al=Y\wedge Z\wedge(\ga/(Y\rd\al))\quad\text{and}\quad\be\rda\ga=X\wedge Y\wedge((\be/Y)\rd\ga);$$
\item[\rm (5)] the unit $1_X$ of $\CQ(X,X)$ is $X$;
\item[\rm (6)] the partial order on $\CQ(X,Y)$ is inherited from $Q$.
\end{itemize}
\end{prop}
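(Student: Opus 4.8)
The plan is to verify directly that the data in (1)--(6) make $\CQ$ a quantaloid, i.e.\ a category each of whose hom-sets is a $\sup$-lattice and whose composition preserves joins in both variables. The essential inputs will be the residuation adjunction $c\leq a\backslash b\iff a\& c\leq b\iff a\leq b/c$, the distributivity of $\&$ over arbitrary joins, and---crucially---the divisibility identities of Proposition \ref{divisible_condition}, together with the fact recorded there that the unit $I$ of a divisible quantale is the top element $1$.

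First I would check that each $\CQ(X,Y)=\{\al\in Q:\al\leq X\wedge Y\}$ is a $\sup$-lattice: it is the principal down-set ${\downarrow}(X\wedge Y)$ in the complete lattice $Q$, so the join in $Q$ of any family bounded by $X\wedge Y$ stays below $X\wedge Y$ and furnishes the join in $\CQ(X,Y)$. Next I would establish that the two displayed formulas for $\be\circ\al$ genuinely agree: for $\al\in\CQ(X,Y)$ and $\be\in\CQ(Y,Z)$ one has $\al,\be\leq Y$, so $\be\&(Y\backslash\al)=(\be/Y)\&\al$ is precisely divisibility condition (3) of Proposition \ref{divisible_condition} with $a=\be$, $b=\al$, $c=Y$. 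Well-definedness of the composite then follows since $Y\backslash\al\leq 1$ and $\be/Y\leq 1$ (as $1$ is the top): the two forms yield $\be\circ\al\leq\be\&1=\be\leq Z$ and $\be\circ\al\leq 1\&\al=\al\leq X$, whence $\be\circ\al\leq X\wedge Z$ lies in $\CQ(X,Z)$.

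For the unit laws I would take $1_X=X$ and invoke divisibility: $1_Y\circ\al=Y\&(Y\backslash\al)=Y\wedge\al=\al$ by condition (4) (as $\al\leq Y$), while $\al\circ 1_X=(\al/X)\&X=\al\wedge X=\al$ likewise. Associativity is where the double description of composition pays off, and I expect it to reduce to associativity of $\&$ rather than to any hard residuation manipulation: writing the outer composite as $\ga\circ\be=(\ga/Z)\&\be$ and the inner one as $\be\circ\al=\be\&(Y\backslash\al)$, both $(\ga\circ\be)\circ\al$ and $\ga\circ(\be\circ\al)$ collapse to $(\ga/Z)\&\be\&(Y\backslash\al)$, so they coincide by the monoid law in $Q$.

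Finally, preservation of joins is handled one variable at a time by choosing the convenient form of the composite: $\be\circ\al=(\be/Y)\&\al$ gives preservation of joins in $\al$, and $\be\circ\al=\be\&(Y\backslash\al)$ gives preservation of joins in $\be$, each an instance of the distributivity of $\&$ over joins (the joins in the hom-sets being joins in $Q$). The main obstacle is thus not any single computation but the bookkeeping: all four axioms hinge on the equivalence of the two formulas for $\be\circ\al$, which holds only because $Q$ is divisible, and the skill of the argument lies in selecting, for each axiom, the representation of the composite that turns the verification into a one-line consequence of the monoid law, distributivity, or a divisibility identity.
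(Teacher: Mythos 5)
The paper itself gives no proof of this proposition---it is quoted from \cite{Hohle2011,Pu2012}---so your direct verification is the natural approach, and what you do is correct. The hom-sets are indeed principal down-sets, hence $\sup$-lattices; the agreement of the two formulas for $\be\circ\al$ is exactly divisibility condition (3) of Proposition \ref{divisible_condition}; well-definedness and the unit laws use, as you say, that the unit is the top element together with condition (4); and join-preservation in each variable follows from distributivity of $\&$ by choosing the convenient form of the composite. Your associativity argument is particularly clean: writing the outer composition as $(\ga/Z)\&\be$ and the inner one as $\be\&(Y\rd\al)$ makes both bracketings collapse to $(\ga/Z)\&\be\&(Y\rd\al)$, so associativity of $\&$ does all the work.

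There is, however, one omission: the proposition asserts, as part of its data, the explicit formulas in item (4) for the implications $\lda$ and $\rda$, and your proposal never verifies them. Being a quantaloid guarantees that the implications \emph{exist} (as the residuals of a join-preserving composition), but not that they are given by the stated expressions, so a complete proof of the statement must check this. Fortunately it is one line of residuation, again using for each implication the convenient form of the composite: $\be\circ\al\leq\ga\iff\be\&(Y\rd\al)\leq\ga\iff\be\leq\ga/(Y\rd\al)$, so the largest such $\be$ in $\CQ(Y,Z)$ is $Y\wedge Z\wedge(\ga/(Y\rd\al))$; dually, $\be\circ\al\leq\ga\iff(\be/Y)\&\al\leq\ga\iff\al\leq(\be/Y)\rd\ga$, so $\be\rda\ga=X\wedge Y\wedge((\be/Y)\rd\ga)$. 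With this added, your proof is complete.
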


In Example \ref{Girard_quantaloid_example}(3) we have seen that if $Q$ is a Boolean algebra, then the quantaloid $\CQ$ induced by Proposition \ref{divisible_quantale_quantaloid} is a Girard quantaloid. However, we point out that $\CQ$ might not be a Girard quantaloid if $Q$ is a general Girard quantale. For example, the quantaloid induced by a {\L}ukasiewicz $t$-norm in this way is not a Girard quantaloid.

In this section, $Q$ is always assumed to be a divisible unital quantale and $\CQ$ the associated quantaloid given by Proposition \ref{divisible_quantale_quantaloid} if not otherwise specified.

\begin{defn}
A \emph{$Q$-subset} (or \emph{fuzzy set}) is a $\CQ$-typed set.
\end{defn}

Explicitly, a $Q$-subset is a pair $(\sA_0,\sA)$, where $\sA_0$ is the underlying (crisp) set and $\sA:\sA_0\to Q$ is the type map. For each $x\in\sA_0$, the type $\sA x$ is interpreted as the membership degree of $x$ in $\sA_0$. We abbreviate the pair $(\sA_0,\sA)$ to $\sA$ if no confusion would arise. $Q$-subsets and type-preserving functions constitute the slice category $\Set\da Q$.

An element $x$ in a $Q$-subset $\sA$ is \emph{global} if $\sA x=1$. A $Q$-subset is \emph{global} if every element in it is global. A global $Q$-subset is exactly a crisp set.

By Example \ref{Q_category_example}(2), each $Q$-subset $\sA$ can be viewed as a discrete $\CQ$-category, in which
$$\sA(x,y)=\begin{cases}
\sA x, & x=y,\\
0, & x\neq y.
\end{cases}$$

\begin{defn}
A \emph{$Q$-relation} (or \emph{fuzzy relation}) $\phi:\sA\oto\sB$ between $Q$-subsets is a $\CQ$-distributor between discrete $\CQ$-categories $\sA$ and $\sB$.
\end{defn}

Explicitly, a $Q$-relation $\phi:\sA\oto\sB$ is a map $\phi:\sA_0\times\sB_0\to Q$ such that
$$\phi(x,y)\leq\sA x\wedge\sB y$$
for all $x\in\sA_0$ and $y\in\sB_0$. Each $Q$-relation $\phi:\sA\oto\sB$ has a dual $Q$-relation $\phi^{\op}:\sB\oto\sA$ given by $\phi^{\op}(y,x)=\phi(x,y)$ for all $x\in\sA_0$ and $y\in\sB_0$.

Given $Q$-relations $\phi:\sA\oto\sB$ and $\psi:\sB\oto\sC$, the composition of $\psi$ and $\phi$
$$\psi\circ\phi:\sA\oto\sC$$
is defined by the composition of $\CQ$-distributors, i.e.,
$$\psi\circ\phi(x,z)=\bv_{y\in\sB_0}\psi(y,z)\&(\sB y\rd\phi(x,y))=\bv_{y\in\sB_0}(\psi(y,z)/\sB y)\&\phi(x,y).$$
This formula can be regarded as a many-valued reformulation of the statement that $x,z$ are related if there exists some $y$ in $\sB$ such that $x,y$ are related and $y,z$ are related.

\begin{defn}
Let $\phi:\sA\oto\sA$ be a $Q$-relation on a $Q$-subset $\sA$.
\begin{itemize}
\item[\rm (1)] $\phi$ is \emph{reflexive} if $\sA\leq\phi$.
\item[\rm (2)] $\phi$ is \emph{transitive} if $\phi\circ\phi\leq\phi$.
\item[\rm (3)] $\phi$ is \emph{symmetric} if $\phi=\phi^{\op}$.
\item[\rm (4)] $\phi$ is \emph{separated} if $\phi(x,x)\leq\phi(x,y)$ and $\phi(y,y)\leq\phi(y,x)$ implies $x=y$.
\end{itemize}
\end{defn}

\begin{defn}
A \emph{$Q$-preorder} (or \emph{fuzzy preorder}) on a $Q$-subset $\sA$ is a reflexive and transitive $Q$-relation $\bbA:\sA\oto\sA$.
\end{defn}

Explicitly, a $Q$-preorder on a $Q$-subset $\sA$ is a map $\bbA:\sA_0\times\sA_0\to Q$ such that for all $x,y,z\in\sA_0$,
\begin{itemize}
\item[\rm (i)] $\bbA(x,y)\leq\sA x\wedge\sA y$,
\item[\rm (ii)] $\sA x\leq\bbA(x,x)$,
\item[\rm (iii)] $\bbA(y,z)\&(\sA y\rd\bbA(x,y))=(\bbA(y,z)/\sA y)\&\bbA(x,y)\leq\bbA(x,z)$.
\end{itemize}

It can be inferred from (i) and (ii) that $\bbA(x,x)=\sA x$ for all $x\in\sA_0$. Therefore, a $Q$-preordered $Q$-subset can be described as a pair $(A,\bbA)$, where $A$ is a (crisp) set and $\bbA:A\times A\to Q$ is a map, such that for all $x,y,z\in A$,
\begin{itemize}
\item[\rm (1)] $\bbA(x,y)\leq\bbA(x,x)\wedge\bbA(y,y)$,
\item[\rm (2)] $\bbA(y,z)\&(\bbA(y,y)\rd\bbA(x,y))=(\bbA(y,z)/\bbA(y,y))\&\bbA(x,y)\leq\bbA(x,z)$.
\end{itemize}

A $Q$-preordered $Q$-subset $(A,\bbA)$ is \emph{separated} if the $Q$-relation $\bbA$ is separated. It is easy to see that $(A,\bbA)$ is separated if and only if $\bbA(x,x)=\bbA(y,y)=\bbA(x,y)=\bbA(y,x)$ implies $x=y$.

Compared with Definition \ref{Q_category}, we obtain that $Q$-preordered $Q$-subsets are exactly categories enriched over the quantaloid $\CQ$, and separated $Q$-preordered $Q$-subsets are skeletal $\CQ$-categories. We abbreviate the pair $(A,\bbA)$ to $\bbA$ and write $\bbA_0=A$ if there is no confusion. For each $Q$-preordered $Q$-subset $\bbA$, we denote by $|\bbA|$ the underlying $Q$-subset of $\bbA$, with $|\bbA|_0=\bbA_0$ and $|\bbA|x=\bbA(x,x)$ for all $x\in\bbA_0$.

A $Q$-preordered $Q$-subset $\bbA$ is \emph{global} if the underlying $Q$-subset $|\bbA|$ is global, i.e., $\bbA(x,x)=1$ for all $x\in\bbA_0$. A global $Q$-preordered $Q$-subset is exactly a $Q$-preordered set. Thus, our discussion in Section \ref{Preordered_sets} is a special case of this section when $Q$ is a divisible unital quantale.

\begin{defn}
An order-preserving map $F:\bbA\to\bbB$ between $Q$-preordered $Q$-subsets is a $\CQ$-functor between $\CQ$-categories $\bbA$ and $\bbB$.
\end{defn}

Given a $Q$-preordered $Q$-subset $\bbA$, a \emph{potential lower set} of $\bbA$ is a contravariant presheaf $\mu\in\PA$, and a \emph{potential upper set} of $\bbA$ is a covariant presheaf $\lam\in\PdA$. The $Q$-preordered $Q$-subsets $\PA$ and $\PdA$ are called respectively the \emph{$Q$-powerset} and the \emph{dual $Q$-powerset} of $\bbA$. We point out that the underlying $Q$-subset $|\PA|$ of $\PA$ is given by the following data:
\begin{itemize}
\item[\rm (1)] $|\PA|_0$ is the set of potential lower sets $\mu\in\PA$.
\item[\rm (2)] The type map $|\PA|:|\PA|_0\to Q$ sends each $\mu\in\PA$ to $t\mu$.
\end{itemize}
Dually, the underlying $Q$-subset $|\PdA|$ of $\PdA$ is given by the following data:
\begin{itemize}
\item[\rm (1)] $|\PdA|_0$ is the set of potential upper sets $\lam\in\PdA$.
\item[\rm (2)] The type map $|\PdA|:|\PdA|_0\to Q$ sends each $\lam\in\PdA$ to $t\lam$.
\end{itemize}

In particular, Let $\sA$ be a $Q$-subset. Consider $\sA$ as a discrete $\CQ$-category, then the $\CQ$-category $\CP\sA$ is called the \emph{$Q$-powerset} of $\sA$. In elementary words, $(\CP\sA)_0$ consists of pairs $(\mu,a)\in Q^{\sA_0}\times\sA_0$ satisfying
$$\forall x\in\sA_0,\mu(x)\leq\sA x\wedge a,$$
and
$$\CP\sA((\mu,a),(\lam,b))=a\wedge b\wedge\bw_{x\in\sA_0}\lam(x)/(a\rd\mu(x))$$
for all $(\mu,a),(\lam,b)\in(\CP\sA)_0$.
Dually, the \emph{dual $Q$-powerset} $\CPd\sA$ of $\sA$ has the same underlying $Q$-subset $|\CPd\sA|=|\CP\sA|$, but
$$\CPd\sA((\mu,a),(\lam,b))=a\wedge b\wedge\bw_{x\in\sA_0}(\lam(x)/b)\rd\mu(x)$$
for all $(\mu,a),(\lam,b)\in(\CPd\sA)_0$.

A $Q$-preordered $Q$-subset $\bbA$ is \emph{complete} if it is a complete $\CQ$-category. For each $Q$-preordered $Q$-subset $\bbA$, $\PA$ and $\PdA$ are both separated and complete.

\section{Formal concept analysis on fuzzy sets}

Formal concept analysis \cite{Carpineto2004,Davey2002,Ganter2005,Ganter1999} is a useful tool for qualitative data analysis. A formal context is a triple $(A,B,R)$, where $A,B$ are sets and $R\subseteq A\times B$ is a relation between $A$ and $B$. Elements in $A$ are interpreted as \emph{objects} and those in $B$ as \emph{attributes}. Each relation $R\subseteq A\times B$ induces a pair of operators $\uR:{\bf 2}^A\to{\bf 2}^B$ and $\dR:{\bf 2}^B\to{\bf 2}^A$ as follows:
$$\uR(U)=\{y\in B:\forall x\in U,(x,y)\in R\},$$
$$\dR(V)=\{x\in A: \forall y\in V,(x,y)\in R\}.$$
This pair of operators is a contravariant Galois connection in the sense that
$$U\subseteq\dR(V)\iff V\subseteq\uR(U).$$
This Galois connection plays a fundamental role in formal concept analysis.

A formal concept of a formal context $(A,B,R)$ is a pair $(U,V)\in {\bf 2}^A\times {\bf 2}^B$, where $U$ is the \emph{extent} and $V$ the \emph{intent}, such that $U=\dR(V)$ and $V=\uR(U)$. The fundamental theorem of formal concept analysis states that the formal concepts of a formal context form a complete lattice (called the \emph{formal concept lattice}) and every complete lattice is the formal concept lattice of some formal context.

From the viewpoint of category theory, a formal context $(A,B,R)$ is just a ${\bf 2}$-distributor between discrete ${\bf 2}$-categories $A$ and $B$, while the contravariant Galois connection $(\uR,\dR)$ is exactly an Isbell adjunction between ${\bf 2}^A$ and $({\bf 2}^B)^{\op}$. That is to say, formal concept analysis is essentially a theory on distributors between categories.

As demonstrated in Section \ref{Preordered_sets} and \ref{Preordered_fuzzy_sets}, fuzzy relations between (crisp) sets can be viewed as distributors between quantale-enriched categories, while fuzzy relations between fuzzy sets and can be viewed as distributors between quantaloid-enriched categories. Thus, the theory of formal concept analysis can be generalized to a theory on fuzzy relations between crisp sets \cite{Bvelohlavek2001,Bvelohlavek2002,Bvelohlavek2004,Georgescu2004,Lai2009,Popescu2004,Shen2013}, and it is possible to extend the theory further to the realm of fuzzy relations between fuzzy sets.

From now on we assume that $Q$ is a divisible unital quantale. A \emph{fuzzy context} is a triple $(\sA,\sB,\phi)$, where $\sA$ and $\sB$ are $Q$-subsets, and $\phi:\sA\oto\sB$ is a $Q$-relation. Fuzzy contexts and infomorphisms constitute a category $\CQ$-$\Ctx$. It is easy to see that $\CQ$-$\Ctx$ is a subcategory of $\CQ$-$\Info$ containing only $\CQ$-distributors between discrete $\CQ$-categories. It is noteworthy to point out that $(\CQ\text{-}\Ctx)^{\op}=\CQ\text{-}\Ctx$.

We point out that if $\sA$ and $\sB$ are global, i.e., $\sA$ and $\sB$ are crisp sets, then the fuzzy context $(\sA,\sB,\phi)$ is a $Q$-context (also called fuzzy context in some literatures) in the sense of \cite{Bvelohlavek2004,Lai2009,Shen2013}. Therefore, formal concept analysis and rough set theory (introduced in the next section) on $Q$-contexts are special cases of our discussion here.

Each fuzzy context  $(\sA,\sB,\phi)$ gives rise to an Isbell adjunction
$$\uphi\dv\dphi:\CP\sA\rhu\CPd\sB.$$
Let $\CM_{\phi}(\sA,\sB)$ denote the $Q$-preordered $Q$-subset given by
\begin{itemize}
\item[\rm (1)] $|\CM_{\phi}(\sA,\sB)|_0$ consists of pairs $(\mu,\lam)\in(\CP\sA)_0\times(\CPd\sB)_0$ such that
               $$\lam=\uphi(\mu)\quad\text{and}\quad\mu=\dphi(\lam);$$
\item[\rm (2)] for each $(\mu,\lam)\in|\CM_{\phi}(\sA,\sB)|_0$,
               $$|\CM_{\phi}(\sA,\sB)|(\mu,\lam)=t\mu=t\lam;$$
\item[\rm (3)] for each $(\mu,\lam),(\mu',\lam')\in|\CM_{\phi}(\sA,\sB)|_0$,
               $$\CM_{\phi}(\sA,\sB)((\mu,\lam),(\mu',\lam'))=\CP\sA(\mu,\mu')=\CPd\sB(\lam,\lam').$$
\end{itemize}
Then $\CM_{\phi}(\sA,\sB)$ is called the formal concept lattice of the fuzzy context $(\sA,\sB,\phi)$, and the $Q$-subset $|\CM_{\phi}(\sA,\sB)|$ is called the fuzzy set of formal concepts of $(\sA,\sB,\phi)$. Elements $(\mu,\lam)$ in $|\CM_{\phi}(\sA,\sB)|_0$ are called \emph{potential formal concepts} of $(\sA,\sB,\phi)$, and $|\CM_{\phi}(\sA,\sB)|(\mu,\lam)$ is interpreted as the degree that $(\mu,\lam)$ is a formal concept.

The following conclusions are direct consequences of the results in Chapter \ref{Isbell_adjunction}. In particular, we extend the fundamental theorem of formal concept analysis to the fuzzy setting.

\begin{thm}
For any fuzzy context $(\sA,\sB,\phi)$, $\CM_{\phi}(\sA,\sB)$ is a separated and complete $Q$-preordered $Q$-subset. Conversely, each separated and complete $Q$-preordered $Q$-subset $\bbA$ is isomorphic to the formal concept lattice $\CM_{\phi}(\sA,\sB)$ of some fuzzy context $(\sA,\sB,\phi)$.
\end{thm}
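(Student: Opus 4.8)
The plan is to recognize this theorem as a direct translation of the abstract results of Chapter \ref{Isbell_adjunction} into the language of fuzzy sets developed in Section \ref{Preordered_fuzzy_sets}. The crucial observation is the dictionary established throughout the chapter: a fuzzy context $(\sA,\sB,\phi)$ is exactly a $\CQ$-distributor $\phi:\sA\oto\sB$ between discrete $\CQ$-categories, a separated and complete $Q$-preordered $Q$-subset is exactly a skeletal complete $\CQ$-category, and the formal concept lattice $\CM_{\phi}(\sA,\sB)$ is precisely the $\CQ$-category $\CM_{\phi}(\bbA,\bbB)$ constructed in Section \ref{Characterizations_of_Mphi} (with $\bbA=\sA$, $\bbB=\sB$). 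So the entire statement should reduce to two facts already proved for arbitrary $\CQ$.

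For the first assertion, I would argue as follows. First, $\CM_{\phi}(\sA,\sB)$ is by construction isomorphic to the $\CQ$-category $\dphi\circ\uphi(\CP\sA)$, the fixed points of the $\CQ$-closure operator $\dphi\circ\uphi:\CP\sA\to\CP\sA$ arising from the Isbell adjunction $\uphi\dv\dphi$ of Proposition \ref{uphi-dphi-adjunction}. Since $\CP\sA$ is a complete $\CQ$-category (Example \ref{PA_complete}), its $\CQ$-closure system of fixed points is again complete by Proposition \ref{closure_system_complete}; and it is skeletal because $\CP\sA$ is skeletal (Proposition \ref{PA_skeletal}) and $\CM_{\phi}(\sA,\sB)$ embeds fully faithfully into it via the projection $\pi_1$. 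Under the dictionary, ``skeletal complete $\CQ$-category'' translates back to ``separated and complete $Q$-preordered $Q$-subset,'' which is exactly the first claim.

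For the converse, I would invoke Theorem \ref{complete_category_fca}, which states that every skeletal complete $\CQ$-category is isomorphic to $\CM(\phi)$ for some $\CQ$-distributor $\phi:\bbA\oto\bbB$. The only gap is that this general $\phi$ need not be a distributor between \emph{discrete} $\CQ$-categories, whereas a fuzzy context requires $\sA$ and $\sB$ to be $Q$-subsets (discrete $\CQ$-categories). The clean way to close this is to use the characterization in Proposition \ref{closure_system_fca}: a skeletal complete $\CQ$-category $\bbX$ is isomorphic to $\CM(\phi)$ for some $\phi$ if and only if it is (isomorphic to) a $\CQ$-closure system of $\CP\bbA$ for some $\CQ$-category $\bbA$. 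Given a skeletal complete $\bbX$, I would realize it as a $\CQ$-closure system of some $\CP\bbC$, then take $\sB=\bbX$ viewed as its underlying discrete $Q$-subset and $\sA$ the underlying $Q$-subset of $\bbC$, and build the fuzzy relation $\phi=\zeta_C$ (the evaluation distributor $\zeta_C(x,\mu)=\mu(x)$ from Section \ref{Functoriality of the Isbell adjunction}) restricted to discrete carriers; Theorem \ref{F_U_adjunction} then gives $\CM(\zeta_C)\cong\bbX$.

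The main obstacle is precisely this discreteness requirement: Theorem \ref{complete_category_fca} produces an arbitrary $\CQ$-distributor, but the fuzzy-context formulation insists the domain and codomain be plain $Q$-subsets rather than genuinely preordered ones. I expect the resolution to hinge on the fact that $\CM_{\phi}(\sA,\sB)$ depends on $\sA,\sB$ only through the discrete $\CQ$-categories they define, so one may freely replace any $\CQ$-category by its object-set regarded as a discrete $\CQ$-category and recover the same closure system of presheaves up to isomorphism (cf. the proof of Proposition \ref{closure_system_fca}, where $\bbA$ is used only as a carrier for $\CP\bbA$). Once discreteness is handled, everything else is a transcription of Chapter \ref{Isbell_adjunction} through the dictionary of Section \ref{Preordered_fuzzy_sets}, and no new computation is needed.
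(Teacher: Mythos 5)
Your reduction is correct and, in substance, it is the reduction the paper intends: the paper offers no separate proof of this theorem (it is declared a direct consequence of Chapter \ref{Isbell_adjunction}), and the only argument actually written out in this section --- the proof of the proposition that follows the theorem --- is your discretization trick applied to the codomain alone: starting from $\bbX\cong C(\CP\sA)$ with $\sA$ \emph{already} a $Q$-subset, it sets $\sB_0=C(\CP\sA)_0$, $\sB(\mu)=\CP\sA(\mu,\mu)$, $\zeta_C(x,\mu)=\mu(x)$, and reruns Step 1 of Theorem \ref{F_U_adjunction}, which goes through verbatim because that computation never uses the hom-arrows of the codomain. Your converse must in addition discretize the \emph{domain}, and that is precisely the step you leave as an expectation; note that your parenthetical appeal to Proposition \ref{closure_system_fca} does not establish it, since in that proposition $\CP\bbA$ means presheaves on the enriched $\bbA$, so the discrete-versus-enriched issue never arises there. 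The lemma you need is true, and the verification you should insert is short: if $\phi:\bbC\oto\bbB$ is a $\CQ$-distributor and $\phi'$ denotes the same matrix read as a distributor $|\bbC|\oto|\bbB|$ between the underlying discrete $\CQ$-categories, then $(\phi')^{\da}\circ(\phi')_{\ua}$ acts on $\CP|\bbC|$ by the same pointwise formula $\mu\mapsto\bw_{y}\big(\phi(-,y)\lda\mu\big)\rda\phi(-,y)$ as $\dphi\circ\uphi$ does on $\CP\bbC$; each column $\phi(-,y)$ lies in $\CP\bbC$ (this is exactly compatibility of the matrix with the homs of $\bbC$), and $\CP\bbC$ is closed in $\CP|\bbC|$ under $f\rda(-)$ and pointwise meets, so the operator takes all its values in $\CP\bbC$ and hence its fixed points in $\CP|\bbC|$ coincide with those of $\dphi\circ\uphi$ in $\CP\bbC$, i.e.\ $\CM(\phi')=\CM(\phi)$. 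With that inserted, your argument (and the forward direction, which is fine as you state it) is complete. Finally, there is a shorter converse, parallel to the classical proof of the fundamental theorem of formal concept analysis: take the fuzzy context $(|\bbX|,|\bbX|,\phi)$ with $\phi(x,y)=\bbX(x,y)$; the identity-on-objects maps $|\bbX|\to\bbX$ are $\sup$-dense and $\inf$-dense (for instance, the family $x\mapsto\bbX(x,y)$ is an object of $\CP|\bbX|$ whose image under $(-)^{\ra}$ is $\sY_{\bbX}y$), so Theorem \ref{complte_category_sup_dense} yields $\bbX\cong\CM_{\phi}(|\bbX|,|\bbX|)$ directly, avoiding Theorem \ref{complete_category_fca} and the closure-system detour altogether.
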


The category $\CQ$-$\CCat$ consists of the separated and complete $Q$-preordered $Q$-subsets as objects, and the left adjoint order-preserving maps as morphisms.

\begin{prop}
Let $\bbX$ be a separated and complete $Q$-preordered $Q$-subset. The following conditions are equivalent:
\begin{itemize}
\item[\rm (1)] $\bbX$ is isomorphic to the formal concept lattice $\CM_{\phi}(\sA,\sB)$ for some fuzzy context $(\sA,\sB,\phi)$.
\item[\rm (2)] $\bbX$ is isomorphic to a $\CQ$-closure system of $\CP\sA$ for some $Q$-subset $\sA$.
\item[\rm (3)] $\bbX$ is isomorphic to a quotient object of $\CP\sA$ for some $Q$-subset $\sA$ in the category $\CQ$-$\CCat$.
\end{itemize}
\end{prop}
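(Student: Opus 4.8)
The plan is to recognize this proposition as the fuzzy-set specialization of Proposition \ref{closure_system_fca}, which is already proved in the general quantaloid setting. The translation dictionary is established in Section \ref{Preordered_fuzzy_sets}: a fuzzy context $(\sA,\sB,\phi)$ is precisely a $\CQ$-distributor between discrete $\CQ$-categories, the formal concept lattice $\CM_\phi(\sA,\sB)$ is exactly $\CM(\phi)=\dphi\circ\uphi(\PA)$ (as observed there for $\CM_\phi(\bbA,\bbB)$), the $Q$-powerset $\CP\sA$ is the $\CQ$-category $\PA$, separated and complete $Q$-preordered $Q$-subsets are skeletal complete $\CQ$-categories, and $\CQ$-$\CCat$ is the same category in both guises. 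So the strategy is not to reprove anything from scratch, but to check that each of the three conditions here maps onto the corresponding condition of Proposition \ref{closure_system_fca} under this dictionary, and then invoke that proposition.

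First I would address (1)$\Rightarrow$(2). By the identification $\CM_\phi(\sA,\sB)\cong\CM(\phi)$, if $\bbX$ is isomorphic to $\CM_\phi(\sA,\sB)$ then it is isomorphic to $\CM(\phi)$ for the $\CQ$-distributor $\phi:\sA\oto\sB$, which is condition (1) of Proposition \ref{closure_system_fca}; that proposition's (1)$\Rightarrow$(2) then gives a $\CQ$-closure system of $\PA=\CP\sA$. The only subtlety is that here $\sA$ is required to be a \emph{discrete} $\CQ$-category (a $Q$-subset), whereas Proposition \ref{closure_system_fca} allows an arbitrary $\CQ$-category $\bbA$; but since our $\phi$ has a discrete domain, this causes no difficulty in the forward direction. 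For (2)$\Rightarrow$(1), I would run the argument of Proposition \ref{closure_system_fca}'s (2)$\Rightarrow$(1): a $\CQ$-closure system of $\CP\sA$ is, by Proposition \ref{closure_system_operator}, the fixed points $C(\PA)$ of a $\CQ$-closure operator, hence of a $\CQ$-closure space $(\sA,C)$; applying $\CF$ yields a $\CQ$-distributor $\zeta_C:\sA\oto C(\PA)$, and since $\CU\circ\CF=\mathbf{id}$ (Theorem \ref{F_U_adjunction}) we get $\CM(\zeta_C)=C(\PA)\cong\bbX$. Crucially, $\zeta_C:\sA\oto C(\PA)$ is a $\CQ$-distributor between discrete $\CQ$-categories (both $\sA$ and the codomain $C(\PA)$ are viewed discretely as the underlying $Q$-subset of objects), so it is a genuine fuzzy context and $\CM(\zeta_C)=\CM_{\zeta_C}(\sA, C(\PA))$.

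The equivalence (2)$\Leftrightarrow$(3) is verbatim the same as in Proposition \ref{closure_system_fca}: a $\CQ$-subcategory $\bbX$ is a $\CQ$-closure system of $\CP\sA$ exactly when the inclusion has a left adjoint in $\CQ$-$\CCat$, equivalently an epimorphism $\CP\sA\to\bbX$ in $\CQ$-$\CCat$, equivalently a monomorphism $\bbX\to\CP\sA$ in $(\CQ\text{-}\CCat)^{\op}$, i.e.\ a quotient object of $\CP\sA$.

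\textbf{Main obstacle.} The substantive point to verify — and the step I expect to require the most care — is that in the (2)$\Rightarrow$(1) direction the intermediate $\CQ$-distributor $\zeta_C$ indeed qualifies as a \emph{fuzzy context}, that is, that both its domain and codomain can legitimately be treated as discrete $\CQ$-categories ($Q$-subsets) so that $\CM(\zeta_C)$ coincides with a formal concept lattice $\CM_{\zeta_C}(\sA,\sB)$ in the sense of this section. The domain $\sA$ is discrete by hypothesis, and $\zeta_C(x,\mu)=\mu(x)$ depends only on the underlying $\CQ$-typed set of $C(\PA)$, so forgetting the $\CQ$-category structure on the codomain to its underlying $Q$-subset $\sB=|C(\PA)|$ and re-reading $\zeta_C$ as $\sA\oto\sB$ leaves $\uphi,\dphi$ and hence $\CM(\zeta_C)$ unchanged. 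Once this compatibility of the discrete reinterpretation with the Isbell construction is confirmed, the proposition follows immediately from Proposition \ref{closure_system_fca} and the dictionary above.
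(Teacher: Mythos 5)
Your proposal is correct and follows essentially the same route as the paper: the paper likewise observes that all equivalences except (2)$\Rightarrow$(1) follow from Proposition \ref{closure_system_fca}, and for (2)$\Rightarrow$(1) it constructs exactly your fuzzy context $(\sA,\sB,\zeta_C)$ with $\sB_0=C(\CP\sA)_0$, $\sB(\mu)=\CP\sA(\mu,\mu)$ and $\zeta_C(x,\mu)=\mu(x)$, verified as in Step 1 of Theorem \ref{F_U_adjunction}. The discretization issue you isolate as the main obstacle is precisely the point the paper glosses over with ``similar to Step 1 \dots one can deduce,'' and your justification of it (the composite $\zeta_C^{\da}\circ(\zeta_C)_{\ua}$ depends only on the values of $\zeta_C$, not on the category structure of the codomain) is sound.
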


\begin{proof}
Note that all the equivalences follow from Proposition \ref{closure_system_fca} except (2)${}\Lra{}$(1). To this end, suppose that $\bbX$ is isomorphic to $C(\CP\sA)$ for some $Q$-subset $\sA$ and $\CQ$-closure operator $C:\CP\sA\to\CP\sA$. Then similar to Step 1 of Theorem \ref{F_U_adjunction} one can deduce that $(\sA,\sB,\zeta_C)$ given by $\sB_0=C(\CP\sA)_0$, $\sB(\mu)=\CP\sA(\mu,\mu)$ and $\zeta_C(x,\mu)=\mu(x)$ is the desired fuzzy context.
\end{proof}

\begin{thm}
A separated and complete $Q$-preordered $Q$-subset $\bbX$ is isomorphic to the formal concept lattice $\CM_{\phi}(\sA,\sB)$ of some fuzzy context $(\sA,\sB,\phi)$ if and only if there exists a $\sup$-dense order-preserving map $F:\bbA\to\bbX$ and an $\inf$-dense order-preserving map $G:\bbB\to\bbX$ such that $\phi(x,y)=\bbX(Fx,Gy)$ for all $x\in\bbA_0$, $y\in\bbB_0$.
\end{thm}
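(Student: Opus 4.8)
The plan is to recognize this final theorem as the fuzzy-set reformulation of Theorem~\ref{complte_category_sup_dense}, and to derive it by specializing that general result to the quantaloid $\CQ$ induced by the divisible unital quantale $Q$ (via Proposition~\ref{divisible_quantale_quantaloid}). The key observation is that a fuzzy context $(\sA,\sB,\phi)$ is precisely a $\CQ$-distributor between discrete $\CQ$-categories, and that $\CM_\phi(\sA,\sB)$ as defined in this section is exactly the $\CQ$-category $\CM_\phi(\bbA,\bbB)$ from Section~\ref{Characterizations_of_Mphi}, with $\bbA=\sA$ and $\bbB=\sB$ regarded as discrete $\CQ$-categories. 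Under this dictionary, a ``separated and complete $Q$-preordered $Q$-subset'' is a skeletal complete $\CQ$-category, a ``$\sup$-dense (resp.\ $\inf$-dense) order-preserving map'' is a $\sup$-dense (resp.\ $\inf$-dense) $\CQ$-functor in the sense of Definition~\ref{sup_dense}, and $\bbX(Fx,Gy)$ is the hom-arrow of the $\CQ$-category $\bbX$.

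First I would make the translation explicit: verify that the elementary descriptions of $\CP\sA$, $\CPd\sB$, the operators $\uphi,\dphi$, and the order on $\CM_\phi(\sA,\sB)$ given in this section coincide with the general $\CQ$-categorical constructions once $\sA,\sB$ are viewed as discrete $\CQ$-categories. This is the content that makes the specialization legitimate; it is routine but must be stated. Second, I would invoke Theorem~\ref{complte_category_sup_dense} directly. That theorem asserts, for a $\CQ$-distributor $\phi:\bbA\oto\bbB$, that a skeletal complete $\CQ$-category $\bbX$ is isomorphic to $\CM_\phi(\bbA,\bbB)$ if and only if there exist a $\sup$-dense $\CQ$-functor $F:\bbA\to\bbX$ and an $\inf$-dense $\CQ$-functor $G:\bbB\to\bbX$ with $\phi=G^\nat\circ F_\nat=\bbX(F-,G-)$. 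The condition $\phi=\bbX(F-,G-)$ unwinds pointwise to $\phi(x,y)=\bbX(Fx,Gy)$ for all $x\in\bbA_0$, $y\in\bbB_0$, which is exactly the stated hypothesis. Thus both directions of the present theorem follow once the dictionary is in place.

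The only genuine subtlety—and the step I expect to be the main obstacle—is confirming that the notion of discreteness interacts correctly with the quantaloid $\CQ$ built from $Q$. Specifically, in Definition~\ref{sup_dense} the maps $F,G$ have domains $\bbA,\bbB$ that are the discrete $\CQ$-categories on the $Q$-subsets $\sA,\sB$, and one must check that the hom-arrows $\sA(x,x)=\sA x$ and the composition law $\be\circ\al=\be\&(Y\backslash\al)$ of $\CQ$ reproduce the elementary composition formula $\psi\circ\phi(x,z)=\bv_y\psi(y,z)\&(\sB y\rd\phi(x,y))$ used to define $\uphi,\dphi$ in the fuzzy setting. Granting Proposition~\ref{divisible_quantale_quantaloid} and the identifications already established in Section~\ref{Preordered_fuzzy_sets}, this compatibility is exactly what those propositions guarantee, so no new computation is needed beyond transcription.

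In summary, I would phrase the proof as: ``This is the specialization of Theorem~\ref{complte_category_sup_dense} to the quantaloid $\CQ$ of Proposition~\ref{divisible_quantale_quantaloid}, under the identification of fuzzy contexts with $\CQ$-distributors between discrete $\CQ$-categories and of $\CM_\phi(\sA,\sB)$ with $\CM_\phi(\bbA,\bbB)$. The condition $\phi=\bbX(F-,G-)$ of Theorem~\ref{complte_category_sup_dense} is precisely $\phi(x,y)=\bbX(Fx,Gy)$ for all $x,y$, and $\sup$-dense (resp.\ $\inf$-dense) $\CQ$-functors are exactly $\sup$-dense (resp.\ $\inf$-dense) order-preserving maps of $Q$-preordered $Q$-subsets.'' I would keep the argument short, emphasizing that all substantive work was done in the general $\CQ$-categorical setting and that the present statement is its concrete manifestation in fuzzy set theory.
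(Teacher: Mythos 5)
Your proposal is correct and is exactly the paper's own route: the paper states this theorem as one of the ``direct consequences of the results in Chapter \ref{Isbell_adjunction}'', i.e.\ as the specialization of Theorem \ref{complte_category_sup_dense} to the quantaloid $\CQ$ of Proposition \ref{divisible_quantale_quantaloid}, with fuzzy contexts identified with $\CQ$-distributors between discrete $\CQ$-categories and $\CM_{\phi}(\sA,\sB)$ with $\CM_{\phi}(\bbA,\bbB)$. Your dictionary (skeletal complete $\CQ$-categories as separated complete $Q$-preordered $Q$-subsets, $\sup$-/$\inf$-dense $\CQ$-functors as $\sup$-/$\inf$-dense order-preserving maps, and $\phi=G^{\nat}\circ F_{\nat}=\bbX(F-,G-)$ unwinding pointwise to $\phi(x,y)=\bbX(Fx,Gy)$) is precisely the identification the paper relies on.
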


The functoriality of the formal concept lattice is established as follows.

\begin{prop}
$\CM:\CQ\text{-}\Ctx\to\CQ\text{-}\CCat$ is a functor that sends each fuzzy context $(\sA,\sB,\phi)$ to the formal concept lattice $\CM_{\phi}(\sA,\sB)$. This functor is obtained by restricting to domain of $\CM:\CQ\text{-}\Info\to\CQ\text{-}\CCat$ defined by Equation (\ref{M_def}) and identifying $\CM_{\phi}(\sA,\sB)$ with $\CM(\phi)$.
$$\bfig
\ptriangle/->`<-^{) }`<-/<800,500>[\CQ\text{-}\Info`\CQ\text{-}\CCat`\CQ\text{-}\Ctx;\CM``\CM]
\efig$$
\end{prop}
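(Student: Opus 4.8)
The plan is to exhibit the stated functor as nothing more than the restriction, along the domain inclusion $\CQ\text{-}\Ctx\hookrightarrow\CQ\text{-}\Info$, of the already-established functor $\CM=\CT\circ\CU:\CQ\text{-}\Info\to\CQ\text{-}\CCat$ of Equation (\ref{M_def}). First I would record that a fuzzy context $(\sA,\sB,\phi)$ is, by definition, exactly a $\CQ$-distributor $\phi:\sA\oto\sB$ between the discrete $\CQ$-categories associated with the $Q$-subsets $\sA$ and $\sB$, and that the morphisms of $\CQ\text{-}\Ctx$ are precisely the infomorphisms inherited from $\CQ\text{-}\Info$. Hence $\CQ\text{-}\Ctx$ is a subcategory of $\CQ\text{-}\Info$ and the inclusion $J:\CQ\text{-}\Ctx\hookrightarrow\CQ\text{-}\Info$ is a functor. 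Since a composite of functors is again a functor and the codomain of $\CM$ is already $\CQ\text{-}\CCat$, the composite $\CM\circ J:\CQ\text{-}\Ctx\to\CQ\text{-}\CCat$ is immediately a functor; this supplies the bottom edge of the triangle, with preservation of identities and of composition coming for free from $\CM$, so that only the domain needs to be restricted.

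The second step is to identify the object assignment of $\CM\circ J$ with the formal concept lattice. By Equation (\ref{M_def}), $\CM$ sends $\phi$ to $\CM(\phi)=\dphi\circ\uphi(\CP\sA)$, the $\CQ$-subcategory of fixed points of the $\CQ$-closure operator $\dphi\circ\uphi$ on $\CP\sA$. In Section \ref{Characterizations_of_Mphi} it was shown, via the fully faithful surjective projection $\pi_1:\CM_{\phi}(\bbA,\bbB)\to\PA$, $(\mu,\lam)\mapsto\mu$, whose image is exactly this set of fixed points, that $\CM_{\phi}(\bbA,\bbB)\cong\dphi\circ\uphi(\PA)=\CM(\phi)$. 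Comparing the two definitions of $\CM_{\phi}(-,-)$, namely the one in Section \ref{Characterizations_of_Mphi} and the fuzzy-context version in the present section, shows they coincide once $\bbA,\bbB$ are taken to be the discrete $\CQ$-categories $\sA,\sB$; therefore $\CM_{\phi}(\sA,\sB)\cong\CM(\phi)$. Identifying these two skeletal complete $\CQ$-categories along this isomorphism makes $\CM\circ J$ send $(\sA,\sB,\phi)$ to $\CM_{\phi}(\sA,\sB)$, as required.

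There is essentially no hard computation here: everything rests on the two observations that $\CQ\text{-}\Ctx$ sits inside $\CQ\text{-}\Info$ and that $\CM(\phi)$ has already been identified with $\CM_{\phi}(\sA,\sB)$. The only point deserving care, which I would treat as the (mild) obstacle, is making the identification $\CM_{\phi}(\sA,\sB)\cong\CM(\phi)$ coherent with the morphism action, i.e.\ checking that under this identification the value of $\CM\circ J$ on an infomorphism agrees with the expected map of formal concept lattices. But since the morphism part is transported verbatim through $\CM$ on $\CQ\text{-}\Info$ and the projection $\pi_1$ realizing the isomorphism is the restriction of a fixed $\CQ$-functor, no verification beyond the functoriality of $\CM$ itself is needed. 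I would close by noting, as a forward-looking remark, that because $(\CQ\text{-}\Ctx)^{\op}=\CQ\text{-}\Ctx$, the identical restriction argument applied to $\CK$ will yield the analogous functor for rough set theory on fuzzy contexts.
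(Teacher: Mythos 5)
Your proposal is correct and follows exactly the route the paper intends: the paper states this proposition without a separate proof, treating it as immediate from the fact that $\CQ\text{-}\Ctx$ is a subcategory of $\CQ\text{-}\Info$, the functoriality of $\CM=\CT\circ\CU$ from Equation (\ref{M_def}), and the identification $\CM_{\phi}(\sA,\sB)\cong\CM(\phi)$ via the projection $\pi_1$ established in Section \ref{Characterizations_of_Mphi}. Your additional remark on transporting the morphism action along this identification is a reasonable piece of diligence that the paper simply leaves implicit.
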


\section{Rough set theory on fuzzy sets}

Rough set theory \cite{Pawlak1982,Polkowski2002,Yao2004} is another important tool for qualitative data analysis. Given a formal context $(A,B,R)$, the relation $R\subseteq A\times B$ induces another pair of operators $\eR:{\bf 2}^A\to{\bf 2}^B$ and $\aR:{\bf 2}^B\to{\bf 2}^A$ as follows:
$$\eR(U)=\{y\in B:\exists x\in U,(x,y)\in R\},$$
$$\aR(V)=\{x\in A: \forall y\in B,(x,y)\in R\text{ implies }y\in V\}.$$
This pair of operators is a covariant Galois connection in the sense that
$$\eR(U)\subseteq V\iff U\subseteq\aR(V).$$
This Galois connection plays a fundamental role in rough set theory.

A property oriented concept of a formal context $(A,B,R)$ is a pair $(U,V)\in {\bf 2}^A\times {\bf 2}^B$, where $U$ is the \emph{object} and $V$ the \emph{property}, such that $U=\aR(V)$ and $V=\eR(U)$. From the viewpoint of category theory, the covariant Galois connection $(\eR,\aR)$ is exactly a Kan adjunction between ${\bf 2}^A$ and ${\bf 2}^B$. That is to say, rough set theory is also a theory on distributors between categories, thus can be generalized to a theory on fuzzy relations between crisp sets \cite{Duntsch2002,Lai2009,Yao2004}, and further to that on fuzzy relations between fuzzy sets.

Suppose that $Q$ is a divisible unital quantale. Each fuzzy context  $(\sA,\sB,\phi)$ gives rise to a Kan adjunction
$$\phi^*\dv\phi_*:\CP\sB\rhu\CP\sA.$$
Let $\CK_{\phi}(\sA,\sB)$ denote the $Q$-preordered $Q$-subset given by
\begin{itemize}
\item[\rm (1)] $|\CK_{\phi}(\sA,\sB)|_0$ consists of pairs $(\mu,\lam)\in(\CP\sA)_0\times(\CPd\sB)_0$ such that
               $$\lam=\phi_*(\mu)\quad\text{and}\quad\mu=\phi^*(\lam);$$
\item[\rm (2)] for each $(\mu,\lam)\in|\CK_{\phi}(\sA,\sB)|_0$,
               $$|\CK_{\phi}(\sA,\sB)|(\mu,\lam)=t\mu=t\lam;$$
\item[\rm (3)] for each $(\mu,\lam),(\mu',\lam')\in|\CK_{\phi}(\sA,\sB)|_0$,
               $$\CK_{\phi}(\sA,\sB)((\mu,\lam),(\mu',\lam'))=\CP\sA(\mu,\mu')=\CP\sB(\lam,\lam').$$
\end{itemize}
Then $\CK_{\phi}(\sA,\sB)$ is called the property oriented concept lattice of the fuzzy context $(\sA,\sB,\phi)$, and the $Q$-subset $|\CK_{\phi}(\sA,\sB)|$ is called the fuzzy set of property oriented concepts of $(\sA,\sB,\phi)$. Elements $(\mu,\lam)$ in $|\CK_{\phi}(\sA,\sB)|_0$ are called \emph{potential property oriented concepts} of $(\sA,\sB,\phi)$, and $|\CK_{\phi}(\sA,\sB)|(\mu,\lam)$ is interpreted as the degree that $(\mu,\lam)$ is a property oriented concept.

The following conclusions are direct consequences of the results in Chapter \ref{Kan_adjunction}.

\begin{prop}
For any fuzzy context $(\sA,\sB,\phi)$, $\CK_{\phi}(\sA,\sB)$ is a separated and complete $Q$-preordered $Q$-subset.
\end{prop}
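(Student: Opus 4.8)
The plan is to identify $\CK_{\phi}(\sA,\sB)$ with a special instance of the construction $\CK_{\phi}(\bbA,\bbB)$ from Chapter \ref{Kan_adjunction}, and then to read off the claim through the dictionary of Section \ref{Preordered_fuzzy_sets}.

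First I would note that, under the correspondence of Section \ref{Preordered_fuzzy_sets}, the $Q$-subsets $\sA$ and $\sB$ are discrete $\CQ$-categories and the fuzzy relation $\phi:\sA\oto\sB$ is precisely a $\CQ$-distributor between them. The $Q$-preordered $Q$-subset $\CK_{\phi}(\sA,\sB)$ --- whose objects are the pairs $(\mu,\lam)$ with $\lam=\phi_*(\mu)$ and $\mu=\phi^*(\lam)$, and whose hom-arrows are $\CP\sA(\mu,\mu')=\CP\sB(\lam,\lam')$ --- is therefore exactly the $\CQ$-category $\CK_{\phi}(\bbA,\bbB)$ with $\bbA=\sA$ and $\bbB=\sB$.

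Next I would invoke the structural analysis of $\CK_{\phi}(\bbA,\bbB)$ already carried out in Chapter \ref{Kan_adjunction}. The Kan adjunction $\phi^*\dv\phi_*:\CP\sB\rhu\CP\sA$ (Proposition \ref{phistar_adjoint}) makes $\phi_*\circ\phi^*:\CP\sB\to\CP\sB$ a $\CQ$-closure operator (Example \ref{adjunction_closure_interior}), and the projection $(\mu,\lam)\mapsto\lam$ is a fully faithful $\CQ$-functor carrying $\CK_{\phi}(\sA,\sB)$ isomorphically onto the $\CQ$-subcategory $\CK(\phi)=\phi_*\circ\phi^*(\CP\sB)$ of fixed points of this closure operator, i.e.\ onto a $\CQ$-closure system of $\CP\sB$. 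Since $\CP\sB$ is a complete $\CQ$-category (Example \ref{PA_complete}), Proposition \ref{closure_system_complete} shows that this $\CQ$-closure system is itself complete; and since an object $(\mu,\lam)$ is determined by its second coordinate (as $\mu=\phi^*(\lam)$) while $\CP\sB$ is skeletal (Proposition \ref{PA_skeletal}), the hom-formula above forces isomorphic objects of $\CK_{\phi}(\sA,\sB)$ to coincide, so $\CK_{\phi}(\sA,\sB)$ is skeletal. Translating ``skeletal complete $\CQ$-category'' back through Section \ref{Preordered_fuzzy_sets} yields ``separated and complete $Q$-preordered $Q$-subset'', which is the assertion.

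The argument is entirely a corollary of earlier results, so I expect no genuine obstacle; the only delicate point is the bilingual bookkeeping between the enriched and the fuzzy vocabularies, and in particular confirming that skeletality of $\CK_{\phi}(\sA,\sB)$ is inherited from that of $\CP\sB$. This last fact is immediate from the defining equation $\CK_{\phi}(\sA,\sB)((\mu_1,\lam_1),(\mu_2,\lam_2))=\CP\sB(\lam_1,\lam_2)$ together with the observation that $(\mu,\lam)$ is recoverable from $\lam$.
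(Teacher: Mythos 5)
Your proposal is correct and follows essentially the same route as the paper, which treats this proposition as a direct consequence of the results of Chapter \ref{Kan_adjunction}: there $\CK_{\phi}(\bbA,\bbB)$ is shown, via the fully faithful projection $\pi_2:(\mu,\lam)\mapsto\lam$, to be isomorphic to $\CK(\phi)=\phi_*\circ\phi^*(\CP\sB)$, the fixed points of the $\CQ$-closure operator $\phi_*\circ\phi^*$ on the skeletal complete $\CQ$-category $\CP\sB$, whence completeness and skeletality (i.e.\ separatedness) follow. Your only (harmless) deviation is proving skeletality directly from the hom-formula and the recoverability of $\mu$ as $\phi^*(\lam)$, rather than transporting it along the isomorphism with $\CK(\phi)$; you also implicitly correct the paper's typo by taking the pairs in $(\CP\sA)_0\times(\CP\sB)_0$ rather than $(\CP\sA)_0\times(\CPd\sB)_0$.
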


\begin{prop}
If $Q$ is a Boolean algebra, then each separated and complete $Q$-preordered $Q$-subset $\bbA$ is isomorphic to the property oriented concept lattice $\CK_{\phi}(\sA,\sB)$ of some fuzzy context $(\sA,\sB,\phi)$.
\end{prop}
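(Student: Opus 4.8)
Looking at the final statement, it asserts that when $Q$ is a Boolean algebra, every separated and complete $Q$-preordered $Q$-subset is isomorphic to the property oriented concept lattice $\CK_\phi(\sA,\sB)$ of some fuzzy context. Let me think about how to prove this.

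The plan is to reduce this statement to the realization theorem for formal concept lattices already proved in this chapter, exploiting the fact that when $Q$ is a Boolean algebra the induced quantaloid $\CQ$ is a Girard quantaloid (Example \ref{Girard_quantaloid_example}(3)), so that the Isbell and Kan constructions become interchangeable. First I would invoke the fuzzy version of the fundamental theorem of formal concept analysis established earlier in this chapter: every separated and complete $Q$-preordered $Q$-subset $\bbA$ is isomorphic to $\CM_{\psi}(\sA,\sB)$, and hence to $\CM(\psi)=\dpsi\circ\upsi(\CP\sA)$, for some fuzzy context $(\sA,\sB,\psi)$, where $\psi:\sA\oto\sB$ is a $Q$-relation between the discrete $\CQ$-categories $\sA$ and $\sB$.

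Next I would apply the duality developed in Section \ref{When_Q_Girard}. Since $\CQ$ is Girard, so is $\CQ$-$\Dist$ (Proposition \ref{DistQ_Girard}), and the identity $\CK(\phi)=\CM(\neg\phi)$ holds for every $\CQ$-distributor $\phi$ (a consequence of Proposition \ref{G_V_adjunction}). Taking $\phi=\neg\psi$ and using the involutivity $\neg\circ\neg={\bf id}$, I obtain $\CK(\neg\psi)=\CM(\neg\neg\psi)=\CM(\psi)\cong\bbA$. The point that makes this argument land inside $\CQ$-$\Ctx$ rather than merely inside $\CQ$-$\Info$ is that $\neg$ acts only on the \emph{values} of a distributor: since $(\neg\psi)(y,x)=\neg\psi(x,y)$, the distributor $\neg\psi:\sB\oto\sA$ has exactly the same underlying discrete $\CQ$-categories $\sB$ and $\sA$ as $\psi$, merely with domain and codomain interchanged. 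Hence $(\sB,\sA,\neg\psi)$ is again a fuzzy context, and identifying $\CK_{\neg\psi}(\sB,\sA)$ with $\CK(\neg\psi)$ exhibits $\bbA$ as a property oriented concept lattice, as required.

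The one step I would check most carefully is this preservation of fuzzy contexts under $\neg$, i.e.\ that complementation does not disturb discreteness. Concretely, for a discrete $\CQ$-category $\sA$ one has $\sA(x,x)=\sA x$ and $\sA(x,y)=\bot$ for $x\neq y$, and I would confirm directly from $\neg\psi=\neg\sA\lda\psi=\psi\rda\neg\sB$ that the resulting $Q$-relation is bounded pointwise by the types of $\sB$ and $\sA$ and satisfies the distributor inequalities, so that $\neg\psi$ is a legitimate $\CQ$-distributor between discrete categories. Should one prefer a route independent of the formal concept theorem, an alternative is to imitate the proof of (2)$\Rightarrow$(1) in Proposition \ref{interior_system_rst}, realizing $\bbA$ as a $\CQ$-interior system $F(\CP\sA)$ of $\CP\sA$ for a discrete $\sA$ and checking that the associated $\zeta_F$ can be arranged between discrete categories; but the duality argument above is shorter and reuses the machinery of Section \ref{When_Q_Girard} verbatim, so I would adopt it as the primary proof and note that the Boolean hypothesis enters only through the Girard property of $\CQ$.
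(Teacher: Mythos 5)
Your proposal is correct and follows essentially the same route as the paper, whose entire proof is to cite Example \ref{Girard_quantaloid_example}(3) (a Boolean $Q$ induces a Girard quantaloid $\CQ$) together with Proposition \ref{G_V_adjunction}, i.e.\ exactly the negation duality $\CK(\phi)=\CM(\neg\phi)$ that you invoke. If anything, your write-up is slightly more complete than the paper's: by routing the realization step through the fuzzy fundamental theorem of formal concept analysis and checking that $\neg$ sends the fuzzy context $(\sA,\sB,\psi)$ to the fuzzy context $(\sB,\sA,\neg\psi)$, you make explicit the discreteness point that the paper's two-citation proof leaves implicit (Proposition \ref{G_V_adjunction} alone only produces a general $\CQ$-distributor, not a priori one between discrete $\CQ$-categories).
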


\begin{proof}
Follows immediately from Example \ref{Girard_quantaloid_example}(3) and Proposition \ref{G_V_adjunction}.
\end{proof}

\begin{prop}
Let $\bbX$ be a separated and complete $Q$-preordered $Q$-subset. The following conditions are equivalent:
\begin{itemize}
\item[\rm (1)] $\bbX$ is isomorphic to the property oriented concept lattice $\CK_{\phi}(\sA,\sB)$ for some fuzzy context $(\sA,\sB,\phi)$.
\item[\rm (2)] $\bbX$ is isomorphic to a $\CQ$-interior system of $\CP\sA$ for some $Q$-subset $\sA$.
\item[\rm (3)] $\bbX$ is isomorphic to a subobject of $\CP\sA$ for some $Q$-subset $\sA$ in the category $\CQ$-$\CCat$.
\end{itemize}
\end{prop}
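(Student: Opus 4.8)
The plan is to transport the characterisation already established for general $\CQ$-categories in Proposition \ref{interior_system_rst} across the dictionary set up in Section \ref{Preordered_fuzzy_sets}: a separated and complete $Q$-preordered $Q$-subset is precisely a skeletal complete $\CQ$-category, a fuzzy context $(\sA,\sB,\phi)$ is precisely a $\CQ$-distributor $\phi:\sA\oto\sB$ between the discrete $\CQ$-categories $\sA$ and $\sB$, and the property oriented concept lattice $\CK_\phi(\sA,\sB)$ is the complete $\CQ$-category $\CK(\phi)$. The only feature distinguishing the present statement from Proposition \ref{interior_system_rst} is that every $\CQ$-category occurring in its three conditions must here be \emph{discrete} (a $Q$-subset), so the whole task is to check that the constructions behind Proposition \ref{interior_system_rst} survive the restriction to discrete objects.

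First I would settle the easy implications. For (1)$\Lra$(2): by the projection argument preceding Proposition \ref{interior_system_rst} one has $\CK_\phi(\sA,\sB)\cong\phi^*\circ\phi_*(\CP\sA)$, and since $\phi^*\dv\phi_*:\CP\sB\rhu\CP\sA$ is a Kan adjunction, $\phi^*\circ\phi_*$ is a $\CQ$-interior operator on $\CP\sA$ (Example \ref{adjunction_closure_interior}); hence its $\CQ$-category of fixed points is a $\CQ$-interior system of $\CP\sA$, with $\sA$ a $Q$-subset. The equivalence (2)$\iff$(3) is the identification of $\CQ$-interior systems of $\CP\sA$ with subobjects of $\CP\sA$ in $\CQ$-$\CCat$: the inclusion of an interior system is a fully faithful, injective, left adjoint $\CQ$-functor, hence a monomorphism in $\CQ$-$\CCat$, and conversely, exactly as in Proposition \ref{interior_system_rst}.

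The substantive step is (2)$\Lra$(1), which I would carry out by combining the corresponding argument of Proposition \ref{interior_system_rst} with the discretisation device used in its formal-concept-analysis counterpart. By Proposition \ref{interior_system_operator}, a $\CQ$-interior system of $\CP\sA$ has the form $F(\CP\sA)$ for a $\CQ$-interior operator $F:\CP\sA\to\CP\sA$. I would then manufacture a fuzzy context $(\sA,\sB,\zeta_F)$ in which $\sB$ is the $Q$-subset with $\sB_0=F(\CP\sA)_0$ and type map $\sB(\mu)=\CP\sA(\mu,\mu)=t\mu$, regarded as a \emph{discrete} $\CQ$-category, and $\zeta_F(x,\mu)=\mu(x)$. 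Since $\mu(x)\in\CQ(tx,t\mu)$, the values of $\zeta_F$ lie in the correct hom-lattices, and the $\CQ$-distributor axioms hold automatically because the hom-arrows of the discrete $\CQ$-categories $\sA$ and $\sB$ are units on the diagonal and bottom elements off it. The identity $F=(\zeta_F)^*\circ(\zeta_F)_*$ is then obtained by the very calculation of Proposition \ref{interior_system_rst}, yielding $\bbX\cong F(\CP\sA)\cong\CK_{\zeta_F}(\sA,\sB)$ with $(\sA,\sB,\zeta_F)$ a genuine fuzzy context.

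The main obstacle is exactly this discretisation: one must be sure that forgetting the $\CQ$-category structure on $F(\CP\sA)$ and keeping only the membership degrees $t\mu$ leaves $(\zeta_F)^*\circ(\zeta_F)_*$ unchanged. I expect this to rest on the observation that $\phi^*$ and $\phi_*$ are computed pointwise from the distributor values $\zeta_F(x,\mu)=\mu(x)$ alone, with $\mu$ ranging over $F(\CP\sA)_0$, so that the order of $\sB$ never enters the computation; the remaining care is only to confirm that $\zeta_F$ is a legitimate $Q$-relation between $Q$-subsets and that $\CK_{\zeta_F}(\sA,\sB)$ is identified with $\CK(\zeta_F)$ under the conventions of Section \ref{Preordered_fuzzy_sets}.
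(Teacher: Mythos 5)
Your proposal is correct and follows essentially the same route as the paper: the easy implications are delegated to Proposition \ref{interior_system_rst}, and for (2)${}\Lra{}$(1) you build exactly the paper's fuzzy context $(\sA,\sB,\zeta_F)$ with $\sB_0=F(\CP\sA)_0$, $\sB(\mu)=\CP\sA(\mu,\mu)$ and $\zeta_F(x,\mu)=\mu(x)$, then rerun the calculation $F=(\zeta_F)^*\circ(\zeta_F)_*$ from Proposition \ref{interior_system_rst}. Your observation that this calculation only uses the values $\zeta_F(x,\mu)$ and never the hom-arrows of the codomain, so discretizing $\sB$ changes nothing, is precisely the point implicit in the paper's ``similar to Proposition \ref{interior_system_rst}''.
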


\begin{proof}
For (2)${}\Lra{}$(1), suppose that $\bbX$ is isomorphic to $F(\CP\sA)$ for some $Q$-subset $\sA$ and $\CQ$-interior operator $F:\CP\sA\to\CP\sA$, then similar to Proposition \ref{interior_system_rst} one can deduce that $(\sA,\sB,\zeta_F)$ given by $\sB_0=F(\CP\sA)_0$, $\sB(\mu)=\CP\sA(\mu,\mu)$ and $\zeta_F(x,\mu)=\mu(x)$ is the desired fuzzy context.
\end{proof}

We present below the functoriality of the property oriented concept lattice.

\begin{prop}
$\CK:\CQ\text{-}\Ctx\to\CQ\text{-}\CCat$ is a functor that sends each fuzzy context $(\sA,\sB,\phi)$ to the property oriented concept lattice $\CK_{\phi}(\sA,\sB)$. This functor is obtained by restricting to domain of $\CK:(\CQ\text{-}\Info)^{\op}\to\CQ\text{-}\CCat$ defined by Equation (\ref{K_def}) and identifying $\CK_{\phi}(\sA,\sB)$ with $\CK(\phi)$.
$$\bfig
\ptriangle/->`<-^{) }`<-/<900,500>[(\CQ\text{-}\Info)^{\op}`\CQ\text{-}\CCat`\CQ\text{-}\Ctx=(\CQ\text{-}\Ctx)^{\op};\CK``\CK]
\efig$$
\end{prop}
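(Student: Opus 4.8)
The plan is to observe that essentially all the substance has already been established, and that this statement merely repackages earlier results. Recall that Equation (\ref{K_def}) together with Proposition \ref{F_G_infomorhpism_F_phistar_continuous} already provides a functor $\CK=\CT\circ\CV:(\CQ\text{-}\Info)^{\op}\to\CQ\text{-}\CCat$ sending a $\CQ$-distributor $\phi:\bbA\oto\bbB$ to the skeletal complete $\CQ$-category $\phi_*\circ\phi^*(\PB)$, and that the section on characterizing $\CK(\phi)$ supplies, through the fully faithful projection $\pi_2$, a natural identification $\CK_\phi(\bbA,\bbB)\cong\CK(\phi)$. So the only remaining task is to cut this functor down to fuzzy contexts and read off its effect on objects.

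First I would make precise that a fuzzy context $(\sA,\sB,\phi)$ is nothing but an object of $\CQ\text{-}\Info$ whose domain and codomain are discrete $\CQ$-categories, and that the infomorphisms between two such objects are exactly the morphisms of $\CQ\text{-}\Ctx$; since a $\CQ$-functor out of a discrete $\CQ$-category is merely a type-preserving map, $\CQ\text{-}\Ctx$ sits inside $\CQ\text{-}\Info$ as a subcategory, closed under composition of infomorphisms and containing the identities. Passing to opposites, $(\CQ\text{-}\Ctx)^{\op}$ is a subcategory of $(\CQ\text{-}\Info)^{\op}$ carrying the very same objects, namely the fuzzy contexts. Restricting $\CK$ along this inclusion therefore yields a functor into $\CQ\text{-}\CCat$ whose value on $(\sA,\sB,\phi)$ is $\CK(\phi)$, with preservation of identities and composition inherited at once from the functoriality of $\CK$. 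Identifying $\CK(\phi)$ with $\CK_\phi(\sA,\sB)$ via the isomorphism above then shows that the restricted functor sends each fuzzy context to its property oriented concept lattice, as claimed; invoking the self-duality $(\CQ\text{-}\Ctx)^{\op}=\CQ\text{-}\Ctx$ noted earlier, the domain is rewritten as $\CQ\text{-}\Ctx$, producing the triangle in the statement. This is the exact analogue of the corresponding proposition for $\CM$ in the formal concept analysis section, the only structural difference being that $\CM$ is covariant on $\CQ\text{-}\Info$ whereas $\CK$ is contravariant.

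The step that demands genuine care—indeed the only delicate point—is the bookkeeping of variance. Because $\CK$ is contravariant on $\CQ\text{-}\Info$, its restriction is naturally contravariant, and the diagram displays it covariantly by routing through $(\CQ\text{-}\Info)^{\op}$ and the identification $(\CQ\text{-}\Ctx)^{\op}=\CQ\text{-}\Ctx$. I would emphasize that this repackaging must be performed so that the object assignment stays $\phi\mapsto\CK(\phi)\cong\CK_\phi(\sA,\sB)$: one uses that $(\CQ\text{-}\Ctx)^{\op}$ and $\CQ\text{-}\Ctx$ literally share the same class of objects, so the object map of the restriction is unambiguous, rather than applying the transposition $(\sA,\sB,\phi)\mapsto(\sB,\sA,\phi^{\op})$ to objects—which would instead return $\CK(\phi^{\op})$, a $\CQ$-category that in general is only the opposite of $\CK_\phi(\sA,\sB)$ and need not be isomorphic to it. Once this identification of object classes is pinned down, no further computation is needed and the proposition follows.
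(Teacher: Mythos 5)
Your proposal is correct and follows exactly the route the paper intends: the paper offers no separate proof for this proposition, treating it as an immediate restriction of the functor $\CK:(\CQ\text{-}\Info)^{\op}\to\CQ\text{-}\CCat$ of Equation (\ref{K_def}) to the subcategory $\CQ\text{-}\Ctx$ of distributors between discrete $\CQ$-categories, combined with the identification $\CK_{\phi}(\sA,\sB)\cong\CK(\phi)$ via the fully faithful projection $\pi_2$. Your explicit bookkeeping of the contravariance through $(\CQ\text{-}\Ctx)^{\op}=\CQ\text{-}\Ctx$, and the warning not to transpose objects to $(\sB,\sA,\phi^{\op})$, is a sound clarification of what the paper leaves implicit.
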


\bibliographystyle{alphaabbr}
\cleardoublepage
\phantomsection
\addcontentsline{toc}{chapter}{Bibliography}
\bibliography{lili}

\cleardoublepage
\pagestyle{plain}
\phantomsection
\addcontentsline{toc}{chapter}{Acknowledgement}
\chapter*{Acknowledgement}

First of all, I would like to express my deepest gratitude to my supervisor, Professor Dexue Zhang. During the six years under his supervision, I received careful guidance and constant encouragement both in life and study from Professor Zhang. His illuminating suggestions have significantly improved the quality of this dissertation.

Second, I am sincerely grateful to Professor Hui Kou, who not only taught me knowledge, but also instructed and encouraged me with valuable discussions. I am also grateful to Professor Shuguo Zhang, who inspired me with his persistence in a research field.  Meanwhile, I would like to thank all the teachers who taught me lessons when I was studying in Sichuan University.

Moreover, I wish to thank Professor Liangang Peng and Professor Bing Hu for their guidance and support for my career. I am grateful to the seniors Hongliang Lai, Yingming Chai, Lingqiang Li, Piwei Chen, Yayan Yuan, Jiachao Wu, Chengling Fang, Hui Wang and Wei Zhang for their helpful discussions and communications with me both in life and study. I would like to thank my friends and schoolmates Qiang Pu, Haoran Zhao, Liping Xiong, Zhongwei Yang and Chengyong Du for sharing the graduate study. I would like to thank the juniors Yuanye Tao, Wei Li, Jialiang He, Zhiqiang Shen and Junsheng Qiao who are important members of our group. Furthermore, my thanks go out to faculty members, staff members and all the graduate students in the School of Mathematics, for their friendship and help in different ways.

Finally, I would like to express my gratitude for the support from my wife Xiaojuan Zhao, my parents and every member of my family.

\end{document}